\colorlet{jvs}{cyan!20}
\colorlet{bvv}{red!20}
\newtheorem{theorem}{Theorem}[section]
\newtheorem{lemma}[theorem]{Lemma}
\newtheorem{prop}[theorem]{Proposition}
\theoremstyle{definition}
\newtheorem{defn}[theorem]{Definition}
\newtheorem{cor}[theorem]{Corollary}
\newtheorem{rem}[theorem]{Remark}
\DeclareMathOperator*\tr{tr}
\DeclareMathOperator*\dist{dist}
\DeclareMathOperator*\diam{diam}
\DeclareMathOperator*\ess{ess\,sup}
\DeclareMathOperator*\loc{loc}
\DeclarePairedDelimiter{\brk}{(}{)}
\DeclarePairedDelimiter{\abs}{\lvert}{\rvert}
\DeclarePairedDelimiter{\norm}{\lVert}{\rVert}
\DeclarePairedDelimiter{\floor}{\lfloor}{\rfloor}
\DeclarePairedDelimiterX{\intvc}[2]{[}{]}{#1,#2}
\DeclarePairedDelimiterX{\intvl}[2]{(}{]}{#1,#2}
\DeclarePairedDelimiterX{\intvr}[2]{[}{)}{#1,#2}
\DeclarePairedDelimiterX{\intvo}[2]{(}{)}{#1,#2}
\newcommand{\csubset}{\Subset}
\DeclarePairedDelimiterX\set[1]\{\}{%

#1
}
\newcommand{\widerof}[3][c]{\mathpalette\widerof@{{#1}{#2}{#3}}}
\newcommand{\widerof@}[2]{\widerof@@{#1}#2}
\newcommand{\widerof@@}[4]{%
    \begingroup
    \sbox\z@{$\m@th#1#3$}%
    \sbox\tw@{$\m@th#1#4$}%
    \makebox[\ifdim\wd\z@>\wd\tw@ \wd\z@\else \wd\tw@\fi][#2]{$\m@th#1#3$}%
    \endgroup
}
\newcommand*\diff{\mathop{}\, d}
\newcommand{\mres}{\mathbin{\vrule height 1.6ex depth 0pt width
        0.13ex\vrule height 0.13ex depth 0pt width 1.3ex}}
\numberwithin{equation}{section}
\numberwithin{figure}{section}
\author{Bohdan Bulanyi, Jean Van Schaftingen, Benoît Van Vaerenbergh}
\keywords{Singular harmonic map, $p$-minimizer, $p$-stationary map, Plateau problem, varifold, systole,  topological obstruction, finite homotopy group, stress-energy tensor, extension of Sobolev mappings}
\subjclass[2020]{58E20, 49J45, 49Q10, 35B25, 76A15}
\date{\today}
\begin{document}

\address{
Universit\`{a} di Bologna, Dipartimento di Matematica, Piazza di Porta S. Donato 5, 40126 Bologna, Italy}

\email{bohdan.bulanyi@unibo.it}

\address{
Universit\'e catholique de Louvain, Institut de Recherche en Math\'ematique et Physique, Chemin du Cyclotron 2 bte L7.01.01, 1348 Louvain-la-Neuve, Belgium}

\email{jean.vanschaftingen@uclouvain.be}

\address{
Universit\'e catholique de Louvain, Institut de Recherche en Math\'ematique et Physique, Chemin du Cyclotron 2 bte L7.01.01, 1348 Louvain-la-Neuve, Belgium}

\email{benoit.vanvaerenbergh@uclouvain.be}

    \title[Limiting behavior of minimizing $p$-harmonic maps] 
    {Limiting behavior of minimizing $p$-harmonic maps in 3d as  $p$ goes to $2$ with finite fundamental group}
   
      \begin{abstract}
      We study the limiting behavior of minimizing $p$-harmonic maps from a bounded Lipschitz  domain $\Omega \subset \mathbb{R}^{3}$ to a compact connected Riemannian manifold without boundary and with finite fundamental group as $p \nearrow 2$. We prove that there exists a closed set $S_{*}$ of finite length such that minimizing $p$-harmonic maps converge to a locally minimizing harmonic map in $\Omega \setminus S_{*}$. We prove that locally inside $\Omega$ the singular set $S_{*}$ is a finite union of straight line segments, and it minimizes the mass in the appropriate class of admissible chains. Furthermore, we establish local and global estimates for the limiting singular harmonic map. Under additional assumptions, we prove that globally in $\overline{\Omega}$ the set $S_{*}$ is a finite union of straight line segments, and it minimizes the mass in the appropriate class of admissible chains, which is defined by a given boundary datum and $\Omega$.
  \end{abstract}
  
  \thanks{B. Bulanyi and J. Van Schaftingen were supported by the Projet de Recherche T.0229.21 ``Singular Harmonic Maps and Asymptotics of Ginzburg-Landau Relaxations'' of the Fonds de la Recherche Scientifique-FNRS. B. Van Vaerenbergh was supported by a FRIA fellowship. The authors would like to thank the anonymous referee for carefully reading  and commenting on the paper.}
  
   \maketitle

    \tableofcontents
    \section{Introduction}   
Given a bounded domain \(\Omega \subset \mathbb{R}^3\) with a smooth boundary \(\partial \Omega\) and a compact connected smooth Riemannian manifold \(\mathcal{N}\) without boundary and of finite dimension, which will be assumed, thanks to Nash's embedding theorem, to be isometrically embedded into the Euclidean space $\mathbb{R}^{\nu}$ for some $\nu\in \mathbb{N}\setminus\{0\}$,
we are interested in the problem of finding a minimizing harmonic map with boundary datum \(g : \partial \Omega \to \mathcal{N}\), that is, find a minimizer of the Dirichlet integral 
\begin{equation}
\label{eq_WaiCho7AVur1Iicho7ohthoo}
 \int_{\Omega} \frac{\abs{D u}^2}{2}\diff x
\end{equation}
among all Sobolev mappings \(u \in W^{1, 2} (\Omega, \mathcal{N})\) that have \(g\) as a trace on \(\partial \Omega\).

If there exists some  \(v \in W^{1, 2} (\Omega, \mathcal{N})\) having \(g\) as a trace on \(\partial \Omega\), standard methods in the calculus of variations show the existence of a minimizer \(u \in W^{1, 2} (\Omega, \mathcal{N})\) of the energy \eqref{eq_WaiCho7AVur1Iicho7ohthoo}.
Following the classical works of Schoen and Uhlenbeck, the map \(u\) is continuous outside a discrete set of points in \(\Omega\) \cite{SchoUhl}; moreover, the solution \(u\) inherits further regularity of the boundary datum: for example, if \(g \in C^{2, \alpha} (\partial \Omega, \mathcal{N})\), then \(u\) has the same regularity in a neighborhood of the boundary  \cite{SU}.

When the target manifold \(\mathcal{N}\) is simply connected, or equivalently has a trivial fundamental group (\emph{i.e.}, \(\pi_1 (\mathcal{N}) \simeq \set{0}\)), the space of admissible traces is the fractional Sobolev space \(W^{1/2, 2} (\partial \Omega, \mathcal{N})\)  \cite{H-L}, similarly to the trace theory of linear spaces \cite{Gagliardo_1957}, giving then rise to a nice theory of harmonic extension of mappings into \(\mathcal{N}\).

When the manifold \(\mathcal{N}\) is not simply connected, the admissible traces form a strict subset of \(W^{1/2, 2} (\partial \Omega, \mathcal{N})\) due to several obstruction phenomena.
First, one has global topological obstructions created by the combination of the topologies of \(\partial \Omega\) and \(\mathcal{N}\) \citelist{\cite{Bethuel-Demengel}*{Theorem~5}}: if \(\Omega \simeq B^{2}_{1} \times \mathbb{S}^1\) is a solid torus, taking \(\gamma \in C^\infty(\mathbb{S}^1, \mathcal{N})\), which is not homotopic to a constant, the function \(g \in C^\infty (\partial \Omega, \mathcal{N})\) defined by \(g (x', x'') = \gamma(x')\) for \((x', x'') \in \mathbb{S}^1 \times \mathbb{S}^1 \simeq \partial \Omega\) has no extension in \(W^{1, 2} (\Omega, \mathcal{N})\).
Next, one has local topological obstructions with boundary data that behave like \(g (x) = \gamma(x'/\abs{x'})\) \citelist{\cite{H-L}*{Section~6.3}\cite{Bethuel-Demengel}*{Theorem~4}}. 
Finally, there are some analytical obstructions coming from the infiniteness of \(\pi_1 (\mathcal{N})\) \cite{Bethuel_2014}, and since the integrability exponent \(2\) is an integer, from the nontriviality of \(\pi_1 (\mathcal{N})\) \cite{Mironescu_VanSchaftingen_2021}*{Theorem~1.10}; these obstructions arise as limits of juxtaposition of boundary data, which have arbitrarily large lower bounds on the extension energy. 

These obstructions to trace extension disappear if one relaxes the problem and considers the problem of minimizing, for \(1 < p < 2\), the \(p\)-energy 
\begin{equation*}
 \int_{\Omega} \frac{\abs{D u}^p}{p} \diff x.
\end{equation*}
Indeed, given \(g \in W^{1/2, 2} (\partial \Omega, \mathcal{N}) \subset W^{1 - 1/p, p} (\partial \Omega, \mathcal{N})\), there exists \(v \in W^{1, p} (\Omega, \mathcal{N})\) such that \(\operatorname{tr}_{\partial \Omega} (v) = g\).
One way then to find a good replacement for solutions to the problem of minimizing harmonic maps is to define \(u_p\) to be a minimizing \(p\)-harmonic map under the boundary condition \(u = g\) on \(\partial \Omega\) and  study the asymptotics of the solutions \(u_p\) of this \(p\)-harmonic relaxation as \(p \nearrow 2\).

Such an approach has been developed successfully for planar domains \(\Omega \subset \mathbb{R}^2\) when \(\mathcal{N}\) is a circle \cite{Hardt_Lin_1995} or a general compact Riemannian manifold  \cite{VanSchaftingen_VanVaerenbergh}, where, up to a subsequence, \(u_p\) converges  as \(p \nearrow 2\) then to a harmonic map outside a finite set of singular points whose positions and topological charges minimize a suitable renormalized energy \citelist{\cite{Bethuel_Brezis_Helein_1994}\cite{Renormalmap}}.
Similar results hold for \(p\)-energy minimizing maps from \(\Omega \subset \mathbb{R}^m\) to \(\mathbb{S}^{m - 1}\) as \(p \nearrow m\) \cite{Hardt_Lin_Wang_1997}.

In higher dimensions, this problem has been studied 
for maps taking their values in $\mathbb{S}^{n}$:
Hardt and Lin \cite[Section~3]{Lin_2011} have proved that the $p$-energy measures associated to the energy densities of \(p\)-energy minimizing maps from an $m$-dimensional domain to $\mathbb{S}^{n}$ as $p \nearrow n < m$ converge, up to subsequences, to a limit measure whose support coincides with some area minimizing current of dimension $m - n$; Stern \cite{Stern_2020} has obtained the convergence of the $p$-energy measures associated to the energy densities of stationary \(p\)-harmonic maps from a closed oriented Riemannian manifold to the circle \(\mathbb{S}^1\) as \(p \nearrow 2\) to the weight measure of a stationary varifold (we recall that the most-studied class of stationary $p$-harmonic maps (for arbitrary target manifolds) are the $p$-energy minimizers).

In three dimensions, these asymptotics have been analyzed for maps from a closed oriented Riemannian manifold into the circle \(\mathbb{S}^1\) \cite{Lin_2011,Stern_2020}.

Our first result is the following.

\begin{theorem}
\label{Interiorbehavior1th}
If \(\Omega \subset \mathbb{R}^{3}\) is a bounded Lipschitz domain, \(\pi_1 (\mathcal{N})\) is finite, \(g \in W^{1/2, 2} (\partial \Omega, \mathcal{N})\), $(p_{n})_{n\in \mathbb{N}}\subset [1,2)$, $p_{n}\nearrow 2$ as $ n\to +\infty$ and if for each $n\in \mathbb{N}$, $u_{n}\in W^{1,p_{n}}(\Omega, \mathcal{N})$ is a minimizing \(p_n\)-harmonic map with boundary datum $g$, then 
\begin{equation*}
   \limsup_{n \to +\infty} (2-p_{n}) \int_{\Omega} \frac{\abs{D u_n}^{p_n}}{p_n}\diff x \leq C |g|_{W^{1/2, 2}(\partial \Omega, \mathbb{R}^{\nu})}^2,
\end{equation*}
where $C=C(\partial \Omega, \mathcal{N})>0$. Furthermore, there exists a closed set $S_{*} \subset \overline{\Omega}$, at most a countable union of segments \((L_j)_{j \in J}\) lying in $\overline{\Omega}$, mappings \((\gamma_j)_{j \in J}\) in \(C (\mathbb{S}^1, \mathcal{N})\), a map \(u_* : \Omega \to \mathcal{N}\) such that  \(S_{*} \cap \Omega= \bigcup_{j \in J} L_j \cap \Omega\) and, up to a subsequence (not relabeled), the following assertions hold.
\begin{enumerate}[label=(\roman*)]
 \item \(u_n \to u_*\) almost everywhere in \(\Omega\).
 \item  For each $q \in [1,2)$,  $u_{*} \in W^{1,q}_{\loc}(\Omega, \mathcal{N})$, $u_{n} \to u_{*}$  in $W^{1,q}_{\loc}(\Omega, \mathbb{R}^{\nu})$ and 		
 \begin{equation*}
\limsup_{n \to +\infty}\int_{\Omega}(\dist(x,\partial \Omega)|Du_{n}(x)|)^{q}\diff x \leq C |\Omega|+\frac{C}{2 - q}\limsup_{n\to+\infty}(2-p_{n})\int_{\Omega} \frac{|Du_{n}|^{p_{n}}}{p_{n}}\diff x,
\end{equation*}
where $C=C(\diam(\Omega), \mathcal{N})>0$. 
\item Assume that there exists $r_{0}=r_{0}(\Omega)>0$ such that if $x \in \mathbb{R}^{3} \setminus \Omega$ and $\dist(x, \partial \Omega)\leq r_{0}$, there exists a unique point $P(x) \in \partial \Omega$ such that $\dist(x, \partial \Omega)=|x-P(x)|$. If $g$ is a $C^{1}$ map outside a finite  set of points around each of which $g$ is homotopically nontrivial and whose Euclidean norm of the gradient behaves like the distance to the power of \(-1\) close to these points, then for each $q \in [1,2)$, $u_{*} \in W^{1,q}(\Omega, \mathcal{N})$, $u_{n} \to u_{*}$ in $W^{1,q}(\Omega, \mathbb{R}^{\nu})$ and
\begin{equation*}
\limsup_{n \to +\infty}\int_{\Omega}|Du_{n}|^{q}\diff x \leq C+\frac{C}{2 - q}\limsup_{n\to+\infty}(2-p_{n})\int_{\Omega} \frac{|Du_{n}|^{p_{n}}}{p_{n}}\diff x,
\end{equation*}
where $C=C(g, \Omega, \mathcal{N})>0$. 

 \item Any point of \(\Omega\) has a neighborhood that only intersects finitely many \(L_j\).
 \item \(u_* \in W^{1, 2}_{\mathrm{loc}} (\Omega \setminus S_{*}, \mathcal{N})\) is a locally minimizing harmonic map in \(\Omega  \setminus S_{*}\): if \(K \csubset \Omega \setminus S_{*}\) is an open set and \(v \in W^{1, 2} (K, \mathbb{R}^{\nu})\) satisfies $v-u_{*} \in W^{1,2}_{0}(K, \mathbb{R}^{\nu})$, then 
 \begin{equation*}
  \int_{K} \frac{\abs{D u_{*}}^2}{2}\diff x \leq  \int_{K} \frac{\abs{D v}^2}{2} \diff x. 
 \end{equation*}
 Furthermore, there exists at most a countable and locally finite subset $S_{0}$ of $\Omega \backslash S_{*}$ such that $u_{*}\in~C^{\infty}(\Omega\setminus (S_{*} \cup S_{0}), \mathcal{N})$. 
  \item The varifold $V_{*}=(\textup{Id}, A_{*})_{\#}\left(\sum_{j \in J}\mathcal{E}^{\textup{sg}}_{2}(\gamma_{j})\mathcal{H}^{1}\mres (L_{j} \cap \Omega)\right)$ supported by \(\bigcup_{j \in J} L_j \cap \Omega\) and with a multiplicity \(\mathcal{E}^{\mathrm{sg}}_{2} (\gamma_j)\) is a stationary  varifold which is the limit of the rescaled stress-energy tensors and 
 \begin{equation*}
 V_{*}(\Omega \times \mathrm{G(3,1)})= \sum_{j \in J} \mathcal{E}^{\mathrm{sg}}_{2} (\gamma_j) \mathcal{H}^1 (L_j \cap \Omega) 
  \le  \liminf_{n \to + \infty} (2 - p_n) \int_{\Omega} \frac{|D u_n|^{p_n}}{p_n} \diff x,
 \end{equation*}
where $\textup{Id}:\mathbb{R}^{3}\to \mathbb{R}^{3}$ is the identity map, $A_{*}(x) \in \mathbb{R}^{3}\otimes \mathbb{R}^{3}$ represents the orthogonal projection onto the approximate tangent plane $T_{x}S_{*}$ to $S_{*}$ at $x$ and $\mathrm{G}(3,1)$ is the Grassmann manifold.
\item $S_{*}$ minimizes the \textit{mass} among all admissible chains locally inside $\Omega$, namely, locally inside $\Omega$ it solves a homotopical Plateau problem in codimension 2. 
\item If $\partial \Omega$ is homeomorphic to $\mathbb{S}^{2}$ and $g \in W^{1,2}(\partial \Omega, \mathcal{N})$, then $S_{*}=\emptyset$ and $u_{*} \in C^{\infty}(\Omega \setminus S_{0}, \mathcal{N})$ is a minimizing harmonic extension of $g$ to $\Omega$, where $S_{0} \subset \Omega$ is a discrete set.
\end{enumerate}
\end{theorem}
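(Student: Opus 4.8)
The plan is to organize the many assertions around four building blocks: a quantitative $\mathcal{N}$-valued extension estimate with the right $(2-p)$-weight, a compactness-plus-$\varepsilon$-regularity scheme, an analysis of the concentration (defect) measure via the stress--energy tensor, and a dimension-reduction/minimality argument pinning down the fine structure of the singular set.

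\textbf{Step 1 (the energy bound).} Since $g \in W^{1/2,2}(\partial\Omega,\mathcal N) \subset W^{1-1/p,p}(\partial\Omega,\mathcal N)$, I would first construct, for each $p\in(1,2)$, a competitor $v_p \in W^{1,p}(\Omega,\mathcal N)$ with $\mathrm{tr}_{\partial\Omega}v_p = g$ and
\[
(2-p)\int_\Omega \frac{|Dv_p|^p}{p}\,dx \le C\,|g|_{W^{1/2,2}(\partial\Omega,\mathbb R^\nu)}^2 ,
\]
with $C=C(\partial\Omega,\mathcal N)$. Starting from the linear (harmonic) extension $\tilde g\in W^{1,2}(\Omega,\mathbb R^\nu)$, which obeys $\int_\Omega|D\tilde g|^2 \le C|g|_{W^{1/2,2}}^2$, one corrects it into an $\mathcal N$-valued map on a mesh of cubes: on the cubes where $\tilde g$ stays in a tubular neighbourhood of $\mathcal N$ one post-composes with the nearest-point retraction, while on the controllably few remaining cubes one replaces $\tilde g$ by its $0$-homogeneous extension from a slightly shrunk boundary sphere, which costs $\int_0^1 r^{2-p}\,dr/r \sim 1/(2-p)$ times the sphere energy; finiteness of $\pi_1(\mathcal N)$ is what makes these fillings possible at bounded topological cost. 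Minimality of each $u_n$ transfers this bound to $u_n$, giving the first displayed inequality.

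\textbf{Step 2 (compactness, defect measure, regularity off $S_*$).} From $(2-p_n)\int_\Omega|Du_n|^{p_n}\le C$, Hölder's inequality together with the weighted Caccioppoli/monotonicity estimates for $p$-minimizers yields, for every $q<2$ and $\Omega'\csubset\Omega$, bounds on $\int_{\Omega'}(\mathrm{dist}(x,\partial\Omega)|Du_n|)^q$ and on $\|u_n\|_{W^{1,q}(\Omega')}$; after extracting a subsequence with $u_n\to u_*$ a.e.\ and in $W^{1,q}_{\mathrm{loc}}$, and with $\mu_n:=(2-p_n)\frac{|Du_n|^{p_n}}{p_n}\mathcal L^3\rightharpoonup\mu_*$, this proves (i) and (ii). I would define $S_*\cap\Omega$ as the concentration set of $\mu_*$ together with the points where the small-energy $\varepsilon$-regularity threshold for $p$-minimizing maps fails in the limit. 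On $\Omega\setminus S_*$ the Schoen--Uhlenbeck/Hardt--Lin $\varepsilon$-regularity theorem, uniform for $p$ near $2$, gives strong $W^{1,2}_{\mathrm{loc}}$ convergence and $C^{1,\alpha}_{\mathrm{loc}}$ bounds, so $u_*$ is smooth there; comparing $u_n$ with $\mathcal N$-valued competitors having matched boundary values (a Luckhaus-type gluing lemma) and using lower semicontinuity shows $u_*$ is a locally minimizing harmonic map in $\Omega\setminus S_*$, whose further partial regularity off a discrete $S_0$ is classical. This is (v); the local finiteness (iv) follows a posteriori from Step 4.

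\textbf{Step 3 (the stationary limit varifold) and Step 4 (fine structure, minimality, boundary).} Inner variations $x\mapsto x+tX(x)$, $X\in C^1_c(\Omega,\mathbb R^3)$, of the $p_n$-energy produce the divergence-free stress--energy tensor $T_n=|Du_n|^{p_n-2}\bigl(Du_n^{\!*}Du_n-\tfrac1{p_n}|Du_n|^2\mathrm{Id}\bigr)$; multiplying by $(2-p_n)$ and passing to the limit, the monotonicity formula forces $\mu_*\mres\Omega=\Theta\,\mathcal H^1\mres\Sigma$ with $\Sigma=S_*\cap\Omega$ $1$-rectifiable of bounded density, and the associated varifold $V_*=(\mathrm{Id},A_*)_\#(\Theta\,\mathcal H^1\mres\Sigma)$ is stationary and is the weak-$*$ limit of the rescaled stress--energy tensors, with the mass inequality in (vi) coming from lower semicontinuity. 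Blowing up $u_*$ and $V_*$ at $\mathcal H^1$-a.e.\ point of $\Sigma$ gives a tangent object that is translation invariant along a line and $0$-homogeneous transversally, i.e.\ a minimizing map $\mathbb R^2\to\mathcal N$ of the form $x\mapsto\gamma(x/|x|)$ with $\gamma\in C(\mathbb S^1,\mathcal N)$ homotopically nontrivial and multiplicity the quantized singular energy $\mathcal{E}^{\mathrm{sg}}_{2}(\gamma)$; since $\pi_1(\mathcal N)$ is finite these values are discrete, which is the mechanism behind the quantization of $\mu_*$ and the segment-wise constancy of the $(\gamma_j)$. To upgrade rectifiability to a locally finite union of \emph{straight} segments I would combine the stationarity and the calibration provided by $V_*$ with the minimality of $u_*$ in $\Omega\setminus\Sigma$, as for area-minimizing codimension-two cones, to exclude branch points and corners, and then compare $(u_*,\Sigma)$ with configurations obtained by homotoping the line system to show that $\Sigma$, weighted by $\mathcal{E}^{\mathrm{sg}}_{2}(\gamma_j)$, minimizes mass among admissible integral $1$-chains subject to the prescribed homotopical boundary constraint --- the homotopical Plateau problem of (vii), which also yields (iv). For (iii) and (viii) I would rerun Steps 1--2 with the extension adapted to the boundary geometry (the collar with unique nearest-point projection and the explicit local form of $g$ near its finitely many singular points) to get global $W^{1,q}(\Omega)$ bounds; and when $\partial\Omega\cong\mathbb S^2$ with $g\in W^{1,2}(\partial\Omega,\mathcal N)$, the absence of topological obstruction provides a genuine $W^{1,2}(\Omega,\mathcal N)$ extension, so $(2-p_n)\int_\Omega|Du_n|^{p_n}\to0$, $\mu_*=0$, $S_*=\emptyset$, and $u_*$ is a minimizing harmonic extension with discrete $S_0$.

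The main obstacle is Step 4: showing the limit varifold is not merely stationary and rectifiable but exactly a mass-minimizing union of segments requires a full dimension-reduction analysis ruling out all competing tangent objects, plus a precise matching between the variational problem for $u_n$ and the chain-minimization problem; a secondary difficulty is making the $\varepsilon$-regularity and the extension estimates uniform as $p_n\nearrow2$ and up to $\partial\Omega$.
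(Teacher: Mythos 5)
Your outline is broadly in the right direction, but there are two substantive gaps that a blind reader would have to fill with genuinely new arguments.

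The first gap is the uniformity of $\varepsilon$-regularity as $p_n\nearrow 2$. You assert that "the Schoen--Uhlenbeck/Hardt--Lin $\varepsilon$-regularity theorem, uniform for $p$ near $2$, gives strong $W^{1,2}_{\mathrm{loc}}$ convergence." But the classical smallness criterion for a $p$-minimizer on a ball is $r^{p-3}\int_{B_r}|Du|^p<\varepsilon$, and near a codimension-two vortex line the left-hand side scales like $\mathcal{E}^{\mathrm{sg}}_p(\gamma)\,r^{2-p}/(2-p)$, which blows up as $p\to 2$. Keeping $\varepsilon$ fixed the criterion is never met in the regime you care about, so a literal "uniform-in-$p$" version carries no information. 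What is needed (and what Lemma~\ref{heart} proves) is an $\eta$-compactness statement with the $p$-dependent threshold $\eta r^{3-p}/(2-p)$, and the mechanism enabling it is a manifold-constrained extension estimate with \emph{sublinear} (power $1-1/p$) dependence on the boundary energy (Lemma~\ref{key tool bilipschitz}). That sublinearity is obtained by lifting to the universal cover $\widetilde{\mathcal N}$, which is compact precisely because $\pi_1(\mathcal N)$ is finite; compactness of $\widetilde{\mathcal N}$ is what trades the $1/(2-p)$ blowup for a uniform constant. Your Step~1 remark that finiteness of $\pi_1(\mathcal N)$ gives "bounded topological cost" places this hypothesis in the wrong step: the global energy bound holds with $C/(2-p)$ for any compact target via Hardt--Lin, whereas finiteness is indispensable for the sharpened $\eta$-compactness.

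The second gap concerns rectifiability and the segment structure. "The monotonicity formula forces $\mu_*\mres\Omega=\Theta\,\mathcal H^1\mres\Sigma$, rectifiable" is too quick: monotonicity gives density bounds, not rectifiability. The paper derives rectifiability from the divergence-free structure of the limit stress--energy tensor via the Arroyo-Rabasa--De~Philippis--Hirsch--Rindler criterion (Proposition~\ref{prop 1-var}). For the passage to a locally finite union of \emph{straight} segments you invoke "calibration as for area-minimizing codimension-two cones," but the limit varifold is only stationary and no calibration is available a priori; the correct tool is the Allard--Almgren structure theorem for stationary $1$-varifolds with a density gap, and that density gap must first be established by showing the density lies in the finite set $\{\mathcal{E}^{\mathrm{sg}}_2(\gamma):\gamma\in[\mathbb S^1,\mathcal N]\}$. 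The paper does this by direct cylinder energy estimates (Propositions~\ref{prop energy bounds on cylinders} and~\ref{prop density dv}) rather than by blowing up $u_*$; a blow-up of $u_*$ along $S_*$ is delicate because $u_*$ is only in $W^{1,q}$, $q<2$, near $S_*$. Finally, the Luckhaus lemma in the paper is used not for $\varepsilon$-regularity but to glue competitors for the minimality comparison (Propositions~\ref{prop conv to harmonic map} and~\ref{min of sing}); the mass-minimality in (vii) then follows by this gluing, not by a calibration.

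Your Steps~1 and~3 and the boundary arguments for (iii) and (viii) match the paper's strategy in spirit.
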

The singular energy \(\mathcal{E}^{\mathrm{sg}}_{2} (\gamma)\), appearing in the definition of the varifold $V_{*}$ in Theorem~\ref{Interiorbehavior1th}, is defined for \(\gamma \in C (\mathbb{S}^1, \mathcal{N})\) as the infimum of $\sum_{i = 1}^m \int_{\mathbb{S}^1} \frac{|f_i'|^2}{2}\diff \mathcal{H}^{1}$ for \(f_1, \dotsc, f_m \in C^1 (\mathbb{S}^1, \mathcal{N})\) such that \(\gamma\) has a continuous extension to a domain consisting of the 2-disk \({B}^2_{1}\) from which \(m\) small 2-disks have been removed whose restriction on the boundary of the \(i\)-th disk is a rescaled version of the mapping \(f_i\).

The finite assumption on the fundamental group in Theorem~\ref{Interiorbehavior1th} makes it inapplicable to the case of the unit circle \(\mathcal{N} = \mathbb{S}^1\); the framework of our results is thus completely disjoint from Stern's  \cite{Stern_2020}.
Among the situations covered by our theorem, we point out the case where \(\mathcal{N}\) is the real projective plane \(\mathbb{RP}^2\) motivated by liquid crystals, and the case where \(\mathcal{N}\) is the space of attitudes of the cube \(SO(3)/O\), where \(O\) is the octahedral group, which appears as framefields in meshing problems. We point out that the finiteness assumption on the fundamental group of the compact manifold $\mathcal N$ is equivalent to a compactness assumption on the universal covering \(\widetilde{\mathcal{N}}\) of the manifold $\mathcal N$.

When \(\mathcal{N}\) is the real projective plane \(\mathbb{RP}^2\), Theorem~\ref{Interiorbehavior1th} is a counterpart of Canevari's result for Landau-de Gennes models of nematic liquid crystals \cite{Canevari_2017}.

The strategy of our proof of Theorem~\ref{Interiorbehavior1th} is inspired by Canevari's \(\eta\)-compactness approach for Landau-de Gennes  \cite{Canevari_2017}. 
Whereas Canevari works with a perturbation involving no derivatives and with a domain enlarged to a full linear Sobolev space, in our setting we have a perturbation involving the first-order derivatives defined on a nonlinear space with the same manifold constraint but enlarged by lower integrability requirements. 
This results in different constructions to deal with the different structures and different analytical arguments because the space in which  the minimization is performed depends on the relaxation parameter \(p\). 

We also emphasize that in our results we obtain the uniform interior \(W^{1, q}\)  estimates  and the
local minimization property outside the singular set $S_{*}$ for $u_{*}$, in contrast to a minimization result restricted to balls by Canevari, and we prove that locally inside $\Omega$, $S_{*}$ solves a homotopical Plateau problem in codimension 2.

At a more technical point, we could identify the stationary varifold structure of the limit of the stress-energy tensors directly through Arroyo-Rabasa, De Philippis, Hirsch and Rindler’s work on rectifiability of measures satisfying some differential constraints \cite{ArroyoRabasa_DePhilippis_Hirsch_Rindler_2019}.

It is also worth noting that the smaller the fractional seminorm of the boundary datum $g$, the closer to $\partial \Omega$ the singular set $S_{*}$ of Theorem~\ref{Interiorbehavior1th} is located (see Proposition~\ref{prop empty S_*}).

Under additional assumptions we get global properties of the limit.

\begin{theorem}
\label{theorem_intro_global}
With the assumptions and notations of Theorem~\ref{Interiorbehavior1th}
if we assume moreover that \(\Omega\) is of class $C^{2}$, \(\overline{\Omega}\) is strongly convex at every point of $\partial \Omega$, 
\(g\) is a \(C^{1}\) map outside a finite  set of points \( \{a_1, \dotsc, a_m\} \subset \partial \Omega\) around each of which $g$ is homotopically nontrivial and whose Euclidean norm of the gradient behaves like the distance to the power of \(-1\) close to these points and if for each $n \in \mathbb{N}$, $Du_{n}$ is continuous on $\partial \Omega \setminus \{a_{1}, \dotsc, a_{m}\}$, then the following assertions hold.
\begin{enumerate}[label=(\roman*)]
 \item \(S_{*} \cap \partial \Omega = \set{a_1, \dotsc, a_m}\) and $S_{*}$ lies in the convex hull of \(\set{a_1, \dotsc, a_m}\).
 \item The set \(J\) is finite and $S_{*}=\bigcup_{j \in J} L_{j}$.
 \item The quantity
 \begin{equation*} 
 	\sum_{j \in J} \mathcal{E}^{\mathrm{sg}}_{2} (\gamma_j) \mathcal{H}^1 (L_j) \quad\text{minimizes}\quad \sum_{i \in I} \mathcal{E}^{\mathrm{sg}}_{2} (f_i) \mathcal{H}^1 (K_i)
 \end{equation*}
 among all configurations of closed straight line segments \((K_i)_{i \in I}\) and topological charges \((f_i)_{i \in I} \subset C(\mathbb S^1,\mathcal N)\) satisfying the following conditions: \(I\) is finite; each $K_{i}$ has a positive length; the interiors of the segments (i.e., the segments without their endpoints) are pairwise disjoint; \(K = \bigcup_{i \in I} K_{i}\) is a compact subset of \(\overline{\Omega}\) and \(K \cap \partial \Omega\) is a finite set of points; $K$ has no endpoints in \(\Omega\), namely, if \(K_{i}\) is a segment having an endpoint \(x \in \Omega\), then there exists another segment \(K_{j}\) emanating from \(x\);  there exists a map \(v \in C(\Omega \setminus (K \cup F), \mathcal{N})\), with \(F\) being a locally finite subset of \(\Omega \setminus K\), such that \(\operatorname{tr}_{\partial \Omega}(v)=g\) and \(v\) restricted to small circles transversal to \(K_i\) is, up to orientation, homotopic to \(f_i\). Namely, $S_{*}$ solves a homotopical Plateau problem in codimension 2.
\end{enumerate}
\end{theorem}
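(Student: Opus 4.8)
The plan is to upgrade the interior description furnished by Theorem~\ref{Interiorbehavior1th} to a global one, using the extra hypotheses ($\Omega$ of class $C^{2}$ with $\overline\Omega$ strongly convex, $g$ of class $C^{1}$ with the prescribed singular profile away from $\{a_1,\dots,a_m\}$, and $Du_n$ continuous on $\partial\Omega\setminus\{a_1,\dots,a_m\}$) to locate $S_*$ near $\partial\Omega$, to make $J$ finite, and to prove global mass-minimality. We keep the notation of Theorem~\ref{Interiorbehavior1th} and write $\mu_*$ for the weak-$*$ limit along the given subsequence of the rescaled densities $(2-p_n)\frac{\abs{Du_n}^{p_n}}{p_n}\,dx$ on $\overline\Omega$; in $\Omega$ its support equals $S_*\cap\Omega$ by Theorem~\ref{Interiorbehavior1th}(vi). \textbf{Step 1 (the boundary points of $S_*$, first half of (i)).} To prove $S_*\cap\partial\Omega\subseteq\{a_1,\dots,a_m\}$, fix $b\in\partial\Omega\setminus\{a_1,\dots,a_m\}$: there $g$ is $C^{1}$ and locally homotopically trivial, $\partial\Omega$ is $C^{2}$, and $Du_n$ is continuous up to $\partial\Omega$. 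Combining the boundary monotonicity formula for minimizing $p_n$-harmonic maps with the $\varepsilon$-regularity theorem up to the boundary, uniformly for $p_n\in[1,2)$ after flattening $\partial\Omega$ and extending $g$, yields a radius $r>0$ and a bound on $\int_{B_r(b)\cap\Omega}\abs{Du_n}^{p_n}\,dx$ independent of $n$; hence $\mu_*(B_r(b))=0$ and $b\notin S_*$. Conversely, $g$ is homotopically nontrivial around each $a_i$, so the local topological obstruction used to construct the charges $\gamma_j$ in Theorem~\ref{Interiorbehavior1th} provides $c_0>0$ with $\liminf_{n\to+\infty}(2-p_n)\int_{B_\rho(a_i)\cap\Omega}\frac{\abs{Du_n}^{p_n}}{p_n}\,dx\ge c_0$ for every small $\rho>0$, so $a_i\in\operatorname{supp}\mu_*\subseteq S_*$. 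Thus $S_*\cap\partial\Omega=\{a_1,\dots,a_m\}$.

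\textbf{Step 2 (finiteness of $J$, statement (ii)).} By Theorem~\ref{Interiorbehavior1th}(iv) the family $(L_j)_{j\in J}$ is locally finite in $\Omega$, and by Step~1 it does not accumulate on $\partial\Omega$ away from $\{a_1,\dots,a_m\}$, so it suffices to bound the number of $L_j$ reaching each $a_i$. The boundary monotonicity of the concentrated energy at $a_i$ and the stationarity of the varifold $V_*$ of Theorem~\ref{Interiorbehavior1th}(vi) force every blow-up of $S_*$ at $a_i$ to be a stationary $1$-dimensional cone of finite mass, hence a finite union of half-lines emanating from $a_i$; so only finitely many $L_j$ reach $a_i$. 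With the compactness of $\overline\Omega$ this gives $J$ finite and $S_*=\bigcup_{j\in J}L_j$. Moreover $S_*$ has no endpoint in $\Omega$: at a loose endpoint the small transversal circles are contractible in the complement of $S_*$, so $u_*$ restricted to them is homotopically trivial and the corresponding $\mathcal E^{\mathrm{sg}}_{2}(\gamma_j)=0$, contradicting the bound $\mathcal E^{\mathrm{sg}}_{2}(\gamma)\ge\eta_0>0$ for every nontrivial $\gamma$ (which follows from the finiteness of $\pi_1(\mathcal N)$), or equivalently violating the stationarity of $V_*$. After a harmless rearrangement we may assume the interiors of the $L_j$ pairwise disjoint.

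\textbf{Step 3 (admissibility and minimality, statement (iii)).} The data $\big((L_j)_{j\in J},(\gamma_j)_{j\in J}\big)$ together with $v=u_*$ and $F=S_0$ form an admissible configuration in the sense of the statement: $J$ is finite with segments of positive length and pairwise disjoint interiors (Step~2); $K=S_*$ is compact in $\overline\Omega$, $K\cap\partial\Omega=\{a_1,\dots,a_m\}$ is finite, and $K$ has no endpoint in $\Omega$ (Steps~1--2); $u_*\in C^\infty(\Omega\setminus(S_*\cup S_0),\mathcal N)$ with $S_0$ locally finite by Theorem~\ref{Interiorbehavior1th}(v); $\operatorname{tr}_{\partial\Omega}(u_*)=g$ because $u_n\to u_*$ in $W^{1,q}(\Omega,\mathbb{R}^\nu)$ by Theorem~\ref{Interiorbehavior1th}(iii) and $\operatorname{tr}_{\partial\Omega}(u_n)=g$; and $u_*$ restricted to small circles transversal to $L_j$ is homotopic to $\gamma_j$ by construction of $\gamma_j$. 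Hence $\sum_{j\in J}\mathcal E^{\mathrm{sg}}_{2}(\gamma_j)\mathcal H^1(L_j)$ is one of the competing values. For the reverse inequality, let $\big((K_i)_{i\in I},(f_i)_{i\in I},v\big)$ be any admissible configuration and fix $\rho>0$; I would build $w_n\in W^{1,p_n}(\Omega,\mathcal N)$ with $\operatorname{tr}_{\partial\Omega}(w_n)=g$, equal to $v$ outside the $\rho$-tube around $K=\bigcup_{i\in I}K_i$ and, inside the tube around $K_i$, equal to the optimal singular cross-sectional profile for $f_i$ frozen along $K_i$ and truncated at scale $\rho$; the $p_n$-Dirichlet energy of this model has cross-sectional part comparable to $(2-p_n)^{-1}$ times a constant tending to $\mathcal E^{\mathrm{sg}}_{2}(f_i)$ as $p_n\nearrow 2$, integrated over $\mathcal H^1(K_i)$, so $\limsup_{n\to+\infty}(2-p_n)\int_\Omega\frac{\abs{Dw_n}^{p_n}}{p_n}\,dx\le\sum_{i\in I}\mathcal E^{\mathrm{sg}}_{2}(f_i)\mathcal H^1(K_i)+\omega(\rho)$ with $\omega(\rho)\to0$. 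Since $u_n$ minimizes the $p_n$-energy with datum $g$ and $w_n$ is admissible, $(2-p_n)\int_\Omega\frac{\abs{Du_n}^{p_n}}{p_n}\,dx\le(2-p_n)\int_\Omega\frac{\abs{Dw_n}^{p_n}}{p_n}\,dx$; letting $n\to+\infty$, then $\rho\to0$, and invoking $\sum_{j\in J}\mathcal E^{\mathrm{sg}}_{2}(\gamma_j)\mathcal H^1(L_j)=\sum_{j\in J}\mathcal E^{\mathrm{sg}}_{2}(\gamma_j)\mathcal H^1(L_j\cap\Omega)\le\liminf_{n\to+\infty}(2-p_n)\int_\Omega\frac{\abs{Du_n}^{p_n}}{p_n}\,dx$ from Theorem~\ref{Interiorbehavior1th}(vi), we obtain $\sum_{j\in J}\mathcal E^{\mathrm{sg}}_{2}(\gamma_j)\mathcal H^1(L_j)\le\sum_{i\in I}\mathcal E^{\mathrm{sg}}_{2}(f_i)\mathcal H^1(K_i)$, which is (iii).

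\textbf{Step 4 (convex hull, second half of (i), and the main difficulty).} Since $\overline\Omega$ is convex, the convex hull $\mathcal C$ of $\{a_1,\dots,a_m\}$ is contained in $\overline\Omega$. If $S_*\not\subseteq\mathcal C$, choose a linear functional $\ell$ with $\max_{S_*}\ell>\max_{\mathcal C}\ell=\max_i\ell(a_i)$ and some $c\in(\max_i\ell(a_i),\max_{S_*}\ell)$; then $S_*\cap\{\ell\ge c\}$ is a nonempty compact subset of the open set $\Omega$, being disjoint from $\partial\Omega$ (recall $S_*\cap\partial\Omega=\{a_1,\dots,a_m\}\subseteq\{\ell<c\}$). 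Replacing this part by its orthogonal projection onto the hyperplane $\{\ell=c\}$ and correspondingly modifying $u_*$ only in the interior of $\Omega$ keeps the configuration admissible — the trace $g$ is untouched, and the $1$-Lipschitz projection, homotopic through interior homeomorphisms to the identity, transports the homotopy charges — while strictly decreasing $\sum_{j}\mathcal E^{\mathrm{sg}}_{2}(\gamma_j)\mathcal H^1(L_j)$, which contradicts Step~3; hence $S_*\subseteq\mathcal C$. (Strong convexity of $\overline\Omega$ provides, together with the $C^{2}$ regularity, the uniform collar used in Steps~1 and~3 and the good behaviour of the inserted models near $K\cap\partial\Omega$.) I expect the main obstacle to be the competitor construction in Step~3: one must glue the scaled singular models along the network $K$ compatibly at each junction, where several segments and their homotopy charges meet, interpolate to $v$ in the transition annuli, treat the endpoints on $\partial\Omega$ while preserving the trace $g$, and control all the $p_n$-energies uniformly so that after multiplication by $2-p_n$ only the cross-sectional contributions survive in the limit; a secondary technical point is the uniform-in-$p_n$ boundary $\varepsilon$-regularity invoked in Step~1.
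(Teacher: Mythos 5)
Your overall architecture is sensible, but Step 1 contains the central gap, and its plan would not close as written. You argue that, at a smooth boundary point $b\notin\{a_1,\dots,a_m\}$, ``boundary monotonicity + $\varepsilon$-regularity up to the boundary, uniformly in $p_n$,'' yields a bound on the raw $p_n$-energy $\int_{B_r(b)\cap\Omega}|Du_n|^{p_n}$ independent of $n$, hence $\mu_*(B_r(b))=0$. But the boundary $\eta$-compactness one can prove (as in the paper's Proposition~\ref{etacompboundary} and Corollary~\ref{cor etacompbound}) is of concentration-compactness type: it converts a \emph{hypothesis} of small rescaled energy $\le\eta r^{3-p_n}/(2-p_n)$ into a uniform bound $Cr^{3-p_n}$. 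It does \emph{not} tell you that the rescaled energy is small near a smooth boundary point. A priori, a ``boundary vortex'' of $u_n$ could concentrate at $b$ even though $g$ is topologically trivial there; excluding this is precisely the nontrivial content. The paper's mechanism is entirely different: one derives a Pohozaev-type stress-energy boundary identity (Proposition~\ref{prop int by parts tensor}), shows the rescaled limiting tensor $T_*$ satisfies a divergence estimate depending only on $\sup|\xi|$ on $\partial\Omega$ (Proposition~\ref{prop div measure}) together with an inward-perturbation positivity (Propositions~\ref{prop div nonnegative},~\ref{prop min BW}), and concludes the limit varifold is \emph{minimizing area to first order} in Brian White's sense; White's maximum principle, combined with the strong convexity of $\overline\Omega$, then excludes points of $S_*$ on $\partial\Omega$ away from $\{a_1,\dots,a_m\}$. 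Strong convexity is essential exactly here, not as a ``uniform collar.'' Without this or an equivalent argument, Step 1 is unsupported, and both Step 2 (where you need to rule out accumulation of segments at $\partial\Omega$ and to blow up the varifold at boundary points, for which you would need the boundary stationarity/minimality that the paper establishes in this way) and the rest of the proof rest on it.

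Two further comments. First, the paper proves the convex-hull inclusion $S_*\subset\operatorname{conv}\{a_1,\dots,a_m\}$ as part of the boundary repulsion step using the interior \emph{stationarity} of the varifold (Corollary~\ref{cor 5.18}): look at a farthest point of $S_*$ from the convex hull and observe that all segments at such a point lie in a half-space, contradicting the balancing identity. Your projection-onto-a-hyperplane competitor argument could also work, but you would have to verify that the projected chain remains admissible (no collapsing segments, interiors still pairwise disjoint, charges transported consistently) and that the mass strictly decreases; note also the paper obtains the convex-hull inclusion \emph{before} finiteness of $J$, which avoids possible circularity. Second, the paper gets finiteness of $J$ not by varifold blow-up at $a_i$ but by a more elementary density argument on the function $M(r)=\sum_{x\in S_*\cap\partial B_r(a_i)}\Theta_1(x)$: using minimality against a radial competitor together with the discreteness of the range of $\Theta_1$ (Lemma~\ref{lem cont singenergy} and Proposition~\ref{prop density dv}), one shows $M$ must be constant near $a_i$, and then interior stationarity pins down the structure as a finite union of segments. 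Your cone-of-half-lines heuristic is plausible but would need the very boundary control you have not established.

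In summary: Step 3 (the competitor construction and the lower bound from the varifold) is in the right spirit and matches the paper's proof of Proposition~\ref{prop global min} and the lower semicontinuity from Theorem~\ref{Interiorbehavior1th}(vi). But the boundary repulsion $S_*\cap\partial\Omega=\{a_1,\dots,a_m\}$ is the crux, and a monotonicity/$\varepsilon$-regularity argument alone does not deliver it; you need the Pohozaev/White maximum principle input (or some substitute) that your outline omits.
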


The additional ingredient in the proof of Theorem~\ref{theorem_intro_global} is the exploitation of the Pohozaev-type identity combined with White’s maximum principle for minimizing varifolds \cite{White_2010}.

Although our results and proofs are written in the framework of domains of the Euclidean space \(\mathbb{R}^3\), most of the analysis can be transferred to compact 3-dimensional Riemannian manifolds.
The adaptation of Theorem~\ref{Interiorbehavior1th} should be completely straightforward, provided line segments are replaced by geodesics.
Transferring  Theorem~\ref{theorem_intro_global} to manifolds would require a suitable use of the strong convexity of the boundary of submanifolds.

Finally, we point out that if $\overline{\Omega}$ is strongly convex at every point of $\partial \Omega$ and $\pi_{1}(\mathcal{N}) \simeq \mathbb{Z}/2\mathbb{Z}$, then the singular set $S_{*}$ is a minimal connection (see Definition~\ref{defnminconnection} and Proposition~\ref{example 3}). Thus, in this case, we exclude the appearance of closed loops (i.e., homeomorphic images of $\mathbb{S}^{1}$) in $S_{*}$. In fact, the singular sets of minimizing $p_{n}$-harmonic maps $u_{n}$ could have disclination loops \cite[Section~2.4]{Liquidcrystals_1986}, however, in the limit, as $n\to+\infty$, each of these loops, which is contractible in $\Omega$, should become unstable, shrink and disappear. We also suspect that the singular sets of the corresponding minimizing $p_{n}$-harmonic maps converge, up to a subsequence, to the singular set $S_{*}$ in the Hausdorff distance, but this question is beyond the scope of our paper. 
    
  \section{Preliminaries}
    
    \subsection{Conventions and Notation} 
    \emph{Conventions:} in this paper, we say that a value is positive if it is strictly greater than zero, and a value is nonnegative if it is greater than or equal to zero. Euclidean spaces are endowed with the Euclidean inner product $\langle \cdot, \cdot \rangle$ and the induced norm $|\cdot|$. A set will be called a domain whenever it is open and connected. Throughout this paper, $\Omega$, unless otherwise stated, will denote a bounded (locally) Lipschitz domain in $\mathbb{R}^{3}$, and $\mathcal{N}$ will denote a finite-dimensional compact connected smooth Riemannian manifold without boundary and with finite fundamental group, which will be assumed, thanks to Nash's embedding theorem, to be isometrically embedded into the Euclidean space $\mathbb{R}^{\nu}$ for some $\nu\in \mathbb{N}\setminus\{0\}$. The Hausdorff measures,  which we shall use, coincide in terms of normalization with the appropriate outer Lebesgue measures. We shall say that a property holds $\mu$-\textit{almost everywhere} (or simply \textit{almost everywhere}) for a measure $\mu$ (abbreviated $\mu$-a.e.) if it holds outside a set of zero $\mu$-measure. \\

    \noindent\emph{Notation:} if $A,B,M$ are real matrices, we denote the inner product (also known as Frobenius or Hilbert-Schmidt) between $A$ and $B$ by $A:B=\operatorname{tr}(A^{\mathrm{T}}B)$ and the Euclidean norm of $M$ by $|M|=\sqrt{M:M}$. 
    We denote by $B^{l}_{r}(x)$, $\smash{\overline{B}}^{l}_{r}(x)$, and $\partial{B}^{l}_{r}(x)$, respectively, the open ball in $\mathbb{R}^{l}$, the closed ball in $\mathbb{R}^{l}$, and its boundary the $(l-1)$-sphere with center $x$ and radius $r$, where $l \in \mathbb{N}\setminus\{0\}$. 
    If the center is at the origin \(0\), we write $B^{l}_{r}$, $\overline{B}^{l}_{r}$ and $\partial B^{l}_{r}$ the corresponding balls and the $(l-1)$-sphere. 
    We shall sometimes write $\mathbb{S}^{m-1}$ instead of $\partial B^{m}_{1}$. 
    We denote by $\dist(x,A)$ and $|A|$, respectively, the Euclidean distance from $x\in \mathbb{R}^{l}$ to $A\subset \mathbb{R}^{l}$, and the $l$-dimensional Lebesgue measure of $A$. We shall denote by $\mathcal{H}^{l}(A)$ the $l$-dimensional Hausdorff measure of~$A$. If $U\subset \mathbb{R}^{l}$ is Lebesgue measurable, then for $ p \in [1,+\infty)\cup \{+\infty\}$, $L^{p}(U)$ will denote the space consisting of all real measurable functions on $U$ that are $p^{\mathrm{th}}$-power integrable on $U$ if $p \in [1,+\infty)$ and are essentially bounded if $p=+\infty$; $L^{p}(U; \mathbb{R}^{k})$ is the respective space of functions with values in $\mathbb{R}^{k}$.  
    If $p \in [1,+\infty)$ and $E\subset \mathbb{R}^{m}$ is a Borel-measurable set of Hausdorff dimension $l \leq m$, then $L^{p}(E, \mathcal{H}^{l})$ is the  space of real measurable functions on $E$ that are $p^{\mathrm{th}}$-power integrable on $E$ with respect to the $\mathcal{H}^{l}$-measure; $L^{p}(E, \mathbb{R}^{k}, \mathcal{H}^{l})$ is the respective space of functions with values in $\mathbb{R}^{k}$. 
    By $L^{1}_{\loc}(U)$ we denote the space of functions $u$ such that $u\in L^{1}(V)$ for all $V\Subset U$; if $E\subset \mathbb{R}^{m}$ is a Borel-measurable set of Hausdorff dimension $l \leq m$, $L^{1}_{\loc}(E, \mathbb{R}^{k}, \mathcal{H}^{l})$ denote the space of measurable functions on $E$ with values in $\mathbb{R}^{k}$ such that $u \in L^{1}(F,\mathbb{R}^{k}, \mathcal{H}^{l})$ for all $F\Subset E$. The norm on $L^{p}(U)$ ($L^{p}(U;\mathbb{R}^{k})$) is denoted by $\|\cdot\|_{L^{p}(U)}$ ($\|\cdot\|_{L^{p}(U;\mathbb{R}^{k})}$) or   $\|\cdot\|_{p}$ when appropriate. 
    We use the standard notation for Sobolev spaces. 
    If $L:\mathbb{R}^{m}\to \mathbb{R}^{l}$ is a linear map, $L[h] \in \mathbb{R}^{l}$ will denote the value of $L$ at the vector $h \in \mathbb{R}^{m}$. 
    For each $x \in \mathcal{N}$ and for each $r>0$, we shall denote by $B^{\mathcal{N}}_{r}(x)$ the geodesic ball in $\mathcal{N}$ with center $x$ and radius $r$, namely $B^{\mathcal{N}}_{r}(x)=\{y \in \mathcal{N}: d_{\mathcal{N}}(y,x)\leq r\}$, where $d_{\mathcal{N}}:\mathcal{N}\times \mathcal{N}\to [0,+\infty)$ is the geodesic distance in $\mathcal{N}$. 
    If $U\subset \mathbb{R}^{l}$ is a bounded Lipschitz domain, then $\tr_{\partial U}$ will denote the trace operator (the reader may consult, for instance, \cite[Section~4.3]{Evans}). We denote by $|\cdot|_{W^{s,p}(E, \mathbb{R}^{k})}$ (or $|\cdot|_{W^{s,p}(E)}$ when appropriate) the canonical seminorm on $W^{s,p}(E, \mathbb{R}^{k})$: if $E$ is a Borel-measurable set of Hausdorff dimension $l$, then $|u|^{p}_{W^{s,p}(E, \mathbb{R}^{k})}=\int_{E}\int_{E} \frac{|u(x)-u(y)|^{p}}{|x-y|^{l+sp}}\diff \mathcal{H}^{l}(x)\diff \mathcal{H}^{l}(y)$. By $\# X$ we shall denote the cardinality of $X$.

    \subsection{Embedding and nearest point retraction}
    For each $\lambda \in (0,+\infty)$, we define the neighborhood
    \[
    \mathcal{N}_{\lambda}=\{x \in \mathbb{R}^{\nu}: \dist(x, \mathcal{N})<\lambda\}.
    \]
    The next lemma describes the nearest point retraction of a neighborhood of $\mathcal{N}$ onto $\mathcal{N}$.
    \begin{lemma}\label{lemma npr}
        There exists $\lambda_{\mathcal{N}}>0$ such that the nearest point retraction $\Pi_{\mathcal{N}}:\mathcal{N}_{\lambda_{\mathcal{N}}}\to \mathcal{N}$ characterized by 
        \[
        |x-\Pi_{\mathcal{N}}(x)|=\dist(x, \mathcal{N})
        \]
        is a well-defined and smooth map. Moreover, if the mappings $P^{\top}_{\mathcal{N}}:\mathcal{N}\to \mathrm{Lin}(\mathbb{R}^{\nu}, \mathbb{R}^{\nu})$ and $P^{\perp}_{\mathcal{N}}:\mathcal{N}\to \mathrm{Lin}(\mathbb{R}^{\nu}, \mathbb{R}^{\nu})$ are defined for each $x \in \mathcal{N}$ by setting $P^{\top}_{\mathcal{N}}(x)$ and $P^{\perp}_{\mathcal{N}}(x)$ as the orthogonal projections on $T_{x}\mathcal{N}$ and $(T_{x}\mathcal{N})^{\perp}$, identified as linear subspaces of $\mathbb{R}^{\nu}$, then for each $x \in \mathcal{N}_{\lambda_{\mathcal{N}}}$ and $v \in \mathbb{R}^{\nu}$,
        \begin{equation*}
        \left(1-\frac{\dist(x,\mathcal{N})}{\lambda_{\mathcal{N}}}\right)|D\Pi_{\mathcal{N}}(x)[v]|^{2}\leq |P^{\top}_{\mathcal{N}}(\Pi_{\mathcal{N}}(x))[v]|^{2}\leq C|D\Pi_{\mathcal{N}}(x)[v]|^{2},
        \end{equation*}
        where $C=C(\mathcal{N}, \nu)>0$. Furthermore, for each $x \in \mathcal{N}_{\lambda_{\mathcal{N}}}\setminus \mathcal{N}$,
        \begin{equation*}
        |D\dist(x,\mathcal{N})| = 1.
        \end{equation*} 
    \end{lemma}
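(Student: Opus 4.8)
The plan is to realise $\Pi_{\mathcal N}$ through a tubular neighbourhood of $\mathcal N$, to read off $D\Pi_{\mathcal N}$ from the fact that $\Pi_{\mathcal N}$ is a retraction that is constant along affine normal fibres, and then to convert the resulting formula into the two-sided estimate using the shape operator of $\mathcal N$; the eikonal identity $\abs{D\dist(\cdot,\mathcal N)}=1$ will drop out of the normal-ray structure.

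\emph{Construction of $\Pi_{\mathcal N}$.} First I would introduce the normal bundle $N\mathcal N=\{(y,\xi):y\in\mathcal N,\ \xi\in(T_y\mathcal N)^{\perp}\}$ with projection $\pi\colon N\mathcal N\to\mathcal N$ and the smooth map $E\colon N\mathcal N\to\mathbb R^{\nu}$, $E(y,\xi)=y+\xi$. Since $DE$ at any point $(y,0)$ of the zero section is the canonical isomorphism $T_y\mathcal N\oplus(T_y\mathcal N)^{\perp}\cong\mathbb R^{\nu}$, the standard tubular neighbourhood argument (inverse function theorem together with the compactness of $\mathcal N$) yields $\lambda_{0}>0$ such that $E$ restricts to a diffeomorphism of $\{(y,\xi)\in N\mathcal N:\abs{\xi}<\lambda_{0}\}$ onto an open set $V\supseteq\mathcal N$. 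If $\dist(x,\mathcal N)<\lambda_{0}$, a nearest point $z\in\mathcal N$ exists by compactness, satisfies $x-z\perp T_{z}\mathcal N$ by first-order optimality and $\abs{x-z}<\lambda_{0}$, so $E(z,x-z)=x$ with $(z,x-z)$ in the domain of the diffeomorphism; injectivity of $E$ there forces $z$ to be the \emph{unique} nearest point, equal to $\pi(E^{-1}(x))$. I would then set $\Pi_{\mathcal N}:=\pi\circ E^{-1}$ on $\mathcal N_{\lambda_{0}}\subseteq V$, obtaining a smooth map with $\abs{x-\Pi_{\mathcal N}(x)}=\dist(x,\mathcal N)$, with $\Pi_{\mathcal N}|_{\mathcal N}=\mathrm{id}$, and with $\Pi_{\mathcal N}(y+\xi)=y$ whenever $y\in\mathcal N$, $\xi\in(T_y\mathcal N)^{\perp}$ and $\abs{\xi}<\lambda_{0}$.

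\emph{The differential.} Fix $x\in\mathcal N_{\lambda_{0}}$, put $y=\Pi_{\mathcal N}(x)$ and $\xi=x-y$. Because $\Pi_{\mathcal N}$ is constant on the affine normal fibre through $x$, $D\Pi_{\mathcal N}(x)$ annihilates $(T_y\mathcal N)^{\perp}$, so $D\Pi_{\mathcal N}(x)[v]=D\Pi_{\mathcal N}(x)[P^{\top}_{\mathcal N}(y)[v]]$ and it suffices to understand $D\Pi_{\mathcal N}(x)$ on $T_y\mathcal N$. For $w\in T_y\mathcal N$ I would pick a curve $\gamma$ in $\mathcal N$ with $\gamma(0)=y$, $\dot\gamma(0)=w$ and a normal field $\eta$ along $\gamma$ with $\eta(0)=\xi$; then $c(r)=\gamma(r)+\eta(r)$ satisfies $\Pi_{\mathcal N}(c(r))=\gamma(r)$, so differentiating at $r=0$ gives $D\Pi_{\mathcal N}(x)[w+\dot\eta(0)]=w$. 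The Weingarten relation splits $\dot\eta(0)=-A_{\xi}(w)+\nabla^{\perp}_{w}\eta$ into its $T_y\mathcal N$-part $-A_{\xi}(w)$ and its normal part, where the shape operator $A_{\xi}\colon T_y\mathcal N\to T_y\mathcal N$ is characterised by $\langle A_{\xi}(w),z\rangle=\langle\xi,\mathrm{II}(w,z)\rangle$ and thus depends on $\xi$ only. Since $D\Pi_{\mathcal N}(x)$ kills the normal term, this reduces to $D\Pi_{\mathcal N}(x)[(\mathrm{Id}-A_{\xi})(w)]=w$ for every $w\in T_y\mathcal N$, i.e. $D\Pi_{\mathcal N}(x)$ inverts $\mathrm{Id}-A_{\xi}$ on $T_y\mathcal N$ whenever the latter is invertible; equivalently
\[
(\mathrm{Id}-A_{\xi})\bigl(D\Pi_{\mathcal N}(x)[v]\bigr)=P^{\top}_{\mathcal N}(y)[v]\qquad\text{for all }v\in\mathbb R^{\nu}.
\]

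\emph{Estimates, eikonal equation, and the main obstacle.} Compactness of $\mathcal N$ furnishes $C_{0}=C_{0}(\mathcal N,\nu)>0$ with $\norm{A_{\xi}}\le C_{0}\abs{\xi}$ (operator norm), and I would fix $\lambda_{\mathcal N}:=\min\{\lambda_{0},\tfrac1{2C_{0}}\}$. For $x\in\mathcal N_{\lambda_{\mathcal N}}$, writing $t=\dist(x,\mathcal N)=\abs{\xi}<\lambda_{\mathcal N}$, one has $\norm{A_{\xi}}\le C_{0}t\le\tfrac12$, so $\mathrm{Id}-A_{\xi}$ is invertible with $\norm{\mathrm{Id}-A_{\xi}}\le1+C_{0}t$ and $\norm{(\mathrm{Id}-A_{\xi})^{-1}}\le(1-C_{0}t)^{-1}$; inserting these into the displayed identity yields $\abs{P^{\top}_{\mathcal N}(y)[v]}\le(1+C_{0}\lambda_{\mathcal N})\abs{D\Pi_{\mathcal N}(x)[v]}$, hence the upper bound with $C=(1+C_{0}\lambda_{\mathcal N})^{2}$, and $\abs{D\Pi_{\mathcal N}(x)[v]}\le(1-C_{0}t)^{-1}\abs{P^{\top}_{\mathcal N}(y)[v]}$, hence after squaring and using $(1-C_{0}t)^{2}\ge1-2C_{0}t\ge1-t/\lambda_{\mathcal N}$ the lower bound. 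For the last assertion, on $\mathcal N_{\lambda_{\mathcal N}}\setminus\mathcal N$ the function $\dist(\cdot,\mathcal N)=\abs{\,\cdot\,-\Pi_{\mathcal N}(\cdot)}$ is the square root of a smooth positive function, hence smooth; along the ray $s\mapsto y+s\xi/\abs{\xi}$ the nearest point stays equal to $y$, so $\dist(y+s\xi/\abs{\xi},\mathcal N)=s$ for $0\le s<\lambda_{\mathcal N}$, and differentiating at $s=\abs{\xi}$ gives $\langle D\dist(x,\mathcal N),\xi/\abs{\xi}\rangle=1$, which together with the $1$-Lipschitz bound $\abs{D\dist(x,\mathcal N)}\le1$ and Cauchy--Schwarz forces $\abs{D\dist(x,\mathcal N)}=1$. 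The one genuinely non-routine point is the differential computation: one must check that the tangential component of $\dot\eta(0)$ equals $-A_{\xi}(w)$ independently of the chosen normal extension $\eta$, so that the identity is well posed and holds for all $v\in\mathbb R^{\nu}$; the tubular-neighbourhood construction, the uniform choice of $\lambda_{\mathcal N}$, and the scalar inequalities are standard.
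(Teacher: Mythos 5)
The paper itself does not prove this lemma; it simply cites \cite{Monteil-Rodiac-VanSchaftingen}*{Lemma~2.1}. Your argument is a correct, self-contained proof along the standard lines that the cited reference follows: tubular-neighbourhood theorem for existence, smoothness, and uniqueness of $\Pi_{\mathcal N}$; the Weingarten identity $(\mathrm{Id}-A_{\xi})\circ D\Pi_{\mathcal N}(x)=P^{\top}_{\mathcal N}(\Pi_{\mathcal N}(x))$ for the differential; operator-norm bounds on $A_{\xi}$ together with the choice $\lambda_{\mathcal N}\le 1/(2C_{0})$ and the elementary inequality $(1-C_{0}t)^{2}\ge 1-t/\lambda_{\mathcal N}$ to land exactly on the stated two-sided estimate; and the normal-ray computation combined with the $1$-Lipschitz bound for the eikonal identity. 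Each step is correctly justified, including the check that the tangential part of $\dot\eta(0)$ is $-A_{\xi}(w)$ independently of the chosen normal extension, so the proposal matches both the statement's constants and the approach one expects from the cited source.
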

\begin{proof}
For a proof, we refer to \cite[Lemma~2.1]{Monteil-Rodiac-VanSchaftingen}.
\end{proof}
\subsection{Sobolev mappings into manifolds}
Let $\mathcal{X}$ and $\mathcal{Y}$ be compact Riemannian manifolds. Assume that $\mathcal{X}$ and $\mathcal{Y}$ are isometrically embedded into Euclidean spaces, which is always possible by Nash's isometric embedding theorem \cite{Nash_1956}. Notice that these embeddings are bilipschitz. In fact, if $\mathcal{M}$ is a compact Riemannian manifold, which is isometrically embedded into a Euclidean space, then, clearly, the Euclidean distance between two points of $\mathcal{M}$ is less than or equal to the geodesic distance between these points, which is induced by the Riemannian metric on $\mathcal{M}$. On the other hand, there exists a constant $L=L(\mathcal{M})>0$ such that the geodesic distance on $\mathcal{M}$ is less than or equal to the Euclidean distance on $\mathcal{M}$ times $L$ (the idea behind the proof is that each compact Riemannian manifold $\mathcal{M}$ has a positive normal injectivity radius, thus, one can construct a smooth nearest point retraction from a tubular neighborhood of $\mathcal{M}$ onto $\mathcal{M}$ (see Lemma~\ref{lemma npr}) and appropriately use the fact that for each open cover of $\mathcal{M}$ there is a finite subcover). Thus, identifying $\mathcal{X}$ and $\mathcal{Y}$ to their embedding's images, for each $s \in (0,1)$ and $p \in [1,+\infty)$, we can define the fractional Sobolev space $W^{s,p}(\mathcal{X}, \mathcal{Y})$ as
\[
W^{s,p}(\mathcal{X}, \mathcal{Y})=\{u \in L^{p}(\mathcal{X}, \mathbb{R}^{k}, \mathcal{H}^{l}): u(x) \in \mathcal{Y} \,\ \text{for a.e. $x \in \mathcal{X}$ and}\,\ |u|_{W^{s,p}(\mathcal{X}, \mathbb{R}^{k})}<+\infty\},
\]
where $\mathbb{R}^{k}$ is the Euclidean space into which $\mathcal{Y}$ is embedded and $l \in \mathbb{N}\backslash \{0\}$ is the Hausdorff dimension of $\mathcal{X}$ (the reader may also consult \cite[Section~3.1]{Caselli2024}). This definition does not depend on the choice of the isometric Euclidean embedding of $\mathcal{Y}$. We shall consider in $W^{1,p}(\mathcal{X},\mathbb{R}^{k})$ its strongly closed subset
\[
W^{1,p}(\mathcal{X},\mathcal{Y})=\{u \in W^{1,p}(\mathcal{X}, \mathbb{R}^{k}): u(x)\in \mathcal{Y} \; \text{for a.e.}\; x \in \mathcal{X}\}
\] 
This definition does not depend on the choice of the isometric Euclidean embedding of $\mathcal{Y}$ (see, for instance, \cite[Proposition~2.1]{Bousquet2017} and \cite[Section~3.1]{HELEIN2008417}; for intrinsic definitions of Sobolev spaces for Riemannian manifolds, see \cite[Section~3.2]{HELEIN2008417}). 

For convenience, we recall the definition of weak differentiability of mappings defined on bilipschitz images of open sets. 
\begin{defn}\label{def weak differ}
    Let $U\subset \mathbb{R}^{l}$ be open and  $\Phi: E \to U $ be bilipschitz. A map $u \in L^{1}_{\loc}(E, \mathbb{R}^{k}, \mathcal{H}^{l})$ is said to be weakly differentiable if the map $u\circ \Phi^{-1}$ is weakly differentiable.
\end{defn}
Since the weak differentiability is preserved under bilipschitz maps (see \cite[Theorem~2.2.2]{Ziemer}), the above definition is independent of the bilipschitz parametrization of $E$. 
Taking into account that $E$ is bilipschitz homeomorphic to an open set $U\subset \mathbb{R}^{l}$, through a bilipschitz homeomorphism $\Phi:E\to U$, $E$ admits the approximate tangent plane $T_{x}E$ for $\mathcal{H}^{l}$-a.e.\  $x \in E$ (for the definition of the approximate tangent plane, see, for example, \cite[Section~3.2.16]{Federer} and \cite[Definition~4.3]{Federer_1959}). Thus, we can define the notion of the tangential derivative of $\Phi$ for $\mathcal{H}^{l}$-a.e.\  $x\in E$, as the linear map from the tangent plane $T_{x}E$ to $\mathbb{R}^{l}$. If $u \in L^{1}_{\loc}(E, \mathbb{R}^{k}, \mathcal{H}^{l})$ is weakly differentiable, the weak (tangential) derivative of $u$ at $\mathcal{H}^{l}$-a.e.\  $x \in E$ is defined by $D_{\top}u(x)=D(u\circ \Phi^{-1})(\Phi(x))\circ D_{\top}\Phi(x)$, where $D_{\top}\Phi(x)$ is the tangential derivative of $\Phi$ at $x$ (which is defined for $\mathcal{H}^{l}$-a.e.\  $x \in E$).

We shall denote by $W^{1,1}_{\loc}(E, \mathbb{R}^{k})$ the space of all weakly differentiable mappings on $E$ with values in $\mathbb{R}^{k}$. If $p \in [1, +\infty)$ and $E$ is a bilipschitz image of an open set in $\mathbb{R}^{l}$, then we define 
\[
W^{1,p}(E, \mathbb{R}^{k})=\{u \in W^{1,1}_{\loc}(E, \mathbb{R}^{k}): |u|, |D_{\top}u| \in L^{p}(E, \mathcal{H}^{l})\}. 
\]
Next, we define Sobolev spaces of mappings on bilipschitz images of open sets into a compact Riemannian manifold. 

\begin{defn}\label{def bil Sob man}
    Let $p \in [1, +\infty)$ and $E$ be a bilipschitz image of an open set in $\mathbb{R}^{l}$ with $l \in \mathbb{N}\setminus\{0\}$. Let $\mathcal{Y}$ be a compact Riemannian manifold isometrically embedded into $\mathbb{R}^{k}$. Then we define the Sobolev space $W^{1,p}(E, \mathcal{Y})$ by
    \[
    W^{1,p}(E, \mathcal{Y})=\{u \in W^{1,p}(E, \mathbb{R}^{k}):  u(x) \in \mathcal{Y}\; \text{for a.e.}\; x \in E\}. 
    \]
\end{defn}

\subsection{Topological resolution of the boundary datum}
Since $\mathcal{N}$ is a connected Riemannian manifold, it is path-connected and its fundamental group, namely $\pi_{1}(\mathcal{N}, x_{0})$, where $x_{0}\in \mathcal{N}$, is, up to isomorphism, independent of the choice of a basepoint $x_{0}$ (see \cite[Proposition~1.5]{AT}) and denoted by $\pi_{1}(\mathcal{N})$. 
Moreover, there is a one-to-one correspondence between the set of conjugacy classes in $\pi_{1}(\mathcal{N})$ and the set $[\mathbb{S}^{1},\mathcal{N}]$ of free homotopy classes of maps $\mathbb{S}^{1}\to \mathcal{N}$, that is, homotopy class without any condition on some basepoints (see Exercise~6 in \cite[Section~1.1, p. 38]{AT}). 
In general, there is no group structure on $[\mathbb{S}^{1}, \mathcal{N}]$ (there exist groups $G$ with the functor $L:G\to LG$ sending $G$ to its set of conjugacy classes, which cannot be lifted to a group-valued functor; for instance, one can find an inclusion of groups $H\to G$ implying an inclusion of conjugacy classes $LH\to LG$ such that the order of $LH$ does not divide the order of $LG$). After all, hereinafter, when we talk about ``homotopy'', we are talking about \emph{free} homotopy (with no conditions on base points). We shall denote by $\#[\mathbb{S}^{1}, \mathcal{N}]$ the number of equivalence classes of $[\mathbb{S}^{1}, \mathcal{N}]$.

Next, for convenience, we recall the definition of a bounded Lipschitz domain. 
\begin{defn}\label{def Lip domain}
    A bounded domain $U\subset \mathbb{R}^{l}$, $l\geq 2$ and its boundary $\partial U$ are said to be (locally) Lipschitz if there exist a radius $r_{\partial U}$ and a constant $\delta>0$ such that for every point $x \in \partial U$ and every radius $s \in (0,r_{\partial U})$, up to a rotation of coordinates, it holds
    \[
    U\cap B^{l}_{s}(x) =\{(y^{\prime}, y_{l}) \in \mathbb{R}^{l-1}\times \mathbb{R}: y_{l}>\varphi(y^{\prime})\}\cap B^{l}_{s}(x)
    \]
    for some Lipschitz function $\varphi:\mathbb{R}^{l-1}\to \mathbb{R}$ satisfying $\|D \varphi\|_{L^{\infty}(\mathbb{R}^{l-1})}\leq \delta$.
\end{defn}

Given $k \in \mathbb{N}\backslash \{0\}$ and a bounded Lipschitz domain $U\subset \mathbb{R}^{2}$, we denote by $\mathrm{Conf}_{k}U$ the configuration space of $k$ ordered points of $U$, namely
\begin{equation}\label{configuration}
\mathrm{Conf}_{k}U=\{(a_{1},\dotsc,a_{k}) \in U^{k}: a_{i}\neq a_{j}\,\ \text{if}\,\ i\neq j\}.
\end{equation}
Given $(a_{1},\dotsc,a_{k})\in \mathrm{Conf}_{k}U$, we define the quantity
\begin{equation}\label{varrhobar}
\bar{\varrho}(a_{1},\dotsc,a_{k})=\min\Biggl\{\Biggl\{\frac{|a_{i}-a_{j}|}{2}: i,j \in \{1,\dotsc,k\} \; \text{and}\; i\neq j\Biggr\} \cup \Biggl\{\dist(a_{i}, \partial U): i\in \{1,\dotsc,k\}\Biggr\}\Biggr\},
\end{equation}
so that if $\varrho \in (0, \bar{\varrho}(a_{1},\dotsc,a_{k}))$, we have $\smash{\overline{B}}^{2}_{\varrho}(a_{i})\cap \smash{\overline{B}}^{2}_{\varrho}(a_{j})=\emptyset$ for each $i,j\in\{1,\dotsc,k\}$ such that $i\neq j$ and $\smash{\overline{B}}^{2}_{\varrho}(a_{i})\subset U$ for each $i \in \{1,\dotsc,k\}$, and hence the set $U\setminus \bigcup_{i=1}^{k}\smash{\overline{B}}^{2}_{\varrho}(a_{i})$ is connected and has a Lipschitz boundary $\partial(U\setminus \bigcup_{i=1}^{k}\smash{\overline{B}}^{2}_{\varrho}(a_{i}))=\partial U \cup \bigcup_{i=1}^{k}\partial B^{2}_{\varrho}(a_{i})$. 
\begin{defn}\label{top resol}
    We say that $(\gamma_{1},\dotsc, \gamma_{k})\in C(\mathbb{S}^{1}, \mathcal{N})^{k}$ is a \emph{topological resolution} of $g \in C(\partial U, \mathcal{N})$ whenever there exist $(a_{1},\dotsc, a_{k})\in \mathrm{Conf}_{k}U$, $\varrho \in (0,\bar{\varrho}(a_{1},\dotsc,a_{k}))$ and $u\in C(U\setminus \bigcup_{i=1}^{k} \smash{\overline{B}}^{2}_{\varrho}(a_{i}), \mathcal{N})$ such that $u|_{\partial U}=g$ and for each $i \in \{1,\dotsc,k\}$, $u(a_{i}+\varrho\cdot)|_{\mathbb{S}^{1}}=\gamma_{i}$.
\end{defn}

The definition of topological resolution is independent of the order of the curves in the $k$-tuple $(\gamma_{1},\dotsc,\gamma_{k})$. 
Furthermore, if $(b_{1},\dotsc,b_{k})\in \mathrm{Conf}_{k}U$ and $r\in (0, \bar{\varrho}(b_{1},\dotsc,b_{k}))$, then there exists a homeomorphism between $U\setminus \bigcup_{i=1}^{k} \smash{\overline{B}}^{2}_{\varrho}(a_{i})$ and $U\setminus \bigcup_{i=1}^{k}\smash{\overline{B}}^{2}_{r}(b_{i})$, which shows that the statement of the previous definition is independent of the choice of the points and of the radius. 
Similarly, if for each $i \in \{1,\dotsc,k\}$ the map $\widetilde{\gamma}_{i}$ is freely homotopic to $\gamma_{i}$ and if $\widetilde{g}$ is freely homotopic to $g$, then, by definition of homotopy and the fact that an annulus is homeomorphic to a finite cylinder, we have that $(\widetilde{\gamma}_{1},\dotsc,\widetilde{\gamma}_{k})$ is also a topological resolution of $\widetilde{g}$. 
Thus, the property of being a topological resolution is invariant under homotopies.

Definition~\ref{top resol} can be extended to the case when $g:\partial U\to \mathcal{N}$ is a map of vanishing mean oscillation, namely $g\in \mathrm{VMO}(\partial U, \mathcal{N})$ and $(\gamma_{1},\dotsc,\gamma_{k})\in \mathrm{VMO}(\mathbb{S}^{1}, \mathcal{N})^{k}$ (see \cite{BrezisNirenberg, BN1}). Indeed, if $\Gamma\simeq \mathbb{S}^{1}$ is a closed curve, path-connected components of the space $\mathrm{VMO}(\Gamma,\mathcal{N})$ are known to be the closure in $\mathrm{VMO}(\Gamma, \mathcal{N})$ of path-connected components of $C(\Gamma, \mathcal{N})$ (see \cite[Lemma~A.23]{BrezisNirenberg}). Since $W^{1/2,2}(\partial U,\mathcal{N})\subset \mathrm{VMO}(\partial U, \mathcal{N})$, Definition~\ref{top resol}  extends also  to the case when $g\in W^{1/2,2}(\partial U, \mathcal{N})$.
\begin{defn}\label{topresol vmo}
    We say that $(\gamma_{1},\dotsc,\gamma_{k})\in \mathrm{VMO}(\mathbb{S}^{1}, \mathcal{N})^{k}$ is a \emph{topological resolution} of a map $g \in \mathrm{VMO}(\partial U, \mathcal{N})$ whenever $(\gamma_{1},\dotsc,\gamma_{k})$ is homotopic in $\mathrm{VMO}(\mathbb{S}^{1}, \mathcal{N})^{k}$ to a \emph{topological resolution} $(\widetilde{\gamma}_{1},\dotsc,\widetilde{\gamma}_{k})$ of $\widetilde{g}\in C(\partial U, \mathcal{N})$ which is homotopic to $g$ in $\mathrm{VMO}(\partial U, \mathcal{N})$.
\end{defn}
Since Definition~\ref{top resol} is invariant under homotopies, and continuous maps are homotopic in VMO if and only if they are homotopic through a continuous homotopy, Definition~\ref{topresol vmo} generalizes Definition~\ref{top resol}.
\subsection{Singular \texorpdfstring{$p$}{p}-energy}
First, we define the minimal length in the homotopy class of a map $\gamma \in \text{VMO}(\mathbb{S}^{1}, \mathcal{N})$ by
\begin{equation}\label{minlengthhom}
\lambda(\gamma)=\inf\left\{\int_{\mathbb{S}^{1}}|\widetilde{\gamma}^{\prime}|\diff \mathcal{H}^{1} : \widetilde{\gamma} \in C^{1}(\mathbb{S}^{1}, \mathcal{N})\,\ \text{and}\,\ \gamma \,\ \text{are homotopic in} \,\ \text{VMO}(\mathbb{S}^{1}, \mathcal{N})\right\},
\end{equation}
where $|\cdot|$ is the norm induced by the Riemannian metric $g_{\mathcal{N}}$ of the manifold $\mathcal{N}$ on each fiber of the tangent bundle $T\mathcal{N}$, and since $\mathcal{N}$ is isometrically embedded into $\mathbb{R}^{\nu}$, $|\cdot|$ is the Euclidean norm. 

For each $p \in [1,+\infty)$, using the parametrization by arc length and H\"older's inequality if $p \in (1,+\infty)$, we have
\begin{equation*}
\inf\left\{\int_{\mathbb{S}^{1}}\frac{|\widetilde{\gamma}^{\prime}|^{p}}{p}\diff \mathcal{H}^{1} : \widetilde{\gamma}\in C^{1}(\mathbb{S}^{1}, \mathcal{N})\; \text{and}\; \gamma\; \text{are homotopic in}\; \mathrm{VMO}(\mathbb{S}^{1}, \mathcal{N})\right\}=\frac{\lambda(\gamma)^{p}}{(2\pi)^{p-1}p}.
\end{equation*}
In particular, if $\gamma$ is a minimizing closed geodesic, then 
\begin{equation*}
\lambda(\gamma)=\int_{\mathbb{S}^{1}}|\gamma^{\prime}|\diff \mathcal{H}^{1}=\left((2\pi)^{p-1}p\int_{\mathbb{S}^{1}}\frac{|\gamma^{\prime}|^{p}}{p}\diff \mathcal{H}^{1}\right)^{\frac{1}{p}}=2\pi\|\gamma^{\prime}\|_{\infty},
\end{equation*}
since $|\gamma^{\prime}|$ is constant for a minimizing geodesic. 

Recall that the \emph{systole} of the compact manifold $\mathcal{N}$ is the length of the shortest closed nontrivial geodesic on $\mathcal{N}$, namely,
\begin{equation}\label{systole}
\text{sys}(\mathcal{N})=\inf\{\lambda(\gamma): \gamma \in C^{1}(\mathbb{S}^{1}, \mathcal{N})\,\ \text{is not homotopic to a constant}\}.
\end{equation}
If $[\mathbb{S}^{1}, \mathcal{N}] \simeq \{0\}$, we set $\mathrm{sys}(\mathcal{N})=+\infty$. 
Notice that, if $[\mathbb{S}^{1}, \mathcal{N}]\not \simeq \{0\}$, then for each map $\gamma \in \text{VMO}(\mathbb{S}^{1}, \mathcal{N})$, it holds $\lambda(\gamma) \in \{0\}\cup [\text{sys}(\mathcal{N}), +\infty)$. 

We now define the singular $p$-energy of a map $g \in \mathrm{VMO}(\partial U, \mathcal{N})$, where $U\subset \mathbb{R}^{2}$ is a bounded Lipschitz domain. This notion was previously introduced for $p=2$ in \cite{Renormalmap} and generalized in \cite{VanSchaftingen_VanVaerenbergh} to $p\in [1,+\infty)$.
\begin{defn}\label{def p-singular}
    If $p \in [1,+\infty)$, $U\subset \mathbb{R}^{2}$ is a bounded Lipschitz domain and $g \in \mathrm{VMO}(\partial U, \mathcal{N})$, we define the \emph{singular $p$-energy} of $g$ by
    \begin{equation} \label{eq:pesg}
    \mathcal{E}^{\mathrm{sg}}_{p}(g)=\inf\left\{\sum_{i=1}^{k}\frac{\lambda(\gamma_{i})^{p}}{(2\pi)^{p-1} p}: k \in \mathbb{N}\setminus\{0\}\; \text{and}\; (\gamma_{1},\dotsc,\gamma_{k})\; \text{is a topological resolution of}\; g\right\}.
    \end{equation}
\end{defn}
Observe that $\mathcal{E}^{\mathrm{sg}}_{p}$ is invariant under homotopies and depends continuously on $p$. For each $g \in \mathrm{VMO}(\mathbb{S}^{1}, \mathbb{S}^{1})$, \(\mathcal{E}^{\mathrm{sg}}_{p}(g)= 2 \pi \abs{\deg (g)}^p/p\), where $\pi_{1}(\mathbb{S}^{1}) \simeq \mathbb{Z}$.
If $\mathcal{N}=\mathbb{S}^{2}$, then $\pi_{1}(\mathcal{N})$ is trivial and for each $g \in \mathrm{VMO}(\mathbb{S}^{1}, \mathbb{S}^{2})$, $\mathcal{E}^{\mathrm{sg}}_{p}(g)=0$. 
If $\mathcal{N}=\{n\otimes n -\frac{1}{3}\mathrm{Id}: n \in \mathbb{S}^{2}\}$ and $g(t)=v(t)\otimes v(t)-\frac{1}{3}\mathrm{Id}$, where $v(t)=(\cos(t/2), \sin(t/2), 0) \in \mathbb{S}^{2}$ for each $t \in [0,2\pi]$, then $\mathcal{N}$ is diffeomorphic to $\mathbb{RP}^{2}$, $g \in C^{\infty}(\mathbb{S}^{1}, \mathcal{N})$ and $\mathcal{E}^{\mathrm{sg}}_{p}(g)= 2^{\frac{2-p}{2}}\pi/p$ (see \cite[Lemma~19]{Canevari_2017}). The  reader may consult \cite[Section~9.3]{Renormalmap} for other interesting examples in the case where $p=2$.

\begin{lemma}\label{lem cont singenergy}
    Let $U\subset \mathbb{R}^{2}$ be a bounded Lipschitz domain. Then for each $g\in \mathrm{VMO}(\partial U, \mathcal{N})$,
    	 the function $p \in [1,+\infty) \mapsto \mathcal{E}^{\mathrm{sg}}_{p}(g)$ is locally bounded and locally Lipschitz continuous. In particular, if $\mathcal{N}$ is simply connected, then $\mathcal{E}^{\mathrm{sg}}_{p}(g)=0$ for each $p\in [1,+\infty)$. Furthermore, the infimum in \eqref{eq:pesg} is actually a minimum and the set $\{\mathcal{E}^{\mathrm{sg}}_{p}(\gamma): \gamma \in [\mathbb{S}^{1}, \mathcal{N}]\}$ is finite. 
\end{lemma}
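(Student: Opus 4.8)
The plan is to first establish the finiteness of the set $\{\mathcal{E}^{\mathrm{sg}}_{p}(\gamma) : \gamma \in [\mathbb{S}^{1}, \mathcal{N}]\}$, since everything else will follow from a good understanding of the structure of $\mathcal{E}^{\mathrm{sg}}_{p}$. The key point is that $\lambda(\gamma) \in \{0\} \cup [\mathrm{sys}(\mathcal{N}), +\infty)$, as noted just before the lemma, and more precisely $\lambda$ takes only finitely many values on $[\mathbb{S}^{1},\mathcal{N}]$ when $\pi_1(\mathcal{N})$ is finite: indeed $\lambda$ depends only on the free homotopy class, and $[\mathbb{S}^{1},\mathcal{N}]$ is in bijection with the (finite) set of conjugacy classes of $\pi_1(\mathcal{N})$. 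Call these values $0 = \ell_0 < \ell_1 \le \dots \le \ell_N$. Then in the infimum \eqref{eq:pesg} defining $\mathcal{E}^{\mathrm{sg}}_{p}(g)$, each term $\lambda(\gamma_i)^p/((2\pi)^{p-1}p)$ is one of finitely many numbers, and one may discard any $\gamma_i$ that is homotopically trivial (removing it from a topological resolution — fill in the corresponding small disk by a continuous extension, which exists since the restriction is null-homotopic in VMO, hence in $C$ after homotopy). Hence only resolutions with at most $k \le k_0(g)$ nontrivial curves need be considered, where $k_0(g)$ can be bounded using that a topological resolution exists (e.g. $g \in W^{1/2,2} \subset \mathrm{VMO}$ always admits one) and that adding trivial curves never helps. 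Consequently $\mathcal{E}^{\mathrm{sg}}_{p}(g)$ is the minimum of a finite set of numbers of the form $\sum_{i} \ell_{j_i}^p/((2\pi)^{p-1}p)$, which proves both that the infimum is attained and, since there are finitely many homotopy classes $g \in [\mathbb{S}^1,\mathcal{N}]$ contributing and finitely many admissible combinations, that the set $\{\mathcal{E}^{\mathrm{sg}}_{p}(\gamma)\}$ is finite.

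Next I would prove local boundedness and local Lipschitz continuity in $p$. Fix $g$ and a compact interval $[a,b] \subset [1,+\infty)$. By the previous step, for each $p$ there is an optimal resolution with curves drawn from the finite list, and the total number of curves is bounded independently of $p \in [a,b]$ (one can take a single resolution achieving the minimum at, say, $p = a$, and use it as a competitor for all $p$, or argue that the relevant index set $I$ of combinations is finite). Therefore on $[a,b]$,
\begin{equation*}
\mathcal{E}^{\mathrm{sg}}_{p}(g) = \min_{(j_1,\dots,j_k) \in I_g} \; \sum_{i=1}^{k} \frac{\ell_{j_i}^{p}}{(2\pi)^{p-1}\,p},
\end{equation*}
a minimum over a finite index set $I_g$ of functions $p \mapsto \ell^p/((2\pi)^{p-1}p)$, each of which is smooth (in fact real-analytic) on $[1,+\infty)$ and hence Lipschitz on $[a,b]$ with a constant depending only on $a,b$ and the finite list $\ell_0,\dots,\ell_N$. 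Since the minimum of finitely many Lipschitz functions is Lipschitz with the same constant, and a bound on each function gives a bound on the minimum, both local boundedness and the local Lipschitz estimate follow, with constants independent of $g$ once the $\ell_j$ are fixed (the dependence on $g$ enters only through which subfamily $I_g \subset$ (all combinations) is relevant, but a uniform bound over all of $[\mathbb{S}^1,\mathcal{N}]$ still holds since there are finitely many such subfamilies).

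For the simply connected case, if $\mathcal{N}$ is simply connected then $\pi_1(\mathcal{N})$ is trivial, so $[\mathbb{S}^1,\mathcal{N}] \simeq \{0\}$, every $\gamma \in \mathrm{VMO}(\mathbb{S}^1,\mathcal{N})$ is homotopically trivial, hence $\lambda(\gamma) = 0$; a topological resolution of $g$ by the constant curve always exists (extend $g$ continuously — or in VMO — over the disk, which is possible precisely because every map $\partial U \to \mathcal{N}$ is, after homotopy, the boundary value of a continuous map, $\mathcal{N}$ being simply connected and $U$ a disk topologically), giving $\mathcal{E}^{\mathrm{sg}}_{p}(g) = 0$ for all $p$. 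The main obstacle I anticipate is the bookkeeping in the first step: making rigorous that one may always reduce to resolutions with a uniformly bounded number of nontrivial curves, i.e. that deleting a null-homotopic curve from a topological resolution yields again a topological resolution (this uses that a VMO map of a circle that is null-homotopic extends over the disk, via the density/closure description of VMO homotopy classes recalled in the excerpt, plus a gluing argument on annuli) — once that is in place the rest is elementary finite-set and one-variable calculus.
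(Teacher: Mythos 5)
Your proposal is essentially correct, but it takes a genuinely different route from the paper's proof. The paper handles the simply connected case directly and, otherwise, cites two external results: \cite[Proposition~3.2]{Monteil-Rodiac-VanSchaftingen} for the discreteness of the length spectrum (and hence the positivity of the systole), and \cite[Lemma~2.10]{VanSchaftingen_VanVaerenbergh} for the local boundedness and local Lipschitz continuity of $p\mapsto\mathcal{E}^{\mathrm{sg}}_p(g)$; the attainment of the infimum and the finiteness of $\{\mathcal{E}^{\mathrm{sg}}_p(\gamma):\gamma\in[\mathbb{S}^1,\mathcal{N}]\}$ are then deduced, as you also do, from the discreteness of the length spectrum and from $\#[\mathbb{S}^1,\mathcal{N}]<\infty$. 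You instead exploit the finiteness of $\pi_1(\mathcal{N})$ more aggressively from the start: since $\lambda$ factors through the finite set $[\mathbb{S}^1,\mathcal{N}]$, the length spectrum is not merely discrete but \emph{finite}, which lets you represent $\mathcal{E}^{\mathrm{sg}}_p(g)$, locally in $p$, as a minimum over a fixed finite index set of real-analytic functions $p\mapsto\ell^p/((2\pi)^{p-1}p)$, from which local boundedness, local Lipschitz continuity, and attainment of the infimum follow at once. This is more self-contained and avoids the two citations, at the cost of being specialized to the finite-$\pi_1$ setting of this paper (the cited lemma of Van Schaftingen and Van Vaerenbergh is designed to also cover infinite fundamental groups with discrete length spectrum, e.g.\ $\mathcal{N}=\mathbb{S}^1$).

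One step in your sketch should be made explicit: the uniform bound $k\le k_0$ on the number of nontrivial curves in resolutions relevant to the infimum on $[a,b]$ does not come merely from the observation that trivial curves can be deleted; it comes from pairing the lower bound $\lambda(\gamma_i)\ge\mathrm{sys}(\mathcal{N})$ --- so that each nontrivial curve contributes at least $c_0:=\min_{p\in[a,b]}\mathrm{sys}(\mathcal{N})^p/((2\pi)^{p-1}p)>0$ to the sum --- with the upper bound $E_0$ supplied on $[a,b]$ by a single fixed competitor resolution of $g$, giving $k\le E_0/c_0$. The resulting $k_0$ thus depends on $[a,b]$ as well as on the homotopy class of $g$. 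With that filled in, your index set $I_g$ is finite and independent of $p\in[a,b]$, and the rest of your argument is valid.
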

\begin{proof}
    If $\mathcal{N}$ is simply connected, then each $\gamma \in \mathrm{VMO}(\partial U, \mathcal{N})$ is homotopic to a constant map, which yields that $\lambda(\gamma)=0$ and hence $\mathcal{E}^{\mathrm{sg}}_{p}(g)=0$. 
    
    Otherwise, $[\mathbb{S}^{1}, \mathcal{N}]\neq \{0\}$ and then $\mathrm{sys}(\mathcal{N})\in (0,+\infty)$. This comes, since $\mathcal{N}$ is compact, and hence the length spectrum (i.e., the set of nonnegative real numbers $\{\lambda(\gamma): \gamma \in \mathrm{VMO}(\mathbb{S}^{1}, \mathcal{N})\}$) is discrete (for a proof, see \cite[Proposition~3.2]{Monteil-Rodiac-VanSchaftingen}). Then, according to \cite[Lemma~2.10]{VanSchaftingen_VanVaerenbergh}, $p \mapsto \mathcal{E}^{\mathrm{sg}}_{p}(g)$ is locally bounded and locally Lipschitz continuous. The fact that the infimum in \eqref{eq:pesg} is a minimum comes from the discreteness of the length spectrum. Finally, since $\mathcal{E}^{\mathrm{sg}}_{p}$ is invariant under homotopies and $\# [\mathbb{S}^{1}, \mathcal{N}]<+\infty$ by our assumption on $\mathcal{N}$, the set $\{\mathcal{E}^{\textup{sg}}_{p}(\gamma): \gamma \in [\mathbb{S}^{1}, \mathcal{N}]\}$ is finite. This completes our proof of Lemma~\ref{lem cont singenergy}. 
  \end{proof}

It is also worth recalling the notion of a \emph{minimal topological resolution} (when \(p = 2\), see \cite[Definition~2.7]{Renormalmap}).

\begin{defn}\label{mintopresol}
    A topological resolution $(\gamma_{1},\dotsc,\gamma_{k})$ of $g$ is said to be \(p\)-minimal if 
    \[
    \mathcal{E}^{\mathrm{sg}}_{p}(g)=\sum_{i=1}^{k}\frac{\lambda(\gamma_{i})^{p}}{(2\pi)^{p-1} p}
    \]
    and $\lambda(\gamma_{i})>0$ for each $i\in \{1,\dotsc, k\}$.
\end{defn}
\subsection{Renormalized energy of renormalizable maps}
We recall the definition of a renormalizable map (see \cite[Definition~7.1]{Renormalmap}).
\begin{defn}\label{renormalisablemap}
Given an open set $U\subset \mathbb{R}^{2}$, a map $u:U\to \mathcal{N}$ is said to be \emph{renormalizable} whenever there exists a finite set $\mathcal{S}\subset U$ such that for each sufficiently small radius $\varrho>0$, $u\in W^{1,2}(U\setminus\bigcup_{a\in \mathcal{S}}\smash{\overline{B}}^{2}_{\varrho}(a), \mathcal{N})$ and its \emph{renormalized} energy $\mathcal{E}^{\mathrm{ren}}_{2}(u)$ is finite, where
\begin{equation}
\label{eq_te0xaVu2pie1oe1ieziwaira}
\mathcal{E}^{\mathrm{ren}}_{2}(u)=\liminf_{\varrho\to 0+}\int_{U\setminus\bigcup_{a\in \mathcal{S}}\smash{\overline{B}}^{2}_{\varrho}(a)}\frac{|Du|^{2}}{2}\diff x-\sum_{a\in \mathcal{S}}\frac{\lambda(\operatorname{tr}_{\partial B^{2}_{\varrho}(a)}(u))^{2}}{4\pi}\ln\left(\frac{1}{\varrho}\right). 
\end{equation}
We denote by $W^{1,2}_{\mathrm{ren}}(U, \mathcal{N})$ the set of renormalizable maps.
\end{defn}

For each renormalizable map $u:U\to\mathcal{N}$, there exists the \emph{minimal} set $\mathcal{S}_{0}$ for which it holds $u\in W^{1,2}(U\setminus \bigcup_{a \in \mathcal{S}_{0}}\smash{\overline{B}}^{2}_{\varrho}(a), \mathcal{N})$ for each small enough \(\varrho > 0\).
If $a \in U\setminus \mathcal{S}_{0}$, then $\lambda(\operatorname{tr}_{\partial B^{2}_{\varrho}(a)}(u))=0$ for $\varrho>0$ small enough. 
Thus, the $\liminf$ defining $\mathcal{E}^{\mathrm{ren}}_{2}$ in \eqref{eq_te0xaVu2pie1oe1ieziwaira} does not depend on the set $\mathcal{S}$. If $\mathcal{S}_{0}=\emptyset$, then $u \in W^{1,2}(U, \mathcal{N})$ and $\mathcal{E}^{\mathrm{ren}}_{2}(u)=\int_{U}\frac{|Du|^{2}}{2}\diff x$. If $\mathcal{S}_{0}=\{a_{1},\dotsc,a_{k}\}$ with $k \in \mathbb{N}\setminus\{0\}$ and $(a_{1},\dotsc,a_{k}) \in \mathrm{Conf}_{k}(U)$ (see \eqref{configuration}), then, according to \cite[Lemma~2.11]{Renormalmap}, the map
 \[
\varrho\in (0,\bar{\varrho}(a_{1},\dotsc,a_{k})) \mapsto \int_{U\setminus \bigcup_{i=1}^{k}\smash{\overline{B}}^{2}_{\varrho}(a_{i})}\frac{|Du|^{2}}{2}\diff x-\sum_{i=1}^{k}\frac{\lambda(\operatorname{tr}_{\partial B^{2}_{\varrho}(a_{i})}(u))^{2}}{4\pi}\ln\left(\frac{1}{\varrho}\right)
\]
is nonincreasing, where $\bar{\varrho}(a_{1},\dotsc,a_{k}) \in (0,+\infty)$ is defined in \eqref{varrhobar}. The main ingredients of the proof of \cite[Lemma~2.11]{Renormalmap} are the use of the additivity of the integral and the estimate of the  energy on the annulus. The latter comes from the application of the special case of the coarea formula and the comparison of the angular energy taking into account the definition of the minimal length in the homotopy class of a map $\gamma \in \mathrm{VMO}(\mathbb{S}^{1}, \mathcal{N})$ (see \eqref{minlengthhom}).

Let $U \subset \mathbb{R}^{2}$ be a bounded Lipschitz domain, $u \in W^{1,2}_{\mathrm{ren}}(U, \mathcal{N})$ and $S_{0}=\{a_{1},\dotsc,a_{k}\}$ for some $k \in \mathbb{N}\setminus\{0\}$, where $(a_{1},\dotsc,a_{k})\in \mathrm{Conf}_{k} U$. By \cite[Theorem~5.1]{Renormalmap} (see also \cite[Corollary~5.13]{VanSchaftingen_VanVaerenbergh}), $Du \in L^{2,\infty}(U, \mathbb{R}^{\nu}\otimes \mathbb{R}^{2}) \subset L^{p}(U,\mathbb{R}^{\nu}\otimes \mathbb{R}^{2})$ for each $p \in [1,2)$ (see, for instance, \cite[Theorem~5.9]{Castillo_Rafeiro_2016}), where $L^{2,\infty}(U,\mathbb{R}^{\nu}\otimes \mathbb{R}^{2})$ denotes the Marcinkiewicz space or endpoint Lorentz space. In addition, we can define the $p$-renormalized energy of $u$ by
\begin{equation}\label{est ren p}
\mathcal{E}^{\mathrm{ren}}_{p}(u)=
\int_{U}\frac{|Du|^{p}}{p}\diff x - \sum_{i=1}^{k}\frac{\lambda(\operatorname{tr}_{\partial B^{2}_{\varrho}(a_{i})}(u))^{p}}{(2-p)(2\pi)^{p-1}p},
\end{equation}
which is finite and tends to $\mathcal{E}^{\mathrm{ren}}_{2}(u)$ as $p\nearrow 2$.
\begin{prop}\label{prop cont renorm}
    Let $U\subset \mathbb{R}^{2}$ be a bounded Lipschitz domain. If $u\in W^{1,2}_{\mathrm{ren}}(U,\mathcal{N})$, then
    \begin{equation}\label{con ren en}
    \mathcal{E}^{\mathrm{ren}}_{p}(u)\to \mathcal{E}^{\mathrm{ren}}_{2}(u)
    \end{equation}
as $p\nearrow 2$.
\end{prop}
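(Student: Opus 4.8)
The plan is to prove Proposition~\ref{prop cont renorm} by reducing the convergence of the $p$-renormalized energies to two ingredients: the continuity in $p$ of the singular-energy term $\sum_{i} \lambda(\operatorname{tr}_{\partial B^2_\varrho(a_i)}(u))^p/((2-p)(2\pi)^{p-1}p)$ together with the $(2-p)$-weighted control of $\int_U |Du|^p/p$, and then taking $p \nearrow 2$ in the explicit formula \eqref{est ren p} for $\mathcal{E}^{\mathrm{ren}}_{p}(u)$.

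First I would fix a renormalizable map $u \in W^{1,2}_{\mathrm{ren}}(U,\mathcal N)$ with minimal singular set $S_0 = \{a_1, \dotsc, a_k\}$ (the case $S_0 = \emptyset$ being immediate from the remark that $\mathcal{E}^{\mathrm{ren}}_{p}(u) = \int_U |Du|^p/p \to \int_U |Du|^2/2 = \mathcal{E}^{\mathrm{ren}}_{2}(u)$ by dominated convergence, since $Du \in L^2$). Choose a radius $\varrho \in (0, \bar\varrho(a_1,\dotsc,a_k))$. From $Du \in L^{2,\infty}(U)$ (by \cite[Theorem~5.1]{Renormalmap}) one has, for $p \in [1,2)$, the quantitative bound $\int_U |Du|^p \diff x \lesssim \|Du\|_{L^{2,\infty}}^p/(2-p)$ times a constant depending only on $|U|$; hence $(2-p)\int_U |Du|^p/p \diff x$ stays bounded and in fact $\int_U |Du|^p/p \diff x - \int_{U \setminus \bigcup_i \overline B^2_\varrho(a_i)} |Du|^p/p \diff x$ is exactly the energy on the union of punctured disks $\bigcup_i (B^2_\varrho(a_i) \setminus \{a_i\})$, whose divergent part as $p \nearrow 2$ must be matched by the subtracted term. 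On $U \setminus \bigcup_i \overline B^2_\varrho(a_i)$ the map $u$ is in $W^{1,2}$, so $\int_{U\setminus \bigcup_i \overline B^2_\varrho(a_i)} |Du|^p/p \diff x \to \int_{U\setminus \bigcup_i \overline B^2_\varrho(a_i)} |Du|^2/2 \diff x$ by dominated convergence.

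Next I would handle the singular term. Writing $\lambda_i := \lambda(\operatorname{tr}_{\partial B^2_\varrho(a_i)}(u))$ — which is a fixed nonnegative number for our fixed $\varrho$, lying in $\{0\} \cup [\mathrm{sys}(\mathcal N), +\infty)$ — the map $p \mapsto \lambda_i^p/((2-p)(2\pi)^{p-1}p)$ diverges like $\lambda_i^2/((2-p)\,2\pi\cdot 2)$ as $p \nearrow 2$ when $\lambda_i > 0$. The key computation, which mirrors the proof of \cite[Lemma~2.11]{Renormalmap}, is to compare the exact energy $\int_{B^2_\varrho(a_i)\setminus \overline B^2_r(a_i)} |Du|^p/p \diff x$ on an annulus with the subtracted quantity via the coarea formula and the Hölder/arclength lower bound $\int_{\partial B^2_s(a_i)} |Du|^p \diff\mathcal H^1 \geq \lambda_i^p/((2\pi s)^{p-1})$, integrated in $s$ from $0$ to $\varrho$; this produces precisely $\lambda_i^p/((2-p)(2\pi)^{p-1}p)\cdot(\varrho^{2-p}-r^{2-p})$-type terms, so that upon subtraction and letting $r \to 0$ the difference $\int_{B^2_\varrho(a_i)\setminus\{a_i\}} |Du|^p/p \diff x - \lambda_i^p/((2-p)(2\pi)^{p-1}p)\cdot \varrho^{2-p}$ converges as $p \nearrow 2$ to the corresponding $p=2$ object $\int_{B^2_\varrho(a_i)\setminus\{a_i\}} |Du|^2/2\diff x - (\lambda_i^2/4\pi)\ln(1/\varrho)$, after noting $\varrho^{2-p} \to 1$ and the discrepancy between $\varrho^{2-p}$ and the logarithm in the $p=2$ definition is absorbed in the limit (here one uses that $(\varrho^{2-p}-1)/(2-p) \to \ln(1/\varrho^{-1}) = \ln\varrho$... more carefully, $\lambda_i^p\varrho^{2-p}/((2-p)(2\pi)^{p-1}p) - \lambda_i^2\ln(1/\varrho)/(4\pi) = \lambda_i^2(\varrho^{2-p}-1)/((2-p)4\pi) + o(1) \to -\lambda_i^2\ln(1/\varrho)/(4\pi) + \lambda_i^2\ln(1/\varrho)/... $ — the bookkeeping needs care but is elementary). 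Summing over $i$ and combining with the $W^{1,2}$ convergence away from the punctures gives $\mathcal{E}^{\mathrm{ren}}_{p}(u) \to \mathcal{E}^{\mathrm{ren}}_{2}(u)$.

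The main obstacle I anticipate is the delicate matching of the two divergent quantities: the $p$-energy on the small punctured disks blows up like $1/(2-p)$ while the subtracted term blows up the same way, and one must show the \emph{difference} is not only bounded but converges to the correct $p=2$ renormalized quantity — this requires the coarea-plus-Hölder annulus estimate to be sharp enough (which it is, because equality in Hölder's inequality is asymptotically achieved by the renormalizable structure near the singularities), and it requires a uniform-in-$p$ integrability bound near each $a_i$ coming from $Du \in L^{2,\infty}$ so that dominated/monotone convergence applies. Everything else — the $S_0 = \emptyset$ case, the convergence on compact subsets away from $S_0$, the independence of the limit from the choice of $\varrho$ (which follows from the monotonicity statement already recalled after Definition~\ref{renormalisablemap}) — is routine.
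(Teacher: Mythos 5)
The paper's own proof is a bare citation to an external reference, so I will assess your argument on its own terms. Your framework---split $\mathcal{E}^{\mathrm{ren}}_p(u)$ using \eqref{est ren p} into the part on $U\setminus\bigcup_i\overline{B}^2_\varrho(a_i)$, handled by dominated convergence since $u\in W^{1,2}$ there, plus disk contributions against the subtracted singular term, and use the coarea formula and H\"older on annuli to cancel the leading $(2-p)^{-1}$ divergence---is reasonable and does yield the lower bound $\liminf_{p\nearrow 2}\mathcal{E}^{\mathrm{ren}}_p(u)\ge\mathcal{E}^{\mathrm{ren}}_2(u)$ once the algebra is sorted out. But the quantity you claim as the $p\nearrow 2$ limit of the local excess, $\int_{B^2_\varrho(a_i)\setminus\{a_i\}}\frac{|Du|^2}{2}\diff x - \frac{\lambda_i^2}{4\pi}\ln\frac{1}{\varrho}$, is identically $+\infty$ at every point of the minimal singular set (for such $a_i$, $\lambda_i>0$ and $Du\notin L^2(B^2_\varrho(a_i))$). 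The finite target you actually need is the $2$-excess $E^2_i(\varrho):=\lim_{r\to 0}\bigl[\int_{B^2_\varrho(a_i)\setminus\overline{B}^2_r(a_i)}\frac{|Du|^2}{2}\diff x-\frac{\lambda_i^2}{4\pi}\ln\frac{\varrho}{r}\bigr]$, which is finite precisely because $u$ is renormalizable. This is not the ``elementary bookkeeping'' you flag; you have written a divergent expression for a finite limit, and the slip hides where the difficulty actually sits.

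The real gap is the upper bound. Define the $p$-excess $E^p_i(\delta):=\int_{B^2_\delta(a_i)}\frac{|Du|^p}{p}\diff x-\frac{\lambda_i^p\,\delta^{2-p}}{(2-p)(2\pi)^{p-1}p}$. Coarea plus H\"older gives $E^p_i(\delta)\ge 0$; on a fixed annulus $B^2_\varrho(a_i)\setminus\overline{B}^2_\delta(a_i)$ dominated convergence gives $E^p_i(\varrho)-E^p_i(\delta)\to E^2_i(\varrho)-E^2_i(\delta)$ as $p\nearrow 2$; and these two facts yield $\liminf_{p\nearrow 2}E^p_i(\varrho)\ge E^2_i(\varrho)$ after letting $\delta\to 0$. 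But the matching upper bound $\limsup_{p\nearrow 2}E^p_i(\varrho)\le E^2_i(\varrho)$---without which $\mathcal{E}^{\mathrm{ren}}_p(u)$ could overshoot $\mathcal{E}^{\mathrm{ren}}_2(u)$---amounts to showing $\limsup_{p\nearrow 2}E^p_i(\delta)\to 0$ as $\delta\to 0$, that is, smallness of the $p$-excess on a small disk \emph{uniformly in} $p$. Your justification, that ``equality in H\"older's inequality is asymptotically achieved by the renormalizable structure near the singularities,'' is an assertion, not an argument: renormalizability controls the total $p=2$ excess but says nothing about angular near-constancy of $|Du|$ on circles near $a_i$ (which is what near-equality in H\"older would require), and the $L^{2,\infty}$ bound, although it gives $\int_{B^2_\delta(a_i)}|Du|^p$ the right order $\delta^{2-p}/(2-p)$, does not supply the sharp constant $\lambda_i^p/(2\pi)^{p-1}$ that would make the excess vanish. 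Supplying this uniform-in-$p$ control near the singularities is the substantial step that the cited reference has to carry out, and your sketch does not address it.
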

\begin{proof}
    For a proof, see \cite[Proposition~2.17]{VanSchaftingen_VanVaerenbergh}.
\end{proof}

\begin{lemma}\label{lem renenergy ext}
Let $\gamma \in C^{1}(\mathbb{S}^{1}, \mathcal{N})$ be a minimizing geodesic in its (free) homotopy class. Then there exists a renormalizable map $v\in W^{1,2}_{\mathrm{ren}}(B^{2}_{1}, \mathcal{N})$  satisfying the following conditions. 
\begin{enumerate}[label=(\roman*)]
\item $\tr_{\mathbb{S}^{1}}(v)=\gamma$. If $\gamma$ is a constant map, then $v$ is the constant map.
\item
\label{it_ahlaequaih1seo3xu2ooRi3f}
For each $p \in [1,2)$, the following estimate holds
\[
 \int_{B^{2}_{1}}\frac{|Dv|^{p}}{p}\diff x=\mathcal{E}^{\mathrm{ren}}_{p}(v) + \frac{\mathcal{E}^{\mathrm{sg}}_{p}(\gamma)}{2-p}.
\]
\end{enumerate}
\end{lemma}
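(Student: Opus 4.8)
The plan is to build $v$ explicitly on the unit disk $B^2_1$ by interpolating between $\gamma$ on $\partial B^2_1$ and a single point $\gamma(1)$ at the center, in such a way that near the origin $v$ looks like the canonical singular profile $x \mapsto \gamma(x/\abs{x})$ (after homotoping $\gamma$ to a minimizing geodesic, which by hypothesis it already is). Concretely, fix a small $\varrho_0 \in (0,1)$ and define $v$ on the annulus $B^2_1 \setminus \smash{\overline{B}}^2_{\varrho_0}$ by a smooth homotopy of $\gamma$ to itself reparametrized by arc length (which costs only finite, $\varrho_0$-controlled energy, uniformly in $p$ since $p < 2$), and on the punctured disk $B^2_{\varrho_0} \setminus \set{0}$ set $v(x) = \gamma(x/\abs{x})$. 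Since $\gamma$ is a minimizing geodesic, $\abs{\gamma'}$ is constant equal to $\lambda(\gamma)/(2\pi)$, so on $B^2_{\varrho_0} \setminus \smash{\overline{B}}^2_{\varrho}(0)$ one computes directly in polar coordinates
\[
\int_{B^2_{\varrho_0}\setminus\smash{\overline{B}}^2_{\varrho}}\frac{\abs{Dv}^p}{p}\diff x = \frac{1}{p}\int_{\varrho}^{\varrho_0}\int_{\mathbb{S}^1} \frac{\abs{\gamma'(\theta)}^p}{r^p}\, r \diff\theta \diff r = \frac{\lambda(\gamma)^p}{(2\pi)^{p-1}p}\int_{\varrho}^{\varrho_0} r^{1-p}\diff r,
\]
and since $p<2$ this integral converges as $\varrho \to 0$, giving $v \in W^{1,p}(B^2_1,\mathcal{N})$ for every $p \in [1,2)$; moreover the $r^{1-p}$ integral equals $\frac{\varrho_0^{2-p}-\varrho^{2-p}}{2-p}$, which is exactly how the $\frac{1}{2-p}$ factor multiplying $\mathcal{E}^{\mathrm{sg}}_p(\gamma) = \lambda(\gamma)^p/((2\pi)^{p-1}p)$ (here the topological resolution is the single curve $\gamma$, which is $p$-minimal because $\gamma$ is a minimizing geodesic in its class, hence of minimal length) will be produced.

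Next I would check renormalizability and identify $\mathcal{E}^{\mathrm{ren}}_p(v)$. Taking $\mathcal{S} = \set{0}$, the map $v$ belongs to $W^{1,2}$ on $B^2_1 \setminus \smash{\overline{B}}^2_{\varrho}(0)$ for every small $\varrho > 0$ (the annular part has smooth finite $W^{1,2}$ energy, and the punctured-disk part contributes $\frac{\lambda(\gamma)^2}{4\pi}\ln(\varrho_0/\varrho)$, finite for $\varrho > 0$). Since $\operatorname{tr}_{\partial B^2_\varrho(0)}(v) = \gamma(\cdot)$ up to reparametrization for $\varrho < \varrho_0$, we have $\lambda(\operatorname{tr}_{\partial B^2_\varrho(0)}(v)) = \lambda(\gamma)$, so the subtraction in \eqref{eq_te0xaVu2pie1oe1ieziwaira} cancels the logarithmic divergence and $\mathcal{E}^{\mathrm{ren}}_2(v)$ is finite; thus $v \in W^{1,2}_{\mathrm{ren}}(B^2_1,\mathcal{N})$. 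Now apply formula \eqref{est ren p}: with $k=1$, $a_1 = 0$,
\[
\mathcal{E}^{\mathrm{ren}}_p(v) = \int_{B^2_1}\frac{\abs{Dv}^p}{p}\diff x - \frac{\lambda(\operatorname{tr}_{\partial B^2_\varrho(0)}(v))^p}{(2-p)(2\pi)^{p-1}p} = \int_{B^2_1}\frac{\abs{Dv}^p}{p}\diff x - \frac{\mathcal{E}^{\mathrm{sg}}_p(\gamma)}{2-p},
\]
where I use that $\mathcal{E}^{\mathrm{sg}}_p(\gamma) = \lambda(\gamma)^p/((2\pi)^{p-1}p)$ since the one-term resolution $(\gamma)$ is $p$-minimal. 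Rearranging gives exactly the identity in \ref{it_ahlaequaih1seo3xu2ooRi3f}. The trace condition $\operatorname{tr}_{\mathbb{S}^1}(v) = \gamma$ holds by construction, and if $\gamma$ is constant then $\lambda(\gamma) = 0$, the profile $\gamma(x/\abs{x})$ is that same constant, and we may take $v$ itself constant, so (i) holds.

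The main obstacle is constructing the transition region $B^2_1 \setminus \smash{\overline{B}}^2_{\varrho_0}$ cleanly: one needs a homotopy $H : [\varrho_0,1]\times\mathbb{S}^1 \to \mathcal{N}$ with $H(1,\cdot)$ the given parametrization of $\gamma$ and $H(\varrho_0,\cdot)$ the arc-length (constant-speed) parametrization — these are freely homotopic since they differ by a reparametrization of $\mathbb{S}^1$ — chosen smooth and with energy bounded independently of $p \in [1,2)$ (immediate, as the integrand is continuous on a compact set and $r^{1-p} \le r^{-1}$ is integrable away from $0$). A secondary point to verify carefully is that $x \mapsto \gamma(x/\abs{x})$ is genuinely in $W^{1,p}$ across the origin — i.e. that the pointwise-defined derivative is the weak derivative — which follows from the standard capacity argument: a single point has zero $W^{1,p}$-capacity for $p < 2$, so no distributional mass is concentrated there. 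Everything else is a direct computation using the constancy of $\abs{\gamma'}$ for a minimizing geodesic and the already-established monotonicity/finiteness facts recorded after Definition \ref{renormalisablemap}.
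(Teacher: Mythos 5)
Your construction of $v$ (homogeneous profile $\gamma(x/|x|)$ near the origin, smooth interpolation on an outer annulus) is fine, and the computation that
\[
\int_{B^{2}_{1}}\frac{|Dv|^{p}}{p}\diff x
=\mathcal{E}^{\mathrm{ren}}_{p}(v)+\frac{\lambda(\gamma)^{p}}{(2-p)(2\pi)^{p-1}p}
\]
is correct: it is nothing but the definition \eqref{est ren p} applied to a map with a single singular point whose trace is $\gamma$. The gap is in the last step, where you replace $\lambda(\gamma)^{p}/((2\pi)^{p-1}p)$ by $\mathcal{E}^{\mathrm{sg}}_{p}(\gamma)$, justified by ``the topological resolution is the single curve $\gamma$, which is $p$-minimal because $\gamma$ is a minimizing geodesic in its class, hence of minimal length.'' That inference does not hold. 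Being a minimizing geodesic in its free homotopy class says only that $\lambda(\gamma)$ is minimal among lengths of loops in $[\gamma]$; it says nothing about whether the one-loop resolution $(\gamma)$ minimizes $\sum_i\lambda(\gamma_i)^p/((2\pi)^{p-1}p)$ over \emph{all} topological resolutions of $\gamma$, which is what $\mathcal{E}^{\mathrm{sg}}_p$ is the infimum of. By Definition~\ref{def p-singular} one only has the inequality $\mathcal{E}^{\mathrm{sg}}_{p}(\gamma)\le\lambda(\gamma)^{p}/((2\pi)^{p-1}p)$, and it can be strict: for instance if $\pi_1(\mathcal{N})\simeq\mathbb{Z}/4\mathbb{Z}$ with generator $a$ and $[\gamma]=a^2$, the two-loop resolution $(a,a)$ costs $2\lambda(a)^p$ versus $\lambda(\gamma)^p$ for the one-loop resolution; as soon as $\lambda(\gamma)>2^{1/p}\lambda(a)$ (perfectly compatible with $\gamma$ being a minimizing geodesic in $[\gamma]$) splitting is strictly cheaper. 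So your map $v$ satisfies the identity with $\lambda(\gamma)^{p}/((2\pi)^{p-1}p)$, not with $\mathcal{E}^{\mathrm{sg}}_{p}(\gamma)$, and this is in general strictly larger than what the lemma asserts.

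The paper handles this by \emph{not} placing a single singularity at the origin. It first picks a minimal topological resolution $(\gamma_1,\dotsc,\gamma_k)$ of $\gamma$ consisting of minimizing geodesics, and then uses \cite[Proposition~8.3]{Renormalmap} to produce a renormalizable $v$ with $k$ singular points $a_1,\dotsc,a_k$, the trace around $a_i$ being $\gamma_i$. Then in \eqref{est ren p} the subtracted sum is $\sum_i \lambda(\gamma_i)^p/((2-p)(2\pi)^{p-1}p)$, which by minimality of the resolution is exactly $\mathcal{E}^{\mathrm{sg}}_p(\gamma)/(2-p)$. To repair your proof along its current lines you would have to do the same: resolve $\gamma$ into $k\ge 1$ minimizing geodesics realizing the infimum in \eqref{eq:pesg}, place $k$ homogeneous profiles $\gamma_i(\,\cdot\,/|\cdot|)$ around distinct interior points, and join them to $\gamma$ on $\mathbb{S}^1$ by a continuous, finite-$W^{1,2}$ transition region (this joining step is exactly the nontrivial content that \cite[Proposition~8.3]{Renormalmap} supplies, and is no longer an ``elementary'' reparametrization homotopy once $k\ge 2$).
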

\begin{proof}
If $\gamma$ is a constant map, then we define $v$ to be the constant map taking the same value in $\mathcal{N}$ as $\gamma$. Clearly, in this case, the assertion \ref{it_ahlaequaih1seo3xu2ooRi3f} holds. 

If $\gamma$ is not homotopic to a constant,  we apply \cite[Proposition~8.3]{Renormalmap}. A key step for the proof of \cite[Proposition~8.3]{Renormalmap} is to use the estimate of \cite[Lemma~2.11]{Renormalmap}, which is based on the comparison of the angular energy. In particular, let $(\gamma_{1},\dotsc,\gamma_{k})$ be a \(2\)-minimal topological resolution of $\gamma$ (see Definition~\ref{mintopresol}) such that for each $j\in \{1,\dotsc,k\}$, $\gamma_{j} \in C^{1}(\mathbb{S}^{1}, \mathcal{N})$ is a minimizing geodesic.  Then, using \cite[Proposition~8.3]{Renormalmap}, together with \eqref{est ren p}, we deduce that there exists $v\in W^{1,2}_{\mathrm{ren}}(B^{2}_{1},\mathcal{N})$ satisfying
\[
\mathcal{E}^{\mathrm{ren}}_{p}(v)
=\int_{B^{2}_{1}}\frac{|Dv|^{p}}{p} \diff x-\sum_{i=1}^{k}\frac{\lambda(\gamma_{i})^{p}}{(2-p)(2\pi)^{p-1}p}
=\int_{B^{2}_{1}}\frac{|Dv|^{p}}{p} \diff x - \frac{\mathcal{E}^{\mathrm{sg}}_{p}(\gamma)}{2-p},
\]
which completes our proof of Lemma~\ref{lem renenergy ext}. 
\end{proof}

\subsection{Several extension results}
We obtain an extension of the boundary datum with a \textit{linear}-type estimate.
    \begin{lemma}\label{lem lifting linear}
        Let $m \in \mathbb{N}\setminus \{0,1\}$, $p \in [1,2]$, $\widetilde{g} \in W^{1,p}(\partial B^{m}_{r}(x_{0}), \widetilde{\mathcal{N}})$ and $g=\pi \circ \widetilde{g}$, 
        where $\pi:\widetilde{\mathcal{N}}\to \mathcal{N}$ is the universal covering of $\mathcal{N}$. Then there exists a map $w \in W^{1,p}(B^{m}_{r}(x_{0}), \mathcal{N})$ such that $\tr_{\partial B^{m}_{r}(x_{0})}(w)=g$ and
        \begin{equation}\label{ineq 2}
        \|D w\|^{p}_{L^{p}(B^{m}_{r}(x_{0}))}\leq C r\|D_{\top} g\|^{p}_{L^{p}(\partial B^{m}_{r}(x_{0}))},
        \end{equation}
        where $C=C(m,\mathcal{N})>0$. Moreover, $w \in W^{1,2}(B^{m}_{r}(x_{0}), \mathcal{N})$.
    \end{lemma}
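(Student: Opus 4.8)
The plan is to exploit the compactness of $\widetilde{\mathcal{N}}$ (which is equivalent to the finiteness of $\pi_1(\mathcal{N})$) together with the fact that the sphere $\partial B^m_r(x_0)$ is simply connected for $m \geq 2$, so that $\widetilde{g}$ is a genuine lift of $g$ with no topological obstruction. First I would reduce to the case $r = 1$ and $x_0 = 0$ by the scaling $y \mapsto x_0 + r y$, which transforms $\partial B^m_r(x_0)$ into $\mathbb{S}^{m-1}$ and introduces exactly the factor $r$ appearing in \eqref{ineq 2}: the gradient of the rescaled map picks up a factor $r^{-1}$, and the change of variables in the $L^p$ norm over an $m$-dimensional ball contributes $r^m$, whereas the $\|D_\top \widetilde g\|^p_{L^p}$ over the $(m-1)$-sphere contributes $r^{m-1}$, so the two sides balance with a leftover $r$.

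Next, since $\widetilde{\mathcal{N}}$ is a compact simply connected Riemannian manifold, one can apply a linear-type extension result for Sobolev maps into such a manifold (the classical extension to the ball of a $W^{1,p}$ map from the sphere, valid because simply connected compact targets admit no obstruction in this codimension, and with the estimate $\|D\widetilde w\|^p_{L^p(B^m_1)} \leq C \|D_\top \widetilde g\|^p_{L^p(\mathbb{S}^{m-1})}$; this is, e.g., the extension by a suitably mollified homogeneous-degree-zero extension followed by projection onto $\widetilde{\mathcal{N}}$ via its nearest point retraction, cf. Lemma~\ref{lemma npr} applied to $\widetilde{\mathcal N}$). This produces $\widetilde w \in W^{1,p}(B^m_1, \widetilde{\mathcal{N}})$ with trace $\widetilde g$. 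Then I set $w = \pi \circ \widetilde w$: since $\pi : \widetilde{\mathcal{N}} \to \mathcal{N}$ is a smooth Riemannian covering, hence a local isometry and globally Lipschitz (because $\widetilde{\mathcal{N}}$ is compact), composition with $\pi$ preserves $W^{1,p}$ and $|D w| \leq \mathrm{Lip}(\pi)\,|D\widetilde w| = |D\widetilde w|$ a.e.\ (in fact $\pi$ is a local isometry so equality holds), and $\tr_{\mathbb{S}^{m-1}}(w) = \pi \circ \widetilde g = g$. Combining with the extension estimate for $\widetilde w$ and the fact that $\|D_\top g\|_{L^p} = \|D_\top \widetilde g\|_{L^p}$ (again because $\pi$ is a local isometry and $\widetilde g$ is a lift of $g$) yields \eqref{ineq 2}. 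Finally, for the last assertion: on the sphere $\mathbb{S}^{m-1}$ (dimension $m-1 \geq 1$) the embedding $W^{1,p}(\mathbb{S}^{m-1}, \mathbb{R}^\nu)$ does not immediately give $W^{1,2}$, but one should instead observe that the extension construction applied to a \emph{smoothed} version of $\widetilde g$, or a direct bound using that $\widetilde{\mathcal N}$ is bounded, produces $\widetilde w$ bounded (values in compact $\widetilde{\mathcal N}$) with $D\widetilde w$ controlled; the homogeneous extension of a bounded map on the sphere has $|D\widetilde w(y)| \lesssim |y|^{-1}\,\|\widetilde g\|_{W^{1,\infty}}$-type behavior only for smooth data, so the cleaner route is: since $\widetilde g \in W^{1,p}(\mathbb{S}^{m-1},\widetilde{\mathcal N})$ and $\widetilde{\mathcal N}$ is compact, approximate and use that the standard extension lands in $W^{1,2}$ whenever the homogeneous-degree-zero extension of a bounded map does — which holds here because $\widetilde w$ can be taken to agree with a fixed smooth extension away from a neighborhood of the origin and the map is uniformly bounded, so $\int_{B^m_1} |D\widetilde w|^2 < \infty$ follows from $|D\widetilde w(y)|^2 \lesssim |y|^{-2}$ being integrable in dimension $m \geq 2$...

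The main obstacle I anticipate is the very last claim $w \in W^{1,2}$: a general $W^{1,p}$ ($p < 2$) trace on the sphere need not extend to a $W^{1,2}$ map, so this must rely on the specific extension operator chosen (one that, for any bounded Sobolev boundary datum into a compact manifold, outputs a map whose gradient decays like the inverse distance to a point, which is $L^2$-integrable in ambient dimension $m \geq 2$), rather than on an abstract trace-space statement. Concretely, the homogeneous extension $\widetilde w(y) = \widetilde g(y/|y|)$ composed with the nearest point retraction onto $\widetilde{\mathcal N}$ satisfies $|D\widetilde w(y)| \leq C|y|^{-1}$ pointwise because $\widetilde g$ takes values in the compact set $\widetilde{\mathcal N}$ of bounded diameter — wait, this is false for non-smooth $\widetilde g$; the correct fix is to mollify $\widetilde g$ at scale $|y|$ before taking the homogeneous extension, a standard device (used e.g. in Hardt--Lin type constructions) which for bounded data yields $|D\widetilde w(y)| \leq C|y|^{-1}$ and hence $\widetilde w \in W^{1,q}(B^m_1)$ for all $q < m$; since $m \geq 2$, this includes $q = 2$ when $m \geq 3$, and for $m = 2$ one has $W^{1,q}$ for all $q<2$ which together with the boundedness and the a priori $W^{1,p}$ control from \eqref{ineq 2} suffices after noting $p\le 2$ — so I would present the last line as a consequence of choosing the mollified homogeneous extension and refer to the standard estimate $|D\widetilde w(y)|\le C|y|^{-1}$. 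I expect the bulk of the writing to be invoking the right off-the-shelf extension lemma for simply connected compact targets and carefully tracking the scaling factor $r$; the genuinely delicate point is making the $W^{1,2}$ conclusion airtight.
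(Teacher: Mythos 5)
Your overall strategy — reduce to the unit ball, extend the lift $\widetilde g$ into $\widetilde{\mathcal N}$ with a linear-type estimate, push forward by $\pi$ using that $\pi$ is a local isometry — does match the paper's route. However, two of your key steps have genuine gaps.

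\textbf{Gap 1: the extension operator.} You propose to build $\widetilde w$ by a mollified homogeneous-degree-zero extension of $\widetilde g$ followed by the nearest-point retraction $\Pi_{\widetilde{\mathcal N}}$ of Lemma~\ref{lemma npr}. This does not work for general data: the mollified extension $\widetilde v$ satisfies $\dist(\widetilde v(y), \widetilde{\mathcal N}) \lesssim \operatorname{osc}_{B_{c|y|}(y/|y|)\cap\mathbb{S}^{m-1}}\widetilde g$, and for an arbitrary $\widetilde g\in W^{1,p}(\mathbb{S}^{m-1},\widetilde{\mathcal N})$ this oscillation can be comparable to $\diam(\widetilde{\mathcal N})$, so $\widetilde v$ exits the tubular neighborhood $\widetilde{\mathcal N}_{\lambda_{\widetilde{\mathcal N}}}$ where $\Pi_{\widetilde{\mathcal N}}$ is defined. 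No smallness is available, and simple-connectedness of $\widetilde{\mathcal N}$ does not help here — the obstruction to nearest-point projection is metric, not topological. The paper instead follows Hardt–Lin: first extend $\widetilde g$ into the ambient Euclidean space $\mathbb{R}^k$ by the standard trace-extension operator (with the $L^p$ gradient controlled by the fractional seminorm $|\widetilde g|_{W^{1-1/p,p}}$), then compose with the \emph{locally Lipschitz retraction} of \cite[Lemma~6.1]{H-L}, which is defined $\mathcal{H}^{m}$-a.e.\ on all of $\mathbb{R}^k$ (not just on a tube) and works for arbitrary data via Sard's theorem; the interpolation inequality \eqref{ineq linear} (Corollary~\ref{cor interpolation}) is then what converts $|\widetilde g|^p_{W^{1-1/p,p}}$ into $r\|D_\top\widetilde g\|^p_{L^p}$, another step your proposal omits.

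\textbf{Gap 2: the $W^{1,2}$ conclusion.} Your argument here does not close, and you essentially acknowledge this. Even granting the pointwise bound $|D\widetilde w(y)|\leq C|y|^{-1}$, for $m=2$ one gets $\int_{B^2_1}|D\widetilde w|^2\,dy \gtrsim \int_0^1 \rho^{-1}\,d\rho = +\infty$; knowing $\widetilde w\in W^{1,q}$ for every $q<2$ together with $\widetilde w\in W^{1,p}$ for a specific $p\leq 2$ does \emph{not} yield $W^{1,2}$. For $m\ge 3$, the straight homogeneous extension gives $\int|D\widetilde w|^2 = \tfrac{1}{m-2}\int|D_\top\widetilde g|^2$, which requires $\widetilde g\in W^{1,2}$ — not available when $p<2$. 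The paper's argument is different in kind: using boundedness (values in compact $\widetilde{\mathcal N}$) one has $\widetilde g \in L^\infty\cap W^{1,p}\subset W^{1/2,2}(\partial B^m_r(x_0))$, so the Euclidean extension $\widetilde v$ can be taken in $W^{1,2}$; and because $\pi_1(\widetilde{\mathcal N})$ is trivial, the Hardt–Lin retraction of \cite[Lemma~6.1]{H-L} lies in $W^{1,q}_{\loc}(\mathbb{R}^k,\widetilde{\mathcal N})$ for all $q<3$, so the retracted extension is $W^{1,2}$. This codimension/homotopy argument has no counterpart in your proposal and cannot be replaced by the mollified-homogeneous-extension estimate.

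In short: the lift–and–project skeleton is right, but the specific extension operator you propose is not defined on the relevant class of boundary data, and the $W^{1,2}$ claim requires the Hardt–Lin retraction mechanism rather than a pointwise $1/|y|$ bound.
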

    
\begin{rem}\label{remliflinear}
If $m\geq 3$, then $w$ can be chosen as the homogeneous extension of $g$. In this case, \eqref{ineq 2} holds with the constant $C=\frac{1}{m-p}\leq 1$ and without assuming that $\pi_{1}(\mathcal{N})$ is finite and $g$ has a lifting. It is also worth mentioning that for each $p \in (1,2)$, the estimate \eqref{ineq 2} holds without assuming that $g$ has a lifting, but the constant $C$ in \eqref{ineq 2} then behaves like $C\sim \frac{1}{2-p}$.
\end{rem}

\begin{proof}[Proof of Lemma \ref{lem lifting linear}]
    Since the manifold $\mathcal{N}$ is compact and its fundamental group $\pi_{1}(\mathcal{N})$ is finite, its covering space $\widetilde{\mathcal{N}}$ is compact. 
    Notice that $g \in W^{1,p}(\partial B^{m}_{r}(x_{0}), \mathcal{N})$, since $\pi:\widetilde{\mathcal{N}}\to \mathcal{N}$ is a local isometry. 
    According to Nash's embedding theorem \cite{Nash_1956}, $\widetilde{\mathcal{N}}$ can be isometrically embedded into $\mathbb{R}^{k}$ for some $k \in \mathbb{N}\setminus\{0\}$. If $p=1$, we define $w(x)\coloneqq g(x_{0}+r(x-x_{0})/|x-x_{0}|)$ for $x \in B^{m}_{r}(x_{0}) \setminus \{x_{0}\}$ (see \cite[Proposition~2.4]{SchoUhl}). Then $w\in W^{1,1}(B^{m}_{r}(x_{0}), \mathcal{N})$ and
    \begin{equation*}
        \int_{B^{m}_{r}(x_{0})}|Dw|\diff x = \frac{r}{m-1} \int_{\partial B^{m}_{r}(x_{0})}|D_{\top}g|\diff \mathcal{H}^{m-1},
    \end{equation*}
which proves \eqref{ineq 2} for $p=1$.
If $ p\in (1,2]$, according to \cite[Section~6.9]{Function}, $\widetilde{g}$ admits the extension $\widetilde{v} \in C^{\infty}(B^{m}_{r}(x_{0}), \mathbb{R}^{k})$ satisfying the estimate
    \begin{equation}\label{est continuous inverse}
    \|D\widetilde{v}\|^{p}_{L^{p}(B^{m}_{r}(x_{0}))}\leq C|\widetilde{g}|^{p}_{W^{1-1/p,p}(\partial B^{m}_{r}(x_{0}))}, 
    \end{equation}
   where $C=C(m)>0$. The construction of the extension $\widetilde{v}$ is standard:   since $\partial B^{m}_{r}(x_{0})$ is smooth, using a partition of unity, the problem reduces to constructing for a given trace $u\in W^{1-1/p,p}(Q)$, where $Q=\{(x_{1}, x_{2}) \in \mathbb{R}^{2}: |x_{i}|<1,\; i=1,2\}$, a real-valued function $U \in W^{1,p}(P)$, where $P=\{(x_{1}, x_{2}, x_{3}) \in \mathbb{R}^{3}:  0<x_{3}<1,\; |x_{i}|<1-x_{3}\}$, by the convolution of $u$ with a kernel $\varphi$ satisfying \cite[(2.5.1.1)]{Function}. Namely, first, extend $u$ on $\mathbb{R}^{2}\setminus Q$ by $u=0$ and next define 
\[   
 U(x)=\frac{1}{x_{3}^{2}}\int_{|x^{\prime}-y^{\prime}|<x_{3}}\varphi\left(\frac{x^{\prime}-y^{\prime}}{x_{3}}\right) u(y^{\prime})\diff y^{\prime},
 \]
 where $x=(x^{\prime}, x^{3}) \in \mathbb{R}^{3}$, $x_{3}>0$. Then, proceeding as in \cite[Theorem~6.2]{H-L}, namely, using Sard's Theorem and choosing an appropriate locally Lipschitz retraction \cite[Lemma~6.1]{H-L}, we define $\widetilde{w} \in W^{1,2}(B^{m}_{r}(x_{0}),\mathcal{N})$ (generally speaking, the statement of \cite[Theorem~6.2]{H-L} provides us with a $W^{1,p}$ extension, however, using the fact that $\pi_{1}(\widetilde{\mathcal{N}})$ is trivial, one observes that the locally Lipschitz retraction coming from \cite[Lemma~6.1]{H-L} belongs to $W^{1,q}_{\loc}(\mathbb{R}^{k}, \mathcal{N})$ for each $q \in [1,3)$, which, since $\widetilde{g}\in W^{1/2,2}(\partial B^{m}_{r}(x_{0}), \mathcal{N})$, implies that the extension coming from \cite[Theorem~6.2]{H-L} is actually a $W^{1,2}$ extension in this particular case) such that $\tr_{\partial B^{m}_{r}(x_{0})}(\widetilde{w})=\widetilde{g}$ and, in view of \eqref{est continuous inverse},
    \begin{equation}\label{ineq 0.14}
    \|D\widetilde{w}\|^{p}_{L^{p}(B^{m}_{r}(x_{0}))} \leq C^{\prime}|\widetilde{g}|^{p}_{W^{1-1/p,p}(\partial B^{m}_{r}(x_{0}))}\leq C^{\prime \prime}r\|D_{\top} \widetilde{g}\|^{p}_{L^{p}(\partial B^{m}_{r}(x_{0}))},
    \end{equation}
    where $C^{\prime}=C^{\prime}(m,\widetilde{\mathcal{N}})>0$, $C^{\prime \prime}=C^{\prime\prime}(m, \widetilde{\mathcal{N}})>0$ and the last estimate comes from \eqref{ineq linear}. 
    Defining $w \coloneqq \pi\circ \widetilde{w}$ and using \eqref{ineq 0.14}, together with the fact that $\pi$ is a local isometry, we deduce \eqref{ineq 2} for $p \in (1,2)$ (since the universal covering is unique up to a diffeomorphism, we can actually assume that $C^{\prime\prime}$ depends only on $m$ and $\mathcal{N}$). This completes our proof of Lemma~\ref{lem lifting linear}.
    \end{proof}
    The next lemma provides us with a \textit{nonlinear}-type estimate on the extension of the boundary datum, in contrast to a \textit{linear}-type estimate provided by Lemma~\ref{lem lifting linear}. 
    \begin{lemma}\label{lem lifting nonlinear}
        Let $m \in \mathbb{N} \setminus \{0,1\}$, $p_{0} \in (1,2)$, $p \in [p_{0}, 2)$, $\widetilde{g} \in W^{1,p}(\partial B^{m}_{r}(x_{0}), \widetilde{\mathcal{N}})$, $\pi:\widetilde{\mathcal{N}}\to \mathcal{N}$ be the universal covering of $\mathcal{N}$ and $g=\pi \circ \widetilde{g}$.
          Then there exists a map $w \in W^{1,p}(B^{m}_{r}(x_{0}), \mathcal{N})$ such that $\tr_{\partial B^{m}_{r}(x_{0})}(w)=g$ and   
        \begin{equation}\label{ineq 1}
        \|D w\|^{p}_{L^{p}(B^{m}_{r}(x_{0}))}\leq C r^{\frac{m-1}{p}}\|D_{\top} g\|^{p-1}_{L^{p}(\partial B^{m}_{r}(x_{0}))},
        \end{equation}
        where $C=C(p_{0}, m, \mathcal{N})>0$. 
    \end{lemma}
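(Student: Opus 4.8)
The plan is to reduce Lemma~\ref{lem lifting nonlinear} to Lemma~\ref{lem lifting linear} by a scaling-and-interpolation argument, exploiting that the nonlinear estimate \eqref{ineq 1} carries one fewer power of the tangential derivative. First I would lift: since $\pi_1(\mathcal N)$ is finite, $\widetilde{\mathcal N}$ is compact, and $g = \pi \circ \widetilde g$ with $\widetilde g \in W^{1,p}(\partial B^m_r(x_0), \widetilde{\mathcal N})$, so it suffices to build a good extension of $\widetilde g$ into $\widetilde{\mathcal N}$ and compose with the local isometry $\pi$; this turns estimates on $D_\top \widetilde g$ into estimates on $D_\top g$ up to constants. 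After a translation we may assume $x_0 = 0$. The dichotomy I expect is between the case of ``small'' tangential energy, where the linear estimate \eqref{ineq 2} already beats \eqref{ineq 1}, and the case of ``large'' tangential energy, where one must instead use a crude homogeneous-type extension whose cost is controlled by a single power of the energy (essentially the $p=1$ branch of Lemma~\ref{lem lifting linear}, or a VMO/degree-zero argument after dividing the sphere into small caps).

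More precisely, the key steps in order are: (1) Normalize by scaling. Writing $\widetilde g_r(\xi) = \widetilde g(r\xi)$ on $\partial B^m_1$, one has $\|D_\top \widetilde g_r\|_{L^p(\partial B^m_1)}^p = r^{p-(m-1)}\|D_\top \widetilde g\|_{L^p(\partial B^m_r)}^p$ and similarly extensions rescale with a known power of $r$; so it is enough to prove \eqref{ineq 1} for $r=1$ with the right homogeneity, and then track the $r$-powers. Set $T := \|D_\top g\|_{L^p(\partial B^m_1)}$. (2) If $T \le 1$, apply Lemma~\ref{lem lifting linear} (with $r=1$): it gives $w$ with $\|Dw\|_{L^p(B^m_1)}^p \le C T^p \le C T^{p-1}$, which is exactly \eqref{ineq 1} at $r=1$; the dependence is $C=C(m,\mathcal N)$. (3) If $T > 1$, I would instead use the $p=1$ construction from Lemma~\ref{lem lifting linear}, namely the $0$-homogeneous extension $w(x) = g(x/|x|)$. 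A direct coarea computation gives $\int_{B^m_1}|Dw|^p \,dx = \tfrac{1}{m-p}\int_{\partial B^m_1}|D_\top g|^p\,d\mathcal H^{m-1} = \tfrac{1}{m-p} T^p$, which is worse than $T^{p-1}$ when $T$ is large; so the homogeneous extension alone is not enough. The correct large-$T$ device is a \emph{covering of the sphere by small caps}: partition $\partial B^m_1$ into $\sim N$ caps of radius $\sim N^{-1/(m-1)}$, on most of which (those where the average oscillation of $\widetilde g$ is small, by Chebyshev using $T$) $\widetilde g$ is homotopically trivial and can be extended cheaply, while choosing $N \sim T^{p}$ (or a fractional power thereof) balances the contributions so that the total $p$-energy is $\lesssim T^{p-1}$. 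This is exactly the mechanism behind Remark~\ref{remliflinear}'s observation that for $p \in (1,2)$ a lifting-free extension costs $\sim \tfrac{1}{2-p}$: here, having the lifting, we pay instead the sublinear power $T^{p-1}$.

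The main obstacle will be step (3): producing, in the large-energy regime, an $\mathcal N$-valued extension whose $p$-energy is controlled by $\|D_\top g\|_{L^p}^{p-1}$ rather than $\|D_\top g\|_{L^p}^{p}$. The natural route is a Besicovitch-type covering / good-cap selection on $\partial B^m_r$ combined with the VMO–degree machinery already recalled in the excerpt (paths in $\mathrm{VMO}(\mathbb S^1,\mathcal N)$, topological resolutions): on caps where the BMO seminorm of $\widetilde g$ is below the threshold forcing homotopic triviality, use the linear extension of Lemma~\ref{lem lifting linear} locally; on the remaining ``bad'' caps, whose total measure is $\lesssim T^{p}/(\text{threshold})^{p}$ small, use the homogeneous extension, and optimize the cap scale. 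One must also verify the extensions match continuously across cap boundaries (gluing via a common boundary value, e.g. by first composing $\widetilde g$ with a mollification on a skeleton), and that all constants depend only on $p_0, m, \mathcal N$ — in particular uniformly as $p \nearrow 2$, which is where the hypothesis $p \ge p_0 > 1$ is used, since the constants $\tfrac{1}{m-p}$ and the Gagliardo–trace constants degenerate only as $p \to m$ or $p\to 1$, not as $p\to 2$ when $m=2$. I would finish by undoing the scaling from step (1), at which point the stated power $r^{(m-1)/p}$ emerges from combining the rescaling exponents with $T = r^{-(m-1)/p}\|D_\top g\|_{L^p(\partial B^m_r)}$ raised to the power $p-1$.
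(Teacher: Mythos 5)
Your proposal diverges from the paper's proof at exactly the step you flag as ``the main obstacle,'' and I don't think the covering scheme closes. Your case (2) ($T\le 1$) is fine, since $T^p\le T^{p-1}$ there. In case (3) ($T>1$), your exponent count for the bad caps already fails when $m\ge 3$: caps of radius $\rho\sim N^{-1/(m-1)}$ with $N\sim T^p$ and a homogeneous extension costing $\rho$ times the local energy yields $\sum \rho\cdot(\text{local energy})\lesssim T^{-p/(m-1)}\cdot T^p=T^{p-p/(m-1)}$, which exceeds $T^{p-1}$ for $p<2$ whenever $m\ge 3$ (equality only in $m=2$). On the good caps, a linear extension summed over all of them still produces a term of order $T^p$, so the total is not $\lesssim T^{p-1}$.

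The idea you are missing is that no case distinction is needed at all. The paper proves \eqref{ineq 1} by taking the \emph{same} extension $\widetilde w\in W^{1,2}(B^m_r(x_0),\widetilde{\mathcal N})$ constructed in Lemma~\ref{lem lifting linear} (trace extension of $\widetilde g$, pushed onto $\widetilde{\mathcal N}$ via the Hardt--Lin retraction), with
\begin{equation*}
\|D\widetilde w\|^p_{L^p(B^m_r(x_0))}\le C\,|\widetilde g|^p_{W^{1-1/p,p}(\partial B^m_r(x_0))},
\end{equation*}
and then bounding the fractional seminorm with the interpolation inequality of Corollary~\ref{cor interpolation}, namely \eqref{ineq nonlinear}:
\begin{equation*}
|\widetilde g|^p_{W^{1-1/p,p}(\partial B^m_r(x_0))}\le C\, r^{\frac{m-1}{p}}\|D_\top\widetilde g\|^{p-1}_{L^p(\partial B^m_r(x_0))}.
\end{equation*}
The power $p-1$ (rather than $p$) in this inequality comes from the Gagliardo--Nirenberg-type estimate of Lemma~\ref{interpol},
\begin{equation*}
|u|^p_{W^{1-1/p,p}}\lesssim \|u\|_{L^p}\,\|D_\top u\|_{L^p}^{p-1},
\end{equation*}
combined with the $L^\infty$ bound on $\widetilde g$ supplied by the compactness of $\widetilde{\mathcal N}$. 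This $L^\infty$-to-$L^p$ trade is precisely what ``absorbs'' one power of the gradient; you correctly intuited that the sublinear power must be exploitable, but placed the mechanism in a covering argument rather than in the interpolation, and the covering version does not balance. With \eqref{ineq nonlinear} in hand, composing with the local isometry $\pi$ finishes exactly as in Lemma~\ref{lem lifting linear}, and the $p_0$-dependence of the constant comes from Corollary~\ref{cor interpolation} (where the interpolation constant degenerates as $p\to 1$), not from the limit $p\nearrow 2$.
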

\begin{rem}\label{remlifnonlinear}
The estimate \eqref{ineq 1} is based on the fact that $\pi_{1}(\mathcal{N})$ is finite, since in this case $\widetilde{\mathcal{N}}$ is compact (due to the compactness of $\mathcal{N}$, the manifold $\widetilde{\mathcal{N}}$ is compact if and only if $\pi_{1}(\mathcal{N})$ is finite). More precisely, to obtain \eqref{ineq 1}, we have used that the diameter of the manifold $\widetilde{\mathcal{N}}$ (which is identified with its Euclidean embedding) is finite. We have used the constant $p_0$ to avoid the dependence of the constant $C$ in \eqref{ineq 1} on $p$ when $p$ is close to $1$ (see \eqref{main int estimate}). Notice also that if $p$ is close to 2, the estimate \eqref{ineq 1} is valid without assuming that $g$ has a lifting, but the constant $C$ in \eqref{ineq 1} then behaves like $C\sim \frac{1}{2-p}$. 
\end{rem}
\begin{proof}[Proof of Lemma~\ref{lem lifting nonlinear}]
    Let $\widetilde{w} \in W^{1,2}(B^{m}_{r}(x_{0}),\widetilde{\mathcal{N}})$ be the map as in the proof of Lemma~\ref{lem lifting linear}. Using \eqref{est continuous inverse} and \eqref{ineq nonlinear}, we deduce the estimate
    \begin{equation}\label{ineq 0.13}
    \|D\widetilde{w}\|^{p}_{L^{p}(B^{m}_{r}(x_{0}))}\leq Cr^{\frac{m-1}{p}}\|D_{\top}\widetilde{g}\|^{p-1}_{L^{p}(\partial B^{m}_{r}(x_{0}))},
    \end{equation}
    where $C=C(p_{0},m,\mathcal{N})>0$. Define $w=\pi\circ \widetilde{w}$. Since $\pi$ is a local isometry, \eqref{ineq 0.13} implies \eqref{ineq 1}, which concludes our proof of Lemma~\ref{lem lifting nonlinear}. 
    \end{proof}
    
For convenience, we recall the next  proposition.
\begin{lemma}\label{lem rest on T}
    Let $m \in \mathbb{N} \setminus \{0,1\}$, $p \in [1,+\infty)$, $u \in W^{1,p}(B^{m}_{r}(x_{0}), \mathcal{Y})$ and $v \in W^{1,p}(\partial B^{m}_{r}(x_{0}), \mathcal{Y})$, where $\mathcal{Y}$ is either a Euclidean space or a compact Riemannian manifold which is isometrically embedded into $\mathbb{R}^{k}$ for some $k \in \mathbb{N}\setminus\{0\}$. Then the following assertions hold. 
    \begin{enumerate}[label=(\roman*)]
        \item 
        \label{it_aeko0woub7OhLeethohneeth}
        For $\mathcal{H}^{1}$-a.e.\  $\varrho \in (0,r)$, $\operatorname{tr}_{\partial B^{m}_{\varrho}(x_{0})}(u)=u|_{\partial B^{m}_{\varrho}(x_{0})} \in W^{1,p}(\partial B^{m}_{\varrho}(x_{0}), \mathcal{Y})$. 
        \item 
        \label{it_Ohtaap1joLiedei6aiGeipho}
        Assume that $E\subset \partial B^{m}_{r}(x_{0})$ is bilipschitz homeomorphic to an open bounded set in $\mathbb{R}^{l}$ with $l \leq m-1$. Then for $\mathcal{H}^{m(m-1)/2}$-a.e.\  $\omega \in SO(m)$, $\operatorname{tr}_{\omega(E)}(v)=v|_{\omega(E)} \in W^{1,p}(\omega(E), \mathcal{Y})$. 
    \end{enumerate}
\end{lemma}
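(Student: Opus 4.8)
The plan is to prove both parts by a slicing/Fubini‑type argument: in each case I would first forget the manifold constraint (treating $\mathcal{Y}=\mathbb{R}^{k}$) and recover it at the end, since $W^{1,p}(F,\mathcal{Y})=\{w\in W^{1,p}(F,\mathbb{R}^{k}): w\in\mathcal{Y}\ \text{a.e.}\}$ for every bilipschitz image $F$. For assertion (i), assume without loss of generality $x_{0}=0$ and apply the coarea formula to the Lipschitz function $g(x)=\abs{x}$, which satisfies $\abs{\nabla g}=1$ a.e.\ on $B^{m}_{r}\setminus\{0\}$. Applied to $\abs{u}^{p}+\abs{Du}^{p}\in L^{1}(B^{m}_{r})$ it gives $\int_{0}^{r}\bigl(\int_{\partial B^{m}_{\varrho}}\abs{u}^{p}+\abs{Du}^{p}\diff\mathcal{H}^{m-1}\bigr)\diff\varrho<+\infty$; combined with the classical Fubini theorem for Sobolev functions, read through the bilipschitz polar parametrization $\Theta\colon(0,r)\times\mathbb{S}^{m-1}\to B^{m}_{r}\setminus\{0\}$, $\Theta(\varrho,\theta)=\varrho\theta$ (so that $u\circ\Theta\in W^{1,p}((0,r)\times\mathbb{S}^{m-1})$ restricts to a $W^{1,p}$ map on $\{\varrho\}\times\mathbb{S}^{m-1}$ coinciding with the precise representative and with $\operatorname{tr}$ for a.e.\ $\varrho$), this yields $\operatorname{tr}_{\partial B^{m}_{\varrho}(0)}(u)=u|_{\partial B^{m}_{\varrho}(0)}\in W^{1,p}(\partial B^{m}_{\varrho}(0),\mathbb{R}^{k})$ for $\mathcal{H}^{1}$‑a.e.\ $\varrho$. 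For the constraint, $N\coloneqq\{x\in B^{m}_{r}:u(x)\notin\mathcal{Y}\}$ has $\mathcal{L}^{m}(N)=0$, so the coarea formula gives $\int_{0}^{r}\mathcal{H}^{m-1}(N\cap\partial B^{m}_{\varrho})\diff\varrho=0$, hence $\mathcal{H}^{m-1}(N\cap\partial B^{m}_{\varrho})=0$ for a.e.\ $\varrho$; intersecting the two full‑measure sets of radii proves (i).

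For assertion (ii) the guiding idea is that the family $\{\omega(E)\}_{\omega\in SO(m)}$ sweeps $\mathbb{S}^{m-1}$ uniformly, so an average over the rotation group plays the role of the radial coarea of (i). After translating so that $x_{0}=0$ and rescaling so that $r=1$, fix a bilipschitz homeomorphism $\psi\colon U\to E$ with $U\subset\mathbb{R}^{l}$ open and bounded and Lipschitz constant $L$, and let $\mu$ be the Haar probability measure on $SO(m)$. Since $SO(m)$ is a compact $\tfrac{m(m-1)}{2}$‑dimensional smooth submanifold of $\mathbb{R}^{m\times m}$, $\mu$ is a constant multiple of $\mathcal{H}^{m(m-1)/2}\mres SO(m)$, so ``for a.e.\ $\omega$'' is unambiguous. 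By right‑invariance of $\mu$ and transitivity of the $SO(m)$‑action on $\mathbb{S}^{m-1}$, the pushforward of $\mu$ under $\omega\mapsto\omega(z)$ is independent of $z\in\mathbb{S}^{m-1}$, and by left‑invariance it is $SO(m)$‑invariant, hence equals the normalized surface measure $\sigma$ on $\mathbb{S}^{m-1}$; therefore, by Tonelli,
\[
\int_{SO(m)}\int_{U}w(\omega(\psi(y)))\diff y\diff\mu(\omega)=\abs{U}\int_{\mathbb{S}^{m-1}}w\diff\sigma\qquad\text{for every Borel }w\ge 0.
\]
Moreover the pushforward of $\mu\otimes(\mathcal{L}^{l}\mres U)$ under $(\omega,y)\mapsto\omega(\psi(y))$ is $\abs{U}\,\sigma\ll\mathcal{H}^{m-1}\mres\mathbb{S}^{m-1}$, so the composition $(\omega,y)\mapsto v(\omega(\psi(y)))$ is well defined and measurable; taking $w=\abs{v}^{p}$ shows $v\circ\omega\circ\psi\in L^{p}(U,\mathbb{R}^{k})$ for a.e.\ $\omega$, and since $v$ takes values in $\mathcal{Y}$ for $\mathcal{H}^{m-1}$‑a.e.\ point of $\mathbb{S}^{m-1}$, $v\circ\omega\circ\psi$ takes values in $\mathcal{Y}$ a.e.\ on $U$ for a.e.\ $\omega$.

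To control the tangential derivative I would pass to a smooth approximation: pick $v_{j}\in C^{\infty}(\mathbb{S}^{m-1},\mathbb{R}^{k})$ with $v_{j}\to v$ in $W^{1,p}(\mathbb{S}^{m-1},\mathbb{R}^{k})$ (no manifold‑valued density needed, the constraint being already secured above). For each $j$ and each $\omega$, $v_{j}\circ\omega\circ\psi$ is Lipschitz on the bounded set $U$, hence lies in $W^{1,p}(U,\mathbb{R}^{k})$, and the a.e.\ chain rule together with the fact that $\omega$ restricts to an isometry between tangent spaces of $\mathbb{S}^{m-1}$ yields the $\omega$‑\emph{independent} bound $\abs{D(v_{j}\circ\omega\circ\psi)(y)}\le L\,\abs{D_{\top}v_{j}(\omega(\psi(y)))}$ for a.e.\ $y$. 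Subtracting two indices, integrating over $\omega$ and invoking the averaging identity for $w=\abs{v_{j}-v_{i}}^{p}$ and $w=\abs{D_{\top}v_{j}-D_{\top}v_{i}}^{p}$ gives
\[
\int_{SO(m)}\norm{v_{j}\circ\omega\circ\psi-v_{i}\circ\omega\circ\psi}_{W^{1,p}(U,\mathbb{R}^{k})}^{p}\diff\mu(\omega)\le C(L,m)\,\abs{U}\,\norm{v_{j}-v_{i}}_{W^{1,p}(\mathbb{S}^{m-1},\mathbb{R}^{k})}^{p},
\]
so $(v_{j}\circ\cdot\circ\psi)_{j}$ is Cauchy in $L^{p}(SO(m),\mu;W^{1,p}(U,\mathbb{R}^{k}))$. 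Along a subsequence it converges in $W^{1,p}(U,\mathbb{R}^{k})$ for a.e.\ $\omega$, and since it also converges to $v\circ\omega\circ\psi$ in $L^{p}(U,\mathbb{R}^{k})$ for a.e.\ $\omega$, the limit is $v\circ\omega\circ\psi$; thus for a.e.\ $\omega$, $v\circ\omega\circ\psi\in W^{1,p}(U,\mathcal{Y})$, i.e.\ $v|_{\omega(E)}\in W^{1,p}(\omega(E),\mathcal{Y})$ with $\omega(E)$ parametrized by the bilipschitz map $\omega\circ\psi$ (Definition~\ref{def bil Sob man}). Applying the averaging identity to $w=\mathbf{1}_{N_{0}}$, where $N_{0}$ is an $\mathcal{H}^{m-1}$‑null set off which a fixed representative of $v$ is defined, shows that for a.e.\ $\omega$ this map is the pointwise restriction of that representative; in the codimension‑one case $l=m-1$ one then identifies it with $\operatorname{tr}_{\omega(E)}(v)$ exactly as in (i).

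I expect the crux to be the derivative part of (ii). When $l<m-1$ the set $\omega(E)$ is $\mathcal{H}^{m-1}$‑null, so ``$v$ restricted to $\omega(E)$'' has no meaning for an individual $\omega$ and only acquires one for a.e.\ $\omega$ — this is precisely what forces the averaging viewpoint and rules out a direct appeal to a bilipschitz change of variables. The single genuinely essential analytic point is then the $\omega$‑independent constant in the chain‑rule estimate, which hinges on the rotations being isometries (uniform bilipschitz bounds would not suffice to close the averaging estimate); the remaining ingredients — joint measurability, absolute continuity of the relevant pushforwards, Tonelli, and density of smooth maps — are routine.
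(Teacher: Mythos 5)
Your proof is correct and takes essentially the same route as the paper's (coarea formula for~(i), integralgeometric averaging over $SO(m)$ plus smooth $\mathbb{R}^{k}$-valued approximation for~(ii)); the only expositional difference is that you rederive Federer's integralgeometric formula~3.2.48 from Haar-measure invariance rather than citing it, and you spell out the $\omega$-uniform chain-rule bound that the paper leaves implicit.
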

\begin{proof}
    The proof follows using the appropriate coarea formula (see \cite[Theorem~3.12]{Evans} for the assertion \ref{it_aeko0woub7OhLeethohneeth} and \cite[Theorem~3.2.48]{Federer} for the assertion \ref{it_Ohtaap1joLiedei6aiGeipho}) and the fact that each map $u \in W^{1,p}(B^{m}_{r}(x_{0}),\mathbb{R}^{k})$ and $v \in W^{1,p}(\partial B^{m}_{r}(x_{0}), \mathbb{R}^{k})$ can be approximated by smooth maps in $C^{\infty}(B^{m}_{r}(x_{0}),\mathbb{R}^{k})$ and $C^{\infty}(\partial B^{m}_{r}(x_{0}), \mathbb{R}^{k})$, respectively. 
\end{proof}
\begin{lemma}\label{lem 2.17}
    Let  $p \in [1,2)$ and $g \in W^{1,p}(\partial B^{2}_{r}(x_{0}), \mathcal{N})$. Then there exists $u \in W^{1,p}(B^{2}_{r}(x_{0}),\mathcal{N})$ such that $\tr_{\partial B^{2}_{r}(x_{0})}(u)=g$ and
    \begin{equation}\label{est on circle}
    \int_{B^{2}_{r}(x_{0})}\frac{|Du|^{p}}{p}\diff x \leq Cr\int_{\partial B^{2}_{r}(x_{0})}\frac{|D_{\top}g|^{p}}{p}\diff \mathcal{H}^{1} + \frac{\mathcal{E}^{\mathrm{sg}}_{p}(g)r^{2-p}}{2-p},
    \end{equation}
where $C=C(\mathcal{N})>0$.
\end{lemma}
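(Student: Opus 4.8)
The plan is to first reduce to the unit disk by scaling: replacing $u$ by $x\mapsto u(x_0+rx)$ one checks that $\int_{B^2_r(x_0)}\frac{|Du|^p}{p}\diff x$ and $r\int_{\partial B^2_r(x_0)}\frac{|D_\top g|^p}{p}\diff\mathcal{H}^1$ both pick up the factor $r^{2-p}$, while $\mathcal{E}^{\mathrm{sg}}_{p}$ is unchanged because it is a homotopy invariant and $g$ is freely homotopic to its rescaling; so I may assume $x_0=0$, $r=1$. Since $p\ge 1$ one has $W^{1,p}(\mathbb{S}^1,\mathcal{N})\subset C(\mathbb{S}^1,\mathcal{N})$, so $g$ has a well-defined free homotopy class $[g]$. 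If $[g]$ is trivial I am essentially done: then $\lambda(g)=0$, hence $\mathcal{E}^{\mathrm{sg}}_{p}(g)=0$, and $g$ lifts through the universal covering $\pi\colon\widetilde{\mathcal{N}}\to\mathcal{N}$ to some $\widetilde{g}\in W^{1,p}(\mathbb{S}^1,\widetilde{\mathcal{N}})$ (Sobolev because $\pi$ is a local isometry), so Lemma~\ref{lem lifting linear} with $m=2$ and $r=1$ produces $u\in W^{1,p}(B^2_1,\mathcal{N})$ with $\operatorname{tr}_{\mathbb{S}^1}(u)=g$ and $\int_{B^2_1}\frac{|Du|^p}{p}\diff x\le C(\mathcal{N})\int_{\mathbb{S}^1}\frac{|D_\top g|^p}{p}\diff\mathcal{H}^1$, which is the assertion. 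From now on $[g]\ne 0$.

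Next I would split $B^2_1$ into the inner disk $B^2_{1/2}$ and the collar $A=B^2_1\setminus\overline{B}^2_{1/2}$. Choose a length-minimizing closed geodesic $\gamma\in C^1(\mathbb{S}^1,\mathcal{N})$ in the class $[g]$, so $\lambda(\gamma)=\lambda(g)\ge\mathrm{sys}(\mathcal{N})>0$, and record two elementary bounds: since the single-curve resolutions give $\mathcal{E}^{\mathrm{sg}}_{p}(g)\le \lambda(g)^p/((2\pi)^{p-1}p)$ and $\lambda(g)\le\int_{\mathbb{S}^1}|D_\top g|\diff\mathcal{H}^1$, Hölder's inequality on $\mathbb{S}^1$ yields $\mathcal{E}^{\mathrm{sg}}_{p}(g)\le\int_{\mathbb{S}^1}\frac{|D_\top g|^p}{p}\diff\mathcal{H}^1$ and $\int_{\mathbb{S}^1}\frac{|D_\top g|^p}{p}\diff\mathcal{H}^1\ge c(\mathcal{N})>0$. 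On $B^2_{1/2}$ I plant a rescaled copy of the renormalizable extension of $\gamma$: Lemma~\ref{lem renenergy ext} gives $v\in W^{1,2}_{\mathrm{ren}}(B^2_1,\mathcal{N})$ with $\operatorname{tr}_{\mathbb{S}^1}(v)=\gamma$ and, using $\mathcal{E}^{\mathrm{sg}}_{p}(\gamma)=\mathcal{E}^{\mathrm{sg}}_{p}(g)$,
\begin{equation*}
\int_{B^2_1}\frac{|Dv|^p}{p}\diff x=\mathcal{E}^{\mathrm{ren}}_{p}(v)+\frac{\mathcal{E}^{\mathrm{sg}}_{p}(g)}{2-p}.
\end{equation*}
Finiteness of $\pi_1(\mathcal{N})$ makes $[\mathbb{S}^1,\mathcal{N}]$ finite, so $\gamma$ (hence $v$) can be taken from a fixed finite list, and for each such $v$ the map $p\mapsto\mathcal{E}^{\mathrm{ren}}_{p}(v)$ is locally bounded on $[1,2)$ and tends to $\mathcal{E}^{\mathrm{ren}}_{2}(v)$ by Proposition~\ref{prop cont renorm}, so $\mathcal{E}^{\mathrm{ren}}_{p}(v)\le C(\mathcal{N})$ uniformly in $p\in[1,2)$. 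Then $v_{1/2}(x):=v(2x)$ on $B^2_{1/2}$ is a $W^{1,p}$ map with trace on $\partial B^2_{1/2}$ the reparametrized geodesic $\gamma_{1/2}$, and, as the integrand is nonnegative and $2^{p-2}\le 1$,
\begin{equation*}
\int_{B^2_{1/2}}\frac{|Dv_{1/2}|^p}{p}\diff x=2^{p-2}\Bigl(\mathcal{E}^{\mathrm{ren}}_{p}(v)+\frac{\mathcal{E}^{\mathrm{sg}}_{p}(g)}{2-p}\Bigr)\le\frac{\mathcal{E}^{\mathrm{sg}}_{p}(g)}{2-p}+C(\mathcal{N}).
\end{equation*}

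The remaining and genuinely hard step is to fill the collar $A$ by $w\in W^{1,p}(A,\mathcal{N})$ with $w=g$ on $\partial B^2_1$, $w=\gamma_{1/2}$ on $\partial B^2_{1/2}$, and $\int_A\frac{|Dw|^p}{p}\diff x\le C(\mathcal{N})\bigl(\int_{\mathbb{S}^1}\frac{|D_\top g|^p}{p}\diff\mathcal{H}^1+1\bigr)$. Writing $A$ in polar coordinates and taking $w(s,\theta)=H(2(1-s),\theta)$ for a homotopy $H\colon[0,1]\times\mathbb{S}^1\to\mathcal{N}$ with $H(0,\cdot)=g$ and $H(1,\cdot)=\gamma$, one gets $\int_A|Dw|^p\diff x\le C\int_0^1\int_{\mathbb{S}^1}\bigl(|\partial_t H|^p+|\partial_\theta H|^p\bigr)\diff t\diff\mathcal{H}^1$ with $C$ absolute for $p\in[1,2)$; so it suffices to construct a homotopy $H$ inside the class $[g]$, from $g$ to a closed geodesic $\gamma$, with transversal energy $\int|\partial_t H|^p\diff t\diff\mathcal{H}^1\le C(\mathcal{N})$ and angular energy $\int|\partial_\theta H|^p\diff t\diff\mathcal{H}^1\le C(\mathcal{N})\bigl(\int_{\mathbb{S}^1}|D_\top g|^p\diff\mathcal{H}^1+1\bigr)$. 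My approach here: first replace $g$ by a smooth $g_\varepsilon$ with $\|g-g_\varepsilon\|_{C^0}$ small and $\|D_\top g_\varepsilon\|_p\le 2\|D_\top g\|_p$, passing from $g$ to $g_\varepsilon$ on a thin sub-annulus via $\Pi_{\mathcal{N}}$ of an affine interpolation (admissible by Lemma~\ref{lemma npr}, at a cost controlled by $C(\mathcal{N})\|D_\top g\|_p^p$ up to a vanishing term); then deform $g_\varepsilon$ to a closed geodesic inside its homotopy class by a Birkhoff-type broken-geodesic replacement. I expect the crux to be the uniform \emph{angular} bound: monotonicity of length along the scheme keeps every intermediate loop of length $\le C(\mathcal{N})\|D_\top g\|_p$ and every transversal displacement $\le\diam(\mathcal{N})$, but controlling how much the homotopy stretches in $\theta$ requires the bounded geometry of the compact $\mathcal{N}$ and is exactly why a naive geodesic interpolation (which may run into conjugate points) has to be replaced by the Birkhoff process; one also takes $\gamma$ to be the geodesic so produced, which is equally admissible in Lemma~\ref{lem renenergy ext}.

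Finally I would glue: set $u:=v_{1/2}$ on $B^2_{1/2}$ and $u:=w$ on $A$. The traces agree on $\partial B^2_{1/2}$, so $u\in W^{1,p}(B^2_1,\mathcal{N})$ with $\operatorname{tr}_{\mathbb{S}^1}(u)=g$; adding the two energy estimates and absorbing the additive $C(\mathcal{N})$ into $C(\mathcal{N})\int_{\mathbb{S}^1}\frac{|D_\top g|^p}{p}\diff\mathcal{H}^1$ through the lower bound $\int_{\mathbb{S}^1}\frac{|D_\top g|^p}{p}\diff\mathcal{H}^1\ge c(\mathcal{N})$ gives
\begin{equation*}
\int_{B^2_1}\frac{|Du|^p}{p}\diff x\le C(\mathcal{N})\int_{\mathbb{S}^1}\frac{|D_\top g|^p}{p}\diff\mathcal{H}^1+\frac{\mathcal{E}^{\mathrm{sg}}_{p}(g)}{2-p},
\end{equation*}
and undoing the scaling completes the argument. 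The one place where work is concentrated is the energy-controlled homotopy across the collar annulus; the rest is assembly built on Lemmas~\ref{lem lifting linear} and~\ref{lem renenergy ext} and Proposition~\ref{prop cont renorm}.
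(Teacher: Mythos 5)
Your overall architecture matches the paper's: reduction to the unit disk, separate treatment of the nullhomotopic case via Lemma~\ref{lem lifting linear}, planting a rescaled renormalizable extension of a minimizing geodesic $\gamma$ on the inner disk $B^{2}_{1/2}$ via Lemma~\ref{lem renenergy ext} and Proposition~\ref{prop cont renorm}, and absorbing the additive $C(\mathcal{N})$ via the systole lower bound $\int_{\mathbb{S}^1}\frac{|D_\top g|^p}{p}\diff\mathcal{H}^1\ge c(\mathcal{N})$. All of that is correct and is what the paper does.

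The gap is exactly where you put the weight: the collar annulus. You propose to fill it with a continuous homotopy $H$ from $g$ to $\gamma$ satisfying $\int_0^1\int_{\mathbb{S}^1}|\partial_t H|^p\le C(\mathcal{N})$ and $\int_0^1\int_{\mathbb{S}^1}|\partial_\theta H|^p\le C(\mathcal{N})(\|D_\top g\|_{L^p}^p+1)$, built by mollifying $g$ and then running a Birkhoff broken-geodesic replacement. This is not a complete argument and has concrete obstructions: the Birkhoff process is an infinite discrete iteration that need not converge at a uniform rate (closed geodesics can be degenerate), so parametrizing it on $[0,1]$ with finite transversal $L^p$ energy is not automatic — if you allot time $\Delta t_k$ to step $k$, the step's transversal contribution scales like $(\text{displacement})^p/\Delta t_k^{p-1}$, and you have neither a uniform bound on the number of steps nor geometric decay of the per-step displacement. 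Length monotonicity gives you an $L^1$-type control of $\partial_\theta H$ after reparametrization but does not directly control the $L^p$ angular energy of the interpolating homotopy, nor does ``bounded geometry'' deliver it for free. The paper sidesteps all of this: it cuts the annulus along a radius and unfolds it to the square $(0,1)^2$, builds a boundary trace on $\partial(0,1)^2$ out of $g$, $\gamma$, and a connecting arc, observes that this boundary map is nullhomotopic and hence lifts to the compact universal cover $\widetilde{\mathcal{N}}$, replaces the connecting arc by a minimizing geodesic $\widetilde{c}$ in $\widetilde{\mathcal{N}}$ (so its $p$-energy is bounded by a constant depending only on $\mathcal{N}$, since $\diam(\widetilde{\mathcal{N}})<\infty$), applies the linear extension Lemma~\ref{lem lifting linear} on the square, and projects back. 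That route converts the hard homotopy-with-energy-control problem into a single application of an already-proved linear extension estimate, which is why no curve-shortening is needed. If you want to keep your strategy you would have to prove the energy-controlled homotopy as a standalone lemma, and I do not see how to do so along the lines you sketch; otherwise, adopt the square-plus-lifting construction for the collar.
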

\begin{proof}
    Up to scaling and translation, we can assume that $r=1$ and $x_{0}=0$. First, if $g$ were homotopic to a constant map, then we would have $\mathcal{E}^{\mathrm{sg}}_{p}(g)=0$ and, according to Lemma~\ref{lem lifting linear}, \eqref{est on circle}  would hold. 
    
    Thus, we can assume that $g$ is not homotopic to a constant. 
    Let $\gamma \in C^{1}(\mathbb{S}^{1}, \mathcal{N})$ be a minimizing geodesic in the (free) homotopy class of $g$. 
    Hereinafter in this proof, $C$ will denote a positive constant that can only depend on $\mathcal{N}$ and can be different from line to line. 
    Let $v \in W^{1,2}_{\mathrm{ren}}(B^{2}_{1},\mathcal{N})$ be the map of Lemma~\ref{lem renenergy ext} such that $\tr_{\mathbb{S}^{1}}(v)=\gamma$. Define $u(x)=v(2x)$ for each $x \in B^{2}_{1/2}$. 
    Using Lemma~\ref{lem renenergy ext}~\ref{it_ahlaequaih1seo3xu2ooRi3f} and the fact that $g$ is homotopic to $\gamma$,  we get
    \begin{equation}\label{est 2.23}
    \int_{B^{2}_{1/2}}\frac{|Du|^{p}}{p}\diff x =2^{p-2} \int_{B^{2}_{1}}\frac{|Dv|^{p}}{p}\diff x \leq \mathcal{E}^{\mathrm{ren}}_{p}(v)+\frac{\mathcal{E}^{\mathrm{sg}}_{p}(g)}{2-p}\leq C +\frac{\mathcal{E}^{\mathrm{sg}}_{p}(g)}{2-p},
    \end{equation}
    where the last estimate comes, since the value $\mathcal{E}^{\mathrm{ren}}_{p}(v)$ depends only on $p$ and $\gamma$ and tends to $\mathcal{E}^{\mathrm{ren}}_{2}(v)$ as $p \nearrow 2$ (see Proposition~\ref{prop cont renorm}), and hence it is bounded by the constant depending only on $\mathcal{N}$, since there are finitely many free homotopy classes and we can fix a geodesic in each one.
    
    Finally, we define $u$ on the annulus $B^{2}_{1}\setminus B^{2}_{1/2}$. Set $f(x)\coloneqq \gamma(2x)$ for each $x \in \partial B^{2}_{1/2}$. In view of the Morrey embedding theorem, we can assume that $g \in C(\mathbb{S}^{1}, \mathcal{N})$. Then $g$ is continuously homotopic to $f$. Restricting a continuous homotopy between the mappings $g$ and $f$ to the segment $[\sigma/2, \sigma] \subset \smash{\overline{B}}^{2}_{1} \backslash B^{2}_{1/2}$ for some $\sigma \in \mathbb{S}^{1}$, we obtain a map $h \in C([\sigma/2, \sigma], \mathcal{N})$ such that $h(\sigma/2)=f(\sigma/2)$ and $h(\sigma)=g(\sigma)$. Next, observe that $B^{2}_{1}\setminus \smash{\overline{B}}^{2}_{1/2}$ is bilipschitz homeomorphic to $(0,1)^{2}/\sim,$ where $\sim$ is the equivalence relation identifying the sides $[(0,0),(0,1)]$ and $[(1,0), (1,1)]$. Thus, we obtain the map $w_{0}\in C(\partial (0,1)^{2},\mathcal{N})$ taking the values of $g$ on $[(0,0),(1,0)]$, of $f$ on $[(0,1), (1,1)]$ and of $h$ on $[(0,0), (0,1)]\cup [(1,0), (1,1)]$. Then $w_{0}$ is continuously homotopic to a constant, and the classical theory of liftings says that there exists $\widetilde{w}_{0} \in C(\partial (0,1)^{2}, \widetilde{\mathcal{N}})$ such that $w_{0}=\pi\circ \widetilde{w}_{0}$, where $\pi: \widetilde{\mathcal{N}}\to \mathcal{N}$ is the universal covering map. Let $\widetilde{c}\in C^{1}([0,1],\widetilde{\mathcal{N}})$ be a minimizing geodesic joining $\widetilde{w}_{0}((0,0))$ and $\widetilde{w}_{0}((0,1))$. Since $\pi_{1}(\mathcal{N})$ is finite and $\mathcal{N}$ is compact, $\widetilde{\mathcal{N}}$ is compact and hence there exists $L=L(\mathcal{N})>0$ such that $\max\{\int_{0}^{1}|\widetilde{c}^{\prime}|^{p}\diff t: p\in [1,2]\}\leq L$ (actually, $L$ depends on $\widetilde{\mathcal{N}}$, but the universal covering is unique up to a diffeomorphism, and we can assume that $L$ depends only on $\mathcal{N}$). Now we define the map $\widetilde{w}:\partial (0,1)^{2}\to \widetilde{\mathcal{N}}$ taking the values of $\widetilde{w}_{0}$ on $[(0,0), (1,0)]\cup [(0,1), (1,1)]$ and  of $\widetilde{c}$ on $[(0,0), (0,1)]\cup [(1,0), (1,1)]$. Define $w=\pi\circ \widetilde{w}$. Then $w$ takes the values of $g$ on $[(0,0),(1,0)]$, of $\pi\circ \widetilde{c}$ on $[(0,0), (0,1)]\cup [(1,0), (1,1)]$ and the values of $f$ on $[(0,1), (1,1)]$. 
       According to Lemma~\ref{lem lifting linear} with $B^{2}_{1}$ replaced by $(0,1)^{2}$ ($(0,1)^{2}$ is bilipschitz homeomorphic to $B^{2}_{1}$ (see \cite{bilipschitz})), there exists $W \in W^{1,p}((0,1)^{2}, \mathcal{N})$ such that 
    \begin{align*}
    \int_{(0,1)^{2}}\frac{|DW|^{p}}{p}\diff x &\leq C \int_{\partial (0,1)^{2}}\frac{|D_{\top} w|^{p}}{p}\diff \mathcal{H}^{1}\\ &\leq C \left(\int_{\mathbb{S}^{1}}\frac{|D_{\top}g|^{p}}{p}\diff \mathcal{H}^{1}+\int_{\mathbb{S}^{1}}\frac{|D_{\top}\gamma|^{p}}{p}\diff\mathcal{H}^{1}+\int_{0}^{1}\frac{|(\pi\circ \widetilde{c})^{\prime}|^{p}}{p}\diff t\right)\\
    &\leq C\int_{\mathbb{S}^{1}}\frac{|D_{\top}g|^{p}}{p}\diff \mathcal{H}^{1} + C, 
    \end{align*}
    where we have used that $\pi$ is a local isometry and that $\int_{0}^{1}|\widetilde{c}^{\prime}|^{p}\diff t \leq C$. Passing to the quotient $(0,1)^{2}\to B^{2}_{1}\setminus \smash{\overline{B}}^{2}_{1/2}$, we obtain the map $\varphi \in W^{1,p}(B^{2}_{1}\setminus \smash{\overline{B}}^{2}_{1/2}, \mathcal{N})$ such that $\tr_{\mathbb{S}^{1}}(\varphi)=g$, $\tr_{\partial B^{2}_{1/2}}(\varphi)=f$ and 
    \begin{equation}\label{est 2.24}
    \int_{B^{2}_{1}\setminus \smash{\overline{B}}^{2}_{1/2}}\frac{|D\varphi|^{p}}{p}\diff x \leq C\int_{\mathbb{S}^{1}}\frac{|D_{\top}g|^{p}}{p}\diff \mathcal{H}^{1} + C.
    \end{equation}
    Next, using \eqref{systole} and if necessary H\"older's inequality, we have
    \[
    \mathrm{sys}(\mathcal{N})\leq \int_{\mathbb{S}^{1}}|D_{\top}g|\diff \mathcal{H}^{1} \leq (2\pi)^{1-\frac{1}{p}}\left(\int_{\mathbb{S}^{1}}|D_{\top}g|^{p}\diff \mathcal{H}^{1} \right)^{\frac{1}{p}}=(2\pi)^{1-\frac{1}{p}}\|D_{\top}g\|_{L^{p}(\mathbb{S}^{1})}.
    \]
    This, together with the facts that $p \in [1,2)$ and $\mathrm{sys}(\mathcal{N}) \in (0,+\infty)$ (since $g$ is not nullhomotopic and $\mathcal{N}$ is compact), implies that
    \begin{equation}\label{est non hom}
    \int_{\mathbb{S}^{1}}\frac{|D_{\top}g|^{p}}{p}\diff \mathcal{H}^{1} \geq \frac{1}{C}.
    \end{equation}
    Define $u(x)= \varphi(x)$ for each $x \in B^{2}_{1}\setminus \smash{\overline{B}}^{2}_{1/2}$. Then $u \in W^{1,p}(B^{2}_{1},\mathcal{N})$, $\tr_{\mathbb{S}^{1}}(u)=g$ and combining \eqref{est 2.23}, \eqref{est 2.24} and \eqref{est non hom}, we deduce the following
    \[
    \int_{B^{2}_{1}}\frac{|Du|^{p}}{p}\diff x \leq C\int_{\mathbb{S}^{1}}\frac{|D_{\top}g|^{p}}{p}\diff \mathcal{H}^{1}+ \frac{\mathcal{E}^{\mathrm{sg}}_{p}(g)}{2-p},
    \]
    which completes our proof of Lemma~\ref{lem 2.17}.
\end{proof}
To lighten the notation, for every $x_{0}\in \mathbb{R}^{3}$, $r>0$ and $L>0$, we define the cylinder and its lateral surface, namely
\begin{align}\label{def cyl} 
\Lambda_{r,L}(x_{0})&\coloneqq B^{2}_{r}(x_{0})\times (-L,L)&
&\text{and}&
\Gamma_{r,L}(x_{0})=\partial B^{2}_{r}(x_{0})\times (-L,L).
\end{align}

\begin{rem}\label{homotopy class} The next construction is very reminiscent the one proposed by Rubinstein and Sternberg (see \cite[Section 2]{rubinstein1996homotopy}) in the case when the target manifold is the circle $\mathbb{S}^{1}$, which was extended by Brezis, Li, Mironescu and Nirenberg (see \cite[Section~1]{brezis1999degree}) to the case where the targed manifold is compact, connected and oriented.
    Let $\Gamma=\Gamma_{r,L}(x_{0})$ for some $x_{0}\in \mathbb{R}^{3}$, $r,L>0$. 
    Then, for each $g \in W^{1,2}(\Gamma, \mathcal{N})$, we can define the so-called ``homotopy class'' of $g$. 
    Indeed, according to \cite[Proposition,~p.~267]{SU}, there exists $(g_{n})_{n \in \mathbb{N}}\subset C^{\infty}(\Gamma,\mathcal{N})$ such that $g_{n}\to g$ strongly in $W^{1,2}(\Gamma,\mathbb{R}^{\nu})$. 
    Then, using the coarea formula and the Sobolev embedding, we observe that for $\mathcal{H}^{1}$-a.e.\  $z \in (-L,L)$, $\tr_{\partial B^{2}_{r}(x_{0})\times \{z\}}(g) = g|_{\partial B^{2}_{r}(x_{0})\times\{z\}}$ is continuous, belongs to $W^{1,2}(\partial B^{2}_{r}(x_{0})\times \{z\}, \mathcal{N})$ and 
    \[g_{n}|_{\partial B^{2}_{r}(x_{0})\times \{z\}}\to g|_{\partial B^{2}_{r}(x_{0})\times \{z\}}\] uniformly. This implies that, for $\mathcal{H}^{1}$-a.e.\  $z_{1},z_{2}\in (-L,L)$, $g|_{\partial B^{2}_{r}(x_{0})\times \{z_{1}\}}$ is continuously homotopic to $g|_{\partial B^{2}_{r}(x_{0})\times \{z_{2}\}}$, and we can define the ``homotopy class'' of $g$, namely $[g]_{\Gamma}$, by setting $[g]_{\Gamma}\coloneqq [g|_{\partial B^{2}_{r}(x_{0})\times \{z_{1}\}}]$. To simplify our notation, hereinafter, we shall always write $[g]_{\Gamma}$ instead of $[g]_{\Gamma_{r,L}(x_{0})}$. 
\end{rem}
\begin{lemma}\label{lem 2.19}
    Let $p\in [1,2)$ and $g \in W^{1,2}(\Gamma_{r,L}(x_{0}), \mathcal{N})$. Then there exists $u \in W^{1,p}(\Lambda_{r,L}(x_{0}), \mathcal{N})$ such that $\tr_{\Gamma_{r,L}(x_{0})}(u)=g$ and the following assertions hold. 
    \begin{enumerate}[label=(\roman*)]
    \item
    \label{it_pha8ieh4Umaejao2vaephei8}
    For some $C=C(\mathcal{N})>0$, the following estimate holds
    \begin{equation*}
    \int_{\Lambda_{r,L}(x_{0})}\frac{|Du|^{p}}{p}\diff x \leq \frac{2L\mathcal{E}^{\mathrm{sg}}_{p}([g]_{\Gamma})r^{2-p}}{2-p}+ C\left(\frac{L^{p}}{r^{p-1}}+r\right)\int_{\Gamma_{r,L}(x_{0})}\frac{|D_{\top}g|^{p}}{p}\diff \mathcal{H}^{2}.
    \end{equation*}
    \item
    \label{it_cang8wooGhodoh3Eegoom7ee}
     For each $z \in \{-L,L\}$, $\tr_{B^{2}_{r}(x_{0})\times \{z\}}(u)\in W^{1,p}(B^{2}_{r}(x_{0})\times \{z\},\mathcal{N})$ and there exists a constant $C=C(\mathcal{N})>0$ such that     
     \begin{equation*}
    \int_{B^{2}_{r}(x_{0})\times \{z\}}\frac{|D\operatorname{tr}_{B^{2}_{r}(x_{0})\times \{z\}}(u)|^{p}}{p}\diff \mathcal{H}^{2}\leq \frac{\mathcal{E}^{\mathrm{sg}}_{p}([g]_{\Gamma})r^{2-p}}{2-p}+C\left(\left(\frac{L}{r}\right)^{p-1}+\frac{r}{L}\right)\int_{\Gamma_{r,L}(x_{0})}\frac{|D_{\top} g|^{p}}{p}\diff \mathcal{H}^{2}.
    \end{equation*}
    \end{enumerate}
\end{lemma}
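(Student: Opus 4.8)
The plan is to reduce to a model cylinder, split off the topology into a fixed $z$‑independent core that carries all the singular energy, and treat the $z$‑dependence of $g$ by a linear‑type correction in a surrounding shell. First, by the dilation $x\mapsto x_{0}+r(x-x_{0})$, which carries $\Lambda_{r,L}(x_{0})$ to $\Lambda_{1,L/r}(0)$, both inequalities are scale‑consistent, so I may assume $x_{0}=0$ and $r=1$. Fix a minimizing geodesic $\gamma\in C^{1}(\mathbb{S}^{1},\mathcal{N})$ in the free homotopy class $[g]_{\Gamma}$ (the constant map if $[g]_{\Gamma}$ is trivial), let $v\in W^{1,2}_{\mathrm{ren}}(B^{2}_{1},\mathcal{N})$ be the renormalizable extension of $\gamma$ from Lemma~\ref{lem renenergy ext}, and split $\Lambda_{1,L}(0)$ into the \emph{core} $B^{2}_{1/2}\times(-L,L)$ and the \emph{shell} $(B^{2}_{1}\setminus\overline{B}^{2}_{1/2})\times(-L,L)$, matched along $\partial B^{2}_{1/2}\times(-L,L)$.

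On the core I set $u(x',z):=v(2x')$; this is $z$‑independent, so Lemma~\ref{lem renenergy ext}~\ref{it_ahlaequaih1seo3xu2ooRi3f} and a change of variables give
\[
\int_{B^{2}_{1/2}\times(-L,L)}\frac{\abs{Du}^{p}}{p}\diff x = 2L\,2^{p-2}\Bigl(\mathcal{E}^{\mathrm{ren}}_{p}(v)+\frac{\mathcal{E}^{\mathrm{sg}}_{p}([g]_{\Gamma})}{2-p}\Bigr) \le \frac{2L\,\mathcal{E}^{\mathrm{sg}}_{p}([g]_{\Gamma})}{2-p}+2L\,C(\mathcal{N}),
\]
since $2^{p-2}\le1$ and $\abs{\mathcal{E}^{\mathrm{ren}}_{p}(v)}\le C(\mathcal{N})$ (there are finitely many homotopy classes, and $\mathcal{E}^{\mathrm{ren}}_{p}(v)\to\mathcal{E}^{\mathrm{ren}}_{2}(v)$ as $p\nearrow2$ by Proposition~\ref{prop cont renorm}); restricting this piece to $B^{2}_{1/2}\times\set{\pm L}$ gives the same bound with $2L$ replaced by $1$. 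When $[g]_{\Gamma}$ is nontrivial the excess $2L\,C(\mathcal{N})$ is absorbed exactly as in \eqref{est non hom}: H\"older and \eqref{systole} give $\int_{\mathbb{S}^{1}}\abs{D_{\top}g(\cdot,z)}^{p}\diff\mathcal{H}^{1}\ge c(\mathcal{N})>0$ for a.e.\ $z$, hence $\int_{\Gamma_{1,L}}\tfrac{\abs{D_{\top}g}^{p}}{p}\diff\mathcal{H}^{2}\ge c'(\mathcal{N})\,L$, so $2L\,C(\mathcal{N})\le C(\mathcal{N})(L^{p}+1)\int_{\Gamma_{1,L}}\tfrac{\abs{D_{\top}g}^{p}}{p}\diff\mathcal{H}^{2}$ and likewise $C(\mathcal{N})\le C(\mathcal{N})(L^{p-1}+\tfrac1L)\int_{\Gamma_{1,L}}\tfrac{\abs{D_{\top}g}^{p}}{p}\diff\mathcal{H}^{2}$; if $[g]_{\Gamma}$ is trivial, $v$ is constant and there is no excess.

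On the shell I flatten $(B^{2}_{1}\setminus\overline{B}^{2}_{1/2})\times(-L,L)$, which is bilipschitz equivalent to $\mathbb{S}^{1}_{\theta}\times[1/2,1]_{s}\times(-L,L)_{z}$. Using the coarea formula and the weak $(1,1)$ bound for the Hardy–Littlewood maximal function I choose a Lebesgue height $z_{0}\in(-L/2,L/2)$ with $g(\cdot,z_{0})\in W^{1,2}(\mathbb{S}^{1},\mathcal{N})$ representing $[g]_{\Gamma}$ (Remark~\ref{homotopy class}) and such that the maximal function at $z_{0}$ of $w\mapsto\int_{\mathbb{S}^{1}}\abs{D_{\top}g(\cdot,w)}^{p}\diff\mathcal{H}^{1}$ is $\le\tfrac{C}{L}\int_{\Gamma_{1,L}}\abs{D_{\top}g}^{p}\diff\mathcal{H}^{2}$; note $\abs{\pm L-z_{0}}$ is comparable to $L$. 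On $\set{1/2\le s\le3/4}$ I put the $z$‑independent map produced by the annulus step of the proof of Lemma~\ref{lem 2.17} between $\gamma$ (at $s=1/2$) and $g(\cdot,z_{0})$ (at $s=3/4$) — lifting the null-homotopic difference to the compact universal cover $\widetilde{\mathcal{N}}$ and invoking Lemma~\ref{lem lifting linear} — whose energy per unit height is $\le C\bigl(\int_{\mathbb{S}^{1}}\tfrac{\abs{D_{\top}g(\cdot,z_{0})}^{p}}{p}\diff\mathcal{H}^{1}+C(\mathcal{N})\bigr)$. On $\set{3/4\le s\le1}$ I put $u(\theta,s,z):=g\bigl(\theta,\,z_{0}+\beta(s)(z-z_{0})\bigr)$ with $\beta:[3/4,1]\to[0,1]$ smooth, increasing, $\beta(3/4)=0$, $\beta(1)=1$, $\beta(s)\sim(s-3/4)^{\gamma}$ near $3/4$ for a suitable $\gamma>1$; at $s=3/4$ this is $g(\cdot,z_{0})$, so the pieces match. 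After the substitution $w=z_{0}+\beta(s)(z-z_{0})$ on each radial line, the $\partial_{\theta}$‑ and $\partial_{z}$‑contributions are controlled by $C\int_{\Gamma_{1,L}}\abs{D_{\top}g}^{p}$ (the maximal-function choice of $z_{0}$ absorbs the Jacobian factor $\beta(s)^{-1}$ via $\int_{z_{0}-\delta}^{z_{0}+\delta}\le2\delta\,M(\cdot)(z_{0})$), while $\partial_{s}u=\beta'(s)(z-z_{0})\,\partial_{z}g(\theta,w)$ contributes at most $C\,(2L)^{p}\int_{\Gamma_{1,L}}\abs{D_{\top}g}^{p}$ (using $\abs{w-z_{0}}\le2L\,\beta(s)$ and $\int_{3/4}^{1}\abs{\beta'}^{p}\beta^{-1}\diff s<\infty$). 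Adding the core estimate gives (i); for (ii) one records the trace on $\set{z=\pm L}$, where the $z$‑independent piece contributes $\le C\int_{\mathbb{S}^{1}}\tfrac{\abs{D_{\top}g(\cdot,z_{0})}^{p}}{p}\diff\mathcal{H}^{1}+C(\mathcal{N})$ and the piece $\theta\mapsto g(\theta,z_{0}+\beta(s)(\pm L-z_{0}))$ is estimated by the same substitution together with $\abs{\pm L-z_{0}}\sim L$, producing the factor $L^{p-1}+\tfrac1L$; adding the core's $\tfrac{\mathcal{E}^{\mathrm{sg}}_{p}([g]_{\Gamma})}{2-p}$ yields (ii).

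The hard part will be the shell construction, and within it the simultaneous control of the volume energy and of the traces on the two \emph{extreme} disks, which forces the extension to be well-behaved on every horizontal slice, not just on a generic one. Squeezing the $z$‑variable of $g$ to the height $z_{0}$ by the naive straight-line homotopy fails: its $\partial_{\theta}$‑energy demands $\int_{3/4}^{1}\beta(s)^{-1}\diff s<\infty$ (hence $\gamma<1$) while its $\partial_{s}$‑energy demands $\int_{3/4}^{1}\abs{\beta'(s)}^{p}\beta(s)^{-1}\diff s<\infty$ (hence $\gamma>1$), and these requirements are reconcilable only after choosing $z_{0}$ through the maximal function. The surviving constant $\int_{3/4}^{1}\abs{\beta'}^{p}\beta^{-1}\diff s$ still degenerates as $p\to1$, so the regime $p$ close to $1$ has to be handled separately by a homogeneous-type extension (whose radial-energy factor $\tfrac1{2-p}$ is then harmless) after a preliminary modification of $g$ in the collars near $z=\pm L$; it is precisely this modification that yields the sharper power $L^{p-1}+\tfrac1L$ in (ii) rather than $L^{p}+1$ in (i). A second, structural difficulty is that the coefficient of $\tfrac{\mathcal{E}^{\mathrm{sg}}_{p}([g]_{\Gamma})}{2-p}$ must be exactly $2L$ in (i) and exactly $1$ in (ii), which is why the singular part is isolated entirely in the $z$‑independent core map $v$ of Lemma~\ref{lem renenergy ext}, the shell being arranged to carry no singular energy.
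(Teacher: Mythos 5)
Your construction differs from the paper's in the shell step. Where you squeeze the $z$-variable by a smooth power-law profile $\beta(s)\sim(s-3/4)^{\gamma}$, the paper funnels it along a \emph{piecewise-linear cone} in the $(\varrho,z)$-plane: it picks $z_{0}$ by a simple coarea average and sets $u(\varrho\sigma,z)=f(\sigma,\xi(\varrho,z))$ on $\{\varrho\ge\varrho_{0}(z)\}$ with $\xi$ affine in $\varrho$ and $\varrho_{0}(z)$ affine in $z$, continuing by the constant slice $f(\cdot,z_{0})$ on the inner cone $\{\varrho<\varrho_{0}(z)\}$. Because $\partial\xi/\partial z\equiv1$ and $\partial\xi/\partial\varrho$ is a bounded constant, the change of variables $(\varrho,z)\mapsto(\varrho,\xi)$ on each slice is bilipschitz with constants depending only on $L/r$, so all energy and trace estimates follow by brute force with constants uniform in $p\in[1,2)$ and no Lebesgue-point selection is needed. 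Your construction, by contrast, has a degenerating Jacobian as $s\searrow3/4$ and must be rescued by the Hardy--Littlewood maximal function at $z_{0}$; the paper's linear funnel sidesteps that entirely. The treatment of the singular core is essentially the same in both: the paper isolates it in the $z$-independent extension of the $z_{0}$-slice via Lemma~\ref{lem 2.17}, and you unbundle that lemma into the scaled renormalizable map $v(2\cdot)$ on $B^{2}_{1/2}$ and the annular transition in $\{1/2\le s\le3/4\}$.

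The genuine gap you yourself flag is not resolved. Your $\partial_{s}$-bound is expressed through $\int_{3/4}^{1}\abs{\beta'}^{p}\beta^{-1}\diff s$, and for any fixed $\gamma>1$ this integral diverges as $p\searrow1$, so the resulting constant is not of the allowed form $C(\mathcal{N})$. You propose handling $p$ near $1$ by a homogeneous-type extension ``after a preliminary modification of $g$ in the collars,'' but no details are given, and the attendant claim that this modification is what produces the $L^{p-1}+1/L$ factor in (ii) is not substantiated — in the paper that factor comes simply from the $2$-dimensional trace computation. In fact your construction can be repaired without a separate case: applying the Lebesgue-point inequality $\int_{I_{s}}(\cdot)\diff w\le 2L\beta(s)M(z_{0})$ also to the $\partial_{s}$-term cancels the remaining $\beta^{-1}$, leaving $\int_{3/4}^{1}\abs{\beta'}^{p}\diff s$, which is bounded uniformly on $p\in[1,2]$. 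But this sharper estimate is not what you wrote, and the (ii) trace estimate then still requires a dyadic maximal-function argument to control $\int_{z_{0}}^{L}(w-z_{0})^{-1+1/\gamma}\int_{\mathbb{S}^{1}}\abs{\partial_{\theta}g(\theta,w)}^{p}\diff\theta\diff w$ near $w=z_{0}$ — again not spelled out. As presented, the proof therefore does not establish the lemma for the full range $p\in[1,2)$ with the stated uniform constant.
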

      \begin{proof}
    Up to translation, we can assume that $x_{0}=0$. Denote $\Gamma=\Gamma_{r,L}$ and  $\Lambda = \Lambda_{r,L}$. Using the coarea formula, we get some  $z_{0}\in (-L/2,L/2)$ such that $g|_{\partial B^{2}_{r}\times \{z_{0}\}}\in W^{1,2}(\partial B^{2}_{r}\times \{z_{0}\}, \mathcal{N})$ and
    \begin{equation}\label{est 2.27}
    \|D_{\top}g\|^{p}_{L^{p}(\partial B^{2}_{r}\times \{z_{0}\})}\leq \frac{2}{L}\|D_{\top}g\|^{p}_{L^{p}(\Gamma)}.
    \end{equation}
    First, we define $u$ on $(B^{2}_{r}\setminus B^{2}_{r/2})\times (-L,L)$. Let $f(\sigma, z)\coloneqq g(r\sigma,z)$ for each $\sigma \in \mathbb{S}^{1}$ and $z \in (-L,L)$. For every $\varrho\in [r/2,r]$, $\sigma \in \mathbb{S}^{1}$ and $z\in (-L,L)$, we set
    \[
    u(\varrho \sigma, z)\coloneqq
    \begin{cases}
    f(\sigma, \xi(\varrho, z))&\text{if}\,\ \varrho\in [\varrho_{0}(z), r], \\
    f(\sigma, z_{0}) &\text{if}\,\ \varrho\in [r/2, \varrho_{0}(z)],
    \end{cases}
    \]
    where 
    \begin{align*}
    \xi(\varrho,z)
    &\coloneqq
    \begin{cases}
    z-\frac{2(r-\varrho)(L-z_{0})}{r} \,\ &\text{if}\,\ z \in [z_{0},L],\\
    z+\frac{2(r-\varrho)(L+z_{0})}{r} \,\ &\text{if}\,\ z \in [-L,z_{0}]
    \end{cases}
    &
    &\text{ and}&
    \varrho_{0}(z)&
    \coloneqq
    \begin{cases}
    r-\frac{r(z-z_{0})}{2(L-z_{0})} \,\ &\text{if}\,\ z \in [z_{0}, L],\\
    r+\frac{r(z-z_{0})}{2(L+z_{0})}\,\ &\text{if}  \,\ z \in [-L,z_{0}].
    \end{cases}
    \end{align*}
    Notice that $z\mapsto \varrho_{0}(z)$ is affine on $[-L,z_{0}]$ and on $[z_{0}, L]$. Also $\varrho_{0}(-L)=\varrho_{0}(L)=r/2$, $\varrho_{0}(z_{0})=r$. Furthermore, $\xi(\varrho,z)\in (-L,L)$ if $z\in (-L,L)$ and $\varrho\in [\varrho_{0}(z),r]$. Performing the changes of variables, we have, firstly, the splitting
    \begin{align*}
     \|Du\|^{p}_{L^{p}((B^{2}_{r}\setminus \smash{\overline{B}}^{2}_{r/2})\times (z_{0},L))} = \mathrm{I} + \mathrm{II}, 
    \end{align*}
    where
    \begin{align*}
    \mathrm{I}
    \coloneqq 
    \int_{r/2}^{\varrho_{0}(z)}\diff \varrho \int_{\mathbb{S}^{1}}\diff \mathcal{H}^{1}(\sigma)\int_{z_{0}}^{L}\varrho^{1-p} \left|\frac{\partial f}{\partial \sigma}\right|^{p}(\sigma,z_{0}) \diff z
    \end{align*}
    and 
   
    \begin{align*}
    \mathrm{II}\coloneqq \int_{\varrho_{0}(z)}^{r} d \varrho\int_{\mathbb{S}^{1}} d \mathcal{H}^{1}(\sigma) \int_{z_{0}}^{L}\varrho\left(\left(\frac{4(L-z_{0})^{2}}{r^{2}}+1\right)\left|\frac{\partial f}{\partial z}\right|^{2}(\sigma,\xi(\varrho,z))+\frac{1}{\varrho^{2}}\left|\frac{\partial f}{\partial \sigma}\right|^{2}(\sigma,\xi(\varrho,z))\right)^{\frac{p}{2}} d z.
    \end{align*}       

    Using the fact that $\frac{1-(1/2)^{2-p}}{2-p}\leq \ln(2)$, the elementary inequality $(a+b)^{\frac{p}{2}}\leq a^{\frac{p}{2}}+b^{\frac{p}{2}}$ for each $a,b\geq 0$ and \eqref{est 2.27}, we estimate \(\mathrm{I}\) by
    \begin{equation}
    \label{firsteint}
    \begin{split}
    \mathrm{I} &\leq \int_{\mathbb{S}^{1}}\ln(2)(L-z_{0})r^{2-p}\left|\frac{\partial f}{\partial \sigma}\right|^{p}(\sigma, z_{0})\diff \mathcal{H}^{1}(\sigma)\\
    & \leq \ln(2)(L-z_{0})r\|D_{\top}g\|^{p}_{L^{p}(\partial B^{2}_{r}\times \{z_{0}\})}  
    \end{split}
    \end{equation}
    and \(\mathrm{II}\) by
    \begin{align*}
    \mathrm{II} &\leq  \frac{r}{2(L-z_{0})}\int_{z_{0}}^{z}\diff \xi\int_{\mathbb{S}^{1}}\diff \mathcal{H}^{1}(\sigma)\int_{z_{0}}^{L}r \left(\left(\frac{4(L-z_{0})^{2}}{r^{2}}+1\right)\left|\frac{\partial f}{\partial z}\right|^{2}(\sigma,\xi)+\frac{4}{r^{2}}\left|\frac{\partial f}{\partial\sigma}\right|^{2}(\sigma,\xi)\right)^{\frac{p}{2}}  \diff z\\
    & \leq \frac{r}{2}\left(\frac{4(L-z_{0})^{2}}{r^{2}}+4\right)^{\frac{p}{2}}\int_{z_{0}}^{L}\diff \xi\int_{\mathbb{S}^{1}}r\left(\left|\frac{\partial f}{\partial z}\right|^{2}(\sigma,\xi)+\frac{1}{r^{2}}\left|\frac{\partial f}{\partial \sigma}\right|^{2}(\sigma,\xi)\right)^{\frac{p}{2}} \diff \mathcal{H}^{1}(\sigma) \\
    & \leq 2\left(\frac{(L-z_{0})^{p}}{r^{p-1}}+r\right)\|D_{\top}g\|^{p}_{L^{p}(\partial B^{2}_{r}\times (z_{0},L))}, 
    \end{align*} 
    which, together with \eqref{firsteint}, implies that
    \begin{equation}\label{est 2.28}
     \|Du\|^{p}_{L^{p}((B^{2}_{r}\setminus \smash{\overline{B}}^{2}_{r/2})\times (z_{0},L))} \leq C\left(\frac{L^{p}}{r^{p-1}}+r\right)\|D_{\top}g\|^{p}_{L^{p}(\Gamma)}, 
     \end{equation}
    where $C>0$ is the constant
    that, hereinafter in this proof, can depend only on $\mathcal{N}$ and can be different from line to line. 
    Similarly, we have
    \begin{equation} \label{est 2.29}
    \|Du\|^{p}_{L^{p}((B^{2}_{r}\setminus \smash{\overline{B}}^{2}_{r/2})\times (-L,z_{0}))} \leq  C\left(\frac{L^{p}}{r^{p-1}}+r\right)\|D_{\top}g\|^{p}_{L^{p}(\Gamma)}. 
    \end{equation}
    Applying Lemma~\ref{lem 2.17}, we define $u$ on $ B^{2}_{r/2}\times \{z_{0}\}$, satisfying
    \begin{equation}
    \label{est 2.30}
    \begin{split}
    \int_{B^{2}_{r/2}\times \{z_{0}\}}\frac{|Du|^{p}}{p}\diff \mathcal{H}^{2}&\leq \frac{\mathcal{E}^{\mathrm{sg}}_{p}([g]_{\Gamma})r^{2-p}}{2-p}+Cr\int_{\partial B^{2}_{r}\times \{z_{0}\}}\frac{|D_{\top}u|^{p}}{p}\diff \mathcal{H}^{1}\\
    &\leq \frac{\mathcal{E}^{\mathrm{sg}}_{p}([g]_{\Gamma})r^{2-p}}{2-p}+ \frac{Cr}{L}\int_{\Gamma}\frac{|D_{\top}g|^{p}}{p}\diff \mathcal{H}^{2},   
    \end{split}
    \end{equation}
    where to obtain the last estimate, we have used \eqref{est 2.27}. Since $u$ restricted to $\partial B^{2}_{r/2}\times (-L,L)$ is independent of the $z$-variable, we can define $u(\varrho \sigma, z)=u(\varrho \sigma, z_{0})$ for every $\varrho \in (0, r/2]$, $\sigma \in \mathbb{S}^{1}$ and $z \in (-L,L)$. At this point, $u$ is defined on $\Lambda$ and $u \in W^{1,p}(\Lambda, \mathcal{N})$. Observe that, integrating both sides of \eqref{est 2.30} with respect to $z \in (-L,L)$, we obtain that
    \begin{equation}\label{est 2.31}
    \int_{B^{2}_{r/2}\times (-L,L)}\frac{|Du|^{p}}{p}\diff x \leq \frac{2L\mathcal{E}^{\mathrm{sg}}_{p}([g]_{\Gamma})r^{2-p}}{2-p} +Cr\int_{\Gamma}\frac{|D_{\top}g|^{p}}{p}\diff \mathcal{H}^{2}.
    \end{equation}
    Combining \eqref{est 2.28} with \eqref{est 2.29} and \eqref{est 2.31}, we complete the proof of the assertion \ref{it_pha8ieh4Umaejao2vaephei8}.
    
    Lastly, looking at the definition of $u$, one observes that $\tr_{B^{2}_{r}\times \{z\}}(u)\in W^{1,p}(B^{2}_{r}\times \{z\}, \mathcal{N})$ for each $z \in \{-L,L\}$. In particular, if $z=L$, then, setting $\mathrm{III}=\|D\operatorname{tr}_{B^{2}_{r}\times \{L\}}(u)\|^{p}_{L^{p}((B^{2}_{r}\setminus \smash{\overline{B}}^{2}_{r/2})\times \{L\})}$, we deduce
    \begin{align*}
    \mathrm{III} &\leq \int_{r/2}^{r}\diff \varrho\int_{\mathbb{S}^{1}}\varrho\left(\frac{4(L-z_{0})^{2}}{r^{2}}\left|\frac{\partial f}{\partial z}\right|^{2}(\sigma,\xi(\varrho,L))+\frac{1}{\varrho^{2}}\left|\frac{\partial f}{\partial \sigma}\right|^{2}(\sigma, \xi(\varrho,L))\right)^{\frac{p}{2}}\diff\mathcal{H}^{1}(\sigma)\\
    & \leq  \frac{r}{2(L-z_{0})}\left(\frac{4(L-z_{0})^{2}}{r^{2}}+4\right)^{\frac{p}{2}}\int_{z_{0}}^{L}\diff \xi\int_{\mathbb{S}^{1}}r\left(\left|\frac{\partial f}{\partial z}\right|^{2}(\sigma,\xi)+\frac{1}{r^{2}}\left|\frac{\partial f}{\partial \sigma}\right|^{2}(\sigma, \xi)\right)^{\frac{p}{2}}\diff \mathcal{H}^{1}(\sigma) \\
    & \leq \frac{r}{2(L-z_{0})}\left(\frac{4(L-z_{0})^{p}}{r^{p}}+4\right)\int_{\Gamma}|D_{\top}g|^{p}\diff \mathcal{H}^{2}\\ 
    & \leq C\left(\left(\frac{L}{r}\right)^{p-1}+\frac{r}{L}\right)\int_{\Gamma}|D_{\top}g|^{p}\diff \mathcal{H}^{2}. 
    \end{align*}
    This, together with \eqref{est 2.30}, proves the inequality of the assertion \ref{it_cang8wooGhodoh3Eegoom7ee} for $z=L$. 
    Similarly, one obtains the same estimate for $\|D\operatorname{tr}_{B^{2}_{r}\times \{-L\}}(u)\|^{p}_{L^{p}(B^{2}_{r}\times \{-L\})}$, which completes our proof of the assertion \ref{it_cang8wooGhodoh3Eegoom7ee} and of Lemma~\ref{lem 2.19}.    
\end{proof}
\subsection{Minimizing $p$-harmonic maps}
Given an open bounded set $U\subset \mathbb{R}^{3}$, 
recall that a mapping $u_{p} \in W^{1,p}(U, \mathcal{N})$ is a $p$-minimizer in $U$ whenever the following holds
\[
\int_{U}\frac{|Du_{p}|^{p}}{p}\diff x = \min\left\{\int_{U} \frac{|Du|^{p}}{p}\diff x: u \in W^{1,p}(U, \mathcal{N}), \,\ u-u_{p} \in W^{1,p}_{0}(U, \mathbb{R}^{\nu}) \right\}.
\]
In particular, if $U$ has a Lipschitz boundary, then $u_{p} \in W^{1,p}(U, \mathcal{N})$ is a $p$-minimizer in $U$ if and only if
\[
\int_{U}\frac{|Du_{p}|^{p}}{p}\diff x = \min\left\{\int_{U} \frac{|Du|^{p}}{p}\diff x: u \in W^{1,p}(U, \mathcal{N}), \,\ \operatorname{tr}_{\partial U} (u)=\operatorname{tr}_{\partial U} (u_{p})\right\}.
\]
When appropriate, we shall simply say that $u_{p}$ is a $p$-minimizer. 

The next proposition says that if $p \in (1,2)$, then for every $p$-minimizer in $\Omega$ which is equal on $\partial \Omega$ to some fixed mapping $g$ lying in $W^{1/2,2}(\partial \Omega, \mathcal{N})$, the $p$-energy is bounded from above by $C((2-p)^{-1})$,  where $C>0$ is a constant depending only on $g$, $\partial \Omega$ and $\mathcal{N}$.
    \begin{prop}
    \label{global energy bound}
        Let $g\in W^{1/2,2}(\partial \Omega, \mathcal{N})$ and $p\in (1,2)$. Then there exists a positive constant $C$ depending only on $\partial \Omega$ and  $\mathcal{N}$ such that if $u_{p}$ is a $p$-minimizer with $\tr_{\partial \Omega}(u_{p})=g$, then  
        \[
        (2-p)\int_{\Omega}\frac{|Du_{p}|^{p}}{p}\diff x\leq C |g|^{p}_{W^{1/2, 2}(\partial \Omega, \mathbb{R}^{\nu})}.
        \]
    \end{prop}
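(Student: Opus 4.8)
The plan is to construct an explicit competitor $v \in W^{1,p}(\Omega, \mathcal{N})$ with $\operatorname{tr}_{\partial\Omega}(v) = g$ whose $p$-energy is bounded by $C(2-p)^{-1}|g|_{W^{1/2,2}(\partial\Omega,\mathbb{R}^\nu)}^p$, and then invoke the minimality of $u_p$ together with the comparison $|g|_{W^{1-1/p,p}}\lesssim |g|_{W^{1/2,2}}$ (valid for $p\le 2$ on the compact manifold $\partial\Omega$). The key point is that the naive linear extension (harmonic extension into $\mathbb{R}^\nu$ followed by the nearest-point retraction $\Pi_{\mathcal{N}}$) is unavailable because the $W^{1,2}$-harmonic extension of $g$ need not stay inside $\mathcal{N}_{\lambda_{\mathcal{N}}}$; this is precisely the topological obstruction discussed in the introduction. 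So the construction must genuinely use the $p<2$ relaxation.

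First I would reduce to a local problem near $\partial\Omega$ via the Lipschitz structure: cover $\partial\Omega$ by finitely many boundary charts in which $\Omega$ looks like a subgraph, and in each chart work on a thin collar. On the collar one builds the extension by the degenerate-anisotropic scaling of Lemma~\ref{lem 2.19}: the collar is foliated by surfaces transversal to $\partial\Omega$, and one first resolves $g$ on a bounded piece (a $2$-disk cross-section, using Lemma~\ref{lem 2.17} which produces the $\mathcal{E}^{\mathrm{sg}}_p(g)\,r^{2-p}/(2-p)$ term from the singular energy and a linear term $Cr\int|D_\top g|^p$) and then propagates it through the collar. Outside the collar, on a compact core $\Omega' \csubset \Omega$ where the datum has been made continuous, one extends by a fixed map built from geodesics in the finitely many free homotopy classes of $\mathcal{N}$ (here finiteness of $\pi_1(\mathcal{N})$ — equivalently compactness of $\widetilde{\mathcal{N}}$ — is used, exactly as in Lemma~\ref{lem 2.17}, to get a constant depending only on $\mathcal{N}$). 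Gluing the collar extension to the core extension across a fixed-thickness interface costs only a controlled amount of energy, bounded by $C$ plus $C\int_{\partial\Omega}|D_\top g|^p$.

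Collecting the estimates, the competitor satisfies
\begin{equation*}
\int_\Omega \frac{|Dv|^p}{p}\diff x \le \frac{C}{2-p}\Bigl(\mathcal{E}^{\mathrm{sg}}_p(g)+\int_{\partial\Omega}|D_\top g|^p\,\diff\mathcal{H}^2\Bigr) + C,
\end{equation*}
where $C=C(\partial\Omega,\mathcal{N})$; the $(2-p)^{-1}$ is harmless after multiplying through. Since $\mathcal{E}^{\mathrm{sg}}_p(g)$ ranges over a finite set (Lemma~\ref{lem cont singenergy}) it is bounded, and by Hölder $\int_{\partial\Omega}|D_\top g|^p \lesssim 1 + |g|_{W^{1,p}(\partial\Omega)}^p$ once $g\in W^{1,2}\subset W^{1,p}$; more precisely one uses the fractional trace comparison $|g|_{W^{1-1/p,p}(\partial\Omega)}\le C|g|_{W^{1/2,2}(\partial\Omega)}$ together with a Sobolev extension on $\partial\Omega$ itself to replace the gradient integral by the fractional seminorm, absorbing the additive constants into $|g|_{W^{1/2,2}}^p$ (which is bounded below by a positive multiple of $\operatorname{sys}(\mathcal{N})^p$ in the nontrivial homotopy classes, cf.\ \eqref{est non hom}). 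The main obstacle is the bookkeeping in the boundary-chart gluing: one must check that on the overlaps of the collar foliation the maps agree or can be interpolated with $W^{1,p}$-controlled energy, and that the anisotropic scaling factors $L^p/r^{p-1}$ from Lemma~\ref{lem 2.19} stay bounded when $L$ and $r$ are fixed comparably to the chart size — a purely quantitative but somewhat delicate patching argument. Finally, $p$-minimality of $u_p$ gives $\int_\Omega |Du_p|^p/p \le \int_\Omega |Dv|^p/p$, and multiplying by $(2-p)$ yields the claim.
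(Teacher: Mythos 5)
Your proposal takes a genuinely different and much more elaborate route than the paper, and unfortunately it contains a gap that I don't see how to repair without essentially re-deriving the tool the paper cites. The paper's proof is short: it first observes the continuous embedding $W^{1/2,2}(\partial\Omega,\mathbb{R}^\nu)\hookrightarrow W^{1-1/p,p}(\partial\Omega,\mathbb{R}^\nu)$, then invokes Hardt--Lin's extension theorem \cite[Theorem~6.2]{H-L} to obtain a competitor $v\in W^{1,p}(\Omega,\mathcal{N})$ with $\tr_{\partial\Omega}(v)=g$ and $(2-p)\int_\Omega|Dv|^p/p\le C'|g|^p_{W^{1-1/p,p}}$, and finishes by minimality. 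The whole weight of the argument is carried by the cited Hardt--Lin theorem, whose proof works by convolving the boundary datum with a suitable kernel (to get an $\mathbb{R}^\nu$-valued extension with the right $(2-p)^{-1}$ blow-up) and then retracting onto $\mathcal{N}$ via a locally Lipschitz retraction built from Sard's theorem on a dual skeleton — crucially, this never requires taking traces of $g$ on lower-dimensional slices.

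The gap in your approach is a regularity mismatch. Your construction via Lemmas~\ref{lem 2.17} and~\ref{lem 2.19} (disk and cylinder extensions) requires $g$ to have well-defined $W^{1,p}$ restrictions to $1$-dimensional circles/slices and, in the final accounting, you need $\int_{\partial\Omega}|D_\top g|^p<\infty$. But the hypothesis only gives $g\in W^{1/2,2}(\partial\Omega,\mathcal{N})$, which does not embed in $W^{1,p}(\partial\Omega)$ for any $p\ge 1$, so $D_\top g$ need not be $L^p$ (or even defined as a distributional gradient in $L^1$), and traces of $W^{1/2,2}$ functions on a two-manifold onto curves are not defined in general (the critical index $s-1/p=0$ fails). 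You even write ``once $g\in W^{1,2}\subset W^{1,p}$'', but $g\in W^{1,2}(\partial\Omega)$ is a strictly stronger hypothesis than what is assumed; under the actual assumption your quantity $\int_{\partial\Omega}|D_\top g|^p$ is not controlled by $|g|_{W^{1/2,2}}$. This is precisely why Hardt--Lin's theorem is not a mere convenience: the convolution-plus-retraction mechanism is what lets one handle boundary data that are only fractionally differentiable, and your collar-foliation strategy would need a replacement for that mechanism before the rest of your bookkeeping (which is otherwise consistent with the paper's Lemmas~\ref{lem 2.17}--\ref{lem 2.19} machinery, used elsewhere for genuinely $W^{1,p}$ boundary data on spheres and cylinders) can get off the ground.
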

    
    \begin{proof} 
         Inasmuch as $g \in W^{1/2, 2}(\partial \Omega, \mathcal{N})$, it holds $g \in W^{1-1/p,p}(\partial \Omega, \mathbb{R}^{\nu})$ (the main ingredients in the proof are H\"older's inequality, the Sobolev inequality for fractional Sobolev spaces (the reader may consult \cite[Theorem~7.57]{Adams_1975}, \cite[Section~3.3]{Triebel_1983} and \cite[Theorem~1]{Bourgain_Brezis_Mironescu_2002}) and  the fact that $\partial \Omega\times \partial \Omega = \bigcup_{i=1}^{n} \varphi_{i}(B^{2}_{1})\times \varphi_{i}(B^{2}_{1})$ for some $n \in \mathbb{N}\setminus\{0\}$, where $\varphi_{i}$ is a bilipschitz map; see also \cite[Proposition~2.1]{Hitchhiker}) and 
         \begin{equation}
         \label{estimatefracenergyp2}
         |g|_{W^{1-1/p,p}(\partial \Omega, \mathbb{R}^{\nu})} \le C |g|_{W^{1/2, 2}(\partial \Omega, \mathbb{R}^{\nu})}.         
         \end{equation}
         According to \cite[Theorem~6.2]{H-L}, there exists $v \in W^{1,p}(\Omega, \mathcal{N})$ such that $\tr_{\partial \Omega}(v)=g$ and
         \begin{equation}
         \label{eqestimatetraceexthlbd}
         (2-p)\int_{\Omega}\frac{|Dv|^{p}}{p}\diff x \leq C^{\prime} |g|^{p}_{W^{1-1/p,p}(\partial \Omega, \mathbb{R}^{\nu})}.
         \end{equation}
         Next, using the direct method in the calculus of variations (notice that $p \in (1,2)$), we deduce that there exists a $p$-minimizer $u_{p}$ such that $\tr_{\partial \Omega}(u_{p})=g$. In view of the minimality of $u_{p}$ and the estimates \eqref{eqestimatetraceexthlbd}, \eqref{estimatefracenergyp2}, the following holds
        \begin{equation}
        \label{nonlinear est extension}
        (2-p)\int_{\Omega}\frac{|Du_{p}|^{p}}{p}\diff x\leq C^{\prime} |g|^{p}_{W^{1-1/p,p}(\partial \Omega, \mathbb{R}^{\nu})} \leq C^{\prime \prime} |g|_{W^{1/2, 2}(\partial \Omega, \mathbb{R}^{\nu})}^p
        \end{equation}
        for some $C^{\prime \prime}=C^{\prime \prime}(\partial \Omega, \mathcal{N})>0$.
        This completes our proof of Proposition~\ref{global energy bound}.
    \end{proof}
We also recall the following fact. Let $u\in W^{1,p}(\Omega,\mathcal{N})$ be a $p$-minimizer. Fix an arbitrary $\xi \in C^{\infty}_{c}(\Omega, \mathbb{R}^{3})$, which is not identically zero.  Define $\varrho_{0}=\min\{\dist(x, \partial \Omega): x \in \mathrm{supp}(\xi)\}$. If $s \in (0, \varrho_{0}/\|\xi\|_{\infty})$ is small enough, then the mapping $\varphi_{s}(x)= x+s\xi(x)$, $x \in\Omega$ is invertible. For each $s>0$ small enough, define $u_{s}(x)=u\circ \varphi^{-1}_{s}(x)$ for each $x \in \Omega$. Since $u$ minimizes the $p$-energy, $\left.\frac{\diff}{\diff s} \int_{\Omega}|Du_{s}|^{p}\diff x \right\vert_{s=0}=~0$. This, together with the facts that $D\varphi_{s}^{-1}\circ \varphi_{s}=(D\varphi_{s})^{-1}$, $(D\varphi_{s})^{-T}=\mathrm{Id}-s(D\xi)^{T}+O(s^{2})$ and $\det(D\varphi_{s})=1+s\,\mathrm{div}(\xi)+O(s^{2})$, implies that the stress-energy tensor $T^{p}_{u}=\frac{|Du|^{p}}{p}\mathrm{Id}-\frac{Du\otimes Du}{|Du|^{2-p}}$ is (row-wise) divergence free. Namely, we have the following integral identity
\begin{equation}\label{integralidentity}
\int_{\Omega}\sum_{i,j=1}^{3}(|Du|^{p}\delta_{i,j}-p|Du|^{p-2}\langle D_{i}u, D_{j}u\rangle)D_{i}\xi_{j}\diff x = 0 \,\ \,\ \forall \xi \in C^{1}_{c}(\Omega, \mathbb{R}^{3}),
\end{equation}
where $\delta_{i,j}=1$ if $i=j$ and $\delta_{i,j}=0$ otherwise.
A map satisfying \eqref{integralidentity} is called $p$-\textit{stationary} or simply \textit{stationary}.

\section{Extension from triangulation}
    \subsection{Lower bounds for the energy}
We begin with the following simple estimate.
\begin{lemma}
    \label{lem 3.5}
        If $p \in [1,2]$, $\delta \in (0,+\infty)$ and $g \in W^{1,p}(\mathbb{S}^{1}, \mathbb{R}^{k})$, then, defining $v(x)\coloneqq g(x/|x|)$ for each $x \in B^{2}_{1+\delta}\setminus \smash{\overline{B}}^{2}_{1}$, we have $v \in W^{1,p}(B^{2}_{1+\delta}\setminus \smash{\overline{B}}^{2}_{1}, \mathbb{R}^{k})$ and
        \[
        \int_{B^{2}_{1+\delta}\setminus \smash{\overline{B}}^{2}_{1}}|Dv|^{p}\diff x \leq \delta \int_{\mathbb{S}^{1}}|D_{\top} g|^{p}\diff \mathcal{H}^{1}.
        \]
\end{lemma}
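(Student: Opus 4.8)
The plan is to use polar coordinates on the annulus $A\coloneqq B^{2}_{1+\delta}\setminus\smash{\overline{B}}^{2}_{1}$ and to exploit that, by construction, $v$ is constant along each radial ray. Writing $x\in A$ as $x=\varrho\sigma$ with $\varrho\in(1,1+\delta)$ and $\sigma=x/\abs{x}\in\mathbb{S}^{1}$, we have $v(\varrho\sigma)=g(\sigma)$, so the radial part of the derivative of $v$ vanishes and only the angular part survives; heuristically this gives the pointwise identity $\abs{Dv(x)}=\abs{D_{\top}g(x/\abs{x})}/\abs{x}$, which is the formula to be made rigorous and then integrated.

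First I would establish both $v\in W^{1,p}(A,\mathbb{R}^{k})$ and the above gradient identity by approximation. Since $\mathbb{S}^{1}$ is a compact manifold without boundary, choose $g_{n}\in C^{\infty}(\mathbb{S}^{1},\mathbb{R}^{k})$ with $g_{n}\to g$ in $W^{1,p}(\mathbb{S}^{1},\mathbb{R}^{k})$, and set $v_{n}(x)\coloneqq g_{n}(x/\abs{x})$. As $x\mapsto x/\abs{x}$ is smooth on the closed annulus $\overline{A}$, which is bounded away from the origin, one has $v_{n}\in C^{\infty}(\overline{A},\mathbb{R}^{k})$ and the classical chain rule gives $\abs{Dv_{n}(x)}=\abs{D_{\top}g_{n}(x/\abs{x})}/\abs{x}$. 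Applying the polar-coordinate computation of the next step to the smooth map $v_{n}-v_{m}$ shows that $(v_{n})_{n\in\mathbb{N}}$ is a Cauchy sequence in $W^{1,p}(A,\mathbb{R}^{k})$; its limit, which along a subsequence also converges a.e.\ and therefore equals $v$, thus lies in $W^{1,p}(A,\mathbb{R}^{k})$ and inherits the gradient identity.

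It then remains to integrate. Using polar coordinates (equivalently, the coarea formula),
\[
\int_{A}\abs{Dv}^{p}\diff x=\int_{1}^{1+\delta}\int_{\mathbb{S}^{1}}\frac{\abs{D_{\top}g(\sigma)}^{p}}{\varrho^{p}}\,\varrho\,\diff\mathcal{H}^{1}(\sigma)\,\diff\varrho=\left(\int_{1}^{1+\delta}\varrho^{1-p}\,\diff\varrho\right)\int_{\mathbb{S}^{1}}\abs{D_{\top}g}^{p}\diff\mathcal{H}^{1}.
\]
Since $p\in[1,2]$, for every $\varrho\geq1$ we have $1-p\leq0$ and hence $\varrho^{1-p}\leq1$, so $\int_{1}^{1+\delta}\varrho^{1-p}\,\diff\varrho\leq\delta$, which yields exactly the asserted estimate. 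There is no genuine obstacle in this lemma: the only point demanding a little care is turning the pointwise gradient identity into a valid statement for the Sobolev map $v$, which is precisely what the approximation argument handles, the remaining ingredient being the elementary bound $\varrho^{1-p}\leq1$ for $\varrho\geq1$.
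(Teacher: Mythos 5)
Your proof is correct and follows essentially the same route as the paper: pass to polar coordinates, observe that $v$ depends only on the angle, and reduce to the one-dimensional radial integral $\int_{1}^{1+\delta}\varrho^{1-p}\,\diff\varrho$ times the angular energy of $g$. The one small difference is how this radial integral is bounded by $\delta$: the paper computes the primitive explicitly, obtaining $\frac{(1+\delta)^{2-p}-1}{2-p}$, and then invokes concavity of $t\mapsto t^{2-p}$, which forces it to handle the $p=2$ case separately; you instead use the pointwise bound $\varrho^{1-p}\leq 1$ for $\varrho\geq 1$ and $p\in[1,2]$, which gives the same constant, is a touch more direct, and treats $p=2$ uniformly. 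Your extra paragraph justifying $v\in W^{1,p}$ and the gradient identity by smooth approximation is sound; the paper takes this for granted.
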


\begin{proof}
If $p\in [1,2)$, then $v \in W^{1,p}(B^{2}_{1+\delta}\setminus \smash{\overline{B}}^{2}_{1}, \mathbb{R}^{k})$ and 
    \begin{equation}\label{est from v}
    \int_{B^{2}_{1+\delta}\setminus \smash{\overline{B}}^{2}_{1}}|Dv|^{p}\diff x = \int_{1}^{1+\delta}\varrho^{1-p}\diff \varrho \int_{\mathbb{S}^{1}}|D_{\top}g|^{p}\diff \mathcal{H}^{1}=\frac{(1+\delta)^{2-p}-1}{2-p}\int_{\mathbb{S}^{1}}|D_{\top}g|^{p}\diff \mathcal{H}^{1}.
    \end{equation}
Since \(1 \le p \le 2\), the function \(t \in [0, +\infty) \mapsto t^{2 - p}\) is concave and hence $(1+\delta)^{2-p}\leq 1 +(2-p)\delta$. 
This, together with \eqref{est from v}, completes our proof of Lemma~\ref{lem 3.5}, since the case \(p = 2\) is similar.
\end{proof}

Recall that if $p \in [1,2)$ and $\pi_{1}(\mathcal{N})\not \simeq \{0\}$, then the set $C^{\infty}(B^{2}_{r}(x_{0}), \mathcal{N})$ is not dense in $W^{1,p}(B^{2}_{r}(x_{0}), \mathcal{N})$. In fact, in the latter case there exists a map $\gamma \in W^{1,p}(\partial B^{2}_{r}(x_{0}), \mathcal{N})$, which is not nullhomotopic. Defining $u(x)=\gamma(x_{0}+r(x-x_{0})/|x-x_{0}|)$ for each $x \in B^{2}_{r}(x_{0})\backslash \{x_{0}\}$, we have $u \in W^{1,p}(B^{2}_{r}(x_{0}), \mathcal{N})$. Assuming that there exists a sequence of smooth maps $u_{n}:B^{2}_{r}(x_{0})\to \mathcal{N}$ such that $u_{n} \to u$ in $W^{1,p}(B^{2}_{r}(x_{0}), \mathbb{R}^{\nu})$, and using the coarea formula together with the Sobolev embedding theorem, we could find some $\varrho \in (0,r)$ such that $u_{n}|_{\partial B^{2}_{\varrho}(x_{0})}$ converges uniformly to $\tr_{\partial B^{2}_{\varrho}(x_{0})}(u)=u|_{\partial B^{2}_{\varrho}(x_{0})}$. But this would lead to a contradiction, since $u_{n}|_{\partial B^{2}_{\varrho}(x_{0})}$ is nullhomotopic contrary to $u|_{\partial B^{2}_{\varrho}(x_{0})}$. This type of counterexample has its roots in the seminal work of  Schoen and Uhlenbeck (see \cite[Example]{SU}). Nonetheless, as was first observed by Bethuel and Zheng for the circle $\mathbb{S}^{1}$ (see \cite[Theorem~4]{Bethuel_Zheng_1988}) and generalized by Bethuel (see \cite[Theorem~2]{Approximation}) to general compact Riemannian manifolds, Sobolev maps in $W^{1,p}(B^{2}_{r}(x_{0}),\mathcal{N})$ can be approximated by continuous maps outside a finite set of points in $B^{2}_{r}(x_{0})$. For the reader's convenience, we recall Bethuel's theorem, slightly modified for our purposes. 

\begin{theorem}\label{thmappN}
Let $p \in [1,2)$. Then every map $u \in W^{1,p}(B^{2}_{r}(x_{0}), \mathcal{N}) \cap C(B^{2}_{r}(x_{0})\backslash B^{2}_{r/2}(x_{0}), \mathcal{N})$ can be approximated in $W^{1,p}(B^{2}_{r}(x_{0}), \mathbb{R}^{\nu})$ by maps $u_{n} \in W^{1,p}(B^{2}_{r}(x_{0}), \mathcal{N})$ which are continuous outside some finite set of points $A_{n} \subset B^{2}_{r/2}(x_{0})$ such that $u_{n}=u$ on $B^{2}_{r}(x_{0}) \backslash B^{2}_{3r/4}(x_{0})$ and $u_{n}|_{\partial B^{2}_{\varrho}(a)}$  is not nullhomotopic for each $a \in A_{n}$ and for each sufficiently small $\varrho>0$ depending on $n$. 
\end{theorem}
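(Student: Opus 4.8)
The plan is to obtain Theorem~\ref{thmappN} as a consequence of Bethuel's approximation theorem \cite[Theorem~2]{Approximation} (which extends to general targets the result \cite[Theorem~4]{Bethuel_Zheng_1988} of Bethuel and Zheng for \(\mathcal N = \mathbb S^1\)), after isolating three adjustments relative to the bare statement: the approximants should be left equal to \(u\) outside \(B^2_{3r/4}(x_0)\), their singularities should be confined to \(B^2_{r/2}(x_0)\), and these singularities should be topologically nontrivial. Normalizing by a translation and a dilation I may assume \(x_0 = 0\) and \(r = 1\). Recall that in Bethuel's construction one superimposes a grid of small mesh and, on a generic grid, the restriction of \(u\) to each square boundary is in \(W^{1,p}\) hence continuous; squares on which that boundary datum is nullhomotopic are filled in continuously with controlled \(p\)-energy, and on the remaining ones the \(0\)-homogeneous extension of the (continuous) boundary datum is used, which has finite \(p\)-energy precisely because \(p<2\) and is continuous away from the center of the square. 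Only the latter cells produce singularities, and the number of surviving defects is made finite by merging nearby clusters; the output converges to \(u\) in \(W^{1,p}\) as the mesh tends to \(0\).

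For the localization I would fix a radius \(\rho \in (5/8, 3/4)\) for which \(\operatorname{tr}_{\partial B^2_\rho}(u) = u|_{\partial B^2_\rho} \in W^{1,p}(\partial B^2_\rho, \mathcal N)\) — available for a.e.\ such \(\rho\) by Lemma~\ref{lem rest on T}~\ref{it_aeko0woub7OhLeethohneeth} — and run Bethuel's construction on \(B^2_\rho\) with the boundary datum \(u|_{\partial B^2_\rho}\) held fixed, extending the resulting maps by \(u\) itself on \(B^2_1 \setminus B^2_\rho\). Since \(\rho > 1/2\) and \(u \in C(B^2_1 \setminus B^2_{1/2}, \mathcal N)\), the datum \(u|_{\partial B^2_\rho}\) is continuous, so the two pieces glue to a map in \(W^{1,p}(B^2_1, \mathcal N)\); since \(\rho \le 3/4\), these maps coincide with \(u\) on \(B^2_1 \setminus B^2_{3/4}\); and since \(u\) is already continuous on the whole collar \(B^2_1 \setminus \overline{B^2_{1/2}}\), no singularities are created there. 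Indeed, a grid square whose boundary restriction of \(u\) is not nullhomotopic must meet the set \(\Sigma\) of points of \(B^2_1\) near which \(u\) admits no continuous representative (on a neighborhood of a square disjoint from \(\Sigma\) the map \(u\) is continuous, so its boundary restriction contracts), and the hypothesis \(u \in C(B^2_1 \setminus B^2_{1/2}, \mathcal N)\) confines \(\Sigma\) to \(B^2_{1/2}\); hence once the mesh is small compared with \(\dist(\Sigma, \partial B^2_{1/2})\) all the singular points lie in \(B^2_{1/2}\).

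To force nontriviality I would clean up a posteriori: whenever \(a \in A_n\) is such that \(u_n|_{\partial B^2_\varrho(a)}\) is nullhomotopic for some small \(\varrho > 0\), this restriction lifts through the universal covering \(\pi \colon \widetilde{\mathcal N} \to \mathcal N\), whose total space is compact because \(\pi_1(\mathcal N)\) is finite, so Lemma~\ref{lem lifting linear} yields \(w \in W^{1,p}(B^2_\varrho(a), \mathcal N)\) with \(\operatorname{tr}_{\partial B^2_\varrho(a)}(w) = u_n|_{\partial B^2_\varrho(a)}\) and \(\norm{Dw}_{L^p(B^2_\varrho(a))}^p \le C\varrho\, \norm{D_\top u_n}_{L^p(\partial B^2_\varrho(a))}^p\); replacing \(u_n\) by \(w\) on \(B^2_\varrho(a)\) removes this singularity without creating a new one, changes \(u_n\) nowhere outside \(B^2_\varrho(a)\), and is a \(W^{1,p}\)-perturbation of size controlled by \(\int_{B^2_\varrho(a)} \abs{Du_n}^p \diff x\), hence small for \(\varrho\) small. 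After finitely many such removals the surviving singular points are exactly those carrying a nontrivial boundary datum inherited from the grid, and on every small circle around such a point \(u_n\) is homotopic to that datum, hence not nullhomotopic.

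I expect the genuine difficulty to lie entirely inside Bethuel's theorem, which I am invoking as a black box — in particular the merging of clusters of defects into a finite set and the controlled continuous extension of nullhomotopic boundary data, for which the finiteness of \(\pi_1(\mathcal N)\) is exactly what permits the use of the compact simply connected cover \(\widetilde{\mathcal N}\) through Lemma~\ref{lem lifting linear}. The adjustments themselves are routine, the only points needing care being that the grid construction tolerates a prescribed \(W^{1,p}\) boundary value on \(\partial B^2_\rho\) (handled by enlarging the ball slightly and keeping \(u\) on the resulting thin collar) and that the \(W^{1,p}\)-convergence \(u_n \to u\) survives the localization, which it does because \(\Sigma \subset B^2_{1/2}\) keeps the modified region uniformly away from \(\partial B^2_{3/4}\).
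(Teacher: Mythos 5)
Your overall strategy — invoke Bethuel's approximation theorem as a black box, localize the construction to a ball $B^2_\rho$ with $\rho\in(5/8,3/4)$ by gluing along a generic circle, and clean up nullhomotopic singularities afterwards — matches the paper's proof sketch, and your cleanup step using Lemma~\ref{lem lifting linear} is in fact more explicit than the paper's terse reference to a coarea argument: you correctly observe that a nullhomotopic $W^{1,p}$ circle map lifts through the compact universal cover $\widetilde{\mathcal N}$ and that the resulting linear extension estimate gives a $W^{1,p}$ perturbation of order $\varrho^{2-p}\to 0$.

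However, there is a genuine gap in the localization step. You claim that the hypothesis $u\in C(B^2_1\setminus B^2_{1/2},\mathcal N)$ confines $\Sigma$ to $B^2_{1/2}$ and that, once the mesh is smaller than $\dist(\Sigma,\partial B^2_{1/2})$, all singular points land inside $B^2_{1/2}$. This is not justified: $B^2_1\setminus B^2_{1/2}$ is relatively \emph{closed}, so continuity of $u$ there only gives one-sided control at points of $\partial B^2_{1/2}$. Points of $\partial B^2_{1/2}$ may have no neighborhood in $B^2_1$ on which $u$ admits a continuous representative, so in general $\Sigma\subset\overline{B^2_{1/2}}$ and not $\Sigma\subset B^2_{1/2}$, and $\dist(\Sigma,\partial B^2_{1/2})$ can be zero. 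Consequently, for every mesh size there may be singular grid cells straddling $\partial B^2_{1/2}$ whose centers (where Bethuel's construction puts the defect) lie just outside $B^2_{1/2}$, violating the requirement $A_n\subset B^2_{r/2}(x_0)$. What \emph{is} true, and what you should use instead, is that a cell $Q$ disjoint from the \emph{open} ball $B^2_{1/2}$ lies entirely in $B^2_1\setminus B^2_{1/2}$, so $u|_Q$ is continuous and $u|_{\partial Q}$ is nullhomotopic; hence singular cells must meet $B^2_{1/2}$. One must then either (a) cover $B^2_{1/2}$ by balls with centers in $\overline{B^2_{1/2-\varepsilon}}\subset B^2_{1/2}$ — as the paper does, placing all defect points at ball centers and therefore strictly inside $B^2_{1/2}$ — or (b) place the defect point of each singular cell at a point of $Q\cap B^2_{1/2}$ kept uniformly away from $\partial Q$, with the homogeneous extension taken from that point rather than the cell center. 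Without one of these modifications the argument only yields $A_n\subset B^2_{1/2+O(\varepsilon_n)}$, which is not what the statement asserts and is not quite sufficient for the application in Proposition~\ref{Sandier lemma}, where the bound $\dist(a,\partial B^2_2)>1$ is used with the fixed constant $\delta=1$.
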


\begin{proof}[Sketch of the proof]
The proof consists of covering $B^{2}_{r/2}(x_{0})$ with ``good'' and ``bad'' balls with respect to the $p$-energy of $u$, relying on an argument based on the coarea formula, and by changing $u$ inside each good and bad ball from the covering by a smooth function in the case of a good ball and a smooth beyond the center of the ball in the case of a bad ball; the reader may consult \cite[Theorem~2]{Approximation}, where the balls are replaced by cubes. We only need to clarify that $u_{n}|_{\partial B^{2}_{\varrho}(a)}$ is not nullhomotopic for each $a \in A_{n}$ and for each sufficiently small $\varrho>0$. In fact, otherwise, using the coarea formula, we could modify $u_n$ inside a ball $B^{2}_{\delta}(a)$, where $\delta \in (\varrho/2, \varrho)$ for some sufficiently small $\varrho>0$, with the quality of the approximation controlled by the energy of the original map on the ball $B^{2}_{\varrho}(a)$, so that  the modified map $u_{n}$ is continuous on $B^{2}_{\varrho}(a)$.
\end{proof}

    \begin{prop}\label{Sandier lemma} Let $p \in (1,2)$, $u \in W^{1,p}(B^{2}_{r}(x_{0}), \mathcal{N})$ and $\tr_{\partial B^{2}_{r}(x_{0})}(u)\in W^{1,p}(\partial B^{2}_{r}(x_{0}), \mathcal{N})$. 
    Then the following estimate holds
    \begin{equation*}
    \int_{B^{2}_{r}(x_{0})}\frac{|Du|^{p}}{p}\diff x + r\int_{\partial B^{2}_{r}(x_{0})}\frac{|D_{\top}\operatorname{tr}_{\partial B^{2}_{r}(x_{0})}(u)|^{p}}{p}\diff \mathcal{H}^{1}\geq \frac{\mathcal{E}^{\mathrm{sg}}_{p/(p-1)}(\operatorname{tr}_{\partial B^{2}_{r}(x_{0})}(u))r^{2-p}}{2-p}.
    \end{equation*}
\end{prop}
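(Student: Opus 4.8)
The approach is a Sandier-type lower bound. First I would reduce to the unit disk: since $u\mapsto u(x_{0}+r\,\cdot\,)$ scales the three terms of the inequality by the same power of $r$, and $\mathcal{E}^{\mathrm{sg}}_{p/(p-1)}$ depends only on the (reparametrisation-invariant) free homotopy class of the boundary datum, I may take $x_{0}=0$ and $r=1$. Write $g\coloneqq \tr_{\partial B^{2}_{1}}(u)\in W^{1,p}(\mathbb{S}^{1},\mathcal{N})$; this space embeds into $C(\mathbb{S}^{1},\mathcal{N})$, so $g$ has a well-defined free homotopy class and, by Lemma~\ref{lem cont singenergy}, $\mathcal{E}^{\mathrm{sg}}_{p/(p-1)}(g)$ is a finite number depending only on that class. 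The goal becomes $\int_{B^{2}_{1}}\tfrac{\abs{Du}^{p}}{p}\diff x+\int_{\partial B^{2}_{1}}\tfrac{\abs{D_{\top}g}^{p}}{p}\diff\mathcal{H}^{1}\ge \tfrac{\mathcal{E}^{\mathrm{sg}}_{p/(p-1)}(g)}{2-p}$.

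Next I would fold the boundary term into an interior integral by gluing a collar and then pass to a map with isolated singularities. Let $\tilde u$ on $B^{2}_{2}$ equal $u$ on $B^{2}_{1}$ and $x\mapsto g(x/\abs{x})$ on $B^{2}_{2}\setminus \overline{B}^{2}_{1}$; the two pieces share the trace $g$ on $\partial B^{2}_{1}$, so $\tilde u\in W^{1,p}(B^{2}_{2},\mathcal{N})$, and by Lemma~\ref{lem 3.5} with $\delta=1$ the collar contributes $p$-energy at most $\int_{\mathbb{S}^{1}}\abs{D_{\top}g}^{p}\diff\mathcal{H}^{1}$, whence $\int_{B^{2}_{2}}\tfrac{\abs{D\tilde u}^{p}}{p}\diff x\le \int_{B^{2}_{1}}\tfrac{\abs{Du}^{p}}{p}\diff x+\int_{\partial B^{2}_{1}}\tfrac{\abs{D_{\top}g}^{p}}{p}\diff\mathcal{H}^{1}$. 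Hence it suffices to show $\int_{B^{2}_{2}}\tfrac{\abs{D\tilde u}^{p}}{p}\diff x\ge \tfrac{\mathcal{E}^{\mathrm{sg}}_{p/(p-1)}(g)}{2-p}$. Since $\tilde u$ is continuous on the outer half $B^{2}_{2}\setminus B^{2}_{1}$, Theorem~\ref{thmappN} yields maps $\tilde u_{n}\in W^{1,p}(B^{2}_{2},\mathcal{N})$, continuous off a finite set $A_{n}\subset B^{2}_{1}$, coinciding with $\tilde u$ near $\partial B^{2}_{2}$ (so their traces stay in the class $[g]$), non-nullhomotopic on small circles around each point of $A_{n}$, and converging to $\tilde u$ in $W^{1,p}(B^{2}_{2},\mathbb{R}^{\nu})$; as $\int_{B^{2}_{2}}\tfrac{\abs{D\tilde u_{n}}^{p}}{p}\diff x\to\int_{B^{2}_{2}}\tfrac{\abs{D\tilde u}^{p}}{p}\diff x$, it is enough to treat a map $w$ continuous on $B^{2}_{2}\setminus\{a_{1},\dotsc,a_{k}\}$ with $a_{i}\in B^{2}_{1}$, non-nullhomotopic on small circles about the $a_{i}$, and with $\tr_{\partial B^{2}_{2}}(w)$ in the class $[g]$.

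For such a $w$, let $\gamma_{i}$ be a shortest closed geodesic in the (for small radii constant) free homotopy class of $w$ on small circles about $a_{i}$. Restricting $w$ to $B^{2}_{2}$ with small disks about the $a_{i}$ removed shows that $(\gamma_{1},\dotsc,\gamma_{k})$ is a topological resolution of $\tr_{\partial B^{2}_{2}}(w)$, hence, by homotopy invariance, of $g$; therefore $\mathcal{E}^{\mathrm{sg}}_{p/(p-1)}(g)\le\sum_{i}\tfrac{\lambda(\gamma_{i})^{p/(p-1)}}{(2\pi)^{1/(p-1)}\,p/(p-1)}$, and it remains to bound $\int_{B^{2}_{2}}\tfrac{\abs{Dw}^{p}}{p}\diff x$ below by $\tfrac{1}{2-p}\sum_{i}\tfrac{\lambda(\gamma_{i})^{p/(p-1)}}{(2\pi)^{1/(p-1)}\,p/(p-1)}$. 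I would obtain this by a Sandier ball-growth around $\{a_{1},\dotsc,a_{k}\}$ inside $B^{2}_{2}$: grow disks out of the points, merging two families whenever their disks first become tangent; on an annular stage between radii $\sigma_{1}<\sigma_{2}$ whose disk encloses exactly $\{a_{i}:i\in I\}$, the restriction of $w$ to the bounding circle is, up to orientation, homotopic to the concatenation of the $\gamma_{i}$, $i\in I$, and the coarea formula together with the comparison of the angular $p$-energy of a $W^{1,p}$ loop with the minimal length in its homotopy class (the mechanism behind \cite[Lemma~2.11]{Renormalmap}) gives $\int_{\mathrm{annulus}}\abs{Dw}^{p}\diff x\ge\bigl(\int_{\sigma_{1}}^{\sigma_{2}}\rho^{1-p}\diff\rho\bigr)(2\pi)^{1-p}\lambda(\text{enclosed class})^{p}$. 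Summing these contributions from radius $0$ up to the final radius, using $\int_{0}^{\sigma}\rho^{1-p}\diff\rho=\tfrac{\sigma^{2-p}}{2-p}$ for the prefactor, the subadditivity $\lambda(\gamma\gamma')\le\lambda(\gamma)+\lambda(\gamma')$ to keep track of the enclosed classes across merges, and the slack coming from the collar's extra radius (which accounts for $r^{2-p}=1$), should yield the announced inequality.

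The main obstacle is precisely this final accounting. A naive single radial slice about a singularity only produces the weaker lower bound with $\mathcal{E}^{\mathrm{sg}}_{p}$ in place of $\mathcal{E}^{\mathrm{sg}}_{p/(p-1)}$; extracting the \emph{conjugate} exponent $p/(p-1)$ (and the exact constant) requires running the ball construction with the right growth law and a careful Young-type duality between the accumulated $p$-energy $\int_{0}^{\sigma}\rho^{1-p}\diff\rho\sim\tfrac{\sigma^{2-p}}{2-p}$ and the lengths $\lambda(\gamma_{i})$, arranged so that the merging stages contribute exactly $\tfrac{1}{2-p}\sum_{i}\tfrac{\lambda(\gamma_{i})^{p/(p-1)}}{(2\pi)^{1/(p-1)}\,p/(p-1)}$ with no loss. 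Everything else — the scaling reduction, the collar estimate via Lemma~\ref{lem 3.5}, the approximation via Theorem~\ref{thmappN}, and the identification of $(\gamma_{1},\dotsc,\gamma_{k})$ as a topological resolution of $g$ — is routine given the material already in place.
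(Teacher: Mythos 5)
Your structural skeleton matches the paper's proof step by step: the scaling reduction to $r=1$, the homogeneous collar on $B^2_2\setminus \overline{B}^2_1$ via Lemma~\ref{lem 3.5} with $\delta=1$, the approximation by a map $w$ with finitely many nontrivial singularities via Theorem~\ref{thmappN}, and reading off a topological resolution from the observed charges at the singular points. The divergence is at the one step that carries all the analytic content. The paper dispatches it by invoking \cite[Corollary~4.4]{VanSchaftingen_VanVaerenbergh}, which directly furnishes $\mathcal{E}^{\mathrm{sg}}_{p/(p-1)}(\operatorname{tr}_{\partial B^2_2}(w)) \le (2-p)\frac{(p-1)^{p-1}}{((2\pi)/p)^{2-p}}\int_{B^2_2}\frac{|Dw|^p}{p}\,dx$, and then simply drops the explicit constant because it is $<1$ for $1<p<2$. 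You instead propose to re-derive that estimate by a Sandier ball construction, and to your credit you explicitly flag the ``final accounting'' as unresolved. But that accounting \emph{is} the proposition: without it what remains is the paper's preliminary steps plus a promissory note.

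Beyond the incompleteness, the intermediate target you commit to is too strong and appears to be false. You aim to show $\int_{B^2_2}\frac{|Dw|^p}{p}\,dx \ge \frac{1}{2-p}\sum_i\frac{\lambda(\gamma_i)^{p/(p-1)}}{(2\pi)^{1/(p-1)}\,p/(p-1)}$ for the \emph{observed} resolution $(\gamma_1,\dotsc,\gamma_k)$. Take $\pi_1(\mathcal{N})\simeq\mathbb{Z}/2\mathbb{Z}$ with generator $\gamma$ of minimal length $\lambda$, and $w(\rho\sigma)=\gamma(\sigma)$ radially constant on $B^2_2$, a single nontrivial singularity at the origin. Then $\int_{B^2_2}\frac{|Dw|^p}{p}\,dx=\frac{\lambda^p}{p(2\pi)^{p-1}}\cdot\frac{2^{2-p}}{2-p}$, and your claimed inequality reduces to $\frac{2^{2-p}}{p-1}\ge\bigl(\frac{\lambda}{2\pi}\bigr)^{p(2-p)/(p-1)}$, which fails once $\lambda/(2\pi)$ is moderately large (as $p\nearrow 2$ the threshold is $\lambda\approx 2\pi\sqrt{2e}$, and nothing in the hypotheses caps the systole of $\mathcal{N}$). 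So the specific lower bound you want to extract from ball growth is not true, and no accounting can produce it. What the cited corollary actually controls is the infimum $\mathcal{E}^{\mathrm{sg}}_{p/(p-1)}$ over \emph{all} topological resolutions, together with the nontrivial constant $\frac{(p-1)^{p-1}}{((2\pi)/p)^{2-p}}$; the conjugate exponent does not come from reweighting the slice-by-slice annular estimate around fixed singularities, but from trading the $p$-energy accumulated up to merger radii against the lengths of the \emph{merged} classes via a Young inequality, and the resolution that enters the infimum is the one produced by the merges, not the one you start with. Replicating that mechanism is exactly the content of \cite[Proposition~4.3 and Corollary~4.4]{VanSchaftingen_VanVaerenbergh}, and the proposal stops short of it.
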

\begin{proof}
    Up to scaling and translation, we can assume that $r=1$ and $x_{0}=0$. 
    Since $p>1$, in view of Morrey's embedding for Sobolev mappings, without loss of generality, we suppose  that $\tr_{\mathbb{S}^{1}} (u) \in C(\mathbb{S}^{1}, \mathcal{N})$. 
    Define the mapping $v: B^{2}_{2}\to \mathcal{N}$ by 
    \[
    v(x)\coloneqq \begin{cases}
    \operatorname{tr}_{\mathbb{S}^{1}}(u) \bigl(\tfrac{x}{|x|}\bigr) \,\ &\text{if \(x \in B^{2}_{2}\setminus B^{2}_{1}\)},\\
    u(x) \,\ &\text{if \(x \in B^{2}_{1}\)}.
    \end{cases}
    \]
    Then $v\in W^{1,p}(B^{2}_{2}, \mathcal{N})\cap C(B^{2}_{2}\setminus B^{2}_{1}, \mathcal{N})$. By  Lemma~\ref{lem 3.5} applied with $\delta=1$, we obtain
    \begin{equation}\label{est 3.36}
    \int_{B^{2}_{2}}|Dv|^{p}\diff x \leq \int_{B^{2}_{1}}|Du|^{p}\diff x + \int_{\mathbb{S}^{1}}|D_{\top}\operatorname{tr}_{\mathbb{S}^{1}} (u)|^{p}\diff \mathcal{H}^{1}.
    \end{equation}
    By Theorem~\ref{thmappN}, there exists $(v_{n})_{n\in \mathbb{N}} \subset W^{1,p}(B^{2}_{2}, \mathcal{N})$ such that $v_{n} \to v$ in $W^{1,p}(B^{2}_{2}, \mathbb{R}^{\nu})$ and for each $n\in \mathbb{N}$, $v_{n}$ is continuous in $B^{2}_{2}\setminus A_{n}$, where $A_{n}\subset B^{2}_{1}$ is a finite set of points; $\tr_{\partial B^{2}_{2}}(v_{n})$ is homotopic to $\tr_{\mathbb{S}^{1}}(u)$; for each point $a \in A_{n}$ and for each sufficiently small radius $\varrho>0$, $v_{n}|_{\partial B^{2}_{\varrho}(a)}$ is homotopically nontrivial. 
    Next, using \cite[Corollary~4.4]{VanSchaftingen_VanVaerenbergh}
    with  \(\delta = 1 < \dist(a,\partial B^{2}_{2})\) (in the statement of \cite[Corollary~4.4]{VanSchaftingen_VanVaerenbergh}, \(\delta\) is assumed to satisfy the same assumptions as in \cite[Proposition~4.3]{VanSchaftingen_VanVaerenbergh}) to $v_{n}\in W^{1,p}(B^{2}_{2}, \mathcal{N})$ with the set of singular points $A_{n}$, we get
    \begin{equation}\label{lowerboundpen}
    \begin{split}
    \mathcal{E}^{\mathrm{sg}}_{p/(p-1)}(\operatorname{tr}_{\mathbb{S}^{1}}(u))=\mathcal{E}^{\mathrm{sg}}_{p/(p-1)}(\operatorname{tr}_{\partial B^{2}_{2}}(v_{n}))
      &\leq (2-p) \frac{(p-1)^{p-1}}{((2\pi)/p)^{2-p}}\int_{B^{2}_{2}} \frac{|Dv_{n}|^{p}}{p}\diff x\\
       &\le (2-p)\int_{B^{2}_{2}} \frac{|Dv_{n}|^{p}}{p}\diff x,
    \end{split}
    \end{equation}
    since the singular $p$-energy is invariant under homotopies and $\tr_{\partial B^{2}_{2}}(v_{n})$ is homotopic to $\tr_{\mathbb{S}^{1}}(u)$ and since for \(1 < p < 2\), $\frac{(p-1)^{p-1}}{((2\pi)/p)^{2-p}}<1$. 
\end{proof}

\begin{rem}\label{good rem}
    Adapting the proof of \cite[Proposition~4.3]{VanSchaftingen_VanVaerenbergh}, yields a variant of the latter, where the estimate depends on $\mathrm{sys}(\mathcal{N})$. Therefore, one can obtain a variant of Proposition~\ref{Sandier lemma} with the estimate depending on $\mathrm{sys}(\mathcal{N})$. More precisely, if 
    \[
    \int_{B^{2}_{r}(x_{0})}|Du|^{p}\diff x + r\int_{\partial B^{2}_{r}(x_{0})}|D_{\top}\operatorname{tr}_{\partial B^{2}_{r}(x_{0})}(u)|^{p}\diff \mathcal{H}^{1}>0,\] then one can obtain the estimate \[\frac{2-p}{p}\left(\int_{B^{2}_{r}(x_{0})}|Du|^{p}\diff x + r\int_{\partial B^{2}_{r}(x_{0})}|D_{\top}\operatorname{tr}_{\partial B^{2}_{r}(x_{0})}(u)|^{p}\diff \mathcal{H}^{1}\right)\geq C\mathcal{E}^{\mathrm{sg}}_{p}(\operatorname{tr}_{\partial B^{2}_{r}(x_{0})}(u))^{p-1}r^{2-p},
    \] 
    where $C=\left(\frac{(\mathrm{sys}(\mathcal{N}))^{p}}{(2\pi)^{p-1}p}\right)^{2-p}$. 
    This is an upper bound of $\mathcal{E}^{\mathrm{sg}}_{p}(\tr_{\mathbb{S}^{1}}(u))$ by the $p$-energy, which is in one instance better than the control in Proposition~\ref{Sandier lemma} of $\mathcal{E}^{\mathrm{sg}}_{p/(p-1)}(\tr_{\mathbb{S}^{1}}(u))$ by the $p$-energy. 
    However, the estimate depends on $\mathrm{sys}(\mathcal{N})$. 
    Since we prefer to avoid the use of estimates depending on the systole, we decided to keep the estimation of Proposition~\ref{Sandier lemma}.
\end{rem}
\subsection{Lipschitz triangulations of \texorpdfstring{$\mathbb{S}^{2}$}{𝕊²}}
For a given integer $n \in \mathbb{N} \setminus \{0\}$, we shall call the uniform 1-dimensional grid of step  $1/n$ in $\partial [-1,1]^{3}$ the set of all points $x\in \partial[-1,1]^{3}$ such that $nx_{i} \in \mathbb{Z}$ for all $i=1,2,3$ except for at most one. 

The uniform 1-dimensional grid of step $1/n$ gives the decomposition of $\partial [-1,1]^{3}$ into mutually disjoint sets (points, edges and two-dimensional cells), namely
\begin{equation}\label{uniformgrid} 
\partial [-1,1]^{3}=\bigcup^{2}_{i=0}\bigcup_{j=1}^{k_{i}}E_{i,j},
\end{equation}
where $\{E_{0,j}, j\in \{1,\dotsc,k_{0}\}\}\coloneqq ((1/n) \mathbb{Z}^{3})\cap \partial [-1,1]^{3}$ and each set $E_{i,j}$ with $i\in \{1,2\}$ is bilipschitz homeomorphic to $B^{i}_{1/n}$. It is worth noting that for each integer $l \in  \mathbb{N}\setminus \{0\}$, the cube $(0,1)^{l}$ is bilipschitz homeomorphic to $B^{l}_{1}$ (see, for instance, \cite{bilipschitz}).

Let $\delta \in (0,1]$ and $n=\floor{1/\delta}$, where $\floor{\cdot}$ denotes the integer part. In view of \eqref{uniformgrid}, we obtain the following  decomposition of $\mathbb{S}^{2}$ into mutually disjoint sets
\begin{equation}\label{uniformtriangulation}
\mathbb{S}^{2}=\bigcup_{i=0}^{2}\bigcup_{j=1}^{k_{i}}F_{i,j}=\bigcup_{i=0}^{2}\bigcup_{j=1}^{k_{i}}\omega \circ \mathrm{P}_{\mathbb{S}^{2}}(E_{i,j}),
\end{equation}
where $\mathrm{P}_{\mathbb{S}^{2}}:\mathbb{R}^{3}\setminus \{0\}\to \mathbb{S}^{2}$ is the radial projection ($\mathrm{P}_{\mathbb{S}^{2}}(x)=x/|x|$, $x \in \mathbb{R}^{3}\setminus \{0\}$), $\omega$ is an element of $SO(3)$, $\{F_{0,1},\dotsc,F_{0,k_{0}}\}$ is the collection of points and for each $F_{i,j}$ with $i \in \{1,2\}$, we have a bilipschitz homeomorphism 
\begin{equation}\label{bilisomtr}
\Phi_{i,j}:F_{i,j}\to B^{i}_{\delta}\,\ \text{with}\,\ \|D_{\top}\Phi_{i,j}\|_{L^{\infty}(F_{i,j})}+\|D(\Phi_{i,j})^{-1}\|_{L^{\infty}(B^{i}_{\delta})}\leq A_{1},
\end{equation}
where $A_{1}>0$ is an absolute constant.  The map $\Phi_{i,j}$ in \eqref{bilisomtr} extends by continuity to a bilipschitz map between $\overline{F}_{i,j}$ and $\smash{\overline{B}}^{i}_{\delta}$ with the same bilipschitz constant $A_{1}$ as in \eqref{bilisomtr}.

If \eqref{uniformtriangulation} and \eqref{bilisomtr} hold for some $\omega \in SO(3)$, we shall call the decomposition \eqref{uniformtriangulation} a uniform Lipschitz \emph{triangulation} (or simply a Lipschitz triangulation) of step $\delta$ of $\mathbb{S}^{2}$. The points $F_{0,1},\dotsc,F_{0,k_{0}}$ will be called the vertices and the set $\bigcup_{i=0}^{1}\bigcup_{j=1}^{k_{i}}F_{i,j}$ the 1-skeleton. 

To lighten the notation, we denote the collection of 1-skeletons of all Lipschitz triangulations of step $\delta$ of $\mathbb{S}^{2}$ by $\mathrm{RT}_{\delta}$, namely
\[
\mathrm{RT}_{\delta}
\coloneqq
\left\{\bigcup_{i=0}^{1}\bigcup_{j=1}^{k_{i}}\omega \circ \mathrm{P}_{\mathbb{S}^{2}}(E_{i,j}): \omega \in SO(3)\right\},
\]
where the $E_{i,j}$'s come from \eqref{uniformgrid} for $n=\floor{1/\delta}$. The importance of the rotation $\omega$ in the definition of $\mathrm{RT}_{\delta}$ will be illustrated a little later, for example in Lemma~\ref{nice res on T}.

Let us now define a Sobolev space on a finite union of bilipschitz images of $d$-cubes (or $d$-dimensional intervals), in particular, on the 1-skeleton of a uniform Lipschitz triangulation of the sphere $\mathbb{S}^{2}$.

\begin{defn}\label{Sobolev on T}
    Let $d, N \in \mathbb{N} \setminus \{0\}$. For each $p \in [1,+\infty)$ and for each set $\Sigma \subset \mathbb{R}^{d}$ being the closure of $\bigcup_{i=1}^{N}\Phi_{i}((a_{i1}, b_{i1})\times...\times(a_{id},b_{id}))=: \bigcup_{i=1}^{N}\Delta_{i}^{d}$, where for each $i\in \{1,\dotsc,N\}$, $\Phi_{i}:(a_{i1}, b_{i1})\times...\times(a_{id},b_{id})\to \Delta_{i}^{d}$  is a bilipschitz homeomorphism and, in addition, $\Delta_{i}^{d}\cap \Delta_{j}^{d}=\emptyset$ if $i\neq j$, $j \in \{1,\dotsc,N\}$, we define 
     $W^{1,p}(\Sigma, \mathbb{R}^{k})$ by
    \begin{multline*}
    W^{1,p}(\Sigma, \mathbb{R}^{k})=\{u : \Sigma \to \mathbb{R}^{k}\,\ \text{is Borel-measurable}: u|_{\Delta^{d}_{i}}\in W^{1,p}(\Delta^{d}_{i}, \mathbb{R}^{k}), \,\ \\
    \operatorname{tr}_{\partial \Delta^{d}_{i}}(u|_{\Delta^{d}_{i}})=u|_{\partial \Delta^{d}_{i}}\,\ \text{for each}\,\ i \in \{1,\dotsc,N\}\},
    \end{multline*}
    where $\partial \Delta^{d}_{i}$ denotes the relative boundary of $\Delta^{d}_{i}$. 
    For $u \in W^{1,p}(\Sigma, \mathbb{R}^{k})$, we set \[
    \|u\|_{W^{1,p}(\Sigma, \mathbb{R}^{k})}=\left(\sum_{i=1}^{N}\|u |_{\Delta^{d}_{i}}\|_{W^{1,p}(\Delta^{d}_{i}, \mathbb{R}^{k})}^p \right)^\frac{1}{p}.
    \] 
    
    If $\mathcal{Y}$ is a compact Riemannian manifold which is isometrically embedded into $\mathbb{R}^{k}$, then $W^{1,p}(\Sigma, \mathcal{Y})$ is defined by 
    \[
    W^{1,p}(\Sigma, \mathcal{Y})=\{u\in W^{1,p}(\Sigma, \mathbb{R}^{k}): u\in \mathcal{Y}\,\ \text{a.e. in}\,\ \Sigma\}.
    \]
\end{defn}
\begin{prop}\label{prop ineq T}
    Let $p \in [1,2]$, $\delta \in (0,1]$, $\Sigma \in \mathrm{RT}_{\delta}$ and $g \in W^{1,p}(\Sigma, \mathcal{N})$ (see Definition~\ref{Sobolev on T}). Assume that for each $2$-cell $F$ from the Lipschitz triangulation of $\mathbb{S}^{2}$ generated by $\Sigma$, the map $g|_{\partial F}$ is homotopic to a constant, where $\partial F$ is the relative boundary of $F$. Then there exists $\widetilde{v}\in W^{1,2}(\mathbb{S}^{2}, \widetilde{\mathcal{N}})$, where $\pi:\mathcal{\widetilde{N}}\to \mathcal{N}$ is the universal covering of $\mathcal{N}$, such that 
    $\operatorname{tr}_{\Sigma} (\pi \circ \widetilde{v}) = g$ and 
    \begin{equation*}\label{ineq T}
        \int_{\mathbb{S}^{2}}|D_{\top}\widetilde{v}|^{p}\diff \mathcal{H}^{2}
        \leq 
        C\delta \int_{\Sigma}|D_{\top} g|^{p}\diff \mathcal{H}^{1},
    \end{equation*}
    where $C=C(\mathcal{N})>0$.
\end{prop}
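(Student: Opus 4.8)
The plan is to build the extension \emph{on each \(2\)-cell of the triangulation separately}, but only after lifting \(g\) to the covering \(\pi\colon\widetilde{\mathcal{N}}\to\mathcal{N}\), which is compact and simply connected because \(\pi_{1}(\mathcal{N})\) is finite; on a single \(2\)-cell the extension will then be the one furnished by Lemma~\ref{lem lifting linear}, whose estimate is linear in the boundary \(p\)-energy \emph{and uniform in \(p\in[1,2]\)} exactly because the boundary datum is \(\widetilde{\mathcal{N}}\)-valued, and a summation over the triangulation produces the bound. To start, \(g\) is continuous on \(\Sigma\): on each edge it lies in \(W^{1,p}\) of a \(1\)-dimensional set, hence (after a bilipschitz reparametrisation) is absolutely continuous, and Definition~\ref{Sobolev on T} forces the one-sided values at the vertices to agree. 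For the lifting, note that \(\Sigma\) is a connected graph and that \(\mathbb{S}^{2}\) is obtained from \(\Sigma\) by attaching the \(2\)-cells \(F_{2,j}\) along the loops \(\partial F_{2,j}\); since \(\pi_{1}(\mathbb{S}^{2})\) is trivial, the standard presentation of the fundamental group of a \(2\)-complex shows that \(\pi_{1}(\Sigma)\) is the normal closure of the classes \([\partial F_{2,j}]\). By hypothesis each \(g|_{\partial F_{2,j}}\) is homotopic to a constant, so \(g_{*}[\partial F_{2,j}]\) is trivial for every \(j\), whence \(g_{*}(\pi_{1}(\Sigma))\) is trivial; the lifting criterion for \(\pi\) then produces a continuous \(\widetilde g\colon\Sigma\to\widetilde{\mathcal{N}}\) with \(\pi\circ\widetilde g=g\), and since \(\pi\) is a local isometry one gets \(\widetilde g\in W^{1,p}(\Sigma,\widetilde{\mathcal{N}})\) with \(|D_{\top}\widetilde g|=|D_{\top} g|\) \(\mathcal{H}^{1}\)-a.e.\ on \(\Sigma\).

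Next I would extend \(\widetilde g\) into each \(2\)-cell. Fix \(F=F_{2,j}\) and the bilipschitz chart \(\Phi=\Phi_{2,j}\colon F\to B^{2}_{\delta}\) of \eqref{bilisomtr}, which extends to a bilipschitz map \(\overline{F}\to\overline{B}^{2}_{\delta}\) carrying \(\partial F\) onto \(\partial B^{2}_{\delta}\) with the absolute constant \(A_{1}\), and set \(h\coloneqq\widetilde g\circ\Phi^{-1}\in W^{1,p}(\partial B^{2}_{\delta},\widetilde{\mathcal{N}})\). Applying Lemma~\ref{lem lifting linear} with \(m=2\) and \(r=\delta\) to the datum \(h\), and keeping the \(\widetilde{\mathcal{N}}\)-valued map constructed in its proof (rather than its projection to \(\mathcal{N}\)), one obtains \(\widetilde w_{j}\in W^{1,2}(B^{2}_{\delta},\widetilde{\mathcal{N}})\) with \(\operatorname{tr}_{\partial B^{2}_{\delta}}(\widetilde w_{j})=h\) and \(\|D\widetilde w_{j}\|_{L^{p}(B^{2}_{\delta})}^{p}\le C\delta\,\|D_{\top} h\|_{L^{p}(\partial B^{2}_{\delta})}^{p}\) for some \(C=C(\mathcal{N})>0\). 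Pulling back by \(\Phi\) and performing the bilipschitz change of variables, which only costs powers of \(A_{1}\), gives \(\widetilde w_{j}\circ\Phi\in W^{1,2}(F,\widetilde{\mathcal{N}})\), \(\operatorname{tr}_{\partial F}(\widetilde w_{j}\circ\Phi)=\widetilde g|_{\partial F}\), and
\[
\int_{F}|D_{\top}(\widetilde w_{j}\circ\Phi)|^{p}\diff\mathcal{H}^{2}\le C'\delta\int_{\partial F}|D_{\top}\widetilde g|^{p}\diff\mathcal{H}^{1},\qquad C'=C'(\mathcal{N})>0.
\]

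Finally I would glue: set \(\widetilde v\coloneqq\widetilde g\) on \(\Sigma\) and \(\widetilde v\coloneqq\widetilde w_{j}\circ\Phi_{2,j}\) on the interior of each \(F_{2,j}\). The \(2\)-cells have pairwise disjoint interiors, cover \(\mathbb{S}^{2}\), and along each shared edge the traces of the two adjacent pieces both equal \(\widetilde g\); hence the usual gluing lemma for Sobolev maps gives \(\widetilde v\in W^{1,2}(\mathbb{S}^{2},\widetilde{\mathcal{N}})\), and \(\operatorname{tr}_{\Sigma}(\pi\circ\widetilde v)=\pi\circ\widetilde g=g\). Summing the cell estimate over \(j\) and using that \(\pi\) is a local isometry,
\[
\int_{\mathbb{S}^{2}}|D_{\top}\widetilde v|^{p}\diff\mathcal{H}^{2}=\sum_{j}\int_{F_{2,j}}|D_{\top}\widetilde v|^{p}\diff\mathcal{H}^{2}\le C'\delta\sum_{j}\int_{\partial F_{2,j}}|D_{\top} g|^{p}\diff\mathcal{H}^{1},
\]
and since \(\mathbb{S}^{2}\) is a closed surface each edge of \(\Sigma\) lies on the boundary of exactly two \(2\)-cells, so \(\sum_{j}\int_{\partial F_{2,j}}|D_{\top} g|^{p}\diff\mathcal{H}^{1}=2\int_{\Sigma}|D_{\top} g|^{p}\diff\mathcal{H}^{1}\), yielding the assertion with \(C=2C'\).

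The main obstacle is the passage to the universal cover \(\widetilde{\mathcal{N}}\): without it the \(2\)-cell extension would only obey the \(\tfrac{1}{2-p}\)-type estimate of Remark~\ref{remliflinear} instead of the linear one of Lemma~\ref{lem lifting linear}, and the gain \(\delta\) (as well as uniformity as \(p\nearrow2\)) would be lost. That passage is legitimate only because of the topological input: attaching the \(2\)-cells kills \(\pi_{1}(\mathbb{S}^{2})\), so the hypothesis on the \(2\)-cell boundaries forces \(g_{*}(\pi_{1}(\Sigma))\) to be trivial and \(g|_{\Sigma}\) to lift. Everything else — absolute continuity of \(g\) on edges, \(W^{1,p}\)-regularity of \(\widetilde g\), the bilipschitz changes of variables, the gluing lemma, and the two-to-one incidence between edges and \(2\)-cells — is routine bookkeeping.
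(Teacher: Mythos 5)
Your proof is correct. It rests on the same two pillars as the paper's — the linear estimate of Lemma~\ref{lem lifting linear} applied cell by cell through the bilipschitz charts \eqref{bilisomtr}, and the two-to-one incidence of edges to $2$-cells — but you reverse the order of lifting and extension. You lift $g|_{\Sigma}$ globally at the start: Van Kampen identifies $\pi_{1}(\Sigma)$ with the normal closure of the attaching loops $[\partial F_{2,j}]$, the hypothesis kills each $g_{*}[\partial F_{2,j}]$, so $g_{*}(\pi_{1}(\Sigma))=\{e\}$ and the classical lifting criterion on the graph $\Sigma$ yields a continuous $\widetilde{g}$; you then extend $\widetilde{g}$ cell by cell with $\widetilde{\mathcal{N}}$-valued maps and glue. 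The paper instead extends first — each cell boundary datum lifts individually because it is nullhomotopic, so Lemma~\ref{lem lifting linear} produces an $\mathcal{N}$-valued map $v_{F}$ on each cell, glued into $v\in W^{1,2}(\mathbb{S}^{2},\mathcal{N})$ — and only then lifts $v$ to $\widetilde{v}$, invoking a $W^{1,2}$-lifting theorem on the simply connected $\mathbb{S}^{2}$. Your route replaces that Sobolev lifting theorem by a more elementary continuous-lifting argument on a $1$-complex, at the cost of the Van Kampen computation and of having to verify that $g$ is continuous on $\Sigma$ (your argument from Definition~\ref{Sobolev on T} for vertex compatibility is the right one). Both routes give identical estimates with the same constant $2C'$. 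One small point worth flagging: you quote Lemma~\ref{lem lifting linear} for an $\widetilde{\mathcal{N}}$-valued extension, whereas its statement only asserts an $\mathcal{N}$-valued one; this is harmless, since its proof manifestly constructs the $\widetilde{\mathcal{N}}$-valued map $\widetilde{w}$ first and projects at the very end, but you should state explicitly that you are invoking the proof rather than the statement.
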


\begin{proof} We have $\mathbb{S}^{2}=\bigcup F$ and $\Sigma=\bigcup \partial F$, where the  unions are taken over all $2$-cells from the Lipschitz triangulation of $\mathbb{S}^{2}$ generated by $\Sigma$, see \eqref{uniformtriangulation}.   
    \medskip

\noindent    \emph{Step 1.} Fix an arbitrary 2-cell  $F$ from our Lipschitz triangulation of $\mathbb{S}^{2}$.  We know that $g|_{\partial F}$ is homotopic to a constant. Let $\Phi: \overline{F} \to \smash{\overline{B}}^{2}_{\delta}$ be the extension of the corresponding map coming from \eqref{bilisomtr}.
    Then, by the classical theory of continuous liftings, $g|_{\partial F}\circ \Phi^{-1}|_{\partial B^{2}_{\delta}}$ has a continuous lifting, which is also Sobolev; this, together with Lemma~\ref{lem lifting linear} and the estimate \eqref{bilisomtr}, implies that there exists $v \in W^{1,2}(F, \mathcal{N})$ such that $\tr_{\partial F}(v)=g|_{\partial F}$ and 
    \begin{equation}\label{estimate on sperical cell}
    \int_{F}|D_{\top}v|^{p}\diff \mathcal{H}^{2} \leq C\delta \int_{\partial  F} |D_{\top} g|^{p}\diff \mathcal{H}^{1},
    \end{equation}
    where $C=C(\mathcal{N})>0$.     
    \medskip
        
    \noindent \emph{Step 2.} Let $v \in L^{\infty}(\mathbb{S}^{2},\mathcal{N})$ satisfy $v|_{F}=v_{F}$ for each $2$-cell $F$ from our Lipschitz triangulation of $\mathbb{S}^{2}$, where $v_{F}$ is the mapping from \emph{Step 1} corresponding to the cell $F$. Since $v_{F}\in W^{1,2}(F, \mathcal{N})$ and $\tr_{\partial F}(v_{F})=g|_{\partial F}$, we have $v \in W^{1,2}(\mathbb{S}^{2}, \mathcal{N})$. In particular, there exists a map $\widetilde{v} \in W^{1,2}(\mathbb{S}^{2},\widetilde{\mathcal{N}})$ such that $v=\pi\circ \widetilde{v}$ (see, for instance, \cite[Theorem~1]{lifting}).
    Next, using \eqref{estimate on sperical cell} and the fact that each $1$-cell of our Lipschitz triangulation of $\mathbb{S}^{2}$ lies in the relative boundary of exactly two $2$-cells,  since $\pi$ is a local isometry, we get 
    \begin{equation*}
    \begin{split}
    \int_{\mathbb{S}^{2}}|D_{\top}\widetilde{v}|^{p}\diff \mathcal{H}^{2} = 
    \int_{\mathbb{S}^{2}}|D_{\top}v|^{p}\diff \mathcal{H}^{2}=\sum \int_{F}|D_{\top}v|^{p}\diff \mathcal{H}^{2}
    \leq 2C\delta\int_{\Sigma}|D_{\top} g|^{p}\diff \mathcal{H}^{1},
    \end{split}
    \end{equation*}
    where the sum is taken over all $2$-cells $F$. This completes our proof of Proposition~\ref{prop ineq T}.
\end{proof}

\begin{lemma}\label{nice res on T}
    Let $p \in [1,+\infty)$ and $g \in W^{1,p}(\mathbb{S}^{2}, \mathcal{Y})$, where $\mathcal{Y}$ is either a Euclidean space or a compact Riemannian manifold. 
    Let $\delta \in (0,1]$ and $\Sigma \in \mathrm{RT}_{\delta}$. 
    Then for $\mathcal{H}^{3}$-a.e.\  $\omega \in SO(3)$, $\operatorname{tr}_{\omega(\Sigma)}(g)=g|_{\omega(\Sigma)}\in W^{1,p}(\omega(\Sigma), \mathcal{Y})$. Furthermore, there exists a rotation $\omega \in SO(3)$ such that we have $\operatorname{tr}_{\omega(\Sigma)}(g)=g|_{\omega(\Sigma)}\in W^{1,p}(\omega(\Sigma), \mathcal{Y})$ and 
    \begin{equation*}\label{ineq for bilipschitz piece}
    \int_{\omega(\Sigma)}|D_{\top}g|^{p}\diff \mathcal{H}^{1}\leq \frac{A_{2}}{ \delta}\int_{\mathbb{S}^{2}}|D_{\top}g|^{p}\diff \mathcal{H}^{2},
    \end{equation*}
    where $A_{2}>0$ is an absolute constant.
\end{lemma}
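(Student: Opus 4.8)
The plan is to prove the two assertions separately: the qualitative restriction statement cell by cell from Lemma~\ref{lem rest on T}\ref{it_Ohtaap1joLiedei6aiGeipho}, and the quantitative bound by averaging $\omega\mapsto\int_{\omega(\Sigma)}|D_{\top}g|^{p}\diff\mathcal{H}^{1}$ over $\omega\in SO(3)$ against the measure $\mathcal{H}^{3}\mres SO(3)$ and using that a Haar-random rotation carries a fixed point of $\mathbb{S}^{2}$ to a uniformly distributed point of $\mathbb{S}^{2}$.

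For the first assertion, recall from \eqref{uniformtriangulation} and \eqref{bilisomtr} that $\Sigma$ is (a rotated copy of) the union of finitely many $1$-cells $F_{1,1},\dotsc,F_{1,k_{1}}$, each bilipschitz homeomorphic to $B^{1}_{\delta}$ with the absolute bilipschitz constant $A_{1}$, together with the finite set of vertices. Each rotated cell $\omega(F_{1,j})$ is then the bilipschitz image of a bounded open subset of $\mathbb{R}^{1}$ lying in $\partial B^{3}_{1}=\mathbb{S}^{2}$, so I would invoke Lemma~\ref{lem rest on T}\ref{it_Ohtaap1joLiedei6aiGeipho} with $m=3$ (whence $\mathcal{H}^{m(m-1)/2}=\mathcal{H}^{3}$ on $SO(3)$): for each fixed $j$ and $\mathcal{H}^{3}$-a.e.\ $\omega$, one has $\operatorname{tr}_{\omega(F_{1,j})}(g)=g|_{\omega(F_{1,j})}\in W^{1,p}(\omega(F_{1,j}),\mathcal{Y})$. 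Intersecting the $k_{1}$ full-measure sets, for a.e.\ $\omega$ every cell restriction lies in $W^{1,p}$; since such restrictions extend continuously to the endpoints, a brief additional slicing argument near each of the finitely many vertices (for instance, using traces on small circles about a vertex) shows that cells meeting at a vertex take a common value there, so the pieces glue into an element of $W^{1,p}(\omega(\Sigma),\mathcal{Y})$ in the sense of Definition~\ref{Sobolev on T}.

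For the quantitative estimate, set $f\coloneqq|D_{\top}g|^{p}\in L^{1}(\mathbb{S}^{2},\mathcal{H}^{2})$, which is nonnegative, and note that $\mathcal{H}^{3}\mres SO(3)$ is bi-invariant, because left and right multiplication by a fixed rotation is an isometry of $\mathbb{R}^{3\times 3}$. Since any rotation is an isometry, $\int_{\omega(\Sigma)}f\diff\mathcal{H}^{1}=\int_{\Sigma}(f\circ\omega)\diff\mathcal{H}^{1}$, so Tonelli's theorem gives
\begin{equation*}
\int_{SO(3)}\int_{\omega(\Sigma)}f\diff\mathcal{H}^{1}\diff\mathcal{H}^{3}(\omega)=\int_{\Sigma}\Bigl(\int_{SO(3)}f(\omega(y))\diff\mathcal{H}^{3}(\omega)\Bigr)\diff\mathcal{H}^{1}(y).
\end{equation*}
For each fixed $y\in\mathbb{S}^{2}$ the pushforward of $\mathcal{H}^{3}\mres SO(3)$ under $\omega\mapsto\omega(y)$ is an $SO(3)$-invariant measure on $\mathbb{S}^{2}$ of total mass $\mathcal{H}^{3}(SO(3))$, hence equals $\tfrac{\mathcal{H}^{3}(SO(3))}{4\pi}\,\mathcal{H}^{2}\mres\mathbb{S}^{2}$, so the inner integral is $\tfrac{\mathcal{H}^{3}(SO(3))}{4\pi}\int_{\mathbb{S}^{2}}f\diff\mathcal{H}^{2}$, independently of $y$; therefore the $\mathcal{H}^{3}$-average over $SO(3)$ of $\omega\mapsto\int_{\omega(\Sigma)}f\diff\mathcal{H}^{1}$ is exactly $\tfrac{\mathcal{H}^{1}(\Sigma)}{4\pi}\int_{\mathbb{S}^{2}}f\diff\mathcal{H}^{2}$. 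Finally I would bound $\mathcal{H}^{1}(\Sigma)$: by \eqref{bilisomtr} each $1$-cell has length at most $2A_{1}\delta$, while the number $k_{1}$ of $1$-cells --- the edges of the step-$1/\floor{1/\delta}$ grid of $\partial[-1,1]^{3}$ --- is at most $C_{1}/\delta^{2}$ for an absolute constant $C_{1}$, giving $\mathcal{H}^{1}(\Sigma)\le 2A_{1}C_{1}/\delta$. Hence the average above is $\le\tfrac{A_{2}}{\delta}\int_{\mathbb{S}^{2}}f\diff\mathcal{H}^{2}$ with $A_{2}\coloneqq A_{1}C_{1}/(2\pi)$ absolute, so the set of $\omega$ satisfying the desired inequality has positive $\mathcal{H}^{3}$-measure; intersecting it with the a.e.\ set from the first part yields a rotation $\omega$ with all the required properties.

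I expect the only point needing genuine care to be the gluing at the vertices in the first assertion --- checking that, for a.e.\ rotation, the $W^{1,p}$ restrictions to cells sharing a vertex are compatible there in the sense of Definition~\ref{Sobolev on T} --- whereas the Fubini computation, the uniform-distribution fact for $\omega\mapsto\omega(y)$, and the counting of $1$-cells are all routine.
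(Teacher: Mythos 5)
Your quantitative step is correct and is essentially a self-contained re-derivation of the Federer integral-geometric formula \cite[Theorem~3.2.48]{Federer} that the paper cites: you push forward Haar measure on $SO(3)$ through $\omega\mapsto\omega(y)$, identify the image with normalized surface measure on $\mathbb{S}^{2}$, and then pick a rotation at or below the mean. Working over $SO(3)$ directly is in fact slightly tidier than the paper's route, which averages over $O(3)$ and then pays an index-two factor to land in $SO(3)$. The bound $\mathcal{H}^{1}(\Sigma)\le C/\delta$ and the choice of $A_{2}$ are also fine.

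The gap is in the first, qualitative assertion. Applying Lemma~\ref{lem rest on T}\ref{it_Ohtaap1joLiedei6aiGeipho} to each $1$-cell $F_{1,j}$ separately gives you, for $\mathcal{H}^{3}$-a.e.\ $\omega$, that each trace $g|_{\omega(F_{1,j})}$ lies in $W^{1,p}(\omega(F_{1,j}),\mathcal{Y})$ and hence extends continuously to the closed edge. But Definition~\ref{Sobolev on T} requires more: the trace of each cell restriction to the cell boundary must equal the pointwise value of the single Borel map $g|_{\omega(\Sigma)}$ there, which forces the one-sided limits from all edges meeting at a vertex to agree. That compatibility is \emph{not} automatic from cell-by-cell restriction -- the edge traces are each only determined up to $\mathcal{H}^{1}$-null sets on their own edges, so nothing yet ties them together at a common endpoint. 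Your suggested repair, ``traces on small circles about a vertex,'' is not carried out, and it is not clear how continuity of $g$ on a circle $C_\rho$ around a vertex would control the limiting values of the edge restrictions as one approaches the vertex radially.

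The paper closes exactly this hole with one extra observation you are missing: it applies Lemma~\ref{lem rest on T}\ref{it_Ohtaap1joLiedei6aiGeipho} not to single edges but to \emph{unions of two adjacent edges} sharing a vertex. Such a union $V=\overline{F}_{1,j_1}\cup\overline{F}_{1,j_2}$ is, by \eqref{bilisomtr}, bilipschitz homeomorphic to a single closed interval $\smash{\overline{B}}^1_\delta$. So for $\mathcal{H}^3$-a.e.\ $\omega$ the trace of $g$ onto $\omega(U)$ (with $U$ the open interior of $V$) is a $W^{1,p}$ function on one interval, hence has a unique continuous representative on $\omega(V)$ -- continuity \emph{through} the shared vertex is then automatic, not something to be checked afterwards. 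Running this over all adjacent pairs and intersecting the countably many (in fact finitely many) full-measure sets of rotations yields a single continuous representative of $g|_{\omega(\Sigma)}$ on all of $\omega(\Sigma)$, which is what $W^{1,p}(\omega(\Sigma),\mathcal{Y})$ in the sense of Definition~\ref{Sobolev on T} requires. This is the key idea your proposal needs; without it, the vertex compatibility is an unproved claim.
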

\begin{proof} Since $\Sigma\in \mathrm{RT}_{\delta}$, there exists $\omega\in SO(3)$ such that $\Sigma = \bigcup_{i=0}^{1}\bigcup_{j=1}^{k_{i}} \omega(E_{i,j})=\bigcup_{i=0}^{1}\bigcup_{j=1}^{k_{i}} F_{i,j}$ and hence $\Sigma=\bigcup_{j=1}^{k_{1}}\overline{F}_{1,j}$, where the $E_{i,j}$'s come from \eqref{uniformgrid} for $n=\floor{1/\delta}$. 
    Fix an arbitrary $j \in \{1,\dotsc,k_{0}\}$. Let $\overline{F}_{1,j_{1}}$ and $\overline{F}_{1,j_{2}}$ be two edges emanating from $F_{0,j}$. 
    Denote $V=\overline{F}_{1,j_{1}}\cup \overline{F}_{1,j_{2}}$. 
    Then, in view of \eqref{bilisomtr}, there is a bilipschitz mapping  $\Psi:\overline{B}^{1}_{\delta}\to V$. Denote $U=\Psi(B^{1}_{\delta})$. 
    Applying Lemma~\ref{lem rest on T}~\ref{it_Ohtaap1joLiedei6aiGeipho} with $v=g$ and $E=U$, we have $\operatorname{tr}_{\omega(U)}(g)=g|_{\omega(U)} \in W^{1,p}(\omega(U), \mathcal{Y})$ for $\mathcal{H}^{3}$-a.e.\  $\omega \in SO(3)$. 
    Furthermore, for each $\omega \in SO(3)$ for which  $g|_{\omega(U)} \in W^{1,p}(\omega(U), \mathcal{Y})$, there exists a unique mapping $h \in C(\omega(V), \mathcal{Y})$ such that $g|_{\omega(U)}=h$ $\mathcal{H}^{1}$-a.e.\  on $\omega(U)$, since $\omega (U)$ is bilipschitz homeomorphic to $B^{1}_{\delta}$. 
    Thus, successively applying Lemma~\ref{lem rest on T} \ref{it_Ohtaap1joLiedei6aiGeipho}, altogether we obtain that there exists $E\subset SO(3)$ such that $\mathcal{H}^{3}(E)=0$ and the following holds. 
    For each $\omega \in SO(3)\setminus E$ and $j\in \{1,\dotsc,k_{1}\}$ we have $\operatorname{tr}_{\omega(F_{1,j})}(g)=g|_{\omega(F_{1,j})} \in W^{1,p}(\omega(F_{1,j}), \mathcal{Y})$ and there exists a unique mapping $h\in C(\omega(\Sigma), \mathcal{Y})$ such that $\operatorname{tr}_{\omega(\Sigma)}(g)=g|_{\omega(\Sigma)}=h$ $\mathcal{H}^{1}$-a.e.\  on~$\omega(\Sigma)$, namely $g|_{\omega(\Sigma)}\in W^{1,p}(\omega(\Sigma), \mathcal{Y})$ (see Definition~\ref{Sobolev on T}). 
    Next, according to \cite[Theorem~3.2.48]{Federer}, 
    \begin{equation}\label{intgeomlemtr}
    \int_{O(3)}\diff\theta_{3}(\omega) \int_{\omega(\Sigma)}|D_{\top}g|^{p}\diff \mathcal{H}^{1} = \frac{\mathcal{H}^{1}(\Sigma)}{\mathcal{H}^{2}(\mathbb{S}^{2})}\int_{\mathbb{S}^{2}}|D_{\top}g|^{p}\diff\mathcal{H}^{2},
    \end{equation}    
    where $\theta_{3}=(\mathcal{H}^{3}(O(3)))^{-1}\mathcal{H}^{3}\mres O(3)$ is the Haar measure of the orthogonal group $O(3)$ such that $\theta_{3}(O(3))=1$. Denoting 
    \[
    A=\left\{\omega \in O(3): \int_{\omega(\Sigma)}|D_{\top}g|^{p}\diff \mathcal{H}^{1} < \frac{3\mathcal{H}^{1}(\Sigma)}{\mathcal{H}^{2}(\mathbb{S}^{2})}\int_{\mathbb{S}^{2}}|D_{\top}g|^{p}\diff\mathcal{H}^{2}\right\},
    \]
    we observe that $\theta_{3}(A)\geq 2/3$, otherwise \eqref{intgeomlemtr} would not be true. Thus,  $\theta_{3}(A\cap SO(3))\geq 1/6$, since $\theta_{3}(O(3)\setminus SO(3))=1/2$. On the other hand, using \eqref{bilisomtr}, we get $\mathcal{H}^{1}(\Sigma)\leq C/\delta$, where $C>0$ is an absolute constant. 
    This completes our proof of Lemma~\ref{nice res on T}.
    \end{proof}

\begin{lemma}\label{lem lowenergy}
    Let $p_{0} \in (1,2)$, $p\in [p_{0}, 2)$, $\delta \in (0,1]$ and $\Sigma \in \mathrm{RT}_{\delta}$. Let $F$ be a $2$-cell of the Lipschitz triangulation of $\mathbb{S}^{2}$ generated by $\Sigma$. Then there exists a constant $\alpha_{0}=\alpha_{0}(p_{0}, \mathcal{N})>0$ such that the following holds. If $u \in W^{1,p}(F, \mathcal{N})$, $\operatorname{tr}_{\partial F}(u) \in W^{1,p}(\partial F, \mathcal{N})$ and 
    \begin{equation}
    \label{cmcoji3jijn4ignu5nu}
    \int_{F}\frac{|D_{\top}u|^{p}}{p}\diff \mathcal{H}^{2}+\delta\int_{\partial F}\frac{|D_{\top}\operatorname{tr}_{\partial F}(u)|^{p}}{p}\diff \mathcal{H}^{1}\leq \frac{\alpha_{0}\delta^{2-p}}{2-p},
    \end{equation} 
    then $\operatorname{tr}_{\partial F}(u)$ is homotopic to a constant, where $\partial F$ denotes the relative boundary of $F$.
\end{lemma}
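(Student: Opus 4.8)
I would argue by contraposition: assume that $\operatorname{tr}_{\partial F}(u)$ is \emph{not} homotopic to a constant, and show that the left-hand side of \eqref{cmcoji3jijn4ignu5nu} is necessarily at least $c_{0}\,\delta^{2-p}/(2-p)$ for some constant $c_{0}=c_{0}(p_{0},\mathcal{N})>0$; then any $\alpha_{0}<c_{0}$ works. The two ingredients are the lower bound of Proposition~\ref{Sandier lemma} (which is stated on round disks, so we must first pass to a chart) and a uniform positive lower bound for $\mathcal{E}^{\mathrm{sg}}_{q}$ on nontrivial homotopy classes for $q$ in a compact range of exponents.

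\textbf{Passing to the disk.} Let $\Phi=\Phi_{i,j}\colon F\to B^{2}_{\delta}$ be the bilipschitz chart of \eqref{bilisomtr}, extended to the closures, and set $\widetilde{u}\coloneqq u\circ\Phi^{-1}\in W^{1,p}(B^{2}_{\delta},\mathcal{N})$, so that $\operatorname{tr}_{\partial B^{2}_{\delta}}(\widetilde{u})=\operatorname{tr}_{\partial F}(u)\circ(\Phi|_{\partial F})^{-1}\in W^{1,p}(\partial B^{2}_{\delta},\mathcal{N})$. Using the area formula together with the bound \eqref{bilisomtr} (whose constant $A_{1}$ is absolute) and $p\le 2$, one gets an absolute constant $A_{3}>0$ with
\[
\int_{B^{2}_{\delta}}\frac{|D\widetilde{u}|^{p}}{p}\diff x+\delta\int_{\partial B^{2}_{\delta}}\frac{|D_{\top}\operatorname{tr}_{\partial B^{2}_{\delta}}(\widetilde{u})|^{p}}{p}\diff\mathcal{H}^{1}\le A_{3}\Bigl(\int_{F}\frac{|D_{\top}u|^{p}}{p}\diff\mathcal{H}^{2}+\delta\int_{\partial F}\frac{|D_{\top}\operatorname{tr}_{\partial F}(u)|^{p}}{p}\diff\mathcal{H}^{1}\Bigr).
\]
Since $\Phi|_{\partial F}$ is a homeomorphism of circles, $\operatorname{tr}_{\partial B^{2}_{\delta}}(\widetilde{u})$ is freely homotopic, up to orientation, to $\operatorname{tr}_{\partial F}(u)$; in particular (both being continuous by Morrey's embedding, as $p>1$) it is not homotopic to a constant.

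\textbf{Lower bound and conclusion.} Applying Proposition~\ref{Sandier lemma} to $\widetilde{u}$ on $B^{2}_{\delta}$ (legitimate since $p\in(1,2)$) yields
\[
\int_{B^{2}_{\delta}}\frac{|D\widetilde{u}|^{p}}{p}\diff x+\delta\int_{\partial B^{2}_{\delta}}\frac{|D_{\top}\operatorname{tr}_{\partial B^{2}_{\delta}}(\widetilde{u})|^{p}}{p}\diff\mathcal{H}^{1}\ge\frac{\mathcal{E}^{\mathrm{sg}}_{p/(p-1)}\bigl(\operatorname{tr}_{\partial B^{2}_{\delta}}(\widetilde{u})\bigr)\,\delta^{2-p}}{2-p}.
\]
In any topological resolution $(\gamma_{1},\dots,\gamma_{k})$ of a map $\mathbb{S}^{1}\to\mathcal{N}$ that is not null-homotopic, at least one $\gamma_{i}$ is not homotopic to a constant — otherwise all the rescaled $\gamma_{i}$ would extend continuously into their disks, producing a continuous extension of the original map to $B^{2}_{1}$ and hence a null-homotopy, a contradiction. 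Consequently $\lambda(\gamma_{i})\ge\operatorname{sys}(\mathcal{N})>0$ by \eqref{systole}, so with $q\coloneqq p/(p-1)$,
\[
\mathcal{E}^{\mathrm{sg}}_{q}\bigl(\operatorname{tr}_{\partial B^{2}_{\delta}}(\widetilde{u})\bigr)\ge\frac{\lambda(\gamma_{i})^{q}}{(2\pi)^{q-1}q}\ge\frac{2\pi}{q}\Bigl(\frac{\operatorname{sys}(\mathcal{N})}{2\pi}\Bigr)^{q}.
\]
Because $p\ge p_{0}$, the exponent $q$ lies in the compact interval $[2,\,p_{0}/(p_{0}-1)]$, on which $q\mapsto\tfrac{2\pi}{q}(\operatorname{sys}(\mathcal{N})/(2\pi))^{q}$ is continuous and strictly positive; hence it is bounded below by a constant $c_{0}=c_{0}(p_{0},\mathcal{N})>0$. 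Combining the three displays gives that the left-hand side of \eqref{cmcoji3jijn4ignu5nu} is at least $(c_{0}/A_{3})\,\delta^{2-p}/(2-p)$, so the choice $\alpha_{0}\coloneqq c_{0}/(2A_{3})=\alpha_{0}(p_{0},\mathcal{N})>0$ contradicts \eqref{cmcoji3jijn4ignu5nu}. Therefore $\operatorname{tr}_{\partial F}(u)$ is homotopic to a constant.

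\textbf{Main obstacle.} The delicate point is that $\alpha_{0}$ must not depend on $p$: this forces the lower bound on $\mathcal{E}^{\mathrm{sg}}_{p/(p-1)}$ over nontrivial classes to be uniform in $p\in[p_{0},2)$, which is exactly why the fixed lower exponent $p_{0}$ is imposed — without it $q=p/(p-1)\to+\infty$ as $p\to 1$ and the factor $(\operatorname{sys}(\mathcal{N})/(2\pi))^{q}$ would degenerate when $\operatorname{sys}(\mathcal{N})<2\pi$. The remaining steps (the area-formula bookkeeping with the absolute constant $A_{1}$, and the invariance of the homotopy class under the chart) are routine.
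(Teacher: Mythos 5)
Your proposal is correct and follows essentially the same approach as the paper: transfer to the round disk $B^{2}_{\delta}$ via the bilipschitz chart of \eqref{bilisomtr}, apply Proposition~\ref{Sandier lemma}, and use the uniform-in-$p$ lower bound $\mathcal{E}^{\mathrm{sg}}_{p/(p-1)}(\gamma)\ge \frac{(\mathrm{sys}\,\mathcal{N})^{p/(p-1)}(p-1)}{(2\pi)^{1/(p-1)}p}$ for nontrivial $\gamma$, minimized over $p\in[p_0,2]$ to fix $\alpha_0$. The only cosmetic differences are that you phrase it as a contraposition and that you spell out (with a correct sketch) why a topological resolution of a nontrivial class must contain a nontrivial loop, a point the paper passes over by citing Definition~\ref{def p-singular} and \eqref{systole}.
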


\begin{proof}
     Let $\Phi: F \to B^{2}_{\delta}$ be a bilipschitz mapping coming from \eqref{bilisomtr}. 
    To lighten the notation, denote $v\coloneqq u\circ \Phi^{-1}$. 
    According to Definition~\ref{def p-singular} and \eqref{systole}, if a map $\gamma \in W^{1,p}(\partial B^{2}_{\delta}, \mathcal{N})$ is not homotopic to a constant, then     
    \begin{equation}
    \label{kjnfi4h59h49h5g49h689}
   \frac{(\mathrm{sys}(\mathcal{N}))^{\frac{p}{p-1}}(p-1)}{(2\pi)^{\frac{1}{p-1}}p}\leq \mathcal{E}^{\mathrm{sg}}_{p/(p-1)}(\gamma).
    \end{equation}
    Let $\alpha_{0}>0$ be a constant, which will be defined a little later for the proof to work.  Next, using Proposition~\ref{Sandier lemma}, the chain rule for Sobolev mappings, \cite[Theorem~3.2.5]{Federer} and \eqref{cmcoji3jijn4ignu5nu}, we get
    \begin{equation}
      \label{est from (S.1)}
 \frac{\mathcal{E}^{\mathrm{sg}}_{p/(p-1)}(\operatorname{tr}_{\partial B^{2}_{\delta}}(v)) \delta^{2-p}}{2-p}\leq  \int_{B^{2}_{\delta}}\frac{|Dv|^{p}}{p}\diff \mathcal{H}^{2}+ \delta \int_{\partial B^{2}_{\delta}}\frac{|D_{\top}\operatorname{tr}_{\partial B^{2}_{\delta}}(v)|^{p}}{p}\diff \mathcal{H}^{1} \leq \frac{C\alpha_{0} \delta^{2-p}}{2-p},     
     \end{equation}
    where $C>0$ is an absolute constant. Now we define \[\alpha_{0}=\min\left\{ \frac{(\mathrm{sys}(\mathcal{N})
 )^{\frac{p}{p-1}}(p-1)}{2C(2\pi)^{\frac{1}{p-1}}p}: p \in [p_{0},2]\right\}.\] In view of the estimates \eqref{kjnfi4h59h49h5g49h689} and \eqref{est from (S.1)},  $\operatorname{tr}_{\partial B^{2}_{\delta}}(v)$ is  homotopic to a constant, and hence $\operatorname{tr}_{\partial F}(u)$ is homotopic to a constant. This completes our proof of Lemma~\ref{lem lowenergy}.
\end{proof}

The next lemma provides us with an extension to a $3$-cell of given mappings on a $2$-cell with  appropriate energy control.
\begin{lemma}\label{lemma_single_cell}
Let $p \in [1,2]$, $\delta \in (0,1]$ and for each $i \in \{0,1\}$, 
\(g_{i} \in W^{1,p}((0,\delta)^{2} , \mathcal{N})\).
If \(\tr_{\partial (0, \delta)^{2}}(g_{i}) \in W^{1,p}(\partial (0, \delta)^{2}, \mathcal{N})\) and if 
\begin{equation}\label{eq_coincideness_tr}
\operatorname{tr}_{\partial (0, \delta)^{2}}(g_{0})
=\operatorname{tr}_{\partial (0,\delta)^{2} }(g_{1}),
\end{equation}
then there exists $w \in W^{1,p}((0,\delta)^{3}, \mathcal{N})$ such that \(\tr_{(0,\delta)^{2}\times \{i\delta\}}(w)=g_{i}\) for each \(i \in \{0,1\}\) and 
\begin{equation}\label{eq_cell_single}
\begin{split}
\int_{(0,\delta)^3}|Dw|^{p} \diff x &\le A_{3} \delta \sum_{i=0}^{1}\int_{(0,\delta)^2}|D g_{i}|^{p}\diff \mathcal{H}^{2} + A_{3}\delta^2 \int_{\partial (0,\delta)^{2}}|D_{\top} \operatorname{tr}_{\partial (0,\delta)^{2}}(g_{1})|^{p}\diff \mathcal{H}^{1},
\end{split}
\end{equation}
where $A_{3}>0$ is an absolute constant.
\end{lemma}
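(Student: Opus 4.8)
The plan is to realise $w$ as a single \emph{homogeneous (cone) extension} of a carefully chosen boundary datum on $\partial(0,\delta)^3$; the hypothesis \eqref{eq_coincideness_tr} will be used precisely to ensure that this boundary datum is a well-defined $W^{1,p}$ map on the glued surface $\partial(0,\delta)^3$, and the inequality \eqref{eq_cell_single} will then follow from the homogeneous-extension estimate \eqref{ineq 2} (in the version of Remark~\ref{remliflinear}), which in dimension $m=3$ comes with an \emph{absolute} constant since $p\le 2<3$ and needs no lifting of the datum.

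First I would fix coordinates so that the two prescribed faces are $(0,\delta)^2\times\{0\}$ and $(0,\delta)^2\times\{\delta\}$ (the \emph{horizontal} faces), the remaining four faces being \emph{lateral}, on each of which one of the first two coordinates is constant while the other one and $x_3$ range over $(0,\delta)$. Writing $h\coloneqq\tr_{\partial(0,\delta)^2}(g_0)=\tr_{\partial(0,\delta)^2}(g_1)$, which is continuous by the one-dimensional Sobolev embedding, I would define $G\colon\partial(0,\delta)^3\to\mathcal N$ by $G=g_0$ on the bottom face, $G=g_1$ on the top face, and on each lateral face $G$ equal to the appropriate one-dimensional slice of $h$, taken constant in the $x_3$-direction. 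The point is that \eqref{eq_coincideness_tr} is exactly what makes these three prescriptions agree on the edges shared by a horizontal and a lateral face, while along the four vertical edges the agreement is automatic (both sides being the common corner value of $h$, independent of $x_3$); hence $G\in W^{1,p}(\partial(0,\delta)^3,\mathcal N)$ in the sense of Definition~\ref{Sobolev on T}. Since each lateral face is a flat square on which $G$ is an $x_3$-translate of a piece of $h$, one has
\[
\int_{\partial(0,\delta)^3}|D_\top G|^p\diff\mathcal H^2
\le\sum_{i=0}^1\int_{(0,\delta)^2}|Dg_i|^p\diff\mathcal H^2
+\delta\int_{\partial(0,\delta)^2}|D_\top\tr_{\partial(0,\delta)^2}(g_1)|^p\diff\mathcal H^1 .
\]

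Then I would let $w$ be the cone extension of $G$ from the center of the cube, equivalently (through the absolute bilipschitz identification $(0,\delta)^3\simeq B^3_\delta$) the homogeneous extension of $G$ from the origin of a ball. Since $G$ already takes values in $\mathcal N$, no nearest-point retraction is needed, so $w$ maps into $\mathcal N$; it lies in $W^{1,p}$ because on the cone over each face it is $W^{1,p}$ and these pieces have matching traces on the cones over the edges by continuity of $G$; and $\tr_{\partial(0,\delta)^3}(w)=G$, in particular $\tr_{(0,\delta)^2\times\{i\delta\}}(w)=g_i$. By \eqref{ineq 2} with $m=3$ together with Remark~\ref{remliflinear} (constant $\tfrac1{3-p}\le1$, no lifting hypothesis required), transported through the bilipschitz map and scaled, this gives $\int_{(0,\delta)^3}|Dw|^p\diff x\le C\delta\int_{\partial(0,\delta)^3}|D_\top G|^p\diff\mathcal H^2$ with $C$ absolute; alternatively one checks this directly in radial coordinates around the center, exactly as in the planar computation of Lemma~\ref{lem 3.5}, the only new feature being the convergent factor $\int_0^1\rho^{2-p}\diff\rho=\tfrac1{3-p}$. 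Combining the last two displays yields \eqref{eq_cell_single} with $A_3\coloneqq C$. I expect the only real care to be needed in the second step: checking that $G$ is genuinely a Sobolev map on the glued boundary (which is where \eqref{eq_coincideness_tr} is indispensable) and ensuring that every constant stays independent of $\mathcal N$, for which it is essential to use the singular homogeneous extension, legitimate precisely because $p<m=3$.
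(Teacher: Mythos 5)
Your proposal is correct and follows essentially the same route as the paper: define the lateral part of the boundary datum by extending $\tr_{\partial(0,\delta)^2}(g_1)$ constantly in the vertical direction (gluing with $g_0$, $g_1$ on the horizontal faces, which is exactly where \eqref{eq_coincideness_tr} enters), then perform a homogeneous extension of the resulting $W^{1,p}$ boundary datum through the bilipschitz identification of the cube with a ball, using $p\le2<3$ to get an absolute constant. The only presentational difference is that the paper carries out the radial computation explicitly (as in your parenthetical remark) rather than citing Remark~\ref{remliflinear}.
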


\begin{proof}
Denote \(E=\partial (0,\delta)^{2} \times (0,\delta)\). We define $w$ on $E$ by
\(
w|_{E}(x)=\operatorname{tr}_{\partial (0,\delta)^{2}}(g_{1})(x')
\),
where \(x=(x',x_3) \in E\). 
Since $\operatorname{tr}_{\partial (0, \delta)^{2}}(g_{1}) \in W^{1,p}(\partial (0, \delta)^{2}, \mathcal{N})$, we have \(w|_{E} \in W^{1,p}(E, \mathcal{N})\). 
Using \cite[Theorem~3.2.22~(3)]{Federer} and changing the variables, one has
\begin{equation}\label{eq_estbeisk}
\begin{split}
\int_{E}|Dw|^{p}\diff \mathcal{H}^{2} = \delta \int_{\partial (0,\delta)^{2}}|D_{\top} \operatorname{tr}_{\partial (0,\delta)^{2}}(g_1)|^{p}\diff \mathcal{H}^{1}.
\end{split}
\end{equation}
Next, we define the map $w|_{\partial (0,\delta)^{3}}: \partial (0,\delta)^{3} \to \mathcal{N}$ by
\[
w|_{\partial (0,\delta)^{3}}(x)=
    \begin{cases}
        g_{1}(x') \,\  &\text{if} \,\ x=(x', x_3) \in (0,\delta)^{2} \times \{\delta\},\\
        \operatorname{tr}_{\partial (0,\delta)^{2}}(g_{1})(x') \,\ &\text{if} \,\ x=(x', x_{3}) \in \partial (0,\delta)^{2}\times \{\delta\},\\
        w|_{E}(x) \,\ & \text{if}\,\ x \in E,\\
        \operatorname{tr}_{\partial (0,\delta)^{2}}(g_{0})(x') \,\ &\text{if}\,\ x=(x',x_{3}) \in \partial (0,\delta)^{2}\times \{0\},\\
        g_{0}(x') \,\  &\text{if}\,\ x=(x', x_3) \in (0,\delta)^{2} \times \{0\}.
    \end{cases}
\]
Then, in view of \eqref{eq_coincideness_tr}, we deduce that $w|_{\partial (0,\delta)^{3}} \in W^{1,p}(\partial (0,\delta)^{3}, \mathcal{N})$ and $\operatorname{tr}_{(0,\delta)^{2}\times \{i\delta\}}(w)=g_{i}$ for each $i \in \{0,1\}$.  Let $\Psi:\smash{\overline{B}}^3_{\delta}\to [0,\delta]^3$ be a bilipschitz homeomorphism with an absolute bilipschitz constant (see \cite[Corollary~3]{bilipschitz}). Notice that $w|_{\partial (0,\delta)^3}\circ \Psi|_{\partial B^{3}_{\delta}} \in W^{1,p}(\partial B^{3}_{\delta}, \mathcal{N})$. Thus, defining $v(x)=w|_{\partial (0,\delta)^{3}}\circ \Psi(\delta x/|x|)$ for each $x \in B^{3}_{\delta}\setminus \{0\}$ and using that $p \in [1,2]$, we have $v\in W^{1,p}(B^{3}_{\delta}, \mathcal{N})$. Next, setting $\zeta(x)=x/\abs{x}$ for $x \in \mathbb{R}^{3}\backslash \{0\}$, using  the special case of the coarea formula, \cite[Theorem~3.2.5]{Federer}, the fact that $p \in [1,2]$ and the chain rule for Sobolev mappings,  we obtain the following chain of estimates
\begin{equation}
\label{homognew}
\begin{split}
\int_{B^{3}_{\delta}}|Dv|^{p}\diff x &= \int_{0}^{\delta}\diff r \int_{\partial B^{3}_{r}} \delta^{p}|D(w\circ \Psi)(\delta\zeta(\sigma))\circ D\zeta(\sigma)|^{p}\diff \mathcal{H}^{2}(\sigma)\\
&= \int_{0}^{\delta}\Bigl(\frac{r}{\delta}\Bigr)^{2-p}\diff r \int_{\partial B^{3}_{\delta}}|D_{\top}(w\circ \Psi)|^{p}\diff \mathcal{H}^{2}\\
&\leq \delta\int_{\partial B^{3}_{\delta}}|D_{\top}(w\circ \Psi)|^{p}\diff \mathcal{H}^{2}\\
&\leq C\delta\int_{\partial (0, \delta)^{3}}|D_{\top}w|^{p}\diff \mathcal{H}^{2},  
\end{split}
\end{equation}
where  $C>0$ is a constant depending only on the bilipschitz constant of $\Psi$.  Then, defining for each $x \in (0,\delta)^{3}\setminus \{\Psi(0)\}$, $w(x)=v(\Psi^{-1}(x))$, we observe that $w\in W^{1,p}((0,\delta)^3, \mathcal{N})$, since $v\in W^{1,p}(B^{3}_{\delta},\mathcal{N})$ and $\Psi$ is bilipschitz. Using \cite[Theorem~3.2.5]{Federer}, the fact that $\Psi$ is bilipschitz and \eqref{homognew}, we have
\begin{equation*}
\int_{(0,\delta)^3}|D w|^{p}\diff x  
\leq C^{\prime} \int_{B^{3}_{\delta}}|Dv|^{p}\diff x \leq C^{\prime\prime} \delta \int_{\partial (0,\delta)^3}|D_{\top}w|^{p}\diff \mathcal{H}^{2},  
\end{equation*}
where $C^{\prime\prime}>0$ depends only on the bilipschitz constant of $\Psi$, which is an absolute constant. This, together with \eqref{eq_estbeisk}, yields \eqref{eq_cell_single} and completes our proof of Lemma~\ref{lemma_single_cell}.
\end{proof}
Using Lemma~\ref{lemma_single_cell}, we obtain the next Luckhaus-type lemma (see \cite[Lemma~1]{Luckhaus_1988}).

\begin{lemma}\label{finding nice v and conmapping}
    Let $p_{0}\in (1,2)$,  $p \in [p_{0}, 2)$, $\delta \in (0,1)$ and $g \in W^{1,p}(\mathbb{S}^{2}, \mathcal{N})$. Then there exists a constant $\alpha=\alpha(p_{0}, \mathcal{N})>0$ such that the following holds. Assume that 
        \begin{equation}\label{lowenergycondition}
        \int_{\mathbb{S}^{2}}\frac{|D_{\top}g|^{p}}{p}\diff \mathcal{H}^{2} \leq \frac{\alpha\delta^{2-p}}{2-p}.
        \end{equation}
        Then there exist mappings $\varphi_{\delta}\in W^{1,p}(B^{3}_{1}\setminus \smash{\overline{B}}^3_{1-\delta}, \mathcal{N})$ 
        and $\widetilde{v} \in W^{1,2}(\mathbb{S}^{2},\widetilde{\mathcal{N}})$, where $\pi:\widetilde{\mathcal{N}}\to \mathcal{N}$ is the universal covering of $\mathcal{N}$, such that 
    \begin{align}
    \label{connmapcon}
    \operatorname{tr}_{\mathbb{S}^{2}}(\varphi_{\delta})&=g,&
    \operatorname{tr}_{\mathbb{S}^{2}}(\varphi_{\delta}((1-\delta )\cdot))&=\pi   \circ \widetilde{v},
    \end{align}
    and the following estimates are satisfied
    \begin{gather}\label{estonconnmap}
    \int_{B^{3}_{1}\setminus \smash{\overline{B}}^3_{1-\delta}}|D\varphi_{\delta}|^{p}\diff x \leq C\delta \int_{\mathbb{S}^{2}}|D_{\top}g|^{p} \diff \mathcal{H}^{2},\\     
    \label{estlifting}
    \int_{\mathbb{S}^{2}}|D_{\top}(\pi \circ \widetilde{v})|^{p}\diff \mathcal{H}^{2}
    = 
        \int_{\mathbb{S}^{2}}|D_{\top}\widetilde{v}|^{p}\diff \mathcal{H}^{2}
        \leq 
        C \int_{\mathbb{S}^{2}}|D_{\top}g|^{p}\diff \mathcal{H}^{2},
    \end{gather}
    where $C=C(\mathcal{N})>0$. 
\end{lemma}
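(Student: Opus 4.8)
The plan is to combine the triangulation machinery (Lemma~\ref{nice res on T}, Lemma~\ref{lem lowenergy}, Proposition~\ref{prop ineq T}) with the cell-by-cell extension of Lemma~\ref{lemma_single_cell} in order to build $\varphi_\delta$ on the thin shell $B^{3}_{1}\setminus\overline{B}^{3}_{1-\delta}$. First I would choose the constant $\alpha=\alpha(p_{0},\mathcal N)>0$ so that $\alpha$ is at most a small multiple of the constant $\alpha_{0}=\alpha_{0}(p_{0},\mathcal N)$ from Lemma~\ref{lem lowenergy}, the multiple being absolute and coming from the bilipschitz constants $A_{1},A_{2}$ of the triangulation. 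Fix a $\Sigma\in\mathrm{RT}_{\delta}$. By Lemma~\ref{nice res on T} applied to $g\in W^{1,p}(\mathbb S^{2},\mathcal N)$ there is a rotation $\omega\in SO(3)$ (absorb it into $\Sigma$, so write again $\Sigma$) with $g|_{\Sigma}\in W^{1,p}(\Sigma,\mathcal N)$ and
\[
\int_{\Sigma}|D_{\top}g|^{p}\diff\mathcal H^{1}\le \frac{A_{2}}{\delta}\int_{\mathbb S^{2}}|D_{\top}g|^{p}\diff\mathcal H^{2}.
\]
By a further application of the coarea argument inside Lemma~\ref{nice res on T} (or directly Lemma~\ref{lem rest on T}) we may also assume that for every $2$-cell $F$ of the triangulation generated by $\Sigma$ we have $g|_{F}\in W^{1,p}(F,\mathcal N)$ with $\sum_{F}\int_{F}|D_{\top}g|^{p}\diff\mathcal H^{2}=\int_{\mathbb S^{2}}|D_{\top}g|^{p}\diff\mathcal H^{2}$ and $\operatorname{tr}_{\partial F}(g|_{F})=g|_{\partial F}$.

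Next I would verify the low-energy hypothesis of Lemma~\ref{lem lowenergy} on each $2$-cell $F$: using \eqref{lowenergycondition} together with the above two displays and the bilipschitz bound \eqref{bilisomtr}, the quantity $\int_{F}\frac{|D_{\top}g|^{p}}{p}\diff\mathcal H^{2}+\delta\int_{\partial F}\frac{|D_{\top}g|^{p}}{p}\diff\mathcal H^{1}$ is bounded by an absolute constant times $\frac{\alpha\delta^{2-p}}{2-p}$, which is $\le\frac{\alpha_{0}\delta^{2-p}}{2-p}$ provided $\alpha$ was chosen small enough. Hence Lemma~\ref{lem lowenergy} gives that $g|_{\partial F}$ is homotopic to a constant for every $2$-cell $F$. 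This is exactly the hypothesis of Proposition~\ref{prop ineq T}, so there is $\widetilde v\in W^{1,2}(\mathbb S^{2},\widetilde{\mathcal N})$ with $\operatorname{tr}_{\Sigma}(\pi\circ\widetilde v)=g|_{\Sigma}$ and
\[
\int_{\mathbb S^{2}}|D_{\top}\widetilde v|^{p}\diff\mathcal H^{2}\le C\delta\int_{\Sigma}|D_{\top}g|^{p}\diff\mathcal H^{1}\le C\int_{\mathbb S^{2}}|D_{\top}g|^{p}\diff\mathcal H^{2},
\]
which is \eqref{estlifting} (the equality of the two integrals is because $\pi$ is a local isometry).

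It remains to construct $\varphi_{\delta}$ on the shell and prove \eqref{estonconnmap}. The idea is to view the shell $B^{3}_{1}\setminus\overline B^{3}_{1-\delta}$, triangulated radially over the cells $F_{i,j}$ of $\Sigma$, as a disjoint union of (bilipschitz images of) boxes $(0,\delta)^{3}$: over each $2$-cell $F$ of $\mathbb S^{2}$ we have a ``prism'' $F\times(1-\delta,1)$ which is bilipschitz to $(0,\delta)^{3}$ with absolute constant, its outer face $F\times\{1\}$ carrying $g|_{F}$ and its inner face $F\times\{1-\delta\}$ carrying $(\pi\circ\widetilde v)|_{F}$ (rescaled); on the lateral faces we must glue, and here the matching condition is guaranteed because on $\Sigma$ both $g$ and $\pi\circ\widetilde v$ agree by construction. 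Then Lemma~\ref{lemma_single_cell} applied prism-by-prism (with $g_{1}=g|_{F}$, $g_{0}=(\pi\circ\widetilde v)|_{F}$, and noting $\operatorname{tr}_{\partial F}(g_{0})=\operatorname{tr}_{\partial F}(g_{1})=g|_{\partial F}$ since $\widetilde v$ was built in Proposition~\ref{prop ineq T} precisely from cellwise extensions of $g|_{\partial F}$) produces $w_{F}\in W^{1,p}$ on the prism with
\[
\int_{F\times(1-\delta,1)}|D\varphi_{\delta}|^{p}\le A_{3}\delta\Bigl(\int_{F}|D_{\top}g|^{p}\diff\mathcal H^{2}+\int_{F}|D_{\top}(\pi\circ\widetilde v)|^{p}\diff\mathcal H^{2}\Bigr)+A_{3}\delta^{2}\int_{\partial F}|D_{\top}g|^{p}\diff\mathcal H^{1};
\]
summing over all $2$-cells $F$, using \eqref{estlifting} to absorb the $\pi\circ\widetilde v$ term, using $\sum_{F}\int_{F}|D_{\top}g|^{p}=\int_{\mathbb S^{2}}|D_{\top}g|^{p}$, and using $\delta\int_{\Sigma}|D_{\top}g|^{p}\le A_{2}\int_{\mathbb S^{2}}|D_{\top}g|^{p}$ for the last term, gives $\int_{B^{3}_{1}\setminus\overline B^{3}_{1-\delta}}|D\varphi_{\delta}|^{p}\le C\delta\int_{\mathbb S^{2}}|D_{\top}g|^{p}$, i.e.\ \eqref{estonconnmap}. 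The traces on $\mathbb S^{2}=\partial B^{3}_{1}$ and on $\partial B^{3}_{1-\delta}$ are $g$ and $\pi\circ\widetilde v(\,\cdot/|\cdot|\,)$ respectively, which is \eqref{connmapcon} after rescaling.

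\textbf{Main obstacle.} The delicate point is the bookkeeping for the gluing on the lateral faces of the radial prisms: one has to be sure that the cellwise constructions of $\widetilde v$ coming out of Proposition~\ref{prop ineq T}, the cellwise pieces $g|_{F}$, and the box-extensions of Lemma~\ref{lemma_single_cell} all agree as $W^{1,p}$ traces along the shared $1$-skeleton $\Sigma$ (and its radial translates), so that the glued map genuinely lies in $W^{1,p}$ of the whole shell rather than only piecewise; this is where the rotation $\omega$ supplied by Lemma~\ref{nice res on T} is essential, since it is what makes all the relevant restrictions simultaneously Sobolev. The energy estimates themselves are then routine once the geometric bilipschitz constants are tracked as absolute constants.
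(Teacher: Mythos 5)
Your plan coincides step by step with the paper's own proof: fix $\alpha$ from the $\alpha_0$ of Lemma~\ref{lem lowenergy}, use Lemma~\ref{nice res on T} to pick a good skeleton $\Sigma$, verify via Lemma~\ref{lem lowenergy} that $g|_{\partial F}$ is nullhomotopic on every $2$-cell, invoke Proposition~\ref{prop ineq T} for $\widetilde v$, and then glue Lemma~\ref{lemma_single_cell}-extensions over the radial prisms to build $\varphi_\delta$, with the trace-matching on $\Sigma$ (and its rescaled copy on $\partial B^3_{1-\delta}$) supplied precisely by $\operatorname{tr}_\Sigma(\pi\circ\widetilde v)=g|_\Sigma$. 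The bookkeeping you flag as the ``main obstacle'' is indeed the point the paper is careful about, and your handling of it is the intended one.
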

\begin{proof} 
Let $\alpha_{0}=\alpha_{0}(p_{0}, \mathcal{N})>0$ be the constant given by Lemma~\ref{lem lowenergy}. Define $\alpha=\alpha_{0}/2A_{2}$, where $A_{2}$ is the absolute constant of Lemma~\ref{nice res on T}. Without loss of generality, we assume that $A_{2} \geq 1$. Then $\alpha$ depends only on $p_{0}$ and $\mathcal{N}$. In view of Lemma~\ref{nice res on T}, the estimate \eqref{lowenergycondition} and the definition of $\alpha$, there exists $\Sigma \in \mathrm{RT}_{\delta}$ such that $\operatorname{tr}_{\Sigma}(g)=g|_{\Sigma} \in W^{1,p}(\Sigma, \mathcal{N})$, 
\begin{equation}\label{uniformestimateonskeletonofatriangulation}
        \int_{\Sigma}|D_{\top} g|^{p}\diff \mathcal{H}^{1}\leq \frac{A_{2}}{\delta} \int_{\mathbb{S}^{2}}|D_{\top} g|^{p}\diff \mathcal{H}^{2}
        \end{equation}
and for each $2$-cell $F$ from the Lipschitz triangulation of $\mathbb{S}^{2}$ generated by $\Sigma$,
\[
\int_{F}\frac{|D_{\top} g|^{p}}{p}\diff \mathcal{H}^{2}+\delta \int_{\partial F} \frac{|D_{\top} g|^{p}}{p}\diff \mathcal{H}^{1} \leq 2A_{2} \int_{\mathbb{S}^{2}}\frac{|D_{\top} g|^{p}}{p}\diff \mathcal{H}^{2} \leq \frac{\alpha_{0} \delta^{2-p}}{2-p},
\]
where $\partial F$ denotes the relative boundary of $F$. This, according to Lemma~\ref{lem lowenergy}, implies that $g|_{\partial F}$ is homotopically trivial for each $2$-cell $F$ from the Lipschitz triangulation of $\mathbb{S}^{2}$ generated by $\Sigma$. By Proposition~\ref{prop ineq T}, there exists $\widetilde{v} \in W^{1,2}(\mathbb{S}^{2}, \widetilde{\mathcal{N}})$ such that $\operatorname{tr}_{\Sigma} (\pi \circ \widetilde{v}) = \operatorname{tr}_{\Sigma} (g)$ and  
         \begin{equation}\label{estextvtr}
         \int_{\mathbb{S}^{2}}|D_{\top}(\pi\circ \widetilde{v})|^{p}\diff \mathcal{H}^{2}= \int_{\mathbb{S}^{2}}|D_{\top}\widetilde{v}|^{p}\diff \mathcal{H}^{2}\leq C\delta \int_{\Sigma}|D_{\top}g|^{p}\diff \mathcal{H}^{1},
         \end{equation}
        where $C=C(\mathcal{N})>0$. In \eqref{estextvtr} we used that $\pi$ is a local isometry. Using \eqref{uniformestimateonskeletonofatriangulation} and \eqref{estextvtr}, we get    \eqref{estlifting}.     
        
The Lipschitz triangulation of $\mathbb{S}^{2}$ generated by $\Sigma$ yields the decomposition of $B^{3}_{1}\setminus \smash{\overline{B}}^3_{1-\delta}$ into the cells
    \begin{equation}
    \label{decomp123}
    D_{i,j}=\left\{x \in \mathbb{R}^{3}: (1-\delta) < |x| < 1,\,\ \frac{x}{|x|} \in F_{i,j}\right\},
    \end{equation}
    where $i \in \{0,1,2\}$ and $j \in \{1,\dotsc,k_{i}\}$ (see \eqref{uniformtriangulation}). Notice that $D_{i,j}$ is of Hausdorff dimension $(i+1)$. Fix an arbitrary $3$-cell $D:=D_{2,j}$ from the above decomposition. Let $F$ be the $2$-cell of the Lipschitz triangulation of $\mathbb{S}^{2}$ generated by $\Sigma$ such that $D=\{x \in \mathbb{R}^{3}: (1-\delta)<|x|<1, \ \frac{x}{|x|} \in F\}$. Fix a bilipschitz homeomorphism $\Phi: [0,\delta]^{3}\to \smash{\overline{D}}$ with an absolute bilipschitz constant (see \eqref{bilisomtr}) such that $\Phi((0,\delta)^{2} \times \{0\})=(1-\delta) F$ and $\Phi((0,\delta)^{2} \times \{\delta\})=F$. Applying Lemma~\ref{lemma_single_cell} with $g_{0}$ and $g_{1}$ taking the values of $\pi\circ \widetilde{v}\circ (\frac{\Phi}{1-\delta})|_{(0,\delta)^{2} \times \{0\}}$ and $g \circ \Phi|_{(0,\delta)^{2} \times \{1\}}$, respectively, we obtain a mapping $\varphi_{j} \in W^{1,p}(D, \mathcal{N})$ satisfying the estimate
    \begin{equation}
    \label{estm4i05i0jg405ijgitngu56nu}
    \int_{D}|D\varphi_{j}|^{p}\diff x \leq C^{\prime} \delta \left(\int_{F}|D_{\top} \widetilde{v}|^{p}\diff \mathcal{H}^{2}+\int_{F}|D_{\top} g|^{p}\diff \mathcal{H}^{2} \right) + C^{\prime} \delta^{2} \int_{\partial F}|D_{\top} g|^{p} \diff \mathcal{H}^{1},
    \end{equation}
    where $C^{\prime}>0$ is an absolute constant. 
    
    Let $\varphi_{\delta} \in L^{\infty}(B^{3}_{1}\setminus \smash{\overline{B}}^3_{1-\delta}, \mathcal{N})$ satisfy $\varphi_{\delta}|_{D_{2,j}}=\varphi_{j} \in W^{1,p}(D_{2,j}, \mathcal{N})$ for each $3$-cell $D_{2,j}$ from the decomposition \eqref{decomp123}. In view of our construction (see Lemma~\ref{lemma_single_cell}), we can ensure that the traces of the maps $\varphi_{j}$ coincide on the mantles of the cells $D_{2,j}$ and hence $\varphi_{\delta} \in W^{1,p}(B^{3}_{1}\setminus \smash{\overline{B}}^3_{1-\delta}, \mathcal{N})$. Summing \eqref{estm4i05i0jg405ijgitngu56nu}, where $D=D_{2,j}$ and $F=F_{2,j}$, over $j \in \{1,\dotsc, k_{2}\}$, taking into account that each $1$-cell of the Lipschitz triangulation of $\mathbb{S}^{2}$ generated by $\Sigma$ lies in the relative boundary of exactly two $2$-cells from this triangulation, and also using \eqref{estlifting} together with \eqref{uniformestimateonskeletonofatriangulation}, we deduce the following chain of estimates 
\begin{align*}
\int_{B^{3}_{1}\setminus \smash{\overline{B}}^3_{1-\delta}}|D\varphi_{\delta}|^{p}\diff x 
&\leq C^{\prime} \delta\sum_{j=1}^{k_{2}}\left(\int_{F_{2,j}}|D_{\top}\widetilde{v}|^{p}\diff \mathcal{H}^{2}+\int_{F_{2,j}}|D_{\top} g|^{p}\diff \mathcal{H}^{2}\right)+2C^{\prime} \delta^{2}\int_{\Sigma}|D_{\top}g|^{p}\diff \mathcal{H}^{1}\\
&\leq C^{\prime \prime}\delta \int_{\mathbb{S}^{2}}|D_{\top}g|^{p}\diff \mathcal{H}^{2},
\end{align*} 
where $C^{\prime \prime}=C^{\prime \prime}(\mathcal{N})>0$. This completes our proof of Lemma~\ref{finding nice v and conmapping}.
\end{proof}

We state now an \(\eta\)-extension with sublinear growth, which will turn out to be the key tool in the proof of our $\eta$-\emph{compactness} lemma (see Lemma~\ref{heart}).

\begin{lemma}
\label{key tool bilipschitz}
    Let $p_{0} \in (1,2)$ and $p \in [p_{0},2)$.
    There exist constants \(\eta_0, C> 0\), depending only on $p_{0}$ and \(\mathcal{N}\), such that 
    if $g \in W^{1,p}(\partial B^3_r, \mathcal{N})$ satisfies
    \begin{equation}
    \label{estimation key prop bilipschitz}
        \int_{\partial B^3_r}
            \frac
                {|D_{\top} g|^{p}}
                {p}
            \diff \mathcal{H}^{2}
        \leq 
            \frac{\eta_{0} r^{2-p}}{2-p},
    \end{equation}
    then there exists $u \in W^{1,p}(B^3_r, \mathcal{N})$ such that $\operatorname{tr}_{\partial B^{3}_{r}}(u)=g$ and
    \begin{equation}
    \label{beautiful inequality bilipschitz}
        \int_{B^3_r}
            \frac{|Du|^{p}}{p}\diff x 
            \leq 
            Cr^{\frac{2}{p}}
            \biggl(\int_{\partial B^3_r}\frac{|D_{\top} g|^{p}}{p}
            \diff \mathcal{H}^{2}\biggr)^{1 - \frac{1}{p}}.
    \end{equation}
\end{lemma}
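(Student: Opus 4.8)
The plan is to rescale to $r=1$ and then decompose $B^3_1$ into a thin spherical shell adjacent to $\mathbb{S}^2=\partial B^3_1$, on which we invoke the Luckhaus-type Lemma~\ref{finding nice v and conmapping} to replace $g$ by a map admitting a lift to the (compact) universal cover $\widetilde{\mathcal{N}}$, and an interior ball $B^3_{1-\delta}$, on which we invoke the nonlinear extension estimate of Lemma~\ref{lem lifting nonlinear}. Under $x\mapsto x/r$ the hypothesis \eqref{estimation key prop bilipschitz} on $\partial B^3_r$ is equivalent to the same hypothesis on $\mathbb{S}^2$ for the rescaled datum, and the target estimate \eqref{beautiful inequality bilipschitz} is invariant under this rescaling (the exponent $2/p$ being forced by $\int_{B^3}|Du|^p\diff x\sim\lambda^{3-p}$ against $\int_{\mathbb{S}^2}|D_\top g|^p\diff\mathcal{H}^2\sim\lambda^{2-p}$), so we may assume $r=1$. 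Write $\mathcal{E}=\int_{\mathbb{S}^2}\frac{|D_\top g|^p}{p}\diff\mathcal{H}^2$; if $\mathcal{E}=0$ then $g$ is constant and we take $u$ constant, so assume $\mathcal{E}>0$.

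Let $\alpha=\alpha(p_0,\mathcal{N})>0$ be the constant of Lemma~\ref{finding nice v and conmapping} and set $\delta=\bigl((2-p)\mathcal{E}/\alpha\bigr)^{1/(2-p)}$, so that $\mathcal{E}=\frac{\alpha\delta^{2-p}}{2-p}$, i.e. \eqref{lowenergycondition} holds with equality for this $\delta$. Provided $\eta_0\le\alpha/2$, the hypothesis \eqref{estimation key prop bilipschitz} forces $(2-p)\mathcal{E}\le\alpha/2$, whence $\delta\le(1/2)^{1/(2-p)}<1/2$ (recall $2-p<1$). Thus Lemma~\ref{finding nice v and conmapping} applies with this $\delta$ and produces $\varphi_\delta\in W^{1,p}(B^3_1\setminus\overline{B}^3_{1-\delta},\mathcal{N})$ and $\widetilde{v}\in W^{1,2}(\mathbb{S}^2,\widetilde{\mathcal{N}})$, where $\pi\colon\widetilde{\mathcal{N}}\to\mathcal{N}$ is the universal covering, with $\operatorname{tr}_{\mathbb{S}^2}(\varphi_\delta)=g$, $\operatorname{tr}_{\mathbb{S}^2}\bigl(\varphi_\delta((1-\delta)\cdot)\bigr)=\pi\circ\widetilde{v}$, and the estimates \eqref{estonconnmap} and \eqref{estlifting}.

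It remains to fill the interior ball $B^3_{1-\delta}$. Its boundary datum is the trace of $\varphi_\delta$ on $\partial B^3_{1-\delta}$, namely $h(x)=(\pi\circ\widetilde{v})\bigl(x/(1-\delta)\bigr)$, which \emph{admits the $W^{1,p}$ lift} $x\mapsto\widetilde{v}\bigl(x/(1-\delta)\bigr)$, using $W^{1,2}(\mathbb{S}^2,\widetilde{\mathcal{N}})\subset W^{1,p}(\mathbb{S}^2,\widetilde{\mathcal{N}})$ since $p<2$. Hence Lemma~\ref{lem lifting nonlinear} with $m=3$ yields $w\in W^{1,p}(B^3_{1-\delta},\mathcal{N})$ with $\operatorname{tr}_{\partial B^3_{1-\delta}}(w)=h$, and combining \eqref{ineq 1} with the scaling identity $\int_{\partial B^3_{1-\delta}}|D_\top h|^p\diff\mathcal{H}^2=(1-\delta)^{2-p}\int_{\mathbb{S}^2}|D_\top\widetilde{v}|^p\diff\mathcal{H}^2$ ($\pi$ being a local isometry) and the bound \eqref{estlifting} gives
\begin{equation*}
\int_{B^3_{1-\delta}}|Dw|^p\diff x\le C(1-\delta)^{\frac2p}\bigl((1-\delta)^{2-p}\,Cp\mathcal{E}\bigr)^{1-\frac1p}=C'(1-\delta)^{3-p}(p\mathcal{E})^{1-\frac1p}\le C''\mathcal{E}^{1-\frac1p},
\end{equation*}
the decisive point being the exponent cancellation $\tfrac2p+(2-p)\bigl(1-\tfrac1p\bigr)=3-p$ together with $1-\delta\le1$. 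Setting $u=\varphi_\delta$ on $B^3_1\setminus\overline{B}^3_{1-\delta}$ and $u=w$ on $B^3_{1-\delta}$ gives $u\in W^{1,p}(B^3_1,\mathcal{N})$ with $\operatorname{tr}_{\mathbb{S}^2}(u)=g$, since the traces of the two pieces agree (both equal $h$) on $\partial B^3_{1-\delta}$.

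Finally, for the shell, \eqref{estonconnmap} gives $\int_{B^3_1\setminus\overline{B}^3_{1-\delta}}\frac{|D\varphi_\delta|^p}{p}\diff x\le C\delta\mathcal{E}=C\bigl(\delta\mathcal{E}^{1/p}\bigr)\mathcal{E}^{1-1/p}$, so it suffices to arrange $\delta^p\mathcal{E}\le1$. From the definition of $\delta$ one computes $\delta^p\mathcal{E}=\frac{\alpha}{2-p}\bigl((2-p)\mathcal{E}/\alpha\bigr)^{2/(2-p)}\le\frac{\alpha}{2-p}(\eta_0/\alpha)^{2/(2-p)}$, and since for fixed $\beta\in(0,1)$ the function $s\mapsto s^{-1}\beta^{2/s}$ is bounded on $(0,+\infty)$ (with maximum of order $|\ln\beta|^{-1}$), the right-hand side is $\le1$ once $\eta_0=\eta_0(p_0,\mathcal{N})>0$ is chosen small enough in terms of $\alpha$. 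Adding the shell and interior contributions yields $\int_{B^3_1}\frac{|Du|^p}{p}\diff x\le C\mathcal{E}^{1-1/p}$ with $C=C(p_0,\mathcal{N})$, and undoing the rescaling gives \eqref{beautiful inequality bilipschitz}. \textbf{The main obstacle} I anticipate is precisely this last uniform control: one must verify that the blow-up of the exponent $1/(2-p)$ as $p\nearrow2$ is beaten by raising $\eta_0/\alpha<1$ to it, so that a single choice of $\eta_0$ makes $\delta^p\mathcal{E}\le1$ hold for \emph{all} $p\in[p_0,2)$ simultaneously.
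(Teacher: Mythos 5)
Your proof is correct and follows the same decomposition as the paper: rescale to $r=1$, apply Lemma~\ref{finding nice v and conmapping} on the shell $B^3_1\setminus\overline{B}^3_{1-\delta}$ to produce the lifted boundary trace $\pi\circ\widetilde{v}$, fill the inner ball via Lemma~\ref{lem lifting nonlinear}, and glue. The exponent cancellation $\tfrac2p+(2-p)(1-\tfrac1p)=3-p$ you use for the inner ball is exactly what makes the interior contribution come out as $C\mathcal{E}^{1-1/p}$, and matches the paper's computation after rescaling.

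The one genuine (small) divergence from the paper is in how $\delta$ is chosen and the shell term $C\delta\mathcal{E}$ is controlled. The paper fixes $\delta=(1/e)^{1/(e(2-p))}$ depending only on $p$ (so $\eta_0=\alpha(1/e)^{1/e}$ is immediately a constant) and then splits into the cases $\mathcal{E}\le 1$, where $\mathcal{E}\le\mathcal{E}^{1-1/p}$, and $\mathcal{E}>1$, where the shell term is bounded by a constant $\le\mathcal{E}^{1-1/p}$ using $\delta\le 2-p$ and $\delta^{2-p}=\mathrm{const}$. You instead take the data-dependent $\delta=\bigl((2-p)\mathcal{E}/\alpha\bigr)^{1/(2-p)}$ saturating \eqref{lowenergycondition}, and then verify $\delta^p\mathcal{E}\le 1$ uniformly in $p\in[p_0,2)$ once $\eta_0$ is small: since $\delta^p\mathcal{E}\le\frac{\alpha}{s}\beta^{2/s}$ with $s=2-p$, $\beta=\eta_0/\alpha$, and $\sup_{s>0}s^{-1}\beta^{2/s}=\frac{1}{2e|\ln\beta|}$, it suffices to take $\eta_0\le\alpha e^{-\alpha/(2e)}$. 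This avoids the paper's case split and makes the constraint on $\eta_0$ completely explicit; both resolutions are valid, and the final constant depends only on $p_0$ and $\mathcal{N}$ as required. Your proposal is correct.
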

\begin{proof}
    By scaling, we can assume that $r=1$. Letting $\delta \in (0,1/2]$, which will be fixed later for the proof to work, we define 
    \begin{equation}
        \label{defeta}
        \eta_0 =\alpha\delta^{2-p},
    \end{equation}
    where $\alpha=\alpha(p_{0}, \mathcal{N})>0$ is the constant of Lemma~\ref{finding nice v and conmapping}. 
    Then, by \eqref{estimation key prop bilipschitz} and \eqref{defeta}, 
    \eqref{lowenergycondition} is satisfied in Lemma~\ref{finding nice v and conmapping} applied with $\delta$ and $g$, which yields 
    $\varphi_{\delta} \in W^{1,p}(B^{3}_{1}\setminus \smash{\overline{B}}^3_{1-\delta}, \mathcal{N})$ and $\widetilde{v}\in W^{1,2}(\mathbb{S}^{2},\widetilde{\mathcal{N}})$, satisfying 
    the condition \eqref{connmapcon}
    together with the estimates \eqref{estonconnmap} and \eqref{estlifting}.
    Using Lemma~\ref{lem lifting nonlinear} with the boundary datum $\pi \circ \widetilde{v}$, we obtain a map $w \in W^{1,p}(B^{3}_{1}, \mathcal{N})$ such that $\tr_{\mathbb{S}^{2}}(w)=\pi \circ \widetilde{v}$ and 
    \begin{equation}
        \label{estimate coming from lifting}
        \int_{B^{3}_{1}}|Dw|^{p}\diff x 
        \leq 
        C\left(\int_{\mathbb{S}^{2}}|D_{\top}\widetilde{v}|^{p}\diff \mathcal{H}^{2}\right)^{1 - \frac{1}{p}} 
        \leq 
        C^{\prime}\left(\int_{\mathbb{S}^{2}}\frac{|D_{\top}g|^{p}}{p}\diff \mathcal{H}^{2}\right)^{1 - \frac{1}{p}},
    \end{equation}
    where $C=C(p_{0}, \mathcal{N})>0$ and  $C^{\prime}=C^{\prime}(p_{0}, \mathcal{N})>0$. In the last estimate we used \eqref{estlifting} under the condition that $p\in [p_{0},2)$. Now we define the mapping $u :B^{3}_{1}\to \mathcal{N}$ by
    \[
    u(x)=
        \begin{cases}
            \varphi_{\delta}(x) \,\ &\text{if \(x \in B^{3}_{1}\setminus \smash{\overline{B}}^3_{1-\delta}\),}\\
            \displaystyle \pi \circ \widetilde{v} \left(\frac{x}{1-\delta}\right) \,\ &\text{if \(x\in \partial B^{3}_{1-\delta}\)},\\
            w\left(\displaystyle\frac{x}{1-\delta}\right) \,\  &\text{if \(x \in B^{3}_{1-\delta}\)}.
        \end{cases}
    \]
    Observe that $u \in W^{1,p}(B^{3}_{1}, \mathcal{N})$ and $\tr_{\mathbb{S}^{2}}(u)=g$ (see \eqref{connmapcon}). Using this, \eqref{estonconnmap} and \eqref{estimate coming from lifting}, we have
    \begin{equation}\label{twocases}
        \int_{B^{3}_{1}}\frac{|Du|^{p}}{p}\diff x
        \leq  C^{\prime}\left(\int_{\mathbb{S}^{2}}\frac{|D_{\top}g|^{p}}{p}\diff \mathcal{H}^{2}\right)^{1-\frac{1}{p}} + C^{\prime}\delta\int_{\mathbb{S}^{2}}\frac{|D_{\top}g|^{p}}{p}\diff \mathcal{H}^{2},
    \end{equation}
    where we assume that $C^{\prime}$ is large enough so that \eqref{estonconnmap} is valid with $C=C^{\prime}$. Now we need to distinguish between two further
    cases.\\
    \emph{Case~1:} If $\int_{\mathbb{S}^{2}}\frac{|D_{\top}g|^{p}}{p}\diff \mathcal{H}^{2}\leq 1$, then $\int_{\mathbb{S}^{2}}\frac{|D_{\top}g|^{p}}{p}\diff \mathcal{H}^{2}\leq (\int_{\mathbb{S}^{2}}\frac{|D_{\top}g|^{p}}{p}\diff \mathcal{H}^{2})^{1-\frac{1}{p}}$, which, in view of \eqref{twocases}, yields
    \begin{equation}\label{estcase1}
        \int_{B^{3}_{1}}\frac{|Du|^{p}}{p}\diff x \leq  2C^{\prime}\left(\int_{\mathbb{S}^{2}}\frac{|D_{\top}g|^{p}}{p}\diff \mathcal{H}^{2}\right)^{1-\frac{1}{p}}.
    \end{equation}
    \noindent 
    \emph{Case~2:}  When \(\int_{\mathbb{S}^{2}}\frac{|D_{\top}g|^{p}}{p}\diff \mathcal{H}^{2}> 1\), we need to fix $\delta$ so that $\delta^{2-p}=\mathrm{const}$ and $\delta\leq 2-p$. This holds for
    \begin{equation}\label{const&delta}
        \mathrm{const}=\left(\frac{1}{e}\right)^{\frac{1}{e}}\,\  \text{and} \,\ \delta = \left(\frac{1}{e}\right)^{\frac{1}{e(2-p)}},
    \end{equation}
    since $\inf_{x \in (0,+\infty)} x^{x}=(1/e)^{1/e}$. 
    This defines our $\delta$ and hence $\eta_{0}$ (see \eqref{defeta}). 
    Using \eqref{estimation key prop bilipschitz}, \eqref{defeta}, \eqref{twocases}, \eqref{const&delta} and the fact that $\int_{\mathbb{S}^{2}}\frac{|D_{\top}g|^{p}}{p}\diff \mathcal{H}^{2}> 1$, we obtain
    \begin{equation}
    \label{estcase2}
    \begin{split}
        \int_{B^{3}_{1}}\frac{|Du|^{p}}{p}\diff x &\leq C^{\prime}\left(\int_{\mathbb{S}^{2}}\frac{|D_{\top}g|^{p}}{p}\diff \mathcal{H}^{2}\right)^{1 - \frac{1}{p}} + \frac{C^{\prime}\alpha\delta^{3-p}}{2-p} \\&\leq C^{\prime \prime}\left(\int_{\mathbb{S}^{2}}\frac{|D_{\top}g|^{p}}{p}\diff \mathcal{H}^{2}\right)^{1 - \frac{1}{p}}+C^{\prime \prime} \\&\leq 2C^{\prime \prime}\left(\int_{\mathbb{S}^{2}}\frac{|D_{\top}g|^{p}}{p}\diff \mathcal{H}^{2}\right)^{1 - \frac{1}{p}}, 
    \end{split}
    \end{equation}
    where $C^{\prime \prime}=C^{\prime \prime}(p_{0}, \mathcal{N})>0$. 
    The estimates \eqref{estcase1} and \eqref{estcase2} imply then, up to a scaling, \eqref{beautiful inequality bilipschitz}. This completes our proof of Lemma~\ref{key tool bilipschitz}.
    \end{proof}

Now we prove our $\eta$-compactness lemma.

\begin{lemma}
\label{heart} 
Let $\kappa \in (0,1)$, $p_{0}\in (1,2)$, $p \in [p_{0},2)$ and let $E = \Psi(B^3_r) \subset \mathbb{R}^3$ be an open set, where \(\Psi : B^3_r \to \mathbb{R}^3\) is a bilipschitz homeomorphism onto its image with a bilipschitz constant $L\geq 1$. 
There exist $\eta, C>0$, depending only on $\kappa, p_{0}$, $L$ and $\mathcal{N}$, such that if $u_{p} \in W^{1,p}(E, \mathcal{N})$ is a $p$-minimizer and satisfies 
    \begin{equation}
    \label{nice condition}
        \int_{E}\frac{|D u_{p}|^{p}}{p}\diff x \leq \frac{\eta r^{3-p}}{2-p},
    \end{equation}
    then 
    \begin{equation}
    \label{nice decay}
        \int_{\Psi (B^3_{\kappa r})}\frac{|D u_{p}|^{p}}{p}\diff x \leq C r^{3-p}.
    \end{equation}
\end{lemma}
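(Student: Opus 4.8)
The plan is the usual $\eta$-compactness scheme: use the smallness of the energy to produce, on a well-chosen sphere inside $E$, boundary data to which the sublinear extension of Lemma~\ref{key tool bilipschitz} applies, and then invoke the minimality of $u_p$ to transfer the bound on the comparison map back to $u_p$. First I would reduce to the model case. By scaling one may take $r=1$; the bilipschitz homeomorphism $\Psi$ and all constants are unchanged. Set $v:=u_p\circ\Psi\in W^{1,p}(B^3_1,\mathcal{N})$. Since $u_p$ is a $p$-minimizer on $E$, its restriction to any subdomain is a $p$-minimizer for its own boundary trace; pushing this forward through $\Psi$ and using $\tfrac1L\le|D\Psi|,|D\Psi^{-1}|\le L$ shows that $v$ is an $L^{2(p+3)}$-quasi-minimizer, i.e.\ $\int_{B^3_\rho}|Dv|^p\le L^{2(p+3)}\int_{B^3_\rho}|D\tilde v|^p$ for every $B^3_\rho\subseteq B^3_1$ and every competitor $\tilde v$ with the same trace on $\partial B^3_\rho$. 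The hypothesis \eqref{nice condition} becomes $\int_{B^3_1}\tfrac{|Dv|^p}{p}\,dx\le L^{p+3}\tfrac{\eta}{2-p}$, and it suffices to bound $\int_{B^3_\kappa}\tfrac{|Dv|^p}{p}\,dx$ by a constant.

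Next I would pick the good sphere. Let $\eta_0,C$ be the constants of Lemma~\ref{key tool bilipschitz} for the exponent $p_0$. By the coarea formula $\int_\kappa^1\big(\int_{\partial B^3_\rho}\tfrac{|D_\top v|^p}{p}\,d\mathcal{H}^2\big)\,d\rho\le\int_{B^3_1}\tfrac{|Dv|^p}{p}\,dx$, so by Lemma~\ref{lem rest on T} there is $\rho\in(\kappa,1)$ with $v|_{\partial B^3_\rho}\in W^{1,p}(\partial B^3_\rho,\mathcal{N})$ and $\int_{\partial B^3_\rho}\tfrac{|D_\top v|^p}{p}\,d\mathcal{H}^2\le\tfrac{1}{1-\kappa}\int_{B^3_1}\tfrac{|Dv|^p}{p}\,dx\le\tfrac{L^{p+3}\eta}{(1-\kappa)(2-p)}$. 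Choosing $\eta:=\eta_0\kappa(1-\kappa)L^{-5}$, and using $\rho^{2-p}\ge\kappa^{2-p}\ge\kappa$ because $2-p<1$, this is $\le\tfrac{\eta_0\rho^{2-p}}{2-p}$, so Lemma~\ref{key tool bilipschitz} yields $w\in W^{1,p}(B^3_\rho,\mathcal{N})$ with $\operatorname{tr}_{\partial B^3_\rho}(w)=v|_{\partial B^3_\rho}$ and $\int_{B^3_\rho}\tfrac{|Dw|^p}{p}\,dx\le C\rho^{2/p}\big(\int_{\partial B^3_\rho}\tfrac{|D_\top v|^p}{p}\big)^{1-1/p}$. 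Since $w$ is admissible against $v$ on $B^3_\rho$, quasi-minimality gives $\int_{B^3_\kappa}\tfrac{|Dv|^p}{p}\le\int_{B^3_\rho}\tfrac{|Dv|^p}{p}\le L^{2(p+3)}\int_{B^3_\rho}\tfrac{|Dw|^p}{p}$; the algebraic identity $\tfrac2p+(2-p)\big(1-\tfrac1p\big)=3-p$ makes the scaling in $r$ come out right.

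The one subtle point — which I expect to be the real obstacle — is that the only control on the tangential energy of the good sphere is $O\big(\tfrac{\eta}{2-p}\big)$, so feeding this into the estimate above produces $\int_{B^3_\kappa}\tfrac{|Dv|^p}{p}\lesssim\big(\tfrac{\eta}{2-p}\big)^{1-1/p}$, whose prefactor degenerates as $p\nearrow2$. To reach a constant $C=C(\kappa,p_0,L,\mathcal{N})$ uniform over $p\in[p_0,2)$ I would iterate the construction over a sequence of shrinking balls and track the normalized excess: each step improves the power of $(2-p)$, and the argument is closed by a dichotomy. Indeed, either on some sphere in the process the boundary data is homotopically trivial with tangential energy of the \emph{scale-invariant} order $\rho^{2-p}$ — in which case the extension and minimality give the bound $\le Cr^{3-p}$ directly — or the boundary data is homotopically nontrivial on a family of spheres filling an annulus, which, by a Sandier-type lower bound in the spirit of Proposition~\ref{Sandier lemma} together with the fact that a singular line carries $p$-energy of order $(2-p)^{-1}$ per unit length, forces $\int_{B^3_1}\tfrac{|Dv|^p}{p}$ above $\tfrac{\eta}{2-p}$ once $\eta$ lies below a systole-dependent threshold, contradicting the hypothesis. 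Making this dichotomy compatible with the fixed target radius $\kappa r$, and with the divergence-free structure \eqref{integralidentity} of the stress-energy tensor that underlies the relevant monotonicity, is the most delicate part of the proof.
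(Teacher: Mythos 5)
Your reduction to $v=u_p\circ\Psi$, the use of the coarea formula together with a Chebyshev argument to produce a good sphere, and the invocation of Lemma~\ref{key tool bilipschitz} followed by minimality are all correct and match the paper's set-up step for step (the paper phrases minimality directly for $u_p$ on $\Psi(B^3_\rho)$ rather than as a quasi-minimality for $v$, but this is cosmetic). You have also correctly identified the genuine obstacle: a single application of extension plus minimality on one good sphere yields a bound $\lesssim(\eta/(2-p))^{1-1/p}$, which blows up as $p\nearrow 2$. Up to that point your argument is sound.

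The gap is in how you close it. The dichotomy you sketch (homotopically trivial data versus a Sandier-type lower bound across an annulus, iterated over shrinking balls) is not carried out, and it is not the mechanism the paper uses. The paper exploits the \emph{full measure} of the good set of radii, not one good sphere. Concretely: define the good set
\[
G_p=\Bigl\{\varrho\in(\kappa r,r):\ v_p|_{\partial B^3_\varrho}\in W^{1,p}\ \text{and}\ \int_{\partial B^3_\varrho}\tfrac{|Dv_p|^p}{p}\,d\mathcal{H}^2\le\tfrac{\eta_0(\kappa r)^{2-p}}{2-p}\Bigr\},
\]
and note by Chebyshev that $\mathcal{H}^1(G_p)\ge\tfrac{1-\kappa}{2}r$ once $\eta$ is chosen small. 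Then set $F(\varrho)=\int_{B^3_\varrho}\tfrac{|Dv_p|^p}{p}\,dx$, so $F'(\varrho)$ is precisely the spherical integrand. For each $\varrho\in G_p$, extension plus minimality give not a static bound but a \emph{differential inequality}
\[
F(\varrho)\le C L^{6+2p}\,\varrho^{2/p}\,F'(\varrho)^{1-1/p},
\qquad\text{i.e.}\qquad
F'(\varrho)\ge c\,\frac{F(\varrho)^{p/(p-1)}}{\varrho^{2/(p-1)}}.
\]
The super-linear power $p/(p-1)>1$, which comes directly from the sublinear exponent $1-1/p$ in \eqref{beautiful inequality bilipschitz}, is exactly what makes the ODE close. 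Integrating $F'/F^{p/(p-1)}$ over $(\kappa r,r)$ and discarding the contribution of $(\kappa r,r)\setminus G_p$ (where the integrand is nonnegative) yields
\[
\frac{p-1}{F(\kappa r)^{1/(p-1)}}\ \ge\ c\int_{G_p}\frac{d\varrho}{\varrho^{2/(p-1)}}\ \ge\ \frac{c(1-\kappa)}{2\,r^{(3-p)/(p-1)}},
\]
and hence $F(\kappa r)\le C r^{3-p}$ with $C$ independent of $p$. Note that this final bound does not reference the initial energy $F(r)$ at all; the smallness hypothesis \eqref{nice condition} is used only to guarantee that $G_p$ has large measure. That is the structural reason the degenerating prefactor you worried about does not appear: you must use all the good radii simultaneously through the differential inequality, rather than extracting one good sphere and comparing once.
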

\begin{rem}
Let us point out that the dependence on $p$ in Lemma~\ref{heart} is essential, since it is used in Lemma~\ref{lem concentration}.
\end{rem}
\begin{proof}[Proof of Lemma~\ref{heart}]
    We define \(v_p = u_p \circ \Psi\),
    so that, by the chain rule, we have \(v_p \in W^{1, p} (B^3_r, \mathcal{N})\) and 
    \begin{equation}
    \label{eq_rei2soh6aihahBohyeihie2p}
      \int_{B^3_r} \frac{\vert D v_p\vert^p}{p} \diff x
      \le L^{3 + p}\int_{E} \frac{\vert D u_p\vert^p}{p} \diff x.
    \end{equation}
    Let $\eta_{0}>0$ be the constant of Lemma~\ref{key tool bilipschitz}.  We define
    \[
    G_{p}
    =
        \left\{
            \varrho \in (\kappa r, r): 
            \operatorname{tr}_{\partial B^3_\varrho}(v_p)=v_p|_{\partial B^3_\varrho} \in W^{1,p}(\partial B^3_{\varrho}, \mathcal{N})
            \text{ and }\int_{\partial B^3_\varrho}\frac{|D v_p|^{p}}{p}\diff \mathcal{H}^{2} \leq \frac{ \eta_0 (\kappa r)^{2-p}}{2-p}\right\}.
    \]
    In view of \eqref{eq_rei2soh6aihahBohyeihie2p} and \eqref{nice condition},
    \begin{equation}
    \label{eq_ib2soquohGheicoogohp0im7}
     \mathcal{H}^1 ((\kappa r, r) \setminus G_p)
     \le \frac{2 - p}{\eta_0 (\kappa r)^{2- p}} \int_{B^3_r} \frac{\vert D v_p\vert ^{p}}{p}  \diff x
     \le \frac{L^{3+p} \eta r}{\eta_{0} \kappa^{2-p}} \le \frac{L^{5} \eta r}{\eta_{0} \kappa}.
    \end{equation}
    Choosing 
    \[
     \eta \coloneqq \frac{\eta_0 \kappa (1 - \kappa)}{2 L^{5}}
    \]
    and using \eqref{eq_ib2soquohGheicoogohp0im7}, we obtain that 
    \begin{equation}
    \label{nu9n3459h4598h489ht}
        \mathcal{H}^1 (G_p) \ge \frac{(1 - \kappa)r}{2}.
    \end{equation}
    If $\varrho \in G_{p}$, then 
    \[
    \int_{\partial B^3_{\varrho}}\frac{|D v_p|^{p}}{p}\diff \mathcal{H}^{2}\leq \frac{\eta_{0} (\kappa r)^{2-p}}{2-p}
    \le \frac{\eta_{0} \varrho^{2-p}}{2-p}.
    \]
    Thus, letting \(v_p^\varrho : B^3_\varrho\to \mathcal{N}\) be the extension of \(v_p \vert_{\partial B^3_\varrho}\) given by Lemma~\ref{key tool bilipschitz}, 
    we have by the \(p\)-minimizing property and the bilipschitz invariance,
    \begin{equation}\label{penergydecay}
    \begin{split}
    L^{-3 - p} \int_{B^3_\varrho} \frac{\lvert D v_p \rvert^p}{p} \diff x 
    & \le 
       \int_{\Psi(B^3_\varrho)} \frac{\lvert D u_p \rvert^p}{p} \diff x 
       \le 
       \int_{\Psi(B^3_\varrho)} \frac{\lvert D (v_p^\varrho \circ \Psi^{-1}) \rvert^p}{p} \diff x\\ 
       &\le 
         L^{3 + p} 
         \int_{B^3_\varrho} \frac{\lvert D v^\varrho_p \rvert^p}{p} \diff x 
         \le C L^{3 + p} \varrho^{\frac{2}{p}}\biggl(\int_{\partial B^3_\varrho}\frac{|Dv_p|^{p}}{p}\diff \mathcal{H}^{2}\biggr)^{1 - \frac{1}{p}}
    \end{split}
    \end{equation}
    for each $\varrho\in G_{p}$, where $C=C(p_{0},\mathcal{N})>0$. 
    We define now the function \(F : [\kappa r, r] \to [0, \infty)\) by
    \[
     F (\varrho) = \int_{B^3_{\varrho}}\frac{|Dv_p|^{p}}{p}\diff x.
    \]
    Without loss of generality, we assume that $F(\kappa r)>0$, because otherwise \eqref{nice decay} follows.
    The function $F$ is absolutely continuous and for every \(\varrho \in G_p\), we have by \eqref{penergydecay},
    \[
      F'(\varrho) = \int_{\partial B^3_\varrho}\frac{|D v_p|^{p}}{p}\diff \mathcal{H}^{2}
      \ge c \frac{F (\varrho)^\frac{p}{p - 1}}{\varrho^{\frac{2}{p - 1}}},
    \]
    and thus 
    \[
    \begin{split}
      \frac{p - 1}{F(\kappa r)^\frac{1}{p - 1}}
      \ge \frac{p - 1}{F(\kappa r)^\frac{1}{p - 1}} - 
      \frac{p - 1}{F(r)^\frac{1}{p - 1}}
      = \int_{\kappa r}^{r} \frac{F'(\varrho)}{F (\varrho)^\frac{p}{p - 1}} \diff \varrho 
      &\ge c \int_{G_\varrho} \frac{\diff \varrho}{\varrho^\frac{2}{p - 1}} \\
      &\ge c \frac{\mathcal{H}^1 (G_\varrho)}{r^\frac{2}{p-1}}
      \ge \frac{c (1 - \kappa)}{2 r^{\frac{3 - p}{p - 1}}},
    \end{split}
    \]
    where $c=c(p_{0}, L, \mathcal{N})>0$ and the last estimate comes from \eqref{nu9n3459h4598h489ht}. Then the estimate \eqref{nice decay} follows through a bilipschitz change of variable, completing the proof of Lemma~\ref{heart}.
\end{proof}

\section{Convergence to a harmonic map}

The following variant (Lemma~\ref{lemma_Luckhaus}) of the Luckhaus lemma (see \cite[Lemma~1]{Luckhaus_1988}, \cite[Lemma~3]{Luckhaus_1993}) is our key tool in the proof of Proposition~\ref{prop conv to harmonic map} of the fact that a weakly convergent sequence of minimizing $p$-harmonic maps converges to a minimizing 2-harmonic map when $p\nearrow 2$. 
It says that if two $W^{1,p}$ mappings from a compact 2-dimensional Riemannian manifold  $\mathcal{M}$ without boundary into $\mathcal{N}$ are close in $L^{p}$, then we can find a map $w$ in $\mathcal{M}\times (0,T)$, having these two mappings as boundary values, such that $w$ does not leave a tubular neighborhood of $\mathcal{N}$ and such that its $W^{1,p}$ norm is of order $T$.

\begin{lemma}\label{lemma_Luckhaus}
 Let $\mathcal{M}$ be a compact Riemannian manifold of dimension~2 without boundary and $p\in (1,2]$. Then there exist positive constants $C$, $T_{0}$, depending only on $\mathcal{M}$, such that for every $u,v \in W^{1,p}(\mathcal{M}, \mathcal{N})$ and $T\in (0,T_{0})$ there exists a map $w\in W^{1,p}(\mathcal{M}\times (0,T), \mathbb{R}^{\nu})$ satisfying 
 \begin{enumerate}[label=(\roman*)]
     \item $\operatorname{tr}_{\mathcal{M}\times \{0\}}(w) = u, \,\ \operatorname{tr}_{\mathcal{M}\times \{T\}}(w)=v$;
     \item 
     \label{it_eepogaechuwaeghee3UlooPo}
     $ \displaystyle \int_{\mathcal{M}\times (0,T)}|Dw|^{p}\diff \mathcal{H}^{2}\diff t \leq  CT\int_{\mathcal{M}}\left(|D_{\top}u|^{p}+|D_{\top}v|^{p}+\frac{|u-v|^{p}}{T^{p}}\right)\diff \mathcal{H}^{2}$;
     \item 
     \label{it_Ahohtev2do6yee5bohk9Aw5O}
     \begin{align*}\displaystyle \ess_{\mathcal{M}\times (0,T)}& \dist(w,\mathcal{N}) 
      \leq C \norm{u - v}_{L^\infty (\mathcal{M})}\\
     \\ \leq \,\ & CT^{1-\frac{2}{p}}\left(\int_{\mathcal{M}}\left(|D_{\top}u|^{p}+|D_{\top}v|^{p}+\frac{|u-v|^{p}}{T^{p}}\right)\diff \mathcal{H}^{2}\right)^{\frac{1}{p}\left(1-\frac{1}{2p}\right)}\left(\int_{\mathcal{M}}\frac{|u-v|^{p}}{T^{p}}\diff \mathcal{H}^{2}\right)^{\frac{1}{p}\frac{1}{2p}}.
         \end{align*}
 \end{enumerate}
\end{lemma}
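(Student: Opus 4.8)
The plan is to establish Lemma~\ref{lemma_Luckhaus} as a $p$-energy version (for $p\in(1,2]$) of the classical Luckhaus interpolation lemma \cite[Lemma~1]{Luckhaus_1988}, \cite[Lemma~3]{Luckhaus_1993}, adapting that construction to the $p$-energy and to a general compact $2$-manifold. Since the interpolant $w$ is only required to take values in $\mathbb{R}^\nu$, the manifold constraint on $u,v$ intervenes solely in the distance estimate~\ref{it_Ahohtev2do6yee5bohk9Aw5O}, whereas \ref{it_eepogaechuwaeghee3UlooPo} is a purely linear statement. First I would reduce to a model situation: fix once and for all a finite bi-Lipschitz triangulation of $\mathcal{M}$ into $2$-cells, $1$-cells and vertices with uniformly bounded bi-Lipschitz constants; since no manifold constraint is imposed on $w$, it suffices to build $w$ on each product $(2\text{-cell})\times(0,T)$ so that the pieces agree as $W^{1,p}$ maps along the common $(1\text{-cell})\times(0,T)$, which is arranged by first constructing $w$ on $(1\text{-skeleton})\times(0,T)$ and then extending cell by cell, in the spirit of Definition~\ref{Sobolev on T} and Lemma~\ref{lemma_single_cell}. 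Here the hypothesis $\dim\mathcal{M}=2$ together with $p>1$ enters: the trace of a $W^{1,p}$ map on a $1$-cell, when it exists, is continuous, so the skeletal construction is unambiguous.

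On a model cube $Q\cong[0,1]^2$, and for a grid mesh $h\in(0,h_0]$ to be fixed at the end, I would choose by a Fubini/averaging argument (the analogue for $Q$ of Lemma~\ref{nice res on T}) a decomposition of $Q$ into squares of side $h$ whose $1$-skeleton $\Sigma_h$ carries the traces of $u$ and $v$, with $\int_{\Sigma_h}(\abs{D_\top u}^p+\abs{D_\top v}^p)\diff\mathcal{H}^1\le Ch^{-1}\int_Q(\abs{D_\top u}^p+\abs{D_\top v}^p)\diff\mathcal{H}^2$ and $\int_{\Sigma_h}\abs{u-v}^p\diff\mathcal{H}^1\le Ch^{-1}\int_Q\abs{u-v}^p\diff\mathcal{H}^2$. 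Let $\hat u,\hat v$ be the continuous piecewise-affine interpolants of the cell averages $\bar u_a=\fint_{S_a}u$, $\bar v_a=\fint_{S_a}v$ at the vertices $a$; by Poincaré's inequality on squares and edges these obey $\int_Q(\abs{D\hat u}^p+\abs{D\hat v}^p)\le C\int_Q(\abs{D_\top u}^p+\abs{D_\top v}^p)$ and $\int_Q\abs{\hat u-\hat v}^p\le C\int_Q\abs{u-v}^p$, as well as $\dist(\bar u_a,\mathcal{N})\le Ch^{1-2/p}\norm{D_\top u}_{L^p(S_a)}$ and $\abs{\bar u_a-\bar v_a}\le Ch^{-2/p}\norm{u-v}_{L^p(S_a)}$. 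Then I would split $(0,T)$ into three equal layers, taking $w$ to be an interpolation from $u$ to $\hat u$ on the first, the affine interpolation $(1-s)\hat u+s\hat v$ on the middle, and an interpolation from $\hat v$ to $v$ on the last, the two outer transitions being performed à la Luckhaus: on each prism $S\times(\text{layer})$ one subdivides into $3$-simplices and defines $w$ as the degree-$0$ homogeneous extension of the data already known on the lower-dimensional faces ($u$ resp. $v$ on the cube face, $\hat u$ resp. $\hat v$ on the middle face, and $u|_{\Sigma_h}$, $v|_{\Sigma_h}$ with their interpolations on the lateral faces). The $p$-energy of such a homogeneous extension is estimated by the spherical coarea formula exactly as in Lemma~\ref{lem 3.5} and Lemma~\ref{lemma_single_cell}; summing over all prisms and skeleton cells and using the two restriction estimates yields \ref{it_eepogaechuwaeghee3UlooPo} with a constant depending only on $\mathcal{M}$ (and, uniformly, on $p\in(1,2]$), with no residual dependence on $h$.

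For the distance estimate, on each homogeneously extended sub-simplex $\dist(w,\mathcal{N})$ is bounded by the largest distance to $\mathcal{N}$ of the prescribed boundary data. Since that data is either $\mathcal{N}$-valued, a vertex average, or a convex combination thereof, and the averages lie in the convex hull of $\mathcal{N}$, one immediately gets the crude bound $\ess_{\mathcal{M}\times(0,T)}\dist(w,\mathcal{N})\le C\norm{u-v}_{L^\infty(\mathcal{M})}$. For the refined bound one instead feeds in $\dist(\bar u_a,\mathcal{N})\le Ch^{1-2/p}\norm{D_\top u}_{L^p}$, $\abs{\bar u_a-\bar v_a}\le Ch^{-2/p}\norm{u-v}_{L^p}$ and the one-dimensional oscillation estimates $\mathrm{osc}_e(u)\le Ch^{1-1/p}\norm{D_\top u}_{L^p(\Sigma_h)}$ on the edges $e$, tracks these through the three layers, and performs the scale optimization intrinsic to Luckhaus's argument; written in terms of $A=\int_{\mathcal{M}}(\abs{D_\top u}^p+\abs{D_\top v}^p+\abs{u-v}^p/T^p)\diff\mathcal{H}^2$ and $B=\int_{\mathcal{M}}\abs{u-v}^p/T^p\diff\mathcal{H}^2$ this balances out to the right-hand side of \ref{it_Ahohtev2do6yee5bohk9Aw5O}. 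One then patches the pieces over the $2$-cells and transports back to $\mathcal{M}$.

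The main obstacle is precisely the quantitative estimate \ref{it_Ahohtev2do6yee5bohk9Aw5O}: one must carry the $L^\infty$-control through every homogeneous extension and every Poincaré step at the correct scale and then choose $h$ (and the internal averaging radius) so that the exponents $\tfrac1p(1-\tfrac1{2p})$, $\tfrac1{2p^2}$ and $T^{1-2/p}$ come out exactly — this is where the whole subtlety of Luckhaus's lemma sits. By comparison, \ref{it_eepogaechuwaeghee3UlooPo} and the crude bound $\ess\dist(w,\mathcal{N})\le C\norm{u-v}_{L^\infty}$ are routine once the construction is set up; the assembly over a general compact $2$-manifold is handled by the bi-Lipschitz-triangulation framework already in place, with all constants depending only on $\mathcal{M}$; and uniformity in $p\in(1,2]$ is unproblematic, since $p=2$ is Luckhaus's case and $p<2$ only improves the one-dimensional embeddings used.
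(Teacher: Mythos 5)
The paper's actual proof is a one-line citation to \cite[Lemma~3]{Luckhaus_1993} with a small modification of the inductive step, so your attempt to reconstruct the construction from scratch is a genuinely different route — but it contains a gap, and not where you flagged it. Your scheme interpolates $u \to \hat u \to \hat v \to v$ across three time layers, where $\hat u,\hat v$ are piecewise-affine interpolants of cell averages, and you declare the crude bound $\ess_{\mathcal{M}\times(0,T)}\dist(w,\mathcal{N}) \le C\norm{u-v}_{L^\infty(\mathcal{M})}$ to be ``routine once the construction is set up,'' on the grounds that ``the averages lie in the convex hull of $\mathcal{N}$.'' That reasoning proves nothing: membership in the convex hull says nothing about the distance to $\mathcal{N}$, and certainly not that it is controlled by $\norm{u-v}_\infty$. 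Take $u = v$ nonconstant; then $\norm{u-v}_\infty = 0$, yet $\hat u$ is a piecewise-affine map valued in $\mathrm{conv}(\mathcal{N})$ and not in $\mathcal{N}$ (a compact connected manifold embedded in $\mathbb{R}^\nu$ is never convex), so in the first layer $\dist(w,\mathcal{N}) = \dist(\hat u,\mathcal{N}) > 0$ on a set of positive measure. In your construction the distance to $\mathcal{N}$ in the outer layers is governed by the $h$-scale oscillation of $u$ (resp.\ $v$), which is completely decoupled from $\norm{u-v}_\infty$, so the first inequality in \ref{it_Ahohtev2do6yee5bohk9Aw5O} fails.

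The Luckhaus construction (and the modification the paper invokes) avoids this precisely by never passing through $\mathcal{N}$-leaving auxiliary fields: on the (closure of the) $l$-skeleton of the triangulation, where $W^{1,p}$ traces are classical (here $l=1$ for $\dim\mathcal{M}=2$, $p\in(1,2]$), one defines directly the pointwise convex combination $w(x,t) = (1-\tfrac{t}{T})u(x) + \tfrac{t}{T}v(x)$ of the two $\mathcal{N}$-valued maps; since $u(x)\in\mathcal{N}$, one has $\dist(w(x,t),\mathcal{N}) \le |w(x,t)-u(x)| = \tfrac{t}{T}|u(x)-v(x)| \le \norm{u-v}_\infty$ pointwise on the skeleton, and homogeneous radial extensions into higher-dimensional cells of $\mathcal{M}\times(0,T)$ preserve this $L^\infty$ bound on $\dist(\cdot,\mathcal{N})$. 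Your three-layer detour through $\hat u$ and $\hat v$ severs exactly this chain. If you wish to keep the cell-averaging bookkeeping, the averages $\bar u_a,\bar v_a$ should enter only as auxiliary quantities in deriving the quantitative second inequality (via Poincar\'e and mean-oscillation estimates), not in the definition of $w$ on the skeleton; the skeletal interpolant must remain a pointwise convex combination of $u$ and $v$, with $h\sim T$, exactly as in Luckhaus.
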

\begin{proof}
  For a proof, we refer the reader to \cite[Lemma~3]{Luckhaus_1993} (see also \cite[Lemma~1]{Luckhaus_1988} for the original Luckhaus lemma, where $\mathcal{M}=\mathbb{S}^{2}$). Notice that the inductive assumption in \cite[Lemma~3]{Luckhaus_1993} may not be satisfied. However, if one slightly modifies the Luckhaus proof, namely, one defines $w(x,t)=\frac{t}{T}v(x)+\left(1-\frac{t}{T}\right)u(x)$ for $x$ belonging to the closure of the $l$-skeleton of $\mathcal{M}$ in the case when either $[p]<p$ and $l=[p]$ or $[p]=p$ and $l=[p-1]$, then the inductive assumption in \cite[Lemma~3]{Luckhaus_1993} is satisfied, and the proof works as written (observe that Luckhaus defines $w$ as above for $x$ belonging to the closure of the $l$-skeleton of $\mathcal{M}$ in the case when $l=[p-1]$).  It is also possible to reduce the dependence of the constant $C$ in \cite[Lemma~3]{Luckhaus_1993} on $\dim(\mathcal{N})$. 
\end{proof}

\begin{prop}\label{prop conv to harmonic map}
    Let $U\subset \mathbb{R}^{3}$ be an open set. Let $(p_{n})_{n\in \mathbb{N}} \subset [1,2)$, $u_{n} \in W^{1,p_{n}}(U, \mathcal{N})$ and $p_{n}\nearrow 2$ as $n\to +\infty$. 
    If for each open set $K\csubset U$ and each \(n \in \mathbb{N}\), $u_{n}$ is a $p_{n}$-minimizer in $K$ and if 
    \begin{equation}\label{C1}
    \sup_{n \in \mathbb{N}}
    \int_{K}\frac{|Du_{n}|^{p_{n}}}{p_{n}}\diff x\leq C,
    \end{equation}
    where $C=C(K)>0$, then there exists $u_{*} \in W^{1,2}_{\loc}(U, \mathcal{N})$ such that, up to a subsequence (not relabeled), the following assertions hold.
    \begin{enumerate}[label=(\roman*)]
    \item 
    \label{item:weakLp} 
    $u_{n}$ converges to $u_{*}$ weakly in $W^{1,p}_{\loc}(U, \mathbb{R}^{\nu})$ for all $p\in (1,2)$ and a.e.\  in $U$.
    \item \label{item2convtoharmproposition} For each open set $K\csubset U$,    
    \[
    \int_{K}\frac{|Du_{n}|^{p_{n}}}{p_{n}}\diff x\to \int_{K}\frac{|Du_{*}|^{2}}{2}\diff x
    \]
    as $n\to +\infty$. Furthermore, $\int_{K}|Du_{n}-Du_{*}|^{p_{n}}\diff x \to 0$ and $Du_{n}\to Du_{*}$ a.e.\  in $U$ as $n\to +\infty$. 
    \item  \label{item3convharm} If $K\csubset U$ is open, then $u_{*}$ is a $2$-minimizer in $K$. Furthermore, there exists at most a countable and locally finite subset $S_{0}$ of $U$ such that $u_{*}\in~C^{\infty}(U\setminus S_{0}, \mathcal{N})$. 
    \end{enumerate} 
\end{prop}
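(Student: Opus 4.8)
\emph{Strategy and compactness.} The plan is to extract a weak limit, and then to exploit the $p_n$-minimality of $u_n$ through a Luckhaus-type comparison competitor built on a thin spherical shell, in order to obtain both the minimality of $u_*$ and, via the uniform convexity of $\xi\mapsto|\xi|^p$, the strong convergence of the gradients; the regularity of $u_*$ is then classical. To begin, fix an exhaustion $K_1\csubset K_2\csubset\cdots$ of $U$. By \eqref{C1} and H\"older's inequality (valid once $p_n>p$), the sequence $(u_n)_n$ is bounded in $W^{1,p}(K_j,\mathbb R^\nu)$ for each $p\in(1,2)$ and each $j$; a diagonal extraction over $j$ and over a sequence $p\nearrow2$, together with the Rellich--Kondrachov theorem, produces a (not relabeled) subsequence with $u_n\rightharpoonup u_*$ in $W^{1,p}_{\loc}(U,\mathbb R^\nu)$ for all $p\in(1,2)$, $u_n\to u_*$ in $L^p_{\loc}$ and a.e.\ in $U$; since $\mathcal N$ is closed, $u_*\in\mathcal N$ a.e. For $K\csubset U$ and $p<2$, weak lower semicontinuity and H\"older's inequality give $\int_K|Du_*|^p\diff x\le\liminf_n\int_K|Du_n|^p\diff x\le|K|^{1-p/2}(2C(K))^{p/2}$, and letting $p\nearrow2$ yields $u_*\in W^{1,2}_{\loc}(U,\mathcal N)$, proving (i) and the membership statement in (ii).

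\emph{The shell competitor, minimality, and energy convergence.} Fix a ball $B_R(x_0)\csubset U$; by the coarea formula and Fatou's lemma we may assume, after a further subsequence, that $u_n|_{\partial B_R}\in W^{1,p_n}(\partial B_R,\mathcal N)$ for all $n$, that $u_*|_{\partial B_R}\in W^{1,2}(\partial B_R,\mathcal N)$, and that $\sup_n\int_{\partial B_R}|D_\top u_n|^{p_n}\diff\mathcal H^2<\infty$. Given any $v\in W^{1,2}(B_R,\mathcal N)$ with $v-u_*\in W^{1,2}_0(B_R,\mathbb R^\nu)$, I would build a competitor $w_n\in W^{1,p_n}(B_R,\mathcal N)$ for $u_n$ by setting $w_n=v$ on $B_{r_n}$, with $r_n\uparrow R$ suitably chosen, and $w_n=\Pi_{\mathcal N}\circ w$ on the shell $B_R\setminus B_{r_n}$, where $w$ is the Luckhaus interpolation of Lemma~\ref{lemma_Luckhaus} (with $\mathcal M=\partial B_R$ and $T=R-r_n$) between the traces $u_n|_{\partial B_R}$ and $v|_{\partial B_{r_n}}$; both of these converge in $L^{p_n}(\partial B_R)$ to $u_*|_{\partial B_R}$ (the first by compactness of the trace map, the second by continuity of traces under dilations). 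With $r_n\uparrow R$ chosen carefully against the $L^{p_n}(\partial B_R)$-rate of $u_n|_{\partial B_R}-v|_{\partial B_{r_n}}$ and against $2-p_n$, estimate~\ref{it_Ahohtev2do6yee5bohk9Aw5O} of Lemma~\ref{lemma_Luckhaus} makes $\ess_{\partial B_R\times(0,T)}\dist(w,\mathcal N)<\lambda_{\mathcal N}$, so the retraction $\Pi_{\mathcal N}$ of Lemma~\ref{lemma npr} applies, and estimate~\ref{it_eepogaechuwaeghee3UlooPo} of Lemma~\ref{lemma_Luckhaus} makes $\int_{B_R\setminus B_{r_n}}|Dw_n|^{p_n}\diff x\to0$; moreover $\int_{B_{r_n}}\frac{|Dv|^{p_n}}{p_n}\diff x\to\int_{B_R}\frac{|Dv|^2}{2}\diff x$. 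The $p_n$-minimality of $u_n$ in $B_R$ then gives
\[
\int_{B_R}\frac{|Du_n|^{p_n}}{p_n}\diff x\le\int_{B_{r_n}}\frac{|Dv|^{p_n}}{p_n}\diff x+\int_{B_R\setminus B_{r_n}}\frac{|Dw_n|^{p_n}}{p_n}\diff x\longrightarrow\int_{B_R}\frac{|Dv|^2}{2}\diff x.
\]
Combined with the lower semicontinuity bound (letting the auxiliary exponent tend to $2$), this yields $\int_{B_R}\frac{|Du_*|^2}{2}\diff x\le\int_{B_R}\frac{|Dv|^2}{2}\diff x$; taking $v=u_*$ in addition forces $\int_{B_R}\frac{|Du_n|^{p_n}}{p_n}\diff x\to\int_{B_R}\frac{|Du_*|^2}{2}\diff x$. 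A covering/exhaustion argument then gives the energy convergence of (ii) on every $K\csubset U$, and a standard extension argument upgrades the minimality to: $u_*$ is a $2$-minimizer in every open $K\csubset U$, which is the first assertion of (iii).

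\emph{Strong convergence of gradients and regularity.} With $v=u_*$ the inequality above reads $\int_{B_{r_n}}|Du_n|^{p_n}\diff x\le\int_{B_{r_n}}|Du_*|^{p_n}\diff x+o(1)$. Feeding this into the pointwise uniform convexity bound $|b|^p-|a|^p-p\,|a|^{p-2}a:(b-a)\ge c(p_0)\,|b-a|^2(|a|+|b|)^{p-2}$, valid for $p\in[p_0,2]$, with $a=Du_*$ and $b=Du_n$, and integrating over $B_{r_n}$, one obtains
\[
c(p_0)\int_{B_{r_n}}\frac{|Du_n-Du_*|^2}{(|Du_n|+|Du_*|)^{2-p_n}}\diff x\le o(1)-\int_{B_{r_n}}|Du_*|^{p_n-2}Du_*:(Du_n-Du_*)\diff x.
\]
Splitting $Du_*=Du_*\mathbf 1_{\{|Du_*|\le M\}}+Du_*\mathbf 1_{\{|Du_*|>M\}}$, the truncated part of the last integral tends to $0$ (its coefficient is bounded and converges a.e., hence strongly in the relevant dual space, while $Du_n-Du_*\rightharpoonup0$), and the remainder is bounded by a modulus $\omega(M)\to0$ uniformly in $n$ (H\"older's inequality together with $Du_*\in L^2_{\loc}$); hence the weighted integral on the left tends to $0$. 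H\"older's inequality with exponents $2/p_n$ and $2/(2-p_n)$, using the uniform bound on $\int_{B_{r_n}}(|Du_n|+|Du_*|)^{p_n}\diff x$, then converts this into $\int_{B_{r_n}}|Du_n-Du_*|^{p_n}\diff x\to0$. Since the ball was arbitrary, $\int_K|Du_n-Du_*|^{p_n}\diff x\to0$ for every $K\csubset U$, and a further subsequence has $Du_n\to Du_*$ a.e.\ in $U$, which completes (ii). Finally, since $u_*$ is a minimizing harmonic map into $\mathcal N$ on a domain of $\mathbb R^3$, the regularity theory of Schoen and Uhlenbeck \cite{SchoUhl} (in particular dimension reduction in dimension $3$) shows that its singular set is a locally finite set $S_0\subset U$ and that $u_*\in C^\infty(U\setminus S_0,\mathcal N)$, which is the second assertion of (iii).

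\emph{Main obstacle.} The crux is the shell competitor: Lemma~\ref{lemma_Luckhaus} must be applied so that the interpolation stays inside the tubular neighborhood $\mathcal N_{\lambda_{\mathcal N}}$ (so that the nearest-point retraction can restore the target constraint) \emph{and} has negligible $p_n$-energy, with all constants stable as $p_n\nearrow2$; this forces a careful calibration of the shell width $R-r_n\to0$, balancing the two bounds of estimate~\ref{it_Ahohtev2do6yee5bohk9Aw5O} of Lemma~\ref{lemma_Luckhaus} against $2-p_n$. A related subtlety is that only $L^2_{\loc}$ control of $Du_*$ is available---not a $W^{1,2}$ bound on $u_n$---so the weak-convergence term in the uniform-convexity estimate has to be absorbed by truncation rather than by a direct duality pairing.
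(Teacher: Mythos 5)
Your proof is essentially correct and follows the same overall strategy as the paper's: extract a weak limit; show $u_*$ is a $2$-minimizer by building a Luckhaus-type comparison map on a thin shell, calibrating the shell width against the $L^{p_n}$-trace discrepancy and against $2-p_n$ so that the nearest-point retraction of Lemma~\ref{lemma npr} applies and the shell energy vanishes in the limit; deduce the convergence of energies; and then upgrade this to $\lVert Du_n-Du_*\rVert_{L^{p_n}}\to 0$. The two main places where you deviate from the paper are worth naming. First, the domain geometry: the paper does not use a ball, but fixes a smooth compact $3$-manifold with boundary $V$ with $\overline{K}\subset V\csubset U$ and runs the Luckhaus interpolation along a good level set $\mathcal{M}=f^{-1}(\varrho)$ of the distance to $\partial V$. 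This matters because, for an arbitrary open $K\csubset U$, there need not exist a ball $B_R\csubset U$ containing $K$ (take $U$ an annular domain and $K$ an annulus inside it), so your closing phrase ``a standard extension argument upgrades the minimality'' glosses over a real, if routine, fix --- you must rerun the shell construction with $\partial V$ in place of $\partial B_R$ for a smooth $V\supset K$. With that replacement the rest of your argument carries over verbatim. Second, the route to $\int_K|Du_n-Du_*|^{p_n}\to 0$: the paper derives it from the convergence of energies plus weak convergence via Hanner's inequality (Lemma~\ref{rem norm convergence}), whereas you use the pointwise uniform convexity inequality for $\xi\mapsto|\xi|^p$ together with a truncation trick to pair $|Du_*|^{p_n-2}Du_*$ against the weakly convergent $Du_n-Du_*$; the truncation is genuinely necessary here because the naive duality pairing fails as $p_n\nearrow 2$ (the needed integrability of $|Du_*|^{p_n-1}$ degenerates), and your splitting into a bounded part (strong $\times$ weak $\to 0$) and a tail controlled uniformly by $\lVert Du_*\mathbf 1_{\{|Du_*|>M\}}\rVert_{L^2}$ is sound. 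Your H\"older step converting the weighted estimate into $L^{p_n}$ convergence is also correct. The two proofs of gradient convergence thus buy slightly different things: Hanner's inequality is shorter once quoted, while your convexity argument is more self-contained and also makes more visible how the $p_n$-minimality is being spent. Everything else (the $L^\infty$ closeness from Lemma~\ref{lemma_Luckhaus}\ref{it_Ahohtev2do6yee5bohk9Aw5O}, the energy bound from \ref{it_eepogaechuwaeghee3UlooPo}, the final appeal to Schoen--Uhlenbeck regularity) matches the paper's reasoning.
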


\begin{proof}
    Fix an open set $K\csubset U$. Since $\overline{K}\subset U$ is compact, for any cover by open balls, $\overline{K}$ admits a finite subcover. Then there exists an open set $V$ such that $\overline{K} \subset V$, $\overline{V}\subset U$ and $\overline{V}$ is a compact smooth Riemannian manifold of dimension~3 with boundary \(\partial V\) (possibly disconnected, but with a finite number of connected components). 
    Then $\partial V$ is a 2-dimensional compact smooth Riemannian manifold without boundary. According to Lemma~\ref{lemma npr} with $\mathcal{N}$ replaced by $\partial V$, there exists $\lambda_{\partial V}>0$ such that the nearest point retraction $\Pi_{\partial V}: \partial V_{\lambda_{\partial V}}\to \partial V$ is a  well-defined and smooth mapping, where $\partial V_{\lambda_{\partial V}}=\{x\in \overline{V}: \dist(x, \partial V)\leq \lambda_{\partial V}\}$. Choose $\lambda \in (0,\lambda_{\partial V})$ so that $\overline{K}\subset V\setminus \overline{A}_{\lambda}$, where $A_{\lambda}=\{x \in V: \dist(x,\partial V)<\lambda\}$. 
    
    If $p_{n}\in (p,2)$ is large enough, then, applying Young's inequality, we have
    \begin{equation}\label{eq_kogashohzoolej2LeeTh9Iel}
        \int_{V}\frac{|Du_{n}|^{p}}{p}\diff x
        \leq 
            \int_{V}\frac{|Du_{n}|^{p_{n}}}{p_{n}} \diff x 
            + 
            \Bigl(\frac{1}{p} - \frac{1}{p_n}\Bigr)|V| 
    \end{equation}
    so that 
    \begin{equation}
    \label{estimboundpenr}
     \limsup_{n \to +\infty} \int_{V}\frac{|Du_{n}|^{p}}{p}\diff x < \infty
    \end{equation}
    (see \eqref{C1}). Since $\mathcal{N}$ is compact, $(u_{n})_{n\in \mathbb{N}}$ is bounded in $L^{\infty}(U, \mathbb{R}^{\nu})$, which, together with \eqref{estimboundpenr}, implies that for each $p\in (1,2)$, $(u_{n})_{n\in \mathbb{N}}$ is bounded in $W^{1,p}(V, \mathbb{R}^{\nu})$. Thus, there exists a map $u_{*}: V \to \mathbb{R}^{\nu}$ such that, up to a subsequence (still denoted by $n$), for each $p \in (1,2)$,
    \begin{align}
        \label{conv for seq}
        u_{n} & \rightharpoonup u_{*}\text{ weakly in $W^{1,p}(V, \mathbb{R}^{\nu})$}&
        &\text{ and } &
        u_{n}&\to u_{*}\text{ a.e.\ in}\,\ V.
    \end{align}
    In particular, \(u_{*} \in \mathcal{N}\) a.e. in \(V\).
    
    By the weak convergence, \eqref{eq_kogashohzoolej2LeeTh9Iel} and the fact that $p_{n}\nearrow 2$ as $n\to +\infty$, for each $p \in (1,2)$ we have 
    \begin{equation}
    \label{estimate p-energy weak limit}
    \int_{V}\frac{|Du_{*}|^{p}}{p}\diff x \leq \liminf_{n\to+\infty}\int_{V} \frac{|Du_{n}|^{p}}{p}\diff x  \leq \liminf_{n\to +\infty}  \int_{V}\frac{|Du_{n}|^{p_{n}}}{p_{n}} \diff x 
    + \Bigl(\frac{1}{p} - \frac{1}{2}\Bigr)|V|.
    \end{equation}
    Letting \(p\nearrow 2\) in \eqref{estimate p-energy weak limit} and using Fatou's lemma, we obtain the estimate
    \begin{equation}
    \label{eq_Dahshie8EedeeGhae4ohdiec}
      \int_{V}\frac{|Du_{*}|^{2}}{2}\diff x \leq\liminf_{n\to +\infty}  \int_{V}\frac{|Du_{n}|^{p_{n}}}{p_{n}} \diff x,
    \end{equation}
    which, together with \eqref{C1}, implies that \(u_{*}\in W^{1,2}(V, \mathcal{N})\). We prove that $u_{*}$ is a 2-minimizer in $K$  and that, up to a subsequence (not relabeled),   
    \begin{equation}
    \label{contoh}
    \int_{K}\frac{|Du_{n}|^{p_{n}}}{p_{n}}\diff x \to \int_{K} \frac{|Du_{*}|^{2}}{2}\diff x
    \end{equation} 
    and $\|Du_{n}-Du_{*}\|^{p_{n}}_{L^{p_{n}}(K, \mathbb{R}^{\nu} \otimes \mathbb{R}^{3})}\to 0$ as $n\to +\infty$. 

    Applying \cite[Theorem~3.2.22~(3)]{Federer} with $W=A_{\lambda}$, $Z=[0,\lambda]$ and $f:W\to Z$ defined by $f(x)=|x-\Pi_{\partial V}(x)|=\dist(x,\partial V)$, and using also the fact that $|Df(x)| = 1$ for each  $x \in A_{\lambda}$ (see Lemma~\ref{lemma npr}), Fatou's lemma and \eqref{C1}, we obtain
    \begin{equation*}
    \int^{\lambda}_{0}\diff \varrho \;\liminf_{n\to +\infty} \int_{f^{-1}(\{\varrho\})}\frac{|Du_{n}|^{p_{n}}}{p_{n}}\diff \mathcal{H}^{2} \leq \liminf_{n\to +\infty} \int_{A_{\lambda}}\frac{|Du_{n}|^{p_{n}}}{p_{n}} \diff x\leq C.
    \end{equation*}
    This implies that 
    \[
        \mathcal{H}^{1}
            \Bigl(\Bigr\{\varrho \in (0,\lambda): \liminf_{n\to +\infty} \int_{f^{-1}(\{\varrho\})}\frac{|Du_{n}|^{p_{n}}}{p_{n}}\diff \mathcal{H}^{2} \geq \frac{4C}{\lambda}\Bigr\}\Bigr)\leq \frac{\lambda}{4}.
    \]
    So there exists $\varrho \in (0,\lambda/2)$ such that, up to a subsequence (not relabeled), we know that for each sufficiently large $n \in \mathbb{N}$,
    \begin{equation}
        \label{estimfortr}
        \operatorname{tr}_{f^{-1}(\{\varrho\})}(u_{n})=u_{n}|_{f^{-1}(\{\varrho\})} \in W^{1, p_{n}}(f^{-1}(\{\varrho\}), \mathcal{N}), \; \int_{f^{-1}(\{\varrho\})}\frac{|Du_{n}|^{p_{n}}}{p_{n}}\diff \mathcal{H}^{2}<\frac{4C}{\lambda}
    \end{equation}
    and $\operatorname{tr}_{f^{-1}(\{\varrho\})}(u_{*})=u_{*}|_{f^{-1}(\{\varrho\})}$. Fix such $\varrho \in (0,\lambda/2)$ and an arbitrary $p\in (1,2)$. Now we define $\mathcal{M}=f^{-1}(\{\varrho\})$. Notice that $\mathcal{M}$ is a $2$-dimensional compact smooth Riemannian manifold without boundary. In view of \eqref{conv for seq} and the trace theorem, we have $u_{n}|_{\mathcal{M}} \rightharpoonup u_{*}|_{\mathcal{M}}$ weakly in $W^{1-1/p,p}(\mathcal{M}, \mathbb{R}^{\nu})$ (we use that a bounded linear operator maps a weakly convergent sequence to a weakly convergent one). Then, by the compact embedding, up to a subsequence (not relabeled),
    \begin{equation}\label{comp for tr}
     u_{n}|_{\mathcal{M}}\to u_{*}|_{\mathcal{M}} \,\ \text{strongly in} \,\ L^{q}(\mathcal{M}, \mathbb{R}^{\nu})
     \end{equation}
     for each $q \in \left[1,\frac{2p}{2-p}\right)$. Since $u_{n}|_{\mathcal{M}}\in W^{1,p_{n}}(\mathcal{M}, \mathcal{N})$, \eqref{comp for tr} implies that $u_{*}(x) \in \mathcal{N}$ for $\mathcal{H}^{2}$-a.e.\  $x \in \mathcal{M}$. Using \eqref{estimfortr} and \eqref{comp for tr},  we deduce that, up to a subsequence (not relabeled), 
    \[
    u_{n}|_{\mathcal{M}}\rightharpoonup u_{*}|_{\mathcal{M}}\,\ \text{weakly in}\,\ W^{1,p}(\mathcal{M}, \mathbb{R}^{\nu}). 
    \]
    Then, by the weak convergence, Young's inequality and \eqref{estimfortr}, we have
    \begin{equation}
      \label{(0.51)}
      \begin{split}
    \int_{\mathcal{M}}\frac{|D_{\top}u_{*}|^{p}}{p}\diff \mathcal{H}^{2} &\leq \liminf_{n\to +\infty} \int_{\mathcal{M}}\frac{|D_{\top} u_{n}|^{p}}{p}\diff \mathcal{H}^{2} \\ & \leq \liminf_{n\to+ \infty}\left (\int_{\mathcal{M}}\frac{|D u_{n}|^{p_{n}}}{p_{n}}\diff \mathcal{H}^{2} + \Bigl(\frac{1}{p}- \frac{1}{p_{n}}\Bigr) \mathcal{H}^{2}(\mathcal{M})\right) \\ 
    & \leq  \frac{4C}{\lambda}+\Bigl(\frac{1}{p}-\frac{1}{2}\Bigr)\mathcal{H}^{2}(\mathcal{M}), 
    \end{split}
\end{equation}
where we have also used that $|D_{\top}u_{n}|\leq |Du_{n}|$. 
This implies that $u_{*}|_{\mathcal{M} }\in W^{1,2}(\mathcal{M}, \mathcal{N})$, because $p \in (1,2)$ was arbitrarily chosen and we already know that $u_{*}(x) \in \mathcal{N}$ for $\mathcal{H}^{2}$-a.e.\  $x \in \mathcal{M}$. Moreover, $u_{*} \in L^{\infty}(\mathcal{M}, \mathbb{R}^{\nu})$.
Also, \eqref{(0.51)} yields
    \begin{equation}\label{E2energybound}
    \int_{\mathcal{M}}\frac{|D_{\top}u_{*}|^{2}}{2}\diff \mathcal{H}^{2}\leq 
    \lim_{p \nearrow 2} 
    \int_{\mathcal{M}}\frac{|D_{\top}u_{*}|^{p}}{p} \diff \mathcal{H}^{2}
    \leq
    \frac{4C}{\lambda}.
    \end{equation}  
        Next, we want to apply Lemma~\ref{lemma_Luckhaus}. Let $T_{0}=T_{0}(\mathcal{M})>0$ be the constant of Lemma~\ref{lemma_Luckhaus}. Choose $T\in (0,T_{0})$ sufficiently small and let $\Phi: A_{\lambda} \setminus A_\varrho \to \mathcal{M}\times [0, \lambda-\varrho)$ be defined by 
    \[
     \Phi (x) = (\Pi_{\mathcal{M}}(x), \dist (x, \mathcal{M})). 
    \]
    By Lemma~\ref{lemma npr}, there exists $\lambda_{\mathcal{N}}>0$ such that the nearest point retraction $\Pi_{\mathcal{N}}:\mathcal{N}_{\lambda_{\mathcal{N}}} \to \mathcal{N}$, where $
    \mathcal{N}_{\lambda_{\mathcal{N}}}=\{x \in \mathbb{R}^{\nu}: \dist(x, \mathcal{N})<\lambda_{\mathcal{N}}\},
    $ is well defined and smooth.  Setting $s_{n}=\|u_{n}-u_{*}\|_{L^{p_{n}}(\mathcal{M}, \mathbb{R}^{\nu})}$ (without loss of generality, we assume that for each $n \in \mathbb{N}$ large enough, $s_{n}>0$) and using \eqref{estimfortr}, \eqref{comp for tr} and \eqref{E2energybound}, we observe that $s_{n}\to 0$ and for each sufficiently large $n \in \mathbb{N}$,
    \begin{equation}\label{estimluckh}
    \int_{\mathcal{M}}\left(|D_{\top} u_{n}|^{p_{n}}+|D_{\top}u_{*}|^{p_{n}}+\frac{|u_{n}-u_{*}|^{p_{n}}}{s^{p_{n}/2}_{n}}\right) \diff \mathcal{H}^{2} \leq C^{\prime},
    \end{equation}
    where $C^{\prime}$ is a positive constant independent of $n$. Applying Lemma~\ref{lemma_Luckhaus} with $u=u_{n}|_{\mathcal{M}}$, $v=u_{*}|_{\mathcal{M}}$ and $T=s_{n}^{1/2}$, we obtain a map $w_{n}\in W^{1,p_{n}}(\mathcal{M}\times (0,s_{n}^{1/2}), \mathbb{R}^{\nu})$ interpolating between $u_{n}|_{\mathcal{M}}$ and $u_{*}|_{\mathcal{M}}$. 
    According to Lemma~\ref{lemma_Luckhaus}~\ref{it_Ahohtev2do6yee5bohk9Aw5O}, \eqref{estimluckh} and the definition of $s_{n}$, if $n \in \mathbb{N}$ is large enough, we have $\dist(w_{n}(x, t), \mathcal{N})<\lambda_{\mathcal{N}}$ for a.e.\  $(x, t) \in \mathcal{M}\times (0,s_{n}^{1/2})$. 
    Thus, for each $n \in \mathbb{N}$ large enough, we can define the map 
    \[
        \varphi_{n}(y)=\Pi_{\mathcal{N}}(w_{n}(\Phi(y))) 
    \]
    for each $y\in \Phi^{-1}(\mathcal{M}\times (0,s_{n}^{1/2}))$. 
    Then we have the following: $\varphi_{n}\in W^{1,p_{n}}(\Phi^{-1}(\mathcal{M}\times (0,s_{n}^{1/2})), \mathcal{N})$;  $\tr_{\mathcal{M}}(\varphi_{n})=u_{n}|_{\mathcal{M}}$, 
    $\tr_{\Phi^{-1}(\mathcal{M}\times\{s_{n}^{1/2}\})}(\varphi_{n})=u_{*}|_{\mathcal{M}}\circ \Phi_{n}\circ  \Phi|_{\Phi^{-1}(\mathcal{M}\times \{s_{n}^{1/2}\})}$, where $\Phi_{n}: \mathcal{M}\times [s^{1/2}_{n},T]$ is defined by $\Phi_{n}(x,t)=\Phi^{-1}(x, t-s^{1/2}_{n})$ and we have used that $\Phi^{-1}(x,0)=x$; in view of Lemma~\ref{lemma_Luckhaus}~\ref{it_eepogaechuwaeghee3UlooPo} and \eqref{estimluckh},
    \begin{equation}\label{estiml2}
    \int_{\Phi^{-1}(\mathcal{M}\times (0,s_{n}^{1/2}))}\frac{|D\varphi_{n}|^{p_{n}}}{p_{n}}\diff y \leq C^{\prime\prime} s_{n}^{1/2},
    \end{equation}
    where $C^{\prime\prime}>0$ is a constant independent of $n$, but depending on the bilipschitz constant of $\Phi$. 
    
    Now let $E = V \setminus \overline{A}_\varrho$. According to our construction, $E$ is a finite union of bounded Lipschitz domains, and $E$ contains $K$. 
    Fix an arbitrary $w_{*} \in W^{1,2}(E, \mathcal{N})$ with $\tr_{\mathcal{M}}(w_{*})=\tr_{\mathcal{M}}(u_{*})$. Let $\zeta \in C_{c}([0,+\infty))$ be the Lipschitz map such that $\zeta=1$ on $[0,1]$, $\zeta(s)=2-s$ for $s\in [1,2]$, $\zeta=0$ on $[2,+\infty]$. Hence $\|\zeta^{\prime}\|_{L^{\infty}([0, +\infty))}\leq 1$. Next, for each sufficiently large $n \in \mathbb{N}$, define $w_{n}:E\to \mathcal{N}$ by
    \begin{equation}
    \label{defwn}
    w_{n}(y)
    =\begin{cases}
    \varphi_{n}(y) \,\  &\text{if \(y \in \Phi^{-1}(\mathcal{M}\times (0,s_{n}^{1/2}])\),}\\
    \displaystyle w_{*}(\Psi_{n}(y)) \,\ &\text{if \(y \in E\setminus \Phi^{-1}(\mathcal{M}\times (0,s_{n}^{1/2}])\),}
    \end{cases}
    \end{equation}
    where $\Psi_{n}: E\setminus \Phi^{-1}(\mathcal{M}\times (0,s_{n}^{1/2}]) \to E$ is the bilipschitz map defined by 
    \[
    \Psi_{n}(y)=\left(1-\zeta\left(\frac{\dist(y,\mathcal{M})}{s_{n}^{1/3}}\right)\right) y + \zeta\left(\frac{\dist(y,\mathcal{M})}{s_{n}^{1/3}}\right)\Phi_{n}(\Phi(y)).
    \]
    Notice that: $\tr_{\Phi^{-1}(\mathcal{M}\times \{s^{1/2}_{n}\})}(w_{*} \circ \Psi_{n})= u_{*}|_{\mathcal{M}}\circ \Phi_{n}\circ \Phi|_{\Phi^{-1}(\mathcal{M}\times \{s_{n}^{1/2}\})}$; if $y\in E\setminus \Phi^{-1}(\mathcal{M}\times (0,s_{n}^{1/2}])$ and $\dist(y,\mathcal{M})\leq s_{n}^{1/3}$, then we have $\Psi_{n}(y)=\Phi_{n}(\Phi(y))$; if $y\in E\setminus \Phi^{-1}(\mathcal{M}\times (0,s_{n}^{1/2}])$ and $\dist(y,\mathcal{M})\geq 2s_{n}^{1/3}$, then $\Psi_{n}(y)=y$; for each $y\in E$, if $n \in \mathbb{N}$ is large enough, then $\Psi_{n}(y)$ is defined and $\Psi_{n}(y)\to y$ as $n\to +\infty$; for a.e.\  $y \in E\setminus \Phi(\mathcal{M}\times (0,s_{n}^{1/2}])$ and for each $n \in \mathbb{N}$ large enough,
    \begin{equation}
    \label{LipestPsin}
    \begin{split}
    |D\Psi_{n}(y) - \operatorname{Id}|&=\biggl|\zeta^{\prime}\biggl(\frac{\dist(y,\mathcal{M})}{s_{n}^{1/3}}\biggr) \frac{D\dist(y,\mathcal{M})}{s_{n}^{1/3}}\otimes (\Phi_{n}(\Phi(y))-y) \\ & \,\ \,\  \,\ \,\ \,\ \,\ + \zeta\biggl(\frac{\dist(y,\mathcal{M})}{s_{n}^{1/3}}\biggr)(D\Phi_{n}(\Phi (y)) \circ D\Phi(y) -\mathrm{Id})\biggr|\\
    &\leq  Ls_n^{1/6},
    \end{split}
    \end{equation}
    where $L>0$ is a constant independent of $n$, but depending on the bilipschitz constant of $\Phi$. In the above computations we have used that $|\zeta^{\prime}|\leq 1$, $|D\dist(y,\mathcal{M})|= 1$ and $|a\otimes b|\leq |a| |b|$ for $a,b \in \mathbb{R}^{3}$. 
    In particular, when \(n \in \mathbb{N}\) is large enough, \(|D\Psi_{n}(y) - \operatorname{Id}|<1/2\). It is worth noting that, according to the definition of $\Psi_{n}$ and \cite[4.8~(5)]{Federer_1959}, $\Psi_{n}$ is continuously differentiable on the three regions of  $E\setminus \Phi^{-1}(\mathcal{M}\times (0,s_{n}^{1/2}])$, where $\dist(\cdot, \mathcal{M})< s^{1/3}_{n}$, $\dist(\cdot, \mathcal{M}) \in (s^{1/3}_{n}, 2s^{1/3}_{n})$, and $\dist(\cdot, \mathcal{M})> 2s^{1/3}_{n}$, respectively. Then, using the inverse function theorem for $\Psi_{n}$ on each of these three regions (namely, on compact subsets of these regions), and taking into account the definition of $\Psi_{n}$ (namely, the injectivity of $\Psi_{n}$), one observes that $\Psi_{n}$ admits the inverse $\Psi^{-1}_{n}$ such that $\Psi^{-1}_{n}$ is a.e. differentiable on $E$ and $D\Psi^{-1}_{n}$ is sufficiently close to the identity a.e. on $E$. Furthermore, $\Psi^{-1}_{n}$ is absolutely continuous on each segment lying in the closure of one of the images of $\Psi_{n}$ of the three  regions of $E\setminus \Phi^{-1}(\mathcal{M}\times (0,s_{n}^{1/2}])$ considered above. Thus, $\Psi_{n}$ is bilipschitz on  $E\setminus \Phi^{-1}(\mathcal{M}\times (0,s_{n}^{1/2}])$, as we mentioned earlier. 
    It is also worth noting that for a.e. $y \in E$, if $n$ is large enough, then $D\Psi_{n}(y)$ is defined, $D(w_{*}\circ\Psi_{n})(y)=Dw_{*}(\Psi_{n}(y))\circ D\Psi_{n}(y)$ and $D\Psi_{n}(y)\to \mathrm{Id}$, $D(w_{*}\circ \Psi_{n})(y) \to Dw_{*}(y)$  as $n\to +\infty$. In addition, if $n$ is large enough, then for a.e.\  $y\in E$,
    \begin{align*}
        \frac{|D(w_{*}\circ \Psi_{n})(y)1_{E\setminus \Phi^{-1}(\mathcal{M}\times (0,s_{n}^{1/2}])}(y)|^{p_{n}}}{p_{n}} &\leq 2^{p_{n}}\frac{|Dw_{*}(\Psi_{n}(y)) 1_{E\setminus \Phi^{-1}(\mathcal{M}\times (0,s_{n}^{1/2}])}(y)|^{p_{n}}}{p_{n}}\\
        &\leq 2^{p_{n}}\Biggl(\frac{|Dw_{*}(\Psi_{n}(y))1_{E\setminus \Phi^{-1}(\mathcal{M}\times (0,s_{n}^{1/2}])}(y)|^{2}}{2}+\frac{1}{p_{n}}- \frac{1}{2}\Biggr)\\
        &\leq h(y), 
    \end{align*}
    where we have used \eqref{LipestPsin}, the fact that $|\mathrm{Id}|=\sqrt{3}$, Young's inequality and since $|Dw_{*}|^{2}\in L^{1}(E)$, there exists $h \in L^{1}(E)$ such that the last estimate holds.
    
    Taking into account the above observations and using the Lebesgue dominated convergence theorem, we deduce that 
    \begin{equation}\label{Ldomwn}
        \int_{E\setminus \Phi^{-1}(\mathcal{M}\times (0,s_{n}^{1/2}])}\frac{|Dw_{n}|^{p_{n}}}{p_{n}}\diff y \to \int_{E}\frac{|Dw_{*}|^{2}}{2}\diff y
    \end{equation}
as $n\to+\infty$. The mapping $w_{n}$ is a competitor for $u_{n}$ in $E$, and hence, in view of \eqref{defwn},
    \begin{equation}\label{passing to limit}
    \int_{E}\frac{|Du_{n}|^{p_{n}}}{p_{n}}\diff y \le
    \int_{\Phi^{-1}(\mathcal{M}\times (0,s_{n}^{1/2}))}\frac{|D\varphi_{n}|^{p_{n}}}{p_{n}}\diff y+\int_{E\setminus \Phi^{-1}(\mathcal{M}\times (0,s_{n}^{1/2}])}\frac{|D w_{n}|^{p_{n}}}{p_{n}}\diff y.
    \end{equation}
    Using  \eqref{eq_Dahshie8EedeeGhae4ohdiec} with $V$ replaced by $E$ (which is possible thanks to \eqref{conv for seq} and proceeding as in \eqref{estimate p-energy weak limit}), together with \eqref{estiml2}, \eqref{Ldomwn} and \eqref{passing to limit}, we obtain
    \begin{align*}
    \int_{E}\frac{|Du_{*}|^{2}}{2}\diff y=\lim_{p\nearrow 2} \int_{E}\frac{|Du_{*}|^{p}}{p} \diff y\leq  \liminf_{n\to +\infty} \int_{E}\frac{|Du_{n}|^{p_{n}}}{p_{n}}\diff y&\leq  \limsup_{n\to +\infty} \int_{E}\frac{|Du_{n}|^{p_{n}}}{p_{n}}\diff y\\&\leq \int_{E}\frac{|Dw_{*}|^{2}}{2} \diff y.
    \end{align*}
    This implies that $\int_{E}\frac{|Du_{n}|^{p_{n}}}{p_{n}}\diff y \to \int_{E}\frac{|Du_{*}|^{2}}{2}\diff y$ and $u_{*}$ is a 2-minimizer in $E$. In particular, according to Lemma~\ref{rem norm convergence} below, it holds \[\|Du_{n}-Du_{*}\|^{{p_{n}}}_{L^{p_{n}}(E, \mathbb{R}^{\nu}\otimes \mathbb{R}^{3})}\to 0,\] 
    which proves \eqref{contoh}. 
    
    Since $u_{*}$ is a $2$-minimizer in $E$ and $K\csubset E$ is open, $u_{*}$ is a $2$-minimizer in $K$ and, according to \cite[Theorem II]{SchoUhl},  there exists a finite set $S_{K} \subset K$ such that $u_{*}\in C^{\infty}(K\setminus S_{K}, \mathcal{N})$. 
    
    In order to reach the conclusion, we use a diagonal argument and rely on the fact that one can write \(U = \bigcup_{m \in \mathbb{N}} K_m\), with \(K_m \subset K_{m + 1}\) so that if \(K \csubset U\), then \(K \subset K_m\) for some \(m \in \mathbb{N}\). This completes our proof of Proposition~\ref{prop conv to harmonic map}.
\end{proof}
\begin{lemma}\label{rem norm convergence}
    Let $U\subset \mathbb{R}^{3}$ be open and bounded, $u_{*} \in W^{1,2}(U, \mathbb{R}^{\nu})$, $(p_{n})_{n\in \mathbb{N}}\subset [1,2)$, $p_{n}\nearrow 2$ as $n\to +\infty$ and  $(u_{n})_{n\in \mathbb{N}}\subset W^{1,p_{n}}(U, \mathbb{R}^{\nu})$. If $u_{n} \rightharpoonup u_{*}$ weakly in $W^{1,p}(U, \mathbb{R}^{\nu})$ for all $p\in  (1,2)$ and $\int_{U}|Du_{n}|^{p_{n}}\diff x \to \int_{U}|Du_{*}|^{2}\diff x$ as $n\to +\infty$, then  $\int_{U}|Du_{n}-Du_{*}|^{p_{n}}\diff x \to 0$ as $n\to +\infty$.
\end{lemma}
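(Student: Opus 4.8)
The plan is to deduce the statement from the Radon--Riesz (Kadec--Klee) property of the uniformly convex spaces $L^{q}$ with $1<q\le 2$, the only subtlety being that the exponent $p_{n}$ varies with $n$. Throughout, I regard $Du_{n}$ and $Du_{*}$ as elements of $L^{p_{n}}(U,\mathbb{R}^{\nu}\otimes\mathbb{R}^{3})$, which is legitimate since $|U|<+\infty$ and $Du_{*}\in L^{2}(U,\mathbb{R}^{\nu}\otimes\mathbb{R}^{3})$, and I write $q'=q/(q-1)$. If $\int_{U}|Du_{*}|^{2}\diff x=0$, then $Du_{*}=0$ a.e.\ and $\int_{U}|Du_{n}-Du_{*}|^{p_{n}}\diff x=\int_{U}|Du_{n}|^{p_{n}}\diff x\to 0$ by hypothesis, so I may assume $A:=\bigl(\int_{U}|Du_{*}|^{2}\diff x\bigr)^{1/2}>0$.

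First I would collect the relevant scalar limits. Since $p_{n}\nearrow 2$, $|Du_{*}|^{p_{n}}\to|Du_{*}|^{2}$ pointwise and $|Du_{*}|^{p_{n}}\le|Du_{*}|^{2}+1\in L^{1}(U)$, dominated convergence gives $\int_{U}|Du_{*}|^{p_{n}}\diff x\to A^{2}$; together with the hypothesis $\int_{U}|Du_{n}|^{p_{n}}\diff x\to A^{2}$ and the fact that $1/p_{n}\to1/2$, it follows that $\|Du_{n}\|_{L^{p_{n}}(U)}\to A$ and $\|Du_{*}\|_{L^{p_{n}}(U)}\to A$. By the triangle inequality in $L^{p_{n}}(U)$ this yields $\limsup_{n\to+\infty}\bigl\|\tfrac{Du_{n}+Du_{*}}{2}\bigr\|_{L^{p_{n}}(U)}\le A$. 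For the matching lower bound I would use weak lower semicontinuity: fixing $p\in(1,2)$ and taking $n$ so large that $p_{n}>p$, Hölder's inequality on the bounded set $U$ gives
\begin{equation*}
\Bigl\|\tfrac{Du_{n}+Du_{*}}{2}\Bigr\|_{L^{p_{n}}(U)}\ge |U|^{\frac1{p_{n}}-\frac1{p}}\Bigl\|\tfrac{Du_{n}+Du_{*}}{2}\Bigr\|_{L^{p}(U)};
\end{equation*}
since $\tfrac12(Du_{n}+Du_{*})\rightharpoonup Du_{*}$ weakly in $L^{p}(U,\mathbb{R}^{\nu}\otimes\mathbb{R}^{3})$, letting $n\to+\infty$ and then $p\nearrow2$ gives $\liminf_{n\to+\infty}\bigl\|\tfrac12(Du_{n}+Du_{*})\bigr\|_{L^{p_{n}}(U)}\ge A$. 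Hence $\bigl\|\tfrac12(Du_{n}+Du_{*})\bigr\|_{L^{p_{n}}(U)}\to A$.

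The heart of the argument is Clarkson's second inequality: for $1<q\le2$ and $f,g\in L^{q}(U,\mathbb{R}^{\nu}\otimes\mathbb{R}^{3})$,
\begin{equation*}
\Bigl\|\tfrac{f+g}{2}\Bigr\|_{L^{q}(U)}^{q'}+\Bigl\|\tfrac{f-g}{2}\Bigr\|_{L^{q}(U)}^{q'}\le\Bigl(\tfrac12\|f\|_{L^{q}(U)}^{q}+\tfrac12\|g\|_{L^{q}(U)}^{q}\Bigr)^{q'/q}.
\end{equation*}
Applying it with $q=p_{n}$, $f=Du_{n}$, $g=Du_{*}$ and passing to the limit: the right-hand side tends to $\bigl(\tfrac12A^{2}+\tfrac12A^{2}\bigr)^{1}=A^{2}$ (the base tends to $A^{2}$ by the first step, the exponent $q'/q=1/(p_{n}-1)$ to $1$), while the first term on the left tends to $A^{2}$ (the norm tends to $A>0$ by the second step, and $p_{n}'\to2$). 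Consequently $\bigl\|\tfrac12(Du_{n}-Du_{*})\bigr\|_{L^{p_{n}}(U)}^{p_{n}'}\to0$, hence $\bigl\|\tfrac12(Du_{n}-Du_{*})\bigr\|_{L^{p_{n}}(U)}\to0$, and finally
\begin{equation*}
\int_{U}|Du_{n}-Du_{*}|^{p_{n}}\diff x=2^{p_{n}}\Bigl\|\tfrac12(Du_{n}-Du_{*})\Bigr\|_{L^{p_{n}}(U)}^{p_{n}}\le 4\,\Bigl\|\tfrac12(Du_{n}-Du_{*})\Bigr\|_{L^{p_{n}}(U)}^{p_{n}}\longrightarrow 0.
\end{equation*}

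The only genuinely delicate point is the lower bound for $\bigl\|\tfrac12(Du_{n}+Du_{*})\bigr\|_{L^{p_{n}}(U)}$, where one must interpolate between the moving exponent $p_{n}$ and a fixed sub-exponent $p<2$ in order to invoke weak lower semicontinuity of the $L^{p}$ norm, and then let $p\nearrow2$. Everything else is bookkeeping of limits of the form $(\text{base}_{n})^{\,\text{exponent}_{n}}$, which causes no trouble once the trivial case $A=0$ has been set aside, since all the relevant bases then stay bounded away from $0$.
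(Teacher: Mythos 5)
Your argument is correct. It differs from the paper's at the top level: the paper delegates the strong-convergence conclusion to an external Hanner-type inequality result, \cite[Lemma~6.3]{VanSchaftingen_VanVaerenbergh}, and only has to verify the single sufficient condition
\begin{equation*}
4\|Du_{*}\|^{2}_{L^{2}(U)}\leq \liminf_{n\to +\infty}\|Du_{n}+Du_{*}\|^{p_{n}}_{L^{p_{n}}(U)},
\end{equation*}
whereas you give a self-contained proof from Clarkson's second inequality, establishing in addition the convergence of $\|Du_{n}\|_{L^{p_{n}}}$, $\|Du_{*}\|_{L^{p_{n}}}$ and $\bigl\|\tfrac12(Du_{n}+Du_{*})\bigr\|_{L^{p_{n}}}$ to the common value $A$ before passing to the limit in Clarkson. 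The technical crux is identical in both: the lower bound for the norm of $\tfrac12(Du_{n}+Du_{*})$ via weak lower semicontinuity in a fixed $L^{p}$, Hölder on the bounded set $U$ to pass to the moving exponent $p_{n}$, and finally $p\nearrow 2$ with continuity of the $L^{p}$ norm. What your route buys is a shorter dependency chain (no external lemma), at the price of spelling out the three scalar limits and the bookkeeping of $(\text{base}_{n})^{\text{exponent}_{n}}$, which you handle correctly, including the necessary separation of the degenerate case $A=0$. Since Hanner's inequality is sharp while Clarkson's is not, the paper's cited lemma is stronger than what is needed here, but for this statement Clarkson suffices, and your proof is sound.
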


\begin{proof}
     The desired convergence comes by using Hanner's inequality, see \cite[Lemma~6.3]{VanSchaftingen_VanVaerenbergh}. To apply \cite[Lemma~6.3]{VanSchaftingen_VanVaerenbergh}, we only need to prove that
     \begin{equation}\label{condforlemma6.3} 
         4\|Du_{*}\|^{2}_{L^{2}(U, \mathbb{R}^{\nu}\otimes \mathbb{R}^{3})}\leq \liminf_{n\to +\infty}\|Du_{n}+Du_{*}\|^{p_{n}}_{L^{p_{n}}(U, \mathbb{R}^{\nu}\otimes \mathbb{R}^{3})}.
     \end{equation}
 Let $p\in (1,2)$ be arbitrary. Notice that $Du_{n}+Du_{*}\rightharpoonup 2Du_{*}$ weakly in $L^{p}(U,\mathbb{R}^{\nu}\otimes \mathbb{R}^{3})$ and hence
 \begin{equation}\label{estimconvexweak}
  \liminf_{n\to+\infty}\|Du_{n}+Du_{*}\|_{L^{p}(U, \mathbb{R}^{\nu} \otimes \mathbb{R}^{3})}\geq 2 \|Du_{*}\|_{L^{p}(U,\mathbb{R}^{\nu}\otimes \mathbb{R}^{3})}.
 \end{equation}
 Using H\"older's inequality and \eqref{estimconvexweak}, we obtain 
 \begin{equation}
   \label{estimfrmwconv}
   \begin{split}
     \liminf_{n\to +\infty}\|Du_{n}+Du_{*}\|^{p_{n}}_{L^{p_{n}}(U, \mathbb{R}^{\nu}\otimes \mathbb{R}^{3})} &\geq  \liminf_{n\to +\infty} |U|^{1-\frac{p_{n}}{p}} \|Du_{n}+Du_{*}\|^{p_{n}}_{L^{p}(U, \mathbb{R}^{\nu}\otimes \mathbb{R}^{3})}\\
     &\geq 4|U|^{1-\frac{2}{p}}\|Du_{*}\|^{2}_{L^{p}(U, \mathbb{R}^{\nu}\otimes \mathbb{R}^{3})}.
     \end{split}
\end{equation}    
Letting $p \nearrow 2$ and using the continuity of the $L^{p}$-norm, in view of \eqref{estimfrmwconv}, we obtain \eqref{condforlemma6.3}, which completes our proof of Lemma~\ref{rem norm convergence}.
\end{proof}

 \section{The singular set}
    \label{section_singular_set}
    
    Let $p \in (1,2)$ and $u_{p} \in W^{1,p}(\Omega, \mathcal{N})$ be a $p$-minimizer in $\Omega$. For each Borel-measurable set $E\in \mathcal{B}(\overline{\Omega})$, we define the positive Radon measure $\mu_{p}$ at $E$ by
    \[
    \mu_{p}(E)\coloneqq (2-p)\int_{E}\frac{|Du_{p}|^{p}}{p}\diff x.  
    \]

Let $(p_{n})_{n\in \mathbb{N}} \subset [1,2)$, $p_{n}\nearrow 2$ as $n \to +\infty$ and $(u_{n})_{n\in \mathbb{N}}$ be a sequence of $p_{n}$-minimizers in $\Omega$.  We assume that for each $n \in \mathbb{N}$ large enough, setting $\mu_{n}=\mu_{p_{n}}$, it holds
\begin{equation}\label{C2}
\mu_{n}(\overline{\Omega}) \leq C_{0},
\end{equation}
where $C_{0}>0$ is a constant independent of $n$. According to Proposition~\ref{global energy bound}, the condition \eqref{C2} is satisfied whenever for each $n \in \mathbb{N}$ large enough, $\operatorname{tr}_{\partial \Omega}(u_{n})=g \in W^{1/2,2}(\partial \Omega, \mathcal{N})$.  Next, in view of \eqref{C2}, using the Banach-Alaoglu theorem, we obtain a positive Radon measure $\mu_{*}\in (C(\overline{\Omega}))^{\prime}$ such that, up to a subsequence (not relabeled),
\begin{equation}\label{wcm}
\mu_{n} \overset{*}{\rightharpoonup} \mu_{*} \,\ \text{weakly* in}\,\ (C(\overline{\Omega}))^{\prime}.
\end{equation}
We denote the support of $\mu_{*}$ by $S_{*} \subset \overline{\Omega}$. 

\begin{rem}\label{rem global convergence}
If the measure $\mu_{*}$ is independent of the subsequence, the entire  sequence $(\mu_{n})_{n\in \mathbb{N}}$ weakly* converges to $\mu_{*}$ as $n \to +\infty$, since in the latter case each subsequence of $(\mu_{n})_{n\in \mathbb{N}}$ contains a subsequence weakly* converging to $\mu_{*}$.
In general, the limit measure $\mu_{*}$ does depend on the subsequence.
To see this, we consider the projective plane $\mathcal{N}=\mathbb{RP}^{2}$, whose universal covering \(\pi \colon \mathbb{S}^2 \to \mathbb{RP}^2\) identifies antipodal points  (see e.g. \cite[Example~0.4]{AT}), so that, in particular, $\pi_{1}(\mathbb{RP}^2)\simeq \mathbb{Z}/2\mathbb{Z}$.
Defining first
\(
 \Hat{g} \colon \mathbb{S}^2 \to \mathbb{S}^1
\)
by
\[
 \Hat{g} \brk{x} =
 \frac{\brk{x_1 x_2, x_3}}{\brk{x_1^2 + x_3^2}^{1/2}\brk{x_2^2 + x_3^2}^{1/2}},
\]
(this is well defined since \(x_1^2 x_2^2 + x_3^2 = x_1^2x_2^2 + 1 - x_1^2-x^{2}_{2} = \brk{1 - x_1^2} \brk{1 - x_2^2} = \brk{x_2^2 + x_3^2} \brk{x_1^2 + x_3^2}\))
one checks that  \(\Hat{g} \brk{x_1, x_2, x_3} = \Hat{g} \brk{x_2, x_1, x_3} = -\Hat{g} \brk{x_1, -x_2, -x_3}\), that \(\Hat{g}\) has degree \(1\) around small circles around \(\brk{1, 0, 0}\) and \(\brk{-1, 0, 0}\) and degree \(-1\) around small circles around \(\brk{0, 1, 0}\) and \(\brk{0, -1, 0}\), and that \(\Hat{g} \in W^{1/2, 2} \brk{\mathbb{S}^2, \mathbb{S}^1}\).
We define
\(g = \tau \circ \Hat{g}\),
where the map \(\tau \colon \mathbb{S}^1 \to \mathbb{RP}^2\) defined by the condition that
\(\tau \brk{\cos  \brk{\theta}, \sin  \brk{\theta}} = \pi\brk{0, \cos \brk{\frac{\theta}{2}}, \sin \brk{\frac{\theta}{2}}}\).
We have \(g \in W^{1/2, 2} \brk{\mathbb{S}^2, \mathbb{RP}^2}\); since \(\tau\) is not homotopic to a constant, \(g\) is topologically nontrivial in small circles around the same points as \(\Hat{g}\); we have \(g \brk{x_1, x_2, x_3} = g \brk{x_2, x_1, x_3} =
\sigma \brk{g \brk{x_1, -x_2, -x_3}}\),
where \(\sigma \colon \mathbb{RP}^2 \to \mathbb{RP}^2\) is the isometry characterized
by \(\sigma \brk{\pi \brk{y_1, y_2, y_3}} = \pi \brk{y_1, y_3, -y_2}\), so that \(\sigma\brk{\tau \brk{z}} = \tau \brk{-z}\).
In particular, if \(\rho \brk{x_1, x_2, x_3} = \brk{x_1, -x_2, - x_3}\) and $(u_{n})_{n\in \mathbb{N}}$ is a sequence of $p_{n}$-minimizers in $B^{3}_{1}$ having $g$ as a trace on $\mathbb{S}^{2}$, $(\sigma \circ u_{n} \circ \rho)$ is also. Define a sequence $(v_{k})_{k \in \mathbb{N}}$ of $p_{\floor{\frac{k}{2}}}$-minimizers in $B^{3}_{1}$ having $g$ as a trace on $\mathbb{S}^{2}$ by $v_{2k}=u_{k}$ and $v_{2k+1}=\sigma \circ u_{k} \circ \rho$ for each $k \in \mathbb{N}$. Letting $k \to +\infty$, by Proposition~\ref{example 3}, the corresponding limit measure \(\mu_*\) to the subsequence $(v_{2k})_{k \in \mathbb{N}}$ has as support either \([(1,0,0), (0, 1, 0)] \cup [(-1,0,0), (0,-1,0)]\) or  \([(1,0,0), (0,-1,0)] \cup [(-1,0,0), (0,1,0)]\). Assume that the support of $\mu_{*}$ is \([(1,0,0), (0, 1, 0)] \cup [(-1,0,0), (0,-1,0)]\). Then the corresponding limit measure to the subsequence $(v_{2k+1})_{k \in \mathbb{N}}$ has as support \([(1,0,0), (0,-1,0)] \cup [(-1,0,0), (0,1,0)]\). Altogether, $(v_{k})_{k\in \mathbb{N}}$ has two subsequences yielding two different limit measures.
\end{rem}

\begin{lemma}\label{lem concentration}
    Let $\eta>0$ be the constant of Lemma~\ref{heart}, where $\kappa=1/2$, $p_{0}=3/2$, $\Psi(x)=x+x_{0}$. Assume that $B^{3}_{r}(x_{0}) \subset \Omega$ and $\mu_{*}(\smash{\overline{B}}^{3}_{r}(x_{0}))<\eta r$. Then $\mu_{*}(B^{3}_{r/2}(x_{0}))=0$. 
    \end{lemma}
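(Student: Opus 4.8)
The plan is to combine the \(\eta\)-compactness Lemma~\ref{heart} with the upper/lower semicontinuity of measures under weak\(^{*}\) convergence. Since \(\Psi(x) = x + x_{0}\) is an isometry, we are in the situation of Lemma~\ref{heart} with bilipschitz constant \(L = 1\), \(\kappa = 1/2\) and \(p_{0} = 3/2\), and with these choices the constants \(\eta, C\) appearing there depend only on \(\mathcal{N}\). Note also that each \(u_{n}\), being a \(p_{n}\)-minimizer in \(\Omega\), is a fortiori a \(p_{n}\)-minimizer in \(B^{3}_{r}(x_{0}) \subset \Omega\), and that \(p_{n} \ge 3/2\) once \(n\) is large enough.

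First I would convert the strict inequality in the hypothesis into a quantitative one: pick \(\delta > 0\) with \(\mu_{*}(\smash{\overline{B}}^{3}_{r}(x_{0})) < \eta r - \delta\). Since \(\mu_{n} \overset{*}{\rightharpoonup} \mu_{*}\) and \(\smash{\overline{B}}^{3}_{r}(x_{0})\) is compact,
\[
\limsup_{n \to +\infty} \mu_{n}\bigl(\smash{\overline{B}}^{3}_{r}(x_{0})\bigr) \le \mu_{*}\bigl(\smash{\overline{B}}^{3}_{r}(x_{0})\bigr) < \eta r - \delta ,
\]
whereas \(p_{n} \nearrow 2\) forces \(r^{3 - p_{n}} \to r\), hence \(\eta r^{3 - p_{n}} > \eta r - \delta\) for all \(n\) large. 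Combining these two facts with \(\mu_{n}(B^{3}_{r}(x_{0})) \le \mu_{n}(\smash{\overline{B}}^{3}_{r}(x_{0}))\), I obtain that for all \(n\) large enough
\[
\int_{B^{3}_{r}(x_{0})} \frac{|Du_{n}|^{p_{n}}}{p_{n}}\diff x = \frac{\mu_{n}(B^{3}_{r}(x_{0}))}{2 - p_{n}} < \frac{\eta r^{3 - p_{n}}}{2 - p_{n}} ,
\]
which is exactly condition \eqref{nice condition} of Lemma~\ref{heart} (applied with \(E = B^{3}_{r}(x_{0})\)).

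Then Lemma~\ref{heart} yields, for all such \(n\),
\[
\int_{B^{3}_{r/2}(x_{0})} \frac{|Du_{n}|^{p_{n}}}{p_{n}}\diff x \le C r^{3 - p_{n}} ,
\]
so that \(\mu_{n}(B^{3}_{r/2}(x_{0})) = (2 - p_{n}) \int_{B^{3}_{r/2}(x_{0})} \frac{|Du_{n}|^{p_{n}}}{p_{n}}\diff x \le C (2 - p_{n}) r^{3 - p_{n}} \to 0\) as \(n \to +\infty\). To conclude, I would use that \(B^{3}_{r/2}(x_{0})\) is open together with the corresponding portmanteau inequality for weak\(^{*}\) convergence, namely
\[
\mu_{*}\bigl(B^{3}_{r/2}(x_{0})\bigr) \le \liminf_{n \to +\infty} \mu_{n}\bigl(B^{3}_{r/2}(x_{0})\bigr) = 0 ,
\]
which gives \(\mu_{*}(B^{3}_{r/2}(x_{0})) = 0\).

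The only delicate point --- the place where a careless argument would fail --- is the verification of \eqref{nice condition}: one must carry the correct homogeneity \(r^{3 - p_{n}}\) rather than \(r\), and play the strict gap \(\mu_{*}(\smash{\overline{B}}^{3}_{r}(x_{0})) < \eta r\) against the convergence \(r^{3 - p_{n}} \to r\) to absorb it for \(n\) large; one should also keep in mind that \(\eta\) from Lemma~\ref{heart} is a fixed constant once \(\kappa = 1/2\) and \(p_{0} = 3/2\) are fixed, so the lemma may be invoked uniformly along the sequence. Everything else is a routine combination of Lemma~\ref{heart} with the semicontinuity properties of weak\(^{*}\) convergence, so I do not expect any genuine obstacle.
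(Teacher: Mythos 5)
Your proof is correct and follows essentially the same route as the paper: the upper-semicontinuity of weak$^{*}$ limits on the closed ball combined with $r^{3-p_{n}} \to r$ to verify hypothesis~\eqref{nice condition} of Lemma~\ref{heart} for large $n$, and then the lower-semicontinuity on the open half-ball to pass to the limit. The only cosmetic differences are that you introduce an explicit $\delta$-margin (the paper works directly with $\limsup$) and that you use $\mu_{n}(B^{3}_{r}) \le \mu_{n}(\smash{\overline{B}}^{3}_{r})$ rather than the equality $\mu_{n}(B^{3}_{r}) = \mu_{n}(\smash{\overline{B}}^{3}_{r})$ the paper records (which holds since $\mu_{n}$ has an $L^{1}$ density); both are equally valid.
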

\begin{proof}
    In this proof, every ball is centered at point $x_{0}$. It is a well-known fact (see, for instance, \cite[Section~1.9]{Evans}) that the weak* convergence
    of the Borel measures $\mu_{n}$ is equivalent to the following two inequalities
    \begin{equation}\label{two cond of weak*}
    \mu_{*}(F)\geq \limsup_{n\to +\infty}\mu_{n}(F), \,\ \,\  \mu_{*}(G)\leq \liminf_{n\to +\infty} \mu_{n}(G)
    \end{equation}
    whenever $F\subset \overline{\Omega}$ is closed and $G \subset \overline{\Omega}$ is open. Notice that this is where we take advantage of working with $(C(\overline{\Omega}))^{\prime}$ instead of $(C_{0}(\Omega))^{\prime}$. Using \eqref{two cond of weak*} and the facts that $\mu_{*}(\smash{\overline{B}}^{3}_{r})<\eta r$ and $\mu_{n}(\smash{\overline{B}}^{3}_{r})=\mu_{n}(B^{3}_{r})$, we obtain  the following estimate
    \begin{align*}
    \eta r> \limsup_{n\to+\infty} \mu_{n}(B^{3}_{r}).
    \end{align*}
    This estimate, together with the fact that $\eta r^{3-p_{n}} \to \eta r$ as $n \to +\infty$, implies that for all sufficiently large $n \in \mathbb{N}$, it holds
    \[
    \mu_{n}(B^{3}_{r}) < \eta r^{3-p_{n}}.
    \]
    But then Lemma~\ref{heart} says that for all sufficiently large $n \in \mathbb{N}$,
    \begin{equation}\label{nice concentration estimate}
    \mu_{n}(B^{3}_{r/2})\leq (2-p_{n})C r^{3-p_{n}},
    \end{equation}
    where $C>0$ is a constant independent of $n$. Letting $n$ tend to $+\infty$ in \eqref{nice concentration estimate} and using \eqref{two cond of weak*}, we complete our proof of Lemma~\ref{lem concentration}.
\end{proof}

The following monotonicity of the $p$-energy holds, which will be used later.
\begin{lemma}\label{lem mon of p-energy} Let $p \in [1,3)$, $x_{0}\in \Omega$ and $0<\varrho<r<\dist(x_{0}, \partial \Omega)$. Let $u_{p}\in W^{1,p}(\Omega, \mathcal{N})$ be a $p$-minimizer. Then 
    \[
    \varrho^{p-3}\int_{B^{3}_{\varrho}(x_{0})}\frac{|Du_{p}|^{p}}{p}\diff x \leq r^{p-3}\int_{B^{3}_{r}(x_{0})}\frac{|Du_{p}|^{p}}{p}\diff x.
    \]
\end{lemma}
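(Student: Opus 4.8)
The plan is to run the classical comparison argument yielding monotonicity of the rescaled $p$-energy of a $p$-minimizer, here in ambient dimension $3$. Up to a translation, assume $x_{0}=0$ and set $R=\dist(0,\partial\Omega)$, so that $\overline{B}^{3}_{r}\subset B^{3}_{R}\subset\Omega$; note that $u_{p}$ is automatically a $p$-minimizer in every ball $B^{3}_{s}$ with $s<R$, since extending an admissible competitor on $B^{3}_{s}$ by $u_{p}$ on $\Omega\setminus B^{3}_{s}$ produces an admissible competitor for $u_{p}$ on $\Omega$. Define
\[
E(s)=\int_{B^{3}_{s}}\frac{|Du_{p}|^{p}}{p}\diff x,\qquad s\in(0,R).
\]
By the coarea formula, $s\mapsto E(s)$ is nondecreasing and absolutely continuous on compact subintervals of $(0,R)$, with $E'(s)=\int_{\partial B^{3}_{s}}\frac{|Du_{p}|^{p}}{p}\diff\mathcal{H}^{2}$ for a.e.\ $s$; moreover, by Lemma~\ref{lem rest on T}~\ref{it_aeko0woub7OhLeethohneeth}, the trace $\operatorname{tr}_{\partial B^{3}_{s}}(u_{p})=u_{p}|_{\partial B^{3}_{s}}$ lies in $W^{1,p}(\partial B^{3}_{s},\mathcal{N})$ for a.e.\ $s\in(0,R)$.

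For such $s$ I would compare $u_{p}$ on $B^{3}_{s}$ with the $0$-homogeneous extension of its boundary trace, namely $v(x)\coloneqq u_{p}\bigl(s\,x/|x|\bigr)$ for $x\in B^{3}_{s}\setminus\{0\}$, which takes values in $\mathcal{N}$. Writing $x=t\xi$ with $t\in(0,s)$ and $\xi\in\mathbb{S}^{2}$, one has $\partial v/\partial t=0$ and $|D_{\top}v|(t\xi)=\frac{s}{t}\,|D_{\top}u_{p}|(s\xi)$, so parametrising $\partial B^{3}_{t}$ by $t\xi$ and integrating in polar coordinates gives, using $p<3$,
\[
\int_{B^{3}_{s}}|Dv|^{p}\diff x=s^{p-2}\int_{0}^{s}t^{2-p}\diff t\int_{\partial B^{3}_{s}}|D_{\top}u_{p}|^{p}\diff\mathcal{H}^{2}=\frac{s}{3-p}\int_{\partial B^{3}_{s}}|D_{\top}u_{p}|^{p}\diff\mathcal{H}^{2},
\]
in particular $v\in W^{1,p}(B^{3}_{s},\mathcal{N})$. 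For a.e.\ $s$ one also has $\operatorname{tr}_{\partial B^{3}_{s}}(v)=\operatorname{tr}_{\partial B^{3}_{s}}(u_{p})$, hence $v-u_{p}\in W^{1,p}_{0}(B^{3}_{s},\mathbb{R}^{\nu})$ and $v$ is admissible; the minimality of $u_{p}$ in $B^{3}_{s}$ together with the pointwise bound $|D_{\top}u_{p}|\le|Du_{p}|$ then yields, for a.e.\ $s\in(0,R)$,
\[
E(s)\le\int_{B^{3}_{s}}\frac{|Dv|^{p}}{p}\diff x=\frac{s}{(3-p)p}\int_{\partial B^{3}_{s}}|D_{\top}u_{p}|^{p}\diff\mathcal{H}^{2}\le\frac{s}{3-p}\,E'(s).
\]

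Finally I would integrate this differential inequality. The function $s\mapsto s^{p-3}E(s)$ is absolutely continuous on $[\varrho,r]\subset(0,R)$, with a.e.\ derivative $s^{p-3}\bigl(E'(s)-\tfrac{3-p}{s}E(s)\bigr)$, which is nonnegative by the previous display since $s^{p-3}>0$ and $3-p>0$; hence $s\mapsto s^{p-3}E(s)$ is nondecreasing on $[\varrho,r]$, which is precisely the asserted inequality $\varrho^{p-3}E(\varrho)\le r^{p-3}E(r)$. The only steps requiring genuine care are the verification that the homogeneous extension $v$ lies in $W^{1,p}(B^{3}_{s},\mathcal{N})$ — exactly where the hypothesis $p<3$ enters, through convergence of $\int_{0}^{s}t^{2-p}\diff t$ — and that its trace agrees with that of $u_{p}$ on $\partial B^{3}_{s}$ for a.e.\ $s$; the remaining computations are routine.
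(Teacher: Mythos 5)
Your proof is correct and complete: it is the classical energy-comparison argument against the zero-homogeneous extension of the boundary trace, which is exactly the mechanism behind the monotonicity formula for $p$-energy minimizers in the reference the paper cites (Hardt--Lin, Section~4). The two places that need care — the finiteness of the $p$-energy of the homogeneous extension (where $p<3$ enters via $\int_0^s t^{2-p}\diff t<\infty$) and the agreement of traces for a.e.\ radius $s$ via Lemma~\ref{lem rest on T}~\ref{it_aeko0woub7OhLeethohneeth} — are both handled correctly, and the reduction from global to local minimality, the computation of the comparison energy as $\tfrac{s}{3-p}\int_{\partial B^3_s}|D_\top u_p|^p\,\diff\mathcal{H}^2$, the bound $|D_\top u_p|\le|Du_p|$, and the integration of the resulting differential inequality $(3-p)E(s)\le sE'(s)$ are all sound.
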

\begin{proof} For a proof, the reader may consult Section~4 in \cite{H-L}.
\end{proof}
\begin{cor}\label{mon for density}
    Let $x_{0} \in \Omega$. Then the function 
    \[
    r \in (0, \dist(x_{0}, \partial \Omega)) \mapsto \frac{\mu_{*}(\smash{\overline{B}}^3_{r}(x_{0}))}{r}
    \]
    is nondecreasing.
\end{cor}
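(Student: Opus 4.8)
The plan is to derive the monotonicity of $r\mapsto \mu_*(\overline{B}^3_r(x_0))/r$ by passing to the limit $n\to+\infty$ in the monotonicity formula for the normalized $p_n$-energy of the minimizers $u_n$ provided by Lemma~\ref{lem mon of p-energy}, using the weak* convergence $\mu_n\overset{*}{\rightharpoonup}\mu_*$ recorded in \eqref{wcm}. The only point requiring a little care is that the limit mass might concentrate on a sphere $\partial B^3_\varrho(x_0)$, which I will handle by first working with a slightly larger radius and then shrinking it.

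First I would fix $0<\varrho<r<\dist(x_0,\partial\Omega)$, pick an auxiliary radius $\varrho'\in(\varrho,r)$, and apply Lemma~\ref{lem mon of p-energy} to each $p_n$-minimizer $u_n$ with the radii $\varrho'$ and $r$ (note $p_n\in[1,2)\subset[1,3)$). Multiplying the resulting inequality by $2-p_n>0$ and using that $\mu_n$ is absolutely continuous with respect to the Lebesgue measure, so that $\mu_n(\overline{B}^3_s(x_0))=\mu_n(B^3_s(x_0))$ for every $s>0$, I obtain
\[
\mu_n\bigl(\overline{B}^3_{\varrho'}(x_0)\bigr)\le\Bigl(\frac{\varrho'}{r}\Bigr)^{3-p_n}\mu_n\bigl(\overline{B}^3_r(x_0)\bigr).
\]

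Second, I would let $n\to+\infty$. Since $B^3_{\varrho'}(x_0)$ is open and $\overline{B}^3_r(x_0)$ is closed, the two inequalities characterizing the weak* convergence of $(\mu_n)_{n\in\mathbb{N}}$ (already written down in the proof of Lemma~\ref{lem concentration}) give $\mu_*(B^3_{\varrho'}(x_0))\le\liminf_{n\to+\infty}\mu_n(\overline{B}^3_{\varrho'}(x_0))$ and $\limsup_{n\to+\infty}\mu_n(\overline{B}^3_r(x_0))\le\mu_*(\overline{B}^3_r(x_0))$. Combining this with the convergence $(\varrho'/r)^{3-p_n}\to\varrho'/r$ and the uniform bound $\mu_n(\overline{B}^3_r(x_0))\le\mu_n(\overline{\Omega})\le C_0$ from \eqref{C2}, the displayed inequality passes to the limit and yields
\[
\mu_*\bigl(B^3_{\varrho'}(x_0)\bigr)\le\frac{\varrho'}{r}\,\mu_*\bigl(\overline{B}^3_r(x_0)\bigr).
\]
Finally, letting $\varrho'\downarrow\varrho$ and using continuity from above of the finite measure $\mu_*$ (since $\overline{B}^3_\varrho(x_0)=\bigcap_{\varrho'>\varrho}B^3_{\varrho'}(x_0)$), I would get $\mu_*(\overline{B}^3_\varrho(x_0))\le(\varrho/r)\,\mu_*(\overline{B}^3_r(x_0))$, i.e.\ $\mu_*(\overline{B}^3_\varrho(x_0))/\varrho\le\mu_*(\overline{B}^3_r(x_0))/r$, which is the claim. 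The main (minor) obstacle is exactly the one flagged above: one cannot compare $\mu_*(\overline{B}^3_\varrho(x_0))$ directly with $\liminf_{n}\mu_n(\overline{B}^3_\varrho(x_0))$ because mass may accumulate on $\partial B^3_\varrho(x_0)$; inserting the intermediate radius $\varrho'$ and taking $\varrho'\downarrow\varrho$ circumvents this, and everything else is routine.
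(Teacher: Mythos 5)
Your proposal is correct and follows essentially the same route as the paper: apply Lemma~\ref{lem mon of p-energy} to $u_n$ at an auxiliary radius slightly larger than $\varrho$ (the paper writes $\varrho+\varepsilon$, you write $\varrho'$), pass to the limit $n\to+\infty$ via the two inequalities \eqref{two cond of weak*} for open and closed sets, and then shrink the auxiliary radius using continuity from above of the finite measure $\mu_*$. The only cosmetic difference is that the paper does not invoke the uniform bound \eqref{C2} at this stage (it is not needed, since $\mu_*(\overline{B}^3_r(x_0))<\infty$ already controls the limit), but this does no harm.
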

    \begin{proof}[Proof of Corollary~\ref{mon for density}]
        Let $0<\varrho<r<\dist(x_{0}, \partial \Omega)$. In view of Lemma~\ref{lem mon of p-energy}, for each $n \in \mathbb{N}$ and for an arbitrary $\varepsilon \in (0, r-\varrho)$, 
        \[
        (\varrho +\varepsilon)^{p_{n}-3}\mu_{n}(B^{3}_{\varrho+\varepsilon}(x_{0}))\leq r^{p_{n}-3}\mu_{n}(\smash{\overline{B}}^3_{r}(x_{0})).
        \]
        Letting $n$ tend to $+\infty$ and using \eqref{two cond of weak*}, we obtain
        \[
        (\varrho+\varepsilon)^{-1}\mu_{*}(B^{3}_{\varrho+\varepsilon}(x_{0}))\leq r^{-1}\mu_{*}(\smash{\overline{B}}^3_{r}(x_{0})).
        \]
        Then, letting $\varepsilon \searrow 0$ and using the fact that $\mu_{*}(\overline{\Omega})<+\infty$, we conclude our proof of Corollary~\ref{mon for density}.
    \end{proof}
Thanks to Corollary~\ref{mon for density}, for each $x_{0} \in \Omega$, the 1-dimensional density
\begin{equation}\label{defofdensity}
\Theta_{1}(\mu_{*}, x_{0})\coloneqq \lim_{r\to 0+} \frac{\mu_{*}(\smash{\overline{B}}^3_{r}(x_{0}))}{2r}=\lim_{r\to 0+}\frac{\mu_{*}(B^{3}_{r}(x_{0}))}{2r}
\end{equation}
exists and finite. In order to lighten the notation, we shall simply write $\Theta_{1}(x_{0})$ instead of $\Theta_{1}(\mu_{*}, x_{0})$ when appropriate.

\begin{prop}\label{nice characterization for singular set}
    Let $\eta>0$ be the constant of Lemma~\ref{heart}, where $\kappa=1/2$, $p_{0}=3/2$ and $\Psi(x)=x+x_{0}$ whenever $E=B^{3}_{r}(x_{0})$. Then it holds
    \[
    \Omega\cap S_{*}=\left\{x \in \Omega: \Theta_{1}(x)>0\right\}=\Bigl\{x \in \Omega: \Theta_{1}(x)\geq \frac{\eta}{2}\Bigr\}.
    \]
\end{prop}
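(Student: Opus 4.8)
The plan is to establish the chain
\[
\{x \in \Omega : \Theta_{1}(x) \geq \eta/2\} \;\subseteq\; \{x \in \Omega : \Theta_{1}(x) > 0\} \;\subseteq\; \Omega \cap S_{*} \;\subseteq\; \{x \in \Omega : \Theta_{1}(x) \geq \eta/2\},
\]
which, read cyclically, forces all three sets to coincide. The first inclusion is immediate because $\eta > 0$, so the real content is in the last two inclusions, and the key ingredients will be the monotonicity of Corollary~\ref{mon for density} (which also guarantees that $\Theta_{1}$ is well defined) together with the concentration statement of Lemma~\ref{lem concentration}.

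For the middle inclusion, I would fix $x_{0} \in \Omega$ with $\Theta_{1}(x_{0}) > 0$. By Corollary~\ref{mon for density}, the function $r \mapsto \mu_{*}(\smash{\overline{B}}^{3}_{r}(x_{0}))/r$ is nondecreasing on $(0, \dist(x_{0}, \partial \Omega))$, and letting the radius tend to $0$ shows that $\mu_{*}(\smash{\overline{B}}^{3}_{r}(x_{0}))/r \geq 2\,\Theta_{1}(x_{0}) > 0$ for every such $r$. In particular, since any open ball $B^{3}_{\varrho}(x_{0})$ contains $\smash{\overline{B}}^{3}_{\varrho/2}(x_{0})$, every open neighborhood of $x_{0}$ has strictly positive $\mu_{*}$-measure, so $x_{0} \in \operatorname{supp}(\mu_{*}) = S_{*}$, that is, $x_{0} \in \Omega \cap S_{*}$.

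For the last inclusion I would argue by contraposition: assume $x_{0} \in \Omega$ satisfies $\Theta_{1}(x_{0}) < \eta/2$ and show $x_{0} \notin S_{*}$. Since $\Theta_{1}(x_{0}) = \lim_{r \to 0+} \mu_{*}(\smash{\overline{B}}^{3}_{r}(x_{0}))/(2r)$, there exists $r > 0$ with $\smash{\overline{B}}^{3}_{r}(x_{0}) \subset \Omega$ and $\mu_{*}(\smash{\overline{B}}^{3}_{r}(x_{0})) < \eta r$. Applying Lemma~\ref{lem concentration}---whose normalization ($\kappa = 1/2$, $p_{0} = 3/2$, and the translation $\Psi(x) = x + x_{0}$, which is the very choice under which $\eta$ is fixed in the statement of the proposition) matches exactly---we get $\mu_{*}(B^{3}_{r/2}(x_{0})) = 0$. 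Hence $x_{0}$ admits an open neighborhood of vanishing $\mu_{*}$-measure, so $x_{0} \notin \operatorname{supp}(\mu_{*}) = S_{*}$, which is what we wanted.

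I do not anticipate a genuine obstacle here; the argument is short once the auxiliary results are in place. The only points demanding a little care are the bookkeeping of constants and normalizations when invoking Lemma~\ref{lem concentration} (the bilipschitz constant of a translation being $1$), and the routine passage between open and closed concentric balls, which is handled throughout by the monotonicity of $r \mapsto \mu_{*}(\smash{\overline{B}}^{3}_{r}(x_{0}))$ and the inclusions $\smash{\overline{B}}^{3}_{\varrho/2}(x_{0}) \subset B^{3}_{\varrho}(x_{0}) \subset \smash{\overline{B}}^{3}_{\varrho}(x_{0})$.
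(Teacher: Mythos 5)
Your proposal is correct and uses exactly the same ingredients as the paper's proof (Lemma~\ref{lem concentration}, the monotonicity from Corollary~\ref{mon for density}, and the support characterization), merely organized as a cycle of three inclusions rather than as two separate set equalities; the contraposition in your last step is the same argument the paper runs as a contradiction. The only cosmetic difference is that for the middle inclusion you could also have avoided invoking Corollary~\ref{mon for density} by noting directly that $x_0 \notin S_*$ would give a neighborhood of $\mu_*$-measure zero and hence $\Theta_1(x_0)=0$, but your route via monotonicity is equally valid.
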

\begin{proof}
    First, assume by contradiction that for some $x \in \Omega$, $\Theta_{1}(x)=a \in (0, \eta/2)$. Fix 
    \[
    \varepsilon \in \Bigl(0,\min\Bigl\{\frac{a}{2},\frac{\eta}{2}-a\Bigr\}\Bigr).
    \] 
   Then there exists $\delta>0$ such that for all $r \in (0,\delta)$,
    \[
    0<a-\varepsilon<\frac{\mu_{*}(\smash{\overline{B}}^3_{r}(x))}{2r}<\frac{\eta}{2}.
    \]
    However, Lemma~\ref{lem concentration} says that $\mu_{*}(B^{3}_{r/2}(x))=0$, which leads to a contradiction and proves that \[\{x \in \Omega: \Theta_{1}(x)>0\}=\{x \in \Omega: \Theta_{1}(x)\geq \eta/2\}.\] 
    On the other hand, by the definition of the support of a measure, $x \in \Omega \cap S_{*}$ if and only if for all sufficiently small $r>0$, $\mu_{*}(\smash{\overline{B}}^3_{r}(x))>0$, which holds, according to Lemma~\ref{lem concentration}, if and only if for all sufficiently small $r>0$, $\mu_{*}(\smash{\overline{B}}^3_{r}(x))\geq \eta r$. The last holds if and only if $\Theta_{1}(x)\geq \eta/2$. This proves that $\Omega\cap S_{*}=\{x \in \Omega: \Theta_{1}(x)\geq \eta/2\}$ and completes our proof of Proposition~\ref{nice characterization for singular set}.
\end{proof}

\begin{prop}\label{conv to harm} Let $(p_{n})_{n \in \mathbb{N}}\subset [1,2)$, $p_{n} \nearrow 2$ as $n \to +\infty$ and $(u_{n})_{n\in \mathbb{N}}$ be a sequence of $p_{n}$-minimizers in $\Omega$ satisfying \eqref{C2}. Then there exists a mapping $u_{*} \in W^{1,2}_{\loc}(\Omega \setminus S_{*}, \mathcal{N})$ such that, up to a subsequence (not relabeled), $u_{n} \to u_{*}$ a.e. in $\Omega$ and for which the conclusions of Proposition~\ref{prop conv to harmonic map} apply with $U = \Omega \setminus S_*$.
\end{prop}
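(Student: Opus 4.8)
The plan is to deduce Proposition~\ref{conv to harm} from Proposition~\ref{prop conv to harmonic map} applied with $U = \Omega \setminus S_{*}$. First I would fix once and for all the subsequence along which $\mu_{n} \overset{*}{\rightharpoonup} \mu_{*}$ as in \eqref{wcm}, so that $S_{*}$ is the support of $\mu_{*}$ and $U = \Omega \setminus S_{*}$ is a well-defined open subset of $\mathbb{R}^{3}$. Since the restriction to an open subset of a $p_{n}$-minimizer in $\Omega$ is again a $p_{n}$-minimizer, the only hypothesis of Proposition~\ref{prop conv to harmonic map} that remains to be checked is the uniform local energy bound \eqref{C1}: for every open set $K \csubset \Omega \setminus S_{*}$ one has $\sup_{n} \int_{K} |Du_{n}|^{p_{n}}/p_{n} \diff x \le C(K)$.

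To establish \eqref{C1} I would run an $\eta$-compactness covering argument, with $\eta > 0$ the constant of Lemma~\ref{heart} for $\kappa = 1/2$, $p_{0} = 3/2$ and $\Psi(y) = y + x$. Given $x \in \overline{K}$, since $\overline{K}$ is compact and disjoint from the closed set $S_{*}$, there is $r_{x} > 0$ with $\overline{B}^{3}_{r_{x}}(x) \subset \Omega$ and $\overline{B}^{3}_{r_{x}}(x) \cap S_{*} = \emptyset$, whence $\mu_{*}(\overline{B}^{3}_{r_{x}}(x)) = 0 < \eta r_{x}$. Arguing exactly as in the proof of Lemma~\ref{lem concentration} — using \eqref{two cond of weak*} to obtain $\mu_{n}(B^{3}_{r_{x}}(x)) < \eta r_{x}^{3 - p_{n}}$ for all large $n$ (note that $\eta r_{x}^{3 - p_{n}} \ge \eta \min(r_{x}, r_{x}^{2}) > 0$ while $\mu_{n}(\overline{B}^{3}_{r_{x}}(x)) \to 0$), and then invoking Lemma~\ref{heart} — one gets $\int_{B^{3}_{r_{x}/2}(x)} |Du_{n}|^{p_{n}}/p_{n} \diff x \le C r_{x}^{3 - p_{n}} \le C \max(r_{x}, r_{x}^{2})$ for all large $n$, with $C$ independent of $x$ and $n$. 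Covering $\overline{K}$ by finitely many balls $B^{3}_{r_{x_{i}}/2}(x_{i})$ and summing yields $\int_{K} |Du_{n}|^{p_{n}}/p_{n} \diff x \le C(K)$ for all $n$ beyond some threshold; for the remaining finitely many indices the left-hand side is finite anyway (each $u_{n} \in W^{1, p_{n}}(\Omega, \mathcal{N})$), so after enlarging $C(K)$ we obtain \eqref{C1}.

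With \eqref{C1} in hand, Proposition~\ref{prop conv to harmonic map} applied to $U = \Omega \setminus S_{*}$ furnishes, after passing to a further (non-relabeled) subsequence, a map $u_{*} \in W^{1,2}_{\loc}(\Omega \setminus S_{*}, \mathcal{N})$ satisfying all the conclusions of that proposition; in particular $u_{n} \to u_{*}$ almost everywhere in $\Omega \setminus S_{*}$. To upgrade this to almost everywhere convergence on all of $\Omega$ it is enough to show $|S_{*} \cap \Omega| = 0$, which I would derive from the density: by Proposition~\ref{nice characterization for singular set} the $1$-density $\Theta_{1}(\mu_{*}, x)$ (which exists by Corollary~\ref{mon for density}) is at least $\eta/2$ at every point of the Borel set $S_{*} \cap \Omega$, so the standard density lower bound for Radon measures gives $\mathcal{H}^{1}(S_{*} \cap \Omega) \le (C/\eta)\, \mu_{*}(\overline{\Omega}) \le C C_{0}/\eta < +\infty$, hence $|S_{*} \cap \Omega| = \mathcal{H}^{3}(S_{*} \cap \Omega) = 0$ and $u_{n} \to u_{*}$ almost everywhere in $\Omega$. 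The main obstacle is the verification of \eqref{C1}: this is precisely where the $\eta$-compactness mechanism (Lemma~\ref{heart} through Lemma~\ref{lem concentration}) enters, and where one must take care to pass from the vanishing of $\mu_{*}$ near points of $\Omega \setminus S_{*}$ to bounds on $\mu_{n}$ uniform in $n$, while separately absorbing the finitely many indices with $p_{n} < 3/2$; once this is secured, the rest reduces to a direct application of Proposition~\ref{prop conv to harmonic map} together with the elementary estimate on the size of $S_{*}$.
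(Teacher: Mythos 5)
Your proposal is correct and follows essentially the same route as the paper: use the weak\(^*\) convergence together with the \(\eta\)-compactness mechanism of Lemma~\ref{heart} (and the reasoning of Lemma~\ref{lem concentration}) plus a finite covering of \(\overline{K}\) to verify the local uniform energy bound \eqref{C1} on every \(K \csubset \Omega \setminus S_*\), then invoke Proposition~\ref{prop conv to harmonic map}. One small virtue of your write-up over the paper's terse version: you explicitly supply the step justifying a.e.\ convergence on all of \(\Omega\) (not merely on \(\Omega \setminus S_*\)), by deriving \(\lvert S_* \cap \Omega\rvert = 0\) from the density lower bound \(\Theta_1 \ge \eta/2\) of Proposition~\ref{nice characterization for singular set} and the standard comparison \(\mathcal{H}^1(S_* \cap \Omega) \lesssim \eta^{-1}\mu_*(\overline{\Omega}) < +\infty\); the paper's proof leaves this implicit even though the statement asserts convergence a.e.\ in \(\Omega\).
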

\begin{rem}
For interior quantitative estimates of the weak gradient of $u_{*}$, the reader may consult Proposition~\ref{prop est for the differential inside domain}.
\end{rem}
\begin{proof} [Proof of Proposition~\ref{conv to harm}]
Let $K\csubset \Omega \setminus S_{*}$ be open. Using \eqref{wcm}, a standard covering argument and Lemma~\ref{heart}, we conclude that there exists a positive constant $C$, possibly depending only on $K$, $\Omega$ and $\mathcal{N}$, such that, up to a subsequence (not relabeled), for each $n \in \mathbb{N}$, 
\[
\int_{K}\frac{|Du_{n}|^{p_{n}}}{p_{n}} \diff x \le C.
\]
Applying a diagonal argument, we observe that, up to a subsequence (not relabeled), the condition \eqref{C1} of Proposition~\ref{prop conv to harmonic map} is satisfied, where \(U = \Omega \setminus S_*\), for our sequence of $p_{n}$-minimizers. Thus, the conclusions of Proposition~\ref{prop conv to harmonic map} apply, yielding the desired mapping $u_{*}$. This completes our proof of Proposition~\ref{conv to harm}. 
  \end{proof}

\subsection{The limit measure is the weight measure of the stationary varifold}
Recall that a set $E\subset \mathbb{R}^{3}$ is said to be countably $\mathcal{H}^{1}$-rectifiable if there exist countably many Lipschitz functions $f_{i}:\mathbb{R}^{1}\to \mathbb{R}^{3}$ such that
\[
\mathcal{H}^{1}\Bigl(E\setminus \bigcup_{i=0}^{+\infty}f_{i}(\mathbb{R}^{1})\Bigr)=0.
\]
Let $\mathrm{G}(3,1)$ be the Grassmann manifold, which is the space of 1-dimensional subspaces of $\mathbb{R}^{3}$. Following Allard (see \cite{Allard}), for each $A\in \mathrm{G}(3,1)$, we shall also  use ``$A$'' to denote the orthogonal projection of $\mathbb{R}^{3}$ onto $A$. 
That is, $\mathrm{G}(3,1)=\{A\in M_{3}(\mathbb{R}): A\circ A=A,\, A^{\mathrm{T}}=A\,\ \text{and}\,\ \operatorname{Im}(A)=A\}$, where $M_{3}(\mathbb{R})$ denotes the space of real $3\times 3$ matrices. We also define $\mathrm{G}_{3}(\Omega)= \Omega\times \mathrm{G}(3,1)$. Recall that $V$ is said to be a 1-dimensional varifold in $\Omega$ if $V$ is a positive Radon measure on $\mathrm{G}_{3}(\Omega)$. 

Let $S\subset \Omega$ be countably $\mathcal{H}^{1}$-rectifiable and $\mu\in (C(\overline{\Omega}))^{\prime}$ be a positive Radon measure satisfying $\mu\mres \Omega= \theta \mathcal{H}^{1}\mres S$ for some nonnegative $\theta \in L^{1}(\Omega)$. Rectifiability gives that for $\mathcal{H}^{1}$-a.e.\  $x \in S$ (or, equivalently, for $\mu$-a.e.\  $x \in \Omega$), there exists a unique approximate tangent plane $T_{x}S\in \mathrm{G}(3,1)$ to $S$ at $x$ (see, for instance, \cite[Theorem~2.83~(i)]{APD}). For $\mu$-a.e. $x \in \Omega$, we denote by $A(x) \in \mathrm{G}(3,1)$ the orthogonal projection onto $T_{x}S$.
We define the varifold $V$, which is naturally associated to $\mu$, as the pushforward measure $V=(\mathrm{Id},A)_{\#}\mu\mres\Omega$. Thus, for each Borel-measurable set $E \in \mathcal{B}(\mathrm{G}_{3}(\Omega))$, $V(E)=\mu(\{x \in \Omega: (x,A(x))\in E\})$. The varifold $V$ is said to be stationary  (see Section~4.2 in \cite{Allard}) if its first variation vanishes, namely
\begin{equation*}
\int_{\Omega}A(x):D\xi(x)\diff \mu(x) = 0 \,\ \,\ \forall \xi \in C^{1}_{c}(\Omega, \mathbb{R}^{3}).
\end{equation*}
\begin{prop}\label{prop 1-var}
The set $S_{*}\cap \Omega$ is countably $\mathcal{H}^{1}$-rectifiable and $\mathcal{H}^{1}(S_{*}\cap \Omega)<+\infty$. The measure $V_{*}=(\mathrm{Id},A_{*})_{\#}\mu_{*}\mres\Omega$ is a stationary 1-dimensional rectifiable varifold in $\Omega$, where for $\mu_{*}$-a.e. $x \in \Omega$, $A_{*}(x)$ represents the orthogonal projection onto the approximate tangent plane $T_{x}S_{*}$ to $S_{*}$ at $x$. Furthermore, it holds $\mu_{*}\mres \Omega (d x) = \Theta_{1}(x) \mathcal{H}^{1}\mres (S_{*}\cap \Omega)(d x)$, where the density $\Theta_{1}$ is defined in \eqref{defofdensity}. 
\end{prop}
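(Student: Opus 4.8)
The plan is to establish the three assertions in turn --- the $\mathcal{H}^1$-rectifiability and finiteness of $S_*\cap\Omega$ together with the representation $\mu_*\mres\Omega=\Theta_1\,\mathcal{H}^1\mres(S_*\cap\Omega)$, and then the stationarity of $V_*$ by passing to the limit in the stress--energy identity \eqref{integralidentity}. First I would settle the measure-theoretic part. By Corollary~\ref{mon for density} the monotone quotient $r\mapsto\mu_*(\smash{\overline{B}}^3_r(x))/r$ has a limit as $r\to0+$ at every $x\in\Omega$, so $\Theta_1(x)$ exists and is finite everywhere in $\Omega$; by Proposition~\ref{nice characterization for singular set} it vanishes off $S_*\cap\Omega$ and is $\geq\eta/2>0$ on $S_*\cap\Omega$, and since $\mu_*\mres\Omega$ is concentrated on its support $S_*\cap\Omega$, this gives $\Theta_1(x)\in(0,+\infty)$ for $\mu_*$-a.e.\ $x\in\Omega$. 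The density theorem characterizing $1$-rectifiable measures (a positive finite $1$-density $\mu_*$-a.e.) then yields that $\mu_*\mres\Omega$ is $1$-rectifiable, i.e.\ $\mu_*\mres\Omega=\Theta_1\,\mathcal{H}^1\mres E$ for an $\mathcal{H}^1$-rectifiable Borel set $E$ of finite length with $\Theta_1$ the Radon--Nikodym density. Since $\mu_*\mres\Omega$ is carried by $S_*\cap\Omega$ while the finite measure $\mathcal{H}^1\mres E$ has zero density $\mathcal{H}^1$-a.e.\ outside $E$, the sets $E$ and $S_*\cap\Omega$ differ by an $\mathcal{H}^1$-null set; hence $S_*\cap\Omega$ is $\mathcal{H}^1$-rectifiable, $\mathcal{H}^1(S_*\cap\Omega)<+\infty$ (also directly from $\Theta_1\geq\eta/2$ on $S_*\cap\Omega$, the elementary density comparison, and \eqref{C2}), and $\mu_*\mres\Omega=\Theta_1\,\mathcal{H}^1\mres(S_*\cap\Omega)$. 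In particular $S_*\cap\Omega$ has an approximate tangent line $T_xS_*$ at $\mu_*$-a.e.\ $x$, and $A_*(x)\in\mathrm{G}(3,1)$ is the orthogonal projection onto it.

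Next I would construct the limiting stress--energy measure. For each $n$ the $p_n$-minimizer $u_n$ is $p_n$-stationary, so by \eqref{integralidentity} the tensor $T^{p_n}_{u_n}=\frac{|Du_n|^{p_n}}{p_n}\mathrm{Id}-\frac{Du_n\otimes Du_n}{|Du_n|^{2-p_n}}$ is row-wise divergence free. Since $|T^{p_n}_{u_n}|\leq C|Du_n|^{p_n}$ and $(2-p_n)|Du_n|^{p_n}\diff x=p_n\mu_n$, the matrix measures $(2-p_n)T^{p_n}_{u_n}\diff x$ are bounded by $C\mu_n$ in mass; along a further subsequence they converge weakly-$*$ to a measure $\mathcal{T}_*$ with $|\mathcal{T}_*|\leq C\mu_*$, hence $\mathcal{T}_*\mres\Omega=M_*\,\mu_*\mres\Omega$ for a bounded, symmetric, $\mu_*$-measurable matrix field $M_*$, and $\int_\Omega M_*:D\xi\diff\mu_*=0$ for all $\xi\in C^1_c(\Omega,\mathbb{R}^3)$, the divergence-free condition being stable under weak-$*$ convergence tested against $C^1_c$ fields. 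Taking the trace, $\operatorname{tr}T^{p_n}_{u_n}=(\frac{3}{p_n}-1)|Du_n|^{p_n}$, so $(2-p_n)\operatorname{tr}T^{p_n}_{u_n}\diff x=(3-p_n)\mu_n\overset{*}{\rightharpoonup}\mu_*$ and therefore $\operatorname{tr}M_*=1$ $\mu_*$-a.e. Finally, for open $K\csubset\Omega\setminus S_*$, Proposition~\ref{conv to harm} bounds $\int_K|Du_n|^{p_n}\diff x$ uniformly, so $(2-p_n)\int_K|T^{p_n}_{u_n}|\diff x\to0$ and $\mathcal{T}_*$ is concentrated on $S_*$.

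To identify $M_*=A_*$ I would blow up. Fix $x_0\in S_*\cap\Omega$ that is simultaneously a point of approximate tangency (tangent line $T=T_{x_0}S_*$), a point with $\Theta_1(x_0)\in(0,+\infty)$, and a $\mu_*$-Lebesgue point of $M_*$; this excludes a $\mu_*$-null set. With $\eta_{x_0,r}(y)=(y-x_0)/r$, rectifiability and the density give $r^{-1}(\eta_{x_0,r})_{\#}(\mu_*\mres\Omega)\overset{*}{\rightharpoonup}c\,\mathcal{H}^1\mres T$ with $c=\Theta_1(x_0)>0$, and the Lebesgue-point property then gives $r^{-1}(\eta_{x_0,r})_{\#}(\mathcal{T}_*\mres\Omega)\overset{*}{\rightharpoonup}c\,M_*(x_0)\,\mathcal{H}^1\mres T$. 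Since the divergence-free condition is invariant under translations and dilations, the rescaled measures are divergence free, and so is the limit: $\int_T M_*(x_0):D\xi\diff\mathcal{H}^1=0$ for all $\xi\in C^1_c(\mathbb{R}^3,\mathbb{R}^3)$. Testing with fields adapted to $T$ --- the derivative of $\xi$ along $T$ contributes nothing since $\xi$ is compactly supported, while its transverse derivatives along $T$ may be prescribed freely --- forces $M_*(x_0)v=0$ for every $v$ orthogonal to the unit direction $\tau$ of $T$; symmetry of $M_*(x_0)$ then gives $M_*(x_0)=\lambda\,\tau\otimes\tau$, and $\operatorname{tr}M_*(x_0)=1$ gives $\lambda=1$, i.e.\ $M_*(x_0)=A_*(x_0)$. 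Hence $\mathcal{T}_*\mres\Omega=A_*\,\mu_*\mres\Omega$, so $\int_\Omega A_*(x):D\xi(x)\diff\mu_*(x)=0$ for all $\xi\in C^1_c(\Omega,\mathbb{R}^3)$: the first variation of $V_*=(\mathrm{Id},A_*)_{\#}(\mu_*\mres\Omega)$ vanishes, $V_*$ is a stationary rectifiable $1$-varifold, and it is the weak-$*$ limit of the rescaled stress--energy tensors.

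I expect the main obstacle to be the rectifiability step: the only input not already in place in the excerpt is the density characterization of $1$-rectifiable measures (equivalently, one can feed the divergence-constrained measure $\mathcal{T}_*$ into the rectifiability and structure results of \cite{ArroyoRabasa_DePhilippis_Hirsch_Rindler_2019} to obtain the varifold structure directly). After that, the delicate point is the blow-up identification of the tangential part of $\mathcal{T}_*$, where one must carefully push the distributional divergence-free condition through the weak-$*$ limits of the blown-up measures and use the trace normalization $\operatorname{tr}M_*=1$ to pin down the multiplicity to be exactly that of the approximate tangent plane.
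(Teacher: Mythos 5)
Your proof is correct and follows a genuinely different route from the paper's, one the authors themselves acknowledge in Remark~\ref{rem_zieM6dahgh7uecie8ohTh5Za} as an alternative pathway (it is essentially Canevari's strategy, with a direct blow-up argument replacing the Ambrosio--Soner construction). The paper feeds the divergence-free matrix measure into \cite{ArroyoRabasa_DePhilippis_Hirsch_Rindler_2019}*{Proposition~3.1}, which simultaneously delivers the countable $\mathcal{H}^1$-rectifiability of $|\widetilde{A}|$ and the fact that the Radon--Nikod\'ym density is the orthogonal projection onto the approximate tangent line; the algebraic constraints (eigenvalues $\le 1$, $\operatorname{tr} F \ge 1$, two eigenvalues forced to vanish by the $\mathrm{G}(3,1)$-valuedness) then pin down $F = A_*$ and $|\widetilde{A}| = \mu_*\mres\Omega$. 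You instead first extract rectifiability of $\mu_*\mres\Omega$ from Preiss's density theorem (using the monotonicity Corollary~\ref{mon for density} for existence of $\Theta_1$ and Proposition~\ref{nice characterization for singular set} for its positivity on $S_*\cap\Omega$), then identify the Radon--Nikod\'ym field $M_*$ by a tangent-measure blow-up at a $\mu_*$-Lebesgue point, using the divergence-free constraint on the blown-up line together with the sharp normalization $\operatorname{tr} M_*=1$ (which you get as an equality, slightly sharper than the paper's $\tr F\ge 1$) to force $M_*(x_0)=\tau\otimes\tau=A_*(x_0)$. The paper's route is more economical in that one citation does all the geometric-measure-theoretic work; yours separates the rectifiability input from the algebraic identification, which makes the structure more transparent at the cost of an extra reference and a blow-up lemma. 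Both are sound, and your derivation of $\operatorname{tr} M_* = 1$ from $(2-p_n)\operatorname{tr} T^{p_n}_{u_n}\diff x = (3-p_n)\mu_n \overset{*}{\rightharpoonup}\mu_*\mres\Omega$ and the test-field computation on the blown-up line $T$ are both carried out correctly.
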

\begin{proof}
    Let $(u_{n})_{n \in \mathbb{N}}\subset W^{1,p_{n}}(\Omega, \mathcal{N})$, where $(p_{n})_{n\in \mathbb{N}} \subset (3/2,2)$, be a sequence of $p_{n}$-minimizers satisfying \eqref{wcm}. For each $n\in \mathbb{N}$, define the map $A^{p_{n}}:\Omega \to M_{3}(\mathbb{R})$ by
    \begin{equation*}
    A^{p_{n}}_{i,j}=\frac{2-p_{n}}{p_{n}}(|Du_{n}|^{p_{n}}\delta_{i,j}-p_{n}|Du_{n}|^{p_{n}-2}\langle D_{i}u_{n}, D_{j}u_{n}\rangle).
    \end{equation*}
    Then observe that  
    \begin{equation}\label{est trace 5.8}
    (A^{p_{n}})^{\mathrm{T}}=A^{p_{n}}, \,\ \,\ \tr(A^{p_{n}})=(3-p_{n}) \frac{\diff\mu_{n}}{\diff x} \geq \frac{\diff \mu_{n}}{\diff x}
    \end{equation}
    and
    \begin{equation}\label{est tv 5.9}
    |A^{p_{n}}|=\sqrt{A^{p_{n}}:A^{p_{n}}}= \sqrt{3-2p_{n}+p^{2}_{n}}\frac{\diff \mu_{n}}{\diff x} \leq  \sqrt{3}\frac{\diff \mu_{n}}{\diff x},
    \end{equation}
    where $\frac{\diff \mu_{n}}{\diff x}=(2-p_{n})\frac{|Du_{n}|^{p_{n}}}{p_{n}}$. For each $v \in \mathbb{S}^{2}$, 
    \begin{equation}\label{est ev 5.10}
    \sum_{i,j=1}^{3}\langle A^{p_{n}}_{i,j}v_{i},v_{j}\rangle=\frac{2-p_{n}}{p_{n}}\left(|Du_{n}|^{p_{n}}-p_{n}|Du_{n}|^{p_{n}-2}\left|\sum_{i=1}^{3}v_{i}D_{i}u_{n}\right|^{2}\right)\leq \frac{\diff\mu_{n}}{\diff x},
    \end{equation}
    which implies that all eigenvalues  of $A^{p_{n}}$ are less than or equal to $\frac{\diff \mu_{n}}{\diff x}$. Furthermore, by \eqref{integralidentity}, 
    \begin{equation}\label{integid p-harm}
    \int_{\Omega}A^{p_{n}}(x):D\xi(x) \diff x = 0 \,\ \,\ \forall \xi \in C^{1}_{c}(\Omega, \mathbb{R}^{3}).
    \end{equation} 
    In view of \eqref{C2} and \eqref{est tv 5.9}, there exists $\widetilde{A}\in (C_{0}(\Omega, M_{3}(\mathbb{R})))^{\prime}$ such that, up to a subsequence (not relabeled),  $A^{p_{n}}\diff x\overset{*}{\rightharpoonup} \widetilde{A}$ weakly* in $(C_{0}(\Omega, M_{3}(\mathbb{R})))^{\prime}$. By \eqref{wcm} and \eqref{est tv 5.9}, 
    \begin{equation}\label{abscont}
    |\widetilde{A}|\leq \sqrt{3} \mu_{*}\mres \Omega,
    \end{equation}
    which implies that $\widetilde{A}$ is absolutely continuous with respect to $\mu_{*}\mres \Omega$. Then, by the Radon-Nikod\'ym theorem, there exists $F\in L^{1}(\Omega, M_{3}(\mathbb{R}), \mu_{*})$ such that $\widetilde{A}=F \mu_{*}\mres \Omega$ in $(C_{0}(\Omega, M_{3}(\mathbb{R})))^{\prime}$. Letting $n$ tend to $+\infty$ and taking into account \eqref{est trace 5.8}, \eqref{est ev 5.10} and \eqref{integid p-harm}, we observe that for $\mu_{*}$-a.e.\  $x \in \Omega$, $(F(x))^{\mathrm{T}}=F(x)$, $\tr(F(x))\geq 1$, the eigenvalues of $F(x)$ are less than or equal to \(1\) and
    \begin{equation}\label{fvarF}
    \int_{\Omega}F(x):D\xi(x) \diff \mu_{*}(x) =0 \,\ \,\ \forall \xi \in C^{1}_{c}(\Omega, \mathbb{R}^{3}).
    \end{equation}
    According to \cite[Proposition~3.1]{ArroyoRabasa_DePhilippis_Hirsch_Rindler_2019} and \cite[Lemma~2.3]{ArroyoRabasa_DePhilippis_Hirsch_Rindler_2019}, the identity \eqref{fvarF} implies that there exists an $\mathcal{H}^{1}$-rectifiable set $R\subset \Omega$ and a map $\lambda \in L^{\infty}(R, M_{3}(\mathbb{R}))$ satisfying the following conditions. For $\mathcal{H}^{1}$-a.e.\  $x_{0} \in R$, $|\lambda(x_{0})|=1$, $\lambda(x_{0}) \in \mathrm{G}(3,1)$ is the orthogonal projection matrix onto $T_{x_{0}}R$  such that $\widetilde{A}\mres\{\Theta^{*}_{1}(|\widetilde{A}|)>0\}(d x)= \Theta^{*}_{1}(|\widetilde{A}|,x)\lambda(x)\mathcal{H}^{1}\mres R (d x)$, where $T_{x_{0}}R$ is the approximate tangent plane to $R$ at $x_{0}$ and $\Theta^{*}_{1}(|\widetilde{A}|,x_{0})=\limsup_{r\to 0+}\frac{|\widetilde{A}|(B^{3}_{r}(x_{0}))}{2r}$. Then, for $\mu_{*}$-a.e.\  $x \in \Omega$, two eigenvalues of $F(x)$ are zeros. Since for $\mu_{*}$-a.e.\  $x \in \Omega$, $\tr(F(x))\geq 1$ and the eigenvalues of $F(x)$ are less than or equal to 1, we deduce that the eigenvalues of $F(x)$ (up to a permutation) are $(1,0,0)$ and $|F(x)|=1$ for $\mu_{*}$-a.e.\  $x\in \Omega$.  Using this, together with \eqref{defofdensity} and Proposition~\ref{nice characterization for singular set}, we observe that $|\widetilde{A}|=\mu_{*}\mres \Omega$, $\Theta^{*}_{1}(|\widetilde{A}|)=\Theta_{1}(\mu_{*})$ and $R=S_{*}\cap \Omega$. In particular, for $\mu_{*}$-a.e.\  $x \in \Omega$, $F(x)=A_{*}(x)$ and $V_{*}(d A, d x)=\delta_{A_{*}(x)}(d A)\otimes \mu_{*}(d x)=\delta_{A_{*}(x)}(d A)\otimes \Theta_{1}(\mu_{*},x)\mathcal{H}^{1}\mres (S_{*}\cap \Omega)(d x)$, which, together with \eqref{fvarF}, implies that $V_{*}$ is a 1-dimensional stationary rectifiable varifold (see \cite{Allard, DPDRG}). This completes our proof of Proposition~\ref{prop 1-var}.
\end{proof}

\begin{rem}
\label{rem_zieM6dahgh7uecie8ohTh5Za}
Our proof of Proposition~\ref{prop 1-var}, based on \cite[Proposition~3.1]{ArroyoRabasa_DePhilippis_Hirsch_Rindler_2019}, differs from Canevari's proof for the three-dimensional Landau-de Gennes model \cite[Proposition~55]{Canevari_2017}.
His strategy consists in first showing that the set $S_{*}\cap \Omega$ is countably $\mathcal{H}^{1}$-rectifiable using the Moore-Preiss  rectifiability theorem (see \cite[Theorem~3.5]{Moore}, \cite[Theorem~5.3]{Preiss}); next, using the Ambrosio-Soner construction (see \cite[Lemma~3.9]{Ambrosio-Soner}), he proves that the Radon-Nikod\'ym derivative of the limit tensor has the eigenvalues (up to a permutation) $(1,0,0)$, and concludes that the naturally associated one-dimensional varifold is stationary; the rectifiability of the latter then follows from Rectifiability Theorem in \cite[Section~5.5]{Allard}.
Our more direct strategy of proof could be adapted to work in Canevari's setting, and his strategy of proof would provide another pathway to Proposition~\ref{prop 1-var}. 
\end{rem}

\subsection{Energy bounds on a cylinder}
Let us define the notion of a uniform Lipschitz triangulation on the lateral surface of a (finite) cylinder $\Lambda_{1,L}\coloneqq B^{2}_{1}\times (-L,L)$. 
Let $L\geq 1$ and $h \in (0,1)$. 
Set $n \coloneqq \floor{1/h}\geq 1$. 
For each $k \in [0,n-1]$, we define the line segment $S_{k}=[(e^{\frac{2\pi i k}{n}}, -L), (e^{\frac{2\pi i k}{n}}, L)] \subset \mathbb{C} \times \mathbb{R} \simeq \mathbb{R}^3$ and, for \(j \in [0,2\floor{L}n]\) the points 
\[
 a^{k}_{j} \coloneqq \left(e^{\frac{2\pi i k}{n}}, -L+ \frac{Lj}{\floor{L}n}\right)
 \in \mathbb{C} \times \mathbb{R} \simeq \mathbb{R}^3.
\]
For each $k \in [0,n-1]$ and $j \in [0,2\floor{L}n-1]$, let $\gamma^{k}_{j}$ be the union of two geodesics (\emph{a geodesic cross}) on $\overline{\Gamma}_{1,L}$, where one of which connecting $a^{k}_{j}$ with $a^{k+1}_{j+1}$ and the other connecting $a^{k}_{j+1}$ with $a^{k+1}_{j}$. By $c_{k,z}$, where $z\in \{-L,L\}$, we denote the arc on $\mathbb{S}^{1}\times \{z\}$ connecting $a^{k}_{2\floor{L}n}$ with $a^{k+1}_{2\floor{L}n}$ if $z=L$ and $a^{k}_{0}$ with $a^{k+1}_{0}$ if $z=-L$. Observe that the union of all the line segments $S_{k}$, all the geodesic crosses $\gamma^{k}_{j}$ and all the arcs $c_{k,z}$ provides us with the decomposition of $\overline{\Gamma}_{1,L}$ into mutually disjoint $0$, $1$ and $2$-dimensional cells. 
Hereinafter, we denote by $SO(2)$ the subgroup of $SO(3)$ of rotations with respect to the axis \(\{0\} \times \mathbb{R}\), and whose action on the plane $\mathbb{R}^{2}\times \{0\}$ coincides with the action of the orthogonal group $SO(2)$ on the plane. 
Thus, for each $\omega \in SO(2)$, we obtain a decomposition of $\overline{\Gamma}_{1,L}$ into mutually disjoint \(0\), \(1\) and \(2\)-dimensional cells, namely
\begin{equation}\label{grid on cyl}
\overline{\Gamma}_{1,L}= \bigcup_{i=0}^{2}\bigcup_{j=1}^{k_{i}}\omega(E_{i,j})=\bigcup_{i=0}^{2}\bigcup_{j=1}^{k_{i}}F_{i,j},
\end{equation}
where $\{F_{0,1},\dotsc, F_{0,k_{0}}\}$ is the set of points and for each $i \in \{1,2\}$, there exists a bilipschitz homeomorphism 
\begin{equation}\label{bilip on cyl}
\Phi_{i,j}: F_{i,j}\to B^{i}_{h} \,\ \text{with} \,\ \|D_{\top} \Phi_{i,j}\|_{L^{\infty}(F_{i,j})}+\|D_{\top} (\Phi_{i,j})^{-1}\|_{L^{\infty}(B^{i}_{h})} \leq A_{4},
\end{equation}
where $A_{4}>0$ is an absolute constant; see Figure~\ref{figure grid cylinder}.  The map $\Phi_{i,j}$ in \eqref{bilip on cyl} extends by continuity to a bilipschitz map between $\overline{F}_{i,j}$ and $\smash{\overline{B}}^{i}_{h}$ with the same bilipschitz constant $A_{4}$ as in \eqref{bilip on cyl}.
\begin{figure}
    \centering
    \includegraphics[width=.5\textwidth]{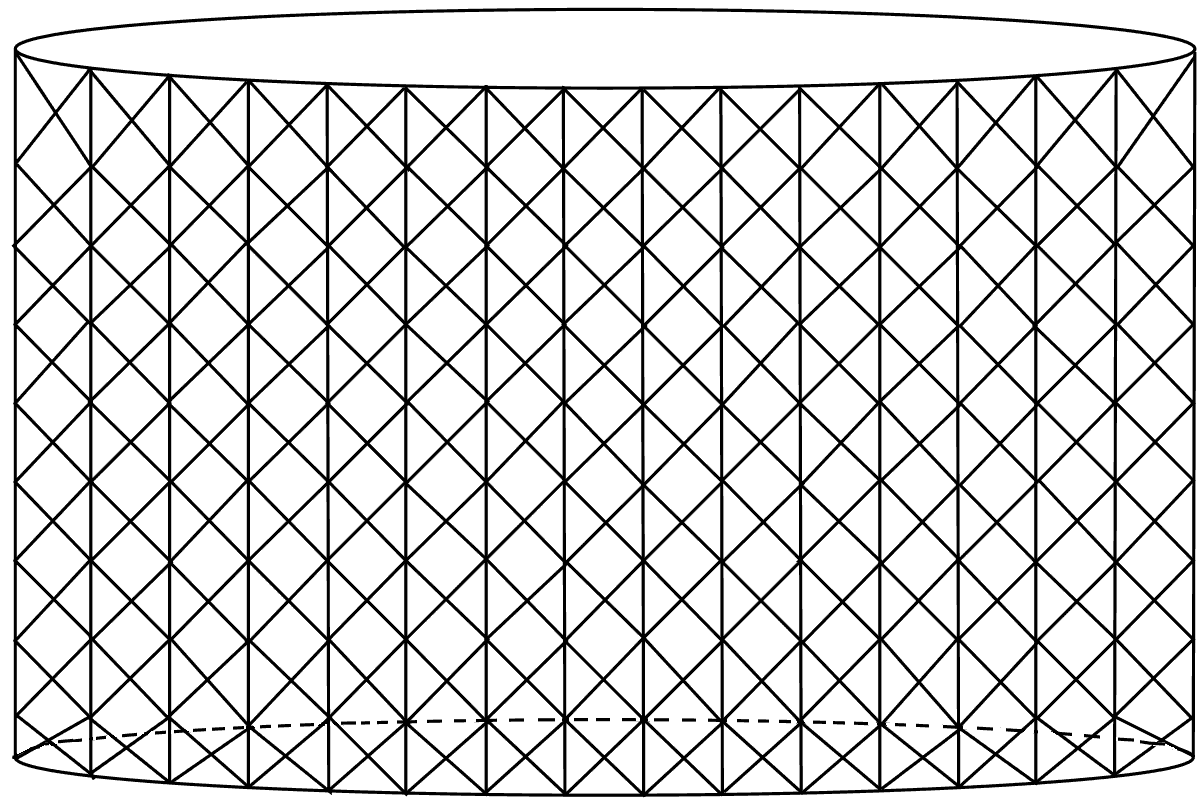}
    \caption{A uniform Lipschitz triangulation of the lateral surface of a cylinder.}
    \label{figure grid cylinder}
\end{figure}

 The decomposition \eqref{grid on cyl} we shall call a uniform Lipschitz (or simply a Lipschitz) triangulation of $\overline{\Gamma}_{1,L}$ of step $h$ with the 1-skeleton $\Sigma= \bigcup_{i=0}^{1} \bigcup_{j=0}^{k_{i}} F_{i, j}$.
 
 To lighten the notation, we denote the collection of 1-skeletons of all Lipschitz triangulations of $\overline{\Gamma}_{1,L}$ of step $h$ by $\mathrm{RC}_{L,h}$, namely
 \[
\mathrm{RC}_{L,h}:=\left\{\bigcup_{i=0}^{1}\bigcup_{j=1}^{k_{i}}\omega(E_{i,j}): \omega \in SO(2)\right\},
\]
where $\bigcup_{i=0}^{1}\bigcup_{j=1}^{k_{i}}E_{i,j}$ is the union of  all the line segments $S_{k}$, all the geodesic crosses $\gamma^{k}_{j}$ and all the arcs $c_{k,z}$. 

It is worth noting that Lemmas~\ref{lem triangulation on cylinder},~\ref{lem lowenergy cylinder},~\ref{finding nice v and conmapping cylinder} are the counterparts of Lemmas~\ref{nice res on T},~\ref{lem lowenergy},~\ref{finding nice v and conmapping} on the lateral surface of a cylinder. A consequence of Lemma~\ref{finding nice v and conmapping cylinder} is Corollary~\ref{cor on cylinder}, which provides us with the key arguments that will be used in the proof of Proposition~\ref{prop energy bounds on cylinders}. The latter gives us the estimates of the $p$-energy on a cylinder, which will be used to prove the fact that the limit measure has a finite set of values.
\begin{lemma}\label{lem triangulation on cylinder}
    Let $p \in [1,+\infty)$, $L\geq 1$, $h \in (0,1)$ and $\mathcal{Y}$ be a compact Riemannian manifold or a Euclidean space. Let $g \in W^{1,p}(\Gamma_{1,L}, \mathcal{Y})$ and $\Sigma \in \mathrm{RC}_{L, h}$. Then for $\mathcal{H}^{1}$-a.e.\  $\omega \in SO(2)$ we have $\operatorname{tr}_{\omega(\Sigma \cap \Gamma_{1,L})}(g)=g|_{\omega(\Sigma\cap \Gamma_{1,L})} \in W^{1,p}(\omega(\Sigma\cap \Gamma_{1,L}), \mathcal{Y})$. Furthermore, there exists $\omega \in SO(2)$ such that $\operatorname{tr}_{\omega(\Sigma \cap \Gamma_{1,L})}(g)=g|_{\omega(\Sigma\cap \Gamma_{1,L})} \in W^{1,p}(\omega(\Sigma\cap \Gamma_{1,L}), \mathcal{Y})$ and 
    \[
    \int_{\omega(\Sigma \cap \Gamma_{1,L})}|D_{\top}g|^{p}\diff \mathcal{H}^{1}\leq \frac{A_{5}}{h}\int_{\Gamma_{1,L}}|D_{\top}g|^{p}\diff \mathcal{H}^{2},
    \]
 where $A_{5}>0$ is an absolute constant.
\end{lemma}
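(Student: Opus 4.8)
The plan is to mimic exactly the proof of Lemma~\ref{nice res on T}, adapting the integralgeometric argument from the sphere to the lateral surface of the cylinder. The key point is that $\Sigma \cap \Gamma_{1,L}$ is a finite union of bilipschitz images of $1$-dimensional intervals (the line segments $S_k$ intersected with the open lateral surface, and the geodesic crosses $\gamma^k_j$), each with a uniform absolute bilipschitz constant $A_4$ by \eqref{bilip on cyl}; so Lemma~\ref{lem rest on T}~\ref{it_Ohtaap1joLiedei6aiGeipho} applies to each such piece. First I would fix $\omega_0 \in SO(2)$ with $\Sigma = \bigcup_{i=0}^1 \bigcup_{j=1}^{k_i} \omega_0(E_{i,j})$ and note that $E := \Sigma \cap \Gamma_{1,L}$ (before applying a further rotation) is, up to a set of $\mathcal H^1$-measure zero, the union of the relative interiors of the $1$-cells $F_{1,j}$. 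For each such $1$-cell $F_{1,j}$, its relative interior $U_j$ is bilipschitz homeomorphic to an open bounded subset of $\mathbb{R}^1$; applying Lemma~\ref{lem rest on T}~\ref{it_Ohtaap1joLiedei6aiGeipho} with $v = g$, $E = U_j$ and replacing $SO(m)$ by the one-parameter group $SO(2) \subset SO(3)$ (the proof there invokes the coarea/slicing formula \cite[Theorem~3.2.48]{Federer}, which is equally valid for the rotation subgroup $SO(2)$ acting on $\Gamma_{1,L}$), we obtain that for $\mathcal{H}^1$-a.e.\ $\omega \in SO(2)$, $\operatorname{tr}_{\omega(U_j)}(g) = g|_{\omega(U_j)} \in W^{1,p}(\omega(U_j), \mathcal{Y})$; intersecting over the finitely many $j$ gives the first assertion.

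For the quantitative bound, I would use the integralgeometric identity, the cylindrical analogue of \eqref{intgeomlemtr}: with $\theta_2$ the normalized Haar measure on $SO(2)$,
\begin{equation*}
\int_{SO(2)}\diff \theta_2(\omega)\int_{\omega(\Sigma \cap \Gamma_{1,L})}|D_{\top}g|^{p}\diff\mathcal{H}^1 = c(L,h)\int_{\Gamma_{1,L}}|D_{\top}g|^{p}\diff\mathcal{H}^2,
\end{equation*}
where $c(L,h)$ is a geometric constant determined by the ratio of $\mathcal{H}^1(\Sigma \cap \Gamma_{1,L})$ to $\mathcal{H}^2(\Gamma_{1,L})$; this follows by applying \cite[Theorem~3.2.48]{Federer} cell by cell on each $1$-cell of the triangulation and summing, exactly as in Lemma~\ref{nice res on T}. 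By \eqref{bilip on cyl} the total length $\mathcal{H}^1(\Sigma \cap \Gamma_{1,L})$ is bounded above by $C/h$ for an absolute constant $C$ (there are $O(L/h \cdot 1/h)$ one-cells, each of $\mathcal{H}^1$-length $O(h)$, and $\mathcal{H}^2(\Gamma_{1,L}) = 4\pi L$), so $c(L,h) \le A/h$ for an absolute constant $A$. Then a Chebyshev/averaging argument as in Lemma~\ref{nice res on T}—defining $A = \{\omega \in SO(2): \int_{\omega(\Sigma\cap\Gamma_{1,L})}|D_\top g|^p\,d\mathcal{H}^1 < \frac{3 A}{h}\int_{\Gamma_{1,L}}|D_\top g|^p\,d\mathcal{H}^2\}$, observing $\theta_2(A) \ge 2/3 > 0$—produces a rotation $\omega$ lying in both $A$ and the full-measure set on which the trace is Sobolev on every $1$-cell, and yields the claimed estimate with an absolute constant $A_5 > 0$.

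The main obstacle — really the only subtlety — is bookkeeping the geometry: one must (i) carefully exclude the $0$-cells (vertices $a^k_j$) and the closure overlaps between adjacent $1$-cells, so that $\Sigma \cap \Gamma_{1,L}$ is, up to $\mathcal H^1$-null sets, a \emph{disjoint} union of relative interiors of $1$-cells to which Lemma~\ref{lem rest on T}~\ref{it_Ohtaap1joLiedei6aiGeipho} and the slicing formula apply independently, and then recover continuity of $g|_{\omega(\Sigma\cap\Gamma_{1,L})}$ across the vertices by the same uniqueness-of-continuous-representative argument used in Lemma~\ref{nice res on T} (each vertex lies in the closure of two $1$-cells, on each of which $g|_{\omega(\cdot)}$ has a continuous representative, and these agree); and (ii) verify that $SO(2)$, acting on $\Gamma_{1,L}$, satisfies the hypotheses of \cite[Theorem~3.2.48]{Federer} — which it does, since the orbit map is smooth with surjective differential on the cylindrical slices, exactly the situation covered there. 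The arcs $c_{k,z}$ on $\mathbb{S}^1\times\{\pm L\}$ are \emph{not} contained in $\Gamma_{1,L}$ (the open lateral surface), so they drop out of $\Sigma \cap \Gamma_{1,L}$ and need not be treated; this is the reason the statement restricts to $\omega(\Sigma \cap \Gamma_{1,L})$ rather than $\omega(\Sigma)$. None of this involves any estimate beyond those already established, so the proof is short once the cell structure is unwound.
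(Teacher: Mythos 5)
Your proposal is correct and follows essentially the same route as the paper, which delegates to an external reference (\cite[Lemma~3.4]{Top}) but identifies the integralgeometric formula on $\Gamma_{1,L}$ as the main ingredient and implicitly relies on the same Chebyshev/averaging argument you spell out, mirroring the proof of Lemma~\ref{nice res on T}. Your accounting of the length $\mathcal{H}^1(\Sigma \cap \Gamma_{1,L}) = O(L/h)$, the exclusion of the arcs $c_{k,z}$ from the open lateral surface, and the cell-by-cell application of Lemma~\ref{lem rest on T}~\ref{it_Ohtaap1joLiedei6aiGeipho} adapted to $SO(2)$ all match what is needed; the only cosmetic slip is reusing the symbol $A$ both for an absolute constant and for the Chebyshev set, which should be renamed to avoid confusion with the paper's $A_5$.
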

\begin{proof}
For a proof, we refer to \cite[Lemma~3.4]{Top}. The main ingredient is the integralgeometric formula
\[
\int_{SO(2)}\diff \omega \int_{\omega(\Sigma \cap \Gamma_{1,L})}|D_{\top} g|^{p}\diff \mathcal{H}^{1}=\frac{\mathcal{H}^{1}(\Sigma\cap \Gamma_{1,L})}{\mathcal{H}^{2}(\Gamma_{1,L})}\int_{\Gamma_{1,L}}|D_{\top}g|^{p}\diff \mathcal{H}^{2}
\]
(for the integralgeometric formulas, the reader may consult \cite{Federer, IGF}; see also \cite[Lemma~3.3]{Top}).
\end{proof}

\begin{lemma}\label{lem lowenergy cylinder}
    Let $p_{0} \in (1,2)$, $p \in [p_{0},2)$, $L\geq 1$ and $h \in (0,1)$. 
    Then there exists a constant $\kappa_{0}=\kappa_{0}(p_{0}, \mathcal{N})>0$ such that the following holds.
        Let $\Sigma \in \mathrm{RC}_{L, h}$ and $F$ be a $2$-cell of the Lipschitz triangulation of $\overline{\Gamma}_{1, L}$ generated by $\Sigma$. Let $g \in W^{1,p}(F, \mathcal{N})$ and $\operatorname{tr}_{\partial F}(g) \in W^{1,p}(\partial F, \mathcal{N})$, where $\partial F$ denotes the relative boundary of $F$.  If
    \begin{equation*}
    \int_{F}\frac{|D_{\top}g|^{p}}{p}\diff \mathcal{H}^{2} + h\int_{\partial F}\frac{|D_{\top}\operatorname{tr}_{\partial F}(g)|^{p}}{p}\diff \mathcal{H}^{1}\leq \frac{\kappa_{0}h^{2-p}}{2-p},
    \end{equation*} 
    then $\operatorname{tr}_{\partial F}(g)$ is nullhomotopic.
\end{lemma}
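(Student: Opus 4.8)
The plan is to transcribe the proof of Lemma~\ref{lem lowenergy} almost verbatim, replacing the bilipschitz charts \eqref{bilisomtr} of the triangulation of $\mathbb{S}^{2}$ by the charts \eqref{bilip on cyl} of the triangulation of $\overline{\Gamma}_{1,L}$. First I would fix the $2$-cell $F=F_{2,j}$ and the associated bilipschitz homeomorphism $\Phi\colon F\to B^{2}_{h}$ from \eqref{bilip on cyl}, whose bilipschitz constant $A_{4}$ is absolute, and set $v\coloneqq g\circ \Phi^{-1}\in W^{1,p}(B^{2}_{h},\mathcal{N})$. By the chain rule for Sobolev mappings together with \cite[Theorem~3.2.5]{Federer}, and using that the bilipschitz constant of $\Phi$ is absolute, the hypothesis of the lemma transforms into
\begin{equation*}
\int_{B^{2}_{h}}\frac{|Dv|^{p}}{p}\diff \mathcal{H}^{2} + h\int_{\partial B^{2}_{h}}\frac{|D_{\top}\operatorname{tr}_{\partial B^{2}_{h}}(v)|^{p}}{p}\diff \mathcal{H}^{1}\le \frac{C\kappa_{0}\,h^{2-p}}{2-p},
\end{equation*}
where $C>0$ depends only on $A_{4}$, hence is absolute.

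Next I would apply Proposition~\ref{Sandier lemma} (after the trivial rescaling from $B^{2}_{h}$ to $B^{2}_{1}$, which is harmless since the stated conclusion is scale-invariant) to $v$, obtaining
\begin{equation*}
\frac{\mathcal{E}^{\mathrm{sg}}_{p/(p-1)}(\operatorname{tr}_{\partial B^{2}_{h}}(v))\,h^{2-p}}{2-p}\le \int_{B^{2}_{h}}\frac{|Dv|^{p}}{p}\diff \mathcal{H}^{2} + h\int_{\partial B^{2}_{h}}\frac{|D_{\top}\operatorname{tr}_{\partial B^{2}_{h}}(v)|^{p}}{p}\diff \mathcal{H}^{1}\le \frac{C\kappa_{0}\,h^{2-p}}{2-p}.
\end{equation*}
On the other hand, exactly as in \eqref{kjnfi4h59h49h5g49h689}, by Definition~\ref{def p-singular} and \eqref{systole}, if a map $\gamma\in W^{1,p}(\partial B^{2}_{h},\mathcal{N})$ is not homotopic to a constant, then $\mathcal{E}^{\mathrm{sg}}_{p/(p-1)}(\gamma)\ge \frac{(\mathrm{sys}(\mathcal{N}))^{p/(p-1)}(p-1)}{(2\pi)^{1/(p-1)}p}$; here $\mathrm{sys}(\mathcal{N})\in(0,+\infty)$ since $\mathcal{N}$ is compact with finite fundamental group (if $\mathcal{N}$ is simply connected the conclusion is trivial, every map being nullhomotopic). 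I would then set
\[
\kappa_{0}=\kappa_{0}(p_{0},\mathcal{N})\coloneqq \min\Bigl\{\frac{(\mathrm{sys}(\mathcal{N}))^{p/(p-1)}(p-1)}{2C(2\pi)^{1/(p-1)}p} : p\in[p_{0},2]\Bigr\}>0,
\]
so that the displayed inequality forces $\mathcal{E}^{\mathrm{sg}}_{p/(p-1)}(\operatorname{tr}_{\partial B^{2}_{h}}(v))$ to lie strictly below the systolic threshold, whence $\operatorname{tr}_{\partial B^{2}_{h}}(v)$ is homotopic to a constant. Since $\Phi$ restricts to a homeomorphism $\partial F\to \partial B^{2}_{h}$, this is equivalent to $\operatorname{tr}_{\partial F}(g)$ being nullhomotopic, which is the claim.

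There is no genuinely hard step: the argument is an almost literal copy of Lemma~\ref{lem lowenergy}. The only points requiring a little attention are that Proposition~\ref{Sandier lemma}, stated for round disks $B^{2}_{r}(x_{0})$, applies to $v$ on $B^{2}_{h}$ after scaling, and that the constant $C$ coming from the bilipschitz change of coordinates is controlled solely by the absolute constant $A_{4}$ of \eqref{bilip on cyl} — in particular it is uniform over the cell $F$, the step $h$ and the cylinder height $L$ — which is exactly what the uniformity built into \eqref{bilip on cyl} provides. Everything else (the chain rule, the transfer of the relative-boundary integral via \cite[Theorem~3.2.5]{Federer}, and the systolic comparison) is identical to the spherical case.
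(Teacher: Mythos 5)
Your proof is correct and follows exactly the approach the paper intends: the paper's own proof of Lemma~\ref{lem lowenergy cylinder} simply says to reproduce the proof of Lemma~\ref{lem lowenergy} with the spherical $2$-cell replaced by the cylindrical $2$-cell, which is precisely what you carried out (pull back by the bilipschitz chart \eqref{bilip on cyl}, apply Proposition~\ref{Sandier lemma}, and compare with the systolic lower bound \eqref{kjnfi4h59h49h5g49h689}). The only cosmetic remark is that Proposition~\ref{Sandier lemma} is already stated on a ball of arbitrary radius, so the rescaling from $B^{2}_{h}$ to $B^{2}_{1}$ you mention is unnecessary, but this does not affect the argument.
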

\begin{proof}
The proof follows by reproducing the proof of Lemma~\ref{lem lowenergy} with minor modifications. Essentially, it is enough to replace the 2-dimensional cell in the proof of Lemma~\ref{lem lowenergy} by the $2$-cell $F$ of Lemma~\ref{lem lowenergy cylinder}. 
\end{proof}

\begin{lemma}\label{finding nice v and conmapping cylinder}
    Let $p_{0} \in (1,2)$, $p \in [p_{0},2)$, $L_{0}\geq 2$, $h\in (0,1/2]$ and $g \in~W^{1,p}(\Gamma_{1,L_{0}}, \mathcal{N})$, where $\Gamma_{1,L_{0}}$ is defined in \eqref{def cyl}.  Then there exists $\kappa=\kappa(p_{0}, \mathcal{N})>0$ such that the following holds. Assume that 
    \begin{equation}\label{lowenergyconditioncylinder}
        \int_{\Gamma_{1,L_{0}}}\frac{|D_{\top}g|^{p}}{p}\diff \mathcal{H}^{2} \leq \frac{\kappa h^{2-p}}{2-p}.
    \end{equation}
    There exist $z_{-} \in (-L_{0}+h,-L_{0}+2h)$, $z_{+}\in (L_{0}-2h,L_{0}-h)$, $\varphi_{h}\in W^{1,p}((B^{2}_{1}\setminus \smash{\overline{B}}^{2}_{1-h}) \times (z_{-},z_{+}), \mathcal{N})$ such that the following assertions hold.
    \begin{enumerate}[label=(\roman*)]
        \item  \label{it_AhxeeCeeth8ieghooloboL4a}
        \(
       \operatorname{tr}_{\mathbb{S}^{1}\times (z_{-},z_{+})}(\varphi_{h})=g|_{\mathbb{S}^{1}\times (z_{-}, z_{+})}\), \(v_h \coloneqq 
            \operatorname{tr}_{\partial B^{2}_{1-h}\times (z_{-},z_{+})}(\varphi_{h}) \in W^{1,2}(\partial B^{2}_{1-h}\times (z_{-},z_{+}), \mathcal{N})\).
        \item 
        \label{it_ChahghohLail8jah6Aibohng}
        For each $z \in \{z_{-},z_{+}\}$, $\operatorname{tr}_{(B^{2}_{1}\setminus \smash{\overline{B}}^{2}_{1-h})\times \{z\}}(\varphi_{h})\in W^{1,p}((B^{2}_{1}\setminus \smash{\overline{B}}^{2}_{1-h})\times \{z\}, \mathcal{N})$ 
        and
        \[
        \int_{(B^{2}_{1}\setminus \smash{\overline{B}}^{2}_{1-h})\times \{z\}}|D_{\top} \operatorname{tr}_{(B^{2}_{1}\setminus \smash{\overline{B}}^{2}_{1-h})\times \{z\}}(\varphi_{h})|^{p}\diff \mathcal{H}^{2}\leq A_{6} \int_{\Gamma_{1, L_{0}}}|D_{\top} g|^{p}\diff \mathcal{H}^{2},
        \]
        where $A_{6}>0$ is an absolute constant.
    \item  \label{it_Bit7ooRoo3ceithiegeej0xi} There exists a constant $C=C(\mathcal{N})>0$ such that
    \begin{equation}\label{estliftingcylinder}
        \int_{\partial B^{2}_{1-h}\times (z_{-},z_{+})}|D_{\top}v_{h}|^{p}\diff \mathcal{H}^{2}\leq C\int_{\Gamma_{1,L_{0}}}|D_{\top}g|^{p}\diff \mathcal{H}^{2}
    \end{equation}
    and
    \begin{equation}\label{estonconnmapcylinder}
        \int_{(B^{2}_{1}\setminus \smash{\overline{B}}^{2}_{1-h})\times (z_{-},z_{+})}|D\varphi_{h}|^{p}\diff x \leq Ch \int_{\Gamma_{1,L_{0}}}|D_{\top}g|^{p} \diff \mathcal{H}^{2}.
    \end{equation}
    \end{enumerate}
 \end{lemma}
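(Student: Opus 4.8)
\emph{Overview and Step~1 (triangulation, nullhomotopy of cell boundaries).} The plan is to transpose the proof of Lemma~\ref{finding nice v and conmapping} from the sphere to the lateral surface of a cylinder, with Lemma~\ref{lem triangulation on cylinder} and Lemma~\ref{lem lowenergy cylinder} playing the roles of Lemma~\ref{nice res on T} and Lemma~\ref{lem lowenergy}, and with a cell-by-cell filling built on Lemma~\ref{lem lifting linear} and Lemma~\ref{lemma_single_cell} replacing the use of Proposition~\ref{prop ineq T}. Since $\Gamma_{1,L_{0}}$ is not simply connected, there is (and need be) no global lift to $\widetilde{\mathcal{N}}$: the auxiliary map $v$ below, which plays the part that $\pi\circ\widetilde v$ plays on the sphere, takes its values in $\mathcal{N}$, which is why the conclusion only mentions $v_{h}\in W^{1,2}(\partial B^{2}_{1-h}\times(z_{-},z_{+}),\mathcal{N})$. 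Concretely: let $\kappa_{0}=\kappa_{0}(p_{0},\mathcal{N})>0$ be the constant of Lemma~\ref{lem lowenergy cylinder}, let $A_{5}$ be the absolute constant of Lemma~\ref{lem triangulation on cylinder}, and set $\kappa:=\kappa_{0}/(1+A_{5})$, so $\kappa=\kappa(p_{0},\mathcal{N})$. By Lemma~\ref{lem triangulation on cylinder} applied to $g$, together with the hypothesis \eqref{lowenergyconditioncylinder}, there is $\omega\in SO(2)$ and hence $\Sigma\in\mathrm{RC}_{L_{0},h}$ such that $\operatorname{tr}_{\Sigma}(g)$ lies in $W^{1,p}$ along $\Sigma$ and $\int_{\Sigma}|D_{\top}g|^{p}\diff\mathcal{H}^{1}\le (A_{5}/h)\int_{\Gamma_{1,L_{0}}}|D_{\top}g|^{p}\diff\mathcal{H}^{2}$. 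Then for every $2$-cell $F$ of the triangulation generated by $\Sigma$ one gets $\int_{F}\frac{|D_{\top}g|^{p}}{p}\diff\mathcal{H}^{2}+h\int_{\partial F}\frac{|D_{\top}g|^{p}}{p}\diff\mathcal{H}^{1}\le(1+A_{5})\,\frac{\kappa h^{2-p}}{2-p}=\frac{\kappa_{0}h^{2-p}}{2-p}$, so Lemma~\ref{lem lowenergy cylinder} yields that $g|_{\partial F}$ is nullhomotopic.

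\emph{Step~2 (the filled map $v$ and the choice of $z_\pm$).} On each $2$-cell $F$, transport $g|_{\partial F}$ through the bilipschitz chart of \eqref{bilip on cyl}, use that a nullhomotopic $W^{1,p}$ map on a circle has a continuous $W^{1,p}$ lift to $\widetilde{\mathcal{N}}$, extend by Lemma~\ref{lem lifting linear} with a linear-type energy bound (which also yields a $W^{1,2}$ extension), and transport back: this gives $v_{F}\in W^{1,2}(F,\mathcal{N})$ with $\operatorname{tr}_{\partial F}(v_{F})=g|_{\partial F}$ and $\int_{F}|D_{\top}v_{F}|^{p}\diff\mathcal{H}^{2}\le Ch\int_{\partial F}|D_{\top}g|^{p}\diff\mathcal{H}^{1}$, $C=C(\mathcal{N})$, exactly as in Step~1 of the proof of Proposition~\ref{prop ineq T}. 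The $v_{F}$ agree on common edges because each equals $g$ on $\Sigma$, hence glue to $v\in W^{1,2}(\Gamma_{1,L_{0}},\mathcal{N})$; summing over $F$ (each $1$-cell bounding two $2$-cells) and invoking the Step~1 bound, $\int_{\Gamma_{1,L_{0}}}|D_{\top}v|^{p}\diff\mathcal{H}^{2}\le C'\int_{\Gamma_{1,L_{0}}}|D_{\top}g|^{p}\diff\mathcal{H}^{2}$ with $C'=C'(\mathcal{N})$. Applying the coarea formula in the $z$-variable to $g$ and $v$ on the end-strips $\mathbb{S}^{1}\times(-L_{0}+h,-L_{0}+2h)$ and $\mathbb{S}^{1}\times(L_{0}-2h,L_{0}-h)$, we select $z_{-}\in(-L_{0}+h,-L_{0}+2h)$ and $z_{+}\in(L_{0}-2h,L_{0}-h)$ with $g|_{\mathbb{S}^{1}\times\{z_{\pm}\}},\,v|_{\mathbb{S}^{1}\times\{z_{\pm}\}}\in W^{1,p}(\mathbb{S}^{1},\mathcal{N})$ and $\int_{\mathbb{S}^{1}}\bigl(|D_{\top}(g|_{\mathbb{S}^{1}\times\{z_{\pm}\}})|^{p}+|D_{\top}(v|_{\mathbb{S}^{1}\times\{z_{\pm}\}})|^{p}\bigr)\diff\mathcal{H}^{1}\le (C''/h)\int_{\Gamma_{1,L_{0}}}|D_{\top}g|^{p}\diff\mathcal{H}^{2}$. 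Finally, define $v_{h}$ on $\partial B^{2}_{1-h}\times(z_{-},z_{+})$ by $v_{h}((1-h)\sigma,z):=v(\sigma,z)$; since $\sigma\mapsto(1-h)\sigma$ is a bilipschitz diffeomorphism of $\mathbb{S}^{1}$ onto $\partial B^{2}_{1-h}$, we get $v_{h}\in W^{1,2}$ and the estimate \eqref{estliftingcylinder}.

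\emph{Step~3 (the shell interpolation $\varphi_h$ and conclusion).} Decompose $(B^{2}_{1}\setminus\overline{B}^{2}_{1-h})\times(z_{-},z_{+})$ into the $3$-cells $D_{F}:=\{(\varrho\sigma,z):1-h\le\varrho\le1,\ (\sigma,z)\in F\}$, where $F$ runs over the $2$-cells meeting $\mathbb{S}^{1}\times(z_{-},z_{+})$. On each $D_{F}$ fix a bilipschitz chart onto $[0,h]^{3}$ with an absolute bilipschitz constant sending the inner face to $\{x_{3}=0\}$ and the outer face to $\{x_{3}=h\}$, and apply Lemma~\ref{lemma_single_cell} with the inner datum induced by $v_{h}$ and the outer datum induced by $g$; condition \eqref{eq_coincideness_tr} holds because both traces restrict to $g|_{\partial F}$, and the resulting map is radially constant (equal to $g|_{\partial F}$) on the mantle of $D_{F}$, so the pieces match and define $\varphi_{h}\in W^{1,p}((B^{2}_{1}\setminus\overline{B}^{2}_{1-h})\times(z_{-},z_{+}),\mathcal{N})$ with $\operatorname{tr}_{\mathbb{S}^{1}\times(z_{-},z_{+})}(\varphi_{h})=g|_{\mathbb{S}^{1}\times(z_{-},z_{+})}$ and $\operatorname{tr}_{\partial B^{2}_{1-h}\times(z_{-},z_{+})}(\varphi_{h})=v_{h}$, which is \ref{it_AhxeeCeeth8ieghooloboL4a}. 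Summing \eqref{eq_cell_single} over the $D_{F}$, with $\int_{F}|Dv_{F}|^{p}\le Ch\int_{\partial F}|D_{\top}g|^{p}$ for the inner datum and $\int_{F}|D_{\top}g|^{p}$ for the outer one, and using $\int_{\Sigma}|D_{\top}g|^{p}\le(A_{5}/h)\int_{\Gamma_{1,L_{0}}}|D_{\top}g|^{p}$, we obtain \eqref{estonconnmapcylinder}. For the caps, on each cell meeting a level $z\in\{z_{-},z_{+}\}$ the trace of $\varphi_{h}$ on $(B^{2}_{1}\setminus\overline{B}^{2}_{1-h})\times\{z\}$ is a radial interpolation between the slices $g|_{\mathbb{S}^{1}\times\{z\}}$ and $v|_{\mathbb{S}^{1}\times\{z\}}$; bounding it by a Lemma~\ref{lem 3.5}-type computation and invoking the good-slice bounds of Step~2 gives $\int_{(B^{2}_{1}\setminus\overline{B}^{2}_{1-h})\times\{z\}}|D_{\top}\operatorname{tr}(\varphi_{h})|^{p}\diff\mathcal{H}^{2}\le A_{6}\int_{\Gamma_{1,L_{0}}}|D_{\top}g|^{p}\diff\mathcal{H}^{2}$, which is \ref{it_ChahghohLail8jah6Aibohng}; assertion \ref{it_Bit7ooRoo3ceithiegeej0xi} is \eqref{estliftingcylinder} and \eqref{estonconnmapcylinder} together.

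\emph{Main obstacle.} The crux is the coordination of the two caps $z=z_{\pm}$ with the triangulation. On one hand $z_{\pm}$ must land in the prescribed intervals $(-L_{0}+h,-L_{0}+2h)$ and $(L_{0}-2h,L_{0}-h)$, which is available only because the triangulation step is comparable to $h$ (the hypothesis $L_{0}\ge 2$ keeps $\lfloor L_{0}\rfloor/L_{0}$ bounded below, so consecutive levels sit at distance in a fixed multiple of $h$), and one must make sure the $3$-cells of the shell near those levels can actually be completed. On the other hand, the radial extension that completes them must have energy controlled by the \emph{full} cylinder energy $\int_{\Gamma_{1,L_{0}}}|D_{\top}g|^{p}$, not merely by an $h$-fraction of it — this is precisely why the cap estimate in \ref{it_ChahghohLail8jah6Aibohng} carries an absolute constant $A_{6}$ with no gain of a power of $h$, in contrast to the bulk estimate \eqref{estonconnmapcylinder}. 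Reconciling these two requirements, and, throughout the summation over cells, keeping straight which constants are absolute and which depend on $p_{0}$ and $\mathcal{N}$, is the technical heart of the proof; everything else is a routine adaptation of the sphere case.
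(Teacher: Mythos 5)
Your overall strategy is right, and you correctly identify that no global lift exists on the cylinder, that Lemmas~\ref{lem triangulation on cylinder},~\ref{lem lowenergy cylinder},~\ref{lem lifting linear},~\ref{lemma_single_cell} are the working pieces, and that the cap estimate \ref{it_ChahghohLail8jah6Aibohng} cannot gain a power of $h$. However, there is a genuine gap, and it is precisely the one you flag under ``Main obstacle'' but do not resolve.

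You triangulate the full cylinder $\Gamma_{1,L_0}$ \emph{first}, construct $v$ on it, and \emph{then} choose $z_\pm$ by coarea inside the end strips $\mathbb{S}^1\times(-L_0+h,-L_0+2h)$ and $\mathbb{S}^1\times(L_0-2h,L_0-h)$. Since the grid levels of a triangulation in $\mathrm{RC}_{L_0,h}$ are prescribed (they sit at $-L_0 + L_0 j/(\lfloor L_0\rfloor n)$), the coarea-good levels $z_\pm$ will generically miss them. Consequently the $3$-cells $D_F$ you write down do not tile $(B^2_1\setminus\overline{B}^2_{1-h})\times(z_-,z_+)$: the cells touching $z_\pm$ stick out past those levels, so you don't have a decomposition of the shell into cells adapted to Lemma~\ref{lemma_single_cell}. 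Worse, your claim that $\operatorname{tr}_{(B^2_1\setminus\overline{B}^2_{1-h})\times\{z_\pm\}}(\varphi_h)$ ``is a radial interpolation between $g|_{\mathbb{S}^1\times\{z_\pm\}}$ and $v|_{\mathbb{S}^1\times\{z_\pm\}}$'' would be slicing the homogeneous extension of Lemma~\ref{lemma_single_cell} at an interior horizontal level of a $3$-cell; that slice is not a controlled radial interpolation and one has no usable estimate for it. The paper's cap estimate is a clean identity precisely because the caps coincide with cell faces, so the trace there comes from the mantle formula $w|_E(x)=\operatorname{tr}_{\partial(0,h)^2}(g_1)(x')$ of Lemma~\ref{lemma_single_cell}, i.e.\ from a constant-in-$x_3$ extension of $g|_{\mathbb{S}^1\times\{z_\pm\}}$ over the annulus.

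The fix, which is what the paper does, is to reverse the two steps: apply the coarea formula to $g$ alone to pick $z_-\in(-L_0+h,-L_0+2h)$, $z_+\in(L_0-2h,L_0-h)$ with $\int_{\mathbb{S}^1\times\{z_\pm\}}|D_\top g|^p\,d\mathcal{H}^1\le(2/h)\int_{\Gamma_{1,L_0}}|D_\top g|^p\,d\mathcal{H}^2$; set $L=(z_+-z_-)/2$, $\mathrm{e}=(0,0,(z_++z_-)/2)$, $u(y)=g(\mathrm{e}+y)$ on $\Gamma_{1,L}$; and only then apply Lemma~\ref{lem triangulation on cylinder} to $u$ on the \emph{shorter} cylinder $\Gamma_{1,L}$. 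With this ordering the top and bottom arcs of the triangulation sit exactly at $z=\pm L$, i.e., at $z_\pm$ after translation, so the $3$-cells tile the shell and the caps are cell faces. Everything else in your Steps~1--3 then works as you intend; in particular there is no need to run the coarea selection for $v$ separately (the cap trace is determined by $g$ alone via Lemma~\ref{lemma_single_cell}, which is why $A_6$ is an absolute constant).
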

\begin{proof} Using the coarea formula, we find $z_{-}\in (-L_{0}+h,-L_{0}+2h)$ and $z_{+}\in (L_{0}-2h,L_{0}-h)$ such that for each $z \in \{z_{-}, z_{+}\}$, $\tr_{\mathbb{S}^{1}\times \{z\}}(g)=g|_{\mathbb{S}^{1}\times \{z\}} \in W^{1,p}(\mathbb{S}^{1}\times \{z\}, \mathcal{N})$ and
    \begin{equation}\label{est 5.30}
    \int_{\mathbb{S}^{1}\times \{z\}}|D_{\top}g|^{p}\diff \mathcal{H}^{1}\leq \frac{2}{h}\int_{\Gamma_{1,L_{0}}}|D_{\top}g|^{p}\diff \mathcal{H}^{2}.
    \end{equation} 
    Define $L\coloneqq (z_{+}-z_{-})/2$, $\mathrm{e}\coloneqq (0,0,(z_{+}+z_{-})/2)$ and $u(y)\coloneqq g(\mathrm{e}+y)$ for each $y \in \Gamma_{1,L}$. 
  Let $\kappa_{0}=\kappa_{0}(p_{0}, \mathcal{N})>0$ be the constant given by Lemma~\ref{lem lowenergy cylinder}. Define $\kappa=\kappa_{0}/2A_{5}$, where $A_{5}>0$ is the absolute constant of Lemma~\ref{lem triangulation on cylinder}. Without loss of generality, we assume that $A_{5}\geq 2$. Then $\kappa$ depends only on $p_{0}$ and $\mathcal{N}$. Using Lemma~\ref{lem triangulation on cylinder}, the estimates \eqref{lowenergyconditioncylinder}, \eqref{est 5.30} and taking into account the definition of $\kappa$, we obtain $\Sigma \in \mathrm{RC}_{L, h}$ such that $\operatorname{tr}_{\Sigma}(u)\in W^{1,p}(\Sigma, \mathcal{N})$, $\operatorname{tr}_{\Sigma \cap \Gamma_{1, L}}(u)=u|_{\Sigma\cap \Gamma_{1,L}} \in W^{1,p}(\Sigma \cap \Gamma_{1,L}, \mathcal{N})$,
\begin{equation}\label{uniformfactskeleton}
    \int_{\Sigma \cap \Gamma_{1,L}}|D_{\top} u|^{p}\diff \mathcal{H}^{1} \leq \frac{A_{5}}{h} \int_{\Gamma_{1,L}} |D_{\top} u|^{p}\diff \mathcal{H}^{2}
    \end{equation}
and for each $2$-cell $F$ from the Lipschitz triangulation of $\overline{\Gamma}_{1,L}$ generated by $\Sigma$,
\[
\int_{F}\frac{|D_{\top} u|^{p}}{p}\diff \mathcal{H}^{2}+h \int_{\partial F} \frac{|D_{\top} \operatorname{tr}_{\partial F}(u)|^{p}}{p}\diff \mathcal{H}^{1} \leq 2A_{5} \int_{\Gamma_{1,L_{0}}}\frac{|D_{\top} g|^{p}}{p}\diff \mathcal{H}^{2} \leq \frac{\kappa_{0} h^{2-p}}{2-p},
\]
where $\partial F$ denotes the relative boundary of $F$. This, according to Lemma~\ref{lem lowenergy cylinder}, implies that $\operatorname{tr}_{\partial F}(u)$ is nullhomotopic for each $2$-cell $F$ from the Lipschitz triangulation of $\overline{\Gamma}_{1,L}$ generated by $\Sigma$. Proceeding similarly as in Proposition~\ref{prop ineq T} (changing $\mathbb{S}^{2}$ to $\overline{\Gamma}_{1,L}$ and the geometry of cells, namely changing ``squares'' to ``squares or triangles'' which are all bilipschitz homeomorphic to $B^{2}_{h}$), we obtain $w_{h} \in W^{1,2}(\Gamma_{1,L}, \mathcal{N})$ such that $\tr_{\Sigma}(w_{h})=\tr_{\Sigma}(u)$, $\tr_{\Sigma \cap \Gamma_{1,L}}(w_{h})=w_{h}|_{\Sigma\cap \Gamma_{1,L}}=u|_{\Sigma \cap \Gamma_{1,L}}$ and  
    \begin{equation}\label{est 5.28}
    \int_{\Gamma_{1,L}}|D_{\top}w_{h}|^{p}\diff \mathcal{H}^{2}\leq C h \int_{\Sigma}|D_{\top}u|^{p}\diff \mathcal{H}^{1},
    \end{equation}
    where $C=C(\mathcal{N})>0$. Using \eqref{est 5.30}, \eqref{uniformfactskeleton} and \eqref{est 5.28}, we obtain the estimate
    \[
    \int_{\Gamma_{1,L}}|D_{\top} w_{h}|^{p}\diff \mathcal{H}^{2}\leq C^{\prime}\int_{\Gamma_{1,L_{0}}}|D_{\top} g|^{p}\diff \mathcal{H}^{2},
    \]
    where $C^{\prime}=C^{\prime}(\mathcal{N})>0$. Now we define $v_{h}(x)=w_{h}((x_{1}/(1-h),x_{2}/(1-h),x_{3})-\mathrm{e})$ for $x \in \partial B^{2}_{1-h}\times (z_{-},z_{+})$. In view of the last estimate, we get    \eqref{estliftingcylinder}.

    From the Lipschitz triangulation of $\overline{\Gamma}_{1,L}$ generated by $\Sigma$ we obtain the decomposition of $(B^{2}_{1}\setminus \smash{\overline{B}}^{2}_{1-h}) \times (z_{-},z_{+})$ into the cells
    \begin{equation}
    \label{decompositioninto123cellscylinder}
    D_{i,j}=\left\{x=(x^{\prime},x_{3}) \in \mathbb{R}^{3}: (1-h) < |x^{\prime}| < 1,\,\ \left(\frac{x^{\prime}}{|x^{\prime}|},x_{3}\right) \in (F_{i,j}+\mathrm{e})\right\},
    \end{equation}
    where $D_{i,j}$ is of Hausdorff dimension $(i+1)$. Fix an arbitrary $3$-cell $D_{2,j}$ from the above decomposition. Let $F_{2,j}$ be the $2$-cell of the Lipschitz triangulation of $\overline{\Gamma}_{1,L}$ generated by $\Sigma$ such that $D_{2,j}=\{x \in \mathbb{R}^{3}: (1-h)< |x^{\prime}|<1,\,\ (\frac{x^{\prime}}{|x^{\prime}|},x_{3}) \in (F_{2,j}+\mathrm{e})\}$. Let $\Phi_{j}: [0,h]^{3}\to \smash{\overline{D}_{2,j}}$ be a bilipschitz homeomorphism with an absolute bilipschitz constant (see \eqref{bilip on cyl}) such that $\Phi_{j}((0,h)^{2} \times \{0\})=\{x \in \mathbb{R}^{3}: |x^{\prime}|=1-h,\,\ (\frac{x^{\prime}}{|x^{\prime}|}, x_{3}) \in (F_{2,j} + \mathrm{e})\}$ and $\Phi_{j}((0,h)^{2} \times \{1\})=F_{2,j}+\mathrm{e}$. Applying Lemma~\ref{lemma_single_cell} with $g_{0}$ and $g_{1}$ taking the values of $v_{h}\circ \Phi_{j}|_{(0,h)^{2} \times \{0\}}$ and $g \circ \Phi_{j}|_{(0,h)^{2} \times \{1\}}$, respectively, and changing the variables, we obtain a map $\varphi_{j} \in W^{1,p}(D_{2,j}, \mathcal{N})$ satisfying the estimate
    \begin{equation}
    \label{nu549j54j945jgu95jgu65ugj}
    \begin{split}
    \int_{D_{2,j}}|D\varphi_{j}|^{p}\diff x &\leq C^{\prime \prime} h \biggl(\int_{\Phi_{j}((0,h)^{2} \times \{0\})}|D_{\top} v_{h}|^{p}\diff \mathcal{H}^{2}+\int_{F_{2,j}+\mathrm{e}}|D_{\top} g|^{p}\diff \mathcal{H}^{2} \biggr)\\  & \qquad \qquad + C^{\prime \prime} h^{2} \int_{\partial F_{2,j}+\mathrm{e}}|D_{\top} g|^{p} \diff \mathcal{H}^{1},
    \end{split}
    \end{equation}
    where $C^{\prime \prime}>0$ is an absolute constant. 
    
    Let $\varphi_{h} \in L^{\infty}((B^{2}_{1}\setminus \smash{\overline{B}}^{2}_{1-h}) \times (z_{-},z_{+}), \mathcal{N})$ satisfy $\varphi_{h}|_{D_{2,j}}=\varphi_{j} \in W^{1,p}(D_{2,j}, \mathcal{N})$ for each $3$-cell $D_{2,j}$ from the decomposition \eqref{decompositioninto123cellscylinder}. In view of our construction (see Lemma~\ref{lemma_single_cell}), we can ensure that the traces of the maps $\varphi_{j}$ coincide on the mantles of the $3$-cells $D_{2,j}$. Thus, we deduce that $\varphi_{h} \in W^{1,p}((B^{2}_{1}\setminus \smash{\overline{B}}^{2}_{1-h}) \times (z_{-},z_{+}), \mathcal{N})$.  Observe that \ref{it_AhxeeCeeth8ieghooloboL4a} and \ref{it_ChahghohLail8jah6Aibohng} are satisfied, where \ref{it_ChahghohLail8jah6Aibohng} comes from the fact that
    \begin{equation*}
    \begin{split}
    \int_{(B^{2}_{1} \setminus \smash{\overline{B}}^{2}_{1-h})\times \{z\}}|D_{\top} \operatorname{tr}_{(B^{2}_{1} \setminus \smash{\overline{B}}^{2}_{1-h})\times \{z\}}(\varphi_{h})|^{p}\diff \mathcal{H}^{2} &=\int_{1-h}^{1} r^{1-p} \diff r \int_{\mathbb{S}^{1}\times \{z\}}|D_{\top} g|^{p}\diff \mathcal{H}^{1}\\ & \leq 2h \int_{\mathbb{S}^{1} \times \{z\}} |D_{\top} g|^{p}\diff \mathcal{H}^{1}
    \end{split}
    \end{equation*}
    for each $z \in \{z_{-}, z_{+}\}$ and \eqref{est 5.30}. Furthermore, summing \eqref{nu549j54j945jgu95jgu65ugj} over $j \in \{1,\dotsc, k_{2}\}$, taking into account that each $1$-cell of the Lipschitz triangulation of $\overline{\Gamma}_{1, L}$ generated by $\Sigma$ lies in the relative boundary of at most two $2$-cells from this triangulation, and also using \eqref{estliftingcylinder} together with \eqref{est 5.30} and \eqref{uniformfactskeleton}, we deduce the following chain of estimates 
\begin{equation*}
\begin{split}
\int_{(B^{2}_{1}\setminus \smash{\overline{B}}^2_{1-h})\times (z_{-}, z_{+})}|D\varphi_{h}|^{p}\diff x 
&\leq C^{\prime \prime} h \biggl(\int_{\partial B^{2}_{1-h} \times (z_{-}, z_{+})}|D_{\top}v_{h}|^{p}\diff \mathcal{H}^{2}+\int_{\mathbb{S}^{1}\times (z_{-},z_{+})}|D_{\top} g|^{p}\diff \mathcal{H}^{2}\biggr) \\ & \qquad \qquad +2C^{\prime \prime} h^{2}\int_{\Sigma+\mathrm{e}}|D_{\top}g|^{p}\diff \mathcal{H}^{1}\\
&\leq C^{\prime \prime \prime}h \int_{\Gamma_{1,L_{0}}}|D_{\top}g|^{p}\diff \mathcal{H}^{2},
\end{split}
\end{equation*} 
where $C^{\prime \prime \prime}=C^{\prime \prime \prime}(\mathcal{N})>0$. This completes our proof of \ref{it_Bit7ooRoo3ceithiegeej0xi} and hence of Lemma~\ref{finding nice v and conmapping cylinder}.
\end{proof}

\begin{cor}\label{cor on cylinder}
        Let $p_{0} \in (1,2)$, $p \in [p_{0}, 2)$, $\delta \in (0,1/2]$ and $g \in~W^{1,p}(\Gamma_{\delta r,r}, \mathcal{N})$, where $\Gamma_{\delta r,r}$ is defined in \eqref{def cyl}. Then there exists $\tau=\tau(p_{0}, \mathcal{N})>0$ such that the following holds. Assume that 
        \begin{equation}\label{penerboundsurf}
            \int_{\Gamma_{\delta r, r}}\frac{|D_{\top}g|^{p}}{p}\diff \mathcal{H}^{2} \leq \frac{\tau (\delta r)^{2-p}}{2-p}.
        \end{equation}
        Then there exist points $z_{-} \in (-r+\delta r/2,-r+\delta r)$, $z_{+}\in (r-\delta r, r-\delta r/2)$ and a mapping $\varphi_{\delta}\in W^{1,p}((B^{2}_{\delta r }\setminus \smash{\overline{B}}^{2}_{\delta r/2}) \times (z_{-},z_{+}), \mathcal{N})$ such that the following assertions hold.
        \begin{enumerate}[label=(\roman*)]
            \item    
            \label{tracepropertyonthelateralsurfaces12}
           \(\operatorname{tr}_{\partial B^{2}_{\delta r}\times I}(\varphi_{\delta})=g|_{\partial B^{2}_{\delta r}\times I},\) \(
             v_{\delta} \coloneqq
                \operatorname{tr}_{\partial B^{2}_{\delta r/2}\times I}(\varphi_{\delta}) \in W^{1,2}(\partial B^{2}_{\delta r/2}\times I, \mathcal{N}),
            \)
            where $I=(z_{-},z_{+})$.
            \item 
            \label{it_Ahk5aboQu8vaighoo6ashaev}
            For each $z \in \{z_{-},z_{+}\}$, $\operatorname{tr}_{(B^{2}_{\delta r}\setminus \smash{\overline{B}}^{2}_{\delta r/2})\times \{z\}}(\varphi_{\delta})\in W^{1,p}((B^{2}_{\delta r}\setminus \smash{\overline{B}}^{2}_{\delta r/2})\times \{z\}, \mathcal{N})$ 
            and 
            \[
            \int_{(B^{2}_{\delta r}\setminus \smash{\overline{B}}^{2}_{\delta r/2})\times \{z\}}\frac{|D_{\top} \operatorname{tr}_{(B^{2}_{\delta r}\setminus \smash{\overline{B}}^{2}_{\delta r/2})\times \{z\}}(\varphi_{\delta})|^{p}}{p}\diff \mathcal{H}^{2}\leq A_{6} \int_{\Gamma_{\delta r, r}}\frac{|D_{\top} g|^{p}}{p}\diff \mathcal{H}^{2}.
            \]
            \item 
            \label{it_aisheraesaenai1ohCoh2Eer}
            There exists a constant $C=C(\mathcal{N})>0$ such that
            \begin{equation*}
                \int_{\partial B^{2}_{\delta r/2}\times (z_{-},z_{+})}\frac{|D_{\top}v_{\delta}|^{p}}{p}\diff \mathcal{H}^{2}\leq C\int_{\Gamma_{\delta r,r}}\frac{|D_{\top}g|^{p}}{p}\diff \mathcal{H}^{2}
            \end{equation*}
            and
            \begin{equation*}
                \int_{(B^{2}_{\delta r}\setminus \smash{\overline{B}}^{2}_{\delta r/2})\times (z_{-},z_{+})}\frac{|D\varphi_{\delta}|^{p}}{p}\diff x \leq C\delta r \int_{\Gamma_{\delta r, r}}\frac{|D_{\top}g|^{p}}{p} \diff \mathcal{H}^{2}.
            \end{equation*}
        \end{enumerate}
\end{cor}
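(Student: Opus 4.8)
The plan is to obtain Corollary~\ref{cor on cylinder} from Lemma~\ref{finding nice v and conmapping cylinder} by the dilation that normalizes the cylinder to unit radius. Given $g\in W^{1,p}(\Gamma_{\delta r,r},\mathcal{N})$ satisfying \eqref{penerboundsurf}, I would set $L_{0}\coloneqq 1/\delta$ and $h\coloneqq 1/2$; since $\delta\in(0,1/2]$ one has $L_{0}\geq 2$ and $h\in(0,1/2]$, which are the admissible ranges in Lemma~\ref{finding nice v and conmapping cylinder}. Define $\widetilde{g}\colon\Gamma_{1,L_{0}}\to\mathcal{N}$ by $\widetilde{g}(y)=g(\delta r\,y)$; then $\widetilde{g}\in W^{1,p}(\Gamma_{1,L_{0}},\mathcal{N})$, and since the tangent plane of the cylinder $\partial B^{2}_{\rho}\times\mathbb{R}$ at a point $(\rho\sigma,z)$ depends only on $\sigma$, dilation by $\delta r$ preserves tangent planes and gives $|D_{\top}\widetilde{g}(y)|=\delta r\,|D_{\top}g(\delta r\,y)|$; combined with $\diff\mathcal{H}^{2}(x)=(\delta r)^{2}\diff\mathcal{H}^{2}(y)$ this yields the scaling identity
\[
\int_{\Gamma_{1,L_{0}}}\frac{|D_{\top}\widetilde{g}|^{p}}{p}\diff\mathcal{H}^{2}
=(\delta r)^{p-2}\int_{\Gamma_{\delta r,r}}\frac{|D_{\top}g|^{p}}{p}\diff\mathcal{H}^{2}.
\]

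By \eqref{penerboundsurf} the right-hand side is at most $\tau/(2-p)$, so condition \eqref{lowenergyconditioncylinder} of Lemma~\ref{finding nice v and conmapping cylinder} with $h=1/2$ holds provided $\tau\leq\kappa\,(1/2)^{2-p}$ for every $p\in[p_{0},2)$, where $\kappa=\kappa(p_{0},\mathcal{N})$ is the constant of that lemma. Since $p\mapsto(1/2)^{2-p}$ is increasing on $[p_{0},2)$, it suffices to choose $\tau\coloneqq\kappa\,(1/2)^{2-p_{0}}$, which depends only on $p_{0}$ and $\mathcal{N}$. Lemma~\ref{finding nice v and conmapping cylinder} then produces $\widetilde{z}_{-}\in(-L_{0}+h,-L_{0}+2h)$, $\widetilde{z}_{+}\in(L_{0}-2h,L_{0}-h)$, together with $\widetilde{\varphi}_{h}\in W^{1,p}((B^{2}_{1}\setminus\smash{\overline{B}}^{2}_{1/2})\times(\widetilde{z}_{-},\widetilde{z}_{+}),\mathcal{N})$ and $v_{h}$ satisfying its conclusions~\ref{it_AhxeeCeeth8ieghooloboL4a}--\ref{it_Bit7ooRoo3ceithiegeej0xi}.

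Finally I would pull everything back under the dilation. Set $z_{\pm}\coloneqq\delta r\,\widetilde{z}_{\pm}$; since $h=1/2$ and $L_{0}=1/\delta$, $z_{-}$ lands in $(-r+\delta r/2,-r+\delta r)$ and $z_{+}$ in $(r-\delta r,r-\delta r/2)$, as required. Put $\varphi_{\delta}(x)\coloneqq\widetilde{\varphi}_{h}(x/(\delta r))$ on $(B^{2}_{\delta r}\setminus\smash{\overline{B}}^{2}_{\delta r/2})\times(z_{-},z_{+})$ and $v_{\delta}(x)\coloneqq v_{h}(x/(\delta r))$ on $\partial B^{2}_{\delta r/2}\times(z_{-},z_{+})$; the trace identities and the $W^{1,2}$ regularity of $v_{\delta}$ in~\ref{tracepropertyonthelateralsurfaces12} are immediate from~\ref{it_AhxeeCeeth8ieghooloboL4a}. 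Each of the quantitative estimates~\ref{it_Ahk5aboQu8vaighoo6ashaev}--\ref{it_aisheraesaenai1ohCoh2Eer} then follows from the corresponding estimate in Lemma~\ref{finding nice v and conmapping cylinder}: the change of variables produces a prefactor $(\delta r)^{2-p}$ for the $2$-dimensional quantities (resp. $(\delta r)^{3-p}$ for the $3$-dimensional integral of $|D\varphi_{\delta}|^{p}$), while the scaling identity above replaces $\int_{\Gamma_{1,L_{0}}}|D_{\top}\widetilde{g}|^{p}$ by $(\delta r)^{p-2}\int_{\Gamma_{\delta r,r}}|D_{\top}g|^{p}$; the two powers of $\delta r$ cancel in~\ref{it_Ahk5aboQu8vaighoo6ashaev} and in the first estimate of~\ref{it_aisheraesaenai1ohCoh2Eer} (leaving the absolute constant $A_{6}$, resp. $C(\mathcal{N})$, unchanged), and leave precisely the factor $\delta r$ in the second estimate of~\ref{it_aisheraesaenai1ohCoh2Eer}. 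The entire argument is thus routine rescaling bookkeeping; the only point requiring care is the $p$-uniformity of the smallness threshold, which is exactly what the $p_{0}$-dependent choice of $\tau$ ensures, so I do not expect any genuine obstacle here beyond keeping track of the dilation factors.
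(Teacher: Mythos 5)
Your proposal is correct and is essentially identical to the paper's argument: rescale by the factor $\delta r$ to reduce to $\Gamma_{1,1/\delta}$, apply Lemma~\ref{finding nice v and conmapping cylinder} with $L_{0}=1/\delta$ and $h=1/2$, and pull back. The paper simply fixes $\tau\in(0,\kappa/2]$ and observes $\kappa/2<\kappa(1/2)^{2-p}$ for $p>1$, whereas you take the sharper constant $\tau=\kappa(1/2)^{2-p_{0}}$; both are valid and depend only on $p_{0}$ and $\mathcal{N}$, so the difference is immaterial.
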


\begin{proof}[Proof of Corollary~\ref{cor on cylinder}]
    Define $f(x)\coloneqq g(\delta r x)$ for each $x \in \Gamma_{1,1/\delta}$. Let $\kappa=\kappa(p_{0}, \mathcal{N})>0$ be the constant of Lemma~\ref{finding nice v and conmapping cylinder}. Fix $\tau\in (0, \kappa/2]$. Then, in view of \eqref{penerboundsurf}, it holds 
    \[
    \int_{\Gamma_{1,1/\delta}}\frac{|D_{\top}f|^{p}}{p}\diff \mathcal{H}^{2}\leq \frac{\tau}{2-p}\leq \frac{\kappa}{2(2-p)}<\frac{\kappa (1/2)^{2-p}}{2-p}.
    \]
    This allows us to apply Lemma~\ref{finding nice v and conmapping cylinder} with $L_{0}=~1/\delta$, $h=1/2$ for the map $f \in W^{1,p}(\Gamma_{1,1/\delta}, \mathcal{N})$. 
    Thus, we obtain points $\xi_{-}\in (-1/\delta+1/2,-1/\delta+1)$, $\xi_{+}\in (1/\delta-1,1/\delta-1/2)$ and maps $w\in W^{1,2}(\partial B^{2}_{1/2}\times (\xi_{-},\xi_{+}), \mathcal{N})$, $\psi\in W^{1,p}((B^{2}_{1}\setminus \smash{\overline{B}}^{2}_{1/2})\times (\xi_{-},\xi_{+}), \mathcal{N})$ such that the assertions \ref{it_AhxeeCeeth8ieghooloboL4a}-\ref{it_Bit7ooRoo3ceithiegeej0xi} of Lemma~\ref{finding nice v and conmapping cylinder} hold for $z_{-},z_{+},v_{h}, \varphi_{h}$ replaced by $\xi_{-}, \xi_{+}, w, \psi$, respectively. 
    Scaling back to $\Gamma_{\delta r,r}$, namely, defining $z_{-}=\delta r\xi_{-}$, $z_{+}=\delta r\xi_{+}$, $v_{\delta}(\cdot)=w(\cdot/\delta r)$ and $\varphi_{\delta}(\cdot)=\psi(\cdot/\delta r)$, we complete our proof of Corollary~\ref{cor on cylinder}.
\end{proof}

\begin{prop}\label{prop energy bounds on cylinders}
    Let $p_{0} \in (1,2)$, $p\in [p_{0},2)$, $\delta \in (0,1/2]$ and $g \in W^{1,p}(\partial \Lambda_{\delta r,r}, \mathcal{N})$, where $\Lambda_{\delta r,r}$ and $\Gamma_{\delta r,r}$ are defined in \eqref{def cyl}. Let $\tau=\tau(p_{0}, \mathcal{N})>0$ be the constant of Corollary~\ref{cor on cylinder}. Assume that 
    \begin{equation*}
    \int_{\Gamma_{\delta r, r}}\frac{|D_{\top}g|^{p}}{p}\diff \mathcal{H}^{2}\leq \frac{\tau (\delta r)^{2-p}}{2-p}. 
    \end{equation*} 
    Let $u$ be a $p$-minimizer in $\Lambda_{\delta r,r}$ with $\tr_{\partial \Lambda_{\delta r,r}}(u)=g$. Then there exists $\gamma \in [\mathbb{S}^{1}, \mathcal{N}]$ such that the following estimates hold
    \begin{equation}
    \label{upper bound cylinder}
        \begin{split}
    \int_{\Lambda_{\delta r, r}}\frac{|Du|^{p}}{p}\diff x& \leq \frac{2\mathcal{E}^{\mathrm{sg}}_{p}(\gamma)r^{3-p}}{2-p} +  C\delta^{1-p}r\int_{\Gamma_{\delta r,r}}\frac{|D_{\top}g|^{p}}{p}\diff \mathcal{H}^{2} \\
    & \,\ +C\delta r\biggl(\int_{B^{2}_{\delta r}\times \{-r\}}\frac{|D_{\top}g|^{p}}{p}\diff \mathcal{H}^{2}+\int_{B^{2}_{\delta r}\times \{r\}}\frac{|D_{\top}g|^{p}}{p}\diff \mathcal{H}^{2}\biggr)
    +\frac{C(\delta r)^{3-p}}{2-p}
\end{split}
    \end{equation}
    and
    \begin{equation} 
    \label{lower bound cylinder}
    (2-2\delta)\frac{\mathcal{E}^{\mathrm{sg}}_{p/(p-1)}(\gamma)\delta^{2-p}r^{3-p}}{2-p} - C\delta r\int_{\Gamma_{\delta r,r}}\frac{|D_{\top}g|^{p}}{p}\diff \mathcal{H}^{2} \leq \int_{\Lambda_{\delta r, r}}\frac{|Du|^{p}}{p}\diff x
    \end{equation}
    where $C=C(\mathcal{N})>0$.
\end{prop}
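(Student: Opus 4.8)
The plan is to prove the two estimates separately, using a single topological charge $\gamma\in[\mathbb{S}^1,\mathcal{N}]$. Since $\int_{\Gamma_{\delta r,r}}\frac{|D_{\top}g|^{p}}{p}\diff\mathcal{H}^{2}\leq\frac{\tau(\delta r)^{2-p}}{2-p}$, Corollary~\ref{cor on cylinder} applies and yields points $z_-\in(-r+\delta r/2,-r+\delta r)$, $z_+\in(r-\delta r,r-\delta r/2)$, a map $\varphi_{\delta}\in W^{1,p}((B^2_{\delta r}\setminus\overline{B}^2_{\delta r/2})\times(z_-,z_+),\mathcal{N})$ equal to $g$ on the outer lateral surface $\partial B^2_{\delta r}\times(z_-,z_+)$, and $v_{\delta}=\operatorname{tr}_{\partial B^2_{\delta r/2}\times(z_-,z_+)}(\varphi_{\delta})\in W^{1,2}(\partial B^2_{\delta r/2}\times(z_-,z_+),\mathcal{N})$, together with the energy bounds in (ii)--(iii). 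We set $\gamma:=[v_{\delta}]_{\Gamma}\in[\mathbb{S}^1,\mathcal{N}]$, which is well defined by Remark~\ref{homotopy class} as $v_{\delta}\in W^{1,2}$. Because $p>1$, a standard slicing argument (cf.\ Remark~\ref{homotopy class}, with Morrey's embedding $W^{1,p}(\mathbb{S}^1)\hookrightarrow C^{0}$ replacing smooth approximation) shows that an $\mathcal{N}$-valued $W^{1,p}$ map on a planar annulus restricts, for a.e.\ radius, to a continuous circle map whose free homotopy class is independent of the radius; applied to $\varphi_{\delta}(\cdot,z)$ for a.e.\ $z\in(z_-,z_+)$ this gives that $g|_{\partial B^2_{\delta r}\times\{z\}}$ is freely homotopic to $\gamma$ for a.e.\ $z\in(z_-,z_+)$.

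\textbf{Lower bound.} For a.e.\ $z\in(-r,r)$ we have $u|_{B^2_{\delta r}\times\{z\}}\in W^{1,p}(B^2_{\delta r},\mathcal{N})$ with $\operatorname{tr}_{\partial B^2_{\delta r}\times\{z\}}(u)=g|_{\partial B^2_{\delta r}\times\{z\}}\in W^{1,p}(\partial B^2_{\delta r},\mathcal{N})$ (and in particular in $\mathrm{VMO}$ since $p>1$), so Proposition~\ref{Sandier lemma} gives
\[
\int_{B^2_{\delta r}\times\{z\}}\frac{|Du|^{p}}{p}\diff\mathcal{H}^{2}\geq\frac{\mathcal{E}^{\mathrm{sg}}_{p/(p-1)}(g|_{\partial B^2_{\delta r}\times\{z\}})(\delta r)^{2-p}}{2-p}-\delta r\int_{\partial B^2_{\delta r}\times\{z\}}\frac{|D_{\top}g|^{p}}{p}\diff\mathcal{H}^{1}.
\]
As $\mathcal{E}^{\mathrm{sg}}_{p/(p-1)}$ is homotopy invariant and nonnegative, the first term on the right equals $\mathcal{E}^{\mathrm{sg}}_{p/(p-1)}(\gamma)(\delta r)^{2-p}/(2-p)$ for a.e.\ $z\in(z_-,z_+)$ and is $\geq 0$ for every $z$; integrating over $z\in(-r,r)$, using the coarea formula on $\Gamma_{\delta r,r}$ for the error term and $z_+-z_->2r-2\delta r$ for the main term, we obtain \eqref{lower bound cylinder} with $C=1$.

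\textbf{Upper bound.} We build an admissible competitor $v\in W^{1,p}(\Lambda_{\delta r,r},\mathcal{N})$ with $\operatorname{tr}_{\partial\Lambda_{\delta r,r}}(v)=g$ and then use the $p$-minimality of $u$. On the shell $(B^2_{\delta r}\setminus\overline{B}^2_{\delta r/2})\times(z_-,z_+)$ we set $v=\varphi_{\delta}$, of energy $\leq C\delta r\int_{\Gamma_{\delta r,r}}\frac{|D_{\top}g|^{p}}{p}$ by Corollary~\ref{cor on cylinder}~(iii). On the inner cylinder $B^2_{\delta r/2}\times(z_-,z_+)$ (recentered in the $z$-direction) we apply Lemma~\ref{lem 2.19} with lateral datum $v_{\delta}$ and $L=(z_+-z_-)/2<r$; part~(i) contributes $\frac{2L\mathcal{E}^{\mathrm{sg}}_{p}(\gamma)(\delta r/2)^{2-p}}{2-p}+C\bigl(\frac{L^{p}}{(\delta r/2)^{p-1}}+\delta r\bigr)\int_{\partial B^2_{\delta r/2}\times(z_-,z_+)}\frac{|D_{\top}v_{\delta}|^{p}}{p}$, and since $2L(\delta r/2)^{2-p}\leq 2\delta^{2-p}r^{3-p}\leq 2r^{3-p}$, $\frac{L^{p}}{(\delta r/2)^{p-1}}\leq 2\delta^{1-p}r$, $\int\frac{|D_{\top}v_{\delta}|^{p}}{p}\leq C\int_{\Gamma_{\delta r,r}}\frac{|D_{\top}g|^{p}}{p}$ (Corollary~\ref{cor on cylinder}~(iii)) and $\delta\leq\delta^{1-p}$, this is $\leq\frac{2\mathcal{E}^{\mathrm{sg}}_{p}(\gamma)r^{3-p}}{2-p}+C\delta^{1-p}r\int_{\Gamma_{\delta r,r}}\frac{|D_{\top}g|^{p}}{p}$. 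Finally, each cap $B^2_{\delta r}\times(-r,z_-]$ and $B^2_{\delta r}\times[z_+,r)$ is a solid cylinder of radius $\delta r$ and height comparable to $\delta r$, hence bilipschitz to $\overline{B}^3_{\delta r}$ with an absolute constant; on it we take the homogeneous extension (Remark~\ref{remliflinear} with $m=3$: constant $1/(3-p)\leq 1$, no topological hypothesis needed) of the datum assembled from $g$ on the lateral part and outer disk, $\operatorname{tr}(\varphi_{\delta})$ on the outer annulus of the interior disk, and the above inner-cylinder extension on the inner disk; these pieces share traces along the common edges by construction, so the datum lies in $W^{1,p}$ on the topological sphere. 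Its energy is $\leq C\delta r$ times the sum of the $p$-energies of these pieces, which by Corollary~\ref{cor on cylinder}~(ii)--(iii), Lemma~\ref{lem 2.19}~(ii) and the uniform bound $\mathcal{E}^{\mathrm{sg}}_{p}(\gamma)\leq C(\mathcal{N})$ (Lemma~\ref{lem cont singenergy}) is $\leq C\delta^{1-p}r\int_{\Gamma_{\delta r,r}}\frac{|D_{\top}g|^{p}}{p}+C\delta r\bigl(\int_{B^2_{\delta r}\times\{-r\}}\frac{|D_{\top}g|^{p}}{p}+\int_{B^2_{\delta r}\times\{r\}}\frac{|D_{\top}g|^{p}}{p}\bigr)+\frac{C(\delta r)^{3-p}}{2-p}$. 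Gluing along the interior disks and mantles, where the traces coincide, makes $v$ admissible, and summing the three contributions gives \eqref{upper bound cylinder}.

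\textbf{Main obstacle.} The delicate part will be the competitor for the upper bound: organizing the three regions (shell, inner cylinder, two caps) so that the traces of the building blocks are mutually compatible and the glued map is genuinely in $W^{1,p}(\Lambda_{\delta r,r},\mathcal{N})$ with trace $g$, and then tracking all the error terms and absorbing them into the four terms of \eqref{upper bound cylinder} via repeated use of $\delta\leq 1/2$, $1<p<2$ and $\mathcal{E}^{\mathrm{sg}}_{p}(\gamma)\leq C(\mathcal{N})$; the homotopy-class identifications in the first step, while standard, also require some care.
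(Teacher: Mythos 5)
Your upper-bound competitor (shell $=\varphi_{\delta}$ from Corollary~\ref{cor on cylinder}, inner cylinder via Lemma~\ref{lem 2.19} with lateral datum $v_{\delta}$, two caps via a bilipschitz map to $\overline{B}^{3}_{\delta r}$ followed by homogeneous extension) is essentially the paper's decomposition into $\Lambda^{0}_{\delta}$, $E_{\delta}$ and $\Lambda^{\pm}_{\delta}$, and your bookkeeping of the error terms is correct.

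The lower bound, however, takes a genuinely different route and has a gap. You apply Proposition~\ref{Sandier lemma} directly to the slices $u|_{B^{2}_{\delta r}\times\{z\}}$ with boundary datum $g|_{\partial B^{2}_{\delta r}\times\{z\}}$, and to extract the term $\mathcal{E}^{\mathrm{sg}}_{p/(p-1)}(\gamma)$ you need $[g|_{\partial B^{2}_{\delta r}\times\{z\}}]=[\gamma]$ for $\mathcal{H}^{1}$-a.e.\ $z\in(z_{-},z_{+})$. You justify this by asserting that an $\mathcal{N}$-valued $W^{1,p}$ map on a planar annulus restricts, for a.e.\ radius, to a continuous circle map whose free homotopy class is independent of the radius. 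That assertion is \emph{false} for $1<p<2$: for example, $u(z)=\frac{z}{|z|}\cdot\frac{\overline{z-a}}{|z-a|}$ (seen as an $\mathbb{S}^1$-valued map on an annulus containing $a$ in its interior) belongs to $W^{1,p}$ of the annulus for every $p<2$, yet the degree of $u$ on circles jumps when the radius crosses $|a|$. In dimension $2$, $W^{1,p}$ with $p<2$ does \emph{not} embed into $\mathrm{VMO}$, so a $W^{1,p}$ annulus map can carry topological defects; the constant-homotopy-class statement of Remark~\ref{homotopy class} is an essentially $W^{1,2}$ phenomenon. Consequently, neither the $W^{1,p}$ regularity of $\varphi_{\delta}(\cdot,z)$ nor that of $g$ on $\Gamma_{\delta r,r}$ suffices, as written, to conclude that $[g|_{\partial B^{2}_{\delta r}\times\{z\}}]=[\gamma]$. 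This is precisely why the paper's proof does not apply Proposition~\ref{Sandier lemma} to $u$ directly: it first forms the glued map
$\psi(\varrho\sigma,z)=u(2\varrho\sigma,z)$ for $\varrho\in[0,\delta r/2)$ and $\psi(\varrho\sigma,z)=\varphi_{\delta}((3\delta r/2-\varrho)\sigma,z)$ for $\varrho\in[\delta r/2,\delta r)$, whose lateral trace is $v_{\delta}(\cdot/2,\cdot)\in W^{1,2}(\partial B^{2}_{\delta r}\times(z_{-},z_{+}),\mathcal{N})$; for a $W^{1,2}$ lateral trace the slice homotopy class is well defined and constant, and the Sandier lemma is applied to $\psi$ at the price of an extra, controlled error term from Corollary~\ref{cor on cylinder}~\ref{it_aisheraesaenai1ohCoh2Eer}. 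Your more direct slicing of $u$ could perhaps be repaired by showing that, under the smallness hypothesis on $\int_{\Gamma_{\delta r,r}}|D_{\top}g|^{p}$, the map $g$ carries no topological defect on $\Gamma_{\delta r,r}$ (an argument in the spirit of Lemma~\ref{lem lowenergy cylinder}), but that requires an additional step you have not provided, and the general $W^{1,p}$-on-annuli claim that you invoke in its place is incorrect.
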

\begin{rem}
The singular $p$-energies $\mathcal{E}^{\mathrm{sg}}_{p}(\gamma)$ and $\mathcal{E}^{\mathrm{sg}}_{p/(p-1)}(\gamma)$ appearing in \eqref{upper bound cylinder} and \eqref{lower bound cylinder}, respectively, are not the same in general (see Definition~\ref{def p-singular}).
\end{rem}

Before proving Proposition~\ref{prop energy bounds on cylinders}, let us consider the case where $r=1$, $\gamma \in C^{1}(\mathbb{S}^{1}, \mathcal{N})$ and the map $g \in W^{1,p}(\partial \Lambda_{\delta,1}, \mathcal{N})$ is defined on the closure of the lateral surface $\Gamma_{\delta, 1}$ (see \eqref{def cyl}) as $g(\sigma, z)=\gamma(\sigma/\delta)$ for each point $(\sigma, z) \in \partial B^{2}_{\delta}\times [-1,1]$ and on the ball $B^{2}_{\delta}\times \{z\}$ with $z \in \{-1, 1\}$, as the extension of the mapping $g(\cdot, z) \in C^{1}(\partial B^{2}_{\delta} \times \{z\}, \mathcal{N})$ coming from Lemma~\ref{lem 2.17}, so that $g$ is of class $C^{1}$ on $\Gamma_{\delta, 1}$ and its homotopy class on $\Gamma_{\delta, 1}$ (see Remark~\ref{homotopy class}) is given by $[\gamma] \in [\mathbb{S}^{1}, \mathcal{N}]$. 
The upper bound \eqref{upper bound cylinder} for the $p$-energy on $\Lambda_{\delta,1}$ of a minimizer $u \in W^{1,p}(\Lambda_{\delta, 1}, \mathcal{N})$ with $\tr_{\partial \Lambda_{\delta, 1}}(u)=g$ comes from Lemma~\ref{lem 2.19}~\ref{it_pha8ieh4Umaejao2vaephei8}, while the lower bound \eqref{lower bound cylinder} for the $p$-energy of $u$ on $\Lambda_{\delta,1}$ is a consequence of the application of the coarea formula and Proposition~\ref{Sandier lemma}.

\begin{proof}[Proof of Proposition~\ref{prop energy bounds on cylinders}]
 By scaling, it is enough to prove Proposition~\ref{prop energy bounds on cylinders} in the case when $r=1$. So we assume that $r=1$. In view of the conditions of Proposition~\ref{prop energy bounds on cylinders}, we can apply Corollary~\ref{cor on cylinder} to $g$ in $\Gamma_{\delta,1}$. Let points $z_{-} \in (-1+\delta/2, -1+\delta)$, $z_{+}\in (1-\delta, 1-\delta/2)$ and maps $v_{\delta} \in W^{1,2}(\partial B^{2}_{\delta/2}\times (z_{-}, z_{+}), \mathcal{N})$, $\varphi_{\delta} \in W^{1,p}((B^{2}_{\delta}\setminus \smash{\overline{B}}^{2}_{\delta/2})\times (z_{-},z_{+}), \mathcal{N})$ be given by Corollary~\ref{cor on cylinder} applied to our $g\in W^{1,p}(\Gamma_{\delta,1}, \mathcal{N})$. According to Remark~\ref{homotopy class}, $v_{\delta}$ has a well-defined ``homotopy class'' on $\partial B^{2}_{\delta/2}\times (z_{-}, z_{+})$. Thus, there exists $\gamma \in [\mathbb{S}^{1}, \mathcal{N}]$ such that for $\mathcal{H}^{1}$-a.e.\  $z \in (z_{-}, z_{+})$, $[v_{\delta}|_{\partial B^{2}_{\delta/2}\times \{z\}}]=[\gamma]$. This defines our $\gamma$. Hereinafter in this proof, $C$ will denote a positive constant that can depend only on $\mathcal{N}$ and can be different from line to line. 
 \medbreak
 \noindent
 \emph{Step~1.} We prove the  upper bound \eqref{upper bound cylinder}. For convenience, in order to construct a competitor for $u$, we define the following subdomains of $\Lambda_{\delta,1}$:
 \begin{align*}
 \Lambda^{-}_{\delta}&\coloneqq B^{2}_{\delta}\times (-1,z_{-}),&
 \Lambda^{0}_{\delta}&\coloneqq B^{2}_{\delta/2}\times (z_{-},z_{+}),\\
 E_{\delta} &\coloneqq (B^{2}_{\delta}\setminus \smash{\overline{B}}^{2}_{\delta/2})\times (z_{-},z_{+}),& \Lambda^{+}_{\delta}&\coloneqq B^{2}_{\delta}\times (z_{+},1).
 \end{align*}
 Let us construct a competitor $w$ for $u$, namely $w\in W^{1,p}(\Lambda_{\delta,1}, \mathcal{N})$ with $\tr_{\partial \Lambda_{\delta,1}}(w)=g$. 
 We define $w$ in $\Lambda^{0}_{\delta}$ by using Lemma~\ref{lem 2.19}~\ref{it_pha8ieh4Umaejao2vaephei8}, namely we obtain a map $w|_{\Lambda^{0}_{\delta}} \in W^{1,p}(\Lambda^{0}_{\delta},\mathcal{N})$ such that 
 \begin{equation}
  \label{est 5.38}
  \begin{split}
 \int_{\Lambda^{0}_{\delta}}\frac{|Dw|^{p}}{p}\diff x
 &\leq \frac{(z_{+}-z_{-})\mathcal{E}^{\mathrm{sg}}_{p}(\gamma)\delta^{2-p}}{(2-p)2^{2-p}}+C\biggl(\frac{(z_{+}-z_{-})^{p}}{2\delta^{p-1}}+\frac{\delta}{2}\biggr)\int_{\partial B^{2}_{\delta/2}\times (z_{-},z_{+})}\frac{|D_{\top}v_{\delta}|^{p}}{p}\diff \mathcal{H}^{2}\\
 & \leq \frac{2\mathcal{E}^{\mathrm{sg}}_{p}(\gamma)}{2-p}+\frac{C}{\delta^{p-1}}\int_{\Gamma_{\delta,1}}\frac{|D_{\top}g|^{p}}{p}\diff \mathcal{H}^{2},
 \end{split}
 \end{equation}
 where we have also used that $(z_{+}-z_{-})\leq 2$, $\delta \in (0,1/2]$, $p\in (1,2)$ and Corollary~\ref{cor on cylinder}~\ref{it_aisheraesaenai1ohCoh2Eer}.
 Next, we define the mapping $g^{+}:\partial \Lambda^{+}_{\delta}\to \mathcal{N}$ by
 \[
 g^{+}(x)\coloneqq 
 \begin{cases}
 g(x) \,\ &\text{if} \,\ x \in \partial \Lambda^{+}_{\delta} \cap \partial \Lambda_{\delta,1},\\ 
 \operatorname{tr}_{(B^{2}_{\delta}\setminus \smash{\overline{B}}^{2}_{\delta/2})\times \{z_{+}\}}(\varphi_{\delta})(x) \,\ &\text{if} \,\ x \in (B^{2}_{\delta}\setminus \smash{\overline{B}}^{2}_{\delta/2})\times \{z_{+}\},\\
 \operatorname{tr}_{\smash{\overline{B}}^{2}_{\delta/2}\times \{z_{+}\}}(w)(x) \,\ &\text{if} \,\ x \in \smash{\overline{B}}^{2}_{\delta/2}\times \{z_{+}\}. 
 \end{cases}
 \]
 By Lemma~\ref{lem 2.19}~\ref{it_cang8wooGhodoh3Eegoom7ee}, $g^{+}|_{B^{2}_{\delta/2}\times \{z_{+}\}} \in W^{1,p}(B^{2}_{\delta/2}\times \{z_{+}\}, \mathcal{N})$. 
 On the other hand, according to Corollary~\ref{cor on cylinder}~\ref{it_Ahk5aboQu8vaighoo6ashaev},  $g^{+}|_{(B^{2}_{\delta}\setminus \smash{\overline{B}}^{2}_{\delta/2})\times \{z_{+}\}} \in W^{1,p}((B^{2}_{\delta}\setminus \smash{\overline{B}}^{2}_{\delta/2})\times \{z_{+}\}, \mathcal{N})$.
 Thus, taking into account that the traces of $g$ and $\varphi_{\delta}$ coincide on $\partial B^{2}_{\delta} \times \{z_{+}\}$, and the traces of $\varphi_{\delta}$ and $w$ coincide on $\partial B^{2}_{\delta/2}\times \{z_{+}\}$, we conclude that $g^{+}\in W^{1,p}(\partial \Lambda^{+}_{\delta}, \mathcal{N})$. 
 In particular, using Lemma~\ref{lem 2.19}~\ref{it_cang8wooGhodoh3Eegoom7ee}, we obtain
 \begin{equation}
  \label{est 5.39}
  \begin{split}
  \int_{\partial \Lambda^{+}_{\delta}}\frac{|D_{\top} g^{+}|^{p}}{p}\diff \mathcal{H}^{2}&\leq  \int_{ \Gamma_{\delta,1}}\frac{|D_{\top}g|^{p}}{p}\diff \mathcal{H}^{2}+ \int_{B^{2}_{\delta}\times \{1\}}\frac{|D_{\top}g|^{p}}{p}\diff \mathcal{H}^{2}+\frac{\mathcal{E}^{\mathrm{sg}}_{p}(\gamma) \delta^{2-p}}{(2-p)2^{2-p}}\\  
 &\,\ \,\ +C\biggl(\biggl(\frac{z_{+}-z_{-}}{\delta}\biggr)^{p-1}+\frac{\delta}{z_{+}-z_{-}}\biggr)\int_{\Gamma_{\delta,1}}\frac{|D_{\top}g|^{p}}{p}\diff \mathcal{H}^{2}\\
 & \leq \frac{C}{\delta^{p-1}}\int_{\Gamma_{\delta,1}}\frac{|D_{\top}g|^{p}}{p}\diff \mathcal{H}^{2}+ \int_{B^{2}_{\delta}\times \{1\}}\frac{|D_{\top}g|^{p}}{p}\diff \mathcal{H}^{2}+\frac{C\delta^{2-p}}{2-p},  
 \end{split}
 \end{equation}
where the last estimate comes since $(z_{+}-z_{-})\leq 2$, $\delta \in (0,1/2]$, $p\in (1,2)$ and $\#[\mathbb{S}^{1}, \mathcal{N}]<+\infty$ (hence $\mathcal{E}^{\mathrm{sg}}_{p}(\gamma)\leq C$).  Notice that $\smash{\overline{\Lambda}}^{+}_{\delta}$ is bilipschitz homeomorphic to $\smash{\overline{B}}^3_{\delta}$, namely, there exists a bilipschitz mapping $\Phi:\smash{\overline{\Lambda}}^{+}_{\delta} \to \smash{\overline{B}}^3_{\delta}$ with an absolute bilipschitz constant. We define
  \begin{align*}
  w|_{\Lambda^{+}_{\delta}}(x)
  & \coloneqq g^{+}\left(\Phi^{-1}\left(\frac{\delta \Phi(x)}{|\Phi(x)|}\right)\right)\,\ && \text{for}\,\ x \in \Lambda^{+}_{\delta}\setminus \{\Phi^{-1}(0)\}.
 \end{align*}
 Then, using \eqref{est 5.39}, we obtain 
 \begin{equation}
 \begin{split}
 \label{est 5.42}
  \int_{\Lambda^{+}_{\delta}}\frac{|Dw|^{p}}{p}\diff x &\leq C\delta \int_{\partial \Lambda^{+}_{\delta}}\frac{|D_{\top}g^{+}|^{p}}{p}\diff \mathcal{H}^{2} \\ &\leq C \delta^{2-p}\int_{\Gamma_{\delta,1}}\frac{|D_{\top}g|^{p}}{p}\diff \mathcal{H}^{2}+C\delta \int_{B^{2}_{\delta}\times \{1\}}\frac{|D_{\top}g|^{p}}{p}\diff \mathcal{H}^{2}+\frac{C\delta^{3-p}}{2-p}.  
  \end{split}
 \end{equation}
We carry out a similar construction on the set $\Lambda^{-}_{\delta}$ and obtain a map $w|_{\Lambda^{-}_{\delta}} \in W^{1,p}(\Lambda^{-}_{\delta}, \mathcal{N})$ satisfying the same type of energy control as in \eqref{est 5.42}.
 In $E_{\delta}$, we define $w|_{E_{\delta}}=\varphi_{\delta}$. By Corollary~\ref{cor on cylinder}~\ref{it_aisheraesaenai1ohCoh2Eer},
 \begin{equation}\label{est 5.43}
 \int_{E_{\delta}}\frac{|Dw|^{p}}{p}\diff x \leq C\delta \int_{\Gamma_{\delta,1}}\frac{|D_{\top}g|^{p}}{p}\diff \mathcal{H}^{2}.
 \end{equation}
 Altogether, we have constructed the map $w \in W^{1,p}(\Lambda_{\delta,1}, \mathcal{N})$ with $\tr_{\partial \Lambda_{\delta,1}}(w)=g$ (we refer to Corollary~\ref{cor on cylinder}~\ref{tracepropertyonthelateralsurfaces12}). Since $u$ is a $p$-minimizer in $\Lambda_{\delta,1}$ with $\tr_{\partial \Lambda_{\delta,1}}(u)=g$ and, in view of \eqref{est 5.38}, \eqref{est 5.42}, \eqref{est 5.43}, the facts that $\delta\in (0,1/2]$ and $p\in (1,2)$, we have
 \begin{align*}
 \int_{\Lambda_{\delta,1}}\frac{|Du|^{p}}{p}\diff x &\leq \int_{\Lambda_{\delta,1}}\frac{|Dw|^{p}}{p}\diff x \leq \frac{2\mathcal{E}^{\mathrm{sg}}_{p}(\gamma)}{2-p} +  \frac{C}{\delta^{p-1}}\int_{\Gamma_{\delta,1}}\frac{|D_{\top}g|^{p}}{p}\diff \mathcal{H}^{2} \\ & \,\ +C\delta\left(\int_{B^{2}_{\delta}\times \{-1\}}\frac{|D_{\top}g|^{p}}{p}\diff \mathcal{H}^{2}+\int_{B^{2}_{\delta}\times \{1\}}\frac{|D_{\top}g|^{p}}{p}\diff \mathcal{H}^{2}\right)+\frac{C\delta^{3-p}}{2-p},
\end{align*}
which yields \eqref{upper bound cylinder}.
\medbreak
\noindent \emph{Step~2.} We prove the lower bound \eqref{lower bound cylinder}. For each $(\varrho, \sigma, z) \in [0,\delta)\times \mathbb{S}^{1} \times (z_{-}, z_{+})$, define the map $\psi: B^{2}_{\delta}\times (z_{-},z_{+})$ by
\[
\psi(\varrho \sigma, z) 
\coloneqq
    \begin{cases}
        u(2\varrho \sigma, z) \,\ &\text{if \(\varrho \in [0, \delta/2)\) and \(z \in (z_{-},z_{+})\)},\\
        \varphi_{\delta}((3\delta/2-\varrho)\sigma,z) \,\ &\text{if \(\varrho \in [\delta/2, \delta)\) and \(z \in (z_{-},z_{+})\)}.
    \end{cases}
\]
Then $\psi \in W^{1,p}(B^{2}_{\delta}\times (z_{-},z_{+}), \mathcal{N})$ and $\tr_{\partial B^{2}_{\delta}\times (z_{-},z_{+})}(\psi)=v_{\delta}(\cdot/2, \cdot) \in W^{1,2}(\partial B^{2}_{\delta} \times (z_{-},z_{+}), \mathcal{N})$. By the definition of $\psi$ and Corollary~\ref{cor on cylinder}~\ref{it_aisheraesaenai1ohCoh2Eer}, 
\begin{equation}
\label{est 5.45}
\begin{split}
\int_{B^{2}_{\delta}\times (z_{-},z_{+})}\frac{|D\psi|^{p}}{p}\diff x &\leq \int_{B^{2}_{\delta}\times (z_{-},z_{+})}\frac{|Du|^{p}}{p}\diff x +  \int_{E_{\delta}}\frac{|D\varphi_{\delta}|^{p}}{p}\diff x\\ &\leq \int_{B^{2}_{\delta}\times (z_{-},z_{+})}\frac{|Du|^{p}}{p}\diff x + C\delta\int_{\Gamma_{\delta,1}}\frac{|D_{\top} g|^{p}}{p}\diff \mathcal{H}^{2}. 
\end{split}
\end{equation}
Using the coarea formula and that $\tr_{\partial B^{2}_{\delta}\times (z_{-},z_{+})}(\psi)=v_{\delta}(\cdot/2, \cdot) \in W^{1,2}(\partial B^{2}_{\delta} \times (z_{-},z_{+}), \mathcal{N})$, we observe that $\tr_{\partial B^{2}_{\delta}\times \{z\}}(\psi)=\psi|_{\partial B^{2}_{\delta}\times \{z\}} \in W^{1,2}(\partial B^{2}_{\delta}\times \{z\}, \mathcal{N})$ and $[\psi|_{\partial B^{2}_{\delta}\times \{z\}}]=[\gamma]$ for $\mathcal{H}^{1}$-a.e.\  $z \in (z_{-}, z_{+})$. Then, applying Proposition~\ref{Sandier lemma}, for $\mathcal{H}^{1}$-a.e.\  $z\in (z_{-},z_{+})$, we obtain
\begin{align*}
\int_{B^{2}_{\delta}\times \{z\}}\frac{|D\psi|^{p}}{p}\diff \mathcal{H}^{2}+\delta \int_{\partial B^{2}_{\delta}\times \{z\}}\frac{|D_{\top}\psi|^{p}}{p}\diff \mathcal{H}^{1}\geq \frac{\mathcal{E}^{\mathrm{sg}}_{p/(p-1)}(\gamma)\delta^{2-p}}{2-p}.
\end{align*}
Integrating both sides of the above inequality over $(z_{-},z_{+})$ with respect to $d z$, we get
\begin{equation}
\label{est 5.46}
\begin{split}
\int_{B^{2}_{\delta} \times (z_{-},z_{+})}\frac{|D\psi|^{p}}{p}\diff x &\geq \frac{(z_{+}-z_{-})\mathcal{E}^{\mathrm{sg}}_{p/(p-1)}(\gamma)\delta^{2-p}}{2-p} - \frac{\delta}{2^{p-1}}  \int_{\partial B^{2}_{\delta/2} \times (z_{-},z_{+})}\frac{|Dv_{\delta}|^{p}}{p}\diff \mathcal{H}^{2}\\
&\geq (2-2\delta)\frac{\mathcal{E}^{\mathrm{sg}}_{p/(p-1)}(\gamma)\delta^{2-p}}{2-p} - C\delta\int_{\Gamma_{\delta,1}}\frac{|D_{\top}g|^{p}}{p}\diff \mathcal{H}^{2}, 
\end{split}
\end{equation}
where we have also used Corollary~\ref{cor on cylinder}~\ref{it_aisheraesaenai1ohCoh2Eer}. Combining \eqref{est 5.45} and \eqref{est 5.46}, we obtain \eqref{lower bound cylinder}. This completes our proof of Proposition~\ref{prop energy bounds on cylinders}.
\end{proof}
\subsection{The limit measure has the density with a finite set of values}    
Recall that $\mu_{*}$ is the positive Radon measure on $\overline{\Omega}$ defined at the beginning of Section~\ref{section_singular_set} and its support (\emph{i.e.}, $\mathrm{supp}(\mu_{*})$) is the set $S_{*}$. We prove the following 
\begin{prop}
\label{prop density dv}
    For $\mathcal{H}^{1}$-a.e.\  $x \in S_{*}\cap \Omega$, we have 
    \(\Theta_{1}(\mu_{*},x) \in \{\mathcal{E}^{\mathrm{sg}}_{2}(\gamma): \gamma \in [\mathbb{S}^{1}, \mathcal{N}]\} \setminus \{0\}
    \).
\end{prop}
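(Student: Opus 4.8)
The plan is to show, at $\mathcal{H}^1$-a.e.\ point $x_0 \in S_* \cap \Omega$, that the density $\Theta_1(\mu_*, x_0)$ equals $\mathcal{E}^{\mathrm{sg}}_2(\gamma)$ for some homotopy class $\gamma \in [\mathbb{S}^1, \mathcal{N}]$, using a blow-up argument at points where $S_*$ has an approximate tangent line. By Proposition~\ref{prop 1-var}, for $\mathcal{H}^1$-a.e.\ $x_0 \in S_* \cap \Omega$ there is an approximate tangent line $T_{x_0}S_*$; moreover $\Theta_1(\mu_*,x_0) > 0$ by Proposition~\ref{nice characterization for singular set}. After rotating and translating, I would assume $x_0 = 0$ and $T_0 S_* = \{0\}\times\mathbb{R} \subset \mathbb{R}^3$. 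The monotonicity from Corollary~\ref{mon for density} gives that $r \mapsto \mu_*(\overline{B}^3_r)/r$ is nondecreasing, so $\mu_*(\overline{B}^3_r)/(2r) \to \Theta_1(0)$ as $r \to 0^+$. The idea is then to compare $\mu_*$ on small balls (equivalently, on small cylinders $\Lambda_{\delta r, r}$ aligned with the tangent line) with the cylindrical energy bounds of Proposition~\ref{prop energy bounds on cylinders}, sending $r \to 0$ first and then $\delta \to 0$.

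The key steps, in order: (1) Fix a point $x_0$ as above and a scale $r>0$; for $\mathcal{H}^1$-a.e.\ $\varrho \in (\kappa r, r)$ and $\mathcal{H}^2$-a.e.\ $z$-level the restriction of $u_n$ to the relevant sphere/cylinder is Sobolev with controlled energy (coarea). Using the rectifiability, for $r$ small the mass of $\mu_*$ in $B^3_r$ is concentrated, up to error $o(r)$, in a thin cylinder $\Lambda_{\delta r, r}(0)$ around the tangent line, and outside that cylinder the density is small — more precisely, $\mu_*(\overline{B}^3_r \setminus \Lambda_{\delta r, r}) = o(r)$ as $r \to 0$ for each fixed $\delta$, because $S_* \cap B^3_r$ lies within a $o(r)$-neighborhood of $T_0S_*$ and $\mu_* \mres \Omega = \Theta_1 \mathcal{H}^1 \mres (S_* \cap \Omega)$ with $\Theta_1$ bounded by $\sqrt 3$-type bounds, actually $L^1$. (2) Choose, via Fatou and the coarea formula applied to the radial distance and to rotations (Lemma~\ref{lem rest on T}, Lemma~\ref{lem triangulation on cylinder}), good radii $\delta r$ and good lateral surfaces $\Gamma_{\delta r, r}$ on which $u_n$ has lateral energy satisfying the smallness hypothesis of Proposition~\ref{prop energy bounds on cylinders} — this is possible because the lateral surface $\Gamma_{\delta r, r}$ sits in the region where $\mu_*$ is small, so $(2-p_n)\int_{\Gamma_{\delta r,r}} |D_\top u_n|^{p_n}$ is of lower order. (3) Apply the lower bound \eqref{lower bound cylinder} and upper bound \eqref{upper bound cylinder} of Proposition~\ref{prop energy bounds on cylinders} to $u_n$ restricted to $\Lambda_{\delta r, r}$, obtaining a homotopy class $\gamma = \gamma(n, r, \delta) \in [\mathbb{S}^1, \mathcal{N}]$ and the sandwich
\[
(2-2\delta)\frac{\mathcal{E}^{\mathrm{sg}}_{p_n/(p_n-1)}(\gamma)\delta^{2-p_n}r^{3-p_n}}{2-p_n} - (\text{error}) \le \int_{\Lambda_{\delta r,r}} \frac{|Du_n|^{p_n}}{p_n}\diff x \le \frac{2\mathcal{E}^{\mathrm{sg}}_{p_n}(\gamma)r^{3-p_n}}{2-p_n} + (\text{error}).
\]
(4) Multiply by $(2-p_n)$, use $[\mathbb{S}^1, \mathcal{N}]$ finite (so $\gamma$ takes finitely many values; pass to a subsequence making it constant in $n$), use the continuity of $p \mapsto \mathcal{E}^{\mathrm{sg}}_p(\gamma)$ from Lemma~\ref{lem cont singenergy} together with $\mathcal{E}^{\mathrm{sg}}_{p_n/(p_n-1)}(\gamma) \to \mathcal{E}^{\mathrm{sg}}_2(\gamma)$ and $\mathcal{E}^{\mathrm{sg}}_{p_n}(\gamma) \to \mathcal{E}^{\mathrm{sg}}_2(\gamma)$ as $p_n \nearrow 2$, together with $\delta^{2-p_n} \to 1$, $r^{3-p_n}/r \to 1$, and the weak-$*$ convergence $\mu_n \overset{*}{\rightharpoonup} \mu_*$, to deduce
\[
(2 - 2\delta)\,\mathcal{E}^{\mathrm{sg}}_2(\gamma) \le \liminf_{r\to 0} \frac{\mu_*(\Lambda_{\delta r, r})}{2r} \le \limsup_{r \to 0}\frac{\mu_*(\Lambda_{\delta r,r})}{2r} \le 2\,\mathcal{E}^{\mathrm{sg}}_2(\gamma).
\]
Since $\mu_*(\overline{B}^3_r \setminus \Lambda_{\delta r,r}) = o(r)$, the middle expressions both equal $2\Theta_1(\mu_*, x_0)$ up to a factor issue that I would track carefully: in fact $\mu_*(\Lambda_{\delta r, r})/(2r) \to \Theta_1(\mu_*, x_0)$. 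Letting $\delta \to 0$ then forces $\Theta_1(\mu_*, x_0) = \mathcal{E}^{\mathrm{sg}}_2(\gamma)$; and since $x_0 \in S_*$ gives $\Theta_1(\mu_*,x_0) \ge \eta/2 > 0$, we conclude $\Theta_1(\mu_*,x_0) \in \{\mathcal{E}^{\mathrm{sg}}_2(\gamma) : \gamma \in [\mathbb{S}^1,\mathcal{N}]\} \setminus \{0\}$, which is a finite set by Lemma~\ref{lem cont singenergy}.

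The main obstacle I anticipate is step (2) combined with the bookkeeping of all the error terms: one must simultaneously (a) choose the cylinder axis to be the approximate tangent line and control how far $S_*$ deviates from it at scale $r$, which requires quantifying "approximate tangent" into a statement that $\mu_*(\overline B^3_r \setminus \Lambda_{\delta r, r}) = o(r)$ uniformly; (b) pick good lateral surfaces on which the $p_n$-energy smallness hypothesis \eqref{penerboundsurf} of Corollary~\ref{cor on cylinder}/Proposition~\ref{prop energy bounds on cylinders} holds — this couples a coarea selection in $n$ with a selection of radius and rotation, and one needs the energy of $u_n$ near $\Gamma_{\delta r, r}$ to be genuinely of lower order than the bulk, which should follow from $\mu_*$-smallness there plus the weak-$*$ convergence and an iteration of the two inequalities in \eqref{two cond of weak*}; and (c) ensure the homotopy class $\gamma$ produced by Proposition~\ref{prop energy bounds on cylinders} stabilizes — this is where finiteness of $[\mathbb{S}^1,\mathcal{N}]$ and a diagonal subsequence extraction are essential, and where one must check that the $\gamma$ obtained from the lower bound and from the upper bound are forced to coincide (they do, being the homotopy class of $u_n$ restricted to circles linking the axis, hence intrinsic to $u_n$ on $\Lambda_{\delta r,r}$). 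Once these selections are in place, the passage to the limit is routine given Lemma~\ref{lem cont singenergy}.
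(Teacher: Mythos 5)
Your proposal follows essentially the same blow-up-to-tangent-line argument the paper uses: rectifiability of $V_*$ gives an approximate tangent line at $\mathcal{H}^1$-a.e.\ point, the blow-up kills $\mu_*$-mass off the axis, good lateral surfaces are selected via coarea and the weak-$*$ convergence, Proposition~\ref{prop energy bounds on cylinders} sandwiches the cylindrical energy between singular energies, and passing to the limit (using Lemma~\ref{lem cont singenergy} and Corollary~\ref{mon for density}) yields $\Theta_1(x_0)=\mathcal{E}^{\mathrm{sg}}_2(\gamma)>0$.

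The one place where your route diverges slightly is the stabilization of $\gamma$: you appeal to finiteness of $[\mathbb{S}^1,\mathcal{N}]$ and diagonal subsequence extraction across $n$, $r$, $\delta$, which does close the argument given the monotonicity of $r\mapsto\mu_*(\smash{\overline{B}}^3_r(x_0))/r$, but the paper instead identifies $\gamma_n$ with the homotopy class of $u_*$ on a small cross-sectional circle by exploiting the local $W^{1,p}$-convergence $u_n\to u_*$ on the thin annular shell around the lateral surface (where $\mu_*$ has no mass, so Lemma~\ref{lem concentration} and Proposition~\ref{conv to harm} apply). The paper's choice is more informative: it pins $\gamma$ to $u_*$ intrinsically, which is precisely what Corollary~\ref{cor 5.17} records and what the subsequent sections rely on.
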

\begin{proof}
    According to Proposition~\ref{prop 1-var} and \cite[Theorem~2.83~(i)]{APD}, $\mu_{*}$ admits an approximate tangent space with multiplicity $\Theta_{1}(x_{0})$ for $\mathcal{H}^{1}$-a.e.\  $x_{0} \in S_{*}\cap \Omega$. Namely, for $\mathcal{H}^{1}$-a.e.\  $x_{0} \in S_{*}\cap \Omega$ (or, equivalently, for $\mu_{*}$-a.e.\  $x_{0} \in \Omega$), there exists a unique approximate tangent plane $T_{x_{0}}S_{*} \in \mathrm{G}(3,1)$ to $S_{*}$ at $x_{0}$ and
    \begin{align}
    \label{weak* conv 5.47}
    \mu_{x_{0},r}=T^{x_{0},r}_{\#} \mu_{*}\mres \Omega & \overset{*}{\rightharpoonup} \mu_{0}=\Theta_{1}(x_{0})\mathcal{H}^{1}\mres T_{x_{0}}S_{*} && \text{weakly* in}\,\ (C_{0}(\mathbb{R}^{3}))^{\prime}\,\ \text{as}\,\ r\searrow 0,
    \end{align}
    where $T^{x_{0},r}(x)\coloneqq (x-x_{0})/r$. 
    Up to rotation and translation, to lighten the notation, we shall assume that $x_{0}=0$ and $T_{x_{0}}S_{*}=\{(0,0)\} \times \mathbb{R}$. Recall that $(p_{n})_{n\in \mathbb{N}}\subset [1,2)$,  $p_{n}\nearrow 2$ as $n\to +\infty$ and for each $n\in \mathbb{N}$ large enough,
    \begin{equation}\label{gl energy}
    \int_{\Omega}\frac{|Du_{n}|^{p_{n}}}{p_{n}}\diff x \leq \frac{C_{0}}{2-p_{n}},
    \end{equation}
    where $C_{0}>0$ is a constant independent of $n$ (see \eqref{C2}). 
    
    Let $\delta \in (0,1/2)$ be fairly small, which we shall carefully choose later for the proof to work. Let $\tau>0$ be the constant of Corollary~\ref{cor on cylinder}, where $p_{0}=3/2$. Let $\eta>0$ be the constant of Lemma~\ref{heart}, where $\kappa=1/2$, $p_{0}=3/2$ and $\Psi$ is a bilipschitz homeomorphism between a ball and a cube with an absolute bilipschitz constant (see, for instance, \cite[Corollary~3]{bilipschitz}). We fix some $\varepsilon \in (0,\min\{\eta,\tau\})$. Next, since $\mu_{0}((\smash{\overline{B}}^{2}_{3\delta/2}\setminus B^{2}_{\delta/2})\times [-2,2])=0$, in view of the weak* convergence (see \eqref{weak* conv 5.47}), there exists $r_{0}=r_{0}(\varepsilon,\delta) \in (0, \min\{1,\dist(x_{0}, \partial \Omega)/4\})$ such that 
    \begin{equation}\label{est 5.48}
    \mu_{*}((\smash{\overline{B}}^{2}_{3\delta r/2}\setminus B^{2}_{\delta r/2})\times [-2r,2r])< \frac{\varepsilon \delta r}{100} \,\ \text{for each}\,\ r \in (0,r_{0}].
    \end{equation}
    Taking into account \eqref{wcm} and \eqref{two cond of weak*}, the estimate \eqref{est 5.48}, together with the Fatou lemma, implies that for a fixed $r \in (0,r_{0}]$, 
    \begin{equation}\label{est by Fatou 1}
    \int_{\delta r/2}^{3\delta r/2}\diff \varrho \,\liminf_{n\to +\infty}(2-p_{n})\int_{\Gamma_{\varrho, r}} \frac{|Du_{n}|^{p_{n}}}{p_{n}}\diff \mathcal{H}^{2} < \frac{\varepsilon \delta r}{100}.
    \end{equation}
    Due to \eqref{est by Fatou 1}, there exist $\varrho \in (3\delta r/4, 5\delta r/4)$ and $n_{0}=n_{0}(\delta, r, \varepsilon) \in \mathbb{N}$ such that, up to a subsequence (not relabeled), for each $n \geq n_{0}$, $\tr_{\Gamma_{\varrho,r}}(u_{n})=u_{n}|_{\Gamma_{\varrho,r}} \in W^{1,p_{n}}(\Gamma_{\varrho,r}, \mathcal{N})$  (see \eqref{def cyl}) and 
    \begin{equation}\label{est 5.51}
    \int_{\Gamma_{\varrho,r}}\frac{|Du_{n}|^{p_{n}}}{p_{n}}\diff \mathcal{H}^{2}<\frac{\varepsilon \varrho^{2-p_{n}}}{2-p_{n}},
    \end{equation}
    where we have used that $\varrho^{2-p_{n}}\to 1$ as $n \to +\infty$. To simplify the notation, without loss of generality, we assume that $\varrho=\delta r$. Notice that $\Lambda_{\delta r, 5r/4} \subset B^{3}_{\sqrt{2}r}$, where $\sqrt{2}r<d_{0}\coloneqq \dist(x_{0}, \partial \Omega)/2$ (since it holds $r \leq r_{0}<d_{0}/2)$. Thus, using the monotonicity of the $p$-energy (see Lemma~\ref{lem mon of p-energy}) and \eqref{gl energy}, for each $n \in \mathbb{N}$ large enough, we get
    \[
    \int_{\Lambda_{\delta r, 5r/4}}\frac{|Du_{n}|^{p_{n}}}{p_{n}}\diff x \leq \left(\frac{\sqrt{2}r}{d_{0}}\right)^{3-p_{n}}\int_{B^{3}_{d_{0}}}\frac{|Du_{n}|^{p_{n}}}{p_{n}}\diff x \leq \left(\frac{\sqrt{2}r}{d_{0}}\right)^{3-p_{n}}\frac{C_{0}}{2-p_{n}}.
    \]
    Using again the Fatou lemma, it holds
    \[
    \int_{-5r/4}^{5r/4}\diff z \, \liminf_{n\to+\infty}(2-p_{n})\int_{B^{2}_{\delta r}\times \{z\}}\frac{|Du_{n}|^{p_{n}}}{p_{n}}\diff \mathcal{H}^{2} \leq \frac{C_{0}\sqrt{2}r}{d_{0}}.
    \]
    This implies that there exist $z^{r}_{-} \in (-5r/4, -3r/4)$ and $z^{r}_{+}\in (3r/4, 5r/4)$ such that, up to a subsequence (still denoted by the same index), for each $z\in \{z^{r}_{-}, z^{r}_{+}\}$ we have the following $\tr_{B^{2}_{\delta r}\times \{z\}}(u_{n})=u_{n}|_{B^{2}_{\delta r}\times \{z\}} \in W^{1,p_{n}}(B^{2}_{\delta r}\times \{z\}, \mathcal{N})$ and for each $n \in \mathbb{N}$ large enough,
    \begin{equation}\label{est 5.52}
    \int_{B^{2}_{\delta r}\times \{z\}}\frac{|Du_{n}|^{p_{n}}}{p_{n}}\diff \mathcal{H}^{2}\leq \frac{C_{1} }{2-p_{n}},
    \end{equation}
    where $C_{1}=C_{1}(C_{0}, d_{0})>0$. Without loss of generality, to lighten the notation, we shall assume that $z^{r}_{-}=-r$ and $z^{r}_{+}=r$. Thus,  $\tr_{\partial \Lambda_{\delta r,r}}(u_{n})=u_{n}|_{\partial \Lambda_{\delta r,r}}\in W^{1,p_{n}}(\partial \Lambda_{\delta r, r}, \mathcal{N})$. From \eqref{est 5.51} and \eqref{est 5.52} we obtain
    \begin{equation}\label{estimationsenersurf}
    \int_{\Gamma_{\delta r,r}}\frac{|D_{\top} \operatorname{tr}_{\Gamma_{\delta r,r}}(u_{n})|^{p_{n}}}{p_{n}}\diff \mathcal{H}^{2} \leq \frac{\varepsilon (\delta r)^{2-p_{n}}}{2-p_{n}} \,\ \text{and}\,\ \int_{B^{2}_{\delta r}\times \{z\}}\frac{|D_{\top} \operatorname{tr}_{B^{2}_{\delta r} \times \{z\}}(u_{n})|^{p_{n}}}{p_{n}}\diff \mathcal{H}^{2}\leq \frac{C_{1}}{2-p_{n}}
    \end{equation}
    for each $z \in \{-r,r\}$ and  $n\in\mathbb{N}$ large enough. Thus, for each sufficiently large $n \in \mathbb{N}$, $u_{n}$ fulfills the conditions of Proposition~\ref{prop energy bounds on cylinders} (recall that $0<\varepsilon<\tau$). Using Proposition~\ref{prop energy bounds on cylinders} and \eqref{estimationsenersurf}, we obtain $\gamma_{n} \in [\mathbb{S}^{1}, \mathcal{N}]$ such that the following holds. Firstly,
    \begin{equation}\label{est 5.53}
            \int_{\Lambda_{\delta r, r}}\frac{|Du_{n}|^{p_{n}}}{p_{n}}\diff x \leq \frac{2\mathcal{E}^{\mathrm{sg}}_{p_{n}}(\gamma_{n})r^{3-p_{n}}}{2-p_{n}} +  \frac{C\varepsilon\delta^{3-2p_{n}} r^{3-p_{n}}}{2-p_{n}} + \frac{C\delta r}{2-p_{n}}+\frac{C(\delta r)^{3-p_{n}}}{2-p_{n}}
        \end{equation}
    for each sufficiently large $n \in \mathbb{N}$, where $C=C(C_{1}, \mathcal{N})>0$. Secondly,
    \begin{equation} \label{est 5.54}
        (2-2\delta)\frac{\mathcal{E}^{\mathrm{sg}}_{p_{n}/(p_{n}-1)}(\gamma_{n})\delta^{2-p_{n}}r^{3-p_{n}}}{2-p_{n}} - \frac{C^{\prime}\varepsilon(\delta r)^{3-p_{n}}}{2-p_{n}}\leq \int_{\Lambda_{\delta r, r}}\frac{|Du_{n}|^{p_{n}}}{p_{n}}\diff x
    \end{equation}
       for each sufficiently large $n \in \mathbb{N}$, where $C^{\prime}=C^{\prime}(\mathcal{N})>0$. 
       
       Now, applying Lemma~\ref{lem concentration}, due to \eqref{est 5.48},  we can cover the compact set $\partial B^{2}_{\delta r}\times [-r, r]$ by a finite number of small cubes (which are bilipschitz homeomorphic to balls) being subsets of $\Omega\setminus S_{*}$ and obtain that the set \[E_{\delta, r}= (\smash{\overline{B}}^{2}_{101\delta r/100}\setminus B^{2}_{99\delta r/100})\times [-r-\delta r/100, r+\delta r /100]\] is  a subset of $\Omega \setminus S_{*}$. Denote by $E^{0}_{\delta, r}$ the interior of $E_{\delta,r}$. Then, by Proposition~\ref{conv to harm}, $u_{n} \rightharpoonup u_{*}$ weakly in $W^{1,p}(E^{0}_{\delta,r}, \mathbb{R}^{\nu})$ for all $p\in (1,2)$. Furthermore, there exists a finite set $\{x_{1},\dotsc,x_{l}\}\subset E^{0}_{\delta,r}$ such that $u_{*} \in C^{\infty}(E^{0}_{\delta, r}\setminus \{x_{1},\dotsc,x_{l}\}, \mathcal{N})$. By the weak convergence and the Sobolev embedding, without loss of generality, $u_{n}|_{\partial B^{2}_{\delta r}\times \{z\}}$ converges uniformly to $u_{*}|_{\partial B^{2}_{\delta r} \times \{z\}} \in C(\partial B^{2}_{\delta r}\times \{z\}, \mathcal{N})$ for each $z \in \{-r,r\}$. 
       
       We claim that for each $n \in \mathbb{N}$ large enough,  $[\gamma_{n}]=[u_{*}|_{\partial B^{2}_{\delta r}\times \{-r\}}]=[u_{*}|_{\partial B^{2}_{\delta r}\times \{r\}}]$. Indeed, 
    a close inspection of the proofs of Lemma~\ref{finding nice v and conmapping cylinder} and Corollary~\ref{cor on cylinder} shows that the mapping $v_{\delta,n} \in W^{1,2}(\partial B^{2}_{\delta r/2}\times~(z^{n}_{-}, z^{n}_{+}), \mathcal{N})$, coming from Corollary~\ref{cor on cylinder}, has a well-defined homotopy class on the surface $\partial B^{2}_{\delta r/2}\times (z^{n}_{-}, z^{n}_{+})$ and for each $z \in \{z^{n}_{-}, z^{n}_{+}\}$, $[v_{\delta,n}|_{\partial B^{2}_{\delta r/2} \times \{z\}}]=[\gamma_{n}]$, where $z^{n}_{-}\in (-r+\delta r/2, -r+\delta r)$ and $z^{n}_{+}\in (r-\delta r, r-\delta r/2)$ (the reader may consult the arguments given immediately above the estimate \eqref{est 5.28}, take into account Remark~\ref{homotopy class}, and observe that the trace on $\mathbb{S}^{1}\times \{L\}$ of the map $w_{h}\in W^{1,2}(\Gamma_{1,L}, \mathcal{N})$ constructed in the proof of Proposition~\ref{prop energy bounds on cylinders} admits a continuous representative). We can actually extend $\varphi_{\delta, n}$ on $(B^{2}_{\delta r}\setminus \smash{\overline{B}}^{2}_{\delta r/2})\times (-r,r)$ and $v_{\delta, n}$ on $\partial B^{2}_{\delta r/2}\times (-r,r)$, where $\varphi_{\delta, n}$ comes from Corollary~\ref{cor on cylinder}, so that these extensions have the following properties: $\varphi_{\delta,n} \in W^{1,p}((B^{2}_{\delta r}\setminus \smash{\overline{B}}^{2}_{\delta r/2})\times (-r,r), \mathcal{N})$, $v_{\delta, n} \in W^{1,2}(\partial B^{2}_{\delta r/2}\times (-r,r), \mathcal{N})$, $\tr_{\partial B^{2}_{\delta r/2}\times (-r,r)}(\varphi_{\delta,n})=v_{\delta,n}$ (the proof consists in constructing the map on the union of surfaces $(\partial B^{2}_{\delta r/2}\times (-r, z^{n}_{-})) \cup (\partial B^{2}_{\delta r/2} \times (z^{n}_{+}, r))$ by similar way as was constructed $v_{\delta,n}$ on $\partial B^{2}_{\delta r/2}\times (z^{n}_{-}, z^{n}_{+})$, namely, by choosing appropriate triangulations on  the sets $\partial B^{2}_{\delta r}\times (-r,z^{n}_{-})$,  $\partial B^{2}_{\delta r} \times (z^{n}_{+}, r)$ and projecting them onto $\partial B^{2}_{\delta r/2}\times (-r, z^{n}_{-})$, $\partial B^{2}_{\delta r/2}\times (z^{n}_{+},r)$, respectively, and then proceed as in the construction of the maps $v_{\delta,n}$, $\varphi_{\delta,n}$; the traces of the obtained extensions of $\varphi_{\delta, n}$ and $v_{\delta,n}$ will coincide with the traces of $\varphi_{\delta, n}$ and $v_{\delta,n}$ on $\partial B^{2}_{\delta r}\times \{z^{n}_{-}\}$, $\partial B^{2}_{\delta r}\times \{z^{n}_{+}\}$ and $\partial B^{2}_{\delta r/2}\times \{z^{n}_{-}\}$, $\partial B^{2}_{\delta r/2}\times \{z^{n}_{+}\}$, respectively, and hence we obtain a $W^{1,p}$ extension of $\varphi_{\delta,n}$ on $(B^{2}_{\delta r}\setminus \smash{\overline{B}}^{2}_{\delta r/2})\times (-r,r)$ and a $W^{1,2}$ extension of $v_{\delta,n}$ on $\partial B^{2}_{\delta r/2} \times (-r,r)$). Since for each $z \in \{-r,r\}$, $\tr_{\partial B^{2}_{\delta r/2}\times \{z\}}(v_{\delta, n})$ is homotopic to $u_{n}|_{\partial B^{2}_{\delta r}\times \{z\}}$, we deduce that for each $n \in \mathbb{N}$ large enough, $[\gamma_{n}]=[u_{*}|_{\partial B^{2}_{\delta r}\times \{z\}}]$. This proves our claim.
    
   Thus, there exists $\gamma \in [\mathbb{S}^{1},\mathcal{N}]$ such that for each $n\in \mathbb{N}$ large enough, $[\gamma_{n}]=[\gamma]$. Next, multiplying both sides of \eqref{est 5.53} and \eqref{est 5.54} by $(2-p_{n})$ and then passing to the limit when $n$ tend to $+\infty$, we obtain
   \begin{equation}\label{est 5.55}
   r(2-2\delta)\mathcal{E}_{2}^{\mathrm{sg}}(\gamma)-C^{\prime}\varepsilon \delta r\leq \mu_{*}(\Lambda_{\delta r,r})\leq 2r \mathcal{E}^{\mathrm{sg}}_{2}(\gamma)+ \frac{C\varepsilon r}{\delta} + 2C\delta r,
   \end{equation}
   where we have used Lemma~\ref{lem cont singenergy}. Now we choose $\delta=\varepsilon^{1/2}$ and hence $\delta\searrow 0$ as $\varepsilon \searrow 0$. Dividing both sides of \eqref{est 5.55} by $r$ and letting $r \searrow 0$ and then $\varepsilon \searrow 0$, we obtain
   \[
   2\mathcal{E}^{\mathrm{sg}}_{2}(\gamma)\leq \Theta_{1}(x_{0})\mathcal{H}^{1}(\{(0,0)\} \times [-1, 1])\leq 2\mathcal{E}^{\mathrm{sg}}_{2}(\gamma).
   \]
   This implies that $\Theta_{1}(x_{0})=\mathcal{E}^{\mathrm{sg}}_{2}(\gamma)$. Since $x_{0} \in S_{*}\cap \Omega$, by Proposition~\ref{nice characterization for singular set}, $\Theta_{1}(x_{0})>0$ and hence $\mathcal{E}^{\mathrm{sg}}_{2}(\gamma)> 0$. This completes our proof of Proposition~\ref{prop density dv}.
\end{proof}
According to Lemma~\ref{lem cont singenergy} and Proposition~\ref{prop density dv}, the density of the one-dimensional varifold $V_{*}$ coming from Proposition~\ref{prop 1-var}, which was \emph{naturally} associated with $\mu_{*}\mres\Omega$, is bounded from below $\|V_{*}\|$-a.e.\  in $\Omega$ (\emph{i.e.}, $\mu_{*}$-a.e.\  in $\Omega$ or, equivalently, $\mathcal{H}^{1}$-a.e.\  in $S_{*}\cap \Omega$) and has a finite set of values. Thus, due to Allard and Almgren's theorem \cite[Theorem, p. 89]{Allard-Almgren}, we obtain the following structure for $S_{*}\cap \Omega$.

\begin{prop}\label{prop structure S_*}
    Any compact set $K\subset \Omega$ such that $S_{*}\cap K\neq \emptyset$ has an open neighborhood $U\csubset \Omega$ with a Lipschitz boundary and a finite number of connected components such that $S_{*}\cap \overline{U}$ is a union of a finite number of closed straight line segments, each of which has a positive length, and the interiors of these segments (\textit{i.e.}, the segments without their endpoints) are pairwise disjoint. If $K$ is connected, then $U$ is connected; otherwise, $U$ may have several connected components, but their closures are pairwise disjoint. Furthermore, $S_{*}\cap \partial U$ is a finite set of points such that for each point $x \in S_{*} \cap \partial U$ there is exactly one segment lying in $S_{*}\cap \overline{U}$ and emanating from $x$.
\end{prop}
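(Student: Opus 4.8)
The plan is to combine the structure theory for stationary one-dimensional varifolds with a generic smooth level-set argument. First I would record the relevant properties of $V_{*}$. By Proposition~\ref{prop 1-var}, $V_{*}=(\mathrm{Id},A_{*})_{\#}\mu_{*}\mres\Omega$ is a stationary $1$-dimensional rectifiable varifold in $\Omega$ with weight measure $\mu_{*}\mres\Omega=\Theta_{1}(\cdot)\,\mathcal{H}^{1}\mres(S_{*}\cap\Omega)$ and $\mathcal{H}^{1}(S_{*}\cap\Omega)<+\infty$. By Proposition~\ref{prop density dv}, for $\mathcal{H}^{1}$-a.e.\ $x\in S_{*}\cap\Omega$ one has $\Theta_{1}(\mu_{*},x)\in\{\mathcal{E}^{\mathrm{sg}}_{2}(\gamma):\gamma\in[\mathbb{S}^{1},\mathcal{N}]\}\setminus\{0\}$, and by Lemma~\ref{lem cont singenergy} this set is finite; hence the density is $\mu_{*}$-a.e.\ bounded below by $\theta_{0}\coloneqq\min\{\mathcal{E}^{\mathrm{sg}}_{2}(\gamma):\gamma\in[\mathbb{S}^{1},\mathcal{N}],\ \mathcal{E}^{\mathrm{sg}}_{2}(\gamma)>0\}>0$ and takes only finitely many values. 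Then Allard and Almgren's structure theorem \cite[Theorem, p.~89]{Allard-Almgren} applies to $V_{*}$: every point of $S_{*}\cap\Omega$ has a ball neighborhood inside which $S_{*}$ is a finite union of closed straight line segments, meeting (subject to the stationary balancing condition) at finitely many junction points.

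Next I would globalize this near $K$. Since $S_{*}$ is closed, each point of $K\setminus S_{*}$ admits a ball disjoint from $S_{*}$; together with the balls just obtained around the points of the compact set $S_{*}\cap K$, compactness of $K$ yields an open set $W$ with $K\subset W\Subset\Omega$ such that $S_{*}\cap W$ is a finite union of maximal (within $W$) closed line segments $\ell_{1},\dots,\ell_{m}$ with pairwise disjoint relative interiors, and such that the set $\mathcal{J}\subset W$ of junction points (points lying on at least two of the $\ell_{i}$, or endpoints of some $\ell_{i}$ interior to $W$) is finite. I would then fix $\phi\in C^{\infty}_{c}(W)$ with $0\le\phi\le1$ and $\phi\equiv1$ on a neighborhood of $K$. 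By Sard's theorem the set of $s\in(0,1)$ that are simultaneously a regular value of $\phi$ and of each of the smooth restrictions $\phi|_{\ell_{i}}$ has full measure; removing in addition the finitely many values $\{\phi(y):y\in\mathcal{J}\}$, I may pick such an $s$ and set $U\coloneqq\{\phi>s\}$. Then $K\subset U$, $\overline{U}\subset W\Subset\Omega$, and $\partial U=\{\phi=s\}$ is a compact smooth (hence Lipschitz) hypersurface, so $U$ has finitely many connected components with pairwise disjoint closures.

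It then remains to verify the listed properties. Since $\overline{U}\subset W$, $\partial U$ avoids $\mathcal{J}$, and $\partial U$ is transverse to each $\ell_{i}$, the set $S_{*}\cap\overline{U}=\bigcup_{i}(\ell_{i}\cap\overline{U})$ is a finite union of closed line segments, each obtained by cutting some $\ell_{i}$ at transversal crossings and therefore of positive length; their relative interiors lie in the pairwise disjoint relative interiors of the $\ell_{i}$, hence are pairwise disjoint. The set $S_{*}\cap\partial U$ is the finite set of these transversal crossings; each such point lies in the relative interior of exactly one $\ell_{i}$ (as $\mathcal{J}$ is avoided), and the unique subsegment of $\ell_{i}\cap\overline{U}$ issuing from it on the side of $U$ is then the unique segment of $S_{*}\cap\overline{U}$ emanating from it. If $K$ is connected, then $K\subset\{\phi=1\}$ is connected, so $K$ lies in a single component of $U$; discarding the other components (which only removes some components of the Lipschitz hypersurface $\partial U$) leaves $U$ connected. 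If $K$ is disconnected, I would replace $U$ by the union of its components meeting $K$, obtaining an open neighborhood of $K$ whose finitely many components have pairwise disjoint closures. This would complete the proof of Proposition~\ref{prop structure S_*}.

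The only genuinely substantial input is Allard--Almgren's theorem, which is what converts stationarity together with the lower density bound $\theta_{0}>0$ and the finiteness of the set of density values into the local ``finite union of segments'' picture; everything else is a routine transversality construction. The single delicate point there will be arranging, via Sard's theorem, that the smooth level set $\{\phi=s\}$ is simultaneously transverse to all the finitely many segments $\ell_{i}$ and disjoint from the finite junction set $\mathcal{J}$, which also guarantees that no cut piece degenerates to a point and that exactly one segment emanates from each boundary point of $S_{*}\cap\partial U$.
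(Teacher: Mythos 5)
Your proposal is correct and shares the paper's essential strategy: invoke the lower density bound $\Theta_{1}\ge\theta_{0}>0$ together with the finiteness of the density spectrum (Lemma~\ref{lem cont singenergy} and Proposition~\ref{prop density dv}) so that the Allard--Almgren structure theorem applies to the stationary one-dimensional varifold $V_{*}$ of Proposition~\ref{prop 1-var}, obtain that each point of $S_{*}\cap\Omega$ has a ball in which $S_{*}$ is a finite bouquet of radii, and then cover the compact set $K$ by finitely many such balls. Where you diverge from the paper is the construction of $U$. The paper passes from ball to ball along a chain, ``successively constructing $\partial U$,'' which is left informal; you instead fix a smooth cutoff $\phi\in C^{\infty}_{c}(W)$ with $\phi\equiv1$ near $K$ and take $U=\{\phi>s\}$ for a Sard-generic $s\in(0,1)$ that is simultaneously a regular value of $\phi$ and of each restriction $\phi|_{\ell_i}$, and avoids the finite set $\phi(\mathcal{J})$. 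Your level-set construction has the merit of making the Lipschitz (indeed smooth) boundary, the transversality of $\partial U$ to the segments (hence the positive length of the cut pieces and the ``exactly one emanating segment'' clause), and the pairwise disjointness of the closures of components all immediate, whereas the paper leaves these to the reader. One step you assert without spelling out is the finiteness of the junction set $\mathcal{J}$ inside $W$; it does hold, but the reason is that the Allard--Almgren local picture makes the only possible junction point in $\overline{B}^{3}_{r_i}(x_i)$ the center $x_i$, and you are covering by finitely many balls, so you should say so explicitly. Overall your route is a clean and valid variant of the paper's, tightening precisely the parts of the argument the paper treats most loosely.
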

\begin{proof}
    If $\pi_{1}(\mathcal{N})=\{0\}$, then by Proposition~\ref{prop density dv} and Proposition~\ref{nice characterization for singular set}, it holds $S_{*}\cap \Omega=\emptyset$ and the proof is trivial. 
    
    Assume that $\pi_{1}(\mathcal{N})\neq \{0\}$. According to the definition of the varifold $V_{*}$ (see Proposition~\ref{prop 1-var}), the weight of $V_{*}$ (or, also the projection of $V_{*}$ on $\Omega$), defined by $\|V_{*}\|(E)=V_{*}(E\times \mathrm{G}(3,1))$ for each $E \in \mathcal{B}(\Omega)$, is equal to $\mu_{*}\mres\Omega=\Theta_{1}\mathcal{H}^{1}\mres(S_{*}\cap \Omega)$. Then, by Proposition~\ref{nice characterization for singular set}, there exists $c=c(\mathcal{N})>0$ such that for each $x \in S_{*}\cap \Omega$, 
    \begin{equation}\label{positive dn est}
    \Theta_{1}(\|V_{*}\|,x)\coloneqq \lim_{r\to 0+}\frac{\|V_{*}\|(B^{3}_{r}(x))}{2r}=\lim_{r\to 0+}\frac{\mu_{*}(B^{3}_{r}(x))}{2r}=\Theta_{1}(x)\geq c.
    \end{equation}
In view of Proposition~\ref{prop 1-var} and \eqref{positive dn est}, we can apply \cite[Theorem, p. 89]{Allard-Almgren} and obtain that there exists a countable family $\mathcal{P}$ of bounded open intervals (\emph{i.e.}, segments without endpoints) contained in $S_{*}\cap \Omega$ such that $V_{*}=\sum\{\Theta_{1}(I)|I|: I\in \mathcal{P}\}$, where $\Theta_{1}(I)$ is the unique member of the range of $\Theta_{1}$ restricted to the interval $I$ and $|I| \in (C_{0}(\mathrm{G}_{3}(\Omega)))^{\prime}$ is a finite positive Radon measure defined by $|I|=\delta_{\mathrm{e}\otimes \mathrm{e}}(d T)\otimes \mathcal{H}^{1}\mres I(d x)$, where $\mathrm{e}$ is the unit direction vector of $I$. Taking into account Lemma~\ref{lem cont singenergy} and Proposition~\ref{prop density dv}, we have that $\{\Theta_{1}(I): I\in \mathcal{P}\}\subset \{\mathcal{E}^{\mathrm{sg}}_{2}(\gamma): \gamma \in [\mathbb{S}^{1}, \mathcal{N}]\}$ is a finite set of values. 
Furthermore, the proof of \cite[Theorem, p. 89]{Allard-Almgren} says that for each $x \in S_{*}\cap \Omega$, there exists a radius $r>0$ such that $S_{*}\cap \Omega\cap \smash{\overline{B}}^3_{r}(x)$ is a finite union of radii of $\smash{\overline{B}}^3_{r}(x)$, that is, closed straight line segments joining $x$ with a point on $\partial B^{3}_{r}(x)$. Let $K\subset \Omega$ be compact and satisfy $K\cap S_{*} \neq \emptyset$. Then there exists a finite collection of open balls $B^{3}_{2r_{1}}(x_{1}) ,\dotsc, B^{3}_{2r_{l}}(x_{l})$ whose closures are contained in $\Omega$, centered at $S_{*}$ and the following holds: $K\cap S_{*} \subset \bigcup_{i=1}^{l}B^{3}_{r_{i}}(x_{i})$; $S_{*}\cap \smash{\overline{B}}^3_{2r_{i}}(x_{i})$ is a finite union of radii of $\smash{\overline{B}}^3_{2r_{i}}(x_{i})$ for each $i \in \{1,\dotsc,l\}$ (at least two radii are contained in each of these covering balls, because otherwise we would obtain a contradiction with the stationarity of $V_{*}$). Now fix an arbitrary $i \in \{1,\dotsc,l\}$. If for some $j \in\{1,\dotsc,l\}$, $j\neq i$ a radius $L_{j}$ of $\smash{\overline{B}}^3_{r_{j}}(x_{j})$ (\emph{i.e.}, a closed segment joining $x_{j}$ with a point on $\partial B^{3}_{r_{j}}(x_{j})$) lying in $S_{*}\cap \smash{\overline{B}}^3_{r_{j}}(x_{j})$ intersects $\smash{\overline{B}}^3_{r_{i}}(x_{i})$, then, by construction of our covering, there exists a radius $L_{i}$ of $\smash{\overline{B}}^3_{r_{i}}(x_{i})$ and a straight line containing both $L_{i}$ and $L_{j}$ (the union of these radii is a straight line segment). Covering the rest of $K$ with a finite number of chains of sufficiently small balls, one can appropriately choose a neighborhood $U$ of $K$ satisfying the conditions of Proposition~\ref{prop structure S_*} by passing from ball to ball in each chain from the resulting cover and successively constructing $\partial U$. This completes our proof of Proposition~\ref{prop structure S_*}.
\end{proof}
As a byproduct of the proof of Proposition~\ref{prop density dv}, taking into account Proposition~\ref{prop structure S_*}, we obtain the following 
\begin{cor}\label{cor 5.17}
    Let $L \subset S_{*}$ be a closed straight line segment. Let $x$ be a point lying in the relative interior of $L$ and  $D\subset \Omega$ be a closed $2$-disk (which is an embedded submanifold of $\mathbb{R}^{3}$) with center $x$ such that $D\cap S_{*}=\{x\}$ and $\partial D\cap S_{0}=\emptyset$, where $S_{0}$ is at most a countable and locally finite subset of $\Omega \backslash S_{*}$ coming from Proposition~\ref{prop conv to harmonic map}~\ref{item3convharm}, which applies to $u_{*}$ in $\Omega \backslash S_{*}$ (see Proposition~\ref{conv to harm}). Then the homotopy class of $u_{*}|_{\partial D}$ is nontrivial and  for each point $y$ in the relative interior of $L$, $\Theta_{1}(\mu_{*}, y)=\mathcal{E}^{\mathrm{sg}}_{2}(u_{*}|_{\partial D})$. 
    \end{cor}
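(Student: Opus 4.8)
The plan is to read the assertion off the proof of Proposition~\ref{prop density dv}, combined with the structure result Proposition~\ref{prop structure S_*}. First I would record the elementary points. Since $u_{*} \in C^{\infty}(\Omega \setminus (S_{*} \cup S_{0}), \mathcal{N})$ (by Proposition~\ref{conv to harm} and Proposition~\ref{prop conv to harmonic map}~\ref{item3convharm}) and $\partial D$ is a topological circle contained in $\Omega \setminus (S_{*} \cup S_{0})$ — indeed $\partial D \cap S_{*} = \emptyset$ because $D \cap S_{*} = \{x\}$ with $x$ interior to $D$, while $\partial D \cap S_{0} = \emptyset$ by hypothesis — the free homotopy class of $u_{*}|_{\partial D}$ is well defined. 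Moreover, by Proposition~\ref{prop structure S_*}, around every point $y$ in the relative interior of $L$ there is a ball in which $S_{*} \cap \Omega$ coincides with an open subsegment of $L$; in particular the approximate tangent line $T_{y}S_{*}$ exists at every such $y$ and is the direction of $L$, and $\Theta_{1}(\mu_{*},y)$ exists for every such $y$ by Corollary~\ref{mon for density}.

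Next I would re-examine the proof of Proposition~\ref{prop density dv} at an arbitrary point $y$ in the relative interior of $L$ for which the blow-ups $\mu_{y,r}$ converge to $\Theta_{1}(\mu_{*},y)\,\mathcal{H}^{1}\mres T_{y}S_{*}$ (this holds for $\mathcal{H}^{1}$-a.e.\ such $y$). Running that argument verbatim — the cylinder constructions of Lemma~\ref{finding nice v and conmapping cylinder} and Corollary~\ref{cor on cylinder}, the energy bounds on cylinders, and the passage to the limit — produces a class $\gamma_{y} \in [\mathbb{S}^{1}, \mathcal{N}]$ with $\Theta_{1}(\mu_{*},y) = \mathcal{E}^{\mathrm{sg}}_{2}(\gamma_{y})$; and the ``claim'' established inside that proof identifies $[\gamma_{y}]$ with the free homotopy class of $u_{*}$ restricted to a small circle linking $L$ transversally near $y$. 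I would then invoke homotopy invariance along $L$: since near the relative interior of $L$ the singular set is the straight segment $L$ itself, two transversal meridian circles of $L$ based at points $y,y'$ of the relative interior sweep, when slid along $L$, a two-parameter family of circles in $\mathbb{R}^{3}$; as $S_{0}$ is a locally finite set of points, a generic small perturbation of this family keeps it disjoint from $S_{0}$, so composing with $u_{*}$ gives a free homotopy in $\mathcal{N}$ between the two restrictions. Hence $[\gamma_{y}]$ is independent of $y$; call it $[\gamma]$ and set $c \coloneqq \mathcal{E}^{\mathrm{sg}}_{2}(\gamma)$, so that $\Theta_{1}(\mu_{*},\cdot) = c$ holds $\mathcal{H}^{1}$-a.e.\ on the relative interior of $L$.

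To upgrade this to \emph{every} point of the relative interior, I would use Proposition~\ref{prop 1-var}, namely $\mu_{*}\mres\Omega = \Theta_{1}\,\mathcal{H}^{1}\mres(S_{*}\cap\Omega)$: near any $y_{0}$ in the relative interior of $L$ this measure equals $c\,\mathcal{H}^{1}\mres L$, and since $L$ is a straight segment passing through $y_{0}$ in its relative interior, $\mu_{*}(\smash{\overline{B}}^{3}_{r}(y_{0})) = 2cr$ for all small $r>0$, whence $\Theta_{1}(\mu_{*},y_{0}) = c$ for every $y_{0}$ in the relative interior of $L$. Finally I would identify $c$ with $\mathcal{E}^{\mathrm{sg}}_{2}(u_{*}|_{\partial D})$: the circle $\partial D$ is a transversal meridian of $L$ at $x$ avoiding $S_{*}\cup S_{0}$, so by the same sliding-and-perturbing argument (now also deforming $D$ to a round transversal disk) it is freely homotopic in $\Omega \setminus (S_{*}\cup S_{0})$ to the standard transversal circles used above; therefore $u_{*}|_{\partial D}$ lies in the class $[\gamma]$ up to orientation, and since $\mathcal{E}^{\mathrm{sg}}_{2}$ is orientation-insensitive we obtain $\mathcal{E}^{\mathrm{sg}}_{2}(u_{*}|_{\partial D}) = c = \Theta_{1}(\mu_{*},y)$ for every $y$ in the relative interior of $L$. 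Nontriviality then follows immediately: $\mathcal{E}^{\mathrm{sg}}_{2}(u_{*}|_{\partial D}) = \Theta_{1}(\mu_{*},x) \ge \eta/2 > 0$ by Proposition~\ref{nice characterization for singular set}, while a nullhomotopic map admits a constant curve as a one-term topological resolution and hence has vanishing singular energy; so $u_{*}|_{\partial D}$ cannot be nullhomotopic.

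The main obstacle is the bookkeeping of homotopy classes: tracing through the cylinder constructions of Lemma~\ref{finding nice v and conmapping cylinder} and Corollary~\ref{cor on cylinder} to confirm that the class $\gamma$ extracted in the proof of Proposition~\ref{prop density dv} really is the class of $u_{*}$ on a transversal circle (this is exactly the ``claim'' paragraph of that proof, which must be reused here), together with the verification that meridian circles of the straight segment $L$ at different relative-interior points are freely homotopic while staying clear of the locally finite set $S_{0}$. Everything else is either a direct citation or a short measure-theoretic computation that exploits the straightness of $L$.
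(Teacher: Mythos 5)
Your proof is correct and takes essentially the same route as the paper: extract from the proof of Proposition~\ref{prop density dv} the identity $\Theta_1(y)=\mathcal{E}^{\mathrm{sg}}_2(\gamma_y)$ with $\gamma_y$ the class of $u_*$ on a transversal meridian near $y$, then use the local finiteness of $S_0$ and the straight-segment structure from Proposition~\ref{prop structure S_*} to freely homotope meridians along $L$. You are somewhat more explicit than the paper on two points it leaves implicit (the upgrade from $\mathcal{H}^1$-a.e.\ $y$ to every $y$ in the relative interior, via Proposition~\ref{prop 1-var} and the straightness of $L$, and the derivation of nontriviality from Proposition~\ref{nice characterization for singular set}), but the core argument and the key lemmas invoked are the same.
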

\begin{proof}[Proof of Corollary~\ref{cor 5.17}]
    Fix an arbitrary point $y$ lying in the relative interior of $L$. According to Proposition~\ref{conv to harm} and Proposition~\ref{prop structure S_*}, we can find an open set $U \csubset \Omega$ such that $x, y \in U$, $D \subset U$ and  $u_{*} \in W^{1,2}_{\loc}(U\setminus L, \mathcal{N})\cap  C^{\infty}(U\setminus (L\cup S_{0}), \mathcal{N})$. Since $S_{0}$ is locally finite, there exists a closed 2-disk $D_{y}\subset U$ such that $D_{y}\cap S_{*}=\{y\}$, $\partial D_{y}\cap S_{0}=\emptyset$ and $u_{*}|_{\partial D}$ is continuously homotopic to $u_{*}|_{\partial D_{y}}$. Then $\mathcal{E}^{\mathrm{sg}}_{2}(u_{*}|_{\partial D}) =\mathcal{E}^{\mathrm{sg}}_{2}(u_{*}|_{\partial D_{y}})$. On the other hand, as a byproduct of the proof of Proposition~\ref{prop density dv},  $\Theta_{1}(y)=\mathcal{E}^{\mathrm{sg}}_{2}(u_{*}|_{\partial D})$, which completes our proof of Corollary~\ref{cor 5.17}. 
\end{proof}

The stationarity of $V_{*}$ and Proposition~\ref{prop structure S_*}  yield the following observation.
\begin{cor}\label{cor 5.18}
    Let $x_{0} \in S_{*}\cap \Omega$. Then, according to Proposition~\ref{prop structure S_*}, there exists a radius $r>0$ such that $\smash{\overline{B}}^3_{r}(x_{0})\subset \Omega$ and $S_{*}\cap \smash{\overline{B}}^3_{r}(x_{0})=\bigcup_{i=1}^{l}L_{i}$, where $l\in \mathbb{N}\setminus\{0\}$ and $L_{i}$ is a closed straight line segment emanating from $x_{0}$. Let $v_{i}\in\mathbb{S}^{2}$ be the direction vector of $L_{i}$ pointing outward from $x_{0}$. Then the stationarity of $V_{*}$ yields that the weighted sum of the $v_{i}$'s is zero, namely
    \begin{equation}\label{weight sum=0}
    \sum_{i=1}^{l}\lambda_{i}v_{i}=0,
    \end{equation}
    where $\lambda_{i}=\mathcal{E}^{\mathrm{sg}}_{2}(u_{*}|_{\partial D_{i}})>0$ with $D_{i}$ being a closed 2-disk (which is an embedded submanifold of $\mathbb{R}^{3}$) centered inside $L_{i}$ and lying in $B^{3}_{r}(x_{0})$ on the 2-plane orthogonal to $L_{i}$ such that $D_{i}\cap S_{*}$ is the singleton, $\partial D_{i} \cap S_{0}=\emptyset$, where $S_{0}$ is at most a countable and locally finite subset of $\Omega \backslash S_{*}$ coming from Proposition~\ref{prop conv to harmonic map}~\ref{item3convharm}, which applies to $u_{*}$ in $\Omega \backslash S_{*}$ (see Proposition~\ref{conv to harm}). 
    \end{cor}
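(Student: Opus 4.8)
The plan is to unwind the stationarity condition $\int_{\Omega}A_{*}(x):D\xi(x)\diff\mu_{*}(x)=0$ provided by Proposition~\ref{prop 1-var} against test fields concentrated near $x_{0}$, using the explicit structure of $S_{*}$ near $x_{0}$ from Proposition~\ref{prop structure S_*}. First I would fix, by Proposition~\ref{prop structure S_*}, a radius $r>0$ with $\smash{\overline{B}}^3_{r}(x_{0})\subset\Omega$ and $S_{*}\cap\smash{\overline{B}}^3_{r}(x_{0})=\bigcup_{i=1}^{l}L_{i}$, each $L_{i}$ a closed segment emanating from $x_{0}$ with outward unit direction $v_{i}\in\mathbb{S}^{2}$, the $L_{i}$ having pairwise disjoint relative interiors and their far endpoints lying on $\partial B^{3}_{r}(x_{0})$. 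Shrinking $r$ if necessary so that each $L_{i}$ carries an orthogonal transversal $2$-disk $D_{i}\subset B^{3}_{r}(x_{0})$ centered in its relative interior with $D_{i}\cap S_{*}$ a singleton and $\partial D_{i}\cap S_{0}=\emptyset$ (possible since $S_{0}$ is locally finite), Corollary~\ref{cor 5.17} shows that the one-dimensional density $\Theta_{1}(\mu_{*},\cdot)$ is constant on the relative interior of $L_{i}$, equal to $\lambda_{i}:=\mathcal{E}^{\mathrm{sg}}_{2}(u_{*}|_{\partial D_{i}})$; this value does not depend on the admissible choice of $D_{i}$ because any two such disks produce homotopic restrictions of $u_{*}$, and it is positive by Proposition~\ref{nice characterization for singular set} combined with Corollary~\ref{cor 5.17}. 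Since $S_{*}$ is $\mathcal{H}^{1}$-rectifiable with approximate tangent line $\operatorname{span}(v_{i})$ at $\mathcal{H}^{1}$-a.e.\ point of the relative interior of $L_{i}$, the orthogonal projection $A_{*}$ of Proposition~\ref{prop 1-var} equals $v_{i}\otimes v_{i}$ for $\mathcal{H}^{1}$-a.e.\ point of $L_{i}$, and $\mu_{*}\mres\smash{\overline{B}}^3_{r}(x_{0})=\sum_{i=1}^{l}\lambda_{i}\,\mathcal{H}^{1}\mres L_{i}$ (the junction point $x_{0}$ being $\mathcal{H}^{1}$-null).

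Next I would test stationarity with a field adapted to $x_{0}$: fix an arbitrary $e\in\mathbb{R}^{3}$ and choose $\xi\in C^{1}_{c}(B^{3}_{r}(x_{0}),\mathbb{R}^{3})$ with $\xi(x_{0})=e$. Inserting this $\xi$ into the first-variation identity of Proposition~\ref{prop 1-var} and using the above descriptions of $\mu_{*}$ and $A_{*}$ gives
\[
0=\int_{\Omega}A_{*}(x):D\xi(x)\diff\mu_{*}(x)=\sum_{i=1}^{l}\lambda_{i}\int_{L_{i}}(v_{i}\otimes v_{i}):D\xi\diff\mathcal{H}^{1}.
\]
Parametrizing $L_{i}$ by $t\mapsto x_{0}+tv_{i}$ for $t\in[0,\ell_{i}]$ with $\ell_{i}=\mathcal{H}^{1}(L_{i})$, one has $(v_{i}\otimes v_{i}):D\xi(x_{0}+tv_{i})=v_{i}^{\mathrm{T}}D\xi(x_{0}+tv_{i})v_{i}=\frac{d}{dt}\bigl(\xi(x_{0}+tv_{i})\cdot v_{i}\bigr)$, so the fundamental theorem of calculus yields $\int_{L_{i}}(v_{i}\otimes v_{i}):D\xi\diff\mathcal{H}^{1}=\xi(x_{0}+\ell_{i}v_{i})\cdot v_{i}-\xi(x_{0})\cdot v_{i}=-\,e\cdot v_{i}$, because the far endpoint $x_{0}+\ell_{i}v_{i}$ lies on $\partial B^{3}_{r}(x_{0})$, where $\xi$ vanishes. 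Hence $0=-\,e\cdot\sum_{i=1}^{l}\lambda_{i}v_{i}$ for every $e\in\mathbb{R}^{3}$, which forces $\sum_{i=1}^{l}\lambda_{i}v_{i}=0$, namely \eqref{weight sum=0}.

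I do not expect a genuine analytic obstacle here: the argument is essentially the classical fact that a stationary integral $1$-varifold which is a cone of segments over a point is balanced, and every nontrivial ingredient — rectifiability and stationarity of $V_{*}$, the segment structure of $S_{*}$ near $x_{0}$, the constancy and positivity of the densities $\lambda_{i}$ — has already been established in the excerpt. The only points deserving a little care are the identification of $A_{*}$ with $v_{i}\otimes v_{i}$ along each $L_{i}$ (immediate from the uniqueness of the approximate tangent plane and the flatness of a segment), the admissibility of the chosen $\xi$ in Proposition~\ref{prop 1-var} together with the fact that $\operatorname{supp}\xi\subset B^{3}_{r}(x_{0})$ meets $S_{*}$ only in the segments $L_{i}$ (by Proposition~\ref{prop structure S_*}), and the observation that the junction point $x_{0}$ contributes nothing to the integral. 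I would also note that varying $e$ over the standard basis recovers \eqref{weight sum=0} componentwise, which is the stated form.
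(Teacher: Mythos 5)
Your proof is correct and follows essentially the same approach as the paper: test the first-variation identity for the stationary varifold $V_{*}$ from Proposition~\ref{prop 1-var} against a compactly supported vector field, reduce each segment integral $\int_{L_{i}}(v_{i}\otimes v_{i}):D\xi\,\diff\mathcal{H}^{1}$ to a total derivative by the fundamental theorem of calculus, and use that the far endpoints lie on $\partial B^{3}_{r}(x_{0})$ where the field vanishes. The paper picks an arbitrary $\chi\in C^{1}_{c}(B^{3}_{r}(x_{0}),\mathbb{R}^{3})$ and notes at the end that $\chi(x_{0})$ is free, whereas you fix $\xi(x_{0})=e$ from the start, a cosmetic difference only.
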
    
\begin{proof}[Proof of Corollary~\ref{cor 5.18}] Fix an arbitrary $\chi \in C^{1}_{c}(B^{3}_{r}(x_{0}), \mathbb{R}^{3})$. Since the first variation of $V_{*}$ vanishes, where $V_{*}(d\,T,\diff x)=\delta_{A_{*}(x)}(dT)\otimes \Theta_{1}(x)\mathcal{H}^{1}\mres (S_{*}\cap \Omega)(dx)$ with $A_{*}(x)$ being the orthogonal projection matrix onto the approximate tangent plane $T_{x}S_{*}$ to $S_{*}$ at $x$, we have
    \begin{equation}\label{stationary sum 0}
        0=\int_{\Omega}A_{*}(x):D\chi(x) \diff \mu_{*}(x)=\int_{S_{*}\cap B^{3}_{r}(x_{0})}\Theta_{1}(x) A_{*}(x):D\chi(x) \diff \mathcal{H}^{1}(x).
    \end{equation}
Next, using that $S_{*}\cap \smash{\overline{B}}^3_{r}(x_{0})=\bigcup_{i=1}^{l}L_{i}$, $\mu_{*}\mres B^{3}_{r}(x_{0})=\sum_{i=1}^{l}\lambda_{i}\mathcal{H}^{1}\mres L_{i}$ and  $A_{*}|_{L_{i}}=v_{i}\otimes v_{i}$, in view of \eqref{stationary sum 0}, we get
\begin{equation}\label{stationary sum}
\begin{split}
0&=\sum_{i=1}^{l}\lambda_{i}\int_{L_{i}}v_{i}\otimes v_{i}:D\chi(x)\diff \mathcal{H}^{1}(x)\\
&=\sum_{i=1}^{l}\lambda_{i}\int_{0}^{\mathcal{H}^{1}(L_{i})}\frac{\diff}{\diff t}\langle \chi(x_{0}+t v_{i}), v_{i}\rangle \diff t\\
&= -\sum_{i=1}^{l}\langle \chi(x_{0}), \lambda_{i} v_{i} \rangle. 
\end{split}
\end{equation}
Since $\chi \in C^{1}_{c}(B^{3}_{r}(x_{0}), \mathbb{R}^{3})$ was arbitrarily chosen, \eqref{stationary sum} yields \eqref{weight sum=0}, which completes our proof of Corollary~\ref{cor 5.18}.
\end{proof}

\begin{rem}\label{rem sing points}
    We obtain several direct consequences of Corollary~\ref{cor 5.18}. 
    \begin{enumerate}[label=(\roman*)]
        \item For each $x_{0} \in S_{*}\cap \Omega$, at least two segments emanate from $x_{0}$, because otherwise $x_{0}$ would be an endpoint for some segment lying in $S_{*}\cap \Omega$, but this would contradict \eqref{weight sum=0}. 
        \item If locally around $x_{0}$ the set $S_{*}\cap \Omega$ is a union of exactly two segments emanating from $x_{0}$, then the angle between them is $\pi$, since a smaller angle  would contradict \eqref{weight sum=0}. 
        \item If exactly three segments emanate from $x_{0}$, then their direction vectors lie in the same two-dimensional plane. 
        \item 
        \label{it_uk8ewikae5Quaoth3eiJaeth}
        If exactly four segments emanate from $x_{0}$, then $\lambda_{1}v_{1}+\lambda_{2}v_{2} +\lambda_{3}v_{3}+\lambda_{4}v_{4}=0$, which yields a system of equations. Firstly, 
        $|\lambda_{1} v_{1} +\lambda_{2} v_{2}|^{2}=|\lambda_{3}v_{3}+\lambda_{4}v_{4}|^{2}$ and hence 
        \[
         \lambda_1^2 + \lambda_2^2 + 2 \lambda_1 \lambda_2 \langle v_1,  v_2\rangle
         = \lambda_3^2 + \lambda_4^2 + 2 \lambda_3 \lambda_4 \langle v_3, v_4 \rangle.
        \]
        Secondly, for each $j \in \{1, \dotsc, 4\}$, 
        \[
        \lambda_{j}+\sum_{i=1, \, i\neq j}^{4} \lambda_{i}\langle v_{i},  v_{j} \rangle=0,
        \]
        since $|v_{j}|=1$. Thirdly, for each $j \in \{1, \dotsc, 4\}$, $|\lambda_{j} v_{j}|^{2}=|\sum_{i=1,\, i\neq j}^{4} \lambda_{i}v_{i}|^{2}$ and hence
        \[
        \lambda_{j}^{2}=\sum_{i=1, \, i\neq j}^{4} \lambda_{i}^{2}+2 \sum_{i,\, k\in (\{1,\dotsc, 4\} \setminus \{j\}) \cap \{i<k\}} \lambda_{i} \lambda_{k} \langle v_{i}, v_{k} \rangle.
        \]
        If $\lambda_{1}=\lambda_{2}=\lambda_{3}=\lambda_{4}$ in \eqref{weight sum=0}, then, up to relabeling,  $v_{1}+v_{2}=-v_{3}-v_{4}$, and there exists a double cone with apex $0$ such that $v_{1}$, $v_{2}$ lie on the upper nappe and $v_{3}$, $v_{4}$ lie on the lower nappe of the cone, respectively. Furthermore, the angle between $v_{1}$ and $v_{2}$ is the same as between $v_{3}$ and $v_{4}$, which is twice the angle of the double cone.
        \end{enumerate} 
        Moreover, in \cite[Proposition~2]{Canevari_2017} it was proved that in the case when $\pi_{1}(\mathcal{N}) \simeq \mathbb{Z}/2\mathbb{Z}$ (for instance, $\mathcal{N}=SO(n)$ or $\mathcal{N}=\mathbb{R}\mathbb{P}^{n-1}$ for $n\in \mathbb{N}$, $n\geq 3$), then only an even number of segments can emanate from $x\in S_{*}\cap \Omega$. We prove that in the case when $\pi_{1}(\mathcal{N})=\mathbb{Z}/2\mathbb{Z}$, there cannot be branching points inside $\Omega$ (see Corollary~\ref{cor branching points}). 
\end{rem}

\subsection{Interior estimates and \texorpdfstring{$W^{1,q}_{\loc}(\Omega)$}{W1,qloc(Ω)} compactness for $p$-minimizers when $q<2$}
We prove that the $q$-energy of $p_{n}$-minimizers with $(p_{n})_{n\in \mathbb{N}} \subset [1,2)$ and $p_{n} \nearrow 2$ as $n\to +\infty$ is locally bounded inside $\Omega$ for each $q\in [1,2)$ under the condition \eqref{C2}.

For the reader's convenience, let us recall the construction of dyadic cubes in $\mathbb{R}^{3}$ (see, for instance, \cite{Auscher_2012}).
\begin{defn}\label{defofdyadiccubes}
Let $[0,1)^{3}$ be the reference cube, $j \in \mathbb{Z}$ and $x \in \mathbb{Z}^{3}$. Then define the dyadic cube of generation $j$ with lower left corner $2^{-j} x$ by 
\[
Q_{j,x}
\coloneqq 
2^{-j} (x + [0, 1)^3) = \{y \in \mathbb{R}^{3}: 2^{j}y-x \in [0,1)^{3}\},
\]
the set of generation $j$ dyadic cubes by $\mathscr{D}_{j}=\{Q_{j,x}: x \in \mathbb{Z}^{3}\}$ and the set of all dyadic cubes by 
\[
\mathscr{D}
\coloneqq 
\bigcup_{j\in \mathbb{Z}}\mathscr{D}_{j}=\{Q_{j, x}: j \in \mathbb{Z}\,\ \text{and}\,\ x \in \mathbb{Z}^{3}\}.  
\]
For each $k\in \mathbb{N}\setminus\{0\}$, we also define the enlarged cubes
\[
kQ_{j,x}
\coloneqq 
\biggl\{y \in \mathbb{R}^{3}: 2^{j}y-x \in \Bigl[-\frac{k-1}{2}, \frac{k+1}{2}\Bigr)^{3}\biggr\}.
\]
\end{defn}
According to \cite[Theorem~2.2.3~(ii)]{Auscher_2012}, for each $j\in \mathbb{Z}$ and $x \in \mathbb{Z}^{3}$, there exists a unique $y\in \mathbb{Z}^{3}$ such that $Q_{j,x}\subset Q_{j-1, y}$. The cube $Q_{j-1, y}$ will be called the \emph{parent} of $Q_{j, x}$. For each $j \in \mathbb{Z}$ and $y \in \mathbb{Z}^{3}$, the cubes $Q_{j+1, x}$ for which $Q_{j, y}$ is the parent are called the \emph{children} of $Q_{j, y}$.

The distance from $U \subset \mathbb{R}^{3}$ to $V \subset \mathbb{R}^{3}$ is given by
\[
\dist(U, V)\coloneqq \inf\{|x-y|: x \in U, \,\ y \in V\}.
\]

\begin{prop}\label{prop goodest dyadic}
	Let $V\subset \mathbb{R}^{3}$ be a bounded open set. Let $C_{1}, C_{2}$ be positive constants, $p \in (1,2]$ and $f \in L^{p}(V)$. Assume that for each dyadic cube $Q_{j, x_{0}}$ such that $2Q_{j, x_{0}} \subset V$ and
			\begin{equation}\label{goodcubeest}
				C_{1}\int_{2Q_{j,x_{0}}} |f|^{p}\diff x \leq 2^{-j(3-p)}
			\end{equation}
			it holds
			\begin{equation}\label{goodcubeestcons}
				\int_{Q_{j, x_{0}}}|f|^{p}\diff x \leq C_{2} 2^{-j(3-p)}.
			\end{equation}
			Then for each $q \in [1,p)$, $f \in L^{q}(V)$ and 
			\begin{equation}\label{goodestq}
				\int_{V}(\dist(x,\partial V) |f(x)|)^{q}\diff x \leq 10^{4}C_{2}^{\frac{q}{p}}|V|+ 10^{4}\frac{C_{1}C_{2}^{\frac{q}{p}}}{p-q}\int_{V}(\dist(x,\partial V)|f(x)|)^{p}\diff x.
			\end{equation}
\end{prop}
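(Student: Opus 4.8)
The plan is to combine a Whitney-type dyadic decomposition of $V$ with a Calderón--Zygmund stopping-time argument driven by the hypothesis. Set $d(x)=\dist(x,\partial V)$ and $h=d\,|f|$; we may assume $V\neq\emptyset$ and $\int_{V}h^{p}\diff x<+\infty$, since otherwise there is nothing to prove. First I would fix a dyadic Whitney decomposition $V=\bigcup_{W\in\mathcal W}W$ into half-open dyadic cubes, pairwise disjoint, refined if necessary so that, writing $\ell(W)$ for the side length of $W$, one has $2W\subset V$ and $c_{1}\ell(W)\le d(y)\le c_{2}\ell(W)$ for every $y\in 2W$, where $c_{1},c_{2}>0$ are absolute constants (take, e.g., the maximal dyadic cubes $W$ with $2W\subset V$ and $\dist(2W,\partial V)\ge\ell(W)$). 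In particular $d(x)$ is comparable to $\ell(W)$ on $W$, the family $\{2W:W\in\mathcal W\}$ has bounded overlap, and every dyadic subcube $Q\subseteq W$ satisfies $2Q\subseteq 2W\subset V$, so the hypothesis \eqref{goodcubeest}--\eqref{goodcubeestcons} applies to all such $Q$. Call a dyadic cube $Q$ with $2Q\subset V$ \emph{good} if $C_{1}\int_{2Q}|f|^{p}\diff x\le\ell(Q)^{3-p}$ and \emph{bad} otherwise; by the hypothesis a good cube satisfies $\int_{Q}|f|^{p}\diff x\le C_{2}\ell(Q)^{3-p}$, and therefore, by H\"older's inequality, $\int_{Q}|f|^{q}\diff x\le C_{2}^{q/p}\ell(Q)^{3-q}$.

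Fix $W\in\mathcal W$. If $W$ is good, then directly $\int_{W}h^{q}\diff x\le(c_{2}\ell(W))^{q}\int_{W}|f|^{q}\diff x\le c_{2}^{q}C_{2}^{q/p}\ell(W)^{3}=c_{2}^{q}C_{2}^{q/p}|W|$, so summing over the good cubes (disjoint, contained in $V$) contributes a term of size $\lesssim C_{2}^{q/p}|V|$. If $W$ is bad, I would run the stopping time inside $W$: let $\mathcal S(W)$ be the family of maximal good dyadic subcubes of $W$. These are pairwise disjoint, and $W\setminus\bigcup_{Q\in\mathcal S(W)}Q$ is Lebesgue-null, for on it every dyadic cube $Q'\ni x$ with $Q'\subseteq W$ is bad, so the average $|2Q'|^{-1}\int_{2Q'}|f|^{p}$ is $\gtrsim\ell(Q')^{-p}\to+\infty$ as $\ell(Q')\to0$, contradicting the Lebesgue differentiation theorem for a.e.\ $x$ (recall $f\in L^{p}(V)\subset L^{1}_{\loc}$).

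For each $Q\in\mathcal S(W)$ its dyadic parent $Q^{*}$ lies in $W$ and is bad (since $W$ is bad and $Q$ is a \emph{maximal} good subcube), so $\ell(Q)^{3-p}\le\tfrac12 C_{1}\int_{2Q^{*}}|f|^{p}\diff x$. Writing $\int_{Q}|f|^{q}\diff x\le C_{2}^{q/p}\ell(Q)^{3-q}=C_{2}^{q/p}\ell(Q)^{p-q}\ell(Q)^{3-p}$, summing first over the cubes of $\mathcal S(W)$ of a fixed dyadic generation $j$ (using the bounded overlap of the dilated parents $\{2Q^{*}\}$ among generation-$(j-1)$ cubes, and that a cube has at most eight children) and then over $j$, the geometric series $\sum_{2^{-j}\le\ell(W)}2^{-j(p-q)}\le\tfrac{2}{p-q}\,\ell(W)^{p-q}$ (valid since $0<p-q\le1$) yields
\begin{equation*}
\int_{W}|f|^{q}\diff x\le\frac{C\,C_{1}C_{2}^{q/p}}{p-q}\,\ell(W)^{p-q}\int_{2W}|f|^{p}\diff x,
\end{equation*}
with $C$ absolute. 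Multiplying by $(c_{2}\ell(W))^{q}$ and using $\ell(W)^{p}|f|^{p}\le c_{1}^{-p}h^{p}$ on $2W$, the powers of $\ell(W)$ cancel because $q+(p-q)-p=0$, so $\int_{W}h^{q}\diff x\le\frac{C'C_{1}C_{2}^{q/p}}{p-q}\int_{2W}h^{p}\diff x$. Summing the good- and bad-cube estimates over $W\in\mathcal W$ and using the bounded overlap of $\{2W\}$ produces an inequality of the asserted form; with a careful choice of the parameters of the decomposition the absolute constant is at most $10^{4}$, and in particular $f\in L^{q}(V)$.

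The main obstacle is the stopping-time summation for bad cubes: one must keep the exponent bookkeeping exact, so that the powers of $\ell(W)$ produced by H\"older's inequality, by the ``badness'' of the parent cubes, and by the $(p-q)$-geometric series cancel precisely — the series being what supplies the factor $1/(p-q)$. Two auxiliary but essential points are: constructing the decomposition so that the \emph{doubled} cubes $2W$ are still contained in $V$ with $d$ comparable to $\ell(W)$ on them (so that the hypothesis applies to all dyadic subcubes of $W$ and both $\int_{W}h^{q}$ and $\int_{2W}h^{p}$ are comparable to powers of $\ell(W)$ times the corresponding $L^{p}$-energies), and the measure-zero argument above, which legitimizes exhausting each bad $W$ by its maximal good subcubes.
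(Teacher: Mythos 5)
Your proposal follows essentially the same route as the paper's proof: a Whitney dyadic decomposition of $V$, a Calder\'on--Zygmund good/bad dichotomy driven by \eqref{goodcubeest}--\eqref{goodcubeestcons}, a stopping time at the maximal good subcubes of each bad Whitney cube (the paper enumerates these stopping cubes generation by generation as the families $\mathcal{F}_{j,n}$), the Lebesgue differentiation theorem to see that the stopping cubes exhaust each Whitney cube up to a null set, H\"older's inequality on good cubes, bounded overlap, and the geometric series supplying the $1/(p-q)$ factor. The only difference is bookkeeping: the paper first counts bad cubes at each level and converts the count to an energy bound, whereas you bound the summed $L^q$-energies directly via the badness of each stopping cube's parent; these are equivalent and the argument is sound.
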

\begin{proof}
According to the Whitney covering theorem for dyadic cubes, namely \cite[Theorem~2.3.1]{Auscher_2012}, there exists a collection of mutually disjoint dyadic cubes $\mathcal{F}=\{Q_{i}\}_{i\in I}$, where $I$ is countable, such that
\begin{equation}\label{estdiamdist}
\frac{1}{30}\dist(Q_{i}, \partial V)\leq \diam(Q_{i})\leq \frac{1}{10}\dist(Q_{i}, \partial V)
\end{equation}
and 
\begin{equation}\label{dyadicdecompV}
V=\bigcup_{i\in I}Q_{i}.
\end{equation}
Since $V$ is bounded, there exists (the minimal) $j_{0} \in \mathbb{Z}$ such that for some $x_{0} \in \mathbb{Z}^{3}$, $Q_{j_{0}, x_{0}} \in \mathcal{F}$ and for each $x \in \mathbb{Z}^{3}$ and $j \in (-\infty, j_{0})\cap \mathbb{Z}$, $Q_{j, x}  \not \in \mathcal{F}$. 
For each $j \in [j_{0}, +\infty) \cap \mathbb{Z}$, define $\mathcal{F}_{j}\coloneqq \mathscr{D}_{j}\cap \mathcal{F}$. 
Since $V$ is bounded, $\mathcal{F}_{j}$ is finite. 
Notice also that for each $Q \in \mathcal{F}$, $3Q \subset V$ (this comes from \eqref{estdiamdist}, see \cite[Proposition~2.3.3~(i)]{Auscher_2012}). 
We say that a dyadic cube $Q \in \mathcal{F}_{j}$ is \emph{good} if \eqref{goodcubeest} holds with $Q_{j,x_{0}}$ replaced by $Q$, and it is \emph{bad} otherwise. 
Let us now define the collection of all good cubes of $\mathcal{F}_{j}$ by $\mathcal{F}_{j,0}$ and their union by $G_{j,0}$. 
For each bad cube $Q \in \mathcal{F}_{j}$, we consider the children of $Q$. 
Similarly, we say that a child $Q^{\prime}$ of $Q$ is good if \eqref{goodcubeest} holds with $Q_{j, x_{0}}$ replaced by $Q^{\prime}$, and it is bad otherwise. 
We define the collection of all good children of bad cubes of $\mathcal{F}_{j}$ by $\mathcal{F}_{j,1}$ and their union by $G_{j,1}$. 
Next, we select good children of bad children of bad cubes in $\mathcal{F}_{j}$ and denote this collection by $\mathcal{F}_{j,2}$ and their union by $G_{j, 2}$. 
Denote the number of bad cubes in $\mathcal{F}_{j}$ by $N_{j,0}$. 
The number of bad children of bad cubes of $\mathcal{F}_{j}$ denote by $N_{j,1}$ and the number of bad children of bad children of bad cubes of $\mathcal{F}_{j}$ denote by $N_{j,2}$. 
Repeating this procedure, we obtain sequences $(\mathcal{F}_{j,n})_{n\in \mathbb{N}}$, $(G_{j,n})_{n\in \mathbb{N}}$ and $(N_{j,n})_{n\in \mathbb{N}}$. 
Define $G_{j}\coloneqq \bigcup_{n\in \mathbb{N}} G_{j,n}$. 
We claim that $G_{j}$ has full measure in $\bigcup_{Q \in \mathcal{F}_{j}}Q$. 
Indeed, by \cite[Theorem~2.2.3]{Auscher_2012}, for each $y \in \mathbb{R}^{3}$, there exists a sequence of dyadic cubes $(Q_{k, y_{k}})_{k\in \mathbb{N}}$ such that $\{y\}=\bigcap_{k\in \mathbb{N}}Q_{k, y_{k}}$. 
If $y \in \bigcup_{Q\in \mathcal{F}_{j}}Q\setminus G_{j}$, then, by construction, for each $k \in \mathbb{N}$ large enough, $2Q_{k, y_{k}} \subset B^{3}_{2^{-k+1}} (y) \subset V$ and 
\[
C_{1}\int_{B^{3}_{2^{-k+1}}(y)}|f|^{p}\diff x \geq 2^{-k(3-p)},
\]
which implies that 
\[
\liminf_{k\to +\infty} \frac{1}{|B^{3}_{2^{-k+1}}|}\int_{B^{3}_{2^{-k+1}}(y)}|f|^{p}\diff x = +\infty. 
\]
Thus, in view of the Lebesgue differentiation theorem (see \cite[Theorem~1.32]{Evans}) and the fact that $f \in L^{p}(V)$, $|\bigcup_{Q \in \mathcal{F}_{j}} Q \setminus G_{j}|=0$. 
This proves our claim. On the other hand, observe that for each $n\in \mathbb{N}$,
\begin{equation}\label{trivialcubdyad}
\# \mathcal{F}_{j,n+1}\leq 2^{3} N_{j, n}
\end{equation}
and, in view of \eqref{goodcubeest}, 
\begin{equation}\label{ebbals}
N_{j,n} \leq 3^{3} 2^{(j+n)(3-p)} C_{1} \int_{G_{j}}|f|^{p}\diff x,
\end{equation}
where we have also used that  if $Q\in \mathscr{D}_{j}$, then there exists exactly $3^{3}-1$ other dyadic cubes $Q^{
\prime} \in \mathscr{D}_{j}$ such that $2Q \cap 2Q^{\prime} \not = \emptyset$, and that $G_{j}$ has full measure in $\bigcup_{Q \in \mathcal{F}_{j}}Q$. Combining \eqref{trivialcubdyad} with \eqref{ebbals}, for each $n\in \mathbb{N}$, we get
\begin{equation}\label{numberofgoodballsi}
\#\mathcal{F}_{j,n+1} \leq 6^{3} 2^{(j+n)(3-p)} C_{1}\int_{G_{j}}|f|^{p}\diff x.
\end{equation}
It is also worth noting that 
\begin{equation}\label{step0esti}
\#\mathcal{F}_{j,0}\leq 2^{3j}|G_{j}|.
\end{equation}
Applying H\"older's inequality and \eqref{goodcubeestcons}, for each $Q \in \mathcal{F}_{j,n}$, we obtain
\begin{equation*}
\int_{Q}|f|^{q}\diff x  \leq |Q|^{1-\frac{q}{p}} \left(\int_{Q}|f|^{p} \diff x\right)^{\frac{q}{p}}\leq C_{2}^{\frac{q}{p}}2^{-3(j+n)\left(1-\frac{q}{p}\right)} 2^{-(j+n)\left(\frac{3q}{p}-q\right)}\leq C_{2}^{\frac{q}{p}}2^{-(j+n)(3-q)}.
\end{equation*}
 Using this, \eqref{numberofgoodballsi} and \eqref{step0esti}, we deduce that
\begin{equation*}
\begin{split}
\int_{G_{j}}|f|^{q}\diff x = \sum_{n=0}^{+\infty} \int_{G_{j,n}}|f|^{q}\diff x &= \sum_{Q \in \mathcal{F}_{j,0}}\int_{Q}|f|^{q}\diff x + \sum_{n=0}^{+\infty} \sum_{Q \in \mathcal{F}_{j, n+1}} \int_{Q}|f|^{q}\diff x \\
& \leq \# \mathcal{F}_{j,0} C_{2}^{\frac{q}{p}} 2^{-j(3-q)}+\sum_{n=0}^{+\infty} \#\mathcal{F}_{j, n+1} C_{2}^{\frac{q}{p}}2^{-(j+n+1)(3-q)} \\ 
& \leq C_{2}^{\frac{q}{p}}2^{jq}|G_{j}|+ 3^{3}2^{q}C_{1}C_{2}^{\frac{q}{p}}\sum_{n=0}^{+\infty} 2^{-(j+n)(p-q)}\int_{G_{j}}|f|^{p}\diff x\\
& \leq C_{2}^{\frac{q}{p}}2^{jq}|G_{j}|+3^{3}2^{jq+p}\frac{C_{1}C_{2}^{\frac{q}{p}}}{\ln(2)(p-q)}\int_{G_{j}}2^{-jp}|f|^{p}\diff x, 
\end{split}
\end{equation*}
where, in the last estimate, we have used that $2^{p-q}-1\geq \ln(2)(p-q)$. Hence
\begin{equation}\label{niceestgj}
\int_{G_{j}}2^{-jq}|f|^{q}\diff x \leq C_{2}^{\frac{q}{p}}|G_{j}|+3^{3}2^{p}\frac{C_{1}C_{2}^{\frac{q}{p}}}{\ln(2)(p-q)}\int_{G_{j}}2^{-jp}|f|^{p}\diff x.
\end{equation}
Since $1\leq q< p \leq 2$, from \eqref{estdiamdist}, \eqref{niceestgj} and the fact that $G_{j}$ has full measure in $\bigcup_{Q \in \mathcal{F}_{j}}Q$, we obtain
\[
\int_{G_{j}}(\dist(x, \partial V) |f(x)|)^{q}\diff x \leq 10^{4}C_{2}^{\frac{q}{p}}|G_{j}|+10^{4}\frac{C_{1}C_{2}^{\frac{q}{p}}}{p-q}\int_{G_{j}}(\dist(x, \partial V)|f(x)|)^{p}\diff x.
\]
Summing both sides of the above inequality over $j \in [j_{0}, +\infty)\cap \mathbb{Z}$ and taking into account that $\bigcup_{j=j_{0}}^{+\infty}G_{j}$ has full measure in $V$ (this comes from \eqref{dyadicdecompV} and the fact that $G_{j}$ has full measure in $\bigcup_{Q \in \mathcal{F}_{j}}Q$), yields
\[
\int_{V}(\dist(x, \partial V)|f(x)|)^{q}\diff x \leq 10^{4}C_{2}^{\frac{q}{p}}|V|+10^{4}\frac{C_{1}C_{2}^{\frac{q}{p}}}{p-q}\int_{V}(\dist(x, \partial V)|f(x)|)^{p}\diff x. 
\]
This completes our proof of Proposition~\ref{prop goodest dyadic}.
\end{proof}

As a consequence of Proposition~\ref{prop goodest dyadic}, we obtain an improvement of Proposition~\ref{conv to harm}.

\begin{prop}\label{prop est for the differential inside domain} 
	Let $(u_{n})_{n\in \mathbb{N}}$ be a sequence of $p_{n}$-minimizers in $\Omega$. Then the following assertions hold. 
	\begin{enumerate}[label=(\roman*)]
		\item 
		\label{it_so3EPh9zoev2Gae0OoQuieg9}
		There exists $C=C(\diam(\Omega), \mathcal{N})>0$ such that for each $q \in [1,2)$, 
		\begin{equation*}
			\limsup_{n \to +\infty}\int_{\Omega}(\dist(x,\partial \Omega)|Du_{n}(x)|)^{q}\diff x \leq C |\Omega|+\frac{C}{2 - q}\limsup_{n\to+\infty}(2-p_{n})\int_{\Omega} \frac{|Du_{n}|^{p_{n}}}{p_{n}}\diff x. 
		\end{equation*}
		\item 
		\label{it_Aex0Aijushoetha0looKo6ei}
		Let $(u_{n})_{n\in \mathbb{N}}$ satisfy \eqref{C2}  and let $u_{*}$ be given by Proposition~\ref{conv to harm}. Then for each $q \in [1,2)$,  $u_{*} \in W^{1,q}_{\loc}(\Omega, \mathcal{N})$ and, up to a subsequence (not relabeled), $u_{n} \to u_{*}$  in $W^{1,q}_{\loc}(\Omega, \mathbb{R}^{\nu})$ and
		\begin{equation}\label{estonweakgradient0ustar}
		\int_{\Omega} (\dist(x,\partial \Omega)|Du_{*}(x)|)^{q} \diff x \leq C |\Omega| + \frac{C}{2-q} \limsup_{n \to +\infty} (2-p_{n}) \int_{\Omega} \frac{|Du_{n}|^{p_{n}}}{p_{n}} \diff x,
		\end{equation}
		where $C=C(\diam(\Omega),  \mathcal{N})>0$.
	\end{enumerate}
\end{prop}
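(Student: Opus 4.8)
The plan is to deduce Proposition~\ref{prop est for the differential inside domain} from the local energy decay already established (Lemma~\ref{heart}) combined with the dyadic Whitney estimate of Proposition~\ref{prop goodest dyadic}, applied to $f = |Du_n|$ with $p = p_n$. First I would verify the hypothesis of Proposition~\ref{prop goodest dyadic}: given a dyadic cube $Q_{j,x_0}$ with $2Q_{j,x_0} \subset \Omega$ satisfying $C_1 \int_{2Q_{j,x_0}} |Du_n|^{p_n}\diff x \le 2^{-j(3-p_n)}$, I want to conclude $\int_{Q_{j,x_0}} |Du_n|^{p_n}\diff x \le C_2 2^{-j(3-p_n)}$. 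The natural route: the cube $2Q_{j,x_0}$ contains a ball $B^3_r(x_0')$ with $r \sim 2^{-j}$ centered at the center of $Q_{j,x_0}$, and $Q_{j,x_0}$ is contained in $B^3_{\kappa r}(x_0')$ for a fixed $\kappa = \kappa(\text{dimension})\in(0,1)$; then the smallness hypothesis, after adjusting constants, gives $\int_{B^3_r(x_0')} \frac{|Du_n|^{p_n}}{p_n}\diff x \le \frac{\eta r^{3-p_n}}{2-p_n}$ (here I use $p_n$ bounded away from $1$, say $p_n \ge 3/2$, which holds for $n$ large, to control the factor $(2-p_n)$ and the $p_n$ in the denominator — note $r^{3-p_n}\le \max\{1,\operatorname{diam}(\Omega)\}^{3-p_n}$ is comparable to $r^{2-p_n}\cdot r$, but more carefully $r^{3-p_n}$ vs $2^{-j(3-p_n)}$ differ only by an absolute constant since $r\sim 2^{-j}$). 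Applying Lemma~\ref{heart} with $\Psi$ a translation then yields $\int_{B^3_{\kappa r}(x_0')}\frac{|Du_n|^{p_n}}{p_n}\diff x \le C r^{3-p_n} \le C_2 2^{-j(3-p_n)}$, which is exactly \eqref{goodcubeestcons}. A technical wrinkle is that the constant $\eta$ in Lemma~\ref{heart} depends on $p_0$, and here I must fix $p_0 = 3/2$ and only treat $n$ large enough that $p_n \ge 3/2$; the finitely many remaining $n$ do not affect the $\limsup$.

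With the hypothesis verified, Proposition~\ref{prop goodest dyadic} applied with $V = \Omega$, $p = p_n$, $q\in[1,2)$ fixed (and $n$ large enough that $p_n > q$) gives
\begin{equation*}
\int_{\Omega}(\dist(x,\partial\Omega)|Du_n(x)|)^q \diff x \le 10^4 C_2^{q/p_n}|\Omega| + 10^4\frac{C_1 C_2^{q/p_n}}{p_n - q}\int_{\Omega}(\dist(x,\partial\Omega)|Du_n(x)|)^{p_n}\diff x.
\end{equation*}
To pass to the $\limsup$ I must bound the last integral. Since $\dist(x,\partial\Omega) \le \operatorname{diam}(\Omega)$, one has $\int_{\Omega}(\dist(x,\partial\Omega)|Du_n|)^{p_n}\diff x \le \operatorname{diam}(\Omega)^{p_n}\int_\Omega |Du_n|^{p_n}\diff x$; but this is not directly controlled unless one multiplies by $(2-p_n)$. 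The cleaner move: observe that $\frac{1}{p_n-q} \le \frac{C}{2-q}$ is false in general (as $p_n\to 2$, $\frac{1}{p_n-q}\to\frac{1}{2-q}$, so actually $\frac{1}{p_n-q}$ is bounded by a constant times $\frac{1}{2-q}$ for $n$ large — indeed $p_n - q \ge \frac{2-q}{2}$ once $p_n \ge \frac{2+q}{2}$). So for $n$ large the prefactor $\frac{C_1 C_2^{q/p_n}}{p_n-q}$ is bounded by $\frac{C}{2-q}$ with $C$ depending only on $\operatorname{diam}(\Omega)$ and $\mathcal N$ (via $C_1, C_2$). It then remains to absorb the $\int_\Omega(\dist(x,\partial\Omega)|Du_n|)^{p_n}$ term: I would use that $(\dist(x,\partial\Omega)|Du_n|)^{p_n} \le \dist(x,\partial\Omega)^{p_n}|Du_n|^{p_n}$ and split the region according to $|Du_n|\le 1$ or $|Du_n| > 1$; on $\{|Du_n|>1\}$, $|Du_n|^{p_n}\le |Du_n|^{p_n}$ trivially and the point is that $\int_\Omega |Du_n|^{p_n}\diff x$ may blow up like $\frac{1}{2-p_n}$, so I need the factor coming in. Actually the right bookkeeping is to keep everything multiplied by $(2-p_n)$ in the harmonic-energy term: the statement we want has $\limsup_n (2-p_n)\int_\Omega\frac{|Du_n|^{p_n}}{p_n}\diff x$ on the right, so I should run Proposition~\ref{prop goodest dyadic} and then bound $\int_{\Omega}(\dist(x,\partial\Omega)|Du_n|)^{p_n}\diff x$ by $\operatorname{diam}(\Omega)^{p_n}\cdot p_n \cdot \frac{1}{2-p_n}\cdot\big((2-p_n)\int_\Omega\frac{|Du_n|^{p_n}}{p_n}\diff x\big)$, and then the surviving $\frac{1}{2-p_n}$ is cancelled against... no — here the structure must be re-examined: the term $\frac{1}{p_n-q}$ stays bounded, and $\frac{1}{2-p_n}$ does not cancel. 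The resolution is that $\dist(x,\partial\Omega)^{p_n}|Du_n|^{p_n}$, integrated, is what Proposition~\ref{prop goodest dyadic} already outputs as controlled by $(2-p_n)\int\frac{|Du_n|^{p_n}}{p_n}$ — i.e. the weighted integral itself does not blow up. This is precisely the content: the $\liminf/\limsup$ of $\int_\Omega(\dist(x,\partial\Omega)|Du_n|)^{p_n}\diff x$ is finite and bounded by (a constant times) $\limsup_n(2-p_n)\int_\Omega\frac{|Du_n|^{p_n}}{p_n}$, because the good cubes estimate shows $|Du_n|^{p_n}$ behaves like a measure of total mass $\sim (2-p_n)\int\frac{|Du_n|^{p_n}}{p_n}$ with the weight $\dist^{p_n}\sim\dist$ built in. The cleanest way to see this: redo the summation in Proposition~\ref{prop goodest dyadic} with $q = p_n$ (trivially) — one gets $\int_\Omega(\dist(x,\partial\Omega)|Du_n|)^{p_n}\diff x \le C\,\mu_n(\Omega) + \dots$ but actually I would instead argue directly that $\sum_j 2^{-jp_n}\int_{G_j}|Du_n|^{p_n} \le C\sum_j N_{j,n}$-type bound gives exactly $\lesssim (2-p_n)\int\frac{|Du_n|^{p_n}}{p_n}$.

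This brings me to assertion \ref{it_Aex0Aijushoetha0looKo6ei}: once \ref{it_so3EPh9zoev2Gae0OoQuieg9} is proven and $(u_n)$ satisfies \eqref{C2}, the right-hand side of the estimate in \ref{it_so3EPh9zoev2Gae0OoQuieg9} is finite, so $(\dist(x,\partial\Omega)|Du_n|)$ is uniformly bounded in $L^q(\Omega)$ for each $q<2$; in particular, on any $K\csubset\Omega$, $(u_n)$ is bounded in $W^{1,q}(K,\mathbb R^\nu)$. Combining with Proposition~\ref{conv to harm} (which gives $u_n \to u_*$ a.e.\ and $u_n \rightharpoonup u_*$ weakly in $W^{1,p}_{\loc}(\Omega\setminus S_*,\mathbb R^\nu)$ for $p<2$, plus strong $W^{1,q}_{\loc}(\Omega\setminus S_*)$ convergence via Proposition~\ref{prop conv to harmonic map}\ref{item2convtoharmproposition}), and using that $\mathcal H^1(S_*\cap\Omega)<+\infty$ (Proposition~\ref{prop 1-var}) so $S_*$ is Lebesgue-negligible, I would pass to a subsequence along which $u_n\to u_*$ in $W^{1,q}_{\loc}(\Omega,\mathbb R^\nu)$: strong convergence away from $S_*$ plus the uniform weighted $L^q$ bound near $S_*$ (which, since $S_*$ has finite length, contributes arbitrarily little to the $L^q$ mass of a small neighborhood) upgrades to strong $W^{1,q}_{\loc}$ convergence on all of $\Omega$. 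Then $u_*\in W^{1,q}_{\loc}(\Omega,\mathcal N)$, and \eqref{estonweakgradient0ustar} follows from \ref{it_so3EPh9zoev2Gae0OoQuieg9} by lower semicontinuity of $v\mapsto\int_\Omega(\dist(x,\partial\Omega)|Dv|)^q\diff x$ under weak $L^q$ convergence of gradients (applied on an exhaustion $K_m\nearrow\Omega$ and using Fatou). I expect the main obstacle to be the bookkeeping in verifying the hypothesis of Proposition~\ref{prop goodest dyadic} with the $p_n$-dependent constants from Lemma~\ref{heart} — specifically, ensuring the constants $C_1, C_2$ can be chosen uniformly in $n$ (using $p_n\ge 3/2$ and $r\le\max\{1,\operatorname{diam}\Omega\}$), and correctly handling the interplay between the scale $r\sim 2^{-j}$ and the exponent $3-p_n$ so that the factor $r^{3-p_n}$ versus $2^{-j(3-p_n)}$ only costs an absolute constant.
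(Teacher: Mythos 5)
Your plan for (i) --- apply Proposition~\ref{prop goodest dyadic} to $f=|Du_n|$ with $p=p_n$ and verify the good-cube hypothesis via Lemma~\ref{heart} with $\kappa$ fixed and $p_0=3/2$ --- is exactly the paper's route, and the Fatou-and-upgrade strategy you sketch for (ii) is also the paper's. Your geometric variant for verifying the hypothesis (inscribe a ball $B^3_r(x_Q)$ in $2Q$ and take $\Psi$ a translation, rather than the paper's fixed bilipschitz ball-to-cube map with $\kappa=1/2$) is harmless; it just forces $\kappa\geq\sqrt{3}/2$ so that $Q\subset B^3_{\kappa r}(x_Q)$, which Lemma~\ref{heart} permits.

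However, there is a genuine gap in the bookkeeping for (i). You waver on why the tail term $\frac{C_1 C_2^{q/p_n}}{p_n-q}\int_\Omega(\dist\cdot|Du_n|)^{p_n}\diff x$ produced by Proposition~\ref{prop goodest dyadic} remains bounded, and your eventual suggestion --- to rerun the summation in Proposition~\ref{prop goodest dyadic} ``with $q=p_n$'' --- is not allowed, since that proposition requires $q<p$. You also claim the prefactor $\frac{C_1 C_2^{q/p_n}}{p_n-q}$ is bounded by $\frac{C}{2-q}$ with $C$ depending only on $\diam(\Omega)$ and $\mathcal N$, but since $C_1=(2-p_n)/\eta$ this prefactor in fact tends to \emph{zero} as $n\to+\infty$, so the bound as you state it proves nothing about its product with a possibly divergent integral. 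The clean resolution --- which you circle around but never state --- is that nothing needs absorbing or cancelling: with $C_1=\frac{2-p_n}{\eta}$ and $\dist(x,\partial\Omega)\leq\diam(\Omega)$,
\begin{equation*}
\frac{C_1}{p_n-q}\int_\Omega\bigl(\dist(x,\partial\Omega)|Du_n(x)|\bigr)^{p_n}\diff x
\leq\frac{p_n\,\diam(\Omega)^{p_n}}{\eta\,(p_n-q)}\,(2-p_n)\int_\Omega\frac{|Du_n|^{p_n}}{p_n}\diff x,
\end{equation*}
and the $\limsup$ of the right-hand side is at most $\frac{C'}{2-q}\limsup_n(2-p_n)\int_\Omega\frac{|Du_n|^{p_n}}{p_n}\diff x$ with $C'=C'(\diam(\Omega),\mathcal N)$, which is exactly the quantity in the statement; $C_2^{q/p_n}\leq\max\{1,C_2^2\}$ is bounded uniformly in $n$ and $q$, so (i) follows. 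For (ii), two precisions are needed: your uniform-integrability upgrade must invoke \ref{it_so3EPh9zoev2Gae0OoQuieg9} with a fixed $q_0\in(q,2)$ and then H\"older (boundedness in $L^q$ alone does not give uniform integrability of $q$-th powers on small sets), and the lower-semicontinuity/Fatou argument for \eqref{estonweakgradient0ustar} must be run on a Whitney cover or compact exhaustion of $\Omega\setminus S_*$ --- where the weak $W^{1,q}$ convergence to $u_*$ actually holds by Proposition~\ref{conv to harm} --- together with $|S_*|=0$ from Proposition~\ref{prop 1-var}, not on an exhaustion of $\Omega$ itself.
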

\begin{proof}
Let $\eta, C>0$ be the constants of Lemma~\ref{heart}, where $\kappa=1/2$, $p_{0}=3/2$ and $\Psi$ comes from \cite[Corollary~3]{bilipschitz}. Observe that if $Q$ is a dyadic cube with barycenter $x_{Q}$ and sidelength $r>0$, then, denoting its interior by $Q^{\circ}$, we have $Q^{\circ}=\Psi(B^{3}_{r/2})+x_{Q}$, $2Q^{\circ}= \Psi(B^{3}_{r})+x_{Q}$ and the bilipschitz constant of  the map $\Psi+x_{Q}$ is the bilipschitz constant of $\Psi$. Thus, applying Proposition~\ref{prop goodest dyadic} with $V=\Omega$, $p=p_{n}$, $f=|Du_{n}|$, $C_{2}=C$, $C_{1}=\frac{2-p_{n}}{\eta}$ and taking into account that $\dist(\cdot, \partial \Omega)\leq \diam(\Omega)$ everywhere in $\Omega$, for each $q \in [1,2)$ we get
\begin{equation*}
\begin{split}
\limsup_{n\to+\infty}\int_{\Omega}(\dist(x, \partial \Omega)|Du_{n}(x)|)^{q}\diff x &\leq C^{\prime} |\Omega|+ \frac{C^{\prime}}{2-q}\limsup_{n\to +\infty} (2-p_{n})\int_{\Omega}\frac{|Du_{n}|^{p_{n}}}{p_{n}}\diff x,
\end{split}
\end{equation*}
where $C^{\prime}=C^{\prime}(\diam(\Omega), \mathcal{N})>0$. This yields \ref{it_so3EPh9zoev2Gae0OoQuieg9}. 

Now we prove \ref{it_Aex0Aijushoetha0looKo6ei}. Let $q \in [1,2)$ and $\mathcal{F}=\{Q_{i}\}_{i\in I}$ be a Whitney covering of $\Omega\backslash S_{*}$ by dyadic cubes such that $\Omega\backslash S_{*}=\bigcup_{i\in I}Q_{i}$ and $2Q_{i}\subset \Omega\backslash S_{*}$ for each $i \in I$ (see, for example, \cite[Theorem~2.3.1]{Auscher_2012}). 
Notice that $I$ is countable. 
Using the fact that $|S_{*}|=0$ (see Proposition~\ref{prop 1-var}), the weak convergence (see Proposition~\ref{prop conv to harmonic map}~\ref{item:weakLp}), together with the convexity, and also using Fatou's lemma, up to a subsequence (not relabeled), one has
\begin{equation*}
 \label{estbyliminfweaklimit}
\begin{split}
\int_{\Omega}(\dist(x, \partial \Omega)|Du_{*}(x)|)^{q}\diff x &= \sum_{i\in I} \int_{Q_{i}}(\dist(x,\partial \Omega)|Du_{*}(x)|)^{q}\diff x \\
&\leq \sum_{i\in I} \liminf_{n\to +\infty} \int_{Q_{i}}(\dist(x,\partial \Omega)|Du_{n}(x)|)^{q}\diff x \\
& \leq \liminf_{n\to +\infty} \sum_{i \in I} \int_{Q_{i}}(\dist(x,\partial \Omega)|Du_{n}(x)|)^{q}\diff x \\
&=\liminf_{n\to +\infty} \int_{\Omega} (\dist(x,\partial \Omega) |Du_{n}(x)|)^{q}\diff x.  
\end{split}
\end{equation*}
This, in view of \ref{it_so3EPh9zoev2Gae0OoQuieg9} and \eqref{C2}, implies \eqref{estonweakgradient0ustar} and the fact that $u_{*}\in W^{1,q}_{\loc}(\Omega, \mathcal{N})$ for all $q \in [1,2)$. Next, let $q_{0} \in (q,2)$ and $U \Subset \Omega$ be an open set. Fix a sequence $(K_{n})_{n\in \mathbb{N}}$ of compact sets such that $K_{n} \Subset U \backslash S_{*}$, $K_{n}\subset K_{n+1}$ and $\bigcup_{n \in \mathbb{N}} K_{n}=U\setminus S_{*}$. Applying H\"{o}lder's inequality and using the fact that $|Du_{n}-Du_{*}|^{q_{0}} \leq 2^{q_{0}-1}(|Du_{n}|^{q_{0}}+|Du_{*}|^{q_{0}})$ (by convexity), for each $n \in \mathbb{N}$ large enough we obtain 
\begin{equation*}
\begin{split}
\int_{U}|Du_{n}-Du_{*}|^{q} \diff x &\leq |U\setminus K_{n}|^{1-\frac{q}{q_{0}}} 2^{q-\frac{q}{q_{0}}}\left(\int_{U\setminus K_{n}}(|Du_{n}|^{q_{0}}+|Du_{*}|^{q_{0}})\diff x\right)^{\frac{q}{q_{0}}} \\
& \qquad \qquad + |K_{n}|^{1 - \frac{q}{p_{n}}}\left(\int_{K_{n}}|Du_{n}-Du_{*}|^{p_{n}}\diff x \right)^{\frac{q}{p_{n}}}.
\end{split}
\end{equation*}
By carefully choosing $K_{n}$ and letting $n$ tend to $+\infty$, in view of Proposition~\ref{prop conv to harmonic map}~\ref{item2convtoharmproposition}, up to a subsequence (not relabeled), we can ensure that the last two terms tend to $0$ as $n \to +\infty$, and hence $u_{n} \to u_{*}$ in $W^{1,q}(U, \mathbb{R}^{\nu})$ as $n \to +\infty$. Using a diagonal argument, we deduce \ref{it_Aex0Aijushoetha0looKo6ei}, which completes our proof of Proposition~\ref{prop est for the differential inside domain}. 
\end{proof}

 \section{Interior minimality of the singular set}
 In this section, we prove some properties of local minimality inside $\Omega$ of the positive Radon measure $\mu_{*} \in (C(\overline{\Omega}))^{\prime}$ and its support $S_{*}$, defined at the beginning of Section~5.
 
 First, we define the notion of an \emph{admissible chain} for a bounded Lipschitz domain $U\subset \mathbb{R}^{3}$ and a boundary datum $g \in W^{1/2,2}(\partial U, \mathcal{N})$.
 \begin{defn}\label{def admissible chain}
     Given a bounded Lipschitz domain $U\subset \mathbb{R}^{3}$ and a map $g \in W^{1/2,2}(\partial U, \mathcal{N})$, we say that a closed set $S \subset \overline{U}$ is an \emph{admissible chain} for $U$ and $g$ if the following properties are satisfied.
  \begin{enumerate}[label=(P.\arabic*)]
      \item
      \label{item_P1}
      $S$ is the union of a finite number of closed straight line segments, each of which has a positive length, and the interiors of these segments (\emph{i.e.}, the segments without their endpoints) are pairwise disjoint.  
      \item 
      \label{item_P2}
      $S\cap \partial U$ is a finite set of points. 
      \item 
      \label{item_P3}
      $S$ has no endpoints in $U$, namely, if $L\subset S$ is a segment having an endpoint $x \in U$, then there exists another segment $L^{\prime} \subset S$ emanating  from $x$.
      \item
      \label{item_P4}
      There exists a map $u \in W^{1,2}_{\loc}(U\setminus S, \mathcal{N})$ such that $\tr_{\partial U}(u)=g$.
  \end{enumerate}
The set of all admissible chains for $U$ and $g$ will be denoted by $\mathscr{C}(U,g)$.
 \end{defn}
 
\begin{rem}\label{remark chain} 
Assume that there exists $u \in W^{1,2}_{\loc}(U\setminus S, \mathcal{N})$ satisfying \ref{item_P4}. Then there exists $v\in W^{1,2}_{\loc}(U\setminus S, \mathcal{N}) \cap C(U\setminus (S\cup S_{v}), \mathcal{N})$ such that $\tr_{\partial U}(v)=g$, where $S_{v}$ is at most a countable and locally finite subset of $U\setminus S$ (see \cite[Theorem~II]{SchoUhl}). 
Let $L$ be a segment in the chain $S$ and $x_{1}, x_{2}$ be interior points of $L$ such that $x_{1}\neq x_{2}$. Since $v \in W^{1,2}_{\loc}(U\setminus S, \mathcal{N})$, using the coarea formula, we can find a sufficiently small radius $r>0$ such that the following holds. 
There exists $\gamma \in [\mathbb{S}^{1}, \mathcal{N}]$ such that  for $\mathcal{H}^{1}$-a.e.\  $x \in [x_{1},x_{2}]$, letting $\partial D$ to be the circle with center $x$ and radius $r$ lying in the 2-plane orthogonal to $L$, we have $\tr_{\partial D}(v)=v|_{\partial D} \in W^{1,2}(\partial D, \mathcal{N})$, $v|_{\partial D}$ is homotopic to $\gamma$ and  $\mathcal{E}^{\mathrm{sg}}_{2}(v|_{\partial D})=\mathcal{E}^{\mathrm{sg}}_{2}(\gamma)$. 
On the other hand, since $v \in C(U\setminus (S\cup S_{v}), \mathcal{N})$ and $S_{v}$ is locally finite, we can choose $x_{1}$ and $x_{2}$ to be arbitrarily close to the endpoints of $L$ and repeat the previous procedure for a smaller radius $\varrho \in (0,r)$, but without changing $\gamma \in [\mathbb{S}^{1}, \mathcal{N}]$. Thus, we can define the \emph{mass} of $S$ with respect to the extension $v$ of $g$ as the sum $\sum_{i=1}^{l}\lambda_{i}\mathcal{H}^{1}(L_{i})$, where $\bigcup_{i=1}^{l}L_{i}=S$ is the union of the segments lying in $S$ and $\lambda_{i}=\mathcal{E}^{\mathrm{sg}}_{2}(\gamma_{i})$, where $\gamma_{i} \in [\mathbb{S}^{1}, \mathcal{N}]$ is defined for each $L_{i}$ with respect to $v$.
Denote this mass by $\mathbb{M}(v, g, S)$. 
It is worth noting that for different extensions $u_{1} \in W^{1,2}_{\loc}(U\setminus S, \mathcal{N}) \cap C(U\setminus (S\cup S_{u_{1}}), \mathcal{N})$ and $u_{2} \in W^{1,2}_{\loc}(U\setminus S, \mathcal{N}) \cap C(U\setminus (S\cup S_{u_{2}}), \mathcal{N})$ of $g$ satisfying the property \ref{item_P4} of Definition~\ref{def admissible chain}, we can obtain different masses of $S$ with respect to $u_{1}$ and $u_{2}$, however, the following
    \begin{equation}\label{mass inf}
    \inf\{\mathbb{M}(u, g, S): u \in W^{1,2}_{\loc}(U\setminus S, \mathcal{N}) \cap C(U \setminus (S\cup S_{u}), \mathcal{N}),\,\ \operatorname{tr}_{\partial U}(u)=g\}
    \end{equation}
is achieved. Indeed, if $u_{n} \in W^{1,2}_{\loc}(U\setminus S, \mathcal{N})$ is a minimizing sequence for \eqref{mass inf}, then, since $\# [\mathbb{S}^{1}, \mathcal{N}]<+\infty$, the set $\{\mathcal{E}^{\mathrm{sg}}_{2}(\gamma): \gamma \in [\mathbb{S}^{1}, \mathcal{N}]\}$ is finite, and hence $\{\mathbb{M}(u_{n}, g, S) : n\in \mathbb{N}\}$ is finite. Thus, for each sufficiently large $n\in \mathbb{N}$, $\mathbb{M}(u_{n}, g, S)$ is equal to the infimum in \eqref{mass inf}, and each $u_{n}$, for sufficiently large $n \in \mathbb{N}$, realizes the infimum in \eqref{mass inf}.
\end{rem}
   
 Next, using Remark~\ref{remark chain}, we define the notion of the \emph{mass} of an admissible chain. 
 \begin{defn}\label{defnofmass}
     Given a bounded Lipschitz domain $U \subset \mathbb{R}^{3}$ and a map $g \in W^{1/2,2}(\partial U, \mathcal{N})$, if $S \in \mathscr{C}(U,g)$, we define its mass by
     \[
    \mathbb{M}(g, S)=\mathbb{M}(u, g, S),
     \]
     where $u \in W^{1,2}_{\loc}(U\setminus S, \mathcal{N}) \cap C(U\setminus (S \cup S_{u}), \mathcal{N})$ realizes the infimum in \eqref{mass inf} for $g$ and $S$.
 \end{defn}
 
 When appropriate, we shall simply write $\mathbb{M}(S)$ instead of $\mathbb{M}(g, S)$. Notice that the definition of $\mathbb{M}(g, S)$ also depends on $\mathcal{N}$, but we decided not to mention it explicitly to simplify the notation.
 
 Now we prove the interior minimality of $S_{*}$.
 \begin{prop}\label{min of sing}
     Let $u_{*} \in W^{1,2}_{\loc}(\Omega\setminus S_{*}, \mathcal{N})$ and $(p_{n})_{n\in \mathbb{N}} \subset [1,2)$ be given by Proposition~\ref{conv to harm}. Then for any domain $K \csubset \Omega$ such that $\overline{K}\cap S_{*}\neq \emptyset$, there exists a Lipschitz domain $U \csubset \Omega$ containing $K$ such that $\operatorname{tr}_{\partial U}(u_{*})=u_{*}|_{\partial U} \in W^{1/2,2}(\partial U, \mathcal{N})$, $S_{*} \cap \overline{U} \in \mathscr{C}(U,u_{*}|_{\partial U})$ and for each $S \in \mathscr{C}(U, u_{*}|_{\partial U})$,
     \begin{equation}\label{HomologicalPlateau}
     \mu_{*}(S_{*}\cap U)=\mathbb{M}(u_{*}|_{\partial U}, S_{*}\cap \overline{U})\leq \mathbb{M}(u_{*}|_{\partial U}, S).
     \end{equation}
 \end{prop}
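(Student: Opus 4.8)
The plan is to establish \eqref{HomologicalPlateau} in two parts: first the existence of a suitable Lipschitz domain $U$ with $S_*\cap\overline U$ an admissible chain, and then the minimality inequality against competitors.

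\textbf{Construction of $U$ and verification that $S_*\cap\overline U$ is an admissible chain.} First I would invoke Proposition~\ref{prop structure S_*} applied to the compact set $\overline K$: since $\overline K\cap S_*\neq\emptyset$, there is an open neighborhood $U\csubset\Omega$ with Lipschitz boundary and finitely many connected components such that $S_*\cap\overline U$ is a finite union of closed segments of positive length with pairwise disjoint interiors, $S_*\cap\partial U$ is a finite set of points, and each point of $S_*\cap\partial U$ has exactly one segment of $S_*\cap\overline U$ emanating from it. By shrinking slightly (using that $S_0$ from Proposition~\ref{prop conv to harmonic map}~\ref{item3convharm} is locally finite and the coarea formula for $u_*\in W^{1,q}_{\loc}(\Omega\setminus S_*,\mathcal N)$ together with Proposition~\ref{prop est for the differential inside domain}), I can arrange that $\operatorname{tr}_{\partial U}(u_*)=u_*|_{\partial U}\in W^{1/2,2}(\partial U,\mathcal N)$. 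Property \ref{item_P1} and \ref{item_P2} come directly from Proposition~\ref{prop structure S_*}; property \ref{item_P3} follows from Remark~\ref{rem sing points}~(i) (at each $x_0\in S_*\cap\Omega$ at least two segments emanate) since interior endpoints of $S_*\cap\overline U$ are points of $S_*\cap\Omega$; property \ref{item_P4} is witnessed by $u_*$ itself, which lies in $W^{1,2}_{\loc}(U\setminus S_*,\mathcal N)$ by Proposition~\ref{conv to harm}. Hence $S_*\cap\overline U\in\mathscr C(U,u_*|_{\partial U})$.

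\textbf{Identification of the mass with $\mu_*(S_*\cap U)$.} Next I would show $\mu_*(S_*\cap U)=\mathbb M(u_*|_{\partial U},S_*\cap\overline U)$. Writing $S_*\cap\overline U=\bigcup_{i}L_i$, Proposition~\ref{prop 1-var} gives $\mu_*\mres\Omega=\Theta_1\,\mathcal H^1\mres(S_*\cap\Omega)$, so $\mu_*(S_*\cap U)=\sum_i\int_{L_i\cap U}\Theta_1\,d\mathcal H^1$. Corollary~\ref{cor 5.17} shows $\Theta_1$ is constant on the relative interior of each $L_i$, equal to $\mathcal E^{\mathrm{sg}}_2(u_*|_{\partial D_i})$ for a suitable small transversal disk $D_i$, and Corollary~\ref{cor 5.17} also identifies this with the topological charge $\gamma_i\in[\mathbb S^1,\mathcal N]$ attached to $L_i$ in the sense of Remark~\ref{remark chain}. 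Since $u_*$ itself realizes the infimum in \eqref{mass inf} — or at least gives the value $\sum_i\mathcal E^{\mathrm{sg}}_2(\gamma_i)\mathcal H^1(L_i\cap U)$, and by finiteness of $\{\mathcal E^{\mathrm{sg}}_2(\gamma):\gamma\in[\mathbb S^1,\mathcal N]\}$ no smaller value is attainable — we get $\mathbb M(u_*|_{\partial U},S_*\cap\overline U)=\sum_i\mathcal E^{\mathrm{sg}}_2(\gamma_i)\mathcal H^1(L_i\cap U)=\mu_*(S_*\cap U)$.

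\textbf{The minimality inequality.} This is the main obstacle. Given an arbitrary competitor $S\in\mathscr C(U,u_*|_{\partial U})$ with extension $v\in W^{1,2}_{\loc}(U\setminus S,\mathcal N)$ realizing $\mathbb M(v,u_*|_{\partial U},S)$, I would build, for each large $n$, a comparison map $\widetilde u_n\in W^{1,p_n}(U,\mathcal N)$ with $\operatorname{tr}_{\partial U}(\widetilde u_n)=u_n|_{\partial U}$ whose $p_n$-energy is controlled by $\frac{\mathbb M(v,u_*|_{\partial U},S)}{2-p_n}+o\!\left(\frac1{2-p_n}\right)$: away from a tubular neighborhood of $S$ one interpolates between $v$ and $u_n$ using the Luckhaus-type Lemma~\ref{lemma_Luckhaus} (the boundary traces of $u_n$ converge to those of $u_*$ by Proposition~\ref{prop est for the differential inside domain}, which is compatible since $\operatorname{tr}_{\partial U}(v)=u_*|_{\partial U}$), while in a tube of radius $\sim\delta$ around each segment $L_i$ of $S$ one uses the cylinder extension of Lemma~\ref{lem 2.19} with boundary charge $\gamma_i$, contributing $\frac{2\mathcal H^1(L_i)\mathcal E^{\mathrm{sg}}_{p_n}(\gamma_i)(\delta)^{2-p_n}}{2-p_n}+O(\delta^{\,\cdot})$, plus small tube-cap corrections handled as in the proof of Proposition~\ref{prop energy bounds on cylinders}. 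By $p_n$-minimality of $u_n$ in $U$, $(2-p_n)\int_U\frac{|Du_n|^{p_n}}{p_n}\le(2-p_n)\int_U\frac{|D\widetilde u_n|^{p_n}}{p_n}$; letting $n\to+\infty$ (using $\mathcal E^{\mathrm{sg}}_{p_n}\to\mathcal E^{\mathrm{sg}}_2$ from Lemma~\ref{lem cont singenergy}, $\delta^{2-p_n}\to1$, and $\mu_n\overset{*}\rightharpoonup\mu_*$ so that $\mu_*(\overline U)\le\liminf(2-p_n)\int_{\overline U}\frac{|Du_n|^{p_n}}{p_n}$, combined with \eqref{two cond of weak*}) and then $\delta\to0$ yields $\mu_*(S_*\cap U)\le\mathbb M(v,u_*|_{\partial U},S)$. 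Taking the infimum over such $v$ gives $\mu_*(S_*\cap U)\le\mathbb M(u_*|_{\partial U},S)$, which together with the identification above proves \eqref{HomologicalPlateau}. The delicate points will be the careful bookkeeping of the tube-radius and boundary-cap error terms (so that they are $o((2-p_n)^{-1})$ after multiplying by $2-p_n$ and then vanish as $\delta\to0$), ensuring the competitor has the correct trace on $\partial U$, and making the gluing across the interface between the tube region and the Luckhaus interpolation region $W^{1,p_n}$-admissible — all of which parallel the constructions already carried out in Proposition~\ref{prop energy bounds on cylinders} and Proposition~\ref{prop conv to harmonic map}.
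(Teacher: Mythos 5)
Your proposal follows essentially the same route as the paper's proof: invoke Proposition~\ref{prop structure S_*} to obtain a Lipschitz neighborhood and shrink it via the coarea formula to obtain good traces, identify the mass via Corollary~\ref{cor 5.17}, then build a competitor by cylinder extensions (Lemma~\ref{lem 2.19}) around the segments of $S$ glued to the realizing extension $v$, and finally interpolate between this competitor and $u_n$ near $\partial U$ with the Luckhaus-type Lemma~\ref{lemma_Luckhaus}, passing to the limit with \eqref{two cond of weak*}. The one slip worth flagging is the line ``$\mu_*(\overline U)\le\liminf(2-p_n)\int_{\overline U}\dotsm$'': the lower-semicontinuity half of \eqref{two cond of weak*} applies to open sets, so you should use $U$ rather than $\overline U$ (this costs nothing since $\mu_*\mres\Omega$ is concentrated on $S_*$ and $\mu_*(\partial(S_*\cap U))=0$ for a well-chosen $U$).
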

 
 \begin{rem}
     Proposition~\ref{min of sing} allows the choice of the domain $U$ to be arbitrarily close to $K$. 
      \end{rem}
 \begin{proof}[Proof of Proposition~\ref{min of sing}]
     According to Proposition~\ref{prop structure S_*}, since $\overline{K}$ is connected, there exists a Lipschitz domain $V\csubset \Omega$ containing $\overline{K}$ such that $S_{*}\cap \overline{V}$ is a union of a finite number of closed straight line segments, each of which has a positive length, and the interiors of these segments are pairwise disjoint. Also $S_{*}\cap \partial V$ is a finite set of points such that for each point $x \in S_{*}\cap \partial V$ there is exactly one segment lying in $S_{*}\cap \overline{V}$ and emanating from $x$. Changing, if necessary, $\partial V$ locally around each point of $S_{*}\cap \partial V$, we can assume that for each $x \in S_{*}\cap \partial V$, $\partial V$ coincides with its tangent plane at $x$ in a small neighborhood of $x$, and the segment lying in $S_{*}\cap \overline{V}$ and emanating from $x$ is orthogonal to this tangent plane.
     
     \medskip
     
     \noindent \emph{Step~1}. 
     We define $U$. 
     Fix $\lambda>0$ small enough such that for each $t \in (0,\lambda]$, all the above properties prescribed for $V$ concerning $S_{*}$ and $K$ hold also for $V_{t}=\{y \in V: \dist(y, \partial V)>t\}$, which is a Lipschitz domain. 
     Applying \cite[Theorem~3.2.22~(3)]{Federer} with $W=V\setminus \overline{V}_{\lambda}$, $Z=(0,\lambda)$ and $f:W\to Z$ defined by $f(y)=\dist(y,\partial V)$, and using also the fact that $|Df(y)| = 1$ for a.e.\  $y \in W$, Fatou's lemma, \eqref{C2} and Proposition~\ref{prop est for the differential inside domain}~\ref{it_Aex0Aijushoetha0looKo6ei}, we obtain
     \begin{multline}\label{cosobcon7ok54kk5kg0k}
         \int^{\lambda}_{0}\diff \varrho \;\liminf_{n\to +\infty}\int_{f^{-1}(\{\varrho\})}(|u_{n}-u_{*}|^{q}+|Du_{n}-Du_{*}|^{q})\diff \mathcal{H}^{2}\\ \leq \liminf_{n\to +\infty} \int_{W}(|u_{n}-u_{*}|^{q}+|Du_{n}-Du_{*}|^{q}) \diff y=0
     \end{multline}
for each $q \in (1,2)$ and
     \begin{multline*}
         \int^{\lambda}_{0}\diff \varrho \;\liminf_{n\to +\infty} (2-p_{n})\int_{f^{-1}(\{\varrho\})}(|Du_{n}|^{p_{n}}+|Du_{*}|^{p_{n}})\diff \mathcal{H}^{2}\\ \leq \liminf_{n\to +\infty} (2-p_{n})\int_{W}(|Du_{n}|^{p_{n}}+|Du_{*}|^{p_{n}}) \diff y\leq C^{\prime},
     \end{multline*}
     where $C^{\prime}=C^{\prime}(\dist(V, \Omega), \diam(\Omega),  C_{0}, \mathcal{N})>0$ and $C_{0}>0$ is the constant coming from \eqref{C2}. On the other hand, using Proposition~\ref{conv to harm} (thanks to which we know that, up to a subsequence (not relabeled), $u_{n}\to u_{*}$ in $L^{p}(\Omega, \mathbb{R}^{\nu})$ for all $p \in [1,+\infty)$ (see Proposition~\ref{prop conv to harmonic map}~\ref{item:weakLp})) and the continuity of the $L^{p}$-norm, we deduce that $\|u_{n}-u_{*}\|^{p_{n}}_{L^{p_{n}}(W, \mathbb{R}^{\nu})}\to 0$  as $n\to +\infty$ and hence 
     \begin{equation*}
         \lim_{n\to +\infty}\int_{0}^{\lambda}\diff \varrho \int_{f^{-1}(\{\varrho\})}|u_{n}-u_{*}|^{p_{n}}\diff \mathcal{H}^{2}=0.
     \end{equation*}
     Then there exists $t \in (0, \lambda)$ such that for $U=V_{t}$, up to a subsequence (not relabeled), for each $n\in \mathbb{N}$ large enough: $\tr_{\partial U}(u_{n})=u_{n}|_{\partial U}, \tr_{\partial U}(u_{*})=u_{*}|_{\partial U}\in W^{1,p_{n}}(\partial U, \mathcal{N}) \subset W^{1/2,2}(\partial U, \mathcal{N})$ and
     \begin{equation}\label{ineq for p-energy limit map boundary}
         \int_{\partial U} (|Du_{n}|^{p_{n}}+|Du_{*}|^{p_{n}})\diff \mathcal{H}^{2}\leq \frac{C^{\prime \prime}}{2-p_{n}},
     \end{equation}
     where $C^{\prime \prime}=C^{\prime \prime}(t,\dist(V,\Omega), \diam(\Omega), C_{0}, \mathcal{N})>0$; 
     \begin{equation}\label{lpnconvfortraces} 
     \|u_{n}|_{\partial U}-u_{*}|_{\partial U}\|^{p_{n}}_{L^{p_{n}}(\partial U, \mathbb{R}^{\nu})}\to 0 
 \end{equation}
     as $n\to +\infty$;
    \begin{equation}\label{uniformconvinterunustar1}
    \|u_{n}|_{\partial U}-u_{*}|_{\partial U}\|_{L^{\infty}(\partial U; \mathbb{R}^{\nu})} \to 0
    \end{equation}
    as $n\to +\infty$, in view of \eqref{cosobcon7ok54kk5kg0k} and Morrey's embedding for Sobolev mappings.
     Since $t$ and $V$ uniquely determine $U$, we can assume that $C^{\prime \prime}$ actually depends on $U, \diam(\Omega)$, $C_{0}$ and $\mathcal{N}$. 
     Thus, we have defined $U$. According to our construction and Proposition~\ref{conv to harm},  the properties \ref{item_P1}-\ref{item_P4} of Definition~\ref{def admissible chain} are fulfilled for $U$, $u_{*}|_{\partial U}$ and $S_{*}\cap \overline{U}$. Therefore, $S_{*}\cap \overline{U} \in \mathscr{C}(U,u_{*}|_{\partial U})$.
     
     \medskip
     
     \noindent \emph{Step~2.} Given $S \in \mathscr{C}(U, u_{*}|_{\partial U})$, we construct a sequence of mappings $w_{n} \in W^{1,p_{n}}(U, \mathcal{N})$ such that $\tr_{\partial U}(w_{n})=u_{*}|_{\partial U}$ and
     \begin{equation}\label{ener comp by mass}
         \limsup_{n\to +\infty}(2-p_{n})\int_{U}\frac{|Dw_{n}|^{p_{n}}}{p_{n}}\diff y \leq \mathbb{M}(S).
     \end{equation}
     Consider a map $u \in W^{1,2}_{\loc}(U\setminus S, \mathcal{N})$ minimizing the expression in \eqref{mass inf}, where $g=u_{*}|_{\partial U}$. 
     Let $\varepsilon \in (0,1)$ and $\delta \in (0,1)$ be small enough and $\delta$ be fairly small with respect to $\varepsilon$. 
     Then, using the coarea formula and taking into account that $u \in W^{1,2}_{\loc}(U\setminus S, \mathcal{N})$, we obtain $\delta^{\prime} \in (3\delta/4, 5\delta/4)$ and $\varepsilon^{\prime} \in (3\varepsilon/4, 5\varepsilon/4)$ such that, defining $S_{\delta^{\prime}}=\{y \in U: \dist(y, S)\leq \delta^{\prime}\}$, we have the following. Firstly, $\tr_{\partial S_{\delta^{\prime}} \cap U}(u)=u|_{\partial S_{\delta^{\prime}}\cap U} \in W^{1,2}(\partial S_{\delta^{\prime}}\cap U, \mathcal{N})$.  Secondly, for each branching point $x \in S\cap \overline{U}$ of $S$ (if such a point exists), $\tr_{(\partial B^{3}_{\varepsilon^{\prime}}(x)\setminus S_{\delta^{\prime}})\cap U}(u)=u|_{(\partial B^{3}_{\varepsilon^{\prime}}(x)\setminus S_{\delta^{\prime}})\cap U} \in W^{1,2}((\partial B^{3}_{\varepsilon^{\prime}}(x)\setminus S_{\delta^{\prime}})\cap U, \mathcal{N})$, where $(\partial B^{3}_{\varepsilon^{\prime}}(x)\setminus S_{\delta^{\prime}})\cap U=\partial B^{3}_{\varepsilon^{\prime}}(x)\setminus S_{\delta^{\prime}}$ if $x \in U$ (we recall that $\delta$ is fairly small with respect to $\varepsilon$ and the structure of $S$ is given by Definition~\ref{def admissible chain}).
     To lighten the notation, we assume that $\delta^{\prime}=\delta$ and $\varepsilon^{\prime}=\varepsilon$. 
     Let $L$ be a segment of $S$. 
     Up to rotation and translation, we can assume that $L=\{(0,0)\} \times [-h,h]$, where $h=\mathcal{H}^{1}(L)/2>0$. 
     Let $\Lambda \coloneqq B^{2}_{\delta}\times (z(0,0,-h), z(0,0,h))$, where $z(0,0,-h)\coloneqq -h+\sqrt{\varepsilon^{2}-\delta^{2}}$ if $(0,0,-h)$ is a branching point of $S$ (\emph{i.e.}, several segments of $S$ emanate from this point), $z(0,0,h)\coloneqq h-\sqrt{\varepsilon^{2}-\delta^{2}}$ if $(0,0,h)$ is a branching point of $S$, $z(0,0,-h)\coloneqq -h+\kappa \delta$ if $(0,0,-h) \in \partial U$ is an endpoint of $S$ (\emph{i.e.}, only one segment of $S$ emanates from this point, namely $L$), $z(0,0,h)=h-\kappa \delta$ if $(0,0,h) \in \partial U$ is an endpoint of $S$, where $\kappa=\kappa(S)>0$ will be defined later for the proof to work. 
     Hereinafter in this proof, $C$ will denote a positive constant that is independent of $n$, $\varepsilon$ and $\delta$ and can be different from line to line. 
     We define $w_{n}$ in $\Lambda$ by using Lemma~\ref{lem 2.19}~\ref{it_pha8ieh4Umaejao2vaephei8}, namely we obtain a map $w_{n}|_{\Lambda} \in W^{1,p_{n}}(\Lambda, \mathcal{N})$ such that 
     \begin{equation} \label{est in cylinder min}
         \int_{\Lambda}\frac{|Dw_{n}|^{p_{n}}}{p_{n}}\diff y \leq \frac{\mathcal{H}^{1}(L)\mathcal{E}^{\mathrm{sg}}_{p_{n}}(\gamma)\delta^{2-p_{n}}}{2-p_{n}}+\frac{C}{\delta^{p_{n}-1}}\int_{\Gamma}\frac{|D_{\top}u|^{p_{n}}}{p_{n}}\diff \mathcal{H}^{2},          
     \end{equation}
   where $\Gamma$ is the lateral surface of $\Lambda$ and  $\gamma \in [\mathbb{S}^{1}, \mathcal{N}]$ is homotopic to $u|_{\partial D} \in C(\partial D, \mathcal{N})$, where $\partial D$ is the circle being the boundary of a $2$-disk orthogonal to $L$ with fairly small radius and whose center belongs to the interior of $L$. 
   We have used that $\Gamma\subset (\partial S_{\delta} \cap U)$ and $u|_{\partial S_{\delta}\cap U} \in W^{1,2}(\partial S_{\delta}\cap U, \mathcal{N})$. 

   We need to distinguish between two further cases. 
   
   \noindent \emph{Case~1.} 
   Let $x \in \partial U$ be an endpoint of $S$ (if it exists). 
   Then, taking into account the properties \ref{item_P1} and \ref{item_P2} of Definition~\ref{def admissible chain} and the fact that for each $x \in S\cap \partial U$, $\partial U$ coincides with its tangent plane at $x$ in a small neighborhood of $x$,  we can find $\kappa=\kappa(S)>0$ such that for each endpoint $y \in \partial U$ of $S$, if $L$ is a segment of $S$ emanating from $y$, then the 2-disk orthogonal to $L$ with center $\xi \in L$ such that $|\xi-y|=\kappa \delta$ lies entirely in $U$. 
   Now let $L$ denote the segment of $S$ emanating from $x$. Up to rotation and translation, $L=\{(0,0)\} \times  [-h,h]$ and $x=(0,0,h)$, where $h\coloneqq \mathcal{H}^{1}(L)/2>0$. 
   Let $\Lambda_{\delta} \coloneqq \{y=(y_{1}, y_{2}, t) \in U: \dist(y, L)<\delta\,\ \text{and}\,\ t \in (h-\kappa \delta, h)\}$. 
   Then $\overline{\Lambda}_{\delta}$ is bilipschitz homeomorphic to $\smash{\overline{B}}^3_{\delta}$ with a bilipschitz constant depending on $\kappa$. 
   Thus, there exists a bilipschitz map $\Psi: \overline{\Lambda}_{\delta}\to \smash{\overline{B}}^3_{\delta}$. We have constructed $w_{n}$ on $\Lambda=B^{2}_{\delta}\times (z(0,0,-h), h-\kappa \delta)$. 
   By Lemma~\ref{lem 2.19}~\ref{it_cang8wooGhodoh3Eegoom7ee}, $\tr_{B^{2}_{\delta}\times \{h-\kappa \delta\}}(w_{n}) \in W^{1,p_{n}}(B^{2}_{\delta}\times \{h-\kappa \delta\}, \mathcal{N})$ and
   \begin{equation}
      \label{est 2-disk cyl}
      \begin{split}
       \int_{B^{2}_{\delta}\times \{h-\kappa \delta\}}\frac{|D\operatorname{tr}_{B^{2}_{\delta}\times \{h-\kappa\delta\}}(w_{n})|^{p_{n}}}{p_{n}}\diff \mathcal{H}^{2} &\leq \frac{\mathcal{E}^{\mathrm{sg}}_{p_{n}}(\gamma)\delta^{2-p_{n}}}{2-p_{n}} + \frac{C}{\delta^{p_{n}-1}}\int_{\Gamma}\frac{|D_{\top}u|^{p_{n}}}{p_{n}}\diff \mathcal{H}^{2}\\
       &\leq \frac{C\delta^{2-p_{n}}}{2-p_{n}}+\frac{C}{\delta^{p_{n}-1}}\int_{\Gamma}\frac{|D_{\top}u|^{p_{n}}}{p_{n}}\diff \mathcal{H}^{2},
       \end{split}
       \end{equation}
       where $\gamma \in [\mathbb{S}^{1}, \mathcal{N}]$ comes from \eqref{est in cylinder min}, $\Gamma$ is the lateral surface of $\Lambda$, and we have used that $\#[\mathbb{S}^{1}, \mathcal{N}]<+\infty$. Now we define the mapping $g_{n}:\partial \Lambda_{\delta}\to \mathcal{N}$ by
       \begin{equation*}
       g_{n}(y)=
        \begin{cases}
           u_{*}|_{\partial U} (y)&\text{if \(y \in \partial \Lambda_{\delta}\cap \partial U\),}\\
           u|_{\partial S_{\delta}\cap U}(y) &\text{if \(y \in \partial S_{\delta}\cap U\),}\\
           \operatorname{tr}_{B^{2}_{\delta}\times \{h-\kappa \delta\}}(w_{n}) & \text{if \(y \in B^{2}_{\delta}\times \{h-\kappa \delta\}\).} 
       \end{cases}
   \end{equation*}
    Then, since the traces of $u_{*}|_{\partial U}$ and $u|_{\partial S_{\delta}\cap U}$ coincide on $\partial S_{\delta}\cap \partial U$, and the traces of $u|_{\partial S_{\delta}\cap U}$ and $\tr_{B^{2}_{\delta}\times \{h-\kappa\delta\}}(w_{n})$ coincide on $\partial B^{2}_{\delta}\times \{h-\kappa\delta\}$, we deduce that $g_{n}\in W^{1,p_{n}}(\partial \Lambda_{\delta},\mathcal{N})$. Using \eqref{ineq for p-energy limit map boundary}, \eqref{est 2-disk cyl} and the fact that $\delta \in (0,1)$ is fairly small, we obtain
    \begin{equation}\label{est en bound datum}
     \int_{\partial \Lambda_{\delta}}\frac{|D_{\top}g_{n}|^{p_{n}}}{p_{n}}\diff \mathcal{H}^{2} \leq \frac{C}{2-p_{n}}+\frac{C\delta^{2-p_{n}}}{2-p_{n}}+\frac{C}{\delta^{p_{n}-1}}\int_{\partial S_{\delta}\cap U}\frac{|D_{\top}u|^{p_{n}}}{p_{n}}\diff \mathcal{H}^{2}.
    \end{equation}
    Next, we define $w_{n}|_{\Lambda_{\delta}}(y)=g_{n}(\Psi^{-1}(\delta \Psi(y)/|\Psi(y)|))$ for $y \in \Lambda_{\delta}\setminus \{\Psi^{-1}(0)\}$. Then, using \eqref{est en bound datum}, we deduce the following
    \begin{equation}
     \label{est wn endpoint}
     \begin{split}
    \int_{\Lambda_{\delta}}\frac{|Dw_{n}|^{p_{n}}}{p_{n}}\diff y 
    &\leq C\delta \int_{\partial \Lambda_{\delta}}\frac{|D_{\top}g_{n}|^{p_{n}}}{p_{n}}\diff \mathcal{H}^{2}\\
    &\leq \frac{C\delta}{2-p_{n}}+\frac{C\delta^{3-p_{n}}}{2-p_{n}}+C\delta^{2-p_{n}}\int_{\partial S_{\delta}\cap U}\frac{|D_{\top}u|^{p_{n}}}{p_{n}}\diff \mathcal{H}^{2}.
    \end{split}
    \end{equation}
    
    \medskip 
    
    \noindent \emph{Case~2.} Let $x$ be a branching point of $S$ (if it exists). Then, according to our construction, $u|_{(\partial B^{3}_{\varepsilon}(x)\setminus S_{\delta})\cap U} \in W^{1,2}((\partial B^{3}_{\varepsilon}(x)\setminus S_{\delta})\cap U, \mathcal{N})$. Furthermore, $(\partial B^{3}_{\varepsilon}(x)\setminus S_{\delta})\cap U=\partial B^{3}_{\varepsilon}(x)\setminus S_{\delta}$ if $x\in U$. Let $L_{i}$, $i \in \{1,\dotsc,s\}$ be the segments of $S$ emanating from $x$. We fill the holes in $(\partial B^{3}_{\varepsilon}(x)\setminus S_{\delta})\cap U$ with the closures of the open 2-disks $D_{i}$ such that $D_{i}$ is orthogonal to $L_{i}$, has radius $\delta$, and its center lies on $L_{i}$ at a distance of $\sqrt{\varepsilon^{2}-\delta^{2}}$ from the point $x$. Thus, $\mathcal{M}=(\partial U \cap \smash{\overline{B}}^{3}_{\varepsilon}(x)) \cup ((\partial B^{3}_{\varepsilon}(x)\setminus S_{\delta})\cap U) \cup \bigcup_{i=1}^{l}\overline{D}_{i}$ is a compact 2-manifold without boundary, which is bilipschitz homeomorphic to $\partial B^{3}_{\varepsilon}$. Furthermore, according to our construction, for each $i \in \{1,\dotsc,l\}$, $\tr_{D_{i}}(w_{n})\in W^{1,p_{n}}(D_{i}, \mathcal{N})$. Since the traces of $u_{*}|_{\partial {U}}$ and $u|_{(\partial B^{3}_{\varepsilon}(x)\setminus S_{\delta})\cap U}$ coincide on $\partial U \cap \partial B^{3}_{\varepsilon}(x)$, and the traces of $u|_{(\partial B^{3}_{\varepsilon}(x)\setminus S_{\delta})\cap U}$ and $\tr_{D_{i}}(w_{n})$ coincide on $\partial D_{i}$, defining $v_{n}:\mathcal{M}\to \mathcal{N}$ by 
    \[
    v_{n}(y)=
    \begin{cases}
    u_{*}|_{\partial U}(y) \,\ &\text{if}\,\ y \in \partial U \cap \smash{\overline{B}}^{3}_{\varepsilon}(x),\\
    u|_{(\partial B^{3}_{\varepsilon}(x)\setminus S_{\delta})\cap U}(y) \,\ &\text{if}\,\ y \in (\partial B^{3}_{\varepsilon}(x)\setminus S_{\delta})\cap U,\\
    \operatorname{tr}_{\overline{D}_{i}}(w_{n})(y) \,\ &\text{if} \,\ y \in \overline{D}_{i},
    \end{cases}
    \]
    we observe that $v_{n}\in W^{1,p_{n}}(\mathcal{M}, \mathcal{N})$. Next, let $E$ be an open bounded set whose boundary is $\mathcal{M}$. Let $\Phi: \overline{E}\to \smash{\overline{B}}^3_{\varepsilon}$ be a bilipschitz homeomorphism. We define $w_{n}|_{E}(y)=v_{n}(\Phi^{-1}(\varepsilon \Phi(y)/|\Phi(y)|))$ for $y \in E\setminus \{\Phi^{-1}(0)\}$. Then, using \eqref{ineq for p-energy limit map boundary}, \eqref{est 2-disk cyl} and Definition~\ref{def admissible chain}, we obtain
    \begin{equation}
     \label{Est branching point}
     \begin{split}
        \int_{E}\frac{|Dw_{n}|^{p_{n}}}{p_{n}}\diff y &\leq C\varepsilon \int_{\mathcal{M}} \frac{|D_{\top}v_{n}|^{p_{n}}}{p_{n}}\diff \mathcal{H}^{2}\\
        &\leq C\varepsilon\int_{\partial U\cap B^{3}_{\varepsilon}(x)}\frac{|D_{\top}u_{*}|^{p_{n}}}{p_{n}}\diff \mathcal{H}^{2}+ C\varepsilon\int_{(\partial B^{3}_{\varepsilon}(x)\setminus S_{\delta})\cap U}\frac{|D_{\top}u|^{p_{n}}}{p_{n}}\diff \mathcal{H}^{2}\\ & \qquad \qquad +C\varepsilon \sum_{i=1}^{l}\int_{D_{i}}\frac{|D_{\top}\operatorname{tr}_{D_{i}}(w_{n})|^{p_{n}}}{p_{n}}\diff \mathcal{H}^{2}\\
        &\leq \frac{C\varepsilon}{2-p_{n}} + C\varepsilon\int_{(\partial B^{3}_{\varepsilon}(x)\setminus S_{\delta})\cap U}\frac{|D_{\top}u|^{p_{n}}}{p_{n}}\diff \mathcal{H}^{2} +\frac{C\varepsilon\delta^{2-p_{n}}}{2-p_{n}} \\ & \qquad \qquad+\frac{C\varepsilon}{\delta^{p_{n}-1}}\int_{\partial S_{\delta}\cap U}\frac{|D_{\top}u|^{p_{n}}}{p_{n}}\diff \mathcal{H}^{2}. 
        \end{split}
        \end{equation}
    Finally, we define $w_{n}$ for each $y$ in $U$ lying outside the set $E_{\varepsilon, \delta}$, which is the union of $S_{\delta}$ with the union of $\smash{\overline{B}}^3_{\varepsilon}(x)\cap U$ over all branching points $x$ of $S$, by $w_{n}(y)=u(y)$. Since the traces of the piecewise definitions of $w_{n}$ coincide, $w_{n}\in W^{1,p_{n}}(U, \mathcal{N})$. Next, gathering together \eqref{est in cylinder min}, \eqref{est wn endpoint}, \eqref{Est branching point} and using the facts that $\varepsilon, \delta \in (0,1)$ are sufficiently small, $\delta$ is fairly small with respect to $\varepsilon$,  $(p_{n})_{n\in \mathbb{N}}\subset [1,2)$ and $p_{n}\nearrow 2$ as $n\to +\infty$, $S$ consists of a finite union of segments (and hence it has a finite number of branching points and endpoints), for all sufficiently large $n \in \mathbb{N}$, we deduce the following
    \begin{align*}
    \int_{U}\frac{|Dw_{n}|^{p_{n}}}{p_{n}}\diff y &\leq \frac{\mathbb{M}(S)\delta^{2-p_{n}}}{2-p_{n}} + \frac{C\varepsilon}{2-p_{n}}+C\varepsilon\sum_{x\in P}\int_{(\partial B^{3}_{\varepsilon}(x)\setminus S_{\delta})\cap U}\frac{|D_{\top}u|^{p_{n}}}{p_{n}}\diff \mathcal{H}^{2}\\ & \qquad \qquad + \frac{C\varepsilon\delta^{2-p_{n}}}{2-p_{n}}+ \frac{C}{\delta^{p_{n}-1}}\int_{\partial S_{\delta}\cap U}\frac{|D_{\top}u|^{p_{n}}}{p_{n}}\diff \mathcal{H}^{2}+\int_{U\setminus E_{\varepsilon,\delta}}\frac{|Du|^{p_{n}}}{p_{n}}\diff y, 
    \end{align*}
    where $P$ is the set of branching points of $S$. Multiplying both sides of the above inequality by $(2-p_{n})$, then letting $n$ tend to $+\infty$ and using the continuity of the $L^{p}$-norm and the facts that $u \in W^{1,2}(\partial S_{\delta}\cap U, \mathcal{N})$, $u \in W^{1,2}((\partial B^{3}_{\varepsilon}(x)\setminus S_{\delta})\cap U, \mathcal{N})$ for each branching point $x$ of $S$, $u \in W^{1,2}(U\setminus E_{\varepsilon, \delta}, \mathcal{N})$, we deduce that
    \begin{equation*}
        \limsup_{n\to+\infty}(2-p_{n})\int_{U}\frac{|Dw_{n}|^{p_{n}}}{p_{n}}\diff y \leq \mathbb{M}(S)+C\varepsilon.
    \end{equation*}
    Since $\varepsilon>0$ was chosen arbitrarily small and $C$ is independent of $\varepsilon$, the last estimate implies \eqref{ener comp by mass}.
    
    \medskip
    
    \noindent \emph{Step~3.} We prove \eqref{HomologicalPlateau}. Let the sequence of maps $w_{n} \in W^{1,p_{n}}(U, \mathcal{N})$ be as in \emph{Step~2}. Using Lemma~\ref{lemma_Luckhaus}, we construct a competitor for the minimizer $u_{n}$ by interpolating between $u_{n}$ and $w_{n}$. In view of \eqref{lpnconvfortraces} and the fact that $\tr_{\partial U}(w_{n})=u_{*}|_{\partial U}$,
    \begin{equation}\label{tracelpnconv}
        \|u_{n}|_{\partial U}-\operatorname{tr}_{\partial U}(w_{n})\|^{p_{n}}_{L^{p_{n}}(\partial U,\mathbb{R}^{\nu})}\to 0
    \end{equation}
    as $n\to+\infty$.  Let $T_{0}=T_{0}(\partial U)>0$ be the constant of Lemma~\ref{lemma_Luckhaus}. Choose $T^{\prime}\in (0,T_{0})$ sufficiently small and $\Phi: \partial U\times [0,T^{\prime}]\to \overline{E}_{r_{0}}$ such that $\Phi$ is bilipschitz onto its image, $\Phi(\sigma,0)=\sigma$, $D_{\top}\Phi(\sigma,0)$ is an isometry for $\sigma\in \partial U$, where $E_{r_{0}}=\{x \in U: \dist(x, \partial U)<r_{0}\}$ for some fairly small constant $r_{0}>0$ depending on $T^{\prime}$. According to Lemma~\ref{lemma npr}, there exists a constant $\lambda_{\mathcal{N}}>0$ such that the nearest point retraction $\Pi_{\mathcal{N}}:\mathcal{N}_{\lambda_{\mathcal{N}}} \to \mathcal{N}$ is well defined and smooth, where $
    \mathcal{N}_{\lambda_{\mathcal{N}}}=\{x \in \mathbb{R}^{\nu}: \dist(x, \mathcal{N})<\lambda_{\mathcal{N}}\}
    $.  Setting $s_{n}=\|u_{n}|_{\partial U}-\tr_{\partial U}(w_{n})\|_{L^{p_{n}}(\partial U, \mathbb{R}^{\nu})}$ and using \eqref{ineq for p-energy limit map boundary} and \eqref{tracelpnconv}, we observe that $s_{n}\to 0$ and for each sufficiently large $n \in \mathbb{N}$,
    \begin{equation}\label{estimluckhmin}
        \int_{\partial U}\left(|D_{\top} u_{n}|^{p_{n}}+|D_{\top}\operatorname{tr}_{\partial U}(w_{n})|^{p_{n}}+\frac{|u_{n}-u_{*}|^{p_{n}}}{s^{p_{n}/2}_{n}}\right) \diff \mathcal{H}^{2} \leq \frac{C}{2-p_{n}}.
    \end{equation}
      Then, by Lemma~\ref{lemma_Luckhaus}, applied with $u=u_{n}|_{\partial U}$, $v=\tr_{\partial U}(w_{n})=u_{*}|_{\partial U}$ and $T=s_{n}^{1/2}$, there exists a mapping $\xi_{n}\in W^{1,p_{n}}(\partial U\times (0,s_{n}^{1/2}), \mathbb{R}^{\nu})$ interpolating between $u_{n}|_{\partial U}$ and $\tr_{\partial U}(w_{n})$. By \eqref{uniformconvinterunustar1}, we deduce that
     \[
      \ess_{(\sigma, t) \in \partial U \times (0,s^{1/2}_{n})}  \dist(\xi_{n}(\sigma,t), \mathcal{N}) \leq \|u_{n}|_{\partial U}-u_{*}|_{\partial U}\|_{L^{\infty}(\partial U, \mathbb{R}^{\nu})} \to 0
      \]
      as $n \to +\infty$. This implies that for each $n\in \mathbb{N}$ large enough, it holds 
       \begin{equation}\label{unifesttomanifoldnforxinonasmoothfiber}
      \dist(\xi_{n}(\sigma,t), \mathcal{N})<\frac{\lambda_{\mathcal{N}}}{10}
       \end{equation}
       for a.e.\  $(\sigma, t) \in \partial U\times (0,s_{n}^{1/2})$.  
     
      In view of \eqref{unifesttomanifoldnforxinonasmoothfiber} and Lemma~\ref{lemma npr}, for each $n \in \mathbb{N}$ large enough we can define the mapping $\varphi_{n}(y)\coloneqq \Pi_{\mathcal{N}}(\xi_{n}(\Phi^{-1}(y)))$ for each $y\in \Phi(\partial U\times (0,s_{n}^{1/2}))$. 
    Then the following assertions hold. First, $\varphi_{n}\in W^{1,p_{n}}(\Phi(\partial U\times (0,s_{n}^{1/2})), \mathcal{N})$. Second,  \[\operatorname{tr}_{\partial U}(\varphi_{n})=u_{n}|_{\partial U},\,\ 
    \operatorname{tr}_{\Phi(\partial U\times\{s_{n}^{1/2}\})}(\varphi_{n})=u_{*}|_{\partial U}\circ \Phi_{n}\circ  \Phi^{-1}|_{\Phi(\partial U\times \{s_{n}^{1/2}\})},\] where $\Phi_{n}: \partial U\times [s^{1/2}_{n},T^{\prime}]$ is defined by $\Phi_{n}(\sigma,t)\coloneqq \Phi(\sigma, t-s^{1/2}_{n})$ and we have used the fact that $\Phi(\sigma,0)=\sigma$. Thirdly, using \cite[Theorem 3.2.5]{Federer}, the fact that $\Psi$ is bilipschitz, 
    Lemma~\ref{lemma_Luckhaus}~\ref{it_eepogaechuwaeghee3UlooPo}, where $T=s^{1/2}_{n}$, and \eqref{estimluckhmin}, we have the following  
    \begin{equation}\label{estiml2min}
    \begin{split}
        (2-p_{n})&\int_{\Phi(\partial U\times (0,s_{n}^{1/2}))}\frac{|D\varphi_{n}|^{p_{n}}}{p_{n}}\diff y \leq C(2-p_{n})\int_{\partial U\times (0,s_{n}^{1/2})}\frac{|D\xi_{n}|^{p_{n}}}{p_{n}}\diff \mathcal{H}^{2}\diff t\\
        &\leq  C(2-p_{n})s^{1/2}_{n} \int_{\partial U}\left(|D_{\top} u_{n}|^{p_{n}}+|D_{\top}\operatorname{tr}_{\partial U}(w_{n})|^{p_{n}}+\frac{|u_{n}-u_{*}|^{p_{n}}}{s^{p_{n}/2}_{n}}\right) \diff \mathcal{H}^{2} \leq C s_{n}^{1/2}.
       \end{split}
    \end{equation}
    Let $\zeta \in C_{c}([0,+\infty))$ be the Lipschitz map such that $\zeta=1$ on $[0,1]$, $\zeta(s)=2-s$ for $s\in [1,2]$, $\zeta=0$ on $[2,+\infty]$. Hence $\|\zeta^{\prime}\|_{L^{\infty}([0, +\infty))}\leq 1$. Next, for each sufficiently large $n \in \mathbb{N}$, define $\psi_{n}:U\to \mathcal{N}$ by
    \begin{equation}\label{defwnmin}
        \psi_{n}(y)\coloneqq \begin{cases}
            \varphi_{n}(y) &\text{if \(y \in \Phi(\partial U\times (0,s_{n}^{1/2}))\),}\\
            u_{*}|_{\partial U}\circ \Phi_{n}\circ \Phi^{-1}|_{\Phi(\partial U\times \{s_{n}^{1/2}\})}(y) &\text{if \(y \in \Phi(\partial U\times \{s_{n}^{1/2}\})\),}\\
            \displaystyle w_{n}(\Psi_{n}(y)) &\text{if \(y \in U\setminus \Phi(\partial U\times (0,s_{n}^{1/2}])\),}
        \end{cases}
    \end{equation}
    where $\Psi_{n}: U\setminus \Phi(\partial U\times (0,s_{n}^{1/2}]) \to U$ is the map defined by 
    \[
    \Psi_{n}(y)\coloneqq \biggl(1-\zeta\biggl(\frac{\dist(y,\partial U)}{s_{n}^{1/3}}\biggr)\biggr) y + \zeta\biggl(\frac{\dist(y,\partial U)}{s_{n}^{1/3}}\biggr)\Phi_{n}(\Phi^{-1}(y)).
    \]
    We note that $\psi_{n}\in W^{1,p_{n}}(U, \mathcal{N})$, $\tr_{\partial U}(\psi_{n})=u_{n}|_{\partial U}$, $\Psi_{n}$ is bilipschitz, $|D\Psi_{n} - \operatorname{Id} |=O(s_{n}^{1/6})$ a.e.\  in $U\setminus \Phi(\partial U\times (0,s^{1/2}_{n}])$ (for more details, see the proof of Proposition~\ref{prop conv to harmonic map}). Thus, $\psi_{n}$ is a competitor for $u_{n}$. Then, using \eqref{estiml2min}, \eqref{defwnmin}, \cite[Theorem~3.2.5]{Federer} and \eqref{ener comp by mass}, we obtain
    \begin{equation}
    \label{upp est by mass}
    \begin{split}
     \liminf_{n\to +\infty}(2-p_{n})\int_{U}\frac{|Du_{n}|^{p_{n}}}{p_{n}}\diff y &\leq \liminf_{n\to+\infty}(2-p_{n})\int_{U}\frac{|D\psi_{n}|^{p_{n}}}{p_{n}}\diff y \\ &\leq \limsup_{n\to +\infty}\biggl(Cs^{1/2}_{n}+(2-p_{n})(1+o(s^{1/10}_{n}))\int_{U}\frac{|Dw_{n}|^{p_{n}}}{p_{n}}\diff y\biggr)\\
     &\leq \mathbb{M}(S).       
    \end{split}
    \end{equation}
    On the other hand, using \eqref{two cond of weak*}, the fact that $U$ is open, Corollary~\ref{cor 5.17} and the geometry of $S_{*}\cap \overline{U}$ (see the beginning of the proof and \emph{Step~1}), we deduce that
    \begin{equation}\label{estmasssingset}
        \mu_{*}(S_{*}\cap U)=\mathbb{M}(S_{*}\cap \overline{U})\leq \liminf_{n\to +\infty}(2-p_{n})\int_{U}\frac{|Du_{n}|^{p_{n}}}{p_{n}}\diff y.
    \end{equation} 
    Combining \eqref{upp est by mass} with \eqref{estmasssingset}, yields \eqref{HomologicalPlateau}. This completes our proof of Proposition~\ref{min of sing}.
 \end{proof}
 
 We shall apply Proposition~\ref{min of sing} to prove that in the case when $\pi_{1}(\mathcal{N})\simeq \mathbb{Z}/2\mathbb{Z}$ the set $S_{*}$ has no branching points inside $\Omega$. To this end, we need the next extension result. The following statement is similar to \cite[Lemma~3]{Canevari_Orlandi_2021}.
 
\begin{prop}\label{propextoutsideplans}
Let \(\Omega \subset \mathbb{R}^{3}\) be a bounded strictly convex domain, \(\pi_{1} (\mathcal{N})\) be Abelian (and endowed with the law \(+\)) and \(a_1, \dotsc, a_k \subset \overline{\Omega}\) be distinct points.
Let \(g \in C(\partial \Omega \setminus \{a_1, \dotsc, a_k\}, \mathcal{N})\) and \(\alpha :  \{a_1, \dotsc, a_k\}^2 \to \pi_1 (\mathcal{N})\) satisfy the following conditions:
\begin{enumerate}[label=(\roman*)]
 \item 
 \label{it_ai1Soh0oNaejoith8faichee}
 \(\alpha (a_i, a_j) = -\alpha(a_j, a_i)\), so that, in particular, \(\alpha (a_i, a_i) = 0\);
 \item
 \label{it_chae9la2eej1Agh3oofae0th}
 if \(a_i \in \partial \Omega\), then \(\sum_{j = 1}^k \alpha (a_i, a_j)\) is the homotopy class of \(g\) around \(a_i\);
 \item 
 \label{it_kaiPhaiyah2pheech2ooz0in}
 if \(a_i \in \Omega\), then \(\sum_{j = 1}^k \alpha (a_i, a_j) = 0\).
\end{enumerate}
Then there exists \(u \in C (\Omega \setminus (\Sigma \cup F), \mathcal{N})\), where \(F\) is finite and \(\Sigma = \bigcup \{[a_i, a_j]: \alpha(a_i, a_j) \ne 0\}\),
and for every \(i, j \in \{1, \dotsc, k\}\), 
\(u\) is homotopic to \(\alpha(a_i, a_j)\) on small circles transversal to \([a_i, a_j]\).
\end{prop}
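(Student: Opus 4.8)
The plan is to build the map $u$ by induction on the number of points, handling one point at a time while correcting the homotopy around all the others, very much in the spirit of \cite[Lemma~3]{Canevari_Orlandi_2021}, but now using that $\pi_1(\mathcal N)$ is Abelian so that the ``charges'' $\alpha(a_i,a_j)$ can be freely added and rearranged. First I would reduce to the case of a \emph{single} dipole. More precisely, for each pair $(i,j)$ with $\alpha(a_i,a_j)\neq 0$, I would construct an auxiliary map $u_{ij}\in C(\mathbb{R}^3\setminus([a_i,a_j]\cup F_{ij}),\mathcal N)$, with $F_{ij}$ finite, which is homotopically trivial on circles transversal to any segment $[a_l,a_m]$ with $\{l,m\}\neq\{i,j\}$, is homotopic to $\alpha(a_i,a_j)$ on small circles transversal to $[a_i,a_j]$, and equals a fixed constant $y_0\in\mathcal N$ outside a small neighbourhood of $[a_i,a_j]$ and near $\partial\Omega$ away from $a_i,a_j$. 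Such a ``single dipole'' map is standard: one works in a tubular neighbourhood $N$ of $[a_i,a_j]$, writes $N\setminus[a_i,a_j]$ as (disk minus center)$\times$(interval), puts the loop representing $\alpha(a_i,a_j)$ — realized by a minimizing geodesic $\gamma_{ij}\in C^1(\mathbb S^1,\mathcal N)$ — on each transversal punctured disk via the radial extension $x\mapsto\gamma_{ij}(x/|x|)$, caps off the two ends $\{a_i\},\{a_j\}$ by Sobolev maps that are continuous away from a single interior point (this is exactly the kind of local extension furnished by Lemma~\ref{lem 2.17} and Theorem~\ref{thmappN}: the $W^{1,2}$ radial map on a disk is continuous outside the center, and around that center it is homotopically nontrivial only when the boundary datum is), and interpolates to the constant $y_0$ in the complement of $N$. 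Since $\mathcal N$ is path-connected, the constant-$y_0$ gluing poses no obstruction.

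Next, once all the single-dipole maps $u_{ij}$ are available, I would ``superpose'' them. Because $\pi_1(\mathcal N)$ is Abelian, homotopy classes of maps from a punctured disk into $\mathcal N$ add under the operation of composing loops, and one can superpose two maps that each equal $y_0$ outside disjoint (or nearly disjoint) regions without interference. The strict convexity of $\Omega$ is used here: the open segments $]a_i,a_j[$ with distinct endpoint-pairs can only meet at common endpoints lying in $\overline\Omega$, never crossing transversally in $\Omega$, because a crossing would force four of the points to be coplanar in a configuration incompatible with all segments having endpoints on the convex body — more importantly, strict convexity guarantees each segment $[a_i,a_j]$ lies in $\overline\Omega$ with interior in $\Omega$ except for endpoints on $\partial\Omega$, so $\Sigma\cap\partial\Omega\subset\{a_1,\dots,a_k\}$ and the local model near each $a_i$ is a finite ``book'' of half-disks. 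Around a common endpoint $a_i$, superposing all the $u_{ij}$ ($j\neq i$) produces a map whose homotopy class on a small sphere around $a_i$ (or hemisphere, if $a_i\in\partial\Omega$) is $\sum_{j\neq i}\alpha(a_i,a_j)$; condition \ref{it_chae9la2eej1Agh3oofae0th} makes this match the homotopy class of $g$ around $a_i$ when $a_i\in\partial\Omega$, and condition \ref{it_kaiPhaiyah2pheech2ooz0in} makes it vanish when $a_i\in\Omega$, which is precisely what is needed to match the boundary datum near $a_i$ in the first case and to \emph{not} create a spurious singular half-line emanating from $a_i$ in the second case. Antisymmetry \ref{it_ai1Soh0oNaejoith8faichee} ensures the two maps $u_{ij}$ and $u_{ji}$ describing the same segment are consistently oriented.

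After the superposition is assembled on a neighbourhood of $\Sigma$ and made equal to a constant on $\partial\Omega$ away from the $a_i$'s, the final step is to extend it across the remaining ``bulk'' region $\Omega\setminus(\text{nbhd of }\Sigma)$ so as to agree with $g$ on $\partial\Omega\setminus\{a_1,\dots,a_k\}$. On this region the boundary datum to be extended is homotopic to $g$ (away from the punctures) and is nullhomotopic on every small transversal circle, hence it admits a continuous lift to the universal cover $\widetilde{\mathcal N}$ on the relevant 2-skeleton; since $\widetilde{\mathcal N}$ is simply connected (indeed compact, as $\pi_1(\mathcal N)$ is finite), one extends the lift continuously to the bulk outside at most finitely many points — this is where the finite set $F$ (beyond $\Sigma$) is created, exactly as in the Hardt–Lin type extension arguments underlying Lemma~\ref{lem lifting linear} and Proposition~\ref{prop ineq T}: a continuous map from a 2-sphere into $\widetilde{\mathcal N}$ need not extend continuously to the 3-ball, but it does extend to the punctured ball, and $\pi_2(\mathcal N)=\pi_2(\widetilde{\mathcal N})$ gives only finitely many obstruction points after triangulating. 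Composing back with the covering projection $\pi:\widetilde{\mathcal N}\to\mathcal N$ yields $u\in C(\Omega\setminus(\Sigma\cup F),\mathcal N)$ with $\operatorname{tr}_{\partial\Omega}(u)=g$ and the prescribed transversal homotopy classes.

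\textbf{Main obstacle.} The delicate point is the \emph{global consistency} of the superposition near the points $a_i$ and the verification that no unwanted singularities are introduced: one must check that, after gluing the finitely many single-dipole maps, the resulting map near each $a_i$ genuinely has the homotopy class dictated by $\sum_{j\neq i}\alpha(a_i,a_j)$ on every enclosing small sphere (not merely on one transversal circle per segment), and that in the interior case \ref{it_kaiPhaiyah2pheech2ooz0in} this really lets the map be capped off continuously at $a_i$ rather than sprouting a new charged segment. Making this rigorous requires a careful choice of a common ``hub'' neighbourhood of each $a_i$, a triangulation of its boundary sphere compatible with the footprints of the incoming segments, and an extension-across-the-sphere argument that uses the Abelian additivity of $\pi_1$ together with the vanishing (respectively matching) of the total charge; this is essentially the content borrowed from \cite[Lemma~3]{Canevari_Orlandi_2021}, and I expect reproducing it in the present covering-space language to be the bulk of the work.
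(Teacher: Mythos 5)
Your approach is genuinely different from the paper's, and while the overall strategy is plausible in spirit, there are several real issues that prevent it from being a proof.

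The paper proves the three-dimensional statement by \emph{slicing with planes}: one first establishes a two-dimensional version (Proposition~\ref{propextoutsideplans2d}, proved by working in the simply connected complement of a tree of segments and radial balls), and then, given a plane \(P\) transversal to \(\Sigma\), uses conditions~\ref{it_chae9la2eej1Agh3oofae0th}--\ref{it_kaiPhaiyah2pheech2ooz0in} to match the homotopy class of \(g\) on one side of \(P\) with the charges of the segments crossing \(P\), thereby extending \(g\) to \(P\cap\Omega\setminus\Sigma\). Reiterating this slicing finitely many times, the problem reduces to the case where \(\Sigma\) is starshaped (or empty), where a homogeneous extension does the job. The dichotomy boundary/interior encoded in \ref{it_chae9la2eej1Agh3oofae0th} vs.\ \ref{it_kaiPhaiyah2pheech2ooz0in} enters entirely through the bookkeeping of charges in a \emph{codimension-one} slice, never through a local analysis at the \(a_i\).

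Your plan, by contrast, builds one map per charged segment and then tries to assemble them near the common endpoints, which is where the difficulties concentrate. First, a concrete error: it is not true that segments \(]a_i,a_j[\) and \(]a_l,a_m[\) with disjoint endpoint pairs cannot cross transversally in \(\Omega\). Four distinct boundary points of a strictly convex body can be coplanar — for instance four points on an equatorial circle of a ball — and the two diagonals of such a quadrilateral cross in the interior. The paper's slicing argument is unbothered by this (crossings are simply part of \(\Sigma\)), but your superposition scheme, which models a neighborhood of \(\Sigma\) as a disjoint union of cylinders away from the \(a_i\), would break down near a crossing. Second, and more fundamentally, ``superposition'' of maps into a general manifold \(\mathcal N\) is not a well-defined operation: unlike the \(\mathbb S^1\)- or Lie-group-valued settings there is no pointwise product, so ``superposing'' two maps that both differ from the constant \(y_0\) near a common point \(a_i\) must be replaced by a from-scratch construction of the map on a punctured ball around \(a_i\) whose boundary data comes from the incoming tubes — which is precisely the hard step you label the ``hub'' obstruction and leave unproved. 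The antisymmetry \ref{it_ai1Soh0oNaejoith8faichee} and the charge-balance conditions \ref{it_chae9la2eej1Agh3oofae0th}, \ref{it_kaiPhaiyah2pheech2ooz0in} are indeed exactly what is needed for that hub extension to exist (up to a finite set accounting for \(\pi_2\)), but asserting that this follows ``as in'' an analogous result elsewhere does not supply the argument. The paper's slicing approach is appealing precisely because it sidesteps a direct local analysis at each \(a_i\): the homotopy-class bookkeeping happens on two-dimensional slices, where Proposition~\ref{propextoutsideplans2d} is elementary, and the endgame is a plain homogeneous extension.
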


The proof of Proposition~\ref{propextoutsideplans} will use the following two-dimensional construction.

\begin{prop}\label{propextoutsideplans2d}
 Let \(U \subset \mathbb{R}^2\) be a bounded strictly convex domain, \(\pi_{1} (\mathcal{N})\) be Abelian and \(a_1, \dotsc, a_k \subset U\) be distinct points.
Assume that \(h \in C(\partial U, \mathcal{N})\) and \(\gamma \colon  \{a_1, \dotsc, a_k\} \to \pi_1 (\mathcal{N})\) satisfy the condition that 
\(\sum_{i = 1}^k \gamma (a_i)\) is the homotopy class of \(h\) on \(\partial U\).
Then there exists \(u \in C (\overline{U} \setminus \{a_1, \dotsc, a_k\},  \mathcal{N})\), such that \(u|_{\partial U}=h\) and \(u\) is homotopic to \(\gamma(a_i)\) on small circles transversal around \(a_i\).
\end{prop}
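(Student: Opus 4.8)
The plan is to thicken each puncture $a_i$ into a small closed disk $D_i=\smash{\overline{B}}^{2}_{r_i}(a_i)\subset U$ (pairwise disjoint), build a continuous map on the holed disk $P:=\overline U\setminus\bigcup_i\mathrm{int}(D_i)$ with prescribed $\pi_1$-behaviour, correct its values on $\partial U$ so that they equal $h$ exactly, and then extend homogeneously across each $D_i\setminus\{a_i\}$. The crucial observation that avoids any secondary finite singular set is that $P$, being a disk with $k$ holes, is homotopy equivalent to a one-dimensional complex (a wedge of $k$ circles), so a continuous map $P\to\mathcal N$ is unobstructed and is determined up to homotopy by its effect on $\pi_1$; the group $\pi_2(\mathcal N)$ never intervenes, and no extra punctures are created.

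Concretely: $P$ deformation retracts onto a finite graph $\Gamma\subset P$, and fixing $x_0\in\partial U$ one obtains free generators $\sigma_1,\dots,\sigma_k$ of $\pi_1(P,x_0)=F_k$, where $\sigma_i$ is represented by a loop running out to $\partial D_i$, once around $a_i$, and back, and where $\partial U$ (with its boundary orientation) is represented, up to conjugation, by a product of the $\sigma_i$. Choose for each $i$ a continuous loop $f_i\colon\mathbb S^1\to\mathcal N$ in the free homotopy class $\gamma(a_i)$ (for instance a minimizing closed geodesic), realize the homomorphism $\sigma_i\mapsto[f_i]$ by a continuous map $\Gamma\to\mathcal N$ (collapse a maximal tree and send each resulting circle along a loop representing $[f_i]$), and compose with the retraction $P\to\Gamma$ to obtain $v_0\in C(P,\mathcal N)$. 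Since $\pi_1(\mathcal N)$ is Abelian, the free homotopy class of $v_0|_{\partial D_i}$ is $\gamma(a_i)$ for each $i$, and the free homotopy class of $v_0|_{\partial U}$ is $\sum_{i=1}^k\gamma(a_i)$, which by hypothesis is the free homotopy class of $h$. Inserting a free homotopy between $v_0|_{\partial U}$ and $h$ into a collar neighbourhood of $\partial U$ in $P$ disjoint from the $D_i$ produces $v\in C(P,\mathcal N)$ with $v|_{\partial U}=h$ and $v|_{\partial D_i}=v_0|_{\partial D_i}$. Finally set $u=v$ on $P$ and, on each $D_i\setminus\{a_i\}$, put $u(x)=v\bigl(a_i+r_i(x-a_i)/|x-a_i|\bigr)$; this glues to a map $u\in C(\overline U\setminus\{a_1,\dots,a_k\},\mathcal N)$ with $u|_{\partial U}=h$, and on every small circle $\partial B^{2}_{\rho}(a_i)$ it restricts to an orientation-preserving reparametrization of $v|_{\partial D_i}$, hence to a loop freely homotopic to $\gamma(a_i)$.

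The one point demanding care is the identification of the boundary loop $\partial U$ with the correct word in $\sigma_1,\dots,\sigma_k$ and the verification that the free class of $v_0|_{\partial U}$ equals $\sum_i\gamma(a_i)$: this is exactly where strict convexity of $U$ is used, to guarantee that $\overline U$ is a closed topological disk (homeomorphic to $\smash{\overline{B}}^{2}_{1}$) so that $P$ is a standard disk-with-holes, and — decisively — where commutativity of $\pi_1(\mathcal N)$ enters, since it makes the answer independent of the geometry-dependent cyclic order in which the petals $\sigma_i$ appear in $\partial U$, so that the single scalar condition $\sum_i\gamma(a_i)=[h]$ is precisely the compatibility needed. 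The remaining ingredients — deformation retraction of a holed disk onto a graph, collar neighbourhoods, realizing a homomorphism $F_k\to\pi_1(\mathcal N)$ by a map of graphs, and the homogeneous (radial) extension over a punctured disk — are all standard.
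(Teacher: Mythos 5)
Your proof is correct, but it follows a genuinely different route from the paper's. The paper cuts the domain: it chooses a system $L$ of segments through $a_1,\dotsc,a_k$ so that $U\setminus L$ is simply connected, prescribes $u$ on the small circles $\partial B^{2}_{r}(a_i)$, extends it along $L$ minus the balls by path-connectedness of $\mathcal N$, and then uses that the boundary loop of the simply connected region $V=U\setminus(L\cup\bigcup_i\smash{\overline{B}}^{2}_{r}(a_i))$ is nullhomotopic in $\mathcal N$ — this is the single place where the hypothesis $\sum_i\gamma(a_i)=[h]$ and the commutativity of $\pi_1(\mathcal N)$ enter — so that $u$ extends over $\overline V$ with $u=h$ on $\partial U$ in one stroke, before the same homogeneous extension into the punctured disks that you use. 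You instead retract the holed disk $P$ onto a spine (wedge of $k$ circles), realize the homomorphism $F_k\to\pi_1(\mathcal N)$, $\sigma_i\mapsto\gamma(a_i)$, on the graph, pull back by the retraction, and then correct the boundary trace to equal $h$ exactly by inserting a free homotopy in a collar of $\partial U$. The paper's cut-and-extend argument is shorter, handles the boundary condition without a separate correction step, and is tailored to be iterated in the three-dimensional Proposition~\ref{propextoutsideplans}, where planes play the role of the cuts; your spine argument makes the group-theoretic bookkeeping transparent (the Abelian target renders the cyclic order of the generators in the boundary word irrelevant, which is exactly the compatibility condition), and it makes explicit why no $\pi_2$-obstruction or auxiliary finite singular set can appear, at the modest cost of the collar-correction step and of fixing orientation conventions when identifying $[\partial U]$ with a product of (conjugates of) the $\sigma_i$. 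Both arguments use strict convexity only to know that $\overline U$ is a closed topological disk, and both finish with the radial extension over each $B^{2}_{r}(a_i)\setminus\{a_i\}$.
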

\begin{proof}
We consider a set \(L\) of segments in \(\mathbb{R}^2\) such that \(\{a_1, \dotsc, a_k\} \subset L\) and \(U \setminus L\) is simply connected. 
For \(r > 0\) small enough, the disks \(B^{2}_{2r} \brk{a_1}, \dotsc, B^{2}_{2 r} \brk{a_k}\) are pairwise disjoint. 
We fix \(u\) on \(\partial B^{2}_r \brk{a_i}\) so that \(u \vert_{\partial B^{2}_r \brk{a_i}}\) has the prescribed homotopy class \(\gamma \brk{a_i}\).
We extend then this map \(u\) to \(\brk{U \cap L} \setminus \bigcup_{i = 1}^k \smash{\overline{B}}^{2}_r \brk{a_i}\) by connectedness of \(\mathcal{N}\).
Since \(V = U \setminus \brk{L \cup \bigcup_{i = 1}^k \smash{\overline{B}}^{2}_r \brk{a_i}}\) is simply connected and by our assumption on the homotopy classes, \(u\) has a trivial homotopy on \(\partial V\). Thus, we can extend \(u\) to \(\overline{V}\) in such a way that $u=h$ on $\partial U$.
Finally, \(u\) is extended homogeneously on the set \(B^{2}_r \brk{a_i} \setminus \set{a_i}\) for each $i\in \{1, \dotsc, k\}$.
\end{proof}

\begin{proof}[Proof of Proposition~\ref{propextoutsideplans}]
As a first step, given a plane \(P\subset \mathbb{R}^3\) that is transversal to \(\Sigma\) and does not contain \(F\), the map \(g\) can be extended to \((\partial \Omega \cup (P \cap \Omega))\setminus \Sigma\).
Indeed, the homotopy class of \(g\) on \(\partial \Omega\) is equal to the sum of the homotopy classes on small circles around \(\{a_1, \dotsc, a_k\} \cap \partial \Omega\) on one side of \(P\). By the conditions \ref{it_chae9la2eej1Agh3oofae0th} and \ref{it_kaiPhaiyah2pheech2ooz0in},
the homotopy class of \(g\) on \(\partial \Omega\) is equal to the sum of the homotopy classes on small circles around \(\Sigma \cap P\).
By Proposition~\ref{propextoutsideplans2d}, \(g\) can be extended to \(P \cap \Omega \setminus \Sigma \) with the prescribed homotopy on small circles around points of  \(\Sigma \cap P\).
Reiterating this construction finitely many times, the problem is reduced to the case where either \(k = 0\) and one can perform a homogeneous extension with respect to any interior point, or the set \(\Sigma\) is starshaped with respect to some point in \(\overline{\Omega}\) and we perform then a homogeneous extension with respect to that point.
\end{proof}

  \begin{cor}\label{cor branching points}
     Let $\pi_{1}(\mathcal{N})=\mathbb{Z}/2\mathbb{Z}$. Then $S_{*}$ has no branching points inside $\Omega$.
 \end{cor}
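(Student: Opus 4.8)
The plan is to argue by contradiction, combining the interior minimality of $S_{*}$ (Proposition~\ref{min of sing}) with the extension result Proposition~\ref{propextoutsideplans}. Suppose $x_{0}\in S_{*}\cap \Omega$ is a branching point. By Proposition~\ref{prop structure S_*} there is $r>0$ with $\smash{\overline{B}}^{3}_{r}(x_{0})\subset \Omega$ and $S_{*}\cap \smash{\overline{B}}^{3}_{r}(x_{0})=\bigcup_{i=1}^{l}L_{i}$, a union of $l$ distinct radii of the ball emanating from $x_{0}$, with unit directions $v_{1},\dotsc,v_{l}$. The first facts I would record are: (a) by Remark~\ref{rem sing points}, $l\geq 3$; (b) since $\pi_{1}(\mathcal{N})\simeq \mathbb{Z}/2\mathbb{Z}$ has a single nontrivial conjugacy class, $[\mathbb{S}^{1},\mathcal{N}]$ has exactly one nontrivial element $\gamma_{0}$, so by Propositions~\ref{nice characterization for singular set} and~\ref{prop density dv} together with Lemma~\ref{lem cont singenergy} the density $\Theta_{1}(\mu_{*},\cdot)$ equals $m_{0}:=\mathcal{E}^{\mathrm{sg}}_{2}(\gamma_{0})>0$ for $\mathcal{H}^{1}$-a.e.\ point of $S_{*}$, i.e.\ \emph{every} segment of $S_{*}$ carries the same weight $m_{0}$; (c) the number $l$ is even, either by \cite[Proposition~2]{Canevari_2017} or directly, since the (continuous, for generic $\rho<r$) restriction of $u_{*}$ to $\partial B^{3}_{\rho}(x_{0})$ punctured at the $l$ points where the $L_{i}$ meet it has charge $\gamma_{0}$ around each puncture, and the sum of these charges vanishes in the abelian group $\pi_{1}(\mathcal{N})$. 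Hence $l\geq 4$.

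Next I would set up the competitor. Applying Proposition~\ref{min of sing} with $K$ a small ball around $x_{0}$, and using the remark that the domain it produces may be taken arbitrarily close to $K$, I obtain a Lipschitz domain $U$ with $x_{0}\in U\csubset \Omega$ and $\overline{U}\subset B^{3}_{r}(x_{0})$, such that $u_{*}|_{\partial U}\in W^{1/2,2}(\partial U,\mathcal{N})$, $S_{*}\cap \overline{U}\in \mathscr{C}(U,u_{*}|_{\partial U})$, and $S_{*}\cap \overline{U}$ minimizes the mass in $\mathscr{C}(U,u_{*}|_{\partial U})$. Because $\overline{U}\subset B^{3}_{r}(x_{0})$, the set $S_{*}\cap \overline{U}$ is exactly $\bigcup_{i}[x_{0},q_{i}]$ with $q_{i}\in \partial U$, so its mass is $m_{0}\sum_{i}|q_{i}-x_{0}|$. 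I then fix $\rho$ with $\smash{\overline{B}}^{3}_{\rho}(x_{0})\subset U$, write $p_{i}=x_{0}+\rho v_{i}$, and modify $S_{*}$ \emph{only inside} $\smash{\overline{B}}^{3}_{\rho}(x_{0})$: there the chain is $\bigcup_{i}[x_{0},p_{i}]$, which I replace by the chords $[p_{a},p_{b}]$ indexed by a perfect matching of $\{1,\dotsc,l\}$ chosen to contain a \emph{non-antipodal} pair (possible since the $v_{i}$ are distinct and $l\geq 3$, and the matching can be completed because $l$ is even), subdividing at any mutual intersections; outside $B^{3}_{\rho}(x_{0})$ I keep $S_{*}$ unchanged, calling the result $S$. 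The reason for decrossing inside a genuine ball is that chords of a ball remain in the ball, so no convexity of $U$ is required.

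To verify $S\in \mathscr{C}(U,u_{*}|_{\partial U})$ I would exhibit the required $W^{1,2}_{\loc}$ extension by gluing $u_{*}$ itself on $\overline{U}\setminus B^{3}_{\rho}(x_{0})$ to the map furnished by Proposition~\ref{propextoutsideplans}, applied to the strictly convex domain $B^{3}_{\rho}(x_{0})$ with boundary datum $u_{*}|_{\partial B^{3}_{\rho}(x_{0})}$, with the points $a_{i}$ being the $p_{i}$ together with the interior subdivision vertices and $\alpha\equiv 1$ on the chords and $0$ elsewhere (the compatibility hypotheses there are immediate: each $p_{i}$ carries boundary charge $\gamma_{0}$, and each interior vertex meets four chords of charge $\gamma_{0}$, whose sum is $0$ mod $2$); since only qualitative membership in $W^{1,2}_{\loc}$ is needed, a continuous interpolation in a thin collar reconciles the two maps on $\partial B^{3}_{\rho}(x_{0})$ if they do not already agree. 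For the mass of $S$, a local-degree argument at each $q_{i}\in \partial U$ — where the boundary datum is topologically nontrivial, being evaluated on a circle transversal to $L_{i}$ — forces every admissible extension to carry the class $\gamma_{0}$ on the outer part $L_{i}^{\mathrm{out}}$ of $L_{i}$, and conservation of the $\mathbb{Z}/2$ charge at the vertices $p_{i}$ and at the subdivision points then propagates this to every chord; hence $\mathbb{M}(u_{*}|_{\partial U},S)=m_{0}\bigl(\sum_{i}|q_{i}-x_{0}|-l\rho+\sum_{\mathrm{pairs}}|p_{a}-p_{b}|\bigr)$. Since $|p_{a}-p_{b}|=\rho|v_{a}-v_{b}|\leq 2\rho$, with strict inequality for the non-antipodal pair, the sum of the chord lengths is $<l\rho$, so $\mathbb{M}(u_{*}|_{\partial U},S)<m_{0}\sum_{i}|q_{i}-x_{0}|=\mathbb{M}(u_{*}|_{\partial U},S_{*}\cap \overline{U})$, contradicting minimality; therefore $S_{*}$ has no branching points in $\Omega$. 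I expect the main obstacle to be the careful verification that $S$ is a genuine admissible chain and, above all, that the infimum defining $\mathbb{M}(u_{*}|_{\partial U},S)$ is attained by assigning the nontrivial class $\gamma_{0}$ to every segment (so that it equals geometric length times $m_{0}$); the decrossing estimate and the invocation of Proposition~\ref{propextoutsideplans} are comparatively routine.
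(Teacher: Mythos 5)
Your proposal is correct in substance, but the competitor you build is genuinely different from the one in the paper, so the two arguments are worth contrasting. The paper does not replace all of the radii inside a small ball: it fixes only the two segments $L_1, L_2$ that form the smallest angle at $x_0$, encloses their far parts in a small truncated cone $K$ with $\overline{K}\cap S_{*}=\overline{K}\cap(L_1\cup L_2)$ and $\partial K\cap(L_1\cup L_2)=\{x_1,x_2,y_1,y_2\}$, and replaces $[x_1,y_1]\cup[x_2,y_2]$ inside $K$ by the two chords $[x_1,x_2]\cup[y_1,y_2]$; the strict inequality $|x_1-x_2|+|y_1-y_2|<|x_1-y_1|+|x_2-y_2|$ follows from the triangle inequality once the small base of $K$ is pushed close enough to $x_0$. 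Notably, in the paper's competitor the branching point $x_0$ survives with all $l$ segments attached; one never removes it, one merely shows the chain near it cannot be length-minimizing. Your construction instead deletes $x_0$ entirely by replacing $\bigcup_i[x_0,p_i]$ with a perfect matching of chords, which forces you to invoke the $\mathbb{Z}/2$ parity to know that $l$ is even (the paper uses the same parity, but only implicitly, to get the extendability of its competitor) and to verify that a matching with a non-antipodal pair exists, whereas the paper only needs $l\geq 3$ and picks the minimum-angle pair.

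Each choice has a small technical advantage. Your approach applies Proposition~\ref{propextoutsideplans} only on the strictly convex ball $B^{3}_{\rho}(x_{0})$ and glues with $u_{*}$ on the annulus $U\setminus B^{3}_{\rho}(x_{0})$, which sidesteps the question of whether the domain $U$ produced by Proposition~\ref{min of sing} can be taken strictly convex; the paper instead takes $U$ to be a convex $\varepsilon$-neighborhood of the truncated cone and feeds $U$ itself into Proposition~\ref{propextoutsideplans}, which is slightly delicate since such a neighborhood has flat pieces of boundary and is not strictly convex. On the other hand, the paper's local uncrossing is more economical and avoids having to treat possible mutual intersections of chords and the additional $\mathbb{Z}/2$ bookkeeping at the new crossing vertices and matched endpoints $p_i$ that your construction creates.

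On the two points you flag as potential obstacles: the verification that every admissible extension of $S$ must assign $\gamma_0$ to every segment is not actually needed. For the contradiction you only require the \emph{upper} bound $\mathbb{M}(u_{*}|_{\partial U},S)\leq m_{0}\,\mathcal{H}^{1}(S)$, which follows from exhibiting one extension (the one from Proposition~\ref{propextoutsideplans}); the equality $\mathbb{M}(u_{*}|_{\partial U},S_{*}\cap\overline{U})=m_{0}\,\mathcal{H}^{1}(S_{*}\cap\overline{U})$ you need on the other side is supplied directly by Proposition~\ref{min of sing} together with Corollary~\ref{cor 5.17}. The admissibility check for $S$ is genuine work, but your charge-conservation bookkeeping is correct, so the gap you anticipate is in fact not there.
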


 \begin{proof}[Proof of Corollary~\ref{cor branching points}]
     Assume by contradiction that $S_{*}$ has a branching point $x_{0} \in \Omega$. Then, according to Remark~\ref{rem sing points}, locally around $x_{0}$, $S_{*}$ is a finite union of an even number of segments emanating from $x_{0}$. Fix two segments from this union that form the smallest angle among all possible pairs. Denote these segments by $L_{1}$ and $L_{2}$. Then there exists a truncated cone $K \subset \Omega$ such that $\overline{K}\cap S_{*}=\overline{K}\cap (L_{1} \cup L_{2})$ and $\partial K\cap (L_{1}\cup L_{2})=\{x_{1}, x_{2}, y_{1}, y_{2}\}$, where $x_{1}$, $y_{1}$ and $x_{2}$, $y_{2}$ are the endpoints of $L_{1}\cap \overline{K}$, $L_{2} \cap \overline{K}$, respectively. Furthermore, $x_{1}, x_{2}$ belong to the smaller circular base of $K$ and $y_{1}$, $y_{2}$ belong to the bigger circular base of $K$. Choosing $K$ so that its smaller circular base is fairly close to $x_{0}$ and using the triangle inequality, we can ensure that $|x_{1}-x_{2}|+|y_{1}-y_{2}|<|x_{1}-y_{1}|+|x_{2}-y_{2}|$. 
     This, together with Propositions~\ref{min of sing},~\ref{propextoutsideplans}, yields a contradiction, since the set $U$ of Proposition~\ref{min of sing}, which contains $K$, can be chosen to be convex and arbitrarily close to $K$ (for instance, we can choose $U=\{x \in \mathbb{R}^{3}: \dist(x, K)<\varepsilon\}$ for some sufficiently small $\varepsilon \in (0,1)$). 
     Therefore, $S_{*}$ has no branching points inside $\Omega$, which completes our proof of Corollary~\ref{cor branching points}.
 \end{proof}
\section{Analysis up to the boundary}
\subsection{Preliminary results}
    If the fractional seminorm of the boundary datum is fairly small, we may have a concentration of the $p$-energy as $p \nearrow 2$ only near the boundary of the domain. Namely, the smaller the fractional seminorm of the boundary datum, the closer to $\partial \Omega$ the singular set is located.
\begin{prop}\label{prop empty S_*}
    Let $g \in W^{1/2,2}(\partial \Omega, \mathcal{N})$, $r_{0}\in (0,1)$, $\Omega_{r_{0}}:=\{x \in \Omega: \dist(x, \partial \Omega)>r_{0}\}\neq \emptyset$, $(p_{n})_{n\in \mathbb{N}}\subset [1,2)$, $p_{n}\nearrow 2$ as $n\to+\infty$ and $(u_{n})_{n\in \mathbb{N}}\subset W^{1,p_{n}}(\Omega, \mathcal{N})$ be a sequence of $p_{n}$-minimizers with $\tr_{\partial \Omega}(u_{n})=g$. Then there exists $\varepsilon=\varepsilon(r_{0},\partial \Omega, \mathcal{N})>0$ such that the following holds. Assume that  $|g|_{W^{1/2,2}(\partial \Omega, \mathbb{R}^{\nu})}\leq \varepsilon.$ Then $S_{*}\cap \Omega_{r_{0}}=\emptyset$. 
\end{prop}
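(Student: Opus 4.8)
The plan is to exploit the global energy bound of Proposition~\ref{global energy bound} together with the concentration result of Lemma~\ref{lem concentration} and the characterization of the singular set in Proposition~\ref{nice characterization for singular set}. The point is that for $x_{0}\in \Omega_{r_{0}}$ with $\dist(x_{0},\partial\Omega)>r_{0}$, one has $B^{3}_{r_{0}}(x_{0})\subset\Omega$, so by the monotonicity of the density (Corollary~\ref{mon for density}) and the definition of the $1$-dimensional density \eqref{defofdensity}, controlling $\mu_{*}$ on a ball of radius comparable to $r_{0}$ controls $\Theta_{1}(x_{0})$. If we can make $\mu_{*}(\smash{\overline{B}}^{3}_{r_{0}}(x_{0}))<\eta r_{0}$ for every such $x_{0}$, where $\eta$ is the constant of Lemma~\ref{heart}, then Lemma~\ref{lem concentration} gives $\mu_{*}(B^{3}_{r_{0}/2}(x_{0}))=0$, hence $\Theta_{1}(x_{0})=0$, and Proposition~\ref{nice characterization for singular set} forces $x_{0}\notin S_{*}$.

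\textbf{Key steps.} First I would recall that $\mu_{*}(\overline{\Omega})\le\liminf_{n}\mu_{n}(\overline{\Omega})\le C_{0}$, and by Proposition~\ref{global energy bound} one may take $C_{0}=C(\partial\Omega,\mathcal{N})\,|g|_{W^{1/2,2}(\partial\Omega,\mathbb{R}^{\nu})}^{2}$. Since $\mu_{*}$ is a positive measure, for any $x_{0}\in\Omega_{r_{0}}$ we trivially have
\[
\mu_{*}(\smash{\overline{B}}^{3}_{r_{0}}(x_{0}))\le\mu_{*}(\overline{\Omega})\le C(\partial\Omega,\mathcal{N})\,|g|_{W^{1/2,2}(\partial\Omega,\mathbb{R}^{\nu})}^{2}.
\]
Now choose $\varepsilon=\varepsilon(r_{0},\partial\Omega,\mathcal{N})>0$ so small that $C(\partial\Omega,\mathcal{N})\,\varepsilon^{2}<\eta r_{0}$, with $\eta$ the constant of Lemma~\ref{heart} (taken with $\kappa=1/2$, $p_{0}=3/2$, $\Psi(x)=x+x_{0}$, as in the statement of Lemma~\ref{lem concentration}). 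Then under the hypothesis $|g|_{W^{1/2,2}(\partial\Omega,\mathbb{R}^{\nu})}\le\varepsilon$ we get $\mu_{*}(\smash{\overline{B}}^{3}_{r_{0}}(x_{0}))<\eta r_{0}$ for every $x_{0}\in\Omega_{r_{0}}$. Lemma~\ref{lem concentration} applied with $r=r_{0}$ (note $B^{3}_{r_{0}}(x_{0})\subset\Omega$) yields $\mu_{*}(B^{3}_{r_{0}/2}(x_{0}))=0$, so $\Theta_{1}(\mu_{*},x_{0})=\lim_{r\to0+}\mu_{*}(B^{3}_{r}(x_{0}))/(2r)=0$. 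By Proposition~\ref{nice characterization for singular set}, $\Omega\cap S_{*}=\{x\in\Omega:\Theta_{1}(x)\ge\eta/2\}$, hence $x_{0}\notin S_{*}$. Since $x_{0}\in\Omega_{r_{0}}$ was arbitrary, $S_{*}\cap\Omega_{r_{0}}=\emptyset$.

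\textbf{Main obstacle.} There is no deep obstacle here; the argument is essentially a bookkeeping combination of already-established facts, and the only mild subtlety is making sure the ball $\smash{\overline{B}}^{3}_{r_{0}}(x_{0})$ genuinely lies inside $\Omega$ so that Lemma~\ref{lem concentration} applies (which is exactly why the hypothesis is phrased with strict inequality $\dist(x,\partial\Omega)>r_{0}$, giving a little room). One should also double-check that the dependence of $\varepsilon$ is indeed only on $r_{0}$, $\partial\Omega$ and $\mathcal{N}$: the constant $C$ from Proposition~\ref{global energy bound} depends on $\partial\Omega$ and $\mathcal{N}$, and $\eta$ from Lemma~\ref{heart} depends on $\kappa=1/2$, $p_{0}=3/2$, the bilipschitz constant of $\Psi$ (which is $1$ here, a translation) and $\mathcal{N}$, so $\varepsilon$ defined through $C\varepsilon^{2}<\eta r_{0}$ depends only on $r_{0}$, $\partial\Omega$, $\mathcal{N}$, as claimed. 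It is worth remarking that the same proof actually shows the sharper quantitative statement alluded to before Theorem~\ref{theorem_intro_global}: if $|g|_{W^{1/2,2}(\partial\Omega,\mathbb{R}^{\nu})}$ is small, then $S_{*}$ is confined to a thin layer $\{x\in\overline{\Omega}:\dist(x,\partial\Omega)\le r_{0}\}$ whose thickness $r_{0}$ can be taken proportional to $|g|_{W^{1/2,2}(\partial\Omega,\mathbb{R}^{\nu})}^{2}$.
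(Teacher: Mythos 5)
Your proof is correct, and it takes a genuinely simpler route than the paper's. The paper localizes the global energy bound to $B^{3}_{r+\delta}(x_{0})$ via the monotonicity formula (Lemma~\ref{lem mon of p-energy}, yielding $\mu_{*}(\smash{\overline{B}}^{3}_{r}(x_{0}))\le\eta\, r_{0}^{1/2}\,r$ for every $r\in(0,r_{0})$), then applies Lemma~\ref{lem concentration} at each such $r$. You bypass monotonicity entirely by using the trivial inclusion bound $\mu_{*}(\smash{\overline{B}}^{3}_{r_{0}}(x_{0}))\le\mu_{*}(\overline{\Omega})$, then smallness of $|g|_{W^{1/2,2}}$ from Proposition~\ref{global energy bound} makes the whole mass less than $\eta r_{0}$, and one application of Lemma~\ref{lem concentration} at $r=r_{0}$ finishes. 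Both are valid; your argument is shorter and does not even use the hypothesis $r_{0}<1$, which the paper needs to absorb the factor $r_{0}^{1/2}$ coming from the monotonicity step. (The paper's route is perhaps motivated by the fact that monotonicity is needed anyway elsewhere, so it gives a scale-invariant decay at all radii rather than just at $r_{0}$.) Two minor remarks: (i) after obtaining $\mu_{*}(B^{3}_{r_{0}/2}(x_{0}))=0$ you can conclude $x_{0}\notin S_{*}$ directly from the definition of support, without invoking Proposition~\ref{nice characterization for singular set}; (ii) when you pass from $\mu_{n}(\overline{\Omega})\le C|g|^{p_{n}}_{W^{1/2,2}}$ to $\mu_{*}(\overline{\Omega})\le C|g|^{2}_{W^{1/2,2}}\le C\varepsilon^{2}$, it is worth noting explicitly that you use $\mu_{n}(\overline{\Omega})\to\mu_{*}(\overline{\Omega})$ (test the weak-$*$ convergence in $(C(\overline{\Omega}))'$ against the constant function $1$) together with $|g|^{p_{n}}\to|g|^{2}$; with that made precise the exponent $2$ in your final smallness condition $C\varepsilon^{2}<\eta r_{0}$ is correct.
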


\begin{proof}
    Fix arbitrary $x_{0} \in \Omega_{r_{0}}$ and $r\in (0,r_{0})$. Let $\eta>0$ be the constant of Lemma~\ref{heart}, where $\kappa=1/2$, $p_{0}=3/2$ and $\Psi(x)=x+x_{0}$ whenever $E=B^{3}_{r}(x_{0})$. Let $\varepsilon>0$ be small enough so that for each $n \in \mathbb{N}$, 
    \begin{equation}\label{estimglobalpenergy02}
    (2-p_{n})\int_{\Omega} \frac{|Du_{n}|^{p_{n}}}{p_{n}}\diff x \leq \eta r^{3/2}_{0},
    \end{equation}
    which is possible in view of Proposition~\ref{global energy bound} and the assumption $|g|_{W^{1/2,2}(\partial \Omega, \mathbb{R}^{\nu})}\leq \varepsilon$.
    Then, applying Lemma~\ref{lem mon of p-energy} and using \eqref{estimglobalpenergy02}, for each fairly small $\delta>0$ we have
    \begin{align*}
    (2-p_{n})\int_{B^{3}_{r+\delta}(x_{0})}\frac{|Du_{n}|^{p_{n}}}{p_{n}}\diff x &\leq (2-p_{n})\left(\frac{r+\delta}{r_{0}}\right)^{3-p_{n}}\int_{B^{3}_{r_{0}}(x_{0})}\frac{|Du_{n}|^{p_{n}}}{p_{n}}\diff x\\ &\leq (2-p_{n})\left(\frac{r+\delta}{r_{0}}\right)^{3-p_{n}}\int_{\Omega}\frac{|Du_{n}|^{p_{n}}}{p_{n}}\diff x \\ &\leq \eta r^{3/2-3+p_{n}}_{0}(r+\delta)^{3-p_{n}}.
    \end{align*} 
    Letting $n$ tend to $+\infty$, we have $\mu_{*}(\smash{\overline{B}}^3_{r}(x_{0}))\leq \mu_{*}(B^{3}_{r+\delta}(x_{0}))\leq \eta r_{0}^{1/2}(r+\delta)$ (see \eqref{two cond of weak*}). Letting $\delta$ tend to $0+$, yields $\mu_{*}(\smash{\overline{B}}^3_{r}(x_{0}))<\eta r$, since $r_{0}\in (0,1)$. Then, according to Lemma~\ref{lem concentration}, $\mu_{*}(B^{3}_{r/2}(x_{0}))=0$ and hence $B^{3}_{r/2}(x_{0})\subset \Omega\setminus S_{*}$. Since $x_{0} \in \Omega_{r_{0}}$ was arbitrary, $S_{*}\cap \Omega_{r_{0}}=\emptyset$, which completes our proof of Proposition~\ref{prop empty S_*}.
\end{proof}

The next proposition says that if the boundary of the domain is homeomorphic to $\mathbb{S}^{2}$ and the boundary datum is regular enough, then no singular set arises in the limit.
\begin{prop}\label{example 1}
    Let $\partial \Omega$ be homeomorphic to $\mathbb{S}^{2}$, $g \in W^{1,2}(\partial \Omega, \mathcal{N})$, $(p_{n})_{n\in \mathbb{N}}\subset [1,2)$, $p_{n}\nearrow 2$ as $n\to +\infty$ and $(u_{n})_{n\in \mathbb{N}}\subset W^{1,p_{n}}(\Omega, \mathcal{N})$ be a sequence of $p_{n}$-minimizers such that for each $n\in \mathbb{N}$, $\tr_{\partial \Omega}(u_{n})=g$. Then $S_{*}=\emptyset$ and the map $u_{*}$ coming from Proposition~\ref{conv to harm} is a minimizing harmonic extension of $g$ to $\Omega$. Furthermore, $u_{*} \in C^{\infty}(\Omega \setminus S_{0}, \mathcal{N})$, where $S_{0} \subset \Omega$ is a discrete set.
    \end{prop}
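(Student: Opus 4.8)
The plan is to show that the hypotheses of Proposition~\ref{example 1} force the global $p_n$-energy to stay bounded by $C|g|_{W^{1,2}(\partial\Omega,\mathbb R^\nu)}^2$ \emph{without} the factor $(2-p_n)^{-1}$; once this is established, the limit measure $\mu_*$ vanishes identically, hence $S_*=\emptyset$, and the remaining assertions follow from the results already proved. The key point is the extension lemma: since $\partial\Omega$ is homeomorphic to $\mathbb S^2$ and $g\in W^{1,2}(\partial\Omega,\mathcal N)$, the map $g$ lies in the image of the (continuous) lifting, i.e.\ $g=\pi\circ\widetilde g$ for some $\widetilde g\in W^{1,2}(\partial\Omega,\widetilde{\mathcal N})$ --- here one uses that $\widetilde{\mathcal N}$ is compact (because $\pi_1(\mathcal N)$ is finite) and that a $W^{1,2}$ map from a closed surface homeomorphic to $\mathbb S^2$ into $\mathcal N$ lifts, exactly as invoked in the proof of Proposition~\ref{prop ineq T} and Lemma~\ref{finding nice v and conmapping} (cf.\ \cite[Theorem~1]{lifting}). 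Then one extends $\widetilde g$ inside $\Omega$ by composing a bilipschitz identification $\overline\Omega\cong\smash{\overline B}^3_1$ with the homogeneous (radial) extension and projecting back to $\widetilde{\mathcal N}$: since $\widetilde{\mathcal N}$ is simply connected and $m=3$, the radial extension of a $W^{1/2,2}$ boundary datum on $\partial B^3_1$ is in $W^{1,2}$, as in Remark~\ref{remliflinear} with $m\ge 3$. Composing with $\pi$ (a local isometry) produces $v\in W^{1,2}(\Omega,\mathcal N)$ with $\operatorname{tr}_{\partial\Omega}(v)=g$ and $\int_\Omega |Dv|^2\le C|g|_{W^{1,2}(\partial\Omega,\mathbb R^\nu)}^2$.

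Next I would run the comparison argument. Fix $p\in(1,2)$ close to $2$; by Young's inequality $\int_\Omega\frac{|Dv|^p}{p}\le\int_\Omega\frac{|Dv|^2}{2}+(\frac1p-\frac12)|\Omega|$, so $v$ is an admissible competitor for the $p_n$-minimizer $u_n$ with uniformly (in $n$) bounded $p_n$-energy. Minimality then gives $\sup_n\int_\Omega\frac{|Du_n|^{p_n}}{p_n}\diff x<+\infty$. Multiplying by $(2-p_n)$ and letting $n\to+\infty$ we obtain $\mu_*(\overline\Omega)=\lim_n(2-p_n)\int_\Omega\frac{|Du_n|^{p_n}}{p_n}\diff x=0$, hence $\mu_*\equiv 0$ and $S_*=\operatorname{supp}(\mu_*)=\emptyset$. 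With $S_*=\emptyset$, Proposition~\ref{conv to harm} and Proposition~\ref{prop conv to harmonic map} apply with $U=\Omega$: up to a subsequence $u_n\to u_*$ a.e.\ in $\Omega$, $u_*\in W^{1,2}_{\loc}(\Omega,\mathcal N)$ is a $2$-minimizer in every $K\csubset\Omega$, and $u_*\in C^\infty(\Omega\setminus S_0,\mathcal N)$ for a discrete (countable, locally finite) set $S_0\subset\Omega$ by \cite[Theorem~II]{SchoUhl}.

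It remains to upgrade "locally minimizing harmonic in $\Omega$" to "minimizing harmonic extension of $g$ to $\Omega$", i.e.\ to include the boundary condition and global minimality. For this I would invoke Proposition~\ref{prop est for the differential inside domain}~\ref{it_Aex0Aijushoetha0looKo6ei}: since $\mu_*(\overline\Omega)=0$, the estimate there gives $\int_\Omega(\dist(x,\partial\Omega)|Du_*(x)|)^q\diff x\le C|\Omega|$ for all $q\in[1,2)$, and in fact, arguing exactly as in the boundary part of the proof of Proposition~\ref{min of sing} (choosing a good level set $f^{-1}(\{\varrho\})$ near $\partial\Omega$ on which $u_n\to u_*$ strongly in $W^{1,p}$ and the traces converge), one gets that $u_n\to u_*$ strongly enough near $\partial\Omega$ that $\operatorname{tr}_{\partial\Omega}(u_*)=g$ and $u_*\in W^{1,2}(\Omega,\mathcal N)$ with $\int_\Omega\frac{|Du_n|^{p_n}}{p_n}\diff x\to\int_\Omega\frac{|Du_*|^2}{2}\diff x$; then the same Luckhaus-interpolation competitor construction as in Proposition~\ref{prop conv to harmonic map} (now on all of $\Omega$, interpolating between $u_n|_{\partial\Omega}=g$ and any $w_*\in W^{1,2}(\Omega,\mathcal N)$ with $\operatorname{tr}_{\partial\Omega}(w_*)=g$) yields $\int_\Omega\frac{|Du_*|^2}{2}\le\int_\Omega\frac{|Dw_*|^2}{2}$, i.e.\ global minimality. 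The main obstacle I anticipate is precisely this last step --- ensuring the strong convergence of traces up to $\partial\Omega$ and the validity of the global comparison, since $u_n$ and $u_*$ are only controlled on level sets $f^{-1}(\{\varrho\})$ for $\varrho$ in a set of positive measure; but this is handled by the coarea/Fatou selection argument already used in Step~1 of the proof of Proposition~\ref{min of sing}, adapted to the level sets of $\dist(\cdot,\partial\Omega)$ inside $\Omega$, combined with the $W^{1,q}_{\loc}$ convergence of Proposition~\ref{prop est for the differential inside domain}.
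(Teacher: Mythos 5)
Your strategy coincides with the paper's in its essentials: construct a competitor with bounded energy, conclude that $(2-p_n)\int_\Omega\frac{|Du_n|^{p_n}}{p_n}\diff x\to 0$, hence $\mu_*\equiv 0$ and $S_*=\emptyset$, and then identify $u_*$ as the minimizing harmonic extension via Propositions~\ref{conv to harm} and~\ref{prop conv to harmonic map}. The one place where you genuinely take a different path is the extension step, and there you over-engineer. You route through the universal cover: lift $g$ to $\widetilde g\in W^{1,2}(\partial\Omega,\widetilde{\mathcal N})$, extend $\widetilde g$ radially in $\widetilde{\mathcal N}$, then push down by $\pi$. None of this is needed. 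The paper simply takes the bilipschitz identification $\Phi:\overline\Omega\to\smash{\overline B}^3_1$, sets $h=g\circ\Phi^{-1}|_{\mathbb S^2}$, and uses the \emph{direct} homogeneous extension $u(x)=h(x/|x|)$, which is already a competitor in $W^{1,2}(B^3_1,\mathcal N)$ with
\[
\int_{B^3_1}\frac{|Du|^{p_n}}{p_n}\diff x=\frac{1}{3-p_n}\int_{\mathbb S^2}\frac{|D_\top h|^{p_n}}{p_n}\diff\mathcal H^2.
\]
In a $3$-dimensional domain the radial energy integral $\int_0^1 r^{2-p}\diff r$ converges for every $p\le 2$ regardless of the topology of $\mathcal N$, so no lifting, no simple connectivity of the cover, and no compactness of $\widetilde{\mathcal N}$ enter the picture. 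Your detour is correct, just unnecessary; and it also obscures the cleanest form of the bound $\mu_n(\overline\Omega)\le\frac{C(2-p_n)}{3-p_n}\int_{\mathbb S^2}\frac{|D_\top h|^{p_n}}{p_n}\diff\mathcal H^2\to 0$, which the paper extracts directly from minimality without passing through Young's inequality.

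On the final step — upgrading "locally minimizing harmonic in $\Omega$" to "minimizing harmonic extension of $g$ to $\Omega$" — you are right to flag it as the delicate point. The paper itself treats it tersely, asserting that "the conclusions of Proposition~\ref{prop conv to harmonic map} apply for $K=\Omega$," which is a mild abuse since that proposition is stated for $K\csubset U$. What makes this acceptable is precisely the \emph{uniform} bound $\sup_n\int_\Omega\frac{|Du_n|^{p_n}}{p_n}\diff x<+\infty$ (not just on compact subsets), which allows the Luckhaus interpolation in the proof of Proposition~\ref{prop conv to harmonic map} to be run with the level set taken close to $\partial\Omega$ and the fixed trace $g$, giving $u_*\in W^{1,2}(\Omega,\mathcal N)$ with $\operatorname{tr}_{\partial\Omega}(u_*)=g$ and the global comparison inequality. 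Your sketch of this — selecting a good level set via coarea/Fatou and interpolating, as in Step~1 and Step~3 of Proposition~\ref{min of sing} — is the right idea, and in fact slightly more explicit than what the paper records; it would compile into a correct proof of this step, even if it adds length the paper avoids.
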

    \begin{rem}
    In Proposition~\ref{example 1}, the topology of $\Omega$ is simple, namely $\Omega \simeq B^{3}_{1}$. However, the singular set $S_{*}$ does not occur in the limit in some cases even if the topology of $\Omega$ is not simple, for example when \(g\) is the restriction of a smooth map on \(\Bar{\Omega}\).
    \end{rem}
\begin{proof}[Proof of Proposition~\ref{example 1}]
    Since $\Omega\subset \mathbb{R}^{3}$ is a bounded (locally) Lipschitz domain and $\partial \Omega$ is homeomorphic to $\mathbb{S}^{2}$, there exists a bilipschitz homeomorphism $\Phi: \overline{\Omega} \to \smash{\overline{B}}^{3}_{1}$.  Let $h=g\circ \Phi^{-1}|_{\mathbb{S}^{2}}$. Then $h \in W^{1,2}(\mathbb{S}^{2}, \mathcal{N})$.
    Next, defining, as it was done in \cite[Proposition~2.4]{SchoUhl}, $u(x)=h(x/|x|)$ for $x \in B^{3}_{1} \setminus \{0\}$, we have $u \in W^{1,2}(B^{3}_{1}, \mathcal{N})$ and $\tr_{\mathbb{S}^{2}}(u)=h$. Since $\Phi$ is bilipschitz and $u \circ \Phi$ is a competitor for $u_{n}$, we deduce that
    \begin{equation}\label{n4u39jj5tj5jtj54j58uj}
    \mu_{n}(\overline{\Omega})\leq C(2-p_{n})\int_{B^{3}_{1}}\frac{|Du|^{p_{n}}}{p_{n}}\diff x=\frac{C(2-p_{n})}{3-p_{n}}\int_{\mathbb{S}^{2}}\frac{|D_{\top}h|^{p_{n}}}{p_{n}}\diff \mathcal{H}^{2} \to 0
    \end{equation}
    as $n \to +\infty$, where $C>0$ is a constant depending only on the bilipschitz constant of $\Phi$, and we have used that $\mu_{n}(\Omega)=\mu_{n}(\overline{\Omega})$. Thus, $\mu_{*}(\overline{\Omega})=0$ (here we again take advantage of considering the $\mu_{n}$'s as elements of $(C(\overline{\Omega}))^{\prime}$ instead of $(C_{0}(\Omega))^{\prime}$, because any constant belongs to $C(\overline{\Omega})$, and hence the weak* convergence preserves mass, namely, since $\mu_{n} \overset{*}{\rightharpoonup} \mu_{*}$ weakly* in $(C(\overline{\Omega}))^{\prime}$, it holds $\mu_{n}(\overline{\Omega})\to \mu_{*}(\overline{\Omega})$). Therefore, the support of $\mu_{*}$ is the empty set, that is $S_{*}=\emptyset$. On the other hand, in view of \eqref{n4u39jj5tj5jtj54j58uj}, the condition \eqref{C1} of Proposition~\ref{prop conv to harmonic map} is valid for the sequence $(u_{n})_{n \in \mathbb{N}}$ and the set $K=\Omega$. Thus, the conclusions of Proposition~\ref{prop conv to harmonic map} apply for some mapping $u_{*} \in W^{1,2}(\Omega, \mathcal{N})$, which comes from Proposition~\ref{conv to harm}. In particular, $u_{*}$ is a minimizing harmonic extension of $g$ to $\Omega$.
Then, according to \cite[Theorem II]{SchoUhl}, $u_{*} \in C^{\infty}(\Omega \setminus S_{0}, \mathcal{N})$, where $S_{0}\subset \Omega$ is a discrete set, which completes our proof of Proposition~\ref{example 1}.
\end{proof}

As an immediate consequence of Lemma~\ref{key tool bilipschitz}, we have its counterpart on a half-ball. Recall that $\mathbb{R}^{3}_{+}=\{(x',x_{3}) \in \mathbb{R}^{3}: x_{3} \in (0, +\infty)\}$.

\begin{lemma}
\label{label tool bilipschitz boundary}
    Let $p_{0}\in (1,2)$, $p \in [p_{0},2)$.
    There exist constants \(\eta_0, C > 0\), depending only on \(p_0\) and \(\mathcal{N}\), such that 
    if $g \in W^{1,p}(\partial (B^3_r \cap \mathbb{R}^3_+), \mathcal{N})$ satisfies
    \begin{equation*}
        \int_{\partial (B^3_r \cap \mathbb{R}^3_+)}
            \frac
                {|D_{\top} g|^{p}}
                {p}
            \diff \mathcal{H}^{2}
        \leq 
            \frac{\eta_{0} r^{2-p}}{2-p},
    \end{equation*}
    then there exists $u \in W^{1,p}(B^3_r \cap \mathbb{R}^3_+, \mathcal{N})$ such that $\operatorname{tr}_{\partial (B^{3}_{r} \cap \mathbb{R}^{3}_{+})}(u) =g$ and
    \begin{equation*}
        \int_{B^3_r\cap \mathbb{R}^3_+}
            \frac{|Du|^{p}}{p}\diff x 
            \leq 
            Cr^{\frac{2}{p}}
            \biggl(\int_{\partial (B^{3}_{r} \cap \mathbb{R}^{3}_{+})}\frac{|D_{\top} g|^{p}}{p}
            \diff \mathcal{H}^{2}\biggr)^{1-\frac{1}{p}}.
    \end{equation*}
\end{lemma}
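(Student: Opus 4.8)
The statement is the half-ball counterpart of Lemma~\ref{key tool bilipschitz}, and the whole point of invoking "As an immediate consequence of Lemma~\ref{key tool bilipschitz}" is that we should \emph{not} redo the triangulation and lifting argument, but instead reduce to the full-ball case via a bilipschitz reflection. The plan is to realize the half-ball $B^3_r \cap \mathbb{R}^3_+$ together with its boundary $\partial(B^3_r \cap \mathbb{R}^3_+)$ as a bilipschitz image of a full ball and its sphere, and then transport the extension produced by Lemma~\ref{key tool bilipschitz} through this bilipschitz map, using the bilipschitz invariance of the $p$-energy (as in the proof of Lemma~\ref{heart}, equation~\eqref{eq_rei2soh6aihahBohyeihie2p}).

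First I would fix, by scaling, $r = 1$; everything will be rescaled back at the end exactly as in the proof of Lemma~\ref{key tool bilipschitz}. The key geometric input is that the closed half-ball $\smash{\overline{B}}^3_1 \cap \smash{\overline{\mathbb{R}}}^3_+$ is bilipschitz homeomorphic to the closed ball $\smash{\overline{B}}^3_1$ through a homeomorphism $\Psi$ carrying the topological boundary $\partial(B^3_1 \cap \mathbb{R}^3_+)$ (the "flat disk" $\smash{\overline{B}}^2_1 \times \{0\}$ together with the spherical cap $\partial B^3_1 \cap \smash{\overline{\mathbb{R}}}^3_+$) onto $\mathbb{S}^2$ with an absolute bilipschitz constant; this can be obtained for instance from the bilipschitz equivalence of a cube and a ball (\cite[Corollary~3]{bilipschitz}) composed with an explicit bilipschitz map of a half-cube onto a cube, or more directly by a radial-type construction. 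Then, given $g \in W^{1,p}(\partial(B^3_1\cap \mathbb{R}^3_+), \mathcal{N})$, I set $\widetilde{g} = g \circ \Psi^{-1}|_{\mathbb{S}^2} \in W^{1,p}(\mathbb{S}^2, \mathcal{N})$; by the chain rule and the bilipschitz bound, $\int_{\mathbb{S}^2}\frac{|D_{\top}\widetilde g|^p}{p}\diff\mathcal{H}^2 \le L^{2+p}\int_{\partial(B^3_1\cap\mathbb{R}^3_+)}\frac{|D_{\top}g|^p}{p}\diff\mathcal{H}^2$ for an absolute constant $L$. I then choose $\eta_0$ to be the constant $\eta_0$ of Lemma~\ref{key tool bilipschitz} divided by $L^{2+p}$ (still depending only on $p_0$ and $\mathcal{N}$, since $L$ is absolute and $p \in [p_0, 2)$), so that the hypothesis of Lemma~\ref{key tool bilipschitz} holds for $\widetilde g$. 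Lemma~\ref{key tool bilipschitz} yields $\widetilde u \in W^{1,p}(B^3_1, \mathcal{N})$ with $\operatorname{tr}_{\mathbb{S}^2}(\widetilde u) = \widetilde g$ and the sublinear bound $\int_{B^3_1}\frac{|D\widetilde u|^p}{p}\diff x \le C(\int_{\mathbb{S}^2}\frac{|D_{\top}\widetilde g|^p}{p}\diff\mathcal{H}^2)^{1-1/p}$. Finally I define $u = \widetilde u \circ \Psi \in W^{1,p}(B^3_1\cap\mathbb{R}^3_+, \mathcal{N})$; the chain rule together with bilipschitz invariance of the integral gives $\operatorname{tr}_{\partial(B^3_1\cap\mathbb{R}^3_+)}(u) = g$ and $\int_{B^3_1\cap\mathbb{R}^3_+}\frac{|Du|^p}{p}\diff x \le L^{3+p}\int_{B^3_1}\frac{|D\widetilde u|^p}{p}\diff x \le C' (\int_{\partial(B^3_1\cap\mathbb{R}^3_+)}\frac{|D_{\top}g|^p}{p}\diff\mathcal{H}^2)^{1-1/p}$, with $C' = C'(p_0,\mathcal{N})$. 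Rescaling restores the factor $r^{2/p}$ exactly as in Lemma~\ref{key tool bilipschitz}.

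The only genuine point requiring care — and the one I would expect to be the "main obstacle," though it is a mild one — is the interaction between the trace operator and the bilipschitz change of variables: one must check that $\operatorname{tr}_{\partial(B^3_1\cap\mathbb{R}^3_+)}(\widetilde u \circ \Psi) = (\operatorname{tr}_{\mathbb{S}^2}\widetilde u)\circ \Psi|_{\partial(B^3_1\cap\mathbb{R}^3_+)}$, i.e., that traces commute with bilipschitz homeomorphisms of a domain that also map boundary to boundary. This is standard (it follows by approximating $\widetilde u$ by smooth maps in $W^{1,p}(B^3_1, \mathbb{R}^\nu)$, for which the identity is obvious, and passing to the limit using continuity of the trace operator together with the boundedness of composition with a bilipschitz map on $W^{1,p}$ and on $W^{1-1/p,p}$ of the boundary); I would cite \cite[Theorem~2.2.2]{Ziemer} and the trace theory already used in the paper. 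With this in hand the proof is just a two-line bookkeeping of constants, which is precisely why the statement is phrased as "an immediate consequence."
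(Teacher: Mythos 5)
Your proposal is correct and is precisely the reduction the paper intends: since the half-ball is bilipschitz equivalent to the full ball (with an absolute constant, e.g.\ via \cite[Corollary~3]{bilipschitz}), one transports the boundary datum to $\mathbb{S}^2$, applies Lemma~\ref{key tool bilipschitz}, and pulls the extension back, adjusting $\eta_0$ and $C$ by powers of the bilipschitz constant exactly as in \eqref{eq_rei2soh6aihahBohyeihie2p}. The paper leaves this as an "immediate consequence," and your filled-in argument, including the remark that traces commute with a bilipschitz change of variables sending boundary to boundary, is exactly the bookkeeping that justifies that phrase.
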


Next, we prove the $\eta$-compactness lemma up to the boundary of $\Omega$, which is the counterpart of Lemma~\ref{heart} on the boundary.

\begin{prop}\label{etacompboundary}
Let \(\kappa \in (0, 1)\), \(p_0 \in (1, 2)\), \(p\in [p_{0}, 2)\) and let \(U = \Psi (B^3_r \cap \mathbb{R}^3_+, \mathcal{N})\) be an open set, where \(\Psi : \smash{\overline{B}}^3_r \cap \smash{\overline{\mathbb{R}}}^3_+ \to \mathbb{R}^3\) is a bilipschitz homeomorphism onto its image $\overline{U}$ with a bilipschitz constant \(L \ge 1\).
There exist $\eta, C>0$, depending only on $\kappa, p_{0}$, $L$ and $\mathcal{N}$, such that if $u_{p} \in W^{1,p}(U, \mathcal{N})$ is a $p$-minimizer and if 
\begin{equation}
\label{etacompcondb}
    \int_{U}\frac{|Du_{p}|^{p}}{p}\diff x 
    + r \int_{\Psi(B^3_r \cap \mathbb{R}^2 \times \{0\})} \frac{|D_{\top} \operatorname{tr}_{\Psi(B^3_r \cap \mathbb{R}^2 \times \{0\})}(u_{p})|^{p}}{p} \diff \mathcal{H}^{2}
    \leq \frac{\eta r^{3-p}}{2-p},
\end{equation}
then
\begin{equation}\label{etaconsb}
    \int_{\Psi (B^3_{\kappa r} \cap \mathbb{R}^3_+) }\frac{|Du_{p}|^{p}}{p}\diff x \leq Cr^{3-p}.
\end{equation}
\end{prop}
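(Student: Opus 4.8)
The plan is to adapt the proof of Lemma~\ref{heart} to the boundary setting, using the half-ball version of the $\eta$-extension (Lemma~\ref{label tool bilipschitz boundary}) in place of Lemma~\ref{key tool bilipschitz}. First I would reduce to the model case by setting $v_p = u_p \circ \Psi \in W^{1,p}(B^3_r \cap \mathbb{R}^3_+, \mathcal{N})$; by the chain rule and bilipschitz invariance, the hypothesis \eqref{etacompcondb} gives a bound of the form
\[
\int_{B^3_r \cap \mathbb{R}^3_+} \frac{|Dv_p|^p}{p}\diff x + r\int_{B^3_r \cap \mathbb{R}^2 \times \{0\}} \frac{|D_{\top}\operatorname{tr}(v_p)|^p}{p}\diff \mathcal{H}^2 \le \frac{L^{C} \eta r^{3-p}}{2-p}
\]
for a suitable power of $L$ (here the trace term requires a $2$-dimensional bilipschitz change of variables on the flat part of the boundary, contributing an extra factor of $r$ relative to the interior case, which is accounted for in \eqref{etacompcondb}).

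Next I would run the good-radius argument. Let $\eta_0 > 0$ be the constant of Lemma~\ref{label tool bilipschitz boundary} with the given $p_0$. I would define, for $\varrho \in (\kappa r, r)$, the half-sphere $\partial(B^3_\varrho \cap \mathbb{R}^3_+)$, consisting of the spherical cap $\partial B^3_\varrho \cap \mathbb{R}^3_+$ and the flat disk $B^2_\varrho \times \{0\}$, and set
\[
G_p = \Bigl\{ \varrho \in (\kappa r, r) : \operatorname{tr}_{\partial(B^3_\varrho \cap \mathbb{R}^3_+)}(v_p) \in W^{1,p}(\partial(B^3_\varrho \cap \mathbb{R}^3_+), \mathcal{N}) \text{ and } \int_{\partial(B^3_\varrho \cap \mathbb{R}^3_+)} \frac{|D_\top v_p|^p}{p}\diff \mathcal{H}^2 \le \frac{\eta_0 (\kappa r)^{2-p}}{2-p} \Bigr\}.
\]
The key point is that the coarea formula must be applied to \emph{both} pieces: on the spherical part one uses coarea for the radial foliation of $B^3_r \cap \mathbb{R}^3_+$ (as in Lemma~\ref{heart}), and on the flat part one slices the planar region $B^2_r \times \{0\}$ by circles $\partial B^2_\varrho \times \{0\}$, using the second (trace) term in \eqref{etacompcondb}. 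Chebyshev's inequality applied to the sum of these two contributions, together with the choice $\eta = \eta_0 \kappa(1-\kappa)/(2 L^{C})$, yields $\mathcal{H}^1(G_p) \ge (1-\kappa)r/2$ exactly as before.

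Then, for each $\varrho \in G_p$, Lemma~\ref{label tool bilipschitz boundary} applied with radius $\varrho$ to the boundary datum $v_p|_{\partial(B^3_\varrho \cap \mathbb{R}^3_+)}$ produces a competitor $v_p^\varrho$ on $B^3_\varrho \cap \mathbb{R}^3_+$; by the $p$-minimizing property of $u_p$ (hence of $v_p$ up to the bilipschitz factors) and the sublinear estimate of Lemma~\ref{label tool bilipschitz boundary}, one gets, writing $F(\varrho) = \int_{B^3_\varrho \cap \mathbb{R}^3_+} |Dv_p|^p / p \,\diff x$,
\[
F'(\varrho) = \int_{\partial(B^3_\varrho \cap \mathbb{R}^3_+)} \frac{|Dv_p|^p}{p}\diff \mathcal{H}^2 \ge c\, \frac{F(\varrho)^{p/(p-1)}}{\varrho^{2/(p-1)}}
\]
for $\varrho \in G_p$, with $c = c(p_0, L, \mathcal{N}) > 0$. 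Integrating this differential inequality over $(\kappa r, r)$, bounding $\int_{G_p} \varrho^{-2/(p-1)} \diff\varrho \ge \mathcal{H}^1(G_p)/r^{2/(p-1)} \ge c(1-\kappa)/(2r^{(3-p)/(p-1)})$, and undoing the bilipschitz change of variables gives \eqref{etaconsb}. The main obstacle I anticipate is bookkeeping the geometry of the flat piece of the boundary under the bilipschitz map $\Psi$: one must check that $\Psi$ sends the flat disk to a portion of $\partial U$, that the coarea slicing on the flat part is legitimate (invoking Lemma~\ref{lem rest on T} for the relevant trace regularity), and that $v_p^\varrho \circ \Psi^{-1}$ is a genuine admissible competitor for $u_p$ on $\Psi(B^3_\varrho \cap \mathbb{R}^3_+)$ with matching trace on $\partial U$ — i.e. that the Dirichlet boundary condition on the curved exterior part of $\partial(B^3_\varrho \cap \mathbb{R}^3_+)$ is preserved while the flat part lies in the interior of $U$ where modification is free. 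Once this is set up, the analytic core is identical to the proof of Lemma~\ref{heart}.
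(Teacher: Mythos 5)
Your proposal is correct and follows essentially the same route as the paper: compose with the bilipschitz map, run a Chebyshev/coarea argument to produce a large set $G_p$ of good radii, apply the half-ball extension (Lemma~\ref{label tool bilipschitz boundary}) to get a competitor, and integrate the resulting differential inequality as in Lemma~\ref{heart}. The one place where your exposition is slightly off track is the treatment of the flat disk $B^2_\varrho \times \{0\}$: you don't need (and coarea doesn't directly give) a further slicing of the flat part by circles. The paper proceeds more simply. It puts only the spherical cap $\partial B^3_\varrho \cap \mathbb{R}^3_+$ in the definition of $G_p$ (so Chebyshev uses only the first term of \eqref{etacompcondb}, via the standard radial coarea on the half-ball); then for each $\varrho \in G_p$ the flat-disk contribution to $\int_{\partial(B^3_\varrho \cap \mathbb{R}^3_+)}|D_\top v_p|^p/p$ is bounded for \emph{all} $\varrho < r$ at once by monotonicity, since $B^2_\varrho \times \{0\} \subset B^2_r \times \{0\}$, and the latter integral is exactly the second term of the hypothesis. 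Shrinking $\eta$ so that both the cap and the disk contributions together stay below $\eta_0 \varrho^{2-p}/(2-p)$ then lets Lemma~\ref{label tool bilipschitz boundary} apply, and the rest is as you describe. If you do insist on putting the full boundary integral into $G_p$ as you wrote it, the same monotonicity observation rescues the Chebyshev step, so your variant works too; but the circle-slicing step as stated gives information about restrictions to circles rather than the disk integral you actually need, and is best dropped.
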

\begin{proof}
Given the mapping \(v_p \coloneqq u_p \circ \Psi \in W^{1, p} (B^3_r \cap \mathbb{R}^3_+, \mathcal{N})\) satisfying
\begin{equation}
\label{eq_Chahd3voa1fiec0sheoSha2e}
\int_{B^3_r \cap \mathbb{R}^3_+} \frac{\lvert D v_p\rvert^p}{p} \diff x
\le L^{3+p} \int_{U} \frac{\lvert D u_p\rvert^p}{p} \diff x
\end{equation}
and the constant $\eta_{0}=\eta_{0}(p_{0}, \mathcal{N})>0$ of Lemma~\ref{label tool bilipschitz boundary}, we define the set 
\begin{multline}
\label{eq_aicaez9Aibuiwuxagahvaung}
    G_{p}
    =
        \biggl\{
            \varrho \in (\kappa r, r): 
            \operatorname{tr}_{\partial (B^3_\varrho \cap \mathbb{R}^3_+)}(v_p)=v_p|_{\partial (B^3_\varrho  \cap \mathbb{R}^3_+)} \in W^{1,p}(\partial (B^3_{\varrho}  \cap \mathbb{R}^3_+), \mathcal{N})\\
            \text{ and }\int_{\partial B^3_\varrho \cap \mathbb{R}^{3}_{+}}\frac{|D v_p|^{p}}{p}\diff \mathcal{H}^{2} \leq \frac{ \eta_0 (\kappa r)^{2-p}}{2(2-p)}\biggr\}.
\end{multline}
As a consequence of \eqref{etacompcondb}, \eqref{eq_Chahd3voa1fiec0sheoSha2e} and \eqref{eq_aicaez9Aibuiwuxagahvaung},
\begin{equation*}
     \mathcal{H}^1 ((\kappa r, r) \setminus G_p)
     \le \frac{2 (2 - p)}{\eta_0 (\kappa r)^{2- p}} \int_{B^{3}_{r}\cap \mathbb{R}^{3}_{+}} \frac{\vert D v_p\vert ^{p}}{p} \diff x
     \le \frac{2(2 - p)L^{3 + p}}{\eta_0 (\kappa r)^{2- p}} \int_{U} \frac{\vert D u_p\vert ^{p}}{p} \diff x
    \le \frac{(1-\kappa) r}{2},
\end{equation*}
provided we choose \(\eta\) satisfying
\begin{equation}
\label{eq_UH2koh2phiu9aitohTee2mie}
     \eta \le \frac{\eta_0 \kappa(1 - \kappa)}{4 L^{5}};
\end{equation}
equivalently we have then 
 \begin{equation}
        \mathcal{H}^1 (G_p) \ge \frac{(1 - \kappa)r}{2}.
  \end{equation}
Fix $\eta$ satisfying \eqref{eq_UH2koh2phiu9aitohTee2mie}. For every \(\varrho \in G_p\) the following holds
\begin{equation*}  
\begin{split}
    \int_{\partial (B^3_{\varrho} \cap \mathbb{R}^3_+)}
    \frac{|D_{\top}v_p|^{p}}{p}\diff \mathcal{H}^{2}
    &= \int_{\partial B^3_{\varrho} \cap \mathbb{R}^3_+}
    \frac{|D_{\top}v_p|^{p}}{p}\diff \mathcal{H}^{2}
    + \int_{B^3_{\varrho} \cap \mathbb{R}^{2}\times \{0\}}
    \frac{|D_{\top}v_p|^{p}}{p}\diff \mathcal{H}^{2}\\
    &\leq \frac{\eta_{0} (\kappa r)^{2-p}}{2(2-p)}
    + \frac{\eta r^{2 - p}L^{2+p}}{2 - p}
    \le \frac{\eta_{0} \varrho^{2-p}}{2-p},
\end{split}
\end{equation*}
where we have used \eqref{etacompcondb}, the fact that $|D_{\top}v_{p}|\leq |Dv_{p}|$ and \eqref{eq_UH2koh2phiu9aitohTee2mie}. Under this condition, according to Lemma~\ref{label tool bilipschitz boundary},    
\begin{equation}
     \int_{B^{3}_{\varrho} \cap \mathbb{R}^{3}_{+}} \frac{\lvert D v_p \rvert^p}{p} \diff x 
   \le C \varrho^{\frac{2}{p}}\biggl(\int_{\partial (B^{3}_{\varrho} \cap \mathbb{R}^{3}_{+})}\frac{|D_{\top}v_p|^{p}}{p}\diff \mathcal{H}^{2}\biggr)^{1-\frac{1}{p}}.
\end{equation}
To obtain \eqref{etaconsb}, we conclude as in the proof of Lemma~\ref{heart}.
\end{proof}

The next corollary is a direct consequence of Proposition~\ref{etacompboundary} and Definition~\ref{def Lip domain}, so we omit its proof.
\begin{cor}\label{cor etacompbound}
    Let $\kappa \in (0,1)$ and $g \in W^{1,2}(E, \mathcal{N})$, where $E$ is a relatively open subset of $\partial \Omega$. Let $r_{\partial \Omega}>0$ be the number coming from Definition~\ref{def Lip domain}, which is applied with $U=\Omega$. Then there exist constants $p_{1}=p_{1}(\partial \Omega, \mathcal{N}, \|D_{\top} g\|_{L^{2}(E, \mathbb{R}^{\nu}\otimes \mathbb{R}^{3})}) \in [3/2,2)$, $\eta=\eta(\kappa, \partial \Omega, \mathcal{N})>0$ and $C=C(\kappa, \partial \Omega,  \mathcal{N})>0$ such that the following holds. Let $p \in [p_{1},2)$ and $u_{p} \in W^{1,p}(\Omega, \mathcal{N})$ be a $p$-minimizer such that $\operatorname{tr}_{E}(u_{p})=g$. Then for each $r \in (0,r_{\partial \Omega})$ and $x_{0} \in E$ such that $B^{3}_{r}(x_{0})\cap \partial \Omega \subset E$ and 
    \begin{equation*}
        \int_{B^{3}_{r}(x_{0})\cap \Omega}\frac{|Du_{p}|^{p}}{p}\diff x \leq \frac{\eta r^{3-p}}{2-p},
    \end{equation*}
    it holds
    \begin{equation*}
        \int_{B^{3}_{\kappa r}(x_{0})\cap \Omega}\frac{|Du_{p}|^{p}}{p}\diff x \leq Cr^{3-p}.
    \end{equation*}
\end{cor}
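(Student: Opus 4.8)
The statement to prove is Corollary~\ref{cor etacompbound}, which the excerpt explicitly says follows directly from Proposition~\ref{etacompboundary} and Definition~\ref{def Lip domain}, with the proof omitted. So the plan is to reconstruct that omitted argument.

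The plan is to reduce the boundary $\eta$-compactness estimate at a point $x_0 \in E \subset \partial\Omega$ to the half-ball version in Proposition~\ref{etacompboundary} via the bilipschitz chart straightening $\partial\Omega$ near $x_0$. First I would invoke Definition~\ref{def Lip domain} with $U = \Omega$: there is a radius $r_{\partial\Omega}$ and a constant $\delta > 0$ so that for every $x_0 \in \partial\Omega$ and every $s \in (0, r_{\partial\Omega})$, up to a rotation, $\Omega \cap B^3_s(x_0)$ is the supergraph of a Lipschitz function $\varphi$ with $\|D\varphi\|_\infty \le \delta$. The map $(y', y_l) \mapsto (y', y_l - \varphi(y'))$ (composed with the rotation and translation sending $x_0$ to $0$) is bilipschitz with a bilipschitz constant $L = L(\delta) \ge 1$ depending only on $\partial\Omega$, and it sends a neighborhood of $x_0$ in $\overline{\Omega}$ to a half-ball configuration $\Psi(B^3_\rho \cap \mathbb{R}^3_+)$ with $\Psi$ having bilipschitz constant $L$; crucially the flat part $B^3_\rho \cap \mathbb{R}^2 \times \{0\}$ is sent into $\partial\Omega$, i.e. into $E$ when $B^3_r(x_0) \cap \partial\Omega \subset E$.

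Next I would apply Proposition~\ref{etacompboundary} with this $\Psi$, with the given $\kappa$, with $p_0 = 3/2$, and with the constants $\eta = \eta(\kappa, L, \mathcal{N})$ and $C = C(\kappa, L, \mathcal{N})$ it produces; since $L$ depends only on $\partial\Omega$, these become $\eta(\kappa, \partial\Omega, \mathcal{N})$ and $C(\kappa, \partial\Omega, \mathcal{N})$. The hypothesis \eqref{etacompcondb} of Proposition~\ref{etacompboundary} involves both the interior $p$-energy of $u_p$ on $U$ \emph{and} the tangential $p$-energy of its trace on the flat part. The interior term is controlled by the assumption $\int_{B^3_r(x_0)\cap\Omega} |Du_p|^p/p \le \eta r^{3-p}/(2-p)$ (up to the bilipschitz distortion in $\Psi$). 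For the trace term one uses that $\operatorname{tr}_E(u_p) = g \in W^{1,2}(E, \mathcal{N})$: by Hölder's inequality the $p$-energy of $D_\top g$ is bounded in terms of $\|D_\top g\|_{L^2(E)}$ and the measure of the flat part, which is $\lesssim r^2$; so this term is $\le C' r^2 \|D_\top g\|_{L^2(E, \mathbb{R}^\nu \otimes \mathbb{R}^3)}^{?}$, and after multiplying by $r$ (as in \eqref{etacompcondb}) and comparing with $\eta r^{3-p}/(2-p)$, one sees it is absorbed provided $p$ is close enough to $2$ — this is exactly where the threshold exponent $p_1 = p_1(\partial\Omega, \mathcal{N}, \|D_\top g\|_{L^2(E)}) \in [3/2, 2)$ enters: for $p \ge p_1$, $r^3 \|D_\top g\|_{L^2}^{\text{power}} \le \tfrac{1}{2}\eta r^{3-p}/(2-p)$ for all $r < r_{\partial\Omega}$ (using $r^{p-1} \le 1$ and $(2-p)/1$ small), and meanwhile one shrinks the interior-energy budget to $\tfrac12\eta$ as well. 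Then \eqref{etacompcondb} holds and Proposition~\ref{etacompboundary} gives $\int_{\Psi(B^3_{\kappa r}\cap\mathbb{R}^3_+)} |Du_p|^p/p \le C r^{3-p}$; pulling back through $\Psi^{-1}$ and using that $\Psi(B^3_{\kappa r}\cap\mathbb{R}^3_+) \supset B^3_{\kappa r/L}(x_0)\cap\Omega$ (so, after relabeling $\kappa$ and adjusting $C$ by $L$), yields the claimed estimate $\int_{B^3_{\kappa r}(x_0)\cap\Omega}|Du_p|^p/p \le C r^{3-p}$.

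The only delicate bookkeeping — and the main obstacle — is the choice of $p_1$ so that the trace term in \eqref{etacompcondb} is swallowed by the right-hand side $\eta r^{3-p}/(2-p)$ uniformly over $r \in (0, r_{\partial\Omega})$: one needs $r \cdot C' r^2 \|D_\top g\|_{L^2(E)}^{p} \le \tfrac12 \eta r^{3-p}/(2-p)$, i.e. $2 C'(2-p) r^{p-1}\|D_\top g\|_{L^2(E)}^p/\eta \le 1$, which holds for $r < r_{\partial\Omega} \le 1$ once $p$ is close enough to $2$ depending on $\|D_\top g\|_{L^2(E)}$, $\partial\Omega$ and $\mathcal{N}$; here one also uses Hölder to pass from the $L^p$ tangential energy on a set of $\mathcal{H}^2$-measure $\lesssim r^2$ to the $L^2$ norm of $D_\top g$ over all of $E$. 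Everything else is routine bilipschitz change of variables (using $|D_\top v| \le |Dv|$ for the trace and the chain rule), plus re-absorbing the distortion constant $L$ into $\eta$, $C$ and a relabeling of $\kappa$, exactly as in the proof of Lemma~\ref{heart} and Proposition~\ref{etacompboundary}. Since the excerpt authorizes omitting the proof, I would keep the written argument short: state the chart from Definition~\ref{def Lip domain}, note $L$ depends only on $\partial\Omega$, apply Proposition~\ref{etacompboundary}, handle the trace term by Hölder against $\|D_\top g\|_{L^2(E)}$ to define $p_1$, and conclude by pulling back.
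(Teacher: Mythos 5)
Your reconstruction matches the route the paper points to: use the Lipschitz chart from Definition~\ref{def Lip domain} to produce the bilipschitz map $\Psi$ flattening $\partial\Omega$ near $x_0$, apply Proposition~\ref{etacompboundary} with $p_0 = 3/2$, and use $g \in W^{1,2}(E, \mathcal{N})$ together with H\"older to absorb the boundary trace term of \eqref{etacompcondb} into $\tfrac12 \eta r^{3-p}/(2-p)$ once $p$ is close enough to $2$, which defines $p_1$; pulling back through $\Psi$ (adjusting $\kappa$, $\eta$, $C$ by the bilipschitz constant $L = L(\partial\Omega)$) gives the claim. One small algebraic slip worth tidying: H\"older against $\|D_\top g\|_{L^2}$ over a portion of $\partial\Omega$ of $\mathcal{H}^2$-measure $\lesssim r^2$ gives
\[
\int_{B^3_r(x_0)\cap\partial\Omega}|D_\top g|^p\,\diff\mathcal{H}^2 \;\lesssim\; r^{2-p}\,\|D_\top g\|_{L^2(E)}^p,
\]
not $r^2\|D_\top g\|_{L^2(E)}^p$; with the correct exponent, multiplying by $r$ yields $C' r^{3-p}\|D_\top g\|_{L^2(E)}^p$, so the smallness condition $2C'(2-p)\|D_\top g\|_{L^2(E)}^p \le \eta$ defining $p_1$ is uniform in $r$ and no stray factor $r^{p-1}\le 1$ needs to be invoked.
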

\begin{cor}\label{cor conv uptotheboundary}
    Let $E$ be a relatively open subset of $\partial \Omega$. Assume that \eqref{C2} holds and that for all $n \in \mathbb{N}$ large enough, $\tr_{E}(u_{n})=g \in W^{1,2}(E, \mathcal{N})$. Then there exists a closed set $S_{*} \subset \overline{\Omega}$ and a mapping $u_{*} \in W^{1,2}_{\loc}((\Omega \cup E) \setminus S_{*}, \mathcal{N})$ satisfying the assertions of Proposition~\ref{prop conv to harmonic map} and the assertion \ref{it_Aex0Aijushoetha0looKo6ei} of Proposition~\ref{prop est for the differential inside domain} such that $\tr_{E}(u_{*})=g$ and, up to a subsequence (not relabeled), 
    \[
    u_{n} \rightharpoonup u_{*} \,\ \text{weakly in} \,\ W^{1,p}_{\loc}((\Omega \cup E) \setminus S_{*}, \mathbb{R}^{\nu}) \,\ \text{for all}\,\ p \in (1,2).
    \]
\end{cor}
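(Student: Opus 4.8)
The plan is to transfer the interior convergence machinery of Proposition~\ref{conv to harm} (and its refinement Proposition~\ref{prop est for the differential inside domain}) to the set $\Omega \cup E$ by combining it with the boundary $\eta$-compactness statement Corollary~\ref{cor etacompbound}. First I would fix a relatively open set $E' \csubset E$ and observe that, thanks to \eqref{C2}, the total masses $\mu_n(\overline{\Omega})$ are uniformly bounded; hence, as in Section~\ref{section_singular_set}, after passing to a subsequence we obtain a limit measure $\mu_* \in (C(\overline{\Omega}))'$ with support $S_*$, and the two-sided inequalities \eqref{two cond of weak*} hold. The key point is that near $E$ the singular set is controlled: for $x_0 \in E'$ and $r$ small with $B^3_r(x_0) \cap \partial \Omega \subset E$, if $\mu_*(\smash{\overline{B}}^3_r(x_0)) < \eta r$ (with $\eta$ the constant of Corollary~\ref{cor etacompbound} for $\kappa = 1/2$), then, exactly as in the proof of Lemma~\ref{lem concentration} but using Corollary~\ref{cor etacompbound} in place of Lemma~\ref{heart}, one deduces $\mu_*(B^3_{r/2}(x_0)) = 0$. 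Consequently $S_* \cap E$ is a closed subset of $E$ and any point of $(\Omega \cup E)\setminus S_*$ has a neighborhood (a ball, or a half-ball of the form $B^3_\rho(x_0)\cap \overline{\Omega}$, which is bilipschitz to $B^3_\rho \cap \overline{\mathbb{R}}^3_+$ by Definition~\ref{def Lip domain}) on which the $p_n$-energies are uniformly bounded by a standard covering argument applied to the open sets where $\mu_*$ is small, together with Corollary~\ref{cor etacompbound}.

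Next I would establish the compactness and the 2-minimality/regularity statements on $(\Omega \cup E)\setminus S_*$. Interior points are handled verbatim by Proposition~\ref{prop conv to harmonic map}. For a boundary point $x_0 \in E \setminus S_*$, I would work in a bilipschitz chart $\Psi$ flattening $\partial\Omega$ near $x_0$, so that $u_n \circ \Psi$ is a $p_n$-minimizer on a half-ball with prescribed trace $g \circ \Psi$ on the flat part; the uniform energy bound gives, up to a subsequence, weak $W^{1,p}_{\mathrm{loc}}$ convergence for all $p \in (1,2)$ and a.e.\ convergence to some $u_*$ with $\tr_E(u_*) = g$ (the trace is preserved because $u_n$ all have the same trace $g \in W^{1,2}(E,\mathcal{N})$ and traces pass to the weak limit on good slices chosen via the coarea formula, exactly as in the selection of $\varrho$ in Proposition~\ref{prop conv to harmonic map}). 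The Luckhaus-interpolation argument of Proposition~\ref{prop conv to harmonic map} then applies after replacing the slicing surface $\mathcal{M}$ by an appropriate surface $\mathcal{M}$ lying in the half-ball and touching the flat boundary in a lower-dimensional set — here one interpolates on $\mathcal{M}$ between $u_n|_{\mathcal M}$ and $u_*|_{\mathcal M}$ while keeping the boundary values equal to $g$; this yields strong $W^{1,p_n}_{\mathrm{loc}}$ convergence, the energy convergence $\int |Du_n|^{p_n}/p_n \to \int |Du_*|^2/2$ on compact subsets of $(\Omega\cup E)\setminus S_*$, and that $u_*$ is a local 2-energy minimizer relative to its own boundary values (hence, by the Schoen–Uhlenbeck boundary regularity recalled in the introduction since $g \in W^{1,2}$, there is a locally finite $S_0$ with $u_* \in C^\infty((\Omega\cup E)\setminus(S_* \cup S_0),\mathcal{N})$, though the corollary only claims the $W^{1,2}_{\mathrm{loc}}$ and weak-convergence statements). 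Finally, the $W^{1,q}_{\mathrm{loc}}$ estimate of Proposition~\ref{prop est for the differential inside domain}~\ref{it_Aex0Aijushoetha0looKo6ei} is inherited: one covers $(\Omega\cup E)\setminus S_*$ by dyadic cubes and half-cubes on which the good-cube alternative \eqref{goodcubeest}–\eqref{goodcubeestcons} holds (using Lemma~\ref{heart} for interior cubes and Proposition~\ref{etacompboundary} for those meeting $E$), applies Proposition~\ref{prop goodest dyadic}, and passes to the $\liminf$ as in the proof of Proposition~\ref{prop est for the differential inside domain}.

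The closing diagonal argument over an exhaustion $(\Omega\cup E) = \bigcup_m K_m$ by relatively compact sets delivers a single subsequence realizing all the conclusions simultaneously, completing the proof.

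The main obstacle I expect is the boundary adaptation of the Luckhaus interpolation in the proof of Proposition~\ref{prop conv to harmonic map}: one must choose, via the coarea formula, a good slicing hypersurface $\mathcal{M}$ inside the (flattened) half-ball that meets the flat portion of the boundary transversally, carries uniformly bounded $p_n$- and $2$-energies of both $u_n|_{\mathcal M}$ and $u_*|_{\mathcal M}$, and along which $u_n|_{\mathcal M} \to u_*|_{\mathcal M}$ in $L^{p_n}$; then one runs Luckhaus' lemma on $\mathcal{M}\times(0,T)$ while ensuring the interpolation does not disturb the prescribed boundary datum $g$ on $E$, so that the resulting competitor for $u_n$ is admissible (same trace $g$). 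This is where one genuinely uses that $g \in W^{1,2}(E,\mathcal{N})$ rather than merely $W^{1/2,2}$, and the bookkeeping of which faces of $\mathcal{M}\times(0,T)$ lie on $\partial\Omega$ is the delicate part; the rest is a routine combination of the half-ball analogues (Lemma~\ref{label tool bilipschitz boundary}, Proposition~\ref{etacompboundary}, Corollary~\ref{cor etacompbound}) already established in the excerpt.
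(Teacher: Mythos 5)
Your proposal is correct and follows exactly the same strategy as the paper, which simply notes that the proof of Proposition~\ref{prop est for the differential inside domain}~\ref{it_Aex0Aijushoetha0looKo6ei} goes through unchanged and that one need only incorporate Corollary~\ref{cor etacompbound} into the proof of Proposition~\ref{prop conv to harmonic map}. You have in fact spelled out more of the underlying details than the paper does — in particular the boundary analogue of Lemma~\ref{lem concentration}, the half-ball charts, and the delicate boundary adaptation of the Luckhaus interpolation (choosing a slicing surface meeting $\partial\Omega$ and keeping the trace pinned at $g$) — so your account is a faithful and more explicit version of the paper's two-sentence proof.
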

\begin{proof}[Proof of Corollary~\ref{cor conv uptotheboundary}]
The proof of the assertion \ref{it_Aex0Aijushoetha0looKo6ei} of Proposition~\ref{prop est for the differential inside domain} remains unchanged. One only needs to take Corollary~\ref{cor etacompbound} into account in the proof of Proposition~\ref{prop conv to harmonic map}. 
\end{proof}

\subsection{Global estimates and \texorpdfstring{$W^{1,q}$}{W1q} compactness}
 
It is well known that the set $C^{\infty}(\partial \Omega, \mathcal{N})$ is dense in $W^{1/2,2}(\partial \Omega, \mathcal{N})$ if and only if $\pi_{1}(\mathcal{N}) \simeq \{0\}$ (we refer the reader to \cite[Theorem~4]{Brezis-Mironescu}). Although $C^{\infty}(\partial \Omega, \mathcal{N})$ is not dense in $W^{1/2,2}(\partial \Omega, \mathcal{N})$ if $\pi_{1}(\mathcal{N})$ is nontrivial, the set of smooth maps outside a finite  set of points whose Euclidean norm of the gradient behaves like the distance to the power of -1 close to these points is dense in $W^{1/2,2}(\partial \Omega, \mathcal{N})$ (see \cite[Theorem~1]{Mucci_2009} and \cite[Theorem~3]{Brezis-Mironescu}). Namely, letting  $\mathcal{R}^{1}_{0}(\partial \Omega, \mathcal{N})$ to be the class of maps $\varphi\in C^1(\partial \Omega\setminus \{a_{1},\dotsc, a_{l}\}, \mathcal{N})$ for some $a_{1},\dotsc, a_{l}\in \partial \Omega$ such that 
\begin{equation}\label{degen der}
  |D_{\top}\varphi(x)|\leq \frac{C}{\dist(x,\bigcup_{i=1}^{l}a_{i})},
\end{equation}
where $C$ is a positive constant independent of $x$, we have 
\[
\overline{\mathcal{R}^{1}_{0}(\partial \Omega, \mathcal{N})}^{W^{1/2,2}}=W^{1/2,2}(\partial \Omega, \mathcal{N}).
\] 
Hereinafter, for each $g \in \mathcal{R}^{1}_{0}(\partial \Omega, \mathcal{N})$, we shall denote by $S(g)$ the singular set of $g$, namely the set of points $a \in \partial \Omega$ such that for each sufficiently small radius $\varrho>0$, $g|_{\partial B^{\partial \Omega}_{\varrho}(a)}$ is not nullhomotopic, where $B^{\partial \Omega}_{\varrho}(a)$ is the geodesic ball in $\partial \Omega$ with center $a$ and radius $\varrho$ such that $B^{\partial \Omega}_{\varrho}(a)$ is far enough from $S(g)\backslash \{a\}$. 

It is worth noting that for each $g \in \mathcal{R}^{1}_{0}(\partial \Omega, \mathcal{N})$, there exists the \emph{minimal} set of points $\{a_{1}, \dotsc, a_{l}\} \subset \partial \Omega$ such that $g \in C^{1}(\partial \Omega \setminus \{a_{1}, \dotsc, a_{l}\}, \mathcal{N})$. Moreover, $S(g) \subset \{a_{1}, \dotsc, a_{l}\}$ and the inclusion can be strict. 
\begin{lemma}\label{lemma energ boundatum}
Let $g \in \mathcal{R}^{1}_{0}(\partial \Omega, \mathcal{N})$ and $\{a_{1}, \dotsc, a_{l}\} \subset \partial \Omega$ be the minimal set of different points such that $g \in C^{1}(\partial \Omega \setminus \{a_{1}, \dotsc, a_{l}\}, \mathcal{N})$. Then for each $q \in [1,2)$, $g \in W^{1,q}(\partial \Omega, \mathcal{N})$ and 
\begin{equation*}
\int_{\partial \Omega} |D_{\top} g|^{q} \diff \mathcal{H}^{2} \leq \frac{C}{2-q},
\end{equation*}
where $C>0$ is a constant depending only on $\partial \Omega$, the set $\{a_{1}, \dotsc, a_{l}\}$ and the constant of the estimate \eqref{degen der} applied with $\varphi=g$.
\end{lemma}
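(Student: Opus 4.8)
The statement is a purely local computation near each singular point $a_i$, so the plan is to localize the estimate to a geodesic ball $B^{\partial\Omega}_{\varrho_0}(a_i)$ around each $a_i$, handle the "bad" neighborhood of each $a_i$ with the explicit decay hypothesis \eqref{degen der}, and control the complement using the fact that $g$ is $C^1$ (hence its tangential gradient is bounded) away from the finitely many points $a_1,\dots,a_l$. First I would fix $\varrho_0>0$ small enough that the geodesic balls $B^{\partial\Omega}_{2\varrho_0}(a_i)$ are pairwise disjoint and each is bilipschitz equivalent, via a coordinate chart for the $C^1$ (indeed $C^\infty$, since $\partial\Omega$ is assumed smooth in the relevant theorems, but here only Lipschitz is needed, so one uses a bilipschitz chart) surface $\partial\Omega$, to a planar disk $B^2_{\varrho_0}$, with the point $a_i$ sent to the origin. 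Since the $\mathcal{H}^2$-measure and the tangential gradient transform with bounded distortion under a bilipschitz change of variables, it suffices to bound $\int_{B^2_{\varrho_0}}|Dh|^q$ where $h$ is the pushed-forward map and $|Dh(y)|\le C/|y|$ for $y\in B^2_{\varrho_0}\setminus\{0\}$, the constant $C$ coming from \eqref{degen der} and the bilipschitz constant of the chart.

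\textbf{Key steps.} The core computation is the elementary estimate
\begin{equation*}
\int_{B^2_{\varrho_0}} |Dh|^q \diff \mathcal{H}^2 \le C^q \int_{B^2_{\varrho_0}} \frac{1}{|y|^q}\diff \mathcal{H}^2 = C^q \int_0^{\varrho_0} \frac{2\pi \varrho}{\varrho^q}\diff \varrho = \frac{2\pi C^q \varrho_0^{2-q}}{2-q},
\end{equation*}
which is finite precisely because $q<2$ and blows up like $(2-q)^{-1}$ as $q\nearrow 2$; since $\varrho_0\le 1$ can be arranged, $\varrho_0^{2-q}\le 1$ and the bound $C/(2-q)$ results. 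Summing over the $l$ bad neighborhoods (there are finitely many) keeps the same form with $C$ depending on $l$ and on the individual decay constants. On the complement $\partial\Omega\setminus\bigcup_{i=1}^l B^{\partial\Omega}_{\varrho_0}(a_i)$, the map $g$ is $C^1$ on a compact set, so $\|D_\top g\|_{L^\infty}<\infty$ there, and hence $\int |D_\top g|^q \le \|D_\top g\|_{L^\infty}^q \mathcal{H}^2(\partial\Omega)$, which is a constant independent of $q$ (for $q$ in a bounded range, say $q\in[1,2)$, one bounds $\|D_\top g\|_{L^\infty}^q$ by $\max\{1,\|D_\top g\|_{L^\infty}^2\}$) and in particular is $\le C/(2-q)$ after enlarging $C$. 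Adding the two contributions gives the claimed inequality, and the same computation shows $|D_\top g|\in L^q(\partial\Omega,\mathcal H^2)$, so together with $g\in L^\infty$ (as $\mathcal N$ is compact) we get $g\in W^{1,q}(\partial\Omega,\mathcal N)$.

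\textbf{Main obstacle.} There is no serious analytic obstacle here; the proof is a one-line polar-coordinate integral plus bookkeeping. The only points requiring care are (i) making the bilipschitz chart argument precise — checking that the tangential derivative and the surface measure pull back with a fixed multiplicative constant, which follows from the area formula and the chain rule for Sobolev/tangentially-differentiable maps as recalled in the preliminaries (Definition~\ref{def weak differ} and the subsequent discussion), and noting that the distance function $\dist(\cdot,a_i)$ on $\partial\Omega$ is comparable to the Euclidean distance $|y|$ in the chart; and (ii) tracking the dependence of the final constant $C$ so that it depends only on $\partial\Omega$, the set $\{a_1,\dots,a_l\}$, and the constant in \eqref{degen der}, as claimed — this is immediate once one observes that $\varrho_0$ itself can be chosen depending only on $\partial\Omega$ and the mutual separation of the $a_i$'s. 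I would write the argument directly in these two stages (localization near $a_i$; the $C^1$ complement) and then sum.
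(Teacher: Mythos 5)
Your proposal is correct and follows essentially the same route as the paper's proof: decompose $\partial\Omega$ into small (pairwise-disjoint) balls about the $a_i$, estimate the contribution near each $a_i$ by pulling back to a planar disk and integrating $\varrho^{1-q}$ in polar coordinates to obtain the $(2-q)^{-1}$ factor, and bound the complement using boundedness of $D_\top g$ away from the singular set (the paper gets this directly from \eqref{degen der} with $\varrho$ fixed, you invoke compactness plus $C^1$ regularity, which is equivalent). The dependence of the constant on $\partial\Omega$, $\{a_1,\dots,a_l\}$ and the constant in \eqref{degen der} falls out the same way in both arguments.
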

\begin{proof} 
   Fix an arbitrary $q \in [1,2)$. Define
    \[
    \bar{\varrho}_{\partial \Omega}(a_{1}, \dotsc, a_{l})=\min\left\{\frac{|a_{i}-a_{j}|}{2}: i, j \in \{1, \dotsc, l\} \,\ \text{and} \,\ i \not = j\right\}
    \]
    so that if $\varrho \in (0, \bar{\varrho}_{\partial \Omega}(a_{1}, \dotsc, a_{l}))$, then $\smash{\overline{B}}^{3}_{\varrho}(a_{i}) \cap \smash{\overline{B}}^{3}_{\varrho}(a_{j}) = \emptyset$ for each $i,j \in \{1, \dotsc, l\}$ such that  $i \not = j$. Then for each $i \in \{1, \dotsc, l\}$ and for each sufficiently small $\varrho \in  (0, \bar{\varrho}_{\partial \Omega}(a_{1}, \dotsc, a_{l}))$, 
    \begin{equation}\label{estimbiliphomeomdegenderpointai}
    \int_{B^{3}_{\varrho}(a_{i}) \cap \partial \Omega}|D_{\top}g|^{q}\diff \mathcal{H}^{2}\leq C \int_{0}^{\varrho}\frac{\diff t}{t^{1-q}} \leq \frac{C}{2-q},
    \end{equation}
    where $C>0$ is a constant depending only on $\partial \Omega$, $\bar{\varrho}_{\partial \Omega}(a_{1}, \dotsc, a_{l})$ and the constant of the estimate \eqref{degen der} applied with $\varphi=g$. On the other hand, in view of \eqref{degen der}, if $\varrho>0$ and $x \in \partial \Omega \setminus \bigcup_{i=1}^{l} \smash{\overline{B}}^{3}_{\varrho}(a_{i})$, then  $|D_{\top} g(x)| \le C/\varrho$. Taking this and \eqref{estimbiliphomeomdegenderpointai} into account, we complete our proof of Lemma~\ref{lemma energ boundatum}.
\end{proof}

Assuming that $g \in \mathcal{R}^{1}_{0}(\partial \Omega, \mathcal{N})$, we obtain the next improvement of Proposition~\ref{prop est for the differential inside domain}.
\begin{prop}\label{prop quant behuptotheb}
Let $g \in \mathcal{R}^{1}_{0}(\partial \Omega, \mathcal{N})$ be such that $g \in C^{1}(\partial \Omega \setminus S(g), \mathcal{N})$. Let $(p_{n})_{n\in \mathbb{N}} \subset [1,2)$, $p_{n} \nearrow 2$ as $n\to +\infty$ and $(u_{n})_{n\in \mathbb{N}}$ be a sequence of $p_{n}$-minimizers such that $\tr_{\partial \Omega}(u_{n}) = g$ for each $n \in \mathbb{N}$.  Let $u_{*}$ be a map given by Proposition~\ref{conv to harm}. Assume that there exists $r_{0}>0$ such that if $x \in \mathbb{R}^{3} \setminus \Omega$ and $\dist(x, \partial \Omega)\leq r_{0}$, there exists a unique point $P(x) \in \partial \Omega$ such that $\dist(x, \partial \Omega)=|x-P(x)|$.  
Then the following assertions hold.
\begin{enumerate}[label=(\roman*)]
\item 
\label{it_Eev4ahtip9aleetho0Avoo6i}
There exists $C=C(g, \Omega, \mathcal{N})>0$ such that for each $q \in [1,2)$, 
\begin{equation*}
\limsup_{n \to +\infty}\int_{\Omega}|Du_{n}|^{q}\diff x \leq C+\frac{C}{2 - q}\limsup_{n\to+\infty}(2-p_{n})\int_{\Omega} \frac{|Du_{n}|^{p_{n}}}{p_{n}}\diff x. 
\end{equation*}
\item 
\label{it_ZoV0keeweev6aiboal0dudie}
For each $q \in [1,2)$,  $u_{*} \in W^{1,q}(\Omega, \mathcal{N})$ and, up to a subsequence (not relabeled), $u_{n} \to u_{*}$  in $W^{1,q}(\Omega, \mathbb{R}^{\nu})$ and for some $C=C(g, \Omega, \mathcal{N})>0$,
		\begin{equation*}
		\int_{\Omega}|Du_{*}|^{q} \diff x \leq C+ \frac{C}{2-q} \limsup_{n \to +\infty} (2-p_{n}) \int_{\Omega} \frac{|Du_{n}|^{p_{n}}}{p_{n}} \diff x.
		\end{equation*}
\end{enumerate}
\end{prop}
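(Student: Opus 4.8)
The plan is to mimic the structure of the proof of Proposition~\ref{prop est for the differential inside domain}, but now the crucial input is the boundary version of the $\eta$-compactness lemma (Corollary~\ref{cor etacompbound}) together with the quantitative bound on the boundary datum provided by Lemma~\ref{lemma energ boundatum}. First I would observe that, thanks to the assumption that a tubular collar of $\partial \Omega$ in $\mathbb{R}^3 \setminus \Omega$ admits a well-defined nearest-point projection $P$, one can extend $\Omega$ to a slightly larger bounded Lipschitz domain $\widehat{\Omega} \supset \overline{\Omega}$ and extend $g$ by $\widehat{g}(x) = g(P(x))$ on $\widehat{\Omega} \setminus \Omega$; this extension lies in $\mathcal{R}^1_0$ on the relevant piece and satisfies $\int_{\widehat{\Omega}\setminus \Omega}|D\widehat g|^q \le C/(2-q)$ by the same radial estimate as in Lemma~\ref{lemma energ boundatum}. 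Then $u_n$ extended by $\widehat g$ on the collar is a $p_n$-(quasi)minimizer on $\widehat{\Omega}$ up to a controlled error, which lets me run the dyadic-cube argument of Proposition~\ref{prop goodest dyadic} on a domain $V$ with $\overline{\Omega}\Subset V$ so that $\dist(\cdot, \partial V)$ is bounded below by a constant on $\overline{\Omega}$; this removes the weight $\dist(x,\partial\Omega)$ appearing in Proposition~\ref{prop est for the differential inside domain} and yields the unweighted estimate of assertion~\ref{it_Eev4ahtip9aleetho0Avoo6i}.

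More concretely, for step~\ref{it_Eev4ahtip9aleetho0Avoo6i} the key is a good-cube/bad-cube dichotomy valid up to the boundary: for a dyadic cube $Q$ with $2Q$ meeting $\partial\Omega$ one uses Corollary~\ref{cor etacompbound} (applicable since $p_n \ge p_1$ eventually, $p_1$ depending on $\|D_\top g\|_{L^2}$ which is finite because $g \in \mathcal{R}^1_0 \cap C^1(\partial\Omega\setminus S(g))$ — wait, one must be slightly careful that $g \in W^{1,2}$ only away from $S(g)$, so near the finitely many points of $S(g)$ one argues directly using the $W^{1,q}$ bound of Lemma~\ref{lemma energ boundatum} and the monotonicity Lemma~\ref{lem mon of p-energy}), while for cubes $2Q \subset \Omega$ one uses the interior Lemma~\ref{heart} exactly as before. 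The small balls around the singular points $a_1,\dots,a_l$ of $g$ contribute at most $C/(2-q)$ in the $W^{1,q}$ norm after using the monotonicity of the $p$-energy to transfer the boundary energy bound $\int_{B^3_r(a_i)\cap\partial\Omega}|D_\top g|^{p_n}\le C/(2-p_n)$ into a solid energy bound $\int_{B^3_r(a_i)\cap\Omega}|Du_n|^{p_n}\,dx \le Cr^{3-p_n}/(2-p_n)$, which is $\le Cr^{3-p_n}$ once multiplied by $(2-p_n)$ and hence feeds into the argument as a ``good'' contribution. Applying the modified Proposition~\ref{prop goodest dyadic} with $p = p_n$, $f = |Du_n|$, $V$ as above, $C_1 \sim (2-p_n)/\eta$, $C_2 \sim C$, and taking $\limsup_{n\to\infty}$, gives assertion~\ref{it_Eev4ahtip9aleetho0Avoo6i}.

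For step~\ref{it_ZoV0keeweev6aiboal0dudie} I would argue as in the proof of Proposition~\ref{prop est for the differential inside domain}~\ref{it_Aex0Aijushoetha0looKo6ei}: using $|S_*|=0$ (Proposition~\ref{prop 1-var}), the weak convergence $u_n \rightharpoonup u_*$ in $W^{1,q}_{\mathrm{loc}}$ up to the boundary on $(\Omega\cup E)\setminus S_*$ (Corollary~\ref{cor conv uptotheboundary}, where $E = \partial\Omega\setminus S(g)$), lower semicontinuity on a Whitney decomposition of $\Omega\setminus S_*$ into dyadic cubes, and Fatou's lemma, one passes the estimate of~\ref{it_Eev4ahtip9aleetho0Avoo6i} to the limit to get $\int_\Omega|Du_*|^q \le C + \frac{C}{2-q}\limsup_n (2-p_n)\int_\Omega |Du_n|^{p_n}/p_n$, hence $u_* \in W^{1,q}(\Omega,\mathcal{N})$ for all $q<2$. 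The strong $W^{1,q}(\Omega)$ convergence then follows from a Hölder interpolation between a small neighborhood of $S_* \cup (\partial\Omega\setminus E$-collar$)$, where the $W^{1,q_0}$ norms ($q < q_0 < 2$) of both $u_n$ and $u_*$ are uniformly small by absolute continuity, and a compact core where one has strong $L^{p_n}$ convergence of $Du_n$ to $Du_*$ from Proposition~\ref{prop conv to harmonic map}~\ref{item2convtoharmproposition}; a diagonal argument over an exhaustion finishes it.

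I expect the main obstacle to be the boundary extension / quasi-minimality bookkeeping in step~\ref{it_Eev4ahtip9aleetho0Avoo6i}: one must verify that extending $u_n$ by the radial extension $\widehat g$ across the collar $\{x \notin \Omega : \dist(x,\partial\Omega) < r_0\}$ yields a competitor whose energy is controlled, that the dyadic-cube machinery of Proposition~\ref{prop goodest dyadic} still applies near $\partial\Omega$ (it only needs $2Q\subset V$, which holds after enlarging the domain), and that the finitely many boundary singularities $a_i$ of $g$ — where $g \notin W^{1,2}$ and Corollary~\ref{cor etacompbound} is unavailable — are handled cleanly by the combination of Lemma~\ref{lem mon of p-energy} and Lemma~\ref{lemma energ boundatum}. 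The dependence $p_1 = p_1(\partial\Omega,\mathcal{N},\|D_\top g\|_{L^2})$ in Corollary~\ref{cor etacompbound} forces one to excise neighborhoods of $S(g)$ before invoking it, which is exactly the delicate point; everything else is a routine adaptation of the interior proof.
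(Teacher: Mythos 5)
Your overall skeleton is right and matches the paper: extend across a tubular collar outside $\Omega$, apply the dyadic machinery of Proposition~\ref{prop goodest dyadic} on an enlarged ball $B^{3}_{R}\supset\supset\overline{\Omega}$ (which removes the weight $\dist(\cdot,\partial\Omega)$), and run the good-cube dichotomy with Lemma~\ref{heart} in the interior and the boundary $\eta$-compactness near $\partial\Omega\setminus S(g)$. Step~(ii) is essentially identical to the paper's. But there is a genuine gap in how you propose to handle cubes near the boundary singular set $S(g)$, and it is precisely the delicate point you flagged yourself.

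First, the ``monotonicity transfer'' you invoke does not exist as stated. Lemma~\ref{lem mon of p-energy} is an interior monotonicity for balls $B^3_r(x_0)\subset\Omega$; it compares the $p$-energy at two interior scales and does not transfer a boundary-datum bound $\int_{B^3_r(a_i)\cap\partial\Omega}|D_\top g|^{p_n}\lesssim 1/(2-p_n)$ into a solid bound $\int_{B^3_r(a_i)\cap\Omega}|Du_n|^{p_n}\lesssim r^{3-p_n}/(2-p_n)$. Such a solid bound would require either a boundary monotonicity with singular boundary data (which the paper does not establish) or a comparison competitor argument. Even granting the solid bound, a direct H\"older-on-dyadic-shells computation would yield $\int_{B^3_{r_0}(a_i)\cap\Omega}|Du_n|^q\lesssim (2-p_n)^{-q/p_n}$, which diverges as $n\to+\infty$ and does \emph{not} give the stated $C+\tfrac{C}{2-q}\limsup\mu_n(\Omega)$.

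Second, and more fundamentally, you have the dichotomy the wrong way around near $a_i$. The good-cube criterion in Proposition~\ref{prop goodest dyadic} is a \emph{hypothesis}: one must show that \emph{whenever} $\int_{2Q}f_n^{p_n}\le\eta\,2^{-j(3-p_n)}/(2-p_n)$, then $\int_{Q}f_n^{p_n}\le C\,2^{-j(3-p_n)}$. The paper does \emph{not} establish this implication near $S(g)$ — instead it shows the hypothesis is \emph{vacuous} there. Because $f_n$ is set equal to $|D(g\circ P)|$ on the exterior collar $V$, the Sandier-type lower bound \eqref{7y7y789u48ur7y3y743y498} (itself a consequence of Proposition~\ref{Sandier lemma}, using that $g$ is homotopically nontrivial around each $a_i$) forces $\int_{2Q}f_n^{p_n}\gtrsim\varepsilon_0\,r^{3-p_n}/(2-p_n)$ for any cube with $\dist(P(2Q\cap V),S(g))<\delta r$; choosing $\eta$ small relative to $\varepsilon_0$ rules these cubes out from the ``good'' class, and they are then absorbed automatically by the bad-cube recursion inside Proposition~\ref{prop goodest dyadic}. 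This exclusion step — which hinges on putting the \emph{boundary datum's} gradient into $f_n$ on the outer collar, not an extension of $u_n$ — is the missing idea in your proposal. (In particular, the ``quasi-minimality bookkeeping'' you worry about is not needed: the paper never extends $u_n$ as a map, only $f_n$ as a scalar density, and Proposition~\ref{prop goodest dyadic} is purely a covering statement.)
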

\begin{rem}
If $\Omega$ is of class $C^{2}$ or is convex, then it has the \emph{unique nearest point property}, namely, the assumption of Proposition~\ref{prop quant behuptotheb} applies (for more details, we refer to \cite{Federer_1959}, \cite{Krantz_Parks_1981}). It is also worth noting that the constants in the assertions \ref{it_Eev4ahtip9aleetho0Avoo6i}, \ref{it_ZoV0keeweev6aiboal0dudie} depend on $S(g)$ and the constant coming from \eqref{degen der}, where $\varphi=g$. Since the latter constant depends only on $g$, we can assume that the constants in \ref{it_Eev4ahtip9aleetho0Avoo6i}, \ref{it_ZoV0keeweev6aiboal0dudie} depend only on $g$, $\Omega$ and $\mathcal{N}$.
\end{rem}
\begin{proof}[Proof of Proposition~\ref{prop quant behuptotheb}]
Fix $R>0$ large enough so that $\Omega \Subset B^{3}_{R}$ and for each $x \in \Omega$ the distance from $x$ to $\partial B^{3}_{R}$ is bounded from below by $1$. Without loss of generality, we assume that $r_{0}$ is small enough with respect to $R$ and that $P$ is a 2-Lipschitz map (the reader may consult \cite[Theorem~4.8~(8)]{Federer_1959}). To lighten the notation, we denote $W=\{x \in B^{3}_{R}: \dist(x, \Omega) \leq r_{0}\}$ and $V=\{x \in B^{3}_{R}\setminus \Omega: \dist(x, \partial \Omega) \leq r_{0}\}$. For each $n \in \mathbb{N}$, define the map $f_{n}: B^{3}_{R}\to [0,+\infty)$ by $f_{n}=|Du_{n}|$ in $\Omega$, $f_{n}=|D(g\circ P)|$ in $V$ and $f_{n}=0$ in $B^{3}_{R} \backslash W$. Since $P$ is 2-Lipschitz and $g \in \mathcal{R}^{1}_{0}(\partial \Omega, \mathcal{N})$, using Lemma~\ref{lemma energ boundatum}, we observe that 
\begin{equation}\label{3j4905ji054jgun45iun}
(2-p_{n})\int_{B^{3}_{R}\setminus \Omega}f_{n}^{p_{n}}\diff x= (2-p_{n})\int_{V}f_{n}^{p_{n}}\diff x \leq C,
\end{equation}
where $C=C(g, \Omega, \mathcal{N})>0$ (here $C$ depends on $r_{0}$, $S(g)$ and the constant coming from \eqref{degen der}, where $\varphi=g$, but $r_{0}$ depends only on $ \Omega$ and the latter constant depends only on $g$). Then the proof of the assertion \ref{it_Eev4ahtip9aleetho0Avoo6i} follows by reproducing the proof of the assertion \ref{it_so3EPh9zoev2Gae0OoQuieg9} of Proposition~\ref{prop est for the differential inside domain} with minor modifications, namely, applying Proposition~\ref{prop goodest dyadic} with $f=f_{n}$, $p=p_{n}$, $V=B^{3}_{R}$, $C_{1}=\frac{2-p_{n}}{\eta}$ and $C_{2}=C$, where $n \in \mathbb{N}$ is large enough and $\eta, C>0$ depend only on $g, \Omega$, $\mathcal{N}$ and will be fixed later for the proof to work. Let us fix an arbitrary dyadic cube $Q$ such that $2Q\subset B^{3}_{R}$. Let $r>0$ be the sidelength of $Q$. To apply Proposition~\ref{prop goodest dyadic} with the above parameters, we need to prove the following: if $n \in \mathbb{N}$ is large enough and 
\begin{equation}\label{mf3ij0ij4i5gn5}
\int_{2Q}f_{n}^{p_{n}}\diff x \leq \frac{\eta r^{3-p_{n}}}{2-p_{n}},
\end{equation}
then 
\begin{equation}\label{mi3m5m5i4io490jg0ij5gn}
\int_{Q}f^{p_{n}}_{n}\diff x \leq Cr^{3-p_{n}}.
\end{equation}
First, notice the following. Fix an arbitrary $x_{0} \in S(g)$. Let $B^{\partial \Omega}_{\varrho}(x_{0})$ be the geodesic ball in $\partial \Omega$ with center $x_{0}$ and radius $\varrho>0$ such that every point in $S(g)\backslash \{x_{0}\}$ is far enough from $B^{\partial \Omega}_{\varrho}(x_{0})$. Then, using Proposition~\ref{Sandier lemma} and the facts that $p_{n} \in [3/2, 2)$ for each $n\in \mathbb{N}$ large enough and $B^{\partial \Omega}_{\varrho}(x_{0})$ is bilipschitz homeomorphic to $B^{2}_{\varrho}(x_{0})$, one has
\begin{equation*}
\int_{B^{\partial \Omega}_{\varrho}(x_{0})}|D_{\top} g|^{p_{n}}\diff \mathcal{H}^{2}+ \varrho \int_{\partial B^{\partial \Omega}_{\varrho}(x_{0})}|D_{\top} (g|_{\partial B^{\partial \Omega}_{\varrho}(x_{0})})|^{p_{n}}\diff \mathcal{H}^{1} \geq \frac{\mathcal{E}^{\textup{sg}}_{p_{n}/(p_{n}-1)}(g|_{\partial B^{\partial \Omega}_{\varrho}(x_{0})}) \varrho^{2-p_{n}}}{C_{0}(2-p_{n})},
\end{equation*}
where $\partial B^{\partial \Omega}_{\varrho}(x_{0})$ denotes the relative boundary of $B^{\partial \Omega}_{\varrho}(x_{0})$,  $C_{0}=C_{0}(\Omega)>0$ and we have used that $\tr_{\partial B^{\partial \Omega}_{\varrho}(x_{0})}(g)=g|_{\partial B^{\partial \Omega}_{\varrho}(x_{0})}$. Then, proceeding as in the proof of Lemma~\ref{lem lowenergy}, we deduce that 
\begin{equation}\label{i3jij5u8tu485utuhgj85u8u8u887y7hu5j95}
\int_{B^{\partial \Omega}_{\varrho}(x_{0})}|D_{\top} g|^{p_{n}}\diff \mathcal{H}^{2}+ \varrho \int_{\partial B^{\partial \Omega}_{\varrho}(x_{0})}|D_{\top} (g|_{\partial B^{\partial \Omega}_{\varrho}(x_{0})})|^{p_{n}}\diff \mathcal{H}^{1} \geq \frac{2\varepsilon_{0} \varrho^{2-p_{n}}}{2-p_{n}},
\end{equation}
for each $n\in \mathbb{N}$ large enough, where $\varepsilon_{0}=\varepsilon_{0}(C_{0}, \mathcal{N})>0$. Since $C_{0}$ depends only on $ \Omega$, we can assume that $\varepsilon_{0}$ depends only on $\Omega$ and $\mathcal{N}$. Also, since $g \in \mathcal{R}^{1}_{0}(\partial \Omega, \mathcal{N})$, we have the estimate
\[
\varrho \int_{\partial B^{\partial \Omega}_{\varrho}(x_{0})}|D_{\top} (g|_{\partial B^{\partial \Omega}_{\varrho}(x_{0})})|^{p_{n}}\diff \mathcal{H}^{1} \leq C^{\prime} \varrho \mathcal{H}^{1}(\partial B^{\partial \Omega}_{\varrho}(x_{0})),
\]
where $C^{\prime}>0$ is a constant depending only on the constant coming from \eqref{degen der}, which depends only on $g$. This, since $p_{n} \nearrow 2$ as $n \to +\infty$, implies that for each $n\in \mathbb{N}$ large enough (depending possibly only on $g$, $\Omega$ and $\mathcal{N}$),  
\begin{equation}\label{ij4i039u598ug9854h}
\varrho \int_{\partial B^{\partial \Omega}_{\varrho}(x_{0})}|D_{\top} (g|_{\partial B^{\partial \Omega}_{\varrho}(x_{0})})|^{p_{n}}\diff \mathcal{H}^{1} \leq \frac{\varepsilon_{0} \varrho^{2-p_{n}}}{2-p_{n}}.
\end{equation}
Combining \eqref{i3jij5u8tu485utuhgj85u8u8u887y7hu5j95} and \eqref{ij4i039u598ug9854h}, we get the next estimate
\begin{equation}\label{7y7y789u48ur7y3y743y498}
\int_{B^{\partial \Omega}_{\varrho}(x_{0})}|D_{\top} g|^{p_{n}}\diff \mathcal{H}^{2} \geq \frac{\varepsilon_{0} \varrho^{2-p_{n}}}{2-p_{n}}
\end{equation}
for each $n\in \mathbb{N}$ large enough. On the other hand, if $E\subset \partial \Omega$ is Borel and $\dist(E, S(g))\geq \varrho>0$, then, using \eqref{degen der}, we obtain
\begin{equation}\label{niu39j4j39j4j4093j90j9}
\int_{E}|D_{\top} g|^{p_{n}}\diff \mathcal{H}^{2}\leq \frac{C^{\prime} \mathcal{H}^{2}(E)}{\varrho^{p_{n}}}.
\end{equation} 
Next, observe that, since  $\Omega$ is a bounded Lipschitz domain having the unique nearest point property in $V$, there exist fairly small constants $0<a<\delta<1/2$ and a number $N_{0} \in \mathbb{N}\setminus \{0\}$, depending only on $\Omega$, such that the following holds. We need to distinguish between two further cases.

\medskip 

\noindent \emph{Case~1:} $Q \subset W$.  If $\dist(P(Q \cap V), S(g))\geq \delta r$, then we proceed as follows. 
Applying \eqref{niu39j4j39j4j4093j90j9} with $E=P(Q \cap V)$, we have
\begin{equation}\label{nf93j5893j58tj594hth8h}
\int_{Q \cap V} f^{p_{n}}_{n} \diff x \leq L r \int_{P(Q \cap V)}|D_{\top}g|^{p_{n}}\diff \mathcal{H}^{2} \leq C^{\prime \prime}r^{3-p_{n}},
\end{equation}
where $L=L(P)>0$ and $C^{\prime \prime}=C^{\prime \prime}(g, \Omega, \mathcal{N})>0$. Next, there exists at most $N_{0}$ balls $B^{3}_{ar}(x_{1}), \dotsc, B^{3}_{ar}(x_{k})$ such that $Q \cap \Omega \subset \bigcup_{i=1}^{k}B^{3}_{ar}(x_{i})\subset 2Q$ and for each $ i\in \{1,\dotsc, k\}$, either $B^{3}_{9ar/8}(x_{i}) \subset \Omega$ or $x_{i} \in \partial \Omega$ and $\dist(B^{3}_{9ar/8}(x_{i}), S(g))\geq \delta r/2$. We shall apply the corresponding $\eta$-compactness argument for each ball $B^{3}_{9ar/8}(x_{i})$, namely Lemma~\ref{heart} in the case when $B^{3}_{9ar/8}(x_{i}) \subset \Omega$ and Proposition~\ref{etacompboundary} otherwise. Let  $\eta, C^{\prime \prime \prime}>0$  be constants that are valid for both Lemma~\ref{heart} and Proposition~\ref{etacompboundary}, where $\kappa=8/9$, $p_{0}=3/2$ and the corresponding bilipschitz homeomorphism whose bilipschitz constant can depend only on $\Omega$. Thus, we can assume that $\eta, C^{\prime \prime \prime}$ depend only on $\Omega$ and $\mathcal{N}$. Assume that
\begin{equation}\label{mi9j8j8934hj8h3h43hr94j98juhgthg784h}
\int_{2Q}f^{p_{n}}_{n}\diff x \leq \frac{\eta r^{3-p_{n}}}{2(2-p_{n})}.
\end{equation}
In the case when $B^{3}_{9ar/8}(x_{i})\subset \Omega$, by Lemma~\ref{heart} and \eqref{mi9j8j8934hj8h3h43hr94j98juhgthg784h}, we immediately obtain the estimate
\begin{equation}
\label{hearti4j05i0ji045j50j0i}
\int_{B^{3}_{ar}(x_{i})}|Du_{n}|^{p_{n}}\diff x \leq C^{\prime \prime \prime}r^{3-p_{n}}.
\end{equation}
Let now $x_{i} \in \partial \Omega$. Using \eqref{nf93j5893j58tj594hth8h} and the fact that $p_{n} \nearrow 2$, we observe that
\begin{equation}\label{mjen439j948j9j43j93j49j}
\frac{9ar}{8} \int_{B^{3}_{\frac{9ar}{8}}(x_{i})\cap \partial \Omega}|D_{\top} g|^{p_{n}}\diff \mathcal{H}^{2} \leq \frac{\eta r^{3-p_{n}}}{2(2-p_{n})}
\end{equation}
for each $n\in \mathbb{N}$ large enough depending only on $g, \Omega$ and $\mathcal{N}$. Thus, in view of \eqref{mi9j8j8934hj8h3h43hr94j98juhgthg784h} and \eqref{mjen439j948j9j43j93j49j}, 
\[
\int_{B^{3}_{\frac{9ar}{8}}(x_{i}) \cap \Omega}|Du_{n}|^{p_{n}}\diff x + \frac{9ar}{8} \int_{B^{3}_{\frac{9ar}{8}}(x_{i}) \cap \partial \Omega}|D_{\top} g|^{p_{n}}\diff \mathcal{H}^{2} \leq \frac{\eta r^{3-p_{n}}}{2-p_{n}},
\]
for each $n \in \mathbb{N}$ large enough, which according to Proposition~\ref{etacompboundary}, implies that
\begin{equation}\label{mcoim43jjh95j8945hj}
\int_{B^{3}_{ar}(x_{i})}|Du_{n}|^{p_{n}}\diff x \leq C^{\prime \prime \prime}r^{3-p_{n}}.
\end{equation}
Combining \eqref{hearti4j05i0ji045j50j0i} and \eqref{mcoim43jjh95j8945hj}, we get 
\begin{equation}
\label{estimatecoveringballsheart}
\int_{Q \cap \Omega} f^{p_{n}}_{n} \diff x \leq N_{0} C^{\prime\prime \prime} r^{3-p_{n}}
\end{equation}
for each $n\in \mathbb{N}$ large enough. Taking into account the estimates \eqref{nf93j5893j58tj594hth8h} and \eqref{estimatecoveringballsheart}, we observe that \eqref{mf3ij0ij4i5gn5} implies \eqref{mi3m5m5i4io490jg0ij5gn}, where $C=C^{\prime \prime}+ N_{0} C^{\prime \prime \prime}$, which can depend only on $g$, $\Omega$ and $\mathcal{N}$.

If $\dist(P(Q \cap V), S(g))< \delta r$, then there exists $x_{0} \in S(g)$ such that $2Q\cap V$ contains a cylinder $U \simeq B^{\partial \Omega}_{ar}(x_{0}) \times (0,ar)$ such that, in view of \eqref{7y7y789u48ur7y3y743y498} and the definition of $f_{n}$ in $V$, it holds
\begin{equation}\label{mi48j09i85u89u595u90i-0-4i9ri439}
\int_{2Q}f^{p_{n}}_{n} \diff x \geq \int_{U}f^{p_{n}}_{n} \diff x \geq \frac{\varepsilon_{0} (a^{10}r)^{3-p_{n}}}{2-p_{n}}
\end{equation}
for each $n \in \mathbb{N}$ large enough. Thus, taking $\eta$ small enough with respect to $\varepsilon_{0}a^{15}$ so that Proposition~\ref{etacompboundary} applies with $2\eta$, under the condition that $\int_{2Q} f^{p_{n}}_{n}\diff x \leq \frac{\eta r^{3-p_{n}}}{2-p_{n}}$, we exclude the situation where $\dist(P(Q\cap V), S(g))<\delta r$. 

\medskip

\noindent \emph{Case~2:} $Q \cap (B^{3}_{R}\setminus W) \not = \emptyset$. If $Q \subset B^{3}_{R}\setminus W$, then, clearly, \eqref{mi3m5m5i4io490jg0ij5gn} holds, since $f_{n}=0$ on $Q$ by definition. Otherwise, we repeat the strategy of the \emph{Case 1} for $Q \cap W$ to conclude that for each $n \in \mathbb{N}$ large enough, \eqref{mf3ij0ij4i5gn5} implies \eqref{mi3m5m5i4io490jg0ij5gn} for some $\eta, C>0$ depending only on $g$, $\Omega$ and $\mathcal{N}$.

After all, we have showed that there exist constants $\eta, C>0$ depending only on $g, \Omega$ and $\mathcal{N}$ such that \eqref{mf3ij0ij4i5gn5} implies \eqref{mi3m5m5i4io490jg0ij5gn} for each $n \in \mathbb{N}$ large enough depending on $g, \Omega$ and $\mathcal{N}$. Therefore, we can apply Proposition~\ref{prop goodest dyadic} with $f=f_{n}$, $V=B^{3}_{R}$, $C_{1}=\frac{2-p_{n}}{\eta}$ and $C_{2}=C$ for each $n \in \mathbb{N}$ large enough. Taking into account \eqref{3j4905ji054jgun45iun} and the fact that $f_{n}=0$ in $B^{3}_{R} \backslash W$, we obtain \ref{it_Eev4ahtip9aleetho0Avoo6i}. 

The assertion \ref{it_ZoV0keeweev6aiboal0dudie} then comes from a diagonal argument, proceeding by the same way as in the proof of Proposition~\ref{prop est for the differential inside domain}~\ref{it_Aex0Aijushoetha0looKo6ei}.  This completes our proof of Proposition~\ref{prop quant behuptotheb}.
\end{proof}

\subsection{Proof of Theorem~\ref{Interiorbehavior1th}}
Now we prove our first main theorem.
\begin{proof}[Proof of Theorem~\ref{Interiorbehavior1th}]
 The proof follows from Propositions~\ref{conv to harm},~\ref{prop 1-var},~\ref{prop structure S_*},~\ref{prop est for the differential inside domain},~\ref{min of sing},~\ref{example 1},~\ref{prop quant behuptotheb} and Corollary~\ref{cor 5.17}.
\end{proof}

\subsection{Boundary repulsion property}\label{section_Wai9niecoo5AhkaciulufeeM}
In this subsection, we derive the Pohozaev-type identity for $p$-minimizers and prove that the varifold associated to the limit of the stress-energy tensors of $p$-minimizers as $p\nearrow 2$ is \textit{minimizing area to first order} in the Brian White sense (see \cite{White_2010}). As a consequence, we prove that if $\overline{\Omega}$ is strongly convex at every point of $\partial \Omega$, the boundary datum $g$ is an element of $\mathcal{R}^{1}_{0}(\partial \Omega, \mathcal{N})$ and $g \in C^{1}(\partial \Omega \setminus S(g), \mathcal{N})$, then $S_{*}\cap \partial \Omega=S(g)$, where $S_{*}$ is the energy concentration set supporting the varifold and  $S(g)$ is the singular set of $g$. 

We begin by establishing the boundary stress-energy tensor identity. For convenience, if $U \subset \mathbb{R}^{3}$ is open, we define for $u \in W^{1,p} (U, \mathbb{R}^{\nu})$ the map 
$T^{p}_{u} : U \to \mathbb{R}^{3}\otimes \mathbb{R}^{3}$ by
\begin{equation}\label{eq_cai5Chohqu5eyuciezeonge4}
    T^{p}_{u}=\frac{|Du|^{p}}{p}\mathrm{Id}-\frac{Du\otimes Du}{|Du|^{2-p}},
\end{equation}
where $Du\otimes Du=(Du)^{\mathrm{T}}Du$. If $U$ is a bounded Lipschitz domain and $u$ is a minimizing \(p\)-harmonic map in $U$, we have $\mathrm{div} (T^p_{u})=0$ in $\mathscr{D}^{\prime}(U, \mathbb{R}^{3})$ (see \eqref{integralidentity}).
\begin{prop}\label{prop int by parts tensor}
    Let $\Omega \subset \mathbb{R}^{3}$ be a bounded domain of class $C^2$, $x_{0} \in \partial \Omega$, $r>0$ be fairly small and $p \in [1,+\infty)$. If $u \in W^{1, p}(\Omega\cap B^{3}_{r}(x_{0}), \mathbb{R}^{\nu})$ satisfies $\mathrm{div}(T^{p}_{u})=0$ in $\mathscr{D}^{\prime}(\Omega\cap B^{3}_{r}(x_{0}), \mathbb{R}^{3})$ and if \(Du\) is continuous at every point of \(\partial \Omega \cap B^{3}_{r}(x_{0})\), then for each $\xi \in C^{1}_{c}(B^{3}_{r}(x_{0}), \mathbb{R}^{3})$,
    \begin{equation}\label{int by parts}
        \int_{\Omega \cap B^{3}_{r}(x_{0})}T^{p}_{u}:D\xi \diff x = \int_{\partial \Omega \cap B^{3}_{r}(x_{0})}\langle \nu, T^{p}_{u}[\xi]\rangle \diff \mathcal{H}^{2},
    \end{equation}
    where $\nu \in C^{1}(\partial \Omega, \mathbb{S}^{2})$ stands for the outward pointing unit normal vector field to $\partial \Omega$.
\end{prop}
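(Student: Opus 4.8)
The plan is to integrate the distributional identity $\mathrm{div}(T^p_u) = 0$ against a smooth vector field, controlling the boundary contribution via the assumed continuity of $Du$ on $\partial \Omega \cap B^3_r(x_0)$. First I would fix a cutoff $\xi \in C^1_c(B^3_r(x_0), \mathbb{R}^3)$ and flatten the boundary locally: since $\partial\Omega$ is of class $C^2$, there is a $C^2$ diffeomorphism straightening $\partial\Omega \cap B^3_r(x_0)$ to a piece of a hyperplane, and after this change of variables we are reduced to proving the identity on a half-ball, where the outward normal $\nu$ is constant. The key point is that the identity $\mathrm{div}(T^p_u)=0$ in $\mathscr{D}'(\Omega \cap B^3_r(x_0), \mathbb{R}^3)$ only tests against compactly supported fields in the \emph{open} set $\Omega \cap B^3_r(x_0)$, so to pick up the boundary term I would exhaust $\Omega \cap B^3_r(x_0)$ by the translated/retracted domains $\Omega_\varepsilon = \{x \in \Omega \cap B^3_r(x_0) : \mathrm{dist}(x, \partial\Omega) > \varepsilon\}$ (or, after flattening, by half-balls pushed slightly into the interior), apply the Gauss–Green theorem on $\Omega_\varepsilon$ — which is legitimate since $T^p_u \in W^{1,1}$ near $\partial\Omega$ by continuity of $Du$ there, hence $T^p_u$ has an $L^1$ trace on $\partial\Omega_\varepsilon$ — and then let $\varepsilon \to 0^+$.

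\textbf{Main steps in order.} (1) Reduce to a half-ball by a local $C^2$ straightening of $\partial\Omega$ and absorb the Jacobian factors; verify that the transformed tensor still has vanishing interior divergence in the distributional sense and that its structure is preserved up to $C^1$ coefficients, or — more cleanly — avoid transforming the tensor and instead work directly with the retracted domains $\Omega_\varepsilon$ in the original coordinates. (2) On $\Omega_\varepsilon$, which is a Lipschitz (indeed $C^1$) domain compactly contained in $\Omega \cap B^3_r(x_0)$ up to its portion of $\partial\Omega_\varepsilon$ lying near $\partial\Omega$, write the Gauss–Green identity
\[
\int_{\Omega_\varepsilon} T^p_u : D\xi \,\diff x = \int_{\partial \Omega_\varepsilon} \langle \nu_\varepsilon, T^p_u[\xi]\rangle \,\diff \mathcal{H}^2 - \int_{\Omega_\varepsilon} \langle \mathrm{div}(T^p_u), \xi\rangle \,\diff x,
\]
where the last integral vanishes because $\xi$ is compactly supported in $B^3_r(x_0)$ and $\mathrm{div}(T^p_u)=0$ on the open set (the part of $\partial\Omega_\varepsilon$ interior to $\Omega$ contributes nothing once $\varepsilon$ is small, since $\xi$ has compact support away from $\partial B^3_r(x_0)$ but we need $\mathrm{supp}\,\xi$ to meet $\partial\Omega$ only — here one splits $\partial\Omega_\varepsilon$ into the ``inner'' piece $\{\mathrm{dist} = \varepsilon\}$ and the piece on $\partial B^3_r(x_0)$ where $\xi \equiv 0$). (3) Pass to the limit $\varepsilon \to 0^+$: the left side converges to $\int_{\Omega \cap B^3_r(x_0)} T^p_u : D\xi\,\diff x$ by dominated convergence (using $T^p_u \in L^1$ via $|Du|^p \in L^1$ for $p \geq 1$... one must be slightly careful about integrability of $|Du|^p$ when $p$ is large, but the hypothesis $u \in W^{1,p}$ gives exactly $|Du|^p \in L^1$); the inner boundary integral over $\{\mathrm{dist}(\cdot,\partial\Omega)=\varepsilon\} \cap B^3_r(x_0)$ converges to $\int_{\partial\Omega \cap B^3_r(x_0)} \langle \nu, T^p_u[\xi]\rangle\,\diff\mathcal{H}^2$ by the continuity of $Du$ up to $\partial\Omega \cap B^3_r(x_0)$ — this is where the regularity hypothesis on $Du$ is essential, as it makes $T^p_u$ continuous hence uniformly approximable near the boundary and gives convergence of both the integrand and the surface measures (the level sets $\{\mathrm{dist}=\varepsilon\}$ converge to $\partial\Omega$ in the $C^1$, hence measure-theoretic, sense since $\partial\Omega \in C^2$).

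\textbf{Expected obstacle.} The technical heart is Step (3): justifying that $\int_{\{\mathrm{dist}=\varepsilon\}\cap B^3_r(x_0)} \langle \nu_\varepsilon, T^p_u[\xi]\rangle\,\diff\mathcal{H}^2 \to \int_{\partial\Omega \cap B^3_r(x_0)}\langle \nu, T^p_u[\xi]\rangle\,\diff\mathcal{H}^2$. This requires simultaneously controlling three things: the convergence of the outer normals $\nu_\varepsilon \circ (\text{nearest-point projection}) \to \nu$ uniformly (true since $\partial\Omega \in C^2$ has a tubular neighborhood with smooth nearest-point retraction, Lemma~\ref{lemma npr}-type), the convergence of the pullback of $\mathcal{H}^2$ under the retraction to $\mathcal{H}^2 \mres \partial\Omega$ with Jacobian tending to $1$, and the uniform convergence $T^p_u(x) \to T^p_u(P(x))$ as $\varepsilon \to 0$ guaranteed by continuity of $Du$ on the compact set $\overline{\partial\Omega \cap B^3_{r'}(x_0)}$ for $r' < r$ with $\mathrm{supp}\,\xi \subset B^3_{r'}(x_0)$. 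Assembling these into a clean $\varepsilon \to 0$ limit — ideally by writing the inner integral as a pushforward under the retraction $P_\varepsilon$ and applying the area formula — is the only place where genuine care is needed; everything else is a standard Gauss–Green/exhaustion argument. I would present it by fixing $r'$ with $\mathrm{supp}\,\xi \csubset B^3_{r'}(x_0) \csubset B^3_r(x_0)$ at the outset, so all limiting quantities live on compact subsets of the boundary where $Du$ is uniformly continuous.
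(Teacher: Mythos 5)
Your proposal captures the right idea and the right use of the hypotheses, and is morally the same argument as the paper's, but there is one genuine gap in the middle and one cosmetic difference worth noting.

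\textbf{The gap.} You justify applying Gauss--Green on $\Omega_\varepsilon$ by saying that $T^p_u \in W^{1,1}$ near $\partial\Omega$ ``by continuity of $Du$ there, hence $T^p_u$ has an $L^1$ trace.'' This does not follow: continuity of $Du$ on the boundary gives only continuity of $T^p_u$ there, not Sobolev regularity, and away from a thin collar of $\partial\Omega$ the tensor $T^p_u$ is merely $L^1$ with distributional divergence zero. The classical Gauss--Green identity on the retracted domain $\Omega_\varepsilon$ therefore does not apply off the shelf; one would need either to mollify $T^p_u$ in the interior before integrating by parts (and pass $\delta\to 0$ using the $L^1$ convergence in the bulk and the uniform convergence near the boundary, where continuity holds) or to invoke the normal-trace/Gauss--Green theory for divergence-measure fields. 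Your instinct that the interior divergence term ``vanishes because $\mathrm{div}(T^p_u)=0$'' is correct in spirit, but as written it treats $\mathrm{div}(T^p_u)$ as an $L^1$ function, which is unjustified.

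\textbf{How the paper sidesteps this.} Rather than exhausting the domain and applying Gauss--Green, the paper never integrates by parts on a truncated domain at all. It first observes (by mollifying the test field) that the distributional identity $\int T^p_u : D\varphi\,\diff x = 0$ holds for all Lipschitz $\varphi$ with compact support in $\Omega \cap B^3_r(x_0)$. It then tests against $\varphi_\varepsilon = g_\varepsilon(f)\,\xi$ where $f=\dist(\cdot,\partial\Omega)$ and $g_\varepsilon$ is a ramp, $g_\varepsilon(t)=t/\varepsilon$ on $[0,\varepsilon]$ and $1$ thereafter; this is a Lipschitz test field. Expanding the product rule gives exactly two terms: the bulk term $\int T^p_u : g_\varepsilon(f)D\xi$ which converges to the left side by dominated convergence, and a boundary-layer term $\int g_\varepsilon'(f)\langle Df, T^p_u[\xi]\rangle$ supported on $\{0<f<\varepsilon\}$, which via the coarea formula becomes an $\varepsilon$-average of layer integrals over $\{f=t\}$. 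The continuity of $Du$ (hence of $T^p_u$) near $\partial\Omega$ is then invoked only to identify the limit of this average as a right derivative at $t=0$, which equals $-\int_{\partial\Omega\cap B^3_r(x_0)}\langle\nu, T^p_u[\xi]\rangle$. This is exactly your ``smeared-out'' Gauss--Green: the ramp function plays the role of a smoothed indicator of $\Omega_\varepsilon$, and the coarea step replaces your single-slice Gauss--Green by an average over slices, which requires no trace theory for $L^1$ divergence-free fields.

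\textbf{The cosmetic point.} You propose flattening the boundary by a $C^2$ chart, then correctly note this can be avoided. The paper does avoid it and works directly with the distance function $f$, which is $C^1$ with $|Df|=1$ in a collar of $\partial\Omega$ because $\partial\Omega$ is $C^2$; no coordinate change is needed and the Jacobian bookkeeping you anticipate never arises. You also correctly identify that passing to a compactly supported cutoff $r'<r$ at the outset makes all the uniformity arguments clean, which the paper implicitly does by taking $\xi\in C^1_c(B^3_r(x_0),\mathbb{R}^3)$.
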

    \begin{proof} To lighten the notation, denote $B=B^{3}_{r}(x_{0})$. Observe that $\partial \Omega$ is a $2$-dimensional compact Riemannian manifold of class $C^{2}$ without boundary. Hence there exists $r_{0}=r_{0}(\partial \Omega)>0$ such that the function $f(x)=\dist(x, \partial \Omega)$, $x \in \Omega$ is of class $C^{1}$ in $V=\{x \in \Omega: \dist(x, \partial \Omega)< r_{0}\}$ (see \cite[Theorem~4.12]{Federer_1959}) and, furthermore, $|Df(x)|=1$ for $x \in V$ (see \cite[4.8~(5), 4.8~(3)]{Federer_1959}).  Let us fix a sufficiently small $\varepsilon>0$ and define
    \begin{equation*}
        g_{\varepsilon}(t)=
        \begin{cases}
            1 & \text{if}\,\ \,\  t \in [\varepsilon, +\infty)\\
            \frac{t}{\varepsilon} & \text{if}\,\ \,\ t \in [0,\varepsilon].
        \end{cases}
    \end{equation*}
    Since $g_{\varepsilon}$ is a Lipschitz function on $[0, +\infty)$, for each $\xi \in W^{1,\infty}_{0}(B,\mathbb{R}^{3})$, it is clear that the function $\varphi_{\varepsilon}(x)\coloneqq g_{\varepsilon}(f(x))\xi(x)$ is an element of $W^{1,\infty}_{0}(\Omega\cap B,\mathbb{R}^{3})$. Since $\mathrm{div}(T^{p}_{u})=0$ in $\mathscr{D}^{\prime}(\Omega\cap B,\mathbb{R}^{3})$, $u \in W^{1, p}(\Omega\cap B, \mathbb{R}^{\nu})$ and \(Du\) is continuous at every point of \(\partial \Omega \cap B\), by approximation, we have $\int_{\Omega \cap B}T^{p}_{u}:D\xi \diff x =0$ for each $\xi \in W^{1,\infty}_{0}(\Omega\cap B,\mathbb{R}^{3})$. Thus, using the function $\varphi_{\varepsilon}$ as a test function in the weak formulation of $\mathrm{div}(T^{p}_{u})=0$, we get
    \begin{align*}
        0=&\int_{\Omega \cap B} T^{p}_{u}: g_{\varepsilon}(f)D\xi \diff x +  \int_{\Omega \cap B} \langle D(g_{\varepsilon}(f)),T^{p}_{u}[\xi]\rangle \diff x \\ =& \int_{\Omega \cap B} T^{p}_{u}: g_{\varepsilon}(f)D\xi \diff x +  \int_{\Omega \cap B} \langle g^{\prime}_{\varepsilon}(f) D f, T^{p}_{u}[\xi]\rangle \diff x.
    \end{align*}        
    Letting $\varepsilon \searrow 0$  and using the Lebesgue dominated convergence theorem, we have
    \begin{equation} \label{A.3}
        \begin{split}
            \int_{\Omega \cap B} T^{p}_{u} : g_{\varepsilon}(f)D\xi \diff x & \to \int_{\Omega \cap B} T^{p}_{u}:D\xi\diff x.
        \end{split}
    \end{equation}
    On the other hand, using the coarea formula (see \cite[Theorem~3.2.22~(3)]{Federer}), the fact that $|Df|=1$ in $V$ and changing the variables, we get
    \begin{equation}
     \label{A.4}
     \begin{split}
        \int_{\Omega \cap B} \langle g^{\prime}_{\varepsilon}(f)D f, T^{p}_{u} [\xi] \rangle\diff x
        & = \frac{1}{\varepsilon} \int^{\varepsilon}_{0} \diff t \int_{\{f=t\} \cap B} \langle D f(\sigma), T^{p}_{u}[\xi](\sigma)\rangle\diff \mathcal{H}^{2}(\sigma) \\
        &=\frac{1}{\varepsilon} \int^{\varepsilon}_{0} \diff t \int_{\partial \Omega \cap B} \langle D f(y-t\nu(y)), T^{p}_{u}[\xi](y)\rangle \diff \mathcal{H}^{2}(y) + o(1)_{\varepsilon \searrow 0}\\
        & \to -\int_{\partial \Omega \cap B} \langle \nu, T^{p}_{u}[\xi]\rangle \diff \mathcal{H}^{2}, 
        \end{split}
    \end{equation}
    as $\varepsilon \searrow 0$. 
    In fact, since $u \in W^{1, p}(\Omega\cap B, \mathbb{R}^{\nu})$, \(Du\) is continuous at every point of \(\partial \Omega \cap B\), $\xi \in C^{1}_{c}(B, \mathbb{R}^{3})$ and $D f(y-t\nu(y))\to -\nu(y)$ as $t\searrow 0$ for each $y \in \partial \Omega$, the function
    \[
    r\in (0,a] \mapsto \Psi(r)\coloneqq  \int^{a}_{0} \diff t \int_{\partial \Omega \cap B} \langle Df(y-t\nu(y)), T^{p}_{u}[\xi](y)\rangle \diff \mathcal{H}^{2}(y),
    \]
    where $a>0$ is fairly small, has the right derivative at $0$ equal to $\Psi^{\prime}(0+)=-\int_{\partial \Omega \cap B} \langle \nu,  T^{p}_{u}[\xi]\rangle \diff \mathcal{H}^{2}$. Combining \eqref{A.3} and \eqref{A.4}, we deduce the desired stress-energy identity, which completes our proof of Proposition~\ref{prop int by parts tensor}.
\end{proof}
Now we deduce the normal boundary estimate.
\begin{prop}\label{prop bound normal}
    Let $\Omega \subset \mathbb{R}^{3}$ be a bounded domain of class $C^{2}$, $x_{0} \in \partial \Omega$, $r>0$ be fairly small and $p \in (1,2]$. Let $K\csubset \partial \Omega \cap B^{3}_{r}(x_{0})$ be relatively open. If $u \in W^{1, p}(\Omega\cap B^{3}_{r}(x_{0}), \mathbb{R}^{\nu})$ satisfies $\mathrm{div}(T^{p}_{u})=0$ in $\mathscr{D}^{\prime}(\Omega\cap B^{3}_{r}(x_{0}), \mathbb{R}^{3})$ and if \(Du\) is continuous at every point of \(\partial \Omega \cap B^{3}_{r}(x_{0})\), then 
    \begin{equation}
    \int_{K}|Du|^{p}\diff \mathcal{H}^{2}\leq \frac{C}{p-1}\left(\int_{\Omega\cap B^{3}_{r}(x_{0})}\frac{|Du|^{p}}{p}\diff x + \int_{\partial \Omega \cap B^{3}_{r}(x_{0})}\frac{|D_{\top}u|^{p}}{p}\diff \mathcal{H}^{2}\right),
    \end{equation}
    where $C=C(K)>0$. 
\end{prop}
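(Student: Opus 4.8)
The plan is to use the boundary stress-energy identity of Proposition~\ref{prop int by parts tensor} with a cleverly chosen vector field $\xi$, exploiting the fact that $\overline{\Omega}$ is of class $C^{2}$ so that near $x_{0}$ the boundary $\partial \Omega$ looks (after a $C^{1}$ change of variables) like a piece of a hyperplane. The key point is to choose $\xi \in C^{1}_{c}(B^{3}_{r}(x_{0}), \mathbb{R}^{3})$ so that on $K$ the vector $\xi$ is a nonnegative multiple of the outward unit normal $\nu$: then $\langle \nu, T^{p}_{u}[\xi]\rangle$ on $\partial \Omega$ picks out the ``normal component'' of $T^{p}_{u}$, which, since $Du$ restricted to $\partial \Omega$ is a tangential derivative plus a normal derivative, can be expanded. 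Concretely, writing $Du = D_{\top}u \oplus \partial_{\nu} u$ at a boundary point and using the explicit form \eqref{eq_cai5Chohqu5eyuciezeonge4} of $T^{p}_{u}$, one computes that $\langle \nu, T^{p}_{u}[\nu]\rangle = \tfrac{|Du|^{p}}{p} - \tfrac{|\partial_{\nu}u|^{2}}{|Du|^{2-p}}$, and since $|\partial_{\nu}u|^{2} \le |Du|^{2}$ we get $\langle \nu, T^{p}_{u}[\nu]\rangle \le \tfrac{|Du|^{p}}{p} - \tfrac{|\partial_{\nu}u|^{2}}{|Du|^{2-p}}$, which is not directly an upper bound for $|Du|^{p}$; the trick is rather to bound $|\partial_{\nu}u|$ \emph{from below} in terms of $|Du|$ minus the tangential part, so that $|Du|^{p} \lesssim |\partial_{\nu}u|^{2}|Du|^{p-2} + |D_{\top}u|^{p}$, and then $-\langle \nu, T^{p}_{u}[\nu]\rangle \gtrsim \tfrac{1}{p}\bigl((p-1)|Du|^{p} - |D_{\top}u|^{p}\bigr)$ pointwise on $\partial \Omega$, up to harmless constants. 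This is where the factor $\tfrac{1}{p-1}$ in the statement comes from.

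First I would make this pointwise algebraic inequality precise: for $v \in \mathbb{R}^{\nu}\otimes\mathbb{R}^{3}$ with orthogonal decomposition $v = v_{\top} + v_{\nu}$ into its action on $T_{x}\partial\Omega$ and on $\mathbb{R}\nu$, one has $-\langle \nu, T^{p}_{v}[\nu]\rangle = \tfrac{|v_{\nu}|^{2}}{|v|^{2-p}} - \tfrac{|v|^{p}}{p}$ and, using $|v|^{2} = |v_{\top}|^{2} + |v_{\nu}|^{2}$ together with the elementary inequality $(a+b)^{p/2} \le a^{p/2} + b^{p/2}$ and Young's inequality, $-\langle \nu, T^{p}_{v}[\nu]\rangle \ge \tfrac{p-1}{p}|v|^{p} - C_{p}|v_{\top}|^{p}$ with $C_{p}$ uniformly bounded for $p \in (1,2]$. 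Then I would choose $\xi$: take a cutoff $\chi \in C^{1}_{c}(B^{3}_{r}(x_{0}))$ with $\chi \equiv 1$ on a neighborhood of $\overline{K}$, $0 \le \chi \le 1$, and define $\xi(x) = \chi(x)\,N(x)$, where $N$ is a $C^{1}$ extension to $B^{3}_{r}(x_{0})$ of $-\nu$ (the inward normal field), which exists and is $C^{1}$ because $\partial\Omega$ is $C^{2}$ — concretely $N = -\nabla\dist(\cdot,\partial\Omega)$ in the tubular neighborhood, extended. Then $\langle \nu, T^{p}_{u}[\xi]\rangle = -\chi \langle \nu, T^{p}_{u}[\nu]\rangle$ on $\partial\Omega$, so Proposition~\ref{prop int by parts tensor} gives
\[
\int_{\partial\Omega\cap B^{3}_{r}(x_{0})} \chi\,\bigl(-\langle \nu, T^{p}_{u}[\nu]\rangle\bigr)\diff\mathcal{H}^{2}
= - \int_{\Omega\cap B^{3}_{r}(x_{0})} T^{p}_{u} : D\xi \diff x.
\]
On $\overline{K}$ where $\chi \equiv 1$, the left side dominates $\int_{K}\bigl(\tfrac{p-1}{p}|Du|^{p} - C_{p}|D_{\top}u|^{p}\bigr)\diff\mathcal{H}^{2}$; the right side is bounded by $\|D\xi\|_{\infty}\int_{\Omega\cap B^{3}_{r}(x_{0})}|T^{p}_{u}|\diff x \le C\int_{\Omega\cap B^{3}_{r}(x_{0})}|Du|^{p}\diff x$, since $|T^{p}_{u}| \le C|Du|^{p}$ from \eqref{eq_cai5Chohqu5eyuciezeonge4}, with $C$ depending only on $K$ (through $\chi$ and the geometry of $\partial\Omega$ in $B^{3}_{r}(x_{0})$). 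Rearranging and multiplying by $\tfrac{p}{p-1}$ yields the claimed estimate.

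The main obstacle I anticipate is the pointwise inequality on $\partial\Omega$ — specifically being careful that the orthogonal splitting $Du = D_{\top}u + \partial_{\nu}u$ is the one genuinely relevant to $T^{p}_{u}$, since $T^{p}_{u}$ is built from $Du\otimes Du = (Du)^{\mathrm T}Du$ acting on $\mathbb{R}^{3}$, not on $\mathbb{R}^{\nu}$; here $v_{\nu} = (Du)\nu \in \mathbb{R}^{\nu}$ and $|v_{\top}|^{2} = |Du|^{2} - |(Du)\nu|^{2} = |D_{\top}u|^{2}$, so the identity $-\langle\nu,T^{p}_{u}[\nu]\rangle = |(Du)\nu|^{2}|Du|^{p-2} - \tfrac{|Du|^{p}}{p}$ is exactly right and the decomposition is the correct one. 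A secondary point needing care is the regularity hypothesis: Proposition~\ref{prop int by parts tensor} requires $Du$ continuous on $\partial\Omega\cap B^{3}_{r}(x_{0})$, which is assumed, so the boundary integral $\int_{K}|Du|^{p}\diff\mathcal{H}^{2}$ is well-defined and the identity applies; one should also note that $D_{\top}u = (Du)|_{T\partial\Omega}$ and the restriction $u|_{\partial\Omega}$ is the $W^{1,p}$ boundary trace, so $\int_{\partial\Omega\cap B^{3}_{r}(x_{0})}\tfrac{|D_{\top}u|^{p}}{p}\diff\mathcal{H}^{2}$ is the quantity appearing in the statement. Finally I would remark that the dependence of $C$ only on $K$ (and not on $p$) is because the cutoff $\chi$ and the extended normal field $N$ depend only on $K$ and $\partial\Omega$, while all $p$-dependent constants ($C_{p}$, the $|T^{p}_{u}| \le C|Du|^{p}$ bound) are uniform over $p \in (1,2]$.
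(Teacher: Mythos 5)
Your approach is sound and in fact somewhat cleaner than the paper's. The paper proves the boundary estimate by first establishing (Lemma~\ref{lemma normal der}) the weaker inequality $(p-1)|D_{\perp} u|^{p}\leq -p\langle\nu, T^{p}_{u}[\nu]\rangle+(3-p)|D_{\top}u|^{p}$ via Young's inequality and subadditivity of $t\mapsto t^{p/2}$, and then runs a sign argument by splitting $\partial\Omega\cap B^3_r(x_0)$ into the set $E=\{(p-1)|D_{\perp}u|^2\ge|D_{\top}u|^2\}$, where $-\langle\nu,T^p_u[\nu]\rangle\ge0$, and its complement, where they show $0\le\langle\nu,T^p_u[\nu]\rangle\le\tfrac{2}{p}|D_{\top}u|^p$. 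Your pointwise inequality $-\langle\nu,T^p_u[\nu]\rangle\ge\tfrac{p-1}{p}|Du|^p-|D_{\top}u|^p$ replaces both of these steps with a single algebraic observation, and it is correct (indeed essentially an identity): since $|D_{\perp}u|^2=|Du|^2-|D_{\top}u|^2$, one has $-\langle\nu,T^p_u[\nu]\rangle=\tfrac{p-1}{p}|Du|^p-\tfrac{|D_{\top}u|^2}{|Du|^{2-p}}$, and $\tfrac{|D_{\top}u|^2}{|Du|^{2-p}}\le|D_{\top}u|^p$ because $|Du|\ge|D_{\top}u|$ and $2-p\ge0$; the constant $C_p=1$ is uniform. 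You do not actually need the $(a+b)^{p/2}\le a^{p/2}+b^{p/2}$ or Young steps you invoke.

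The one imprecision worth flagging is the sentence ``On $\overline K$ where $\chi\equiv1$, the left side dominates $\int_K(\tfrac{p-1}{p}|Du|^p-C_p|D_{\top}u|^p)$.'' Because $-\langle\nu,T^p_u[\nu]\rangle$ can be negative on $(\partial\Omega\cap B^3_r(x_0))\setminus K$, the integral $\int_{\partial\Omega\cap B^3_r(x_0)}\chi(-\langle\nu,T^p_u[\nu]\rangle)$ is not simply bounded below by the corresponding integral over $K$. The fix is that the pointwise inequality holds on all of $\partial\Omega\cap B^3_r(x_0)$, and then, using $0\le\chi\le1$ with $\chi\equiv1$ on $K$, one gets
\[
\int_{\partial\Omega\cap B^3_r(x_0)}\chi\bigl(-\langle\nu,T^p_u[\nu]\rangle\bigr)\diff\mathcal{H}^2
\ge \frac{p-1}{p}\int_{K}|Du|^p\diff\mathcal{H}^2
- \int_{\partial\Omega\cap B^3_r(x_0)}|D_{\top}u|^p\diff\mathcal{H}^2,
\]
where the first term on the right keeps only $K$ (dropping the nonnegative contribution of $\chi\cdot\tfrac{p-1}{p}|Du|^p$ elsewhere) while the second term must be taken over the full boundary patch, since $\chi|D_{\top}u|^p\le|D_{\top}u|^p$ there. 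This is exactly the quantity that appears in the statement, so the argument closes once you estimate $|T^p_u|\le(1+\sqrt3/p)|Du|^p$ on the interior integral and multiply through by $p/(p-1)$. So the proof strategy is valid, the key algebraic lemma is correct and even a genuine simplification of the paper's Lemma~\ref{lemma normal der} plus set-splitting, but the bookkeeping of the boundary integral as written skips the step just described.
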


To prove Proposition~\ref{prop bound normal}, we need the next estimate.
\begin{lemma}\label{lemma normal der}
Let $\Omega \subset \mathbb{R}^{3}$ be a bounded domain of class $C^{2}$, $x_{0} \in \partial \Omega$, $r>0$ be fairly small and $p \in [1,2]$. Let $u \in W^{1, p}(\Omega\cap B^{3}_{r}(x_{0}), \mathbb{R}^{\nu})$ and \(Du\) be continuous at every point of \(\partial \Omega \cap B^{3}_{r}(x_{0})\). Then the following estimate holds
\begin{equation}\label{est normal der}
    (p-1)|D_{\perp} u|^{p}\leq -p\langle\nu, T^{p}_{u}[\nu]\rangle+(3-p)|D_{\top}u|^{p}
\end{equation}
on $\partial \Omega \cap B^{3}_{r}(x_{0})$, where $\nu \in C^{1}(\partial \Omega, \mathbb{S}^{2})$ stands for the outward pointing unit normal vector field to $\partial \Omega$. 
\end{lemma}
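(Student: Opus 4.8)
The statement is a pointwise algebraic inequality on $\partial \Omega \cap B^3_r(x_0)$, valid at every point where $Du$ is continuous (so that the trace $D_\top u$ and the normal derivative $D_\perp u$ make classical sense). The plan is to fix such a boundary point $y$, introduce an orthonormal frame adapted to $\partial\Omega$ at $y$, and expand both $\langle \nu, T^p_u[\nu]\rangle$ and $|D_\top u|^p$, $|D_\perp u|^p$ in terms of the components of $Du$ in that frame. Everything reduces to a one-variable convexity/elementary inequality, so no heavy machinery is needed; the work is purely bookkeeping with $T^p_u = \frac{|Du|^p}{p}\mathrm{Id} - \frac{Du\otimes Du}{|Du|^{2-p}}$ from \eqref{eq_cai5Chohqu5eyuciezeonge4}.

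First I would set up notation: at the boundary point $y$, let $\nu = \nu(y)$ be the outward unit normal and let $\tau_1,\tau_2$ span $T_y\partial\Omega$, so $\{\tau_1,\tau_2,\nu\}$ is an orthonormal basis of $\mathbb{R}^3$. Write $a = D_\perp u := Du[\nu] \in \mathbb{R}^\nu$ and note $|D_\top u|^2 = |Du[\tau_1]|^2 + |Du[\tau_2]|^2$, while $|Du|^2 = |D_\top u|^2 + |a|^2$. Using $Du\otimes Du = (Du)^{\mathrm T}Du$, one computes directly
\[
\langle \nu, T^p_u[\nu]\rangle = \frac{|Du|^p}{p} - \frac{|Du[\nu]|^2}{|Du|^{2-p}} = \frac{|Du|^p}{p} - \frac{|a|^2}{|Du|^{2-p}}.
\]
Hence the claimed inequality \eqref{est normal der}, after multiplying the displayed identity by $-p$, becomes
\[
(p-1)|a|^p \le -|Du|^p + \frac{p|a|^2}{|Du|^{2-p}} + (3-p)(|Du|^2 - |a|^2)^{p/2}.
\]
Setting $s = |a|^2 \ge 0$ and $M = |Du|^2 = s + |D_\top u|^2 \ge s$, this is the scalar inequality
\[
(p-1)s^{p/2} + M^{p/2} \le p\,s\,M^{p/2-1} + (3-p)(M-s)^{p/2},
\]
to be proved for all $0 \le s \le M$ and $p \in [1,2]$.

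The main (and essentially only) obstacle is verifying this scalar inequality cleanly. I would handle it by homogeneity: divide by $M^{p/2}$ and put $t = s/M \in [0,1]$, reducing to
\[
(p-1)t^{p/2} + 1 \le p\,t + (3-p)(1-t)^{p/2}, \qquad t \in [0,1],\ p\in[1,2].
\]
At $t=0$ both sides equal... wait, the right side is $0 + (3-p)$, the left is $1$; since $3-p \ge 1$ this holds, with equality only at $p=2$. At $t=1$ both sides equal $p$. I would then check the inequality by showing the function $\phi(t) = p\,t + (3-p)(1-t)^{p/2} - (p-1)t^{p/2} - 1$ is nonnegative on $[0,1]$: compute $\phi(1)=0$ and $\phi'(t) = p - \tfrac{p(3-p)}{2}(1-t)^{p/2-1} - \tfrac{p(p-1)}{2}t^{p/2-1}$, and argue (using $p/2-1 \le 0$, so $(1-t)^{p/2-1}$ and $t^{p/2-1}$ are convex/monotone in the appropriate sense) that $\phi'$ has at most one sign change on $(0,1)$, from which $\phi \ge \min(\phi(0),\phi(1)) \ge 0$ or a standard one-variable argument gives $\phi\ge 0$; the edge cases $p=1$ (where the inequality becomes $1 + 2\sqrt{1-t}\cdot\tfrac12$... i.e. $1 \le t + 2(1-t)^{1/2}$, true since $t + 2\sqrt{1-t} \ge t + 2(1-t) = 2-t \ge 1$) and $p=2$ (where it is the identity $t + 1 = 2t + (1-t)$) can be checked by hand as sanity checks. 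Once the scalar inequality is established, substituting back $t = |D_\perp u|^2/|Du|^2$ and multiplying through by $|Du|^p$ yields \eqref{est normal der} at $y$; since $y \in \partial\Omega\cap B^3_r(x_0)$ was an arbitrary point of continuity of $Du$, the proof is complete. (If $Du(y)=0$ the inequality is trivial, $0\le 0$.)
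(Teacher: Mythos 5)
Your overall strategy (compute $\langle\nu,T^p_u[\nu]\rangle$ explicitly, normalize by $|Du|^p$, and reduce everything to a one-variable scalar inequality) is a genuinely different route from the paper's. The paper's proof is a two-line algebraic argument: it starts from $|D_\perp u|^p = |D_\perp u|^p\,|Du|^{p(p-2)/2}\,|Du|^{p(2-p)/2}$, applies Young's inequality with exponents $2/p$ and $2/(2-p)$ to obtain
\begin{equation*}
|D_\perp u|^p \le \frac{p}{2}\,|D_\perp u|^2\,|Du|^{p-2} + \frac{2-p}{2}\,|Du|^p
= -\frac{p}{2}\langle\nu,T^p_u[\nu]\rangle + \frac{3-p}{2}|Du|^p,
\end{equation*}
and then uses subadditivity of $t\mapsto t^{p/2}$ (valid for $p\le 2$) in the form $|Du|^p \le |D_\top u|^p + |D_\perp u|^p$, after which a one-line rearrangement finishes. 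That avoids any calculus. Your route reaches the same destination but replaces these two standard inequalities by a single reduced claim $(p-1)t^{p/2}+1 \le pt + (3-p)(1-t)^{p/2}$ on $[0,1]$, which is a legitimate reformulation.

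The actual gap is in your verification of that scalar inequality. Your argument asserts that $\phi'(t)=p-\tfrac{p(3-p)}{2}(1-t)^{p/2-1}-\tfrac{p(p-1)}{2}t^{p/2-1}$ has at most one sign change on $(0,1)$, and then invokes $\phi\ge\min(\phi(0),\phi(1))$. But for $p\in(1,2)$ one has $p/2-1<0$, so $t^{p/2-1}\to+\infty$ as $t\to0^+$ and $(1-t)^{p/2-1}\to+\infty$ as $t\to1^-$, hence $\phi'(0^+)=\phi'(1^-)=-\infty$. Consequently $\phi'$ cannot change sign exactly once: it either stays negative or changes sign an even number of times, and in the latter case $\phi$ could dip below its endpoint values in the interior, so the inference $\phi\ge\min(\phi(0),\phi(1))$ does not follow. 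The fix is short: for $t\in(0,1)$ and $p\le 2$ the exponent $p/2-1$ is nonpositive, so $t^{p/2-1}\ge 1$ and $(1-t)^{p/2-1}\ge 1$; therefore $(3-p)(1-t)^{p/2-1}+(p-1)t^{p/2-1}\ge(3-p)+(p-1)=2$, which gives $\phi'(t)\le 0$ on $(0,1)$. Then $\phi$ is nonincreasing and $\phi(t)\ge\phi(1)=0$, which is exactly the needed conclusion. With this repair your proof is correct; as written, the sign-change step is a real logical error, not merely an omitted detail.
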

\begin{proof}
    Applying successively  Young's inequality for products,
    the definition of the stress-energy  tensor  $T^{p}_{u}$ and the subadditivity of the function \(t \in \intvo{0}{\infty} \mapsto t^{p/2}\), we have
    \begin{equation*}
    \begin{split}
        |D_{\perp} u|^{p}&=|D_{\perp} u|^{p}|Du|^{\frac{p(p-2)}{2}}|Du|^{\frac{p(2-p)}{2}}\\
        &\leq \frac{p}{2}|D_{\perp}u|^{2}|Du|^{p-2}+\frac{2-p}{2}|Du|^{p}\\
        &=-\frac{p}{2}\biggl(\frac{|Du|^{p}}{p}-\frac{|D_{\perp}u|^{2}}{|Du|^{2-p}}\biggr) + \frac{3-p}{2}|Du|^{p}\\
        &=-\frac{p}{2}\langle\nu, T^{p}_{u}[\nu]\rangle +\frac{3-p}{2}(|D_{\top}u|^{2}+|D_{\perp}u|^{2})^{\frac{p}{2}}\\
        &\leq -\frac{p}{2}\langle\nu, T^{p}_{u}[\nu]\rangle +\frac{3-p}{2}(|D_{\top}u|^{p}+|D_{\perp}u|^{p}),
    \end{split}
    \end{equation*}
    which yields \eqref{est normal der} by rearranging the terms.
\end{proof}
\begin{proof}[Proof of Proposition~\ref{prop bound normal}]
    Let $\theta \in C^{1}(\partial \Omega, \mathbb{R})$ be such that $\theta=1$ on $K$, $0\leq \theta\leq 1$ and $\theta=0$ in $\partial \Omega \setminus B^{3}_{r}(x_{0})$. Fix $\xi \in C^{1}_{c}(B^{3}_{r}(x_{0}), \mathbb{R}^{3})$ such that $\xi=\theta \nu$ on $\partial \Omega$. 
    Applying Proposition~\ref{prop int by parts tensor} and using the Cauchy-Schwarz inequality, and also the facts that $|\mathrm{Id}|=\sqrt{3}$, $|Du \otimes Du|=|Du|^{2}$, we get
    \begin{equation}
     \label{ineq tensor est en balldomain}
     \begin{split}
        \left|\int_{\partial \Omega \cap B^{3}_{r}(x_{0})}\theta \langle \nu, T^{p}_{u}[\nu]\rangle\diff \mathcal{H}^{2} \right|&=\left|\int_{\Omega \cap B^{3}_{r}(x_{0})}T^{p}_{u}:D\xi \diff x\right| \\
        & = \left|\int_{\Omega \cap B^{3}_{r}(x_{0})}\left(\frac{|Du|^{p}}{p}\mathrm{Id}-\frac{Du\otimes Du}{|Du|^{2-p}}\right):D\xi \diff x\right| \\
        &\leq \int_{\Omega \cap B^{3}_{r}(x_{0})}\left(\frac{\sqrt{3}}{p}+1\right)|Du|^{p}|D\xi|\diff x\\
        &\leq (2+\sqrt{3})\|D\xi\|_{L^{\infty}(B^{3}_{r}(x_{0}), \mathbb{R}^{3}\otimes \mathbb{R}^{3})}\int_{\Omega \cap B^{3}_{r}(x_{0})}\frac{|Du|^{p}}{p}\diff x. 
        \end{split}
        \end{equation}
    Setting $E=\{x \in \partial \Omega \cap B^{3}_{r}(x_{0}): (p-1)|D_{\perp}u(x)|^{2}\geq |D_{\top} u(x)|^{2}\}$, observe that $-\langle \nu, T^{p}_{u}[\nu] \rangle \geq 0$ on $E$ and hence
    \begin{equation}\label{fineqboundest}
        -\frac{p}{p-1}\int_{K \cap E} \langle \nu, T^{p}_{u}[\nu] \rangle \diff \mathcal{H}^{2} \leq -\frac{p}{p-1} \int_{E} \theta \langle \nu, T^{p}_{u}[\nu] \rangle \diff \mathcal{H}^{2},
    \end{equation}
    where we have used that $K\csubset \partial \Omega \cap B^{3}_{r}(x_{0})$, $\theta=1$ on $K$ and $0\leq \theta \leq 1$. On the other hand, on $(\partial \Omega \cap B^{3}_{r}(x_{0})) \setminus E$ it holds
    \begin{equation}
     \label{esttensoranotherpartbound}
     \begin{split}
        0 \leq \langle \nu, T^{p}_{u}[\nu] \rangle &= |Du|^{p-2}\biggl(\frac{|D_{\top}u|^{2}+|D_{\perp}u|^{2}}{p}-|D_{\perp}u|^{2}\biggr)\\
        & \leq |D_{\top} u|^{p-2}\biggl(\frac{|D_{\top}u|^{2}+(p-1)|D_{\perp} u|^{2}}{p}\biggr) \leq \frac{2}{p}|D_{\top}u|^{p}. 
        \end{split}
    \end{equation}
    Hereinafter in this proof, $C$ denotes an absolute positive constant that can be different from line to line.  Using Lemma~\ref{lemma normal der}, the estimates \eqref{ineq tensor est en balldomain}, \eqref{fineqboundest} and \eqref{esttensoranotherpartbound}, we deduce the following chain of estimates
    \begin{align*}
        \int_{K}|Du|^{p}\diff \mathcal{H}^{2} &\leq \int_{K}(|D_{\top}u|^{p} +|D_{\perp}u|^{p})\diff \mathcal{H}^{2}  \\ 
        &\leq  - \frac{p}{p-1} \int_{K}\langle \nu, T^{p}_{u}[\nu]\rangle \diff \mathcal{H}^{2} + \frac{C}{p-1}\int_{K}\frac{|D_{\top}u|^{p}}{p}\diff \mathcal{H}^{2}\\
        &\leq -\frac{p}{p-1} \int_{\partial \Omega \cap B^{3}_{r}(x_{0})}\theta \langle \nu, T^{p}_{u}[\nu] \rangle \diff \mathcal{H}^{2}+ \frac{C}{p-1}\int_{\partial \Omega \cap B^{3}_{r}(x_{0})}\frac{|D_{\top} u|^{p}}{p}\diff \mathcal{H}^{2} \\
        &\leq \frac{C}{p-1}\biggl(\|D\xi\|_{\infty} \int_{\Omega \cap B^{3}_{r}(x_{0})}\frac{|Du|^{p}}{p}\diff x + \int_{\partial \Omega \cap B^{3}_{r}(x_{0})}\frac{|D_{\top} u|^{p}}{p}\diff \mathcal{H}^{2}\biggr),
    \end{align*}
    which completes our proof of Proposition~\ref{prop bound normal}.
\end{proof}
Next, we analyze the boundary and inward perturbations for the limit stress-energy tensor, and we prove that the set supporting the associated varifold to this tensor is \textit{minimizing area to first order} in the Brian White sense. First, we analyze the boundary perturbations.
\begin{prop}\label{prop bound perturbations}
    Let $\Omega \subset \mathbb{R}^{3}$ be a bounded domain of class $C^{2}$, $x_{0} \in \partial \Omega$, $r>0$ be fairly small and $p \in [1,+\infty)$. If $u \in W^{1, p}(\Omega\cap B^{3}_{r}(x_{0}), \mathbb{R}^{\nu})$ satisfies $\mathrm{div}(T^{p}_{u})=0$ in $\mathscr{D}^{\prime}(\Omega\cap B^{3}_{r}(x_{0}), \mathbb{R}^{3})$ and if \(Du\) is continuous at every point of \(\partial \Omega \cap B^{3}_{r}(x_{0})\), then for each $\xi \in C^{1}_{c}(B^{3}_{r}(x_{0}), \mathbb{R}^{3})$,
    \begin{equation}\label{est bound per}
        \biggl|\int_{\Omega \cap B^{3}_{r}(x_{0})} T^{p}_{u}:D\xi \diff x\biggr| \leq \left(1+\frac{1}{p}\right)\|\xi\|_{L^{\infty}(\partial \Omega \cap B^{3}_{r}(x_{0}), \mathbb{R}^{3})} \int_{\partial \Omega \cap B^{3}_{r}(x_{0})}|Du|^{p} \diff \mathcal{H}^{2}.
    \end{equation}
    \end{prop}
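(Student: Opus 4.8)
The plan is to reduce the interior integral to a boundary integral by means of the stress-energy identity of Proposition~\ref{prop int by parts tensor}, and then to bound the resulting integrand pointwise on $\partial\Omega \cap B^{3}_{r}(x_{0})$ by a constant multiple of $|Du|^{p}|\xi|$.

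First I would apply Proposition~\ref{prop int by parts tensor}, whose hypotheses coincide with those assumed here, to obtain
\begin{equation*}
\int_{\Omega \cap B^{3}_{r}(x_{0})} T^{p}_{u} : D\xi \diff x = \int_{\partial \Omega \cap B^{3}_{r}(x_{0})} \langle \nu, T^{p}_{u}[\xi] \rangle \diff \mathcal{H}^{2},
\end{equation*}
where $\nu \in C^{1}(\partial\Omega, \mathbb{S}^{2})$ is the outward pointing unit normal and $T^{p}_{u}$ is evaluated pointwise on $\partial\Omega \cap B^{3}_{r}(x_{0})$ thanks to the continuity of $Du$ there (so that the boundary integrand is well defined, with the usual convention that $T^{p}_{u}$ vanishes wherever $Du = 0$).

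Next I would estimate the integrand. Recalling from \eqref{eq_cai5Chohqu5eyuciezeonge4} that $T^{p}_{u} = \frac{|Du|^{p}}{p}\mathrm{Id} - \frac{Du \otimes Du}{|Du|^{2-p}}$ with $Du\otimes Du = (Du)^{\mathrm{T}}Du$, I split
\begin{equation*}
\langle \nu, T^{p}_{u}[\xi] \rangle = \frac{|Du|^{p}}{p}\langle \nu, \xi \rangle - \frac{\langle Du[\nu], Du[\xi] \rangle}{|Du|^{2-p}}.
\end{equation*}
Since $|\nu| = 1$, the first term is at most $\frac{|Du|^{p}}{p}|\xi|$ in absolute value. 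For the second term, the Cauchy-Schwarz inequality together with $|Du[v]| \le |Du|\,|v|$ (valid because $|Du|$ denotes the Hilbert-Schmidt norm, which dominates the operator norm) gives $|\langle Du[\nu], Du[\xi]\rangle| \le |Du[\nu]|\,|Du[\xi]| \le |Du|^{2}|\xi|$, so the second term is bounded by $|Du|^{2}|\xi|/|Du|^{2-p} = |Du|^{p}|\xi|$. Adding the two contributions yields the pointwise bound
\begin{equation*}
|\langle \nu, T^{p}_{u}[\xi] \rangle| \le \Bigl(1 + \frac{1}{p}\Bigr)|Du|^{p}|\xi| \quad \text{on } \partial\Omega \cap B^{3}_{r}(x_{0}).
\end{equation*}

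Finally I would integrate this bound over $\partial\Omega \cap B^{3}_{r}(x_{0})$ and pull $\|\xi\|_{L^{\infty}(\partial\Omega \cap B^{3}_{r}(x_{0}), \mathbb{R}^{3})}$ out of the integral, which produces exactly \eqref{est bound per}. There is essentially no obstacle here: once Proposition~\ref{prop int by parts tensor} is in hand the estimate is purely algebraic, and the only role of the hypotheses is to guarantee that the interior divergence-free condition converts the interior integral into the boundary one and that $|Du|^{p}$ has a legitimate pointwise meaning on $\partial\Omega \cap B^{3}_{r}(x_{0})$.
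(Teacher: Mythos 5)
Your proof is correct and follows the same route as the paper: invoke Proposition~\ref{prop int by parts tensor} to pass to a boundary integral, then establish the pointwise bound $|\langle \nu, T^{p}_{u}[\xi]\rangle|\leq (1+1/p)|Du|^{p}|\xi|$ via the triangle and Cauchy--Schwarz inequalities, and integrate. The paper states the pointwise bound without spelling out the algebra; your version just fills in those details explicitly.
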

    \begin{proof}
        Let $\nu \in C^{1}(\partial \Omega, \mathbb{S}^{2})$ be the outward pointing unit normal vector field to $\partial \Omega$ and let $\xi \in C^{1}_{c}(B^{3}_{r}(x_{0}), \mathbb{R}^{3})$. Applying the triangle and the Cauchy-Schwarz inequalities, we have 
        \[
        |\langle \nu, T^{p}_{u}[\xi]\rangle|\leq (1+1/p)|Du|^{p}|\xi|
        \]
        on $\partial \Omega \cap B^{3}_{r}(x_{0})$. Using this estimate and Proposition~\ref{prop int by parts tensor}, we complete our proof of Proposition~\ref{prop bound perturbations}.
    \end{proof}
\begin{prop}\label{prop div measure}
    Let $\Omega \subset \mathbb{R}^{3}$ be a bounded domain of class $C^{2}$, $x_{0} \in \partial \Omega$, $r>0$ be fairly small, $(p_{n})_{n\in \mathbb{N}}\subset [1,2)$ and $p_{n} \nearrow 2$ as $n\to +\infty$. Let $(u_{n})_{n\in \mathbb{N}}$ be a sequence of mappings such that $u_{n} \in W^{1,p_{n}}(\Omega \cap B^{3}_{r}(x_{0}), \mathbb{R}^{\nu})$,  \(Du_n\) be continuous at every point of \(\partial \Omega \cap B^{3}_{r}(x_{0})\) and $\mathrm{div}(T^{p_{n}}_{u_{n}})=0$ in $\mathscr{D}^{\prime}(\Omega \cap B^{3}_{r}(x_{0}), \mathbb{R}^{3})$. Assume also that $(2-p_{n})T^{p_{n}}_{u_{n}} \overset{*}{\rightharpoonup} T_{*}$ weakly* in $(C(\overline{\Omega \cap B^{3}_{r}(x_{0})}, M_{3}(\mathbb{R})))^{\prime}$. If 
    \begin{equation}\label{cond1lim}
        \limsup_{n\to +\infty} (2-p_{n})\int_{\Omega \cap B^{3}_{r}(x_{0})}\frac{|Du_{n}|^{p_{n}}}{p_{n}}\diff x <+\infty
    \end{equation}
    and
    \begin{equation}\label{cond2lim}
        \limsup_{n\to +\infty} (2-p_{n})\int_{\partial \Omega \cap B^{3}_{r}(x_{0})}\frac{|D_{\top} u_{n}|^{p_{n}}}{p_{n}}\diff \mathcal{H}^{2} <+\infty,
    \end{equation}
    then there exists a constant $C>0$ such that for each $\xi \in C^{1}_{c}(B^{3}_{r}(x_{0}), \mathbb{R}^{3})$ we have 
    \begin{equation}\label{boundary est limit}
        \biggl\lvert \int_{\Omega \cap B^{3}_{r}(x_{0})} T_{*}:D\xi \diff x \biggr\rvert \leq C\|\xi\|_{L^{\infty}(\partial \Omega \cap B^{3}_{r}(x_{0}), \mathbb{R}^{3})}.
    \end{equation}
\end{prop}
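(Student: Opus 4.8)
\textbf{Proof of Proposition~\ref{prop div measure} (proposal).}
The plan is to derive \eqref{boundary est limit} by passing to the weak* limit in an $n$-uniform bound for the pairing $\int_{\Omega\cap B^3_r(x_0)}(2-p_n)T^{p_n}_{u_n}:D\xi\,\diff x$, and that uniform bound will be assembled from the boundary perturbation estimate of Proposition~\ref{prop bound perturbations}, the normal boundary estimate of Proposition~\ref{prop bound normal}, and the energy hypotheses \eqref{cond1lim}--\eqref{cond2lim}. Fix $\xi \in C^1_c(B^3_r(x_0),\mathbb{R}^3)$. Since $\operatorname{supp}\xi$ is a compact subset of $B^3_r(x_0)$, choose a relatively open set $K$ with $(\operatorname{supp}\xi)\cap\partial\Omega \subset K \csubset \partial\Omega\cap B^3_r(x_0)$. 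Writing $\nu\in C^1(\partial\Omega,\mathbb{S}^2)$ for the outward unit normal, Proposition~\ref{prop int by parts tensor} together with the Cauchy--Schwarz inequality (exactly as in the proof of Proposition~\ref{prop bound perturbations}) gives the pointwise bound $\abs{\langle \nu, T^{p_n}_{u_n}[\xi]\rangle} \le (1+1/p_n)\abs{Du_n}^{p_n}\abs{\xi}$ on $\partial\Omega\cap B^3_r(x_0)$; since $\xi$ vanishes off $\operatorname{supp}\xi$, integrating and using $p_n\ge 3/2$ for $n$ large yields
\[
\Bigl\lvert \int_{\Omega\cap B^3_r(x_0)} T^{p_n}_{u_n}:D\xi\,\diff x\Bigr\rvert \le 2\,\norm{\xi}_{L^\infty(\partial\Omega\cap B^3_r(x_0),\mathbb{R}^3)}\int_{K} \abs{Du_n}^{p_n}\,\diff\mathcal{H}^2 .
\]

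Because $K \csubset \partial\Omega\cap B^3_r(x_0)$, Proposition~\ref{prop bound normal} applies and bounds $\int_K \abs{Du_n}^{p_n}\,\diff\mathcal{H}^2$ by $\tfrac{C(K)}{p_n-1}$ times the sum of $\int_{\Omega\cap B^3_r(x_0)}\tfrac{\abs{Du_n}^{p_n}}{p_n}\,\diff x$ and $\int_{\partial\Omega\cap B^3_r(x_0)}\tfrac{\abs{D_\top u_n}^{p_n}}{p_n}\,\diff\mathcal{H}^2$ (the key point being that the full boundary gradient produced by the term $\langle\nu,T^{p_n}_{u_n}[\xi]\rangle$ is converted, via the Pohozaev-type identity and Lemma~\ref{lemma normal der}, into the purely tangential boundary energy controlled by \eqref{cond2lim} plus an interior term controlled by \eqref{cond1lim}). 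Multiplying the previous display through by $(2-p_n)$, using $p_n-1\ge \tfrac12$ for $n$ large, and setting $\mu_n = (2-p_n)\int_{\Omega\cap B^3_r(x_0)}\tfrac{\abs{Du_n}^{p_n}}{p_n}\,\diff x$ and $\sigma_n = (2-p_n)\int_{\partial\Omega\cap B^3_r(x_0)}\tfrac{\abs{D_\top u_n}^{p_n}}{p_n}\,\diff\mathcal{H}^2$, I obtain
\[
(2-p_n)\Bigl\lvert \int_{\Omega\cap B^3_r(x_0)} T^{p_n}_{u_n}:D\xi\,\diff x\Bigr\rvert \le C'(K)\,\norm{\xi}_{L^\infty(\partial\Omega\cap B^3_r(x_0),\mathbb{R}^3)}\,(\mu_n+\sigma_n).
\]

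Finally, pass to the limit. By \eqref{cond1lim} and \eqref{cond2lim}, $\limsup_{n\to+\infty}(\mu_n+\sigma_n)<+\infty$, so the right-hand side above has $\limsup$ bounded by some $C''(K)\norm{\xi}_{L^\infty(\partial\Omega\cap B^3_r(x_0),\mathbb{R}^3)}$. On the other hand $D\xi$, extended by zero, is a continuous $M_3(\mathbb{R})$-valued function on $\mathbb{R}^3$, in particular on $\overline{\Omega\cap B^3_r(x_0)}$, so the assumed convergence $(2-p_n)T^{p_n}_{u_n}\overset{*}{\rightharpoonup}T_*$ in $(C(\overline{\Omega\cap B^3_r(x_0)},M_3(\mathbb{R})))'$ gives $\int_{\Omega\cap B^3_r(x_0)}(2-p_n)T^{p_n}_{u_n}:D\xi\,\diff x \to \int_{\Omega\cap B^3_r(x_0)}T_*:D\xi\,\diff x$, where the right-hand side is read as the pairing of the matrix-valued Radon measure $T_*$ with $D\xi$. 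Combining the two displays yields $\lvert\int_{\Omega\cap B^3_r(x_0)}T_*:D\xi\,\diff x\rvert \le C''(K)\norm{\xi}_{L^\infty(\partial\Omega\cap B^3_r(x_0),\mathbb{R}^3)}$, which is \eqref{boundary est limit}. I would conclude with the remark that $K$, and hence the constant, can be taken to depend only on a fixed compact subset of $B^3_r(x_0)$ containing $\operatorname{supp}\xi$ (together with $\Omega$, $x_0$, $r$ and the sequence $(u_n)$). The only genuinely delicate point in the argument is this last dependence: the constant $C(K)$ coming from Proposition~\ref{prop bound normal} degenerates as $K$ exhausts $\partial\Omega\cap B^3_r(x_0)$, which is precisely why the estimate is intrinsically local in the support of $\xi$; everything else is an assembly of already-established inequalities, together with the trivial observation that $1/(p_n-1)$ stays bounded since $p_n\nearrow 2$.
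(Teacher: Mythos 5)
Your argument follows the same route as the paper's: apply Proposition~\ref{prop bound perturbations} to reduce to a boundary integral of $\lvert D u_n\rvert^{p_n}\lvert\xi\rvert$, control that via the normal boundary estimate of Proposition~\ref{prop bound normal}, then pass to the limit using the weak* convergence and hypotheses \eqref{cond1lim}--\eqref{cond2lim}. The one place you are actually more careful than the paper's displayed chain of estimates is the introduction of the relatively compact set $K$ with $(\operatorname{supp}\xi)\cap\partial\Omega \subset K \csubset \partial\Omega\cap B^3_r(x_0)$: Proposition~\ref{prop bound normal} only bounds $\int_K\lvert Du\rvert^p$ for such a $K$ (its proof uses a cutoff $\theta$ with compact support in $B^3_r(x_0)$), while the paper writes the estimate as though it applied to the full boundary $\partial\Omega\cap B^3_r(x_0)$. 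Consequently, as you correctly observe in your closing remark, the constant you produce genuinely depends on $\operatorname{supp}\xi$, so the proposition as stated (with a single $C$ for all $\xi \in C^1_c(B^3_r(x_0),\mathbb{R}^3)$) should be read with that local dependence understood; this is still exactly the bound needed downstream, since in the proof of Theorem~\ref{thm boundary concentration} the only use of Proposition~\ref{prop div measure} is to conclude $\mathrm{div}(T_*)\in(C_0(B^3_r(x_0),\mathbb{R}^3))'$, for which a constant $C_{K'}$ for each compact $K'\subset B^3_r(x_0)$ suffices. Your proof is correct.
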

\begin{proof}
    By the weak* convergence and the convexity, and also using Proposition~\ref{prop bound normal}, Proposition~\ref{prop bound perturbations} and the estimates \eqref{cond1lim}, \eqref{cond2lim}, we obtain the following chain of estimates
    \begin{equation*}
    \begin{split}
     \biggl\lvert \int_{\Omega \cap B^{3}_{r}(x_{0})}T_{*}:D\xi \diff x \biggr\rvert 
     &\leq \liminf_{n\to +\infty}\, \biggl\lvert (2-p_{n})\int_{\Omega \cap B^{3}_{r}(x_{0})}T^{p_{n}}_{u_{n}}:D\xi \diff x \biggr\rvert \\
     &\leq  \limsup_{n\to +\infty}2\|\xi\|_{L^{\infty}(\partial \Omega \cap B^{3}_{r}(x_{0}), \mathbb{R}^{3})}(2-p_{n})
     \int_{ \partial \Omega \cap B^{3}_{r}(x_{0})}|Du_{n}|^{p_{n}} \diff \mathcal{H}^{2} \\
      &\leq \limsup_{n\to +\infty} C^{\prime}\|\xi\|_{L^{\infty}(\partial \Omega \cap B^{3}_{r}(x_{0}), \mathbb{R}^{3})}(2-p_{n})\\
      &\hspace{6em}\biggl(\int_{\Omega \cap B^{3}_{r}(x_{0})}\frac{|Du_{n}|^{p_{n}}}{p_{n}}\diff x + \int_{\partial \Omega \cap B^{3}_{r}(x_{0})}\frac{|D_{\top} u_{n}|^{p_{n}}}{p_{n}}\diff \mathcal{H}^{2}\biggr)\\
      &\leq C\|\xi\|_{L^{\infty}(\partial \Omega \cap B^{3}_{r}(x_{0}), \mathbb{R}^{3})},
    \end{split}
    \end{equation*}
    where $C^{\prime}, C>0$ are constants independent of $\xi$. This leads to \eqref{boundary est limit} and thereby completes our proof of Proposition~\ref{prop div measure}.
\end{proof}
Now we analyze the inward perturbations.
\begin{prop}\label{prop div nonnegative}
    Let $\Omega \subset \mathbb{R}^{3}$ be a bounded domain of class $C^{2}$, $x_{0} \in \partial \Omega$, $r>0$ be fairly small and $p \in [1,2]$. If $u \in W^{1,p}(\Omega \cap B^{3}_{r}(x_{0}), \mathbb{R}^{\nu})$ satisfies $\mathrm{div}(T^{p}_{u})=0$ in $\mathscr{D}^{\prime}(\Omega\cap B^{3}_{r}(x_{0}), \mathbb{R}^{3})$ and if \(Du\) is continuous at every point of \(\partial \Omega \cap B^{3}_{r}(x_{0})\), then for each $\xi \in C^{1}_{c}(B^{3}_{r}(x_{0}), \mathbb{R}^{3})$ such that $\langle \xi, \nu \rangle \leq 0$ on $\partial \Omega \cap B^{3}_{r}(x_{0})$, where $\nu \in C^{1}(\partial \Omega, \mathbb{S}^{2})$ denotes the outward pointing unit normal vector field to $\partial \Omega$, we have
    \begin{equation*}
        \int_{\Omega \cap B^{3}_{r}(x_{0})} T^{p}_{u}:D\xi \diff x \geq -\|\xi\|_{L^{\infty}(U, \mathbb{R}^{3})}\int_{U}\left((3-p)\frac{|D_{\top}u|^{p}}{p}+|D_{\perp}u|^{p-1}|D_{\top}u|\right) \diff \mathcal{H}^{2},
    \end{equation*}
    where $U=\partial \Omega \cap B^{3}_{r}(x_{0})$.
\end{prop}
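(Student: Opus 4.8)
The plan is to reduce the volume integral to a boundary integral by means of Proposition~\ref{prop int by parts tensor} and then to estimate the boundary integrand pointwise on $U = \partial\Omega \cap B^3_r(x_0)$, splitting the perturbation $\xi$ into its components tangent and normal to $\partial\Omega$. Denote by $\nu \in C^1(\partial\Omega, \mathbb{S}^2)$ the outward unit normal and, at each point of $U$, write $\xi = \xi_\top + \langle \xi, \nu\rangle\nu$ with $\xi_\top \perp \nu$. Then Proposition~\ref{prop int by parts tensor} gives
\begin{equation*}
\int_{\Omega \cap B^3_r(x_0)} T^p_u : D\xi \diff x = \int_U \langle \nu, T^p_u[\xi_\top]\rangle \diff \mathcal{H}^2 + \int_U \langle \xi, \nu\rangle \langle \nu, T^p_u[\nu]\rangle \diff \mathcal{H}^2,
\end{equation*}
so it suffices to estimate from below the integrand of each of the two terms on the right-hand side, using only that $\langle \xi, \nu\rangle \le 0$ on $U$ and that $p\in[1,2]$.

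First I would treat the tangential term. Recalling the definition \eqref{eq_cai5Chohqu5eyuciezeonge4} and using $\langle \nu, \xi_\top\rangle = 0$, one gets on $U$ the identity $\langle \nu, T^p_u[\xi_\top]\rangle = -|Du|^{p-2}\langle D_\perp u, D_\top u[\xi_\top]\rangle$, where $D_\perp u = Du[\nu]$ and $D_\top u$ is the tangential derivative. Since $|Du|^2 = |D_\top u|^2 + |D_\perp u|^2$ on $U$, we have $|Du| \ge |D_\perp u|$, hence $|Du|^{p - 2} \le |D_\perp u|^{p - 2}$; combining this with $|D_\top u[\xi_\top]| \le |D_\top u|\,|\xi_\top| \le |D_\top u|\,\|\xi\|_{L^\infty(U)}$ and the Cauchy--Schwarz inequality yields
\begin{equation*}
\langle \nu, T^p_u[\xi_\top]\rangle \ge -\|\xi\|_{L^\infty(U)}\,|D_\perp u|^{p - 1}\,|D_\top u| \qquad \text{on } U,
\end{equation*}
with the usual convention that the right-hand side is $0$ where $D_\perp u = 0$.

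Next comes the normal term, which is where the hypothesis $\langle \xi, \nu\rangle \le 0$ is used. A direct computation from \eqref{eq_cai5Chohqu5eyuciezeonge4}, analogous to the one behind \eqref{esttensoranotherpartbound}, gives $\langle \nu, T^p_u[\nu]\rangle = \tfrac{1}{p}|Du|^{p - 2}\bigl(|D_\top u|^2 - (p - 1)|D_\perp u|^2\bigr)$ on $U$. On the set where $\langle \nu, T^p_u[\nu]\rangle \le 0$ the product $\langle \xi, \nu\rangle\langle \nu, T^p_u[\nu]\rangle$ is nonnegative and thus contributes nonnegatively. On the complementary set, where $\langle \nu, T^p_u[\nu]\rangle > 0$, one discards the term $-(p - 1)|D_\perp u|^2 \le 0$ and uses $|Du|^{p - 2} \le |D_\top u|^{p - 2}$ (valid since $|Du| \ge |D_\top u|$ and $p\le2$) to obtain $0 < \langle \nu, T^p_u[\nu]\rangle \le \tfrac{1}{p}|D_\top u|^p \le (3 - p)\tfrac{|D_\top u|^p}{p}$, because $3 - p \ge 1$; hence there $\langle \xi, \nu\rangle\langle \nu, T^p_u[\nu]\rangle \ge -\|\xi\|_{L^\infty(U)}(3 - p)\tfrac{|D_\top u|^p}{p}$. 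Integrating the two pointwise bounds over $U$, adding them, and substituting into the stress--energy identity above gives exactly the asserted inequality. I do not expect a genuine obstacle here; the only points needing care are the algebraic manipulation of $T^p_u$ (which parallels Lemma~\ref{lemma normal der} and the proof of Proposition~\ref{prop bound normal}) and the bookkeeping of the degenerate cases $|Du| = 0$ and $|D_\perp u| = 0$, where the quotients involving $|Du|^{2 - p}$ are interpreted by continuity.
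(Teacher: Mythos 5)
Your proof is correct and follows essentially the same route as the paper: it passes to the boundary integral via Proposition~\ref{prop int by parts tensor}, decomposes $\xi$ into tangential and normal parts along $\partial\Omega$, and then bounds the two resulting boundary contributions pointwise by exactly the terms appearing in the statement. The only cosmetic difference is that the paper invokes Lemma~\ref{lemma normal der} to control the normal contribution, whereas you reprove that estimate inline by a sign-based case split on $\langle\nu, T^p_u[\nu]\rangle$; the resulting pointwise bound is the same.
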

\begin{proof}
    Using Lemma~\ref{lemma normal der} and the fact that $\langle \xi, \nu \rangle \le 0$ on $U$, we deduce that
    \begin{align*}
    \langle \nu, T^{p}_{u}[\xi] \rangle & = \langle \nu, \xi \rangle \langle \nu, T^{p}_{u}[\nu]\rangle - |Du|^{p-2}\langle Du[\xi-\langle \nu, \xi \rangle \nu], Du[\nu] \rangle \\
    & \geq \langle \nu, \xi \rangle \left(\frac{1-p}{p}\right) |D_{\perp} u|^{p}+\langle \nu, \xi \rangle \left(\frac{3-p}{p}\right)|D_{\top} u|^{p}-\|\xi\|_{L^{\infty}(U, \mathbb{R}^{3})}|D_{\perp}u|^{p-1}|D_{\top}u|\\
    & \geq -\|\xi\|_{L^{\infty}(U, \mathbb{R}^{3})}\left(\frac{3-p}{p}\right)|D_{\top} u|^{p}-\|\xi\|_{L^{\infty}(U, \mathbb{R}^{3})}|D_{\perp}u|^{p-1}|D_{\top}u|.
\end{align*}
The conclusion then follows from Proposition~\ref{prop int by parts tensor}.
\end{proof}
The next proposition says that the set supporting the varifold associated to the tensor $T_{*}$ of Proposition~\ref{prop div measure} is \textit{minimizing area to first order} in the Brian White sense (see \cite{White_2010}).
\begin{prop}\label{prop min BW}
    Let $\Omega \subset \mathbb{R}^{3}$ be a bounded domain of class $C^{2}$, $x_{0} \in \partial \Omega$ and $r>0$ be fairly small. Let $(p_{n})_{n\in \mathbb{N}}$, $(u_{n})_{n \in \mathbb{N}}$ and $T_{*}$ be as in Proposition~\ref{prop div measure}. Let $\nu \in C^{1}(\partial \Omega, \mathbb{S}^{2})$ be the outward pointing unit normal vector field to $\partial \Omega$. Assume that 
    \begin{equation}\label{tangent limit zero}
      \lim_{n\to +\infty}(2-p_{n})\int_{\partial \Omega \cap B^{3}_{r}(x_{0})}\frac{|D_{\top}u_{n}|^{p_{n}}}{p_{n}}\diff \mathcal{H}^{2}=0.
\end{equation}
Then for each $\xi \in C^{1}_{c}(B^{3}_{r}(x_{0}), \mathbb{R}^{3})$ such that $\langle \xi, \nu \rangle \le 0$ on $\partial \Omega \cap B^{3}_{r}(x_{0})$,
\begin{equation*}
    \int_{\Omega \cap B^{3}_{r}(x_{0})}T_{*}:D\xi \diff x \geq 0.
\end{equation*}
\end{prop}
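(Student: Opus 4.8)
The plan is to pass to the limit in the inward-perturbation inequality of Proposition~\ref{prop div nonnegative}, exploiting that the tangential boundary energy of the $u_n$'s vanishes in the limit by \eqref{tangent limit zero}. Fix $\xi \in C^{1}_{c}(B^{3}_{r}(x_{0}), \mathbb{R}^{3})$ with $\langle \xi, \nu \rangle \le 0$ on $\partial \Omega \cap B^{3}_{r}(x_{0})$. Applying Proposition~\ref{prop div nonnegative} to each $u_n$ (which is legitimate since $Du_n$ is continuous up to $\partial \Omega \cap B^{3}_{r}(x_{0})$ and $\mathrm{div}(T^{p_n}_{u_n}) = 0$), multiplying by $(2-p_n) > 0$, and writing $U = \partial \Omega \cap B^{3}_{r}(x_{0})$, I obtain
\begin{equation*}
(2-p_n)\int_{\Omega \cap B^{3}_{r}(x_{0})} T^{p_n}_{u_n} : D\xi \diff x \geq -\|\xi\|_{L^{\infty}(U, \mathbb{R}^{3})} (2-p_n)\int_{U}\left((3-p_n)\frac{|D_{\top}u_n|^{p_n}}{p_n}+|D_{\perp}u_n|^{p_n-1}|D_{\top}u_n|\right) \diff \mathcal{H}^{2}.
\end{equation*}
The left-hand side converges to $\int_{\Omega \cap B^{3}_{r}(x_{0})} T_* : D\xi \diff x$ by the assumed weak* convergence $(2-p_n) T^{p_n}_{u_n} \overset{*}{\rightharpoonup} T_*$, since $D\xi \in C(\overline{\Omega \cap B^{3}_{r}(x_{0})}, M_3(\mathbb{R}))$. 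So it remains to show that the right-hand side tends to $0$ as $n \to +\infty$, i.e. that both boundary integrals, multiplied by $(2-p_n)$, vanish in the limit.

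\textbf{Controlling the boundary terms.} The first term, $(2-p_n)\int_U (3-p_n)\frac{|D_{\top}u_n|^{p_n}}{p_n}\diff \mathcal{H}^2$, is bounded by $3$ times $(2-p_n)\int_U \frac{|D_{\top}u_n|^{p_n}}{p_n}\diff\mathcal{H}^2$, which tends to $0$ by \eqref{tangent limit zero}. For the second, mixed term, I would use Young's inequality in the form $|D_{\perp}u_n|^{p_n - 1}|D_{\top}u_n| \le \frac{p_n-1}{p_n}|D_{\perp}u_n|^{p_n} + \frac{1}{p_n}|D_{\top}u_n|^{p_n}$, so that
\begin{equation*}
(2-p_n)\int_U |D_{\perp}u_n|^{p_n-1}|D_{\top}u_n|\diff\mathcal{H}^2 \le (2-p_n)\int_U |D_{\perp}u_n|^{p_n}\diff\mathcal{H}^2 + (2-p_n)\int_U \frac{|D_{\top}u_n|^{p_n}}{p_n}\diff\mathcal{H}^2.
\end{equation*}
The last summand again vanishes by \eqref{tangent limit zero}. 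The surviving quantity is $(2-p_n)\int_U |D_{\perp}u_n|^{p_n}\diff\mathcal{H}^2$, and this is where Proposition~\ref{prop bound normal} enters: choosing a slightly smaller relatively open $K \csubset \partial\Omega \cap B^3_r(x_0)$ containing the support of $\xi|_{\partial\Omega}$ (one reduces to such $K$ since $\xi$ has compact support in $B^3_r(x_0)$, so $\langle\xi,\nu\rangle$ is supported in a compact subset of $\partial\Omega\cap B^3_r(x_0)$; outside that set the right-hand side integrand can be taken over $K$ only), Proposition~\ref{prop bound normal} gives $\int_K |Du_n|^{p_n}\diff\mathcal{H}^2 \le \frac{C}{p_n-1}\big(\int_{\Omega\cap B^3_r(x_0)}\frac{|Du_n|^{p_n}}{p_n}\diff x + \int_{\partial\Omega\cap B^3_r(x_0)}\frac{|D_{\top}u_n|^{p_n}}{p_n}\diff\mathcal{H}^2\big)$. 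Since $|D_{\perp}u_n| \le |Du_n|$, multiplying by $(2-p_n)$ and using \eqref{cond1lim} (which is implicit in the hypotheses carried over from Proposition~\ref{prop div measure}) together with \eqref{tangent limit zero}, the quantity $(2-p_n)\int_K |D_{\perp}u_n|^{p_n}\diff\mathcal{H}^2$ is bounded by $\frac{2-p_n}{p_n-1}$ times a bounded quantity, hence tends to $0$ because $\frac{2-p_n}{p_n-1} \to 0$ as $p_n \nearrow 2$.

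\textbf{Conclusion and main obstacle.} Combining these estimates, the right-hand side of the displayed inequality tends to $0$, so passing to the limit yields $\int_{\Omega \cap B^{3}_{r}(x_{0})}T_{*}:D\xi \diff x \geq 0$, which is the claim. The bookkeeping obstacle I anticipate is the reduction from the cutoff function $\theta$ implicit in Proposition~\ref{prop bound normal} and the domain $\partial\Omega\cap B^3_r(x_0)$ appearing in Proposition~\ref{prop div nonnegative} to a common relatively open set $K$ on which all estimates hold simultaneously; this requires choosing $K$ with $\mathrm{supp}(\xi|_{\partial\Omega}) \csubset K \csubset \partial\Omega\cap B^3_r(x_0)$ and noting that, since $\xi$ vanishes near $\partial B^3_r(x_0)$, the boundary integrand in Proposition~\ref{prop div nonnegative} is effectively supported in $K$. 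The genuinely quantitative point — and the reason \eqref{tangent limit zero} is a hypothesis rather than merely \eqref{cond2lim} — is that without the \emph{vanishing} of the tangential boundary energy one would only get an inequality with a nonzero (though bounded) right-hand side, as in Proposition~\ref{prop div measure}; the stronger hypothesis \eqref{tangent limit zero} is exactly what upgrades that to the sharp sign condition expressing that $T_*$ is minimizing area to first order in White's sense.
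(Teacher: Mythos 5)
Your overall strategy — pass to the limit in the inward-perturbation inequality of Proposition~\ref{prop div nonnegative}, using \eqref{tangent limit zero} to kill the boundary terms — is the same as the paper's. The pure $D_{\top}$ term is handled correctly. But there is a genuine gap in your treatment of the mixed term $|D_{\perp}u_n|^{p_n-1}|D_{\top}u_n|$, and the error is a double-counted factor of $(2-p_n)$.

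After applying the plain Young inequality you are left with $(2-p_n)\int_{K}|D_{\perp}u_n|^{p_n}\diff\mathcal{H}^{2}$, and you claim this tends to zero. Here is why the justification fails. Proposition~\ref{prop bound normal} gives
\begin{equation*}
\int_{K}|Du_n|^{p_n}\diff\mathcal{H}^{2}\leq \frac{C}{p_n-1}\left(\int_{\Omega\cap B^{3}_{r}(x_{0})}\frac{|Du_n|^{p_n}}{p_n}\diff x+\int_{\partial\Omega\cap B^{3}_{r}(x_{0})}\frac{|D_{\top}u_n|^{p_n}}{p_n}\diff\mathcal{H}^{2}\right).
\end{equation*}
To invoke \eqref{cond1lim}, which controls $(2-p_n)\int\frac{|Du_n|^{p_n}}{p_n}\diff x$ (the bulk $p$-energy alone blows up like $1/(2-p_n)$), you must attach the factor $(2-p_n)$ to the parenthesis. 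Doing so yields
\begin{equation*}
(2-p_n)\int_{K}|Du_n|^{p_n}\diff\mathcal{H}^{2}\leq \frac{C}{p_n-1}\left((2-p_n)\int_{\Omega\cap B^{3}_{r}(x_{0})}\frac{|Du_n|^{p_n}}{p_n}\diff x+(2-p_n)\int_{\partial\Omega\cap B^{3}_{r}(x_{0})}\frac{|D_{\top}u_n|^{p_n}}{p_n}\diff\mathcal{H}^{2}\right),
\end{equation*}
whose right-hand side is merely \emph{bounded} as $n\to+\infty$ (since $p_n-1\to 1$), not vanishing. You wrote ``bounded by $\frac{2-p_n}{p_n-1}$ times a bounded quantity,'' which tacitly assumed the parenthesis in Proposition~\ref{prop bound normal} is itself bounded — but it is of order $1/(2-p_n)$. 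The $(2-p_n)$ you pulled out is used up making it bounded; there is no extra small factor left over. Indeed the paper records only $A\coloneqq\limsup_{n\to+\infty}(2-p_n)\int_{U}|Du_n|^{p_n}\diff\mathcal{H}^2<+\infty$, i.e.\ boundedness, exactly because one cannot expect vanishing there.

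The repair is to exploit the \emph{bilinear} structure of the mixed term rather than the additive bound from Young. The paper applies H\"older's inequality with exponents $p_n/(p_n-1)$ and $p_n$:
\begin{equation*}
(2-p_n)\int_{U}|D_{\perp}u_n|^{p_n-1}|D_{\top}u_n|\diff\mathcal{H}^{2}\leq\left[(2-p_n)\int_{U}|Du_n|^{p_n}\diff\mathcal{H}^{2}\right]^{\frac{p_n-1}{p_n}}\left[(2-p_n)\int_{U}|D_{\top}u_n|^{p_n}\diff\mathcal{H}^{2}\right]^{\frac{1}{p_n}},
\end{equation*}
after splitting $(2-p_n)=(2-p_n)^{(p_n-1)/p_n}(2-p_n)^{1/p_n}$. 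As $n\to+\infty$ both exponents tend to $1/2$, the first bracket is bounded by $A$, and the second tends to $0$ by \eqref{tangent limit zero}, so the product vanishes. (Alternatively, a parametrized Young inequality $ab\leq\varepsilon a^{p_n/(p_n-1)}+C_\varepsilon b^{p_n}$ would also work, upon first sending $n\to+\infty$ and then $\varepsilon\to 0$, but your version with equal weights does not.) The rest of your outline — continuity of the weak* pairing, the role of Proposition~\ref{prop bound normal} in establishing $A<+\infty$, and the remark on why \eqref{tangent limit zero} is needed rather than merely \eqref{cond2lim} — is accurate and matches the paper.
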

\begin{proof}
    Set $U= \partial \Omega \cap B^{3}_{r}(x_{0})$. Taking into account \eqref{cond1lim}, \eqref{cond2lim} and Proposition~\ref{prop bound normal}, we get
    \begin{equation}\label{total der est}
        A\coloneqq \limsup_{n\to +\infty} (2-p_{n}) \int_{U}|Du_{n}|^{p_{n}} \diff \mathcal{H}^{2} <+\infty.
    \end{equation}
    Next, using Proposition~\ref{prop div nonnegative}, H\"older's inequality, \eqref{total der est} and \eqref{tangent limit zero}, we deduce that
    \begin{equation*}
    \begin{split}
        \int_{\Omega \cap B^{3}_{r}(x_{0})}T_{*}:D\xi \diff x &\geq -\|\xi\|_{L^{\infty}(U, \mathbb{R}^{3})}\limsup_{n\to +\infty} (2-p_{n}) \int_{U}\frac{|D_{\top} u_{n}|^{p_{n}}}{p_{n}}\diff \mathcal{H}^{2} \\ & \,\ \,\ \,\ -\|\xi\|_{L^{\infty}(U, \mathbb{R}^{3})}\limsup_{n\to +\infty} (2-p_{n})\|Du_{n}\|_{L^{p_{n}}(U, \mathbb{R}^{\nu}\otimes \mathbb{R}^{3})}^{1-\frac{1}{p_{n}}} \|D_{\top}u_{n}\|_{L^{p_{n}}(U, \mathbb{R}^{\nu}\otimes \mathbb{R}^{3})}\\
        &\geq -\|\xi\|_{L^{\infty}(U, \mathbb{R}^{3})}A^{\frac{1}{2}} \left(\limsup_{n\to +\infty} (2-p_{n})\int_{U}\frac{|D_{\top}u_{n}|^{p_{n}}}{p_{n}}\diff \mathcal{H}^{2}\right)^{\frac{1}{2}} = 0,
        \end{split}
    \end{equation*}
    which completes our proof of Proposition~\ref{prop min BW}.
\end{proof}
We shall say that $\overline{\Omega}$ is \emph{strongly convex} at a point $x_{0} \in \partial \Omega$ whenever the two  principal curvatures of $\partial \Omega$ at $x_{0}$, with respect to the inward pointing unit normal vector to $\partial \Omega$ at $x_{0}$, are positive. If $\Omega$ is convex and $\partial \Omega$ has the second fundamental form that is positive semidefinite with respect to the inward pointing unit normal vector field to $\partial \Omega$, then $\overline{\Omega}$ is strongly convex at $x_{0} \in \partial \Omega$ whenever the Gaussian curvature of $\partial \Omega$ is positive at \(x_0\).
   
As a result of our analysis in this subsection, we obtain the next boundary repulsion property.
\begin{theorem}\label{thm boundary concentration}
    Let $\Omega \subset \mathbb{R}^{3}$ be a bounded domain of class $C^{2}$ such that $\overline{\Omega}$ is strongly convex at every point of $\partial \Omega$. Let $g \in \mathcal{R}^{1}_{0}(\partial \Omega, \mathcal{N})$ be such that $g \in C^{1}(\partial \Omega \setminus S(g), \mathcal{N})$. Let $(p_{n})_{n\in \mathbb{N}} \subset [1,2)$, $p_{n} \nearrow 2$ as $n \to +\infty$ and $(u_{n})_{n \in \mathbb{N}}$ be a sequence of $p_{n}$-minimizers in $\Omega$ such that for each $n \in \mathbb{N}$, $\operatorname{tr}_{\partial \Omega}(u_{n})=g$. 
    Assume that for each \(n \in \mathbb{N}\), \(D u_n\) is continuous on \(\partial \Omega \setminus S (g)\). 
    Then there exists $\mu_{*} \in (C(\overline{\Omega}))^{\prime}$ such that, up to a subsequence, \eqref{wcm} holds. Moreover, $S_{*}$ lies in the convex hull of $S(g)$ and $S_{*}\cap \partial \Omega =S(g)$,  where $S_{*}$ is the support of $\mu_{*}$.
\end{theorem}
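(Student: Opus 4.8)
The plan is to prove the three assertions in turn: the existence of $\mu_*$ (hence of $S_*$) along a subsequence, the inclusion $S(g)\subseteq S_*\cap\partial\Omega$, and finally $S_*\cap\partial\Omega\subseteq S(g)$ together with $S_*\subseteq\operatorname{conv}(S(g))$. Since $\mathcal{R}^{1}_{0}(\partial\Omega,\mathcal{N})\subseteq W^{1/2,2}(\partial\Omega,\mathcal{N})$, Proposition~\ref{global energy bound} yields the uniform bound \eqref{C2}, so by the Banach--Alaoglu theorem a subsequence satisfies \eqref{wcm}; set $S_*=\operatorname{supp}\mu_*$.

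For $S(g)\subseteq S_*$, fix $a\in S(g)$, so that $g|_{\partial B^{\partial\Omega}_\sigma(a)}$ has a nontrivial free homotopy class for all small $\sigma$. I would establish a half-ball lower bound: there is $c=c(\mathcal{N},\partial\Omega,g)>0$ such that for all small $r>0$ and all large $n$, $\int_{B^{3}_{r}(a)\cap\Omega}\frac{|Du_n|^{p_n}}{p_n}\diff x\ge \frac{c\,r^{3-p_n}}{2-p_n}$. Indeed, for a.e.\ $r$ the set $B^{3}_{r}(a)\cap\Omega$ is a bounded Lipschitz domain and $u_n$ restricted to it minimizes the $p_n$-energy among maps agreeing with $g$ on $\partial\Omega\cap B^{3}_{r}(a)$ and with $u_n$ on the spherical cap; dropping the cap constraint only enlarges this class, and since $g$ is topologically nontrivial around $a$, any such competitor must carry the charge of $g$ through the half-ball to the cap, hence (by a Sandier-type/$\eta$-ellipticity estimate adapted to half-balls, in the spirit of Proposition~\ref{Sandier lemma} and the boundary lower bounds \eqref{i3jij5u8tu485utuhgj85u8u8u887y7hu5j95}--\eqref{7y7y789u48ur7y3y743y498} from the proof of Proposition~\ref{prop quant behuptotheb}, after flattening $\partial\Omega$ near $a$ by a bilipschitz chart and reflecting) costs at least $c\,r^{3-p_n}/(2-p_n)$. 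Multiplying by $2-p_n$ and using \eqref{two cond of weak*}, $\mu_*(\overline{B}^{3}_{r}(a))\ge cr>0$, so $a\in\operatorname{supp}\mu_*\cap\partial\Omega$.

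For $a\in\partial\Omega\setminus S(g)$ I would run the stress-energy analysis of Subsection~\ref{section_Wai9niecoo5AhkaciulufeeM}. For small $r$, $\partial\Omega\cap B^{3}_{r}(a)$ avoids $S(g)$, so $g$ is $C^{1}$ there, $Du_n$ is continuous there by hypothesis, and $(2-p_n)\int_{\partial\Omega\cap B^{3}_{r}(a)}\frac{|D_\top u_n|^{p_n}}{p_n}\diff\mathcal{H}^{2}=(2-p_n)\int_{\partial\Omega\cap B^{3}_{r}(a)}\frac{|D_\top g|^{p_n}}{p_n}\diff\mathcal{H}^{2}\to0$; moreover $u_n$ is $p_n$-stationary, so $\operatorname{div}(T^{p_n}_{u_n})=0$ in $\mathscr{D}'(\Omega,\mathbb{R}^{3})$. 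Passing to a further subsequence, $(2-p_n)T^{p_n}_{u_n}\diff x\overset{*}{\rightharpoonup}T_*$; since $(2-p_n)T^{p_n}_{u_n}=A^{p_n}$ in the notation of Proposition~\ref{prop 1-var}, its proof gives $T_*\mres\Omega=A_*\,\mu_*\mres\Omega$, the first-variation measure of the varifold $V_*$ of Proposition~\ref{prop 1-var}, which is stationary in $\Omega$. By Proposition~\ref{prop min BW}, $\int_{\Omega\cap B^{3}_{r}(a)}T_*:D\xi\diff x\ge0$ for every $\xi\in C^{1}_{c}(B^{3}_{r}(a),\mathbb{R}^{3})$ with $\langle\xi,\nu\rangle\le0$ on $\partial\Omega$, i.e.\ $V_*$ is minimizing area to first order at the barrier $\partial\Omega\cap B^{3}_{r}(a)$. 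As $\overline\Omega$ is strongly convex, $\partial\Omega$ is strictly mean-convex at $a$; White's maximum principle for minimizing varifolds \cite{White_2010} then forbids $\operatorname{spt}(V_*)=S_*$ from touching $\partial\Omega\cap B^{3}_{r}(a)$, so $a\notin S_*$. Hence $S_*\cap\partial\Omega=S(g)$. For the convex hull property, let $H=\{x:\langle x,\nu\rangle\le c\}$ be any closed half-space with $S(g)\subseteq H$ and suppose $S_*\not\subseteq H$; since $S_*\cap\partial\Omega=S(g)\subseteq H$ and $S_*$ is compact, for small $\varepsilon>0$ the set $S_*\cap\{\langle\cdot,\nu\rangle\ge c+\varepsilon\}$ is a nonempty compact subset of $\Omega$. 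Testing the interior stationarity of $V_*$ with a cutoff smoothing of $\xi=(\langle\cdot,\nu\rangle-c-\varepsilon)_+\,\nu$ gives $\int\langle\nu,A_*(x)\nu\rangle\,\mathds{1}_{\{\langle x,\nu\rangle>c+\varepsilon\}}\diff\mu_*(x)=0$, so every segment of $S_*$ meeting $\{\langle\cdot,\nu\rangle>c+\varepsilon\}$ is parallel to $\partial H$ and thus lies at constant height $>c+\varepsilon$; by the structure of $S_*$ (Proposition~\ref{prop structure S_*}, no free endpoints in $\Omega$, balancing condition \eqref{weight sum=0}) this yields a nonempty finite balanced planar network with no boundary, which is impossible (a vertex maximizing a generic linear functional violates the balancing). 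Hence $S_*\subseteq H$, and intersecting over all such $H$ gives $S_*\subseteq\operatorname{conv}(S(g))\subseteq\overline\Omega$.

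The main obstacle is the passage through the stress-energy tensor: correctly identifying $T_*\mres\Omega$ with the first-variation measure of a genuine varifold $V_*$ that is stationary inside and minimizing to first order at the strictly mean-convex barrier $\partial\Omega$ away from $S(g)$, and invoking White's maximum principle \cite{White_2010} in this one-sided boundary setting. A secondary technical point is making the half-ball topological lower bound for $S(g)\subseteq S_*$ fully rigorous, namely flattening and reflecting near the curved $C^{2}$ boundary and setting up the corresponding Sandier-type estimate on half-balls.
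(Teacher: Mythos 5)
Your proposal matches the paper's proof in its central and hardest step: the exclusion $S_*\cap\partial\Omega\subseteq S(g)$ is obtained exactly as in the text, by identifying the weak* limit $T_*$ of the rescaled stress-energy tensors with (the first variation of) the rectifiable varifold $V_*$ via Proposition~\ref{prop div measure} and \cite{ArroyoRabasa_DePhilippis_Hirsch_Rindler_2019}, invoking Proposition~\ref{prop min BW} to get first-order area-minimality at the barrier away from $S(g)$, and then applying White's one-sided maximum principle \cite{White_2010} at the strictly convex boundary. Where you diverge is in the two auxiliary inclusions, and both of your alternatives are sound. For $S(g)\subseteq S_*$ the paper is indirect: it establishes $S_*\subseteq\operatorname{conv}(S(g))$ first, then uses Corollary~\ref{cor conv uptotheboundary} to show $\operatorname{tr}_{\partial\Omega}(u_*)=g$ away from $S(g)$, observes the nontrivial homotopy class of $u_*$ on small geodesic circles around each $a\in S(g)$, and concludes through Corollary~\ref{cor 5.17}; you instead propose a direct half-ball lower bound $\mu_n(B^3_r(a)\cap\Omega)\gtrsim r^{3-p_n}$, which is logically independent of the convex-hull statement, is quantitative (it bounds the density of $\mu_*$ at $a$ from below), and is more self-contained, but — as you correctly flag — requires one to build a boundary version of the Sandier-type slicing estimate (Proposition~\ref{Sandier lemma} applied to concentric half-spheres after a bilipschitz flattening of the $C^2$ boundary), which is nontrivial to set up. For $S_*\subseteq\operatorname{conv}(S(g))$ the paper picks a point of $S_*$ at maximal distance from the hull and contradicts the balancing of Corollary~\ref{cor 5.18}; your slicing variant — testing stationarity with a cutoff smoothing of $(\langle\cdot,\nu\rangle-c-\varepsilon)_+\nu$ to force all of $S_*$ above level $c+\varepsilon$ into horizontal planes, then ruling out a finite balanced horizontal network without free endpoints by a generic linear functional — is an equally valid packaging of the same geometric idea (and arguably dodges a slight imprecision in the paper, whose extremal-point argument silently assumes the chosen point has segments not all coplanar). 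Two details worth making explicit in your version: (i) the test field $(\langle\cdot,\nu\rangle-c-\varepsilon)_+\nu$ is only Lipschitz and not compactly supported in $\Omega$, so the smoothing and the cutoff near $\partial\Omega$ (harmless because $S_*\cap\partial\Omega\subseteq\{\langle\cdot,\nu\rangle\le c\}$) should be recorded; and (ii) the segments of $S_*$ above level $c+\varepsilon$ may sit at several distinct heights, so the impossibility should be run at the topmost occupied level — at such a vertex every emanating segment must be horizontal, since otherwise it would intersect $\{\langle\cdot,\nu\rangle>c+\varepsilon\}$ with nonzero normal component in positive $\mu_*$-measure.
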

\begin{rem}
According to \cite{H-L}, every $p$-minimizer $u$ in $\Omega$ is locally $C^{1,\alpha}$ regular in $\Omega \backslash S(u)$ for some relatively closed subset $S(u)$ of $\Omega$ which has Hausdorff dimension at most 1 when $p \in (1,2]$. Furthermore, $p$-minimizers with smooth boundary values on smooth domains are H\"{o}lder continues near the boundary and, in view of \cite{SU}, it is reasonable to assume that they are $C^{1}$ regular near the boundary.  In Theorem~\ref{thm boundary concentration}, we assume that the gradient of a $p$-minimizer is continuous on a portion of the boundary, where the boundary datum is smooth. This assumption is significantly less demanding than the assumption that the minimizer is $C^{1}$ regular outside the union of its singular set with the singular set of the boundary datum.
\end{rem}
\begin{proof}[Proof of Theorem~\ref{thm boundary concentration}]
    Using Proposition~\ref{global energy bound}, the fact that $g \in \mathcal{R}^{1}_{0}(\partial \Omega, \mathcal{N})\subset W^{1/2,2}(\partial \Omega, \mathcal{N})$ and the Banach-Alaoglu theorem, we obtain $\mu_{*} \in (C(\overline{\Omega}))^{\prime}$ such that, up to a subsequence, \eqref{wcm} holds. 
    Fix an arbitrary point $x_{0} \in \partial \Omega \setminus S(g)$ and a fairly small $r>0$ such that $\smash{\overline{B}}^{3}_{r}(x_{0})\cap S(g)=\emptyset$. In view of Proposition~\ref{global energy bound}, the fact that $g \in W^{1/2,2}(\partial \Omega, \mathcal{N})$ and Lemma~\ref{lemma energ boundatum}, \eqref{cond1lim} and \eqref{cond2lim} hold. Then, taking into account \eqref{est tv 5.9}, Proposition~\ref{global energy bound} and using the Banach-Alaoglu theorem, we deduce that there exists $T_{*} \in (C_{0}(\overline{\Omega \cap B^{3}_{r}(x_{0})}, M_{3}(\mathbb{R})))^{\prime}$ such that, up to a subsequence (still denoted by the same index), $(2-p_{n})T^{p_{n}}_{u_{n}} \overset{*}{\rightharpoonup} T_{*}$ weakly* in $(C_{0}(\overline{\Omega \cap B^{3}_{r}(x_{0})}, M_{3}(\mathbb{R})))^{\prime}$. Keeping the same notation, let us define $T_{*}(\cdot)= T_{*}(\cdot \cap \overline{\Omega} \cap B^{3}_{r}(x_{0})) \in (C_{0}(B^{3}_{r}(x_{0}), M_{3}(\mathbb{R})))^{\prime}$. By Proposition~\ref{prop div measure}, $\mathrm{div}(T_{*}) \in (C_{0}(B^{3}_{r}(x_{0}), \mathbb{R}^{3}))^{\prime}$. Then, according to \cite[Proposition~3.1]{ArroyoRabasa_DePhilippis_Hirsch_Rindler_2019}, there exists an $\mathcal{H}^{1}$-rectifiable set $R\subset B^{3}_{r}(x_{0}) \cap \overline{\Omega}$ and a map $\lambda \in L^{\infty}(R, M_{3}(\mathbb{R}))$ satisfying the following conditions. For $\mathcal{H}^{1}$-a.e.\  $x_{0} \in R$, $|\lambda(x_{0})|=1$, $\lambda(x_{0}) \in \mathrm{G}(3,1)$ is the orthogonal projection matrix onto $T_{x_{0}}R$  such that $T_{*}\mres\{\Theta^{*}_{1}(|T_{*}|)>0\}(d x)= \Theta^{*}_{1}(|T_{*}|,x)\lambda(x)\mathcal{H}^{1}\mres R (d x)$, where $T_{x_{0}}R$ is the approximate tangent plane to $R$ at $x_{0}$. One has that $|T_{*}|=\mu_{*}$, $\Theta^{*}_{1}(|T_{*}|)=\Theta_{1}(\mu_{*})$ and $R=S_{*}\cap B^{3}_{r}(x_{0})$ (we refer the reader to the proof of Proposition~\ref{prop 1-var}). In particular, we deduce that 
    $V_{*}(d A, d x)=\delta_{\lambda_{*}(x)}(d A)\otimes |T_{*}|(d x)$
        is a 1-dimensional rectifiable varifold (see \cite{Allard, DPDRG}). Since $g \in C^{1}(\partial \Omega \cap B^{3}_{r}(x_{0}), \mathcal{N})$, $(u_{n})_{n\in \mathbb{N}}$ fulfill the condition \eqref{tangent limit zero} of  Proposition~\ref{prop min BW}, which says that $V_{*}$ is \emph{minimizing area to first order} in $(\Omega \cap B^{3}_{r}(x_{0})) \cup (\partial \Omega \cap B^{3}_{r}(x_{0}))$ in the Brian White sense (see \cite{White_2010}). Then, by \cite[Theorem~1]{White_2010}, $x_{0} \not \in S_{*}$. Thus, $S_{*} \cap \partial \Omega  \subset S(g)$. Denote by \(E\) the convex hull of \(S(g)\). By contradiction, assume that $S_{*} \cap (\Omega\setminus E) \neq \emptyset$. Then, since $S_{*} \cap \partial \Omega  \subset S(g)$, the set $M$ of points $x_{0} \in S_{*}$ such that $\dist(x_{0}, E)=\max\{\dist(x,E):x \in S_{*}\}$ is a subset of $\Omega \setminus E$. Fix a point $x_{0} \in M$ such that the segments of $S_{*}$ emanating from $x_{0}$ do not lie in the same plane. Then these segments lie in the same half-space (lying under a 2-plane parallel to some face of $E$) and form a vertex, which cannot happen in view of the stationarity of the varifold coming from Proposition~\ref{prop 1-var} (see Corollary~\ref{cor 5.18}).  Therefore, $S_{*} \subset E$. 
    
    Next, let us prove that $S(g)\subset S_{*}\cap \partial \Omega$. Using Corollary~\ref{cor conv uptotheboundary} and the fact that $S_{*}\cap \partial \Omega \subset S(g)$, for all fairly small $\delta>0$, we have $\tr_{\partial \Omega \setminus S^{\delta}_{g}}(u_{*})=g|_{S^{\delta}_{g}}$, where $u_{*}$ is a map coming from Corollary~\ref{cor conv uptotheboundary} and $S^{\delta}_{g}=\{x \in \partial \Omega: \dist(x, S(g)) \geq \delta\}$. For each $a \in S(g)$, denoting by $\partial B^{\partial \Omega}_{\varrho}(a)$ the boundary of the geodesic ball in $\partial \Omega$ with center $a$ and radius $\varrho>0$, we know that $u_{*}|_{\partial B^{\partial \Omega}_{\varrho}(a)}=g|_{\partial B^{\partial \Omega}_{\varrho}(a)}$ is not nullhomotopic for each sufficiently small $\varrho>0$. This, together with the fact that $S_{*} \subset E$ and Corollary~\ref{cor 5.17}, implies that $a \in S_{*} \cap \partial \Omega$ and hence $S(g)\subset S_{*}\cap \partial \Omega$. Therefore, $S_{*}\cap \partial \Omega = S(g)$, which completes our proof of Theorem~\ref{thm boundary concentration}.
\end{proof}

\subsection{Structure of the singular set and the limit measure}

\begin{theorem}\label{prop structure chains}
	Let $\Omega$, $g$, $S(g)$, $(u_{n})_{n\in \mathbb{N}}$, $\mu_{*}$ and $S_{*}$ be as in Theorem~\ref{thm boundary concentration}. Let $u_{*}$ be the map of Proposition~\ref{conv to harm} being an element of $W^{1,2}_{\loc}(\Omega\setminus S_{*}, \mathcal{N})$. Then $S_{*}$ is a finite union of finite chains of segments connecting the points of $S(g)$ and lying in the convex hull of $S(g)$. 
More precisely, there exists a finite set of segments $L_{1},\dotsc,L_{n}\subset \overline{\Omega}$ such that $S_{*}=\bigcup_{i=1}^{n}L_{i}$ and $u_{*}\in C^{\infty}(\Omega \setminus (\bigcup_{i=1}^{n}L_{i} \cup S_{0}), \mathcal{N})$, where $S_{0}$ is a locally finite subset of $\Omega \setminus \bigcup_{i=1}^{n}L_{i}$. 
If $x$ is a point inside $L_{i}$, then $\Theta_{1}(\mu_{*},x)=\mathcal{E}^{\mathrm{sg}}_{2}([u_{*}|_{\partial D}])=:\lambda_{i}$, where $D$ is a closed 2-disk in $\Omega$ such that $D\cap S_{*}=\{x\}$, $\partial D \cap S_{0}=\emptyset$ and $x$ is an interior point of $D$. If $x$ is a branching point of $S_{*}$, namely there exists $L_{i_{1}},\dotsc,L_{i_{k}}$ emanating from $x$, then $\sum_{j=1}^{k}\lambda_{i_{j}}v_{i_{j}}=0$, where $v_{i_{j}}$ is the unit direction vector of $L_{i_{j}}$ pointing from $x$. 
Furthermore, $\mu_{*}=\sum_{i=1}^{n}\lambda_{i}\mathcal{H}^{1}\mres L_{i}$. 
Finally, for each $i\in \{1,\dotsc,n\}$, each endpoint of $L_{i}$ is either a branching point of $S_{*}$ lying in $\Omega$ or it belongs to $S(g)$. If $x$ is an endpoint of some $L_{i}$ lying in $S(g)$, then $[g|_{\partial D_{x}}]=[u_{*}|_{\partial D}]$, where $D_{x}$ is the geodesic disk on $\partial \Omega$ with center $x$ and lying far from $S(g)\setminus \{x\}$, and $D$ is a closed 2-disk in $\Omega$ centered inside $L_{i}$ with $\partial D$ lying outside the convex hull of $S(g)$ and outside of $S_{0}$.
\end{theorem}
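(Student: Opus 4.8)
The plan is to first upgrade the \emph{local} description of $S_*$ as a finite union of segments (Proposition~\ref{prop structure S_*}) to a \emph{global} one, and then to read off each quantitative assertion from results already established. Write $E$ for the convex hull of the finite set $S(g)$; by Theorem~\ref{thm boundary concentration} we already know $S_* \subset E$ and $S_* \cap \partial \Omega = S(g)$. Since $u_*$ is the map of Proposition~\ref{conv to harm}, Proposition~\ref{prop conv to harmonic map}~\ref{item3convharm} and Corollary~\ref{cor conv uptotheboundary} give $u_* \in C^\infty(\Omega \setminus (S_* \cup S_0), \mathcal{N})$ for a locally finite $S_0 \subset \Omega \setminus S_*$, together with $\operatorname{tr}_{\partial \Omega \setminus S(g)}(u_*) = g$.

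\emph{Global finiteness of the collection of segments.} By Proposition~\ref{prop structure S_*}, $S_* \cap \Omega$ is a locally finite union of closed segments with pairwise disjoint relative interiors, and by Proposition~\ref{prop 1-var} it has finite $\mathcal{H}^1$-measure. Suppose $S_*$ were the union of infinitely many such segments. Since $\mathcal{H}^1(S_*) < +\infty$, their lengths would tend to $0$, and as $S_* \subset E$ is contained in a compact set and $S_*$ is closed, their midpoints would accumulate at some $x_0 \in S_*$. If $x_0 \in \Omega$, this contradicts the local finiteness of Proposition~\ref{prop structure S_*} near $x_0$. If $x_0 \in \partial \Omega$, then $x_0 \in S_* \cap \partial \Omega = S(g)$; here we argue as in the proof of Theorem~\ref{thm boundary concentration}. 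Since $\overline{\Omega}$ is strongly convex at $x_0$, $g$ is $C^1$ near $x_0$ and each $Du_n$ is continuous near $x_0$, Propositions~\ref{prop div measure} and~\ref{prop min BW} apply, so the varifold associated to the limit of the rescaled stress-energy tensors is minimizing area to first order near $x_0$. Combining the monotonicity of $r \mapsto \mu_*(\smash{\overline{B}}^3_r(x_0))/r$, the structure of one-dimensional stationary cones (Proposition~\ref{prop 1-var} and \cite{Allard-Almgren}) and White's boundary maximum principle \cite{White_2010}, every tangent cone of $\mu_*$ at $x_0$ is a finite union of rays from $x_0$ meeting the tangent plane $T_{x_0}\partial \Omega$ only at $x_0$; hence $S_*$ is a finite union of segments in a neighbourhood of $x_0$, a contradiction. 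Therefore $S_* = \bigcup_{i=1}^n L_i$ for finitely many closed segments $L_i \subset \overline{\Omega}$ with pairwise disjoint relative interiors, all contained in $E$.

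\emph{The density and the measure.} Fix $i$ and a point $x$ in the relative interior of $L_i$, and let $D \subset \Omega$ be a closed $2$-disk with $D \cap S_* = \{x\}$, $\partial D \cap S_0 = \emptyset$ and $x$ an interior point of $D$. By Corollary~\ref{cor 5.17}, $u_*|_{\partial D}$ is not nullhomotopic and $\Theta_1(\mu_*, y) = \mathcal{E}^{\mathrm{sg}}_{2}(u_*|_{\partial D}) =: \lambda_i > 0$ for every $y$ in the relative interior of $L_i$, so $\lambda_i$ does not depend on $x$ or $D$. Each point $a \in S(g)$ carries zero $\mu_*$-mass, since its one-dimensional density is finite by the tangent-cone analysis above, so $\mu_*(S(g)) = 0$ and $\mu_* = \mu_* \mres \Omega$; by Proposition~\ref{prop 1-var}, $\mu_* \mres \Omega = \Theta_1\,\mathcal{H}^1 \mres (S_* \cap \Omega)$, and since $\Theta_1 \equiv \lambda_i$ on the relative interior of each $L_i \cap \Omega$ and the endpoints are $\mathcal{H}^1$-negligible, $\mu_* = \sum_{i=1}^n \lambda_i\,\mathcal{H}^1 \mres L_i$. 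The balancing relation $\sum_{j=1}^k \lambda_{i_j} v_{i_j} = 0$ at a branching point $x$ from which $L_{i_1},\dotsc,L_{i_k}$ emanate is precisely Corollary~\ref{cor 5.18}.

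\emph{Endpoints and chain structure.} Let $x$ be an endpoint of some $L_i$. If $x \in \Omega$, then $x \in S_* \cap \Omega$ and, by Remark~\ref{rem sing points}, at least two segments of $S_*$ emanate from $x$, so $x$ is a branching point of $S_*$. Otherwise $x \in \overline{\Omega} \setminus \Omega = \partial \Omega$, hence $x \in S_* \cap \partial \Omega = S(g)$. In that case, pick a closed $2$-disk $D \subset \Omega$ centred at an interior point of $L_i$ with $\partial D$ lying outside $E$ (so $\partial D \cap S_* = \emptyset$, since $S_* \subset E$) and outside $S_0$; sliding $D$ along $L_i$ towards $x$ and invoking Corollary~\ref{cor 5.17} together with the continuity argument of Remark~\ref{remark chain}, the class $[u_*|_{\partial D}]$ equals the homotopy class of $u_*$ on small geodesic circles $\partial D_x$ around $x$ in $\partial \Omega$; since $\operatorname{tr}_{\partial \Omega \setminus S(g)}(u_*) = g$, this is $[g|_{\partial D_x}]$, nontrivial by definition of $S(g)$. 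Finally, $\{L_1,\dotsc,L_n\}$ is a finite graph whose only leaves lie in $S(g)$, every other vertex being a branching point inside $\Omega$, so it decomposes into finitely many chains of segments joining points of $S(g)$, all contained in $E$, which is the asserted structure. I expect the main obstacle to be the boundary finiteness step — ruling out accumulation of segments at a point of $S(g)$ — which forces one to combine the boundary stress-energy/Pohozaev estimates, the monotonicity up to the strongly convex boundary, and White's maximum principle in the tangent half-space.
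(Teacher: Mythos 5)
Your proof of the density identity, the balancing at branching points, the formula $\mu_*=\sum_i\lambda_i\mathcal{H}^1\mres L_i$, and the classification of endpoints are all correct and follow the same sources the paper uses (Propositions~\ref{prop 1-var},~\ref{prop structure S_*}, Corollaries~\ref{cor 5.17},~\ref{cor 5.18},~\ref{cor conv uptotheboundary}). However, the step you yourself flag as the ``main obstacle'' — ruling out accumulation of segments at a point $x_0\in S(g)$ — contains a genuine error, and it is precisely where the paper does something entirely different.

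You write that at $x_0\in S(g)$ ``$g$ is $C^1$ near $x_0$,'' and then invoke Propositions~\ref{prop div measure} and~\ref{prop min BW} to get area-minimization to first order and apply White's maximum principle. But $S(g)$ is by definition the set of points where $g$ is topologically singular, so $g$ is \emph{not} $C^1$ near $x_0$. More concretely, Proposition~\ref{prop min BW} requires $\lim_{n\to\infty}(2-p_n)\int_{\partial\Omega\cap B^3_r(x_0)}\tfrac{|D_\top u_n|^{p_n}}{p_n}\,d\mathcal{H}^2=0$, and this fails at $x_0\in S(g)$: by the blow-up estimate \eqref{degen der} and the lower bound \eqref{7y7y789u48ur7y3y743y498}, the tangential $p_n$-energy of $g$ near $x_0$ is of order $1/(2-p_n)$, so the rescaled tangential energy has a strictly positive limit. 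Consequently the ``minimizing to first order'' structure and White's boundary maximum principle — which in Theorem~\ref{thm boundary concentration} are applied only at $x_0\in\partial\Omega\setminus S(g)$ — are unavailable here. You also invoke the monotonicity of $r\mapsto\mu_*(\smash{\overline{B}}^3_r(x_0))/r$, but Corollary~\ref{mon for density} is established only for $x_0\in\Omega$; the paper has no boundary version of it. Even granting a tangent cone at $x_0$ made of finitely many rays, passing from this to ``$S_*$ is a finite union of segments in a neighbourhood of $x_0$'' would still require an argument excluding accumulation, which your proposal does not supply.

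The paper's proof fills this gap with a genuinely different mechanism: it introduces the counting-with-multiplicity function $M(r)=\int_{S_*\cap\partial B^3_r(x_0)}\Theta_1\,d\mathcal{H}^0$, observes (using Propositions~\ref{prop density dv} and~\ref{prop structure S_*}, Lemma~\ref{lem cont singenergy}, and upper semicontinuity of the density) that $M$ is a piecewise constant function with a discrete set of values bounded below, and then proves the ``boundary monotonicity'' inequality $\int_0^r M(\varrho)\,d\varrho\le rM(r)$ for suitable $r$ by constructing a competitor $v_n$ that is a homogeneous extension of $u_n|_{\partial B^3_r\cap\Omega}$ on a cone $K$, filled in by homotopy on the sliver $V=(B^3_r\cap\Omega)\setminus K$ near $\partial\Omega$ with vanishing rescaled $p_n$-energy (using $g\in\mathcal{R}^1_0$ and Lemma~\ref{lemma energ boundatum}). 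Choosing $r$ where $M$ attains its minimum forces $M$ to be constant on $(0,r]$, and then the stationarity of the varifold (Corollary~\ref{cor 5.18}) pins $S_*\cap\smash{\overline{B}}^3_r(x_0)$ down as a finite union of radii emanating from $x_0$. This comparison argument, which exploits the minimality of $u_n$ directly rather than a maximum principle, is the missing ingredient in your write-up.
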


\begin{proof}
Denote by \(E\) the convex hull of \(S(g)\). The fact that $S_{*} \subset E$ comes from Theorem~\ref{thm boundary concentration}.
Fix an arbitrary $x_{0} \in S(g)$. Further in this proof, the center of each ball and sphere is at $x_{0}$. In view of \eqref{C2}, for each $r_{0}>0$,
\begin{equation*}
\limsup_{n\to +\infty} (2-p_{n}) \int_{B^{3}_{r_{0}} \cap \Omega} \frac{|Du_{n}|^{p_{n}}}{p_{n}}\diff x < +\infty.
\end{equation*}
Then, up to extracting a subsequence (not relabeled) and using the coarea formula, together with the Fatou lemma, we deduce that $\tr_{\partial B^{3}_{r}\cap \Omega}(u_{n})=u_{n}|_{\partial B^{3}_{r} \cap \Omega} \in W^{1,p_{n}}(\partial B^{3}_{r}\cap \Omega, \mathcal{N})$ and
\begin{equation*}
\limsup_{n\to +\infty}(2-p_{n}) \int_{\partial B^{3}_{r} \cap \Omega} \frac{|Du_{n}|^{p_{n}}}{p_{n}}\diff \mathcal{H}^{2} < +\infty
\end{equation*}
for $\mathcal{H}^{1}$-a.e.\  $r \in (0,r_{0}]$.
Let $r_{0}>0$ be small enough so that $\smash{\overline{B}}^{3}_{r_{0}}$ does not contain any point of $S(g)$ except of $x_{0}$. Furthermore, by virtue of Proposition~\ref{prop structure S_*} and the fact that $S_{*} \subset E$, we can also assume that $\partial B^{3}_{r_{0}}$ does not contain any branching point of $S_{*}$. Next, we recall that $\mu_{*}\mres \Omega (d x) = \Theta_{1}(x) \mathcal{H}^{1}\mres (S_{*}\cap \Omega)(d x)$, where the density $\Theta_{1}$ is defined in \eqref{defofdensity}. For each $r \in (0,r_{0}]$, define $M(r)=\int_{S_{*}\cap \partial B^{3}_{r}} \Theta_{1}(x)\diff \mathcal{H}^{0}(x)$, where $\mathcal{H}^{0}$ denotes the $0$-dimensional Hausdorff measure in $\mathbb{R}^{3}$, namely, for a discrete set of points $X\subset \mathbb{R}^{3}$, $\mathcal{H}^{0}(X)=\# X$ (the cardinality of $X$). In view of Lemma~\ref{lem cont singenergy}, Proposition~\ref{prop density dv}, Proposition~\ref{prop structure S_*} and the fact that $S_{*}\subset E$,  $M$ is a bounded from below piecewise constant function with a discrete set of values. Indeed, there exists an at most countable collection of intervals $\{(a_{i}, b_{i}]: i\in \mathbb{N}\cap [0,i_{0}]\}$, where $i_{0}\in \mathbb{N}\cup \{+\infty\}$, such that $b_{0}=r_{0}$, $(0, r_{0}]=\bigcup_{i=0}^{i_{0}}(a_{i}, b_{i}]$ and for each $i\in \mathbb{N}\cap [0,i_{0}]$: $M$ is constant on $(a_{i}, b_{i})$; $a_{i}=b_{i+1}$; for each $r \in (a_{i}, b_{i})$, $\partial B^{3}_{r}$ does not meet any branching point of $S_{*}$; $\partial B^{3}_{a_{i}}$ meets a branching point of $S_{*}$. By definition of $r_{0}$, $M$ is constant on $(a_{0}, b_{0}]$. Notice that, by the upper semicontinuity of $\Theta_{1}$ (see, for instance, \cite[2(8)]{Allard-Almgren}) and Corollary~\ref{cor 5.17}, for each $i\in \mathbb{N} \cap [0,i_{0}]$ and $x \in (a_{i}, b_{i}) \cup (a_{i+1}, b_{i+1})$, $M(x)\leq M(b_{i})$. Also, taking into account Proposition~\ref{prop structure S_*} and the facts that $S_{*}\subset E$, $E \cap \partial \Omega =S(g)$ (since $\overline{\Omega}$ is strongly convex at every point of $\partial \Omega$), we observe that $\mu_{*}(\partial (B^{3}_{r}\cap \Omega))=0$ for each $r \in (0,r_{0}]$. This, together with the weak* convergence of $\mu_{n}$ to $\mu_{*}$ (see \eqref{wcm}), implies that for each $i \in \mathbb{N} \cap [0,i_{0}]$,
\begin{equation*}
\mu_{n}((B^{3}_{b_{i}}\setminus B^{3}_{a_{i}})\cap \Omega) -\mu_{*}((B^{3}_{b_{i}}\setminus B^{3}_{a_{i}})\cap \Omega) =\int_{a_{i}}^{b_{i}}\diff r \biggl( (2-p_{n})\int_{\partial B^{3}_{r} \cap \Omega} \frac{|Du_{n}|^{p_{n}}}{p_{n}} \diff \mathcal{H}^{2} -M(r)\biggr) \to 0
\end{equation*}
as $n \to +\infty$, where we have used the coarea formula. Applying the Fatou lemma, up to a subsequence (not relabeled), we get
\begin{equation}\label{casesfiniteseg}
\int_{a_{i}}^{b_{i}}\diff r \biggl(\limsup_{n\to +\infty} (2-p_{n})\int_{\partial B^{3}_{r} \cap \Omega} \frac{|Du_{n}|^{p_{n}}}{p_{n}}\diff \mathcal{H}^{2} -M(r)\biggr)\leq 0.
\end{equation}
Fix an arbitrary $i\in \mathbb{N} \cap [0,i_{0}]$. Then either for $\mathcal{H}^{1}$-a.e.\  $r \in [a_{i}, b_{i}]$, 
\begin{equation}\label{ineq00001111}
\limsup_{n\to +\infty} (2-p_{n})\int_{\partial B^{3}_{r} \cap \Omega} \frac{|Du_{n}|^{p_{n}}}{p_{n}}\diff \mathcal{H}^{2} \geq M(r),
\end{equation}
or
there exists a set $I\subset [a_{i}, b_{i}]$ of positive $\mathcal{H}^{1}$-measure such that for each $r \in I$, \eqref{ineq00001111} does not hold. In the case when \eqref{ineq00001111} holds for $\mathcal{H}^{1}$-a.e.\  $r \in [a_{i}, b_{i}]$, according to \eqref{casesfiniteseg},  we have
\begin{equation*}
\limsup_{n\to +\infty} (2-p_{n})\int_{\partial B^{3}_{r} \cap \Omega} \frac{|Du_{n}|^{p_{n}}}{p_{n}}\diff \mathcal{H}^{2}=M(r)
\end{equation*}
for $\mathcal{H}^{1}$-a.e.\  $r \in [a_{i}, b_{i}]$. 

Thus, there exists $r \in (a_{i}, b_{i})$ such that $\tr_{\partial B^{3}_{r} \cap \Omega}(u_{n})=u_{n}|_{\partial B^{3}_{r} \cap \Omega} \in W^{1, p_{n}}(\partial B^{3}_{r} \cap \Omega, \mathcal{N})$ for each $n$ and 
\begin{equation}\label{usefulestimenergdensity}
\limsup_{n\to +\infty} (2-p_{n}) \int_{\partial B^{3}_{r} \cap \Omega} \frac{|Du_{n}|^{p_{n}}}{p_{n}}\diff \mathcal{H}^{2} \leq M(r).
\end{equation}
Next, we construct a competitor $v_{n}$ for $u_{n}$. Define $v_{n}=u_{n}$ in $\Omega \setminus B^{3}_{r}$. For each $x$ in the cone $K=\{x \in B^{3}_{r} \cap \Omega: (x_{0}+r(x-x_{0})/|x-x_{0}|) \in \partial B^{3}_{r} \cap \overline{\Omega}\}$, set 
\begin{equation}\label{homdefnvnun}
v_{n}(x)=\operatorname{tr}_{\partial B^{3}_{r}\cap \overline{\Omega}}(u_{n})\left(x_{0}+\frac{r(x-x_{0})}{|x-x_{0}|}\right).
\end{equation}
In the region $V=(B^{3}_{r} \cap \Omega) \setminus K$, we define $v_{n}$ by homotopy, using \cite[Theorem~2]{Bethuel-Demengel}. More precisely, fix a point $x \in V$ and consider a 2-plane passing through $x$, which is orthogonal to the axis of the cone $K$. The intersection of this 2-plane with $V$ is an annulus whose boundary consists of two curves $\gamma_{1} \subset \partial \Omega$ and $\gamma_{2} \subset \partial K$. By definition, $g|_{\gamma_{1}}$ is continuously homotopic to $v_{n}|_{\gamma_{2}}$. Then, by \cite[Theorem~2]{Bethuel-Demengel}, on this annulus there exists a $W^{1,2}$ map  with values in $\mathcal{N}$ and with boundary values $g|_{\gamma_{1}}$ and $v_{n}|_{\gamma_{2}}$ on $\gamma_{1}$ and $\gamma_{2}$, respectively. Observe that the set of values of $v_{n}|_{\gamma_{2}}$ coincides with the set of values of $g|_{\partial B^{3}_{r} \cap \partial \Omega}$. Continuing this process, namely filling $V$, we obtain a map $v_{n}|_{V}$ such that $v_{n}|_{V} \in W^{1,2}(V \setminus \smash{\overline{B}}^{3}_{\delta}, \mathcal{N})$ for each $ \delta \in (0,r)$ and 
\begin{equation}\label{energyzeroinlimitinE}
\limsup_{n\to +\infty} (2-p_{n})\int_{V} \frac{|D(v_{n}|_{V})|^{p_{n}}}{p_{n}}\diff x =0,
\end{equation}
which is possible due to the fact that $g \in \mathcal{R}^{1}_{0}(\partial \Omega, \mathcal{N})$ (see Lemma~\ref{lemma energ boundatum}), since the $p_{n}$-energy of $v_{n}|_{V}$ in $V\cap B^{3}_{\delta}$ is of order $\delta/(2-p_{n})$ for $\delta \in (0,r)$. Thus, we have constructed a competitor $v_{n}$ for $u_{n}$. 

Next, again by the weak* convergence of $\mu_{n}$ to $\mu_{*}$ and the fact that $\mu_{*}(\partial (B^{3}_{r} \cap \Omega))=0$, we have
\begin{equation}\label{againweakconvergenceofmeasures}
\int_{0}^{r}M(\varrho)\diff \varrho = \mu_{*}(B^{3}_{r} \cap \Omega) = \lim_{n\to+\infty} (2-p_{n})\int_{B^{3}_{r} \cap \Omega} \frac{|Du_{n}|^{p_{n}}}{p_{n}}\diff x.
\end{equation}
Since $v_{n}$ is a competitor for $u_{n}$, $v_{n}=u_{n}$ in $\Omega \setminus B^{3}_{r}$, using \eqref{usefulestimenergdensity}, \eqref{homdefnvnun}, \eqref{energyzeroinlimitinE} and \eqref{againweakconvergenceofmeasures}, we deduce that
\begin{equation}
\label{niceestimforM}
\begin{split}
\int_{0}^{r}M(\varrho)\diff \varrho & \leq  \limsup_{n\to +\infty}(2-p_{n}) \int_{K} \frac{|Dv_{n}|^{p_{n}}}{p_{n}}\diff x\\ &\leq \limsup_{n\to +\infty}\int_{0}^{r} \varrho^{2-p_{n}} \diff \varrho\, (2-p_{n})\int_{\partial B^{3}_{r}\cap \Omega}\frac{|D_{\top}u_{n}|^{p_{n}}}{p_{n}} \diff \mathcal{H}^{2} \leq r M(r),
\end{split}
\end{equation}
where we have also used that $p_{n} \nearrow 2$ and $|D_{\top}u_{n}|\leq |Du_{n}|$. We can choose $i \in \mathbb{N} \cap [0,i_{0}]$ and $r>0$ so that, apart from \eqref{niceestimforM}, $M(r)$ is the minimal value of $M$ on $(0,r_{0}]$. In fact, since $M$ is bounded from below and has a discrete set of values, it achieves its minimum at every point of the interval $(a_{i}, b_{i})$ for some $i \in \mathbb{N} \cap [0, i_{0}]$ (notice that it can achieve the minimum at $a_{i}$ or $b_{i}$, but in this case it achieves its minimum at every point of $(a_{i}, b_{i})$, since $M(x) \leq \min\{M(a_{i}), M(b_{i})\}$ as was observed earlier). Thus, taking such $i \in \mathbb{N}\cap [0,i_{0}]$ and $r \in (a_{i}, b_{i})$, it holds $M=M(r)$ on $(0,r]$, because otherwise \eqref{niceestimforM} would not hold. Then, since $M$ has a discrete set of values, there exists a bounded set $J \subset \mathbb{N}$ such that for each $ \varrho \in (0,r]$, the number of branching points of $S_{*} \cap \partial B^{3}_{\varrho}$ belongs to $J$. This, together with the stationarity of the associated varifold $V_{*}$ (see Corollary~\ref{cor 5.18}), implies that the only possible situation describing $S_{*} \cap \smash{\overline{B}}^{3}_{r}$ is when $S_{*} \cap \smash{\overline{B}}^{3}_{r}$ is the union of closed straight line segments connecting the points of $S_{*} \cap \partial B^{3}_{r}$ with $x_{0}$ and whose relative interiors are mutually disjoint. 

We complete our proof of Theorem~\ref{prop structure chains} using Propositions~\ref{conv to harm},~\ref{prop structure S_*}, Corollaries~\ref{cor 5.17},~\ref{cor 5.18},~\ref{cor conv uptotheboundary} and Theorem~\ref{thm boundary concentration}. 
\end{proof}

The next corollary is a direct consequence of Theorem~\ref{prop structure chains}, so we omit its proof.
\begin{cor}\label{cor on structure}
    If the set $S(g)$ of Theorem~\ref{prop structure chains} lies in a 2-plane, then $S_{*}$ is the segment  connecting the points of $S(g)$ if $S(g)$ has only two points, is contained in the triangle with vertices from $S(g)$ if $S(g)$ has only three points and is contained in the polygon with vertices from $S(g)$ if $S(g)$ has more than four points. 
    \end{cor}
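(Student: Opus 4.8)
The plan is to read the statement off directly from Theorem~\ref{thm boundary concentration} and Theorem~\ref{prop structure chains}, the only genuinely new point being the equality (rather than mere inclusion) when $S(g)$ has two points. First I would recall that, by those two results, $S_{*}$ is a finite union of closed straight line segments contained in the convex hull $E$ of $S(g)$, with $S_{*}\cap\partial\Omega=S(g)$, and that every point of $S_{*}\cap\Omega$ is either an interior point of one of these segments or a branching point from which at least two segments emanate (Remark~\ref{rem sing points}~(i)--(ii), which in turn rests on the stationarity relation $\sum_{i}\lambda_{i}v_{i}=0$ with all $\lambda_{i}>0$). The hypothesis that $S(g)$ lies in a $2$-plane $P$ forces $E\subset P$; hence, by the elementary description of the convex hull of a finite planar set of points, $E$ is the closed segment $[a_{1},a_{2}]$ when $S(g)=\{a_{1},a_{2}\}$, the closed filled triangle with vertices $a_{1},a_{2},a_{3}$ when $S(g)=\{a_{1},a_{2},a_{3}\}$, and the closed convex polygon whose vertices form a subset of $S(g)$ when $\#S(g)\ge 4$.

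Given this, the containment assertions are immediate: the inclusion $S_{*}\subseteq E$ furnished by Theorem~\ref{thm boundary concentration} says exactly that $S_{*}$ is contained in the triangle, resp.\ in the polygon, in the last two cases. It remains to upgrade the inclusion $S_{*}\subseteq[a_{1},a_{2}]$ to an equality in the two-point case. Since $a_{1},a_{2}\in S_{*}=\bigcup_{i}L_{i}$ and since $\overline{\Omega}$ is strongly convex (so a nontrivial segment of $\overline{\Omega}$ cannot pass through a point of $\partial\Omega$ and the open segment $(a_{1},a_{2})$ lies in $\Omega$), each of $a_{1},a_{2}$ is an endpoint of some $L_{i}\subseteq[a_{1},a_{2}]$ of positive length. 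Suppose $S_{*}\neq[a_{1},a_{2}]$; then there is $y\in(a_{1},a_{2})\setminus S_{*}$, and since $S_{*}$ is closed there is a maximal open subinterval $(c,d)\ni y$ disjoint from $S_{*}$ with $c\in S_{*}\cup\{a_{1}\}$ and $d\in S_{*}\cup\{a_{2}\}$. If $c\neq a_{1}$, then $c\in S_{*}\cap\Omega$ is an endpoint of the segment of $S_{*}$ adjacent on its left, contradicting Remark~\ref{rem sing points}~(i), which (all segments here being collinear) forces another segment of $S_{*}$ to emanate from $c$ into $(c,d)$. Hence $c=a_{1}$ and likewise $d=a_{2}$, so $S_{*}\cap(a_{1},a_{2})=\emptyset$; then $\mu_{*}\mres\Omega=\Theta_{1}\,\mathcal{H}^{1}\mres(S_{*}\cap\Omega)$ (Proposition~\ref{prop 1-var}) vanishes on $\Omega$, i.e.\ $S_{*}\cap\Omega=\emptyset$, contradicting the existence of a segment $L_{i}$ of positive length emanating from $a_{1}$ into $\Omega$. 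Therefore $[a_{1},a_{2}]\subseteq S_{*}\subseteq[a_{1},a_{2}]$, so $S_{*}=[a_{1},a_{2}]$.

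I do not expect a serious obstacle: the assertion is genuinely a corollary of the structure already obtained. The one step needing care is the two-point equality just sketched, where the argument is purely the one-dimensional topology of a finite union of collinear segments combined with the interior non-endpoint property of $S_{*}$ and the density identity for $\mu_{*}$; everything else follows verbatim from $S_{*}\subseteq E$ and the planarity of $E$.
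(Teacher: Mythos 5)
Your proof is correct and takes the same route as the paper, which simply declares the corollary a direct consequence of Theorem~\ref{prop structure chains} and omits the argument; your write-up supplies exactly the details that are implicit there, namely that the containments follow at once from $S_{*}\subset E$ (Theorem~\ref{thm boundary concentration}) and the planarity of $E$, and that in the two-point case the no-endpoint-in-$\Omega$ property forces the chain to fill the segment $[a_{1},a_{2}]$. One could appeal equally well to the final clause of Theorem~\ref{prop structure chains} (``each endpoint of $L_i$ is either a branching point of $S_*$ lying in $\Omega$ or it belongs to $S(g)$'') instead of Remark~\ref{rem sing points}~(i), both of which trace back to the stationarity identity of Corollary~\ref{cor 5.18}, but that is a matter of which intermediate statement to cite rather than a different argument.
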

We provide a revised version of Corollary~\ref{cor on structure} for the case when $S(g)$ has only two points, also without proof.
\begin{cor}\label{example 2}
        Let $\Omega$, $g$ and $(u_{n})_{n\in \mathbb{N}}$ be as in Theorem~\ref{thm boundary concentration}. Let $S(g)=\{a,b\}$. Then $S_{*}=[a,b]$ and for each $x \in (a,b)$, $\Theta_{1}(\mu_{*},x)=\mathcal{E}^{\mathrm{sg}}_{2}([g|_{\partial D_{a}}])=\mathcal{E}^{\mathrm{sg}}_{2}([g|_{\partial D_{b}}])$, where for $z\in \{a,b\}$, $D_{z}$ is a geodesic (closed) disk in $\partial \Omega$  with center $z$,  not containing $b$ if $z=a$ and $a$ if $z=b$. Furthermore, $\mu_{*}=\mathcal{E}^{\mathrm{sg}}_{2}([g|_{\partial D_{a}}]) \mathcal{H}^{1}\mres [a,b]$.
\end{cor}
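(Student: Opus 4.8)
The plan is to obtain Corollary~\ref{example 2} by specializing the structural results already established --- Theorem~\ref{thm boundary concentration}, Theorem~\ref{prop structure chains}, Corollary~\ref{cor on structure}, Corollary~\ref{cor 5.17} and Corollary~\ref{cor 5.18} --- to the degenerate configuration in which the boundary singular set consists of only two points. Since $\Omega$, $g$ and $(u_{n})_{n\in\mathbb{N}}$ satisfy the hypotheses of Theorem~\ref{thm boundary concentration}, that theorem first produces, up to a subsequence, the limit measure $\mu_{*}\in(C(\overline{\Omega}))^{\prime}$ with $\mu_{n}\overset{*}{\rightharpoonup}\mu_{*}$, together with the facts that $S_{*}$ is contained in the convex hull of $S(g)$ and that $S_{*}\cap\partial\Omega=S(g)=\{a,b\}$. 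The convex hull of $\{a,b\}$ is the segment $[a,b]$, and two points always lie in a common $2$-plane, so Corollary~\ref{cor on structure} applies and yields $S_{*}=[a,b]$. (One may also argue by hand: $S_{*}$ is nonempty since it meets $\partial\Omega$, and by Theorem~\ref{prop structure chains} it is a finite union of subsegments of $[a,b]$ of positive length with pairwise disjoint relative interiors; if $S_{*}$ omitted a point of $(a,b)$, some such segment would have an endpoint $c\in(a,b)\subset\Omega$, where the inclusion $(a,b)\subset\Omega$ comes from the strong convexity of $\overline{\Omega}$; by Theorem~\ref{prop structure chains} this $c$ is a branching point, so by Corollary~\ref{cor 5.18} at least two segments of $S_{*}$ emanate from it, necessarily in the only two directions $\pm(b-a)/|b-a|$ available inside $[a,b]$, contradicting the existence of the gap.)

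Next I would use Theorem~\ref{prop structure chains} to write $[a,b]=S_{*}=\bigcup_{i=1}^{n}L_{i}$; since the $L_{i}$ are subsegments of $[a,b]$ with disjoint relative interiors whose union is $[a,b]$, after relabeling they are consecutive, $L_{1}=[a,c_{1}],\dots,L_{n}=[c_{n-1},b]$, and each junction $c_{j}$ lies in $(a,b)\subset\Omega$, hence, being an endpoint in $\Omega$ of a segment of $S_{*}$, is a branching point from which exactly $L_{j}$ and $L_{j+1}$ emanate, with opposite unit direction vectors. The stationarity relation $\sum_{k}\lambda_{k}v_{k}=0$ of Corollary~\ref{cor 5.18} then forces $\lambda_{j}=\lambda_{j+1}$; iterating, all the densities $\lambda_{i}$ coincide with a single value $\lambda>0$, and $\mu_{*}=\sum_{i}\lambda_{i}\mathcal{H}^{1}\mres L_{i}=\lambda\,\mathcal{H}^{1}\mres[a,b]$. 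In particular $\Theta_{1}(\mu_{*},x)=\lambda$ for every $x\in(a,b)$.

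It remains to identify $\lambda$ with the boundary singular energies, which is where the endpoint clause of Theorem~\ref{prop structure chains} enters. The point $a$ is an endpoint of $L_{1}$ lying in $S(g)$, so $[g|_{\partial D_{a}}]=[u_{*}|_{\partial D}]$ for a closed $2$-disk $D\subset\Omega$ centered at an interior point $x$ of $L_{1}$ with $\partial D$ outside the convex hull of $S(g)$ (here simply $\partial D\cap[a,b]=\emptyset$, so that $D\cap S_{*}=\{x\}$) and $\partial D\cap S_{0}=\emptyset$; these are exactly the hypotheses of Corollary~\ref{cor 5.17}, which gives $\mathcal{E}^{\mathrm{sg}}_{2}([u_{*}|_{\partial D}])=\Theta_{1}(\mu_{*},x)=\lambda_{1}=\lambda$, hence $\mathcal{E}^{\mathrm{sg}}_{2}([g|_{\partial D_{a}}])=\lambda$, and symmetrically $\mathcal{E}^{\mathrm{sg}}_{2}([g|_{\partial D_{b}}])=\lambda$. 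Combining the three displays proves the corollary, including the a priori nonobvious coincidence $\mathcal{E}^{\mathrm{sg}}_{2}([g|_{\partial D_{a}}])=\mathcal{E}^{\mathrm{sg}}_{2}([g|_{\partial D_{b}}])$ and the identity $\mu_{*}=\mathcal{E}^{\mathrm{sg}}_{2}([g|_{\partial D_{a}}])\,\mathcal{H}^{1}\mres[a,b]$. I do not expect any serious obstacle: the whole statement is an unwinding of Theorems~\ref{thm boundary concentration} and~\ref{prop structure chains} in this low-complexity situation, and the only genuinely needed extra input is the stationarity of $V_{*}$ (Corollary~\ref{cor 5.18}), which prevents the density from jumping across a possible interior junction of the segment $S_{*}$.
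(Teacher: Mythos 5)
Your proof is correct and matches the paper's intent: the paper omits the proof of this corollary, stating only that it is a revision of Corollary~\ref{cor on structure} (itself declared a direct consequence of Theorem~\ref{prop structure chains}), and your argument is precisely the expected unwinding of Theorems~\ref{thm boundary concentration} and~\ref{prop structure chains} together with Corollaries~\ref{cor 5.17} and~\ref{cor 5.18}. In particular your use of the stationarity relation from Corollary~\ref{cor 5.18} at each putative interior junction of $[a,b]$, forcing the densities $\lambda_i$ to be constant, is exactly the extra step needed to pass from the segment structure to the asserted form of $\mu_{*}$.
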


\subsection{Minimality of the singular set}
Now we prove the global minimality of $S_{*}$.
\begin{prop}\label{prop global min}
Let $\Omega$, $g$, $S(g)$, $(u_{n})_{n\in \mathbb{N}}$, $S_{*}$ and $\mu_{*}$ be as in Theorem~\ref{thm boundary concentration}. Let $u_{*}$ be the map of Proposition~\ref{conv to harm} being an element of $W^{1,2}_{\loc}(\Omega\setminus S_{*}, \mathcal{N})$. Then $S_{*} \in \mathscr{C}(\Omega, g)$ and for each $S \in \mathscr{C}(\Omega, g)$,
\begin{equation*}
\mu_{*}(S_{*})=\mathbb{M}(g, S_{*}) \leq \mathbb{M}(g, S).
\end{equation*}
\end{prop}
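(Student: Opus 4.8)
The plan is to combine the interior minimality result, Proposition~\ref{min of sing}, with the structural results of Theorem~\ref{prop structure chains} and Theorem~\ref{thm boundary concentration}, and to upgrade the competitor construction of Proposition~\ref{min of sing} so that it works up to the boundary, where the trace is the fixed datum $g \in \mathcal{R}^1_0(\partial \Omega, \mathcal{N})$ rather than $u_*|_{\partial U}$ on an interior Lipschitz surface. First I would record that $S_* \in \mathscr{C}(\Omega, g)$: property \ref{item_P1} is Theorem~\ref{prop structure chains} (finitely many segments, pairwise disjoint interiors), \ref{item_P2} is $S_* \cap \partial \Omega = S(g)$ from Theorem~\ref{thm boundary concentration} (a finite set), \ref{item_P3} is the no-endpoints-inside-$\Omega$ statement coming from Corollary~\ref{cor 5.18} (every endpoint in $\Omega$ is a branching point with at least two segments), and \ref{item_P4} holds with $u = u_*$, since $u_* \in W^{1,2}_{\loc}(\Omega \setminus S_*, \mathcal{N})$ by Proposition~\ref{conv to harm} and $\operatorname{tr}_{\partial \Omega}(u_*) = g$ by Corollary~\ref{cor conv uptotheboundary}. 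The identity $\mu_*(S_*) = \mathbb{M}(g, S_*)$ then follows exactly as in the interior case: by the weak* lower-semicontinuity inequalities \eqref{two cond of weak*}, Corollary~\ref{cor 5.17} (which identifies the density $\Theta_1$ with $\mathcal{E}^{\mathrm{sg}}_2$ of the transversal homotopy class) and the explicit description $\mu_* = \sum_i \lambda_i \mathcal{H}^1 \mres L_i$ with $\lambda_i = \mathcal{E}^{\mathrm{sg}}_2([u_*|_{\partial D}])$ from Theorem~\ref{prop structure chains}.

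The heart of the proof is the lower bound $\mu_*(S_*) \le \mathbb{M}(g, S)$ for an arbitrary admissible chain $S \in \mathscr{C}(\Omega, g)$. The strategy mirrors \emph{Steps~2 and~3} of Proposition~\ref{min of sing}: given a map $u \in W^{1,2}_{\loc}(\Omega \setminus S, \mathcal{N})$ realizing the infimum in \eqref{mass inf}, I would build a sequence of competitors $w_n \in W^{1,p_n}(\Omega, \mathcal{N})$ with $\operatorname{tr}_{\partial \Omega}(w_n) = g$ and $\limsup_{n\to\infty}(2-p_n)\int_\Omega \frac{|Dw_n|^{p_n}}{p_n}\diff x \le \mathbb{M}(g, S)$, then interpolate between $u_n$ and $w_n$ near $\partial \Omega$ via Lemma~\ref{lemma_Luckhaus}, obtaining a genuine competitor $\psi_n$ for the $p_n$-minimizer $u_n$, so that by minimality and \eqref{two cond of weak*},
\begin{equation*}
\mu_*(S_*) \le \liminf_{n\to\infty}(2-p_n)\int_\Omega \frac{|Du_n|^{p_n}}{p_n}\diff x \le \liminf_{n\to\infty}(2-p_n)\int_\Omega \frac{|D\psi_n|^{p_n}}{p_n}\diff x \le \mathbb{M}(g, S).
\end{equation*}
The competitor $w_n$ is assembled cell by cell: thin cylinders of radius $\delta$ around each interior segment of $S$ carry the singular energy $\frac{\mathcal{H}^1(L)\mathcal{E}^{\mathrm{sg}}_{p_n}(\gamma)\delta^{2-p_n}}{2-p_n}$ via Lemma~\ref{lem 2.19}\ref{it_pha8ieh4Umaejao2vaephei8}; small balls around branching points of $S$ inside $\Omega$ are filled as in Proposition~\ref{min of sing}, Step~2, Case~2; outside these thin neighbourhoods $w_n = u$; and the new ingredient is the treatment of the endpoints of $S$ \emph{on} $\partial \Omega$, i.e. the points of $S(g)$. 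Near such a boundary point, the segment of $S$ meets $\partial \Omega$, and I would build the extension by filling a half-ball (or bilipschitz image thereof) using Lemma~\ref{lem 2.19}\ref{it_cang8wooGhodoh3Eegoom7ee} to control the trace on the capping disk, together with a homogeneous extension whose $p_n$-energy is bounded by $C\delta/(2-p_n)$ plus a term controlled by $\|D_\top g\|_{L^{p_n}}^{p_n}$ on a boundary patch, which is $O(1/(2-p_n))$ by Lemma~\ref{lemma energ boundatum}. Multiplying by $(2-p_n)$ and letting $n \to \infty$ (with $\delta, \varepsilon \to 0$ afterwards, as in Proposition~\ref{min of sing}) kills all auxiliary terms and leaves exactly $\mathbb{M}(g, S)$.

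The main obstacle I anticipate is the interpolation near $\partial \Omega$ in the final step. In Proposition~\ref{min of sing} the surface $\partial U$ was an interior Lipschitz hypersurface along which one had, after choosing a good radius, uniform bounds $\int_{\partial U}(|Du_n|^{p_n}+|Du_*|^{p_n})\diff\mathcal{H}^2 \le C/(2-p_n)$, $L^{p_n}$-convergence of the traces, and uniform convergence; here the analogous surface is $\partial \Omega$ itself, and one needs these three facts for $u_n$ and $w_n$ restricted to $\partial \Omega$. The trace of $w_n$ on $\partial \Omega$ is exactly $g$, so $\operatorname{tr}_{\partial\Omega}(u_n) - \operatorname{tr}_{\partial\Omega}(w_n) = g - g = 0$ identically, which actually makes the Luckhaus interpolation \emph{trivial} in the tangential direction — the real subtlety is instead that $g$ is only in $\mathcal{R}^1_0(\partial\Omega,\mathcal{N})$, not smooth, so the $p_n$-energy of $g$ on $\partial\Omega$ is unbounded and one must localize away from $S(g)$ and bound the contribution of thin boundary neighbourhoods of $S(g)$ by Lemma~\ref{lemma energ boundatum} exactly as in the construction of $w_n$ above. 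This requires arranging the Luckhaus collar $\Phi(\partial\Omega \times (0,T'))$ and the cell decomposition of $w_n$ to be compatible near $S(g)$; I would handle it by first fixing the small parameters $\varepsilon, \delta$ governing the cylinders and branching balls of $w_n$, then choosing the Luckhaus thickness $T' = s_n^{1/2}$ with $s_n = \|u_n|_{\partial\Omega} - g\|_{L^{p_n}(\partial\Omega)} \to 0$ (using Corollary~\ref{cor conv uptotheboundary} and the continuity of the $L^{p}$-norm), and verifying the required uniform estimate $\int_{\partial\Omega}(|D_\top u_n|^{p_n} + |D_\top g|^{p_n} + |u_n - g|^{p_n}/s_n^{p_n/2})\diff\mathcal{H}^2 \le C/(2-p_n)$ from Proposition~\ref{global energy bound}, Lemma~\ref{lemma energ boundatum}, and the hypothesis that $Du_n$ is continuous on $\partial\Omega \setminus S(g)$ combined with Proposition~\ref{prop bound normal}. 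Everything else is a bookkeeping repetition of the interior argument.
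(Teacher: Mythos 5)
Your outline of the competitor construction is on track: reproducing \emph{Step~2} of Proposition~\ref{min of sing} around the interior segments and branching points of $S$, handling the endpoints on $\partial\Omega$ with a half-ball filling whose trace is $g$, and bounding the boundary contribution by Lemma~\ref{lemma energ boundatum}. That, together with the identity $\mu_*(S_*)=\mathbb{M}(g,S_*)$ from Theorem~\ref{prop structure chains}, is exactly what the paper does.

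Where your proposal diverges is the entire third paragraph, which imports the Luckhaus interpolation (\emph{Step~3} of Proposition~\ref{min of sing}) and then worries about making it work near $\partial\Omega$. In the interior version that step was essential because the competitor $w_n$ there had trace $u_*|_{\partial U}$ on the auxiliary surface $\partial U$ while $u_n$ had trace $u_n|_{\partial U}$, and these differ; the Luckhaus collar was precisely what welded the two together. In the global setting there is no mismatch: $\operatorname{tr}_{\partial\Omega}(u_n)=g$ by hypothesis, and the competitor you build also has trace $g$. You actually noticed this (``$\operatorname{tr}_{\partial\Omega}(u_n)-\operatorname{tr}_{\partial\Omega}(w_n)=g-g=0$''), but instead of concluding that $w_n$ is already admissible and that the whole interpolation collapses, you reinterpreted the boundary regularity of $g$ as a new ``subtlety'' for the collar, introduced $s_n=\|u_n|_{\partial\Omega}-g\|_{L^{p_n}}$ (which is identically zero, so $T'=s_n^{1/2}$ does not even yield an admissible Luckhaus thickness), and invoked Proposition~\ref{prop bound normal} and the continuity of $Du_n$ on $\partial\Omega\setminus S(g)$, none of which are needed here. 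The paper's proof is the short version your competitor construction already delivers: once $v_n\in W^{1,p_n}(\Omega,\mathcal{N})$ with $\operatorname{tr}_{\partial\Omega}(v_n)=g$ is built and one has $\limsup_n(2-p_n)\int_\Omega\frac{|Dv_n|^{p_n}}{p_n}\diff x\le\mathbb{M}(g,S)$, minimality of $u_n$ gives $(2-p_n)\int_\Omega\frac{|Du_n|^{p_n}}{p_n}\diff x\le(2-p_n)\int_\Omega\frac{|Dv_n|^{p_n}}{p_n}\diff x$, and the left side converges to $\mu_*(\overline{\Omega})=\mu_*(S_*)=\mathbb{M}(g,S_*)$ by \eqref{wcm} and Theorem~\ref{prop structure chains}. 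The only genuinely new input over the interior case is that the boundary energy $\int_{\partial\Omega}|D_\top g|^{p_n}\diff\mathcal{H}^2$ is controlled by $C/(2-p_n)$ via Lemma~\ref{lemma energ boundatum}, which is what makes the Step~2 estimates close; that is the role of the hypothesis $g\in\mathcal{R}^1_0(\partial\Omega,\mathcal{N})$, not an interpolation obstacle.
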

\begin{proof}
According to Theorem~\ref{prop structure chains}, $S_{*} \in \mathscr{C}(\Omega, g)$ (see Definition~\ref{def admissible chain}). Let $S \in \mathscr{C}(\Omega, g)$. By reproducing the \emph{Step~2} of the proof of Proposition~\ref{min of sing}, we obtain a sequence of competitors $v_{n} \in W^{1,p_{n}}(\Omega, \mathcal{N})$ for $u_{n}$ such that
\begin{equation}\label{estimcommassenergysegments}
\limsup_{n\to +\infty} (2-p_{n}) \int_{\Omega} \frac{|Dv_{n}|^{p_{n}}}{p_{n}}\diff x \leq \mathbb{M}(g, S),
\end{equation}
such  reproduction is possible thanks to Lemma~\ref{lemma energ boundatum}. By Theorem~\ref{prop structure chains} and Definition~\ref{defnofmass}, $\mu_{*}(S_{*})=\mathbb{M}(g, S_{*})$.  Using this, $\eqref{wcm}$, the fact that $v_{n}$ is a competitor for $u_{n}$ and \eqref{estimcommassenergysegments}, we obtain
\begin{align*}
\mathbb{M}(g, S_{*})= \lim_{n\to +\infty}(2-p_{n})\int_{\Omega}\frac{|Du_{n}|^{p_{n}}}{p_{n}}\diff x \leq \limsup_{n\to +\infty} (2-p_{n}) \int_{\Omega} \frac{|Dv_{n}|^{p_{n}}}{p_{n}}\diff x \leq \mathbb{M}(g, S),
\end{align*}
which completes our proof of Proposition~\ref{prop global min}.
\end{proof}
For convenience, we provide the definition of a \emph{minimal connection}.
\begin{defn}\label{defnminconnection}
A minimal connection of a set $\{a_{1},\dotsc,a_{2n}\} \subset \mathbb{R}^{3}$ is a union $S$ of mutually disjoint closed segments, each of which connects two points from $\{a_{1},\dotsc,a_{2n}\}$ so that the sum of the lengths of these segments is minimal among all possible such connections.
\end{defn}

\begin{prop}\label{example 3}
Let $\Omega, g, S(g), S_{*}$ be as in Theorem~\ref{thm boundary concentration}, $S(g) \not = \emptyset$ and $\pi_{1}(\mathcal{N}) \simeq \mathbb{Z}/2\mathbb{Z}$. Then $S(g)$ has an even number of elements and $S_{*}$ is a minimal connection of $S(g)$ in the sense of Definition~\ref{defnminconnection}. 
\end{prop}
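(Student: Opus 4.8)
The plan is to combine the structural description of Theorem~\ref{prop structure chains}, the absence of interior branch points from Corollary~\ref{cor branching points}, the extension construction of Proposition~\ref{propextoutsideplans}, and the global minimality of Proposition~\ref{prop global min}. First I would record that, since $\pi_{1}(\mathcal{N}) \simeq \mathbb{Z}/2\mathbb{Z}$ is Abelian with a single nontrivial element, $[\mathbb{S}^{1},\mathcal{N}]$ has exactly two elements and $\mathcal{E}^{\mathrm{sg}}_{2}$ takes exactly the two values $0$ and $\lambda \coloneqq \mathcal{E}^{\mathrm{sg}}_{2}(\text{nontrivial class}) > 0$. By Theorem~\ref{prop structure chains}, $S_{*} = \bigcup_{i=1}^{n} L_{i}$ is a finite union of segments; since $S_{*}\cap \partial \Omega = S(g)$ is finite, each $L_{i}$ has interior points in $\Omega$, where $\Theta_{1} > 0$ by Proposition~\ref{nice characterization for singular set}, so by Proposition~\ref{prop density dv} and Corollary~\ref{cor 5.17} the associated charge $\lambda_{i} = \mathcal{E}^{\mathrm{sg}}_{2}([u_{*}|_{\partial D}])$ equals $\lambda$ (the only nonzero value available). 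Hence $\mu_{*} = \lambda\,\mathcal{H}^{1}\mres S_{*}$ and $\mathbb{M}(g, S_{*}) = \lambda\,\mathcal{H}^{1}(S_{*})$, and in particular Proposition~\ref{prop global min} says that $S_{*}$ minimizes $\mathcal{H}^{1}$ among all admissible chains in $\mathscr{C}(\Omega, g)$. Moreover, by Corollary~\ref{cor branching points} the set $S_{*}$ has no branch point in $\Omega$, so every endpoint of every $L_{i}$ lies in $S(g)$ and distinct $L_{i}$ meet only at common endpoints in $S(g)$, their interiors being pairwise disjoint by property~\ref{item_P1}.

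Next I would establish the boundary parity constraint: for each $a \in S(g)$, the number $k_{a}$ of segments of $S_{*}$ emanating from $a$ is odd (hence $\geq 1$). Fixing a small half-ball $H = B^{3}_{\rho}(a)\cap \Omega$ with $\rho$ so small that $S_{*}\cap \overline{H}$ consists of the radial segments at $a$ and that $\partial H$ avoids $S_{0}$, the map $u_{*}$ is continuous on $\partial H \setminus S_{*}$, a topological $2$-sphere punctured at $a$ and at the $k_{a}$ points where the segments cross $\partial B^{3}_{\rho}(a)$; the homotopy classes of $u_{*}$ around these punctures are $[g|_{\partial D_{a}}]$ and $k_{a}$ copies of the nontrivial element, and since the homotopy classes around the punctures of a continuous $\mathcal{N}$-valued map on a punctured $2$-sphere sum to zero in the Abelian group $\pi_{1}(\mathcal{N})$, we get $[g|_{\partial D_{a}}] = k_{a}\cdot 1$ in $\mathbb{Z}/2\mathbb{Z}$; as $a \in S(g)$ forces $[g|_{\partial D_{a}}]$ nontrivial, $k_{a}$ is odd. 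Since each $L_{i}$ contributes two endpoints, both in $S(g)$, we have $\sum_{a\in S(g)} k_{a} = 2n$, which is even, so the number of $a$ with $k_{a}$ odd --- which is all of $S(g)$ --- is even. This proves the first assertion.

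To show $S_{*}$ is a connection of $S(g)$ in the sense of Definition~\ref{defnminconnection}, I would prove that $k_{a} = 1$ for every $a \in S(g)$. Suppose $k_{a} \geq 3$. Among the pairwise distinct inward unit directions $v_{1},\dots,v_{k_{a}}$ at $a$, some two, say $v_{1}$ and $v_{2}$, are not antipodal, so if $[a,b_{1}]$ and $[a,b_{2}]$ are the corresponding segments then $|a-b_{1}| + |a-b_{2}| > |b_{1}-b_{2}|$. Replacing $[a,b_{1}]\cup[a,b_{2}]$ by the single segment $[b_{1},b_{2}]$ and then removing, by length-nonincreasing swaps, the finitely many crossings that this may introduce (each swap preserving the degree of every vertex), one obtains a finite union $S'$ of straight segments with endpoints in $S(g)$, pairwise disjoint interiors and no endpoint in $\Omega$, in which the degree of each $a' \in S(g)$ has the same parity as in $S_{*}$, hence is odd; by Proposition~\ref{propextoutsideplans}, applied with $\alpha$ the $\{0,1\}$-valued function counting mod $2$ the $S'$-segments between two points (whose hypotheses hold since $\pi_{1}(\mathcal{N})$ is Abelian, $[g|_{\partial D_{a'}}]$ is the unique nontrivial element, and there are no interior points), $S' \in \mathscr{C}(\Omega, g)$ with $\mathbb{M}(g,S') \leq \lambda\,\mathcal{H}^{1}(S') < \lambda\,\mathcal{H}^{1}(S_{*}) = \mathbb{M}(g,S_{*})$, contradicting Proposition~\ref{prop global min}. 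Hence $k_{a}\leq 2$, and combined with the previous paragraph $k_{a}=1$ for all $a \in S(g)$; consequently the segments of $S_{*}$ are pairwise disjoint and pair up all of $S(g)$, i.e., $S_{*}$ is a connection of $S(g)$.

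Finally, for an arbitrary connection $\Gamma$ of $S(g)$, Proposition~\ref{propextoutsideplans} (with $\alpha$ the indicator of the underlying perfect matching) shows $\Gamma \in \mathscr{C}(\Omega,g)$, and $\mathbb{M}(g,\Gamma) = \lambda\,\mathcal{H}^{1}(\Gamma)$ because at each endpoint, which has degree $1$ in $\Gamma$, the flux identity of the second paragraph forces the incident segment to carry the nontrivial charge for every admissible extension. Proposition~\ref{prop global min} then gives $\lambda\,\mathcal{H}^{1}(S_{*}) = \mathbb{M}(g,S_{*}) \leq \mathbb{M}(g,\Gamma) = \lambda\,\mathcal{H}^{1}(\Gamma)$, so $\mathcal{H}^{1}(S_{*}) \leq \mathcal{H}^{1}(\Gamma)$ for every connection $\Gamma$; since $S_{*}$ is itself a connection, it is a minimal one. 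I expect the main obstacle to be the topological bookkeeping in the middle two paragraphs --- relating the homotopy class of $g$ around a boundary point to the number and charges of the singular segments meeting it, and checking that the modified configuration $S'$ is still admissible after the uncrossing step (in particular handling possible collinear degeneracies) --- which should be carried out using the topological resolution machinery of Definition~\ref{top resol} and Proposition~\ref{propextoutsideplans}, with no new analytic input.
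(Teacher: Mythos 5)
Your proof is correct and rests on the same four ingredients as the paper's proof of Proposition~\ref{example 3} --- Corollary~\ref{cor branching points}, Theorem~\ref{prop structure chains}, Proposition~\ref{propextoutsideplans}, and Proposition~\ref{prop global min} --- but you make explicit a step that the paper compresses into a single sentence. After recalling that $S_*$ is a finite union of segments with endpoints in $S(g)$ and pairwise disjoint interiors, the paper compares $\mathbb{M}(g,S_*)\le\mathbb{M}(g,S)$ over connections $S$ and then simply asserts that, since $\pi_1(\mathcal{N})$ has only one nontrivial element, this already forces $S_*$ to be a minimal connection. What is not verified there, and what you carefully supply, is that each $a\in S(g)$ has degree exactly~$1$ in $S_*$ (so that the segments of $S_*$ are genuinely \emph{mutually disjoint} as Definition~\ref{defnminconnection} requires): you prove $k_a$ odd via the flux identity on the punctured half-sphere $\partial H$ around $a$ (which simultaneously yields $\#S(g)$ even by the handshake lemma, a slightly different derivation than the paper's direct parity argument for $g$ on $\partial\Omega$), and you rule out $k_a\ge 3$ by the shortening/uncrossing move combined with Proposition~\ref{propextoutsideplans} to certify that the modified configuration stays in $\mathscr{C}(\Omega,g)$. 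Your last paragraph, noting that degree-$1$ boundary vertices force the incident segment to carry the nontrivial charge for every admissible extension, is exactly what is needed to conclude $\mathbb{M}(g,\Gamma)=\lambda\,\mathcal{H}^1(\Gamma)$ for connections and close the argument. One small point to keep in mind: Proposition~\ref{propextoutsideplans} produces a \emph{continuous} extension outside $\Sigma\cup F$ whereas Definition~\ref{def admissible chain}~\ref{item_P4} asks for a $W^{1,2}_{\loc}$ extension; since the construction in Propositions~\ref{propextoutsideplans2d}--\ref{propextoutsideplans} is piecewise Lipschitz away from the singular set and homogeneous near it, this passage is harmless, and the paper's own proof of Proposition~\ref{example 3} relies on it in the same way.
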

\begin{proof}
    First, notice that $\# S(g)$ is even, because otherwise, using the fact that $\pi_{1}(\mathcal{N}) \simeq \mathbb{Z}/2\mathbb{Z}$, we would  obtain a map which is not nullhomotopic, but having zero degree, which is a contradiction. Next, in view of Corollary~\ref{cor branching points} and Theorem~\ref{prop structure chains}, $S_{*}$ is a finite union of segments whose relative interiors are mutually disjoint, and each of these segments connects two points from $S(g)$.  Let $S$ be a union of mutually disjoint closed segments, each of which connects two points from $S(g)$.  Then, according to Proposition~\ref{propextoutsideplans} and Definition~\ref{def admissible chain},  $S \in \mathscr{C}(\Omega, g)$. By Proposition~\ref{prop global min}, 
    \[
    \mathbb{M}(g, S_{*}) \leq \mathbb{M}(g, S).
    \]
Since $\pi_{1}(\mathcal{N})$ has only one nontrivial element, the above estimate implies that $S_{*} \in \mathscr{C}(\Omega, g)$ is a minimal connection of $S(g)$ in the sense of Definition~\ref{defnminconnection}. This completes our proof of Proposition~\ref{example 3}.
\end{proof}
\subsection{Proof of Theorem~\ref{theorem_intro_global}} 
Finally, we prove our second main theorem.
\begin{proof}[Proof of Theorem~\ref{theorem_intro_global}]
The proof follows from Propositions~\ref{prop quant behuptotheb},~\ref{prop global min} and Theorem~\ref{prop structure chains}.
\end{proof}
  \appendix
\section{Auxiliary results}
We provide a proof of the following lemma leading to Corollary~\ref{cor interpolation}, which was used in the proofs of Lemmas~\ref{lem lifting linear},~\ref{lem lifting nonlinear}.
\begin{lemma}\label{interpol} 
    Let $m \in \mathbb{N}\setminus \{0,1\}$, $p\in (1,+\infty)$ and $U \subset \mathbb{R}^{m-1}$ be open and convex. Let $\varphi:U\to \mathbb{R}^{m}$ be a bilipschitz mapping and $E=\varphi(U)$. Then there exists a constant $C=C(m)>0$ such that for every $u\in W^{1,p}(E, \mathbb{R}^{k})$,
    \begin{equation}\label{main int estimate}
        |u|^{p}_{W^{1-1/p,p}(E)}\leq  \frac{2^{p}}{p-1}L^{4m-5+2p}C\|u\|_{L^{p}(E)}\,\|D _{\top}u\|_{L^{p}(E)}^{p-1}
    \end{equation}
    and
    \begin{equation}\label{est 2.8}
        |u|^{p}_{W^{1-1/p,p}(E)}\leq 2^{p}L^{2m-3+p}C\Bigl(\frac{2}{\diam(U)}\Bigr)^{p-1}\|u\|^{p}_{L^{p}(E)}+ L^{4m-5+2p}C\Bigl(\frac{\diam(U)}{2}\Bigr)\|D_{\top}u\|^{p}_{L^{p}(E)},
    \end{equation}
    where $|u|^{p}_{W^{1-1/p,p}(E)}$ denotes the integral $\int_{E}\int_{E}\frac{|u(y)-u(x)|^{p}}{|y-x|^{m-2+p}}\diff \mathcal{H}^{m-1}(y)\diff \mathcal{H}^{m-1}(x)$ and $L>0$ is a bilipschitz constant of $\varphi$.
\end{lemma}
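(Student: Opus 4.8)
The plan is to reduce everything to the standard fractional trace/interpolation estimate on a convex set in $\mathbb{R}^{m-1}$ and then transport it through the bilipschitz map $\varphi$. First I would recall (or quickly reprove) the Gagliardo-type estimate on the convex domain $U$: for $w\in W^{1,p}(U,\mathbb{R}^k)$ one has
\[
|w|^{p}_{W^{1-1/p,p}(U)}=\int_{U}\int_{U}\frac{|w(y)-w(x)|^{p}}{|y-x|^{(m-1)-1+p}}\diff y\diff x\le C(m)\int_{U}\int_{U}\frac{|D w(z_{x,y})|^{p}}{|y-x|^{(m-1)-1}}\,(\cdots)\,\diff y\diff x,
\]
which is proved by writing $w(y)-w(x)=\int_0^1 Dw(x+t(y-x))\cdot(y-x)\,dt$ (legitimate since $U$ is convex so the segment $[x,y]\subset U$), applying Jensen's inequality in $t$, and using Fubini together with the change of variables $z=x+t(y-x)$. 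Keeping track of the scaling in the radial variable one obtains, for every $\rho>0$,
\[
|w|^{p}_{W^{1-1/p,p}(U)}\le 2^{p}C(m)\Bigl(\rho^{-(p-1)}\|w\|_{L^{p}(U)}^{p}+\rho\,\|D w\|_{L^{p}(U)}^{p-1}\,\|w\|_{L^{p}(U)}\Bigr),
\]
or, optimizing in $\rho$ only over $\rho\le \tfrac12\diam(U)$, the two displayed forms: the Young-type split giving \eqref{est 2.8} with $\rho=\tfrac12\diam(U)$, and the Hölder-type product form giving \eqref{main int estimate} with the choice $\rho=\|w\|_{L^p(U)}/\|D_\top w\|_{L^p(U)}$ (here the factor $\frac{1}{p-1}$ appears exactly from optimizing $\rho^{-(p-1)}a+\rho b$). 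This is the only genuinely analytic step; it is classical but I would spell out the convexity usage since the lemma's hypothesis on $U$ is precisely what makes the segment argument work.

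Next I would transfer these estimates to $E=\varphi(U)$. Set $w=u\circ\varphi\in W^{1,p}(U,\mathbb{R}^k)$; by the chain rule for Sobolev maps and the fact that $\varphi$ is $L$-bilipschitz, $|Dw(x)|\le L\,|D_{\top}u(\varphi(x))|$ a.e., and the area formula (the Jacobian of $\varphi$ is comparable to $1$, with both bounds controlled by powers of $L$, and $\mathcal{H}^{m-1}$ on $E$ corresponds to Lebesgue measure on $U$ up to these Jacobian factors) gives $\|w\|_{L^p(U)}\le L^{(m-1)/p}\|u\|_{L^p(E)}$ and $\|D_\top w\|_{L^p(U)}\le L^{1+(m-1)/p}\|D_\top u\|_{L^p(E)}$, and conversely. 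For the fractional seminorm itself one compares $|u(\varphi(y))-u(\varphi(x))|=|w(y)-w(x)|$ and $|\varphi(y)-\varphi(x)|\asymp_L|y-x|$, so that $|u|^p_{W^{1-1/p,p}(E)}$ and $|w|^p_{W^{1-1/p,p}(U)}$ are comparable up to a factor that is a fixed power of $L$ — tracking the exponents in the kernel $|y-x|^{-(m-2+p)}$ and in the two measure changes yields the exponents $2m-3+p$ and $4m-5+2p$ appearing in the statement. Plugging these comparisons into the two estimates for $w$ on $U$ from the previous paragraph produces exactly \eqref{main int estimate} and \eqref{est 2.8}.

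The main obstacle is purely bookkeeping: getting the powers of $L$ and the combinatorial constant $C(m)$ to come out in the precise form claimed. In practice I would not try to obtain the sharp exponents from first principles but rather verify that the naive estimates — one factor of $L$ per derivative, $L^{m-1}$ per $(m-1)$-dimensional measure change via the area formula, and $L^{m-2+p}$ from the kernel — sum to at most $2m-3+p$ on the $\|u\|_{L^p}$ term and $4m-5+2p$ on the gradient term, absorbing any slack into the constant $C=C(m)$; since $L\ge 1$ one may always inflate an exponent. The one subtlety worth care is that the $\frac{1}{p-1}$ in \eqref{main int estimate} must come from the $\rho$-optimization and not be hidden in $C(m)$, because Remark~\ref{remlifnonlinear} relies on $C$ being $p$-independent away from $p=1$; this is automatic from the explicit minimization $\min_{\rho>0}(\rho^{-(p-1)}a+\rho b)=\frac{p}{p-1}(p-1)^{-1/p}a^{1/p}b^{1-1/p}\le \frac{2}{p-1}a^{1/p}b^{1-1/p}$ for $p\in(1,2]$, so I would present that computation explicitly.
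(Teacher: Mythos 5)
Your overall plan — fundamental theorem of calculus along segments (exploiting convexity of \(U\)), Jensen, Fubini, near/far diagonal split, and optimizing the cut-off radius \(\rho\) — is essentially the paper's argument, and the \(\rho=\|u\|_{L^p}/\|D_\top u\|_{L^p}\) choice for \eqref{main int estimate} and \(\rho=\diam(U)/2\) for \eqref{est 2.8} are exactly the paper's choices as well. (The paper estimates on \(U\) with the Jacobian factors of \(\varphi\) carried inline, rather than estimating the seminorm of \(w=u\circ\varphi\) on \(U\) and then comparing the two fractional seminorms at the end; your reordering is legitimate but gives a strictly worse \(L\)-exponent on the \(\|u\|_{L^p}^p\) term, because converting one copy of the measure back to \(\mathcal H^{m-1}\mres E\) via the area formula only costs \(L^{m-1}\) once, whereas comparing whole seminorms costs \(L^{2(m-1)}\). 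That is a constant-tracking issue, not a conceptual one.)

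There are, however, two genuine problems with the proposal. First, your intermediate inequality has the wrong near-diagonal term: the Jensen/Fubini/change-of-variables argument on
\(F_\delta=\{|y-x|<\delta\}\) produces \(C\,\delta\,\|Dw\|_{L^p(U)}^p\), not \(\delta\,\|Dw\|_{L^p(U)}^{p-1}\|w\|_{L^p(U)}\). With the term you wrote, plugging \(\rho=\|w\|_{L^p}/\|Dw\|_{L^p}\) does not produce \(\|w\|_{L^p}\,\|Dw\|_{L^p}^{p-1}\) but rather \(\|w\|_{L^p}^2\|Dw\|_{L^p}^{p-2}\), so the claimed form \eqref{main int estimate} does not come out.

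Second, and more seriously, you misidentify the origin of the factor \(\tfrac{1}{p-1}\). A direct computation gives
\[
\min_{\rho>0}\bigl(\rho^{-(p-1)}a+\rho b\bigr)
= p\,(p-1)^{\frac{1-p}{p}}\,a^{1/p}b^{1-1/p},
\]
and \((p-1)^{(1-p)/p}\to 1\) as \(p\to 1^+\), so the optimization does \emph{not} blow up like \((p-1)^{-1}\) (note also your sign on the exponent of \(p-1\) is reversed). The \(\tfrac{1}{p-1}\) in \eqref{main int estimate} actually comes from the far-diagonal kernel integral
\(\int_{\{|z|\geq\delta\}}|z|^{-(m-2+p)}\,dz = \tfrac{C(m)}{(p-1)\delta^{p-1}}\), so the correct intermediate estimate reads
\(|w|^{p}_{W^{1-1/p,p}(U)}\leq C(m)\bigl(\tfrac{1}{p-1}\rho^{-(p-1)}\|w\|_{L^p}^p+\rho\|Dw\|_{L^p}^p\bigr)\). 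This matters for \eqref{est 2.8}: with that \(\tfrac{1}{p-1}\) present, simply plugging \(\rho=\diam(U)/2\) would leave a spurious \((p-1)^{-1}\) in front of \(\|u\|_{L^p(E)}^p\), which the statement of \eqref{est 2.8} does not allow. The paper eliminates it by using that the far-diagonal integral only ranges over \(|y-x|\in[\delta,2\delta]\) when \(\diam(U)=2\delta\), and \(\int_\delta^{2\delta}r^{-p}\,dr=\delta^{1-p}\cdot\tfrac{1-2^{1-p}}{p-1}\leq\ln(2)\,\delta^{1-p}\) uniformly in \(p\in(1,2]\). Without this observation your argument does not yield \eqref{est 2.8} with a \(p\)-independent constant, which is precisely the point of that inequality.
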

\begin{proof}
    Let $\|D_{\top}u\|_{L^{p}(E)}>0$, because otherwise \eqref{main int estimate} and \eqref{est 2.8} are satisfied. According to \cite[Theorem~3.2.5,~p.~244]{Federer}, we have
    \begin{equation}\label{ineq parametrization}
        |u|^{p}_{W^{1-1/p,p}(E)}=\int_{U}\int_{U}\frac{|u(\varphi(y))-u(\varphi(x))|^{p}}{|\varphi(y)-\varphi(x)|^{m-2+p}}J_{m-1}(\varphi(y))J_{m-1}(\varphi(x))\diff y\diff x,
    \end{equation}
    where $J_{m-1}$ denotes the $(m-1)$-dimensional Jacobian (see, for instance, Section~3.2.1, p.~241 in \cite{Federer}).
    We define for each $\delta>0$ the sets \[W_{\delta}=\{(x,y)\in U^{2}: |y-x|\geq \delta\}\quad \text{and} \quad F_{\delta}=\{(x,y) \in U^{2}: |y-x|<\delta\}. \] Then $U^{2}=W_{\delta}\cup F_{\delta}$. Hereinafter in this proof, $C$ denotes a positive constant that can only depend on $m$ and can be different from line to line.  We first prove \eqref{main int estimate}. For each $\delta>0$,
    \begin{equation}
    \label{ineq 0.2}
    \begin{split}
        \iint_{W_{\delta}}&\frac{|u(\varphi(y))-u(\varphi(x))|^{p}}{|\varphi(y)-\varphi(x)|^{m-2+p}}J_{m-1}(\varphi(y)) J_{m-1}(\varphi(x))\diff y  \diff x \\ & \leq 2^{p-1}\iint_{W_{\delta}}\frac{|u(\varphi(y))|^{p}+|u(\varphi(x))|^{p}}{|\varphi(y)-\varphi(x)|^{m-2+p}}J_{m-1}(\varphi(y)) J_{m-1}(\varphi(x))\diff y\diff x \\ &\leq 2^{p-1}L^{m-2+p}\iint_{W_{\delta}}\frac{|u(\varphi(y))|^{p}+|u(\varphi(x))|^{p}}{|y-x|^{m-2+p}}J_{m-1}(\varphi(y)) J_{m-1}(\varphi(x))\diff y\diff x \\ &\leq 2^{p}L^{2m-3+p}\int_{E}|u|^{p}\diff\mathcal{H}^{m-1} \int_{\mathbb{R}^{m-1}\setminus B^{m-1}_{\delta}}\frac{\diff z}{|z|^{m-2+p}}\\
        &=\frac{ 2^{p}CL^{2m-3+p}}{(p-1)\delta^{p-1}}\int_{E}|u|^{p}\diff \mathcal{H}^{m-1},   
        \end{split}
        \end{equation}
    where we have used that $\varphi$ is bilipschitz with the bilipschitz constant $L$ and \cite[Theorem~3.2.5]{Federer}. Next, using Jensen's inequality, Fubini's theorem and the fact that $\varphi$ is bilipschitz with the bilipschitz constant $L$, we have
    \begin{equation}
\label{ineq 0.3}
\begin{split}
        \iint_{F_{\delta}}&\frac{|u(\varphi(y))-u(\varphi(x))|^{p}}{|\varphi(y)-\varphi(x)|^{m-2+p}} J_{m-1}(\varphi(y)) J_{m-1}(\varphi(x))\diff y \diff x \\& = \iint_{F_{\delta}}\left| \int^{1}_{0}\frac{\partial }{\partial t} u(\varphi((1-t)x+ty))\diff t\right|^{p} \frac{J_{m-1}(\varphi(y)) J_{m-1}(\varphi(x))}{|\varphi(y)-\varphi(x)|^{m-2+p}}\diff y\diff x\\
        &\leq L^{4m-5+2p}\int^{1}_{0}\left(\iint_{F_{\delta}}\frac{|D_{\top}u(\varphi((1-t)x+ty))|^{p}|y-x|^{p}}{|y-x|^{m-2+p}} J_{m-1}(\varphi((1-t)x+ty))\diff y\diff x\right) \diff t\\ & \leq L^{4m-5+2p}\int^{1}_{0}\left(\iint_{F_{\delta}}\frac{|D_{\top}u(\varphi((1-t)x+ty))|^{p}}{|y-x|^{m-2}} J_{m-1}(\varphi((1-t)x+ty))\diff y\diff x\right) \diff t.  
        \end{split}
    \end{equation}    
    Performing the change of variable $z=(1-t)x+ty$ in the integral with respect to $x$, we have
    \begin{equation}\label{eq 0.4}
        \begin{split}
            \iint_{F_{\delta}}\frac{|D_{\top}u(\varphi((1-t)x+ty))|^{p}}{|y-x|^{m-2}}&J_{m-1}(\varphi((1-t)x+ty))\diff y\diff x\\ & \qquad \qquad = \frac{1}{1-t}\iint_{F_{(1-t)\delta}}\frac{|D_{\top}u(\varphi(z))|^{p}}{|y-z|^{m-2}} J_{m-1}(\varphi(z))\diff y\diff z.
        \end{split}
    \end{equation}
    Therefore, integrating \eqref{eq 0.4} with respect to $t\in [0,1]$, we get
    \begin{equation}
    \label{ineq 0.5}
    \begin{split}
        \int^{1}_{0}&\left(\iint_{F_{\delta}}\frac{|D_{\top}u(\varphi((1-t)x+ty))|^{p}}{|y-x|^{m-2}}J_{m-1}(\varphi((1-t)x+ty))\diff y\diff x\right) \diff t \\ & = \int^{1}_{0}\frac{1}{1-t}\iint_{F_{(1-t)\delta}}\frac{|D_{\top}u(\varphi(z))|^{p}}{|y-z|^{m-2}} J_{m-1}(\varphi(z))\diff y\diff z \diff t\\ & \leq \int_{E}|D_{\top}u|^{p}\diff \mathcal{H}^{m-1} \int^{1}_{0}\frac{1}{1-t}\int_{B^{m-1}_{(1-t)\delta}}\frac{1}{|\xi|^{m-2}}\diff \xi  \diff t\\ &\leq C\delta\int_{E}|D_{\top}u|^{p}\diff \mathcal{H}^{m-1}.       
        \end{split}
    \end{equation}
    Then, taking $\delta>0$ such that 
    \[
    \delta=\frac{\|u\|_{L^{p}(E)}}{\|D_{\top}u\|_{L^{p}(E)}},
    \]
    we obtain \eqref{main int estimate}  from \eqref{ineq parametrization}, \eqref{ineq 0.2}, \eqref{ineq 0.3} and \eqref{ineq 0.5}. 
    
    Next, we prove \eqref{est 2.8}. Let us now choose $\delta=\diam(U)/2$. Then, changing $\int_{\mathbb{R}^{m-1}\setminus B^{m-1}_{\delta}}\frac{\diff z}{|z|^{m-2+p}}$ to $\int_{B^{m-1}_{2\delta}\setminus B^{m-1}_{\delta}}\frac{\diff z}{|z|^{m-2+p}}$ and using that $\frac{1-(1/2)^{p-1}}{p-1}\leq \ln(2)$ in \eqref{ineq 0.2}, we obtain
    \begin{multline*}
        \iint_{W_{\delta}}\frac{|u(\varphi(y))-u(\varphi(x))|^{p}}{|\varphi(y)-\varphi(x)|^{m-2+p}}J_{m-1}(\varphi(y)) J_{m-1}(\varphi(x))\diff y  \diff x\\ \leq 2^{p}\ln(2)L^{2m-3+p}C\left(\frac{1}{\delta}\right)^{p-1}\int_{E}|u|^{p}\diff \mathcal{H}^{m-1},
    \end{multline*}
    which, together with \eqref{ineq 0.5}, implies \eqref{est 2.8}. This completes our proof of Lemma~\ref{interpol}.
\end{proof}
\begin{cor}\label{cor interpolation}
    Let $p\in (1,2]$. Then there exists a constant $C=C(m)>0$ such that for every $u\in W^{1,p}(\partial B^{m}_{r}(x_{0}), \mathbb{R}^{k})$,
    \begin{equation}\label{ineq linear}
        |u|^{p}_{W^{1-1/p,p}(\partial B^{m}_{r}(x_{0}))}\leq C r \|D_{\top}u\|^{p}_{L^{p}(\partial B^{m}_{r}(x_{0}))}.
    \end{equation}
    Furthermore, if $\mathcal{Y}$ is a compact Riemannian manifold, which is isometrically embedded into    $\mathbb{R}^{k}$, $p\in [p_{0},2]$ for some $p_{0}\in (1,2]$ and $u \in W^{1,p}(\partial B^{m}_{r}(x_{0}), \mathcal{Y})$, then there exists a constant $C=C(p_{0},m,\mathcal{Y})>0$ such that
    \begin{equation}\label{ineq nonlinear}
        |u|^{p}_{W^{1-1/p,p}(\partial B^{m}_{r}(x_{0}))} \leq C r^{\frac{m-1}{p}} \|D_{\top}u\|^{p-1}_{L^{p}(\partial B^{m}_{r}(x_{0}))}.
    \end{equation}
\end{cor}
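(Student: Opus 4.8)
The plan is to deduce both estimates from Lemma~\ref{interpol} by a localization argument on the sphere, after reducing to the unit sphere by scaling. First I would record the homogeneities under the dilation $x\mapsto x_{0}+r(x-x_{0})$: writing $\widetilde{u}=u(x_{0}+r\,\cdot)$ on $\mathbb{S}^{m-1}=\partial B^{m}_{1}$, one has $|u|^{p}_{W^{1-1/p,p}(\partial B^{m}_{r}(x_{0}))}=r^{m-p}|\widetilde{u}|^{p}_{W^{1-1/p,p}(\mathbb{S}^{m-1})}$, $\|D_{\top}u\|^{p}_{L^{p}(\partial B^{m}_{r}(x_{0}))}=r^{m-1-p}\|D_{\top}\widetilde{u}\|^{p}_{L^{p}(\mathbb{S}^{m-1})}$ and $\|u\|^{p}_{L^{p}(\partial B^{m}_{r}(x_{0}))}=r^{m-1}\|\widetilde{u}\|^{p}_{L^{p}(\mathbb{S}^{m-1})}$. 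Since the exponents $r^{1}\cdot r^{m-1-p}=r^{m-p}$ and $r^{(m-1)/p}\cdot (r^{m-1-p})^{(p-1)/p}=r^{m-p}$ both match, \eqref{ineq linear} and \eqref{ineq nonlinear} are scaling-consistent, so it suffices to treat $r=1$.

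Next I would fix once and for all a finite cover $\mathbb{S}^{m-1}=\bigcup_{i=1}^{N}V_{i}$ by open geodesic caps, each of which is a bilipschitz image $V_{i}=\varphi_{i}(U_{i})$ of a bounded convex open set $U_{i}\subset\mathbb{R}^{m-1}$ with $\operatorname{diam}(U_{i})$ comparable to $1$ and with a bilipschitz constant $L_{0}$ that is an absolute constant; one may also take the cover to have a Lebesgue number $\rho_{0}>0$, so that any two points $x,y\in\mathbb{S}^{m-1}$ with $|x-y|<\rho_{0}$ lie in a common $V_{i}$. I would then split the Gagliardo double integral into a near-diagonal part over $\{(x,y):|x-y|<\rho_{0}\}$ and a far part over the complement. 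The near-diagonal part is bounded by $\sum_{i}\int_{V_{i}}\int_{V_{i}}\frac{|u(x)-u(y)|^{p}}{|x-y|^{m-2+p}}$ (since the integrand is nonnegative and each such $(x,y)$ lies in some $V_{i}\times V_{i}$), and since this integrand is unchanged when a constant is subtracted from $u$, one applies on each $V_{i}$ Lemma~\ref{interpol} to $u-c_{i}$: estimate \eqref{est 2.8} for the linear bound, with $c_{i}$ the average of $u$ over $V_{i}$, and estimate \eqref{main int estimate} for the nonlinear bound, with $c_{i}$ any fixed value in $\mathcal{Y}$. For the far part I would use $|x-y|^{-(m-2+p)}\le\rho_{0}^{-(m-2+p)}$ and $|u(x)-u(y)|^{p}\le 2^{p-1}(|u(x)-\bar u|^{p}+|u(y)-\bar u|^{p})$ with $\bar u$ the average of $u$ over $\mathbb{S}^{m-1}$, obtaining a bound by $C(m,\rho_{0})\,\|u-\bar u\|^{p}_{L^{p}(\mathbb{S}^{m-1})}$.

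For \eqref{ineq linear} with $p\le 2$ the constants in \eqref{est 2.8} are all dominated by $C(m)$; combining the chart contributions with the Poincaré inequality $\|u-c_{i}\|_{L^{p}(V_{i})}\le C(m)\|D_{\top}u\|_{L^{p}(V_{i})}$ on the convex charts and the Poincaré--Wirtinger inequality $\|u-\bar u\|_{L^{p}(\mathbb{S}^{m-1})}\le C(m)\|D_{\top}u\|_{L^{p}(\mathbb{S}^{m-1})}$ on the compact connected sphere, and summing over the finitely many charts, yields \eqref{ineq linear}. For \eqref{ineq nonlinear}, on each chart $\|u-c_{i}\|_{L^{p}(V_{i})}\le\operatorname{diam}(\mathcal{Y})\,|V_{i}|^{1/p}\le C(m,\mathcal{Y})$ because $u$ is $\mathcal{Y}$-valued, and the constant $\frac{2^{p}}{p-1}L_{0}^{4m-5+2p}C$ of \eqref{main int estimate} is $\le C(p_{0},m)$ for $p\in[p_{0},2]$; hence the near-diagonal part is $\le C(p_{0},m,\mathcal{Y})\,\|D_{\top}u\|^{p-1}_{L^{p}(\mathbb{S}^{m-1})}$. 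For the far part I would write $\|u-\bar u\|^{p}_{L^{p}}=\|u-\bar u\|^{p-1}_{L^{p}}\,\|u-\bar u\|_{L^{p}}\le\bigl(C(m)\|D_{\top}u\|_{L^{p}(\mathbb{S}^{m-1})}\bigr)^{p-1}\|u-\bar u\|_{L^{p}}$, and use $|u-\bar u|\le\operatorname{diam}(\mathcal{Y})$ pointwise (valid since $\bar u$ lies in the convex hull of $\mathcal{Y}$) to get $\|u-\bar u\|_{L^{p}}\le C(m,\mathcal{Y})$, so the far part is $\le C(m,\mathcal{Y})\|D_{\top}u\|^{p-1}_{L^{p}(\mathbb{S}^{m-1})}$ as well. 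Adding the two contributions and undoing the scaling gives \eqref{ineq nonlinear}.

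The main obstacle is the far part of the double integral in the nonlinear case: a crude bound there produces only a fixed constant rather than the power $\|D_{\top}u\|^{p-1}_{L^{p}}$, so one must use the interpolation identity $\|u-\bar u\|^{p}_{L^{p}}=\|u-\bar u\|^{p-1}_{L^{p}}\|u-\bar u\|_{L^{p}}$ together with the uniform $L^{\infty}$ bound coming from the compactness of $\mathcal{Y}$ to recover the correct exponent; a secondary point is bookkeeping, namely checking that every constant — the chart bilipschitz constants, the Poincaré and Poincaré--Wirtinger constants, and the $\frac{1}{p-1}$ factor — is controlled uniformly in the cover and in $p\in[p_{0},2]$, which is precisely what produces the stated dependence $C=C(m)$ in \eqref{ineq linear} and $C=C(p_{0},m,\mathcal{Y})$ in \eqref{ineq nonlinear}.
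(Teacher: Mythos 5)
Your proof is correct, and the overall strategy matches the paper: scale to the unit sphere, localize to bilipschitz charts where Lemma~\ref{interpol} applies, subtract suitable constants and invoke Poincar\'e for \eqref{ineq linear}, and exploit compactness of $\mathcal{Y}$ for \eqref{ineq nonlinear}. The one genuine structural difference is in how the Gagliardo double integral over $\mathbb{S}^{m-1}\times\mathbb{S}^{m-1}$ is reduced to the charts. You use a Lebesgue-number cover by convex caps, which controls only the near-diagonal set $\{|x-y|<\rho_0\}$, and you must then treat the far-diagonal part separately; in the nonlinear case this forces the extra interpolation step $\|u-\bar u\|_{L^p}^{p}=\|u-\bar u\|_{L^p}^{p-1}\|u-\bar u\|_{L^p}$ combined with the $L^{\infty}$ bound from $\operatorname{diam}(\mathcal{Y})$. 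The paper instead chooses the $2m$ slightly-more-than-hemispheres $E_{2j-1}=\{x_j<3/4\}$, $E_{2j}=\{x_j>-3/4\}$, which have the stronger combinatorial property that every point of $\mathbb{S}^{m-1}$ misses at most one $E_i$, hence $\mathbb{S}^{m-1}\times\mathbb{S}^{m-1}=\bigcup_i E_i\times E_i$; this absorbs \emph{all} pairs $(x,y)$, including far ones, into the chart integrals, so no far-diagonal term ever appears. The paper also keeps the nonlinear case cleaner by first proving the manifold-free interpolation inequality $|u|^p_{W^{1-1/p,p}}\le C'\|u\|_{L^p}\|D_\top u\|^{p-1}_{L^p}$ (which is scale-invariant) and only then inserting $\|u\|_{L^p}\le C(\mathcal{Y})r^{(m-1)/p}$, whereas you thread the $\mathcal{Y}$-dependence through each chart and the far part. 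Both routes are valid; yours is the generic argument that would work on any compact manifold, while the paper's specialized cover buys a shorter, far-part-free proof on the sphere.
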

\begin{proof}[Proof of Corollary~\ref{cor interpolation}] First, let us prove \eqref{ineq linear}. By scaling and translation,  it is enough to show that for each $u \in W^{1,p}(\mathbb{S}^{m-1}, \mathbb{R}^{k})$,
    \begin{equation}\label{est 2.16}
        |u|^{p}_{W^{1-1/p,p}(\mathbb{S}^{m-1})}\leq C \|D_{\top} u\|^{p}_{L^{p}(\mathbb{S}^{m-1})}.
    \end{equation}
    Recall that $m\geq 2$ and for each \(j \in \{1, \dotsc, m\}\), define  
    \(E_{2j - 1} = \{ x \in \mathbb{S}^{m - 1} : x_j < 3/4\}\) and 
    \(E_{2j} = \{ x \in \mathbb{S}^{m - 1} : x_j > -3/4\}\). For every \(x \in \mathbb{S}^{m - 1}\), there is at most one \(i \in \{1, \dotsc, 2m\}\) such that \(x \not \in E_i\).
    Thus, 
    \(\mathbb{S}^{m-1}\times \mathbb{S}^{m-1}=\bigcup_{i=1}^{2m}E_{i}\times E_{i}\). Moreover, the \(E_i\)'s are bilipschitz images of the unit ball \(B^{m -1}_1\).
    
    Hereinafter in this proof, $C$ will denote a positive constant that can depend only on $m$ and can be different from line to line.  Then, in view of \eqref{est 2.8} with $U=B^{m-1}_{1}$, we have
    \begin{equation}\label{est 2.17}
        |u|^{p}_{W^{1-1/p,p}(E_{i})}\leq C (\|u\|^{p}_{L^{p}(E_{i})}+\|D_{\top}u\|^{p}_{L^{p}(E_{i})}).
    \end{equation}
    Applying \eqref{est 2.17} with $v=u-\frac{1}{\mathcal{H}^{m-1}(E_{i})}\int_{E_{i}}u\diff \mathcal{H}^{m-1}$ and using the Poincaré inequality for $v$ (observe that the constant in the Poincaré inequality can be chosen independently of $p \in (1, 2]$), we get the estimate $|u|^{p}_{W^{1-1/p,p}(E_{i})}\leq C \|D_{\top}u\|^{p}_{L^{p}(E_{i})}$. Then, summing over $i$, we obtain \eqref{est 2.16}, which implies \eqref{ineq linear}.
    
    Next, we prove \eqref{ineq nonlinear}. To this end, we show that if $p \in [p_{0},2]$ and $u \in W^{1,p}(\partial B^{m}_{r}(x_{0}), \mathcal{Y})$, then
    \begin{equation}\label{ineq 0.8}
        |u|^{p}_{W^{1-1/p,p}(\partial B^{m}_{r}(x_{0}))}\leq C^{\prime} \|u\|_{L^{p}(\partial B^{m}_{r}(x_{0}))} \|D_{\top} u\|^{p-1}_{L^{p}(\partial B^{m}_{r}(x_{0}))},
    \end{equation}
    where $C^{\prime}=C^{\prime}(p_{0},m)>0$. Notice that if \eqref{ineq 0.8} would  hold, then it would be ``scale-invariant''. Thus, it is enough to prove \eqref{ineq 0.8} in the case when $\partial B^{m}_{r}(x_{0})=\mathbb{S}^{m-1}$. Applying Lemma~\ref{interpol} with $E=E_{i}$ as above for each $i \in \{1,\dotsc,2m\}$  and then summing over $i$, we get
    \begin{equation}\label{ineq 0.10}
        |u|^{p}_{W^{1-1/p,p}(\mathbb{S}^{m-1})}\leq \frac{C}{p_{0}-1} \|u\|_{L^{p}(\mathbb{S}^{m-1})} \|D_{\top}u\|_{L^{p}(\mathbb{S}^{m-1})}^{p-1}.
    \end{equation}
    Setting $C^{\prime}=C/(p_{0}-1)>0$, \eqref{ineq 0.10} yields \eqref{ineq 0.8}. The inequality~\eqref{ineq nonlinear} is a direct consequence of \eqref{ineq 0.8} and the fact that $\mathcal{Y}$ is a compact Riemannian manifold, which is isometrically embedded into $\mathbb{R}^{k}$. This completes our proof of Corollary~\ref{cor interpolation}.
\end{proof}

\bibliography{bib01}
\bibliographystyle{amsplain}
\end{document}